\definecolor{darkred}{RGB}{139,0,0}
\definecolor{darkblue}{RGB}{0,0,139}
\definecolor{darkgreen}{RGB}{0,100,0}
   \def\MR#1{}
\newcommand{\Diff}{\ensuremath{\mathrm{Diff}}}
\newcommand{\BlockDiff}{\ensuremath{\mathrm{\widetilde{Diff}}}} 
\newcommand{\BDiff}{\ensuremath{B\mathrm{Diff}}}
\newcommand{\BHomeo}{\ensuremath{B\mathrm{Homeo}}}
\newcommand{\Homeo}{\ensuremath{\mathrm{Homeo}}}
\newcommand{\BlockBDiff}{\ensuremath{B\mathrm{\widetilde{Diff}}}}
\newcommand{\hAut}{\ensuremath{\mathrm{hAut}}}
\newcommand{\BhAut}{\ensuremath{B\mathrm{hAut}}}
\newcommand{\BEmb}{\ensuremath{B\mathrm{Emb}}}
\newcommand{\Emb}{\ensuremath{\mathrm{Emb}}}
\newcommand{\Imm}{\ensuremath{\mathrm{Imm}}}
\newcommand{\BlockEmb}{\ensuremath{\mathrm{\widetilde{Emb}}}}
\newcommand{\tohofib}{\ensuremath{\mathrm{tohofib}}}
\newcommand{\bP}{\mathrm{bP}}
\newcommand{\Sp}{\mathrm{Sp}}
\newcommand{\OSp}{\mathrm{OSp}}
\newcommand{\OO}{\mathrm{O}}
\newcommand{\Hom}{\mathrm{Hom}}
\newcommand{\Der}{\mathrm{Der}}
\newcommand{\GL}{\mathrm{GL}}
\newcommand{\SL}{\mathrm{SL}}
\newcommand{\BGL}{B\mathrm{GL}}
\newcommand{\BOSp}{B\mathrm{OSp}}
\newcommand{\Map}{\mathrm{Map}}
\newcommand{\coker}{\mathrm{coker}}
\newcommand{\id}{\mathrm{id}}
\newcommand{\fr}{\mathrm{fr}}
\newcommand{\sfr}{\mathrm{sfr}}
\newcommand{\onefr}{\mathrm{1\text{-}fr}}
\newcommand{\rk}{\mathrm{rk}}
\newcommand{\im}{\mathrm{im}}
\newcommand{\ext}{\mathrm{ext}}
\newcommand{\Bun}{\mathrm{Bun}}
\newcommand{\Sym}{\mathrm{Sym}}
\newcommand{\hofib}{\mathrm{hofib}}
\newcommand{\inc}{\mathrm{inc}}
\newcommand{\pr}{\mathrm{pr}}
\DeclareMathOperator*{\hocolim}{hocolim}
\DeclareMathOperator*{\colim}{colim}
\newcommand{\ev}{\mathrm{ev}}
\newcommand{\even}{\mathrm{even}}
\newcommand{\odd}{\mathrm{odd}}
\newcommand{\Wh}{\mathrm{Wh}}
\newcommand{\gr}{\mathrm{gr}}
\newcommand{\Ind}{\mathrm{Ind}}
\newcommand{\std}{\mathrm{std}}
\newcommand{\Fr}{\mathrm{Fr}}
\newcommand{\spl}{\mathrm{split}}
\newcommand{\BC}{B\mathrm{C}}
\newcommand{\BG}{B\mathrm{G}}
\newcommand{\BSG}{B\mathrm{SG}}
\newcommand{\SG}{\mathrm{SG}}
\newcommand{\G}{\mathrm{G}}
\newcommand{\TOP}{\mathrm{Top}}
\newcommand{\BTOP}{B\mathrm{Top}}
\newcommand{\BSTOP}{B\mathrm{STop}}
\newcommand{\BSO}{B\mathrm{SO}}
\newcommand{\BO}{B\mathrm{O}}
\newcommand{\EO}{\mathrm{EO}}
\newcommand{\SO}{\mathrm{SO}}
\DeclareMathAlphabet{\mathpzc}{OT1}{pzc}{m}{it}
\newcommand{\catsingle}[1]{\ensuremath{\mathcal{#1}}}
\newcommand{\cat}[1]{\ensuremath{\mathsf{#1}}}
\newcommand{\fe}{\mathsf{E}}
\newcommand{\ff}{\mathsf{F}}
\newcommand{\fbt}{\mathsf{Bt}}
\newcommand{\fbo}{\mathsf{Bo}}
\newcommand{\fbg}{\mathsf{Bg}}
\newcommand{\fto}{\mathsf{t}\mskip-2mu/\mskip-2mu\mathsf{o}\mskip1mu}
\newcommand{\fbst}{\mathsf{Bst}}
\newcommand{\oC}{\ensuremath{\mathrm{C}}}
\newcommand{\oE}{\ensuremath{\mathrm{E}}}
\newcommand{\oH}{\ensuremath{\mathrm{H}}}
\newcommand{\oG}{\ensuremath{\mathrm{G}}}
\newcommand{\oO}{\ensuremath{\mathrm{O}}}
\newcommand{\bfC}{\ensuremath{\mathbf{C}}}
\newcommand{\bfL}{\ensuremath{\mathbf{L}}}
\newcommand{\bfR}{\ensuremath{\mathbf{R}}}
\newcommand{\bfZ}{\ensuremath{\mathbf{Z}}}
\newcommand{\bfQ}{\ensuremath{\mathbf{Q}}}
\newcommand{\bfS}{\ensuremath{\mathbf{S}}}
\newcommand{\bfk}{\ensuremath{\mathbf{k}}}
\newcommand{\cA}{\ensuremath{\catsingle{A}}}
\newcommand{\cG}{\ensuremath{\catsingle{G}}}
\newcommand{\cL}{\ensuremath{\catsingle{L}}}
\newcommand{\interior}{\mathrm{int}}
\newcommand{\ra}{\rightarrow}
\newcommand{\lra}{\longrightarrow}
\newcommand{\lla}{\longleftarrow}
\newcommand{\xra}[1]{\xrightarrow{#1}}
\newcommand{\xlra}[1]{\overset{#1}{\longrightarrow}}
\newcommand{\xlratwohead}[1]{\overset{#1}{\relbar\joinrel\twoheadrightarrow}}
\newcommand{\longhookrightarrow}{\lhook\joinrel\longrightarrow}
\newcommand{\Mod}[1]{\ (\mathrm{mod}\ #1)}
\newcommand\dslash{/\mkern-6mu/}
\newcommand{\dl}{(\!(}
\newcommand{\dr}{)\!)}
\newcommand{\circled}[1]{\raisebox{.5pt}{\textcircled{\raisebox{-.9pt} {#1}}}}
\renewcommand{\boxed}[1]{\text{\fboxsep=.2em\fbox{\m@th$\displaystyle#1$}}}
\newcommand{\map}[5]{\ensuremath{#1\colon\begin{array}{rcl} 
      #2 & \longrightarrow & #3 \\[0.3em] 
      #4 & \longmapsto & #5
    \end{array}}}   
\newcommand{\maphook}[5]{\ensuremath{#1\colon\begin{array}{rcl} 
      #2 & \longhookrightarrow & #3 \\[0.3em] 
      #4 & \longmapsto & #5
    \end{array}}}
\newtheorem{bigthm}{Theorem}
\newtheorem{bigcor}[bigthm]{Corollary}
\newtheorem{thm}{Theorem}[section]
\newtheorem{lem}[thm]{Lemma}
\newtheorem{prop}[thm]{Proposition}
\newtheorem{cor}[thm]{Corollary}
\theoremstyle{definition}
\newtheorem{dfn}[thm]{Definition}
\newtheorem*{notation}{Notation}
\newtheorem{conv}[thm]{Convention}
\theoremstyle{remark}
\newtheorem{ex}[thm]{Example}
\newtheorem{rem}[thm]{Remark}
\newcommand{\half}{\nicefrac{1}{2}}
\newcounter{P-counter}
\newcounter{PW-counter}
\begin{document}

\title[Diffeomorphisms of discs and the second Weiss derivative of BTop(--)]{Diffeomorphisms of discs and the second\\ Weiss derivative of BTop(--)}
\author{Manuel Krannich}
\address{Department of Mathematics, Karlsruhe Institute of Technology, 76131 Karlsruhe, Germany}
\email{krannich@kit.edu}

\author{Oscar Randal-Williams}
\address{Centre for Mathematical Sciences, Wilberforce Road, Cambridge CB3 0WB, UK}
\email{o.randal-williams@dpmms.cam.ac.uk}


\begin{abstract}
We compute the rational homotopy groups in degrees up to approximately $\tfrac{3}{2}d$ of the group of diffeomorphisms of a closed $d$-dimensional disc fixing the boundary. Based on this we determine the optimal rational concordance stable range for high-dimensional discs, describe the rational homotopy type of $\BTOP(d)$ in a range, and calculate the second rational derivative of the functor $\BTOP(-)$ in the sense of Weiss' orthogonal calculus. 
\end{abstract}

\maketitle

\tableofcontents

\section{Introduction}
\subsection{Smoothing fibre bundles}\label{intro-smoothing}The study of smooth and topological fibre bundles with a compact smooth $d$-manifold $M$ as fibre lies at the heart of geometric topology. Through the lens of homotopy theory, it becomes the study of the homotopy types of the classifying spaces $\BDiff_\partial(M)$ and $\BHomeo_\partial(M)$ of the topological groups of diffeomorphisms and homeomorphisms of $M$, fixing the boundary. In this work, we are mostly concerned with the case $M=D^d$ of a closed disc, which plays a distinguished role in the theory as it, informally speaking, measures the ``difference'' between smooth and topological fibre bundles. Let us explain why.

Formally, this ``difference'' is the homotopy type of the fibre $\Homeo_\partial(M)/\Diff_\partial(M)=\hofib(\BDiff_\partial(M)\ra \BHomeo_\partial(M))$ since this homotopy type classifies smooth fibre bundles together with a topological trivialisation. It is the source of a \emph{scanning map}
\begin{equation}\label{equ:intro-smoothing-theory}\Homeo_\partial(M)/\Diff_\partial(M)\lra \Gamma_\partial\big(\Fr(M)\times_{\OO(d)}\TOP(d)/\OO(d)\ra M\big)\end{equation}
whose target is the space of sections, standard near the boundary, of the bundle associated to the frame bundle $\Fr(M)$ using the left $\oO(d)$-action on the fibre $\TOP(d)/\oO(d)=\hofib(\BO(d)\ra \BTOP(d))$ where $\TOP(d)=\Homeo(\bfR^d)$ is the topological group of homeomorphisms of $\bfR^d$. Smoothing theory asserts that the scanning map \eqref{equ:intro-smoothing-theory} is a weak equivalence onto the path-components it hits as long as $d\neq 4$ \cite[Essay V]{KS} so, by obstruction theory, the main limiting factor in understanding the difference between smooth and topological fibre bundles is sufficient knowledge of the homotopy groups of $\TOP(d)/\OO(d)$. In degrees up to $d$, these groups are classically well understood, via surgery theory and work of Kirby--Siebenmann \cite[p.\ 246]{KS}. Beyond this degree these groups are less understood, but admit the following alternative description: specialising \eqref{equ:intro-smoothing-theory} to $M=D^d$, the Alexander trick $\BHomeo_\partial(D^d)\simeq*$ results in \emph{Morlet's equivalence}
\begin{equation}\label{equ:intro-Morlet}\BDiff_\partial(D^d)\simeq \Omega^d_0\TOP(d)/\OO(d),\end{equation}
so we have $\pi_{d+k}(\TOP(d)/\OO(d))\cong \pi_k(\BDiff_\partial(D^d))$, leaving us wishing to understand the latter.

\subsection{Rational homotopy groups of $\BDiff_\partial(D^d)$}
As a first approximation, one might attempt to determine $\pi_k(\BDiff_\partial(D^d))$ after rationalisation (indicated by a $(-)_\bfQ$-subscript). There are three known sources of nontrivial classes in these rational homotopy groups:

\subsubsection*{\circled{1} Algebraic $K$-theory}Based on Waldhausen's work on concordance and algebraic $K$-theory \cite{Waldhausen}, Farrell--Hsiang \cite{FarrellHsiang} expressed $\pi_*(\BDiff_\partial(D^d))_\bfQ$ partially in terms of $K_*(\bfZ)$:
\begin{equation}\label{equ:into-farrell-hsiang}
\pi_k(\BDiff_\partial(D^d))_\bfQ\cong\begin{cases} K_{k+1}(\bfZ)_\bfQ&d\text{ odd}\\0&d\text{ even}\end{cases}\quad\text{for }k\le \phi(D^d).
\end{equation}
Here $\phi(D^d)$ is the \emph{concordance stable range}, so $d/3 \lesssim\phi(D^d)$ by work of Igusa \cite{IgusaStability} (but see \cref{intro-pseudoisotopy}). From Borel's work \cite{Borel1} it is known (and used by Farell--Hsiang) that $K_{k+1}(\bfZ)\cong\bfQ$ for positive $k\equiv0\Mod{4}$, and $K_{k+1}(\bfZ)=0$ otherwise. Via different methods, the isomorphism \eqref{equ:into-farrell-hsiang} was later improved to $k\le d-5$ for even $d$ and to $k\le d-6$ for odd $d$, by Randal-Williams \cite[Theorem 4.1]{RWUpperBound} and Krannich \cite[Corollary B]{KrannichConc1} respectively. 

\subsubsection*{\circled{2} Watanabe's graph classes}In \cite{WatanabeI,WatanabeI,Watanabe4,Watanabe4Addendum}, Watanabe constructed classes in the homotopy groups $\pi_{k(d-3)}(\BDiff_\partial(D^d))_\bfQ$ for $k\ge1$ and $d\ge4$, indexed by trivalent graphs, and showed that many of them are nontrivial by evaluating them against characteristic classes which had been described by Kontsevich, in terms of \emph{configuration space integrals}. The classes of lowest degree Watanabe finds show that one has 
\begin{equation}\label{equ:intro-Watanabe-first-class}\pi_{2n-2}(\BDiff_\partial(D^{2n+1)}))_\bfQ\neq 0 \text{ for ``many'' }n\quad \text{and}\quad\pi_{4n-6}(\BDiff_\partial(D^{2n}))_\bfQ\neq0\text{ for all }n\ge2;\end{equation}
``many'' refers to an numerical condition on $n\ge2$ that is often satisfied. The \emph{Kontsevich classes} responsible for \eqref{equ:intro-Watanabe-first-class} are indexed by the ``theta'' graph and the complete graph with four vertices.

\subsubsection*{\circled{3} Pontryagin--Weiss classes}
By work of Sullivan \cite{Sullivan} and Kirby--Siebenmann \cite{KS}, the map $\BO\ra \BTOP$, obtained by stabilising with $(-)\times\bfR$ the maps $\BO(d)\ra \BTOP(d)$ that classify the underlying topological $\bfR^d$-bundle of a vector bundle, is a rational equivalence, so we have $\oH^*(\BTOP;\bfQ)=\oH^*(\BO;\bfQ)=\bfQ[p_1,p_2,\ldots]$. Pulling back $p_i$ along the map $\BTOP(d)\ra \BTOP$, one obtains Pontryagin classes $p_i\in\oH^{4i}(\BTOP(d);\bfQ)$ for topological $\bfR^d$-bundles which extend the usual classes for vector bundles, so one cannot help but wonder whether the relations 
\begin{enumerate}[align=parleft,label=(P\arabic*), ]
\item\label{P1}\tabto{0cm}$p_i=0$\tabto{1.2cm} in $\oH^{4i}(\BO(d);\bfQ)$\tabto{4.1cm} for $i> \lfloor \tfrac{d}{2}\rfloor$
\item\label{P2} \tabto{0cm}$p_n=e^2$\tabto{1.2cm} in $\oH^{4n}(\BSO(2n);\bfQ)$
\setcounter{P-counter}{\value{enumi}}
\end{enumerate}
one is used to for vector bundles  are also satisfied in $\oH^*(\BSTOP(d);\bfQ)$; here $e\in\oH^{2n}(\BSTOP(2n);\bfQ)$ denotes the Euler class. Remarkably the answer is no: Weiss \cite{WeissDalian} showed that \ref{P1} and \ref{P2} do \emph{not} hold in $\oH^*(\BSTOP(d);\bfQ)$ for ``many'' values of $d$ (see loc.cit.\,for precise statements). Moreover, using \ref{P1} and \ref{P2}, one sees that there are unique classes 
\[p_i^\tau\in\oH^{4i-1}(\OO(d)/\TOP(d);\bfQ)\text{ for }i>\lfloor \tfrac{d}{2}\rfloor\quad\text{and}\quad (p_n-e^2)^\tau\in\oH^{4n-1}(\OO(2n)/\TOP(2n);\bfQ)\] 
which transgress to $p_i$ and $p_n-e^2$  in the Serre spectral sequence of the fibration $\OO(d)/\TOP(d)\ra \BSO(d)\ra\BSTOP(d)$. Looping these $d$ times and using \eqref{equ:intro-Morlet} thus yields classes
\begin{enumerate}[align=parleft,label=(PW\arabic*)]
\item\label{PW1}\tabto{0.5cm}$\Omega^{d}p_i^\tau$\tabto{2cm}$\in \oH^{4i-d-1}(\BDiff_\partial(D^{d});\bfQ)$\tabto{6cm} for $i>\lfloor\tfrac{d}{2}\rfloor$
\item\label{PW2} \tabto{0cm}$\Omega^{2n}(p_n-e^2)^\tau$\tabto{2cm}$\in \oH^{2n-1}(\BDiff_\partial(D^{2n});\bfQ)$.\tabto{6cm}
\setcounter{PW-counter}{\value{enumi}}
\end{enumerate}
Weiss' work also shows that \ref{PW1} and \ref{PW2} ``often'' evaluate nontrivially against classes in the image of the rational Hurewicz map, which in particular gives (see Proposition 6.2.1.\,loc.cit.)
\[\pi_{4i-d-1}(\BDiff_\partial(D^{d}))_\bfQ\neq 0\quad\text{for }d>c_1\text{ and }\lfloor \tfrac{d}{2}\rfloor<i<\tfrac{9}{8}d-c_2,\]
for certain positive constants $c_1$ and $c_2$. We will refer to classes in $\pi_*(\BDiff_\partial(D^d))$ which evaluate nontrivially against \ref{PW1} or \ref{PW2} as \emph{Pontryagin--Weiss classes}.

\subsubsection{First main result}
The first result we state determines the groups $\pi_k(\BDiff_\partial(D^d))_\bfQ$ completely for $k\lesssim \tfrac{3}{2}d$, and it in particular shows that the three sources of classes $\circled{1}$--$\circled{3}$ exhaust these groups in this range. We also discover that Watanabe's first graph class from $\circled{2}$ is in fact a Pontryagin--Weiss class, in a sense to be explained (see \cref{intro-relation-to-Watanabe}).

\begin{bigthm}\label{bigthm:diff-discs}
There is an isomorphism
\begin{align*}
\pi_k(\BDiff_\partial(D^{2n+1}))_\bfQ&\cong \begin{cases}
\bfQ & k \equiv 2n-2\Mod{4}, k \geq 2n-2\\
0 & \text{else}
\end{cases} \oplus\ K_{k+1}(\bfZ)_\bfQ
\intertext{in degrees $k<3n-7$, and an isomorphism}
\pi_k(\BDiff_\partial(D^{2n}))_\bfQ&\cong \begin{cases}
\bfQ&k\equiv 2n-1\Mod{4}, k\ge 2n-1\\
0&\text{else} \end{cases}
\end{align*}
in degrees $k<3n-6$.
\end{bigthm}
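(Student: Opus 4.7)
The plan is to combine Morlet's equivalence \eqref{equ:intro-Morlet} with a rational orthogonal-calculus analysis of the functor $V\mapsto\BTOP(V)$, together with the Waldhausen--Farrell--Hsiang contribution from algebraic $K$-theory.

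Via \eqref{equ:intro-Morlet} and the fibration $\TOP(d)/\OO(d)\to\BO(d)\to\BTOP(d)$, the theorem reduces to computing $\pi_{*}(\BTOP(d))_\bfQ$ in a range of degrees up to roughly $\tfrac{3}{2}d$, together with the map from $\BO(d)$ (whose rational homotopy is classical). The main new geometric input is a rational computation of the second Weiss derivative of $V\mapsto\BTOP(V)$. The first derivative is already pinned down by the rational equivalence $\BO\simeq_{\bfQ}\BTOP$ of Sullivan and Kirby--Siebenmann, which only recovers the classical Pontryagin classes on vector bundles. The deviation of $\BTOP(d)$ from $\BO(d)$ in the relevant range is therefore controlled entirely by the second Weiss layer, provided one also establishes a rational-connectivity estimate ensuring that the third and higher Taylor layers contribute nothing to $\pi_{*}(\BTOP(d))_\bfQ$ below degree $\tfrac{3}{2}d$. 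Once this second derivative is identified, one reads off the Pontryagin--Weiss contribution to $\pi_{*}(\BDiff_\partial(D^d))_\bfQ$: a $\bfQ$-summand in each degree $4i-d-1$ for $i>\lfloor d/2\rfloor$ coming from \ref{PW1}, the Euler-class correction $(p_n-e^2)^\tau$ in even dimensions from \ref{PW2}, and --- in odd dimensions --- a further $\bfQ$ in degree $d-3$ which will match a canonical representative of Watanabe's first graph class.

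The remaining $K_{k+1}(\bfZ)_\bfQ$-summand in odd dimensions is supplied by Waldhausen's algebraic $K$-theory through the Farrell--Hsiang isomorphism \eqref{equ:into-farrell-hsiang}. Pushing this isomorphism beyond the classical concordance stable range $k\lesssim d-6$ of \cite{RWUpperBound,KrannichConc1} into our range uses that the Weiss-calculus analysis above has independently accounted for all other rational classes. The two summands are automatically separated by incompatible characteristic-class pairings: Pontryagin--Weiss classes pair nontrivially with classes pulled back along $\BDiff_\partial(D^d)\to\BTOP(d)$, while the $K$-theory summand is detected by the Waldhausen trace to $\Wh(*)$, so the direct sum is genuine.

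The main technical obstacle will be the rational connectivity bound for the third Weiss layer of $\BTOP(-)$, since it is precisely this estimate that licenses the $\tfrac{3}{2}d$ threshold in the statement. A secondary difficulty, needed for the odd-dimensional case, is matching the extra $\bfQ$-class in degree $d-3$ with Watanabe's graph class, reconciling the Weiss-calculus picture with the Kontsevich configuration-space integral construction.
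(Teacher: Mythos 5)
Your proposal inverts the paper's logical structure, and in doing so reduces the theorem to two ingredients that are no easier to obtain and for which the paper provides no independent route. In the paper, \cref{bigthm:diff-discs} is proved first, by purely geometric means: the ``delooped Weiss fibre sequences'' \eqref{intro-weiss-fs} express the (framed) disc diffeomorphism group as the fibre of a map from a space of self-embeddings of $V_g$ or $W_{g,1}$ to a delooping; the Botvinnik--Perlmutter and Galatius--Randal-Williams stable-homology theorems (\cref{thm:stable-homology}) identify the stable homology of the total space; the rational homotopy of the self-embedding spaces is obtained by a handle induction combining Morlet--Goodwillie disjunction (\cref{sec:Disj}), block embeddings of points (\cref{sec:block-embeddings}), and Watanabe-style clasper calculus (\cref{sec:clasper}); and finally one passes to homology via Serre spectral sequences and Borel's stable cohomology of arithmetic groups (\cref{sec:HEmbSpace}). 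Theorems~\ref{bigthm:top-d} and~\ref{bigthm:second-derivative} about $\BSTOP(d)$ and $\Theta\fbt^{(2)}$ are then \emph{consequences} of \cref{bigthm:diff-discs}, derived in \cref{sec:htp-type-BSTOP}.

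Your proposal instead tries to go from the orthogonal-calculus picture to the disc theorem. The paper does note (end of the introduction) that the rational computation of $\Theta\fbt^{(1)}$, $\Theta\fbt^{(2)}$ together with the rational $(\approx\tfrac{5}{2}d)$-connectivity of $\BTOP(d)\to T_2\BTOP(d)$ reproduces \cref{bigthm:diff-discs} via Morlet's equivalence --- so the implication you want to run is formally available. But you flag as ``the main technical obstacle'' precisely the two facts you would need as \emph{inputs}: the rational identification of $\Theta\fbt^{(2)}$, and the rational connectivity bound killing the higher layers. There is no known independent argument for either; the paper explicitly states that $\Theta\fbt^{(2)}$ had resisted computation prior to this work, and the connectivity estimate $\BTOP(d)\to T_2\BTOP(d)$ being rationally $(5\lfloor d/2\rfloor-5)$-connected is deduced in \cref{sec:htp-type-BSTOP} from \cref{bigthm:diff-discs}, not the other way around. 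Orthogonal calculus gives no a priori connectivity of the Taylor tower (unlike Goodwillie calculus on analytic functors), so this bound must be proved by hand. Your plan therefore has a circular dependency: the two ``obstacles'' you name are exactly the theorem you want to prove, restated.

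A secondary gap: you assert that the $K$-theory summand can be ``pushed beyond the concordance stable range'' because Weiss calculus accounts for the other classes, and that the two summands separate automatically because they pair with different invariants. Neither claim is self-evident. The interaction between the first and second Taylor layers of $\fto$ could a priori involve nontrivial $d^1$-differentials and extensions, and in the paper the required splitting and extension control comes from the delicate analysis of the universal-cover Serre spectral sequence of the self-embedding spaces (Step~\ref{enum:homology-selfemb} of \cref{sec:HEmbSpace}), including \cref{prop:HtpyEmbVg}, \cref{prop:VgSSCohCalc}, and the twisted $G_V$-cohomology calculations in \cref{lem:GV-homology}. One cannot substitute for this with a pairing argument alone.
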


\begin{rem}\label{rem:even-not-new} The new part of \cref{bigthm:diff-discs} is the result for odd-dimensional discs; the even case partially recovers a result of Kupers and Randal-Williams \cite[Theorem A]{KR-WDisc}.
\end{rem}
\begin{rem}\label{rem:stronger-disc-thm}Our result is stronger than stated here in that we (i) construct a specific map from the left-hand to the right-hand side in \cref{bigthm:diff-discs}, (ii) show that the surjectivity range of this map is one degree better, and (iii) determine the effect of the involution on $\BDiff_\partial(D^{d})$ induced by conjugation with a reflection of $D^{d}$ in one of the coordinates (see Corollaries \ref{cor:homotopy-diff-even-disc} and \ref{cor:homotopy-diff-odd-disc}).
\end{rem}

\subsubsection{The class $E$}\label{intro-E-sec}
In the statement of \cref{bigthm:diff-discs}, the first summand of $\pi_k(\BDiff_\partial(D^{2n+1}))_\bfQ$ and all of $\pi_k(\BDiff_\partial(D^{2n}))_\bfQ$ consists of Pontryagin--Weiss classes in the sense of $\circled{3}$. Note however, that for $d=2n+1$ the first class in \ref{PW1} has degree $2n+2$, but the first summand in \cref{bigthm:diff-discs} contains a class of degree $2n-2$. This is due to an odd-dimensional analogue of \ref{P2}, which seems to be less well known. Namely, writing $\G(d)=\hAut(S^{d-1})$ and $\SG(d)\subset \G(d)$ for its orientation-preserving variant, there is a unique class 
\begin{equation}\label{intro-E}
E\in \oH^{4n}(\BSG(2n+1);\bfQ)
\end{equation}
which pulls back to the square of the Euler class $e^2\in \oH^{4n}(\BSG(2n);\bfQ)$ along the stabilisation map $\BSG(2n)\ra \BSG(2n+1)$ induced by suspension. Pulled back along the composition $\BSO(2n+1)\ra\BSTOP(2n+1)\ra\BSG(2n+1)$ involving the map classifying the spherical fibration of an $\bfR^d$-bundle, the class $E$ agrees with $p_n$, so we can add to \ref{P1} and \ref{P2} the relation
\begin{enumerate}[label=(P\arabic*),align=parleft]
\setcounter{enumi}{\value{P-counter}}
\item\label{P3}\tabto{0cm}$p_n=E$\tabto{1.2cm} in $\oH^{4n}(\BSO(2n+1);\bfQ)$
\end{enumerate}
which, as with \ref{PW1} and \ref{PW2}, gives rise to a characteristic class
\begin{enumerate}[label=(PW\arabic*),align=parleft]
\setcounter{enumi}{\value{PW-counter}}
\item\label{PW3}  $\Omega^{2n+1}(p_n-E)^\tau\in \oH^{2n-2}(\BDiff_\partial(D^{2n+1});\bfQ)$.
\end{enumerate}
This invariant detects the class of degree $2n-2$ in the first summand in \cref{bigthm:diff-discs}.

\begin{rem}\label{intro-relation-to-Watanabe}
In addition to its appearance in \cref{bigthm:diff-discs}, the class $E$ features in two further discoveries we made while contemplating \ref{P3} and \ref{PW3}. These are not used in the proof of \cref{bigthm:diff-discs} but are perhaps of independent interest (see \cref{app:Watanabe}).

\begin{enumerate}
\item\label{enum:intro-wat-i}Kontsevich's class associated to the ``theta'' graph featuring in $\circled{2}$ can be expressed purely in terms of the class $E$---without reference to configuration space integrals (see \cref{thm:KontsevichClass}). This has as consequence that Watanabe's first graph class in \eqref{equ:intro-Watanabe-first-class} evaluates nontrivially against the class in \ref{PW3}, so is a Pontryagin--Weiss class in this sense.
\item\label{enum:intro-wat-ii} Inspired by \ref{P3} and \cite{WatanabeI}, we explain a remarkably simple proof (which could have been discovered in the '$70$s) that $\pi_{2n-2}(\BDiff_\partial(D^{2n+1}))_\bfQ$ is often nontrivial.
\end{enumerate}
\end{rem}

We proceed this introduction by explaining two applications of \cref{bigthm:diff-discs}, one to the concordance stable range as featuring in $\circled{1}$, and one to the rational homotopy type of $\BTOP(d)$. 

\subsection{The rational concordance stable range}\label{intro-pseudoisotopy}Closely related to the diffeomorphism group $\Diff_\partial(M)$ of a compact smooth manifold $M$ is its space of \emph{concordances}
\[ \oC(M)=\{\phi\colon M\times [0,1]\xra{\cong}M\times [0,1]\mid \phi_{M\times \{0\}\cup \partial M\times [0,1]}=\id\},\]
which in turn relates to the concordance space of $M\times [0,1]$ via the \emph{Hatcher stabilisation map}
\begin{equation}\label{equ:stabilsation}s_M\colon\oC(M)\lra \oC(M\times [0,1]),\end{equation}
given by taking products with $[0,1]$ and ``bending'' the resulting diffeomorphism appropriately to make it satisfy the boundary condition (see e.g.\,\cite[Chapter II §1]{IgusaStability}). The \emph{concordance stable range} of $M$ is defined in terms of these stabilisation maps as the quantity
\[\phi(M)\coloneq \min\{k\in \bfZ \mid s_{M\times [0,1]^m}\text{ is }k\text{-connected for all }m\ge0\},\]
and the \emph{rational concordance stable range} $\phi(M)_\bfQ$ is defined analogously in terms of the rational connectivity of the stabilisation maps. This range plays a crucial role in the study of $\BDiff_\partial(M)$ and $\BHomeo_\partial(M)$ in that it is the main limitation when trying to access these homotopy types via the classical route in terms of unstable homotopy theory and $K$- and $L$-theory (see e.g.\,\cite{WWSurvey} for a survey). As briefly mentioned in $\circled{1}$, the main result on $\phi(M)$ is due to Igusa \cite{IgusaStability} who established a lower bound $\phi(M)\gtrsim\dim(M)/3$ (more precisely $\phi(M)\ge \min(\tfrac{1}{3}(\dim(M)-4),\tfrac{1}{2}(\dim(M)-7))$), but the precise concordance stable range is the subject of speculation (see e.g.\,\cite[p.\,6]{IgusaStability}, \cite[p.\,4]{HatcherSurvey}, \cite[p.\,605]{BurgheleaRational}) and is still not known for a single manifold $M$. Based on a strengthening of \cref{bigthm:diff-discs}, we determine the rational concordance stable range for high-dimensional discs and compute the first rational obstruction to stability.

\begin{bigcor}\label{bigcor:relative-concordance-groups}There is a morphism
\[\pi_k\big(\oC(D^{d}\times[0,1]),\oC(D^{d})\big)_\bfQ\lra \begin{cases}
\bfQ&\text{if }k=d-3\\
0&\text{otherwise}
\end{cases}\]
which is an isomorphism for $k< \lfloor \tfrac{3}{2} d\rfloor -8$ and an epimorphism for $k=\lfloor \tfrac{3}{2} d\rfloor -8$.
\end{bigcor}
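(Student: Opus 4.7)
The plan is to exploit the fibration sequence
\[
\Diff_\partial(D^{d+1}) \lra \oC(D^d) \xra{\ev_1} \Diff_\partial(D^d),
\]
in which the second map is evaluation of a concordance at the top face $D^d\times\{1\}$ and the fibre consists of diffeomorphisms of $D^d\times I = D^{d+1}$ that fix the entire boundary. The Hatcher stabilisation $s_{D^d}$ fits together with its dimension-$(d{+}1)$ counterpart into a commutative ladder of such fibrations whose outer vertical maps are the suspension homomorphisms $\Sigma\colon\Diff_\partial(D^{d+k})\to\Diff_\partial(D^{d+k+1})$ (obtained by crossing with $I$ and bending).

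Passing to the associated long exact sequences of pairs yields a three-term exact sequence
\[
\pi_k\bigl(\Diff_\partial(D^{d+2}),\Diff_\partial(D^{d+1})\bigr)_\bfQ\to \pi_k\bigl(\oC(D^{d+1}),\oC(D^d)\bigr)_\bfQ\to \pi_k\bigl(\Diff_\partial(D^{d+1}),\Diff_\partial(D^d)\bigr)_\bfQ,
\]
embedded in a long exact sequence, so the task reduces to computing relative rational homotopy of the suspension maps between disc diffeomorphism groups. For this I would invoke \cref{bigthm:diff-discs} in the strengthened form promised in \cref{rem:stronger-disc-thm}: it supplies explicit target groups for $\pi_*(\BDiff_\partial(D^d))_\bfQ$ expressed in terms of algebraic $K$-theory and of the Pontryagin--Weiss classes \ref{PW1}, \ref{PW2}, \ref{PW3}, and it identifies the effect of the involution induced by a coordinate reflection, which intertwines with the suspension maps in a controllable way.

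A degree-by-degree bookkeeping---separating the cases of even and odd $d$ and keeping track of the parities of the relevant Pontryagin--Weiss and $K$-theoretic degrees---then shows that in the range $k<\lfloor 3d/2\rfloor -8$ the two outer relative groups recombine so that the relative concordance group is rationally concentrated in the single degree $k=d-3$, where it is a copy of $\bfQ$. The explicit map to $\bfQ$ in the statement is defined by evaluating against the Pontryagin--Weiss class \ref{PW3} (for odd $d$), respectively its analogue built from \ref{PW2} (for even $d$), after relativising with respect to stabilisation.

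The main obstacle lies in the final step: because the rational homotopy of $\BDiff_\partial(D^d)$ follows different degree patterns for even and odd $d$, one cannot naively identify Pontryagin--Weiss summands across a single suspension. Instead, one must use the geometric origin of these classes as transgressions in the Serre sequence of $\OO(d)/\TOP(d)\to\BSO(d)\to\BSTOP(d)$, together with the behaviour of the stabilisation $\TOP(d)/\OO(d)\to \TOP(d+1)/\OO(d+1)$, to identify precisely which classes are hit by $\Sigma$ and which become genuinely new in the next dimension. This fine structure is exactly what is recorded by the strengthening of \cref{bigthm:diff-discs} and is governed by the second Weiss derivative of $\BTOP(-)$ of the abstract.
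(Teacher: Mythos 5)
Your proposed ladder of fibrations does not exist in the form you describe: there is no ``suspension homomorphism'' $\Sigma\colon\Diff_\partial(D^d)\to\Diff_\partial(D^{d+1})$ obtained by crossing with $I$ and bending. If $\phi\in\Diff_\partial(D^d)$, then $\phi\times\id_I$ does not fix $D^d\times\{0,1\}\subset\partial D^{d+1}$, and the bending used in the Hatcher stabilisation of $\oC(D^d)$ does not repair this on the base of the fibration. Concretely, under the identification $\oC(D^d)\cong\Diff_{D^d}(D^{d+1})$, the Hatcher map is $\phi\mapsto\phi\times\id_{[-1,1]}$; its restriction to the moving half of $\partial D^{d+2}$ depends on $\phi$ and not just on the restriction of $\phi$ to the moving half of $\partial D^{d+1}$, so the Hatcher map does not cover a map $\Diff_\partial(D^d)\to\Diff_\partial(D^{d+1})$. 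Consequently, the relative groups $\pi_k(\Diff_\partial(D^{d+1}),\Diff_\partial(D^d))_\bfQ$ in your three-term sequence are undefined, and the sequence itself is not available.

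More seriously, even if some version of the exact sequence were salvaged, the ``degree-by-degree bookkeeping'' cannot succeed on its own: \cref{bigthm:diff-discs} (with \cref{rem:stronger-disc-thm}) only gives the homotopy groups in each dimension as $\bfQ[\langle\rho\rangle]$-modules, not the effect of the Hatcher stabilisation on them. Determining that effect is the actual mathematical content here, and the paper needs two further inputs that your proposal never engages with. First, the interaction between the reflection involution and the \emph{concordance} involution (see \cref{sec:two-involutions-concordances}): the Hatcher map is equivariant for the first and \emph{anti}-equivariant for the second, which combined with the fact that the two involutions agree on $\Diff_\partial(D^d\times I)$ forces the map to vanish on the $(-1)$-eigenspace of the reflection involution. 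Second, the Dwyer--Weiss--Williams Riemann--Roch theorem is used in \cref{thm:concordance-stability-eigenspaces} to show that the $K$-theoretic classes $\xi_i^\tau$ are pulled back from the stable concordance space, so that the map is an isomorphism on the $(+1)$-eigenspace in the range. Without these, there is no way to decide in which degrees the pieces of $\pi_*(\Diff_\partial(D^{d+1}))_\bfQ$ and $\pi_*(\Diff_\partial(D^d))_\bfQ$ cancel in the relative groups. The paper instead computes $\pi_*(\oC(D^d))_\bfQ$ directly (\cref{thm:homotopy-framed-even-disc}, \cref{cor:homotopy-concordances-odd-disc}, \cref{thm:concordances-both-involutions}), establishes the stabilisation behaviour via the two ingredients above, and then passes to the long exact sequence of the triple $(\oC(D^\infty),\oC(D^{d+1}),\oC(D^d))$---avoiding relative diffeomorphism groups altogether.
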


\begin{rem}
\cref{bigcor:relative-concordance-groups} in particular implies that $\phi(D^d)_\bfQ=d-4$ for $d>9$. Previously, it was known from Watanabe's work mentioned in \circled{2} that $\phi(D^d)_\bfQ\lesssim d$, and the above mentioned improvements of \eqref{equ:into-farrell-hsiang} contained in \cite{RWUpperBound} and \cite{KrannichConc1} suggested that $\phi(D^d)_\bfQ\approx d$.
\end{rem}

\begin{rem}In addition to \cref{bigcor:relative-concordance-groups}, we compute the individual groups $\pi_*(\oC(D^d))_\bfQ$ in a slightly larger range  (see \cref{thm:homotopy-framed-even-disc} and \cref{cor:homotopy-concordances-odd-disc}) and determine the effect of two involutions on these groups (see \cref{thm:concordances-both-involutions}).
\end{rem}

\subsection{The rational homotopy type of $\BSTOP(d)$}Combining \eqref{equ:intro-Morlet} and the known rational homotopy type of $\BO(d)$ with a variant of \cref{bigthm:diff-discs}, one can easily calculate $\pi_*(\BTOP(d))_\bfQ$ in a range. But unlike $\BDiff_\partial(D^d)$, the space $\BTOP(d)$ is not a loop space, so its rational homotopy type need not be determined by its rational homotopy groups. Our second main result describes this rational homotopy type in a $(\approx\tfrac{5}{2}d)$-range, using a strong form of \cref{bigthm:diff-discs} (see \cref{rem:stronger-disc-thm}). We state the answer in terms of the orientation-preserving variant $\BSTOP(d)$, and it involves (i) the Euler class $e\in \oH^{2n}(\BSTOP(2n);\bfQ)$, (ii) the odd-dimensional analogue of its square $E\in \oH^{4n}(\BSTOP(2n+1);\bfQ)$ from \cref{intro-E-sec}, (iii) the stabilisation map $s\colon \BSTOP(d)\ra \BSTOP\simeq_\bfQ\prod_{i\ge1}K(\bfQ,4i)$, and (v) a certain map $\tilde{k}\colon \BSTOP(2n+1)\ra  \Omega^{\infty-2n-1}(K(\bfZ)/\bfS)_\bfQ$ that factors a canonical map $k\colon \TOP(2n+2)/\TOP(2n+1)\ra \Omega^{\infty-2n-1}K(\bfZ)$ (more about this map in \cref{intro-first-derivative}) followed by quotienting out the unit $\bfS\ra K(\bfZ)$ and rationalising.

\begin{bigthm}\label{bigthm:top-d}
For $n\ge3$, the following maps are rationally  $(5n-5)$-connected
\vspace{-0.1cm}
\[
\def\arraystretch{1.5}
\begin{array}{c@{\hskip 0.1cm} l@{\hskip 0.1cm} c@{\hskip 0.1cm} l@{\hskip 0.1cm} l@{\hskip 0cm} l@{\hskip 0cm}}
(E, s , \tilde{k})&\colon& \BSTOP(2n+1)&\lra&K(\bfQ, 4n)\times \BSTOP\times \Omega^{\infty-2n-1}(K(\bfZ)/\bfS)_\bfQ\\
(e,s)&\colon&\BSTOP(2n)&\lra&K(\bfQ, 2n)\times \BSTOP.\\
\end{array}
\]
\end{bigthm}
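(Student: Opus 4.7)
Since the target of each map is rationally a product of Eilenberg--MacLane spaces, it suffices to check that $(e,s)$ and $(E,s,\tilde{k})$ induce isomorphisms on $\pi_k(-)_\bfQ$ for $k<5n-5$ and surjections for $k=5n-5$. The plan is therefore to compute $\pi_*(\BSTOP(d))_\bfQ$ in this range using the strengthened form of \cref{bigthm:diff-discs} (see \cref{rem:stronger-disc-thm}), and then to verify that each Cartesian factor of the target absorbs the corresponding class of the source.

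\textbf{Source computation.} Morlet's equivalence \eqref{equ:intro-Morlet} gives $\pi_k(\TOP(d)/\OO(d))_\bfQ\cong\pi_{k-d}(\BDiff_\partial(D^d))_\bfQ$ for $k>d$, so the strengthened \cref{bigthm:diff-discs} determines $\pi_*(\TOP(d)/\OO(d))_\bfQ$ in degrees up to approximately $\tfrac{5}{2}d$, with one extra degree of surjectivity. Feeding this into the long exact sequence of the fibration $\TOP(d)/\OO(d)\to\BO(d)\to\BTOP(d)$, together with the known rational homotopy type of $\BSO(d)$ (Euler class if $d$ is even, and Pontryagin classes $p_1,\ldots,p_{\lfloor d/2\rfloor}$), a direct bookkeeping then yields $\pi_*(\BSTOP(d))_\bfQ$ in the required range.

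\textbf{Matching with the target.} In the even case, the contributions to $\pi_*(\BSTOP(2n))_\bfQ$ in the range are the Euler class in degree $2n$ together with the Pontryagin classes $p_i$ in degrees $4i$---for $i\le n-1$ inherited from $\BSO(2n)$, for $i\ge n$ produced by the Weiss term of \cref{bigthm:diff-discs} via the connecting homomorphism---all of which are visibly detected by $(e,s)$. In the odd case, $s$ detects the Pontryagin classes, $E$ detects the extra generator of $\pi_{4n}(\BSTOP(2n+1))_\bfQ$ corresponding to relation \ref{P3}, and $\tilde{k}$ must detect the $K_{*+1}(\bfZ)_\bfQ$ summands of \cref{bigthm:diff-discs}. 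The construction of $\tilde{k}$ as a factorisation of the canonical map $k\colon\TOP(2n+2)/\TOP(2n+1)\to\Omega^{\infty-2n-1}K(\bfZ)$ through $\BSTOP(2n+1)$, after dividing out the unit $\bfS\to K(\bfZ)$ and rationalising, is designed precisely to hit these $K$-theory classes without overlapping the stable contribution already captured by $s$.

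\textbf{Main obstacle.} The most delicate step is the final identification in the odd case: showing that $\tilde{k}$ genuinely surjects onto the $K$-theory summand of \cref{bigthm:diff-discs} in each relevant degree. This amounts to matching the geometric origin of those classes---traced back via Farrell--Hsiang to Waldhausen's identification of the stable concordance space with algebraic $K$-theory---with the definition of $k$ as (essentially) a component of the first Weiss derivative of $\BTOP(-)$. Once this comparison is in place, the long exact sequence contains exactly one natural candidate per degree in the range, and the claimed rational connectivity follows from a dimension count factor by factor.
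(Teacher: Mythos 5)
Your high-level plan -- reduce to rational homotopy groups (valid, as both sides are simply connected), compute $\pi_*(\BSTOP(d))_\bfQ$ from the strengthened \cref{bigthm:diff-discs} and Morlet's equivalence via the fibration $\TOP(d)/\OO(d)\to\BO(d)\to\BTOP(d)$, and then check surjectivity factor by factor -- is plausible in spirit but leaves the central work undone, and for the odd case it is quite different from what the paper actually does.

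The paper's proof of the even case (\cref{thm:BSTOP2nHty}) is close in spirit to what you describe, but instead of first computing the source and target homotopy groups and then ``matching,'' it constructs an explicit map of fibration sequences \eqref{equ:rational-fibration-o-top-even} from the row $\TOP(2n)/\OO(2n)\to\BSO(2n)\to\BSTOP(2n)$ to a product of Eilenberg--MacLane fibrations, reduces the connectivity claim by looping $(2n+2)$ times, identifies the looped source with $\Omega\BDiff^\fr_\partial(D^{2n})$ by smoothing theory, and invokes \cref{cor:homotopy-diff-even-disc}. This avoids any separate independence check: the comparison is between a single geometric map and its Eilenberg--MacLane model. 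In your framing, the crucial content you label ``visibly detected'' is precisely the nontriviality and independence of the pullbacks $e$, $\Omega^{2n+1}p_i$ on $\pi_*$, which is not a dimension count -- it is the content of \cref{cor:homotopy-diff-even-disc}, i.e.\ the strengthened Theorem A. You should cite that rather than appeal to visibility.

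The odd case is where your proposal genuinely diverges and where the gap is serious. First, there are degrees in the stated range -- for instance degree $4n$ when $n$ is odd -- where $\pi_*(\text{target})_\bfQ$ has rank three ($K(\bfQ,4n)$, $\BSTOP$, and $\Omega^{\infty-2n-1}(K(\bfZ)/\bfS)_\bfQ$ all contribute a copy of $\bfQ$), so ``exactly one natural candidate per degree'' is false, and what you need is linear independence in $\oH^{4n}(\BSTOP(2n+1);\bfQ)$ of $p_n$, $E$, and a $K$-theoretic class, which a dimension count cannot supply. Second, the very existence of the map $\tilde{k}$ is a nontrivial extension problem (the paper's \cref{lem:factorisation}, established using the orthogonal-calculus Euler class machinery of \cref{sec:KThyEuler}); you treat it as given. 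Third, the fact that $\tilde{k}$ surjects onto the $K$-theory summand is precisely what you flag as the ``main obstacle'' and then do not resolve. The paper never runs the long exact sequence of $\TOP(2n+1)/\OO(2n+1)\to\BO(2n+1)\to\BTOP(2n+1)$ for the odd case; instead it proves \cref{thm:square-top-to-g} -- a cartesian square statement for $\TOP(2n+2)/\TOP(2n+1)$, derived from the concordance-stability result \cref{cor:stable-relative-concordance-spaces} -- and then proves \cref{thm:BSTOP2nPlus1Hty} by comparing the \emph{vertical} fibration $\TOP(2n+2)/\TOP(2n+1)\to\BSTOP(2n+1)\to\BSTOP(2n+2)$ with its target counterpart, deducing the connectivity of the middle map from that of the outer ones. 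Your approach could in principle be carried out, but it would require reproving (in the guise of computing connecting homomorphisms and verifying independence) essentially the same information that \cref{thm:square-top-to-g} and \cref{lem:factorisation} package more cleanly, and at present that work is entirely missing.
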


\begin{rem}As with \cref{bigthm:diff-discs}, the new part of this result is the odd-dimensional case; the statement about even dimensions partially recovers \cite[Corollary D]{KR-WDisc}.
\end{rem}

\begin{rem}
In \cref{sec:obstruction-theory-yoga}, we reinterpret \cref{bigthm:top-d} in terms of cohomological obstructions to finding a vector bundle structure on a topological $\bfR^d$-bundle. Via smoothing theory \eqref{equ:intro-smoothing-theory}, this has direct applications to the problem of smoothing topological manifold bundles.
\end{rem}

\subsection{Orthogonal calculus and the second derivative of $\BTOP(-)$}
All of the above can be recast (and strengthened) in terms of Weiss' \emph{orthogonal calculus}, which provides a unifying perspective that greatly conceptualises our results. This part of the introduction serves to explain this viewpoint, and what we prove about it.

\subsubsection{The Taylor tower}The spaces $\BO(d)$, $\BTOP(d)$, and $\TOP(d)/\oO(d)$ may be viewed as the values at $\bfR^d$ of continuous functors $\fbo\coloneq \BO(-)$, $\fbt\coloneq \BTOP(-)$, and $\fto\coloneq \TOP(-)/\oO(-)$ from the category of finite-dimensional inner product spaces and isometries to the category of pointed spaces. Functors of this type are called \emph{orthogonal}, and they are the input for Weiss' apparatus of \emph{orthogonal calculus} \cite{WeissOrthogonal}. To an orthogonal functor $\fe$ this calculus associates for each $k\ge1$ a (na{\"i}ve) $\OO(k)$-spectrum $\Theta \fe^{(k)}$---the $k$th \emph{derivative}---and a tower
\[
\begin{tikzcd}[column sep=2cm,row sep=0.3cm,/tikz/column 3/.append style={anchor=base west}]
&\smash{\vdots} \dar\\
&T_2\fe(V) \dar &[-1.5cm]\lar\Omega^\infty\big((S^{2\cdot V}\wedge \Theta \fe^{(2)})_{h\oO(2)}\big) \\
&T_1\fe(V) \dar &\lar\Omega^\infty\big((S^{1\cdot V}\wedge \Theta \fe^{(1)})_{h\oO(1)}\big)\\
\fe(V) \arrow[r,swap, pos=0.7 ] \arrow[ru,swap, pos=0.69] \arrow[ruu, swap,pos=0.67]\arrow[ruuu,]& T_0\fe(V)&\arrow[l,equal]\fe(\bfR^\infty)
\end{tikzcd}
\]
of orthogonal functors---the \emph{Taylor tower}. Some explanation is in order: the right-hand horizontal maps---the \emph{layers}---indicate the homotopy fibre inclusions of the vertical maps between the \emph{stages} $T_k\fe$, the zeroth stage $T_0\fe$ (also called the zeroth layer) admits a preferred equivalence to the constant functor on the \emph{value at infinity} $\fe(\bfR^\infty)\coloneq \hocolim_d\fe(\bfR^d)$, the space $S^{k\cdot V}$ is the one-point compactification of $k\cdot V\coloneq \bfR^k\otimes V$, and $\oO(k)$ acts on $S^{k\cdot V}\wedge \Theta E^{(k)}$ diagonally. 

\subsubsection{Derivatives}Forgetting the $\OO(k)$-actions for a moment, let us outline how the derivatives $\Theta\fe^{(k)}$ arise: setting $\fe^{(1)}(V)\coloneq \hofib(\fe(V)\ra \fe(\bfR\oplus V))$, rotating in a $2$-plane induces maps 
\begin{equation}\label{intro-first-unstable-derivative}\fe^{(1)}(V)\lra \Omega \fe^{(1)}(\bfR\oplus V),\end{equation} 
which define a sequential spectrum with $d$th space $\fe^{(1)}(\bfR^d)$; this is the first derivative $\Theta \fe^{(1)}$ as constructed in  \cite{WeissOrthogonal} (see also \cite[Section A.1]{MunozEchaniz} for an explicit formula for \eqref{intro-first-unstable-derivative} involving the aforementioned rotation in a plane). Moreover, setting $\fe^{(2)}(V)\coloneq \hofib(\fe^{(1)}(V)\ra \Omega \fe^{(1)}(\bfR\oplus V))$, there are further natural maps
\[\fe^{(2)}(V)\lra \Omega^2 \fe^{(2)}(\bfR\oplus V)\] 
which give rise to a sequential spectrum with $2d$th space $\fe^{(2)}(\bfR^d)$; this is $\Theta \fe^{(2)}$. Continuing in this manner by taking iterated homotopy fibres gives rise to the higher derivatives $\Theta \fe^{(k)}$ as spectra, though this iterative description obscures the $\OO(k)$-action (see \cite{WeissOrthogonal} for details).

\subsubsection{An example: the tower for $\BO(-)$}\label{intro-bo-example}In the example $\fbo=\BO(-)$, the derivatives $\Theta \fbo^{(k)}$ are well understood. Here the map \eqref{intro-first-unstable-derivative} giving rise to $\Theta\fbo^{(1)}$ agrees with the loop-suspension map
\begin{equation}\label{intro-suspension-map}
S^d=\OO(d+1)/\OO(d)\lra \Omega\OO(d+2)/\OO(d+1)=\Omega S^{d+1},
\end{equation}
so $\Theta\fbo^{(1)}$ is the sphere spectrum $\bfS$. Moreover, from the metastable EHP-description of the fibre of \eqref{intro-suspension-map} one sees $\Theta \fbo^{(2)}\simeq \bfS^{-1}$. Moreover, the $\oO(1)$-action on $\bfS$ can be seen to be trivial, and the $\OO(2)$-action on $\bfS^{-1}$ to be rationally trivial. Based on this, it is an instructive exercise to compute the layers of the tower of $\fbo$ evaluated at $\bfR^d$ after rationalisation (fibrewise over $B\pi_1(-)$, indicated by a $(-)_\bfQ$-subscript) as:
\begin{equation}\label{equ:tower-o}
\begin{tikzcd}[ampersand replacement=\&, column sep=2cm, row sep=0.5cm, /tikz/column 3/.append style={anchor=base west}]
\&T_2\BO(d)_\bfQ\dar\&[-1.5cm]\lar \prod_{i=\lceil d/2\rceil}^{
\infty}K(\bfQ,4i-1)\\
\&T_1\BO(d)_\bfQ \dar \&\lar \smash{\begin{cases}K(\bfQ,d)&d\text{ even}\\ *&d\text{ odd}\end{cases}}\\
\BO(d)_\bfQ \arrow[r ] 
\arrow[ru,swap, pos=0.69] \arrow[ruu,]
\& \BO_\bfQ.\&
\end{tikzcd}
\end{equation}
Vaguely speaking, the zeroth layer creates the Pontryagin classes, the first layer creates the Euler class in even dimensions, the second layer imposes the relations \ref{P1} and \ref{P2} (the relation \ref{P3} need not be imposed as $E$ is never created), and the higher layers are rationally trivial. In particular, the map to the second stage $\BO(d)\ra T_2\BO(d)$ is a rational equivalence.
\begin{rem}Arone \cite{Arone} gave a complete description of $\Theta\fbo^{(k)}$ for all $k$, as $\OO(k)$-spectra.
\end{rem}

We now turn from the tower of $\fbo=\BO(-)$ to that of its topological cousin $\fbt=\BTOP(-)$.

\subsubsection{The first derivative of $\BTOP(-)$}\label{intro-first-derivative}As mentioned in \circled{3}, the map between the zeroth stages $\BO\ra \BTOP$ of the towers of $\fbo$ and $\fbt$ is a rational equivalence. Waldhausen's work on concordance theory  \cite[Thm 2 Add.\,(4)]{WaldhausenManifold} shows that the spectrum $\Theta\fbt^{(1)}$ defined by the topological incarnation of the loop-suspension map \eqref{intro-suspension-map}
\begin{equation}\label{intro-topological-loop-suspension}\TOP(d+1)/\TOP(d)\lra \Omega \TOP(d+2)/\TOP(d+1)\end{equation}
agrees with his $\mathrm{A}(*)=K(\bfS)$. Combined with the linearisation $K(\bfS)\ra K(\bfZ)$ this yields the map $k : \TOP(d+1)/\TOP(d)\ra \Omega^\infty(S^d\wedge K(\bfZ))$ mentioned before the statement of \cref{bigthm:top-d}. Moreover, the map $\bfS\simeq\Theta \fbo^{(1)}\ra \Theta \fbt^{(1)}\simeq K(\bfS)$ induced by the inclusion $\OO(-)\subset \TOP(-)$ is given by the unit, so $\Theta \fto^{(1)}\simeq \Omega K(\bfS)/\bfS=\Omega\Wh^{\Diff}(*)$ since taking derivatives preserves fibre sequences. Furthermore, using smoothing theory and a bit more of the theory of orthogonal calculus, one sees that the first stage $\TOP(d)/\oO(d)\ra T_1(\TOP(d)/\oO(d))$ of the tower for $\fto=\TOP(-)/\oO(-)$ is $(\phi(D^d)+d+1)$-connected where $\phi(D^d)$ is the stable range from \cref{intro-pseudoisotopy}. Taking fibres over the zeroth stage $\TOP/\oO$, using $\BlockDiff_\partial(D^d)/\Diff_\partial(D^d)\simeq \Omega^d\hofib\big(\TOP(d)/\oO(d)\ra \TOP/\oO\big)$, and looping $d$ times, this yields  the $(\phi(D^d)+1)$-connected map
\[
\BDiff_\partial(D^d)\simeq_\bfQ \BlockDiff_\partial(D^d)/\Diff_\partial(D^d) \lra \Omega^{\infty+d}((S^{1\cdot \bfR^d})\wedge \Omega\Wh^{\Diff}(*))_{h\oO(1)}),
\]
featuring in the work of Weiss--Williams \cite{WW1}. Using that the $\OO(1)$-action on $\pi_*(\Omega\Wh^{\Diff}(*))_\bfQ$ is by multiplication with $-1$, this recovers Farrell--Hsiang's result from \circled{1}.

\begin{rem}\label{intro-first-derivative-spectrum}We reproduce Waldhausen's identification $\Theta \fbt^{(1)}\simeq A(*)$ rationally from our point of view (and hence also $\Theta \fto^{(1)}\simeq \Omega\Wh^\Diff(*)$), including the $\OO(1)$-action (see \cref{lem:FirstDerivatives})\end{rem}

\begin{rem}From \cref{bigthm:top-d} one can deduce that in high dimensions the topological loop-suspension map \eqref{intro-topological-loop-suspension} is rationally $(2n-2)$-connected and injective on $\pi_{2n-2}(-)_\bfQ$. Rationally this proves what Burghelea--Lashof considered ``natural to conjecture'' \cite[p.450]{BurgheleaLashofStability}.
\end{rem}

\subsubsection{The second derivative of $\BTOP(-)$}Unlike for orthogonal groups, the second derivative $\Theta\fbt^{(2)}$ of $\fbt$ has proved to be difficult to access (see e.g.\,\cite[p.\,3745, 3748]{WeissOrthogonal}, \cite[p.\,228]{WeissReview}, \cite[p.\,35]{AroneChing}). Prior to this work, the one piece of nontrivial information on $\Theta\fbt^{(2)}$ available resulted from Weiss' work explained in \circled{3} which in a sense is all about $\Theta\fbt^{(2)}$. This slogan is already indicated by the above discussion of the rational Taylor tower for $\fbo$ which suggests that the (non)validity of \ref{P1} and \ref{P2} in the cohomology of $\BTOP(d)$ ought to be closely related to $\Theta\fbt^{(2)}$. Reis--Weiss made this precise in \cite{ReisWeiss} by showing that the map of $\oO(2)$-spectra $ \bfS^{-1}\simeq \Theta\fbo^{(2)}\ra \Theta \fbt^{(2)}$ induced by the inclusion has a rational left homotopy inverse if and only if \ref{P2} holds in $\BSTOP(2n)$ for all $n$, which we know is not the case by Weiss' theorem. 

\medskip

In the final part of this work, we completely determine the second derivative $\Theta\fbt^{(2)}$ rationally.

\begin{bigthm}\label{bigthm:second-derivative}There is a rational equivalence of $\oO(2)$-spectra
\[\Theta\fbt^{(2)}\simeq_\bfQ \Map(S^1_+,\bfS^{-1})\]
where $\oO(2)$ acts on the right-hand side via its canonical action on $S^1$. In other words, $\Theta\fbt^{(2)}$ is rationally equivalent to the coinduction along $\oO(1)\subset\oO(2)$ of $\bfS^{-1}$ viewed as trivial $\oO(1)$-spectrum.
\end{bigthm}
 
Combining \cref{bigthm:second-derivative} with the equivalence $\Theta\fbt^{(1)}\simeq \mathrm{A}(*) \simeq_\bfQ K(\bfZ)$ explained in \cref{intro-first-derivative}, the first two layers of the rationalised tower for $\BTOP(d)$ can be computed as follows: 

\begin{equation}\label{equ:tower-top}
\begin{tikzcd}[ampersand replacement=\&, column sep=2cm, row sep=0.6cm,, /tikz/column 3/.append style={anchor=base west}]
\&T_2\BTOP(d)_\bfQ\dar\&[-1.5cm]\lar \smash{\begin{cases}*&d\text{ even}\\ K(\bfQ,2d-2)&d\text{ odd}\end{cases}}\\
\&T_1\BTOP(d)_\bfQ \dar \&\lar \smash{\begin{cases}K(\bfQ,d)&d\text{ even}\\ \Omega^{\infty-d}(K(\bfZ)/\bfS)_\bfQ &d\text{ odd}\end{cases}}\\
\BTOP(d)_\bfQ \arrow[r] 
\arrow[ru] \arrow[ruu]
\& \BTOP_\bfQ.\&
\end{tikzcd}
\end{equation}
Vaguely speaking---as for $\BO(d)$---the zeroth layer creates the Pontryagin classes and the first layer creates the Euler class but---unlike for $\BO(d)$---the first layer also creates $K$-theory classes and instead of \emph{imposing} \ref{P1} and \ref{P2} the second layer \emph{creates} the class $E$ described in \cref{intro-E-sec}; this difference in behaviour of the second layer explains Weiss' theorem from \circled{3}. 

From the point of view of the tower \eqref{equ:tower-top}, \cref{bigthm:top-d} says equivalently that the second Taylor approximation $\BTOP(d)\ra T_2\BTOP(d)$ is rationally $(5\lfloor\tfrac{d}{2}\rfloor-5)$-connected for $d\ge6$, and that the second stage splits rationally into its layers when passing to the orientation preserving variant $\BSTOP(d)$. To fit \cref{bigthm:diff-discs} into this picture, one uses that with respect to the equivalence in \cref{bigthm:second-derivative}, the natural map $\bfS^{-1}\simeq \Theta\fbo^{(2)}\ra  \Theta\fbt^{(2)}\simeq \Map(S^1_+,\bfS^{-1})$ is given by the inclusion of the constant loops, which results in an equivalence of rational $\OO(2)$-spectra \[\Theta\fto^{(2)}\simeq_\bfQ \bfS^{-3},\] where the $\oO(2)$-action on $\bfS^{-3}$ is via the determinant. Together with $\Theta\fbt^{(1)}\simeq_\bfQ \Omega K(\bfS)/\bfS$ this allows for a computation of the first two layers of the Taylor tower for $\fto=\TOP(-)/\OO(-)$ which  reproduces \cref{bigthm:diff-discs} via Morlet's equivalence \eqref{equ:intro-Morlet}: the $K$-theory classes are created by the first layer, the Pontryagin--Weiss classes by the second, and the rational connectivity of $\BDiff_\partial(D^d)\simeq \Omega^d_0\TOP(d)/\oO(d)\ra \Omega_0^dT_2\TOP(d)/\oO(d)$ follows from that of $\BO(d)\ra T_2\BO(d)$ and $\BTOP(d)\ra T_2\BTOP(d)$ discussed above.

\subsection{Structure of the proof of \cref{bigthm:diff-discs}}\label{intro-proof-overview}
To provide some guidance, we give an overview of our strategy to prove the first main result, \cref{bigthm:diff-discs}, on which all other results rely.
\smallskip

\begin{center}\textit{Some steps may be of independent interest. We highlight them with Roman numerals.}\end{center}

\smallskip

\noindent Our approach is not to directly calculate the rational homotopy groups of $B\Diff_\partial(D^{2n+1})$ and $B\Diff_\partial(D^{2n})$, but rather calculate those of $B\oC(D^{2n})$ and $B\Diff_\partial(D^{2n+1})$ and then use the fibration relating these three spaces. To do this we use certain ``delooped Weiss fibre sequences''
\vspace{-0.1cm}
\begin{equation}\label{intro-weiss-fs}
\def\arraystretch{1.5}
\begin{array}{l@{\hskip 0.1cm} c@{\hskip 0.1cm} c@{\hskip 0.1cm}  c@{\hskip 0.1cm} l@{\hskip 0.1cm}}
\BDiff^{\fr}_{D^{2n}}(V_{g})_{\ell_V}  &\lra &\BEmb^{\fr, \cong}_{\nicefrac{1}{2}D^{2n}}(V_g,W_{g,1})_{\ell_V}&\lra&B(\BC(D^{2n}))\\
\BDiff^{\fr}_\partial(W_{g,1})_{\ell_W} &\lra &\BEmb^{\fr, \cong}_{\half\partial}(W_{g,1})_{\ell_W}&\lra&B(\BDiff^{\fr}_\partial(D^{2n})_B)
\end{array}
\end{equation}
Without explaining these in full detail (see \cref{sec:weiss-fs}),  we have 
\[V_g \coloneq \natural^g (D^{n+1}\times S^n)\quad\text{ and }\quad W_{g,1} \coloneq \partial V_g \backslash \interior(D^{2n}),\] the fibres in \eqref{intro-weiss-fs} are certain path-components of classifying spaces for framed smooth manifold bundles subject to certain boundary conditions, $\BDiff^{\fr}_\partial(D^{2n})_B$ differs from $B\Diff_\partial(D^{2n})$ (up to issues with path-components) by $\Omega^{2n}\OO(2n)$ whose rational homotopy groups are known, and the total spaces are classifying spaces of certain group-like monoids of framed self-embeddings of the manifolds $W_{g,1}$ and $V_g$, again subject to certain boundary conditions. Results of Botvinnik--Perlmutter and Galatius--Randal-Williams on parametrised surgery show that for $g \gg 0$ the fibres are rationally acyclic (see \cref{sec:stable-cohomology}), so the right-hand maps in \eqref{intro-weiss-fs} become rational homology isomorphisms for $g\gg 0$. As the base spaces are loop spaces, their rational homotopy groups can be extracted from the rational homology, so the problem becomes to determine the rational homology of the total spaces for $g \gg 0$ in degrees $*\lesssim 3n$. To do this, we first compute the rational homotopy groups of these spaces, as modules over the fundamental groups. In principle one may attempt this using embedding calculus, but this approach leads to various ambiguities that seem hard to resolve. We rather employ a different strategy, taking advantage of the simple handle-structures of the manifolds $V_g$ and $W_{g,1}$:

\begin{enumerate}[label={(\Roman*)},leftmargin=0.8cm]
\item instead of applying the machinery of embedding calculus to the total spaces of \eqref{intro-weiss-fs}, we take a more ``hands-on'' approach to access such spaces of self-embeddings (see \cref{sec:Disj}).
\end{enumerate}
In a bit more detail: to compute the rational homotopy groups of the total spaces of \eqref{intro-weiss-fs}, we  first replace framings by stable framings (which has a negligible effect) and replace the space $\BEmb^{\sfr, \cong}_{\nicefrac{1}{2}D^{2n}}(V_g,W_{g,1})_{\ell_V}$ by a variant $\BEmb^{\sfr, \cong}_{\nicefrac{1}{2}\partial}(V_g)_{\ell_V}$ involving a different boundary condition; these are related (up to issues with path-components) by a fibre sequence 
\[\BEmb^{\sfr, \cong}_{\nicefrac{1}{2}\partial}(V_g)_{\ell_V} \lra \BEmb^{\sfr, \cong}_{\nicefrac{1}{2}D^{2n}}(V_g,W_{g,1})_{\ell_V}\lra \BEmb^{\sfr, \cong}_{\half\partial}(W_{g,1})_{\ell_W}.\]
Then we calculate the rational homotopy groups of the homotopy fibres of certain maps
\begin{equation}\label{equ:intro-map-to-haut}\BEmb^{\sfr,\cong}_{\half\partial}(W_{g,1}) \lra \BhAut_\partial(W_{g,1}) \quad\text{and}\quad
\BEmb^{\sfr,\cong}_{\half\partial}(V_{g}) \lra \BhAut_\partial(V_g)\end{equation}
to the delooped spaces of homotopy automorphisms fixing the full boundary by an induction over handles. This step in particular relies on Goodwillie's work on multiple disjunction and calculations of Arone, Fresse, Turchin, and Willwacher on the rational homotopy groups of spaces of long embeddings. It also involves

\begin{enumerate}[label={(\Roman*)},leftmargin=0.8cm,resume]
\item\label{enum:intro-bonus-ii} a general method to compute rational homotopy groups of spaces of block embeddings of a point into simply-connected high-dimensional manifolds (see \cref{sec:block-embeddings}).
\end{enumerate}
The handle-induction breaks the symmetry of the manifolds: it gives the rational homotopy groups of the fibres in \eqref{equ:intro-map-to-haut} only as vector spaces instead of as modules over (a variant of) the mapping class groups. In the case of $W_{g,1}$ this is irrelevant as the answer turns out to be zero, but in the case of $V_g$ it does matter. There is however an obvious guess of what the equivariant answer ought to be, and we confirm this guess based on
\begin{enumerate}[label={(\Roman*)},leftmargin=0.8cm,resume]
\item a new perspective on Watanabe's parametrised form of the Goussarov--Habiro theory of claspers, offered as part of \cref{sec:clasper}. In particular, we give an axiomatic description of the family of genus $3$ handlebodies central to Watanabe's work in odd dimensions, which he described explicitly in terms of a Borromean ring construction.
\end{enumerate}
Combining the resulting equivariant computation with calculations of the rational homotopy groups of the targets of \eqref{equ:intro-map-to-haut} due to Berglund--Madsen and Krannich eventually yields the required computation of the higher \emph{homotopy} groups of the total spaces of \eqref{intro-weiss-fs}, as modules over their fundamental groups. To calculate the rational \emph{homology} groups, in \cref{sec:HEmbSpace}, we first determine the Lie algebra structure on the rational homotopy groups up to some ambiguities, then calculate the rational homology of the universal covers, and finally analyse the Serre spectral sequences of the universal covers and show that the ambiguities do not matter. This final step in particular involves a number of computations of stable cohomology groups of certain arithmetic groups with coefficients in various algebraic representations.

\subsection*{Acknowledgements}
We would like to thank S{\o}ren Galatius, Erik Lindell, Samuel Mu{\~n}oz-Echaniz, Arthur Souli{\'e}, and the referee for their comments on an earlier version of this paper. We were partially supported by the ERC under the European Union’s Horizon 2020 research and innovation programme (grant agreement No.\ 756444), and by a Philip Leverhulme Prize from the Leverhulme Trust. MK was also supported by the European Union through an ERC grant (MaFC, 101221003).

\section{Automorphisms, self-embeddings, and mapping class groups}\label{sec:manifold-preliminaries}
As outlined in \cref{intro-proof-overview}, the overall strategy of proof of \cref{bigthm:diff-discs} is to compare the homotopy type of $\Diff_\partial(D^d)$ to that of different types of automorphism and embedding spaces of the auxiliary manifolds
\begin{equation}\label{equ:the-stars}
V_g\coloneq \natural^g (D^{n+1}\times S^n),\quad W_g\coloneq \partial V_g\cong\sharp^g(S^n\times S^n),\quad\text{and}\quad W_{g,1}\coloneq W_g\backslash \interior(D^{2n}).
\end{equation}
where $D^{2n}\subset W_g$ is a fixed embedded disc. The discs fit into this as the case $g=0$: we have
\[V_0=D^{2n+1},\quad W_0= S^{2n},\quad\text{and}\quad W_{0,1}=D^{2n}.\]
This section serves to define the various automorphism spaces we shall we use, and to discuss some of their properties. The most important result in this section is \cref{cor:reduction-to-self-embeddings} which enables us to access the rational homotopy type of diffeomorphisms of discs by studying certain spaces of self-embeddings of $W_{g,1}$ and $V_g$ for large $g$. 

\begin{conv}Unless said otherwise, all manifolds considered are assumed to be smooth, as are all embeddings between them.
\end{conv}

\subsection{(Block) diffeomorphisms, the derivative map, and tangential structures}\label{sec:block-diffeos}For a compact $d$-manifold $M$ and subspaces $K,L\subset M$, we denote by $\Diff_{K}(M,L)\subset \Diff(M)$ the subgroup of the group of diffeomorphisms $\Diff(M)$ of those diffeomorphisms that fix $L$ setwise and a neighbourhood of $K\subset M$ pointwise, equipped with the smooth topology. We use the following abbreviations.
\begin{enumerate}
\item If $K$ or $L$ is empty, we omit it from the notation.
\item If $K=\partial M$, we use $\partial$ as subscript.
\item If $K=\partial M\backslash \interior(D^{d-1})$ for an embedded disc $D^{d-1}\subset \partial M$, we use $\half \partial$ as subscript.
\end{enumerate}
Similarly to $\Diff_K(M,L)$, we write $\hAut_K(M,L)\subset \hAut(M)$ and $\BlockDiff_{K}(M,L)\subset \BlockDiff(M)$ for the corresponding subspaces of the monoid of homotopy automorphisms in the compact open topology and of the realisation of the semi-simplicial group of block diffeomorphisms. Recall that these spaces are related by a preferred factorisation
\begin{equation}\label{equ:hAut-factorisation}
\Diff_K(M,L)\ra \BlockDiff_{K}(M,L) \ra \hAut_{K}(M,L)
\end{equation}
of the map that assigns a diffeomorphism its underlying homotopy equivalence. By definition, the first map is surjective on path components. As explained in \cite[Section\,4]{BerglundMadsen} (see \cite[Section\,1]{KrannichConc1} for an alternative point-set model), the second map factors further as
\begin{equation}\label{equ:block-bundle-factorisation}
\BlockDiff_{K}(M,L) \ra \hAut_{K}(\tau_M^s,L)\lra \hAut_{K}(M,L),
\end{equation}
where $\hAut_{K}(\tau_M^s,L)$ is the topological monoid of bundle maps of the stable tangent bundle $\tau^s_M$ of $M$ that are the identity on $\tau_M^s|_K$ and preserve $\tau_M^s|_L$, equipped with the compact open topology. The first map in the composition is induced by taking derivatives and the second map by forgetting bundle data (see the above references for precise definitions). We denote by
\[\hAut^\cong_{K}(\tau_M^s,L)\subset\hAut_{K}(\tau_M^s,L)\quad\text{and}\quad \hAut^\cong_{K}(M,L)\subset \hAut_{K}(M,L)\]
the components in the image of the map from $\BlockDiff_{K}(M,L)$, or equivalently from $\Diff_{K}(M,L)$.

\subsubsection{Tangential structures}\label{sec:tangential-structures}
A tangential structure is a map $\theta\colon B\ra\BO(d)$ from some space $B$ to the classifying space $\BO(d)$ of $d$-dimensional vector bundles. Given a bundle map $\ell_K\colon \tau_M|_K\ra \theta^*\gamma_d$ where $\tau_M$ is the tangent bundle of $M$ and $\gamma_d\ra \BO(d)$ the universal bundle, a \emph{$\theta$-structure} on $M$ is a bundle map $\ell\colon \tau_M\ra \theta^*\gamma_d$ extending $\ell_K$. We call $\ell_K$ a \emph{boundary condition}, since $K$ agrees with the boundary of $M$ in many of the situations we have have in mind. The set of $\theta$-structures extending $\ell_K$ forms a space $\Bun_K(\tau_M,\theta^*\gamma_d;\ell_K)$ with the compact open topology and it is acted upon by the diffeomorphism group $\Diff_{K}(M,L)$ via precomposition with the derivative. We denote the resulting homotopy quotient by
\[
\BDiff^{\theta}_K(M,L;\ell_K)\coloneq \Bun_K(\tau_M,\theta^*\gamma_d;\ell_K)\dslash \Diff_K(M,L).
\]
Recall from \cite[Section 1]{KrannichConc1} that if $\theta$ is \emph{stable}, i.e.\,pulled back from a map $\theta'\colon B'\ra\BO$ along the canonical map $\BO(d)\ra \BO$, then there is an action of $\BlockDiff_K(M,L)$ and $\hAut^\cong_K(\tau_M^s,L)$ on $\Bun_K(\tau_M,\theta^*\gamma_d;\ell_K)$ in the $A_\infty$-sense (i.e.\ an actual action on a space which comes equipped with a preferred equivalence to $\Bun_K(\tau_M,\theta^*\gamma_d;\ell_K)$) whose homotopy quotients 
\begin{align*}
\BlockBDiff_K^\theta(M,L;\ell_K)&\coloneq \Bun_K(\tau_M,\theta^*\gamma_d;\ell_K)\dslash \BlockDiff_K(M,L)\\ 
\BhAut_K^{\theta,\cong}(\tau_M^s,L;\ell_K)&\coloneq\Bun_K(\tau_M,\theta^*\gamma_d;\ell_K)\dslash \hAut^\cong_K(\tau_M^s,L)
\end{align*}
fit into a diagram of homotopy cartesian squares 
\begin{equation}\label{equ:comparison-tangential}
\begin{tikzcd}
\BDiff^\theta_K(M,L;\ell_K)\rar\dar&\BlockBDiff^\theta_K(M,L;\ell_K)\dar\rar&\BhAut_K^{\theta,\cong}(\tau_M^s,L;\ell_K)\dar\\
\BDiff_K(M,L)\rar&\BlockBDiff_K(M,L)\rar&\BhAut^\cong_K(\tau^s_M,L)
\end{tikzcd}
\end{equation}
induced by the maps in \eqref{equ:hAut-factorisation} and \eqref{equ:block-bundle-factorisation}. From the long exact sequence in homotopy groups of the fibration associated to a homotopy quotient, we see that sets of path components of the three spaces forming the upper row all agree with the orbits of the $\pi_0(\Diff_K(M,L))$-action on $\pi_0(\Bun_K(\tau_M,\theta^*\gamma_d;\ell_K))$. We denote the component corresponding to a fixed tangential structure $\ell\in \Bun_K(\tau_M,\theta^*\gamma_d;\ell_K)$ by a subscript $(-)_\ell$, e.g.\,$\BDiff^\theta_K(M,L;\ell_K)_\ell\subset \BDiff^\theta_K(M,L;\ell_K)$ in the case of diffeomorphisms. We also write
\[
\hAut^{\cong,\ell}_K(M,L)\subset \hAut^\cong_K(M,L)
\]
for the collection of components in the image of the map
\[
\pi_1(\BlockBDiff^{\theta}_{K}(M,L;\ell_K);\ell)\lra \pi_1(\BhAut^{\cong}_{K}(M,L))\cong \pi_0\hAut^{\cong}_{K}(M,L).
\]

\begin{ex}\label{ex:tangential-structures}The tangential structures relevant to this work are
\begin{enumerate}
\item\label{item:no-tangential-structure} $\theta=\id_{\BO(d)}$ in which case $\Bun_K(\tau_M,\theta^*\gamma_d;\ell_K)$ is contractible by the universality of $\gamma_d$, so in this case the vertical maps in \eqref{equ:comparison-tangential} are equivalences,
\item \label{item:framings-tangential-structure}the canonical map $\fr\colon \EO(d)\ra \BO(d)$; this encodes framings, 
\item\label{item:oneframings-tangential-structure} the map $\onefr\colon \oO(d+1)/\oO(d)\ra \BO(d)$ from the homotopy fibre of the map $\BO(d)\ra\BO(d+1)$; this encodes framings of the once stabilised bundle, 
\item \label{item:pm-framings-tangential-structure}the canonical map $\pm\fr\colon \BO(1)\ra \BO(d)$; this encodes ``unoriented framings'', and 
\item\label{item:stable-tangential-structure} the map $\sfr\colon \oO/\oO(d)\ra \BO(d)$ from the homotopy fibre of the map $\BO(d)\ra\BO$; this encodes stable framings.
\end{enumerate}
Among these, \ref{item:no-tangential-structure} and \ref{item:stable-tangential-structure} are stable tangential structures whereas \ref{item:framings-tangential-structure},  \ref{item:oneframings-tangential-structure}, and \ref{item:pm-framings-tangential-structure} are not.
\end{ex}

\subsubsection{A rational model for block diffeomorphisms}
In \cite[Corollary 4.21]{BerglundMadsen}, a variant of the bottom right horizontal map in \eqref{equ:comparison-tangential} was shown to be rational equivalence on universal covers for $K=\partial M\cong S^{d-1}$ with $d\ge5$ and simply-connected $M$. We shall need a version of this result, established in \cite{KrannichConc1}, for triads satisfying a $\pi$-$\pi$-condition. To state it, recall that a \emph{manifold triad} $(M;\partial_0M,\partial_1M)$ is a compact $d$-manifold $M$ together with a decomposition of its boundary $\partial M=\partial_0 M\cup \partial_1M$ into (possibly empty) submanifolds with $\partial(\partial_0M)=\partial_0M\cap\partial_1M=\partial(\partial_1M)$. It satisfies the \emph{$\pi$-$\pi$-condition} if $\partial_1M\subset M$ induces an equivalence on fundamental groupoids. The following is part of Theorem 2.2 and Corollary 2.4 of \cite{KrannichConc1}.

\begin{thm}[Krannich]\label{thm:blockdiff-identification}
For a manifold triad $(M;\partial_0M, \partial_1M)$ of dimension $d\ge6$ that satisfies the $\pi$-$\pi$-condition, the homotopy fibre of the map
\[
\BlockBDiff_{\partial_0}(M)\lra \BhAut^{\cong}_{\partial_0}(\tau^s_M,\partial_1M)
\]
is nilpotent and has finite homotopy groups. Moreover, if $\ell$ is a stable framing of $M$ and $\ell_{\partial_0}$ is its restriction to $\partial_0M$, then the same conclusion holds for the map
\[
\BlockBDiff^{\sfr}_{\partial_0}(M;\ell_{\partial_0})_\ell\lra \BhAut^{\cong,\ell}_{\partial_0}(M,\partial_1M).
\]
\end{thm}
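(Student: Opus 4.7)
The overall strategy is surgery-theoretic: identify the homotopy fibre of the comparison map with a structure space in the sense of Wall, and then invoke the $\pi$-$\pi$-theorem for triads to control its homotopy type.

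The first step is to reinterpret the map in surgery-theoretic terms. Following the framework of Quinn, Burghelea--Lashof--Rothenberg, and Weiss--Williams, the passage from $\BlockBDiff_{\partial_0}(M)$ to $\BhAut^{\cong}_{\partial_0}(\tau_M^s, \partial_1 M)$ is the block-theoretic Casson--Sullivan comparison between block smoothings and stably tangential homotopy equivalences rel $\partial_0 M$. The choice of a \emph{stable tangential} target is crucial here: the normal invariant part of the classical surgery exact sequence (contributed by $\Map(M, G/O)$-type terms) is absorbed into the target. Consequently, the homotopy fibre can be identified with a block-theoretic structure space controlled purely by the algebraic $L$-theory assembly map for $(M, \partial_0 M)$.

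Applying Wall's $\pi$-$\pi$-theorem to this structure space, the hypothesis that $\partial_1 M \hookrightarrow M$ induces an equivalence on fundamental groupoids ensures that the $L$-theoretic obstructions for surgery rel $\partial_0 M$ preserving $\partial_1 M$ are trivial, as the relevant relative $L$-groups vanish. Combined with the absorption of the normal invariant data into the target, this forces the structure space to have finite homotopy groups. Nilpotency is automatic from the fact that surgery structure spaces are built as homotopy fibres of maps between infinite loop spaces (hence have simple higher homotopy) and any $\pi_1$-action factors through a finite quotient. For the framed version, the same argument applies verbatim: the stable framing $\ell$ is visible on both sides of the comparison, and passage to the component $(-)_\ell$ simply restricts to a union of path components in a fibration whose homotopy groups we already control.

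The main obstacle will be the precise surgery-theoretic setup on manifold triads: cleanly aligning the boundary conditions of block diffeomorphisms (fixing $\partial_0 M$ pointwise and preserving $\partial_1 M$ setwise) with those of a relative tangential structure space, and deploying the $\pi$-$\pi$-condition in a manner compatible with this decomposition of the boundary. Once this point-set setup is firmly in place, the identification with algebraic $L$-theory and the finiteness conclusion follow from classical surgery theory.
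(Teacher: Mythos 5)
Your overall strategy---identify the homotopy fibre surgery-theoretically as a (block) structure space with tangential data, and invoke Wall's $\pi$-$\pi$-theorem to kill the relevant $L$-groups---is essentially the approach of the cited result (it follows Berglund--Madsen's Corollary 4.21, upgraded by Krannich to the triad/$\pi$-$\pi$ setting). So the route is right. However, the step from ``normal invariants absorbed'' plus ``$L$-theory vanishes'' to ``finite homotopy groups'' is a genuine gap, because as you have set things up that combination would conclude the fibre is \emph{contractible}, which is not what is being claimed and is in fact false.

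The point you are missing is what the normal-invariant term actually becomes after passing to the tangential target. Writing $F_{AB}$, $F_{AC}$, $F_{BC}$ for the homotopy fibres of $\BlockBDiff_{\partial_0}(M)\to\BhAut^\cong_{\partial_0}(\tau^s_M,\partial_1M)$, of the composite to $\BhAut^\cong_{\partial_0}(M,\partial_1M)$, and of $\BhAut^\cong_{\partial_0}(\tau^s_M,\partial_1M)\to\BhAut^\cong_{\partial_0}(M,\partial_1M)$, one has a fibration $F_{AB}\to F_{AC}\to F_{BC}$. Block surgery rel $\partial_0M$ plus the $\pi$-$\pi$-condition gives $F_{AC}\simeq \Map_{\partial_0}(M/\partial_0M,\,G/\OO)$ (the $L$-theory term vanishes), while $F_{BC}\simeq\Map_{\partial_0}(M/\partial_0M,\,\BO)$ classifies the stable bundle data. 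The induced map comes from $G/\OO\to\BO$, whose fibre is $\G$, so after taking fibres one is left with $F_{AB}\simeq\Map_{\partial_0}(M/\partial_0M,\,\G)$ (on appropriate components). So the $\Map(-,G/\OO)$-term is not absorbed outright; its $\BO$-part is cancelled against the target, and what remains is the $\G$-based normal invariant space. Finiteness then comes from the fact that $\pi_0(\G)\cong\bfZ/2$ and $\pi_i(\G)\cong\pi_i^s$ for $i\ge1$ are finite (stable homotopy groups of spheres), combined with $M/\partial_0M$ being a finite complex. Nilpotence likewise does not follow from an ``infinite loop space'' argument (indeed $G/\OO$ with the Whitney-sum $H$-structure is not one); the cleanest route is that $\G$ is a simple space (it is an $H$-space), and mapping spaces from finite complexes into nilpotent spaces are nilpotent. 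These identifications are what your proof needs to make explicit; the framed variant then follows by comparing against the unframed case, but this too deserves a word rather than ``verbatim.''
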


\subsection{Diffeomorphisms and self-embeddings of handlebodies}\label{sec:diff-and-self-emb}
We now turn to the specific manifolds \eqref{equ:the-stars}. Restricting diffeomorphisms to the moving part of the boundary induces a fibre sequence of the form
\begin{equation}\label{equ:diff-restriction-sequence}
\Diff_\partial(V_g)\lra \Diff_{D^{2n}}(V_g)\lra \Diff_\partial(W_{g,1}),
\end{equation}
which we shall compare to a certain fibre sequence of self-embedding spaces via a diagram \begin{equation}\label{equ:diff-emb-comparison}
\begin{tikzcd}
\Diff_\partial(V_g)\rar\dar& \Diff_{D^{2n}}(V_g)\rar\dar& \Diff_\partial(W_{g,1})\dar\\
\Emb_{\half \partial}(V_g)\rar&\Emb_{\half D^{2n}}(V_g,W_{g,1})\rar&\Emb_{\half \partial}(W_{g,1}).
\end{tikzcd}
\end{equation}
To define the bottom row, we decompose the disc $D^{2n}=D^{2n}_+\cup D^{2n}_-\subset \partial V_g$ into two half-discs intersecting in a $(2n-1)$-disc and define the space $\Emb_{\half D^{2n}}(V_g,W_{g,1})$ as the space of self-embeddings of $V_g$ that restrict to a self-embedding of $W_{g,1}\subset \partial V_g$ and agree with the identity on a neighbourhood of $D^{2n}_+$ (but may send $\interior(D^{2n}_-)\subset \partial V_g$ to the interior). Restricting such embeddings to $W_{g,1}$ gives a map to the space $\Emb_{\half \partial}(W_{g,1})$ of self-embeddings of $W_{g,1}$ that agree with the identity on a neighbourhood of $W_{g,1}\cap D^{2n}_+\cong D^{2n-1}$ (but may send $\interior(W_{g,1}\cap D^{2n}_-)\cong\interior(D^{2n-1})\subset \partial W_{g,1}$ to the interior). By isotopy extension, the homotopy fibre of this map agrees with the space $\Emb_{\half \partial}(V_g)$ of self-embeddings of $V_g$ which agree with the identity on a neighbourhood of $W_{g,1}\cup D^{2n}_+=\partial V_g\backslash \interior(D^{2n}_-)$ (but may send $\interior(D^{2n}_-)\subset \partial V_g$ to the interior).

We denote the components in the image of the two right vertical maps in \eqref{equ:diff-emb-comparison} by
\[\Emb^\cong_{\half D^{2n}}(V_g,W_{g,1})\subset \Emb_{\half D^{2n}}(V_g,W_{g,1})\quad\text{and}\quad \Emb^{\cong}_{\half \partial}(W_{g,1})\subset \Emb_{\half \partial}(W_{g,1}),\]
denote by
\begin{equation}\label{equ:restricted-components-ext-1}
\Diff^{\ext}_\partial(W_{g,1})\subset \Diff_\partial(W_{g,1})\quad\text{and} \quad \Emb^{\cong,\ext}_{\half \partial}(W_{g,1})\subset \Emb^\cong_{\half \partial}(W_{g,1})
\end{equation}
the union of those path components hit by the maps from $\Diff_{D^{2n}}(V_g)$, and denote by 
\begin{equation}\label{equ:restricted-components-ext-2}
\Emb^\cong_{\half \partial}(V_g)\subset \Emb_{\half \partial}(V_g)
\end{equation}
those components that map to $\Emb^\cong_{\half D^{2n}}(V_g,W_{g,1})\subset \Emb_{\half D^{2n}}(V_g,W_{g,1})$.

\begin{rem}\label{rem:components-are-nice}
It does not play a role for our arguments, but the collection of components $\Emb^\cong_{\half \partial}(V_g)\subset \Emb_{\half \partial}(V_g)$ can be shown to agree with that of image of $\Diff_\partial(V_g)\ra \Emb_{\half \partial}(V_g)$, as suggested by the notation. This follows from a diagram chase in the ladder of long exact sequences induced by \eqref{equ:diff-emb-comparison} once one checks that the surjection \[\pi_0\Diff^{\ext}(W_{g,1})\lra \pi_0\Emb^{\cong,\ext}_{\half \partial}(W_{g,1})\] is in fact an isomorphism. The latter  follows from showing that the map $\pi_0\Diff_{D^{2n}}(V_g)\ra\pi_0\Emb_{\half \partial}(W_{g,1})$ is injective which follows by comparing the extensions appearing in \cite[Lemma 4.3]{KR-WAlg} and \cite[Theorem 3.3]{KrannichConc1}. 
\end{rem}

Given a tangential structure $\theta\colon B\ra \BO(2n+1)$ and a $\theta$-structure $\ell\colon \tau_{V_g}\ra \theta^*\gamma_{2n+1}$ for $V_g$, we fix an identification $\tau_{V_g}|_{W_{g,1}}\cong \tau_{W_{g,1}}\oplus \varepsilon$ induced by a choice of an inwards pointing vector field where $\varepsilon$ is the trivial line bundle. Using this, we consider the maps 
\[\hspace{-0.1cm}
\begin{tikzcd}[column sep=0.2cm]
\Bun_{\partial}(\tau_{V_g},\theta^*\gamma_{2n+1};\ell_{\partial V_g})\rar\dar&\dar \Bun_{D^{2n}}(\tau_{V_g},\theta^*\gamma_{2n+1};\ell_{D^{2n}})\rar\dar& \Bun_\partial(\tau_{W_{g,1}}\oplus\varepsilon,\theta^*\gamma_{2n+1};\ell_{\partial W_{g,1}})\dar\\
\Bun_{\half\partial}(\tau_{V_g},\theta^*\gamma_{2n+1};\ell_{\half\partial V_g})\rar& \Bun_{\half D^{2n}}(\tau_{V_g},\theta^*\gamma_{2n+1};\ell_{\half D^{2n}})\rar& \Bun_{\half\partial}(\tau_{W_{g,1}}\oplus\varepsilon,\theta^*\gamma_{2n+1};\ell_{\half\partial W_{g,1}})
\end{tikzcd}
\]
of spaces of $\theta$-structures, where the horizontal maps are induced by restriction and are fibre sequences, and the vertical maps are induced by relaxing the boundary conditions, all of which obtained by restriction from $\ell$. Write $(1\text{-}\theta)\colon B'\ra \BO(2n)$ for the tangential structure given by the pullback of $\theta$ along $\BO(2n)\ra \BO(2n+1)$, so that a $(1\text{-}\theta)$-structure on a $2n$-dimensional vector bundle is simply a $\theta$-structure on its one-fold stabilisation. Then the bases of these fibrations are equivalent to the space of $(1\text{-}\theta)$-structures on $W_{g,1}$ relative to $\partial W_{g,1}$ and to $\half\partial W_{g,1}$ respectively. Both rows of \eqref{equ:diff-emb-comparison} act compatibly on this fibre sequence by precomposition with the derivative, so after passing to the subsets of components specified in \eqref{equ:restricted-components-ext-1} and \eqref{equ:restricted-components-ext-2}, we may form homotopy quotients to obtain a diagram of horizontal fibre sequences
\[
\begin{tikzcd}
\BDiff^{\theta}_{\partial}(V_g;\ell_{\partial})\rar\dar& \BDiff^{\theta}_{D^{2n}}(V_g;\ell_{\partial})\rar\dar& \BDiff^{1\text{-}\theta,\ext}_{\partial}(W_{g,1};\ell_{\nicefrac{1}{2}\partial})\dar\\
\BEmb^{\theta,\cong}_{\nicefrac{1}{2}\partial}(V_g;\ell_{\nicefrac{1}{2}\partial})\rar& \BEmb^{\theta,\cong}_{\nicefrac{1}{2}D^{2n}}(V_g,W_{g,1};\ell_{\nicefrac{1}{2}D^{2n}})\rar& \BEmb^{1\text{-}\theta,\cong,\ext}_{\nicefrac{1}{2}\partial}(W_{g,1};\ell_{\nicefrac{1}{2}\partial}),
\end{tikzcd}
\]
with fibres taken over the points induced by $\ell$.

\subsection{Weiss fibre sequences}\label{sec:weiss-fs}
For a tangential structure $\theta\colon B\ra\BO(2n)$ and a $\theta$-structure $\ell$ on $W_{g,1}$, there is a fibre sequence of the form
\begin{equation}\label{equ:weiss-fs-Wg}
\BDiff^\theta_\partial(D^{2n};\ell_{\partial_0})\lra \BDiff^\theta_\partial(W_{g,1};\ell_{\partial})\lra \BEmb^{\theta, \cong}_{\half\partial}(W_{g,1};\ell_{\half \partial}),
\end{equation}
where the boundary conditions are obtained by restricting $\ell$. The first map is induced by extension by the identity along the inclusion $D^{2n}\subset W_{g,1}$ and the second by the forgetful map $\Diff_\partial(W_{g,1})\ra\Emb^\cong_{\half\partial}(W_{g,1})$. For $\theta=\id$, this sequence appeared in \cite[Remark 2.1.3]{WeissDalian} and was shown in \cite[Theorem 4.17]{KupersFiniteness} to deloop once to the right with respect to the $E_{2n}$-structure on the fibre induced by boundary connected sum. The version with tangential structures $\theta$ is noted in \cite[Proposition 8.7]{KR-WAlg} and deloops in the same way. The same arguments establish fibre sequences
\begin{equation}\label{equ:weiss-fs-Vg}
\begin{gathered}
\BDiff^\Xi_\partial(D^{2n+1};\ell_{\partial_0})\lra \BDiff^\Xi_\partial(V_{g};\ell_{\partial})\lra \BEmb^{\Xi, \cong}_{\half\partial}(V_{g};\ell_{\half \partial})\quad\text{and}\\
\BDiff^\Xi_{D^{2n}}(D^{2n+1};\ell_{D^{2n}})\lra \BDiff^{\Xi}_{D^{2n}}(V_{g};\ell_{D^{2n}})\lra \BEmb^{\Xi,\cong}_{\nicefrac{1}{2}D^{2n}}(V_g,W_{g,1};\ell_{\nicefrac{1}{2}D^{2n}}),
\end{gathered}
\end{equation} 
for tangential structures $\Xi\colon B'\ra\BO(2n+1)$, which both deloop once to the right with respect to the $E_{2n+1} / E_{2n}$-structure on the fibre induced by boundary connected sum. Similar to \eqref{equ:weiss-fs-Wg}, the right-hand maps are induced by the comparison maps between diffeomorphisms and embeddings and the left-hand maps are induced by extension by the identity along a fixed embedding $D^{2n+1}\subset V_g$ that restricts to the chosen embedding $D^{2n}\subset W_{g,1}$ on a hemisphere. Note that forgetting tangential structures induces an equivalence
\begin{equation}\label{equ:no-tangential-structure-for-moving-boundary}
\BDiff^\Xi_{D^{2n}}(D^{2n+1};\ell_{D^{2n}})\xlra{\simeq} \BDiff_{D^{2n}}(D^{2n+1}),
\end{equation}
 since the homotopy fibre is equivalent to the space of maps $D^{2n+1}\ra B'$ together with a homotopy from its composition with $\Xi$ to a fixed classifier $D^{2n+1}\ra \BO(2n+1)$ of the tangent bundle with fixed behaviour on the hemisphere $D^{2n}\subset D^{2n+1}$. As the latter inclusion is an equivalence, this space is contractible.

In \cite[Section 4.4.3]{KupersFiniteness}, Kupers explains how to set up similar sequences for the corresponding groups of homeomorphisms (his discussion is phrased without tangential structures), resulting in a commutative diagram of fibre sequences
\[
\begin{tikzcd}
\BDiff_\partial(D^{2n})\rar\dar& \BDiff_\partial(W_{g,1})\rar\dar& \BEmb^{\cong}_{\half\partial}(W_{g,1})\dar\\
\BHomeo_\partial(D^{2n})\rar& \BHomeo_\partial(W_{g,1})\rar& B\mathrm{TopEmb}^{\cong}_{\half\partial}(W_{g,1})
\end{tikzcd}
\]
and similarly for the extensions of \eqref{equ:weiss-fs-Vg} with $\Xi=\id$. Here the bottom row is defined in the same way as the top row, but using spaces of homeomorphisms and locally flat topological embeddings instead. As the spaces
\[\BHomeo_\partial(D^{2n}),\quad \BHomeo_\partial(D^{2n+1}),\quad\text{and}\quad\BHomeo_{D^{2n}}(D^{2n+1})\] are contractible by the Alexander trick, we obtain maps 
\[\begin{gathered}
\BEmb^{\cong}_{\half\partial}(V_{g})\lra \BHomeo_\partial(V_g)\quad\quad \BEmb^{\cong}_{\nicefrac{1}{2}D^{2n}}(V_g,W_{g,1})\lra  \BHomeo_{D^{2n}}(V_{g})\\
\BEmb^{\cong}_{\half\partial}(W_{g,1})\lra \BHomeo_\partial(W_{g,1})
\end{gathered}
\]
up to contractible choices, which we may postcompose with the comparison maps to homotopy automorphisms to obtain maps of the form
\begin{equation}\label{equ:self-embs-to-haut}
\begin{gathered}
\BEmb^{\cong}_{\half\partial}(V_{g})\lra \BhAut_\partial(V_g)\quad\quad \BEmb^{\cong}_{\nicefrac{1}{2}D^{2n}}(V_g,W_{g,1})\lra  \BhAut_{D^{2n}}(V_{g},W_{g,1})\\
\BEmb^{\cong}_{\half\partial}(W_{g,1})\lra \BhAut_\partial(W_{g,1}).
\end{gathered}
\end{equation}

\begin{rem}Even though phrased for the triad $(V_g;D^d,W_{g,1})$, the discussion of this and the previous subsection applies verbatim to any manifold triad $(M;\partial_0M,\partial_1M)$ with $\partial_0M\cong D^d$, except for \cref{rem:components-are-nice}.
\end{rem}

\subsection{Stable cohomology}\label{sec:stable-cohomology}
For  $n \geq 3$ and large values of $g$, the homology of $\BDiff^\theta_\partial(W_{g,1};\ell_{\partial})$ admits a homotopy theoretical description by work of Galatius--Randal-Williams \cite{GRWII}. Botvinnik--Perlmutter \cite{BotvinnikPerlmutter} proved a similar result for $\BDiff^{\Xi}_{D^{2n}}(V_{g};\ell_{D^{2n}})$ as long as the domain of $\Xi$ is $n$-connected and $n\ge4$. We will only use the case of framings, in which case these results read as follows. We denote the component of the constant map in a loop space by $\Omega_0X\subset \Omega X$ and the sphere spectrum by $\bfS$.

\begin{thm}[Botvinnik--Perlmutter, Galatius--Randal-Williams]
\label{thm:stable-homology}
For framings $\ell_{V_g}$ and $\ell_{W_{g,1}}$ of $V_g$ and $W_{g,1}$ respectively, there are maps
\vspace{-0.2cm}
\[
\def\arraystretch{1.5}
\begin{array}{r@{\hskip 0.1cm} c@{\hskip 0.1cm} l@{\hskip 0.1cm}  l@{\hskip 0.1cm}}
\BDiff^{\fr}_{D^{2n}}(V_g;\ell_{D^{2n}})_{\ell_{V_g}} &\lra &\Omega^\infty_0\bfS&\text{for }n\ge4\\
\BDiff^{\fr}_{\partial}(W_{g,1};\ell_{\partial W_{g,1}})_{\ell_{W_{g,1}}}&\lra &\Omega^\infty_0\bfS^{-2n}&\text{for }n\ge3
\end{array}
\]
that are homology isomorphisms in a range of degrees increasing with $g$. In particular, the reduced rational homology of the domains of these maps vanishes in a stable range.
\end{thm}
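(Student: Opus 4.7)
The plan is to invoke the main theorems of Galatius--Randal-Williams (for $W_{g,1}$) and Botvinnik--Perlmutter (for $V_g$). Each of these results provides a scanning/group-completion map from the moduli space of appropriately-structured manifolds to an infinite loop space and shows it to be a homology isomorphism in a range of degrees increasing with $g$, provided the tangential structure is sufficiently connected on the domain.

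For $W_{g,1}$, the framing structure $\fr\colon \EO(2n)\to \BO(2n)$ has contractible (hence $n$-connected) source, so \cite{GRWII} applies once $n\ge 3$. The associated Madsen--Tillmann spectrum $\mathrm{MT}\fr$ is the Thom spectrum of $-\fr^*\gamma_{2n}$ over a contractible base, which is $\bfS^{-2n}$. Together with the homological stability statement of the same paper, this produces the map to $\Omega^\infty_0\bfS^{-2n}$ and the stated range.

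For $V_g$, the analogous input is Botvinnik--Perlmutter's theorem on moduli spaces of odd-dimensional handlebodies with the half-boundary condition on $D^{2n}\subset\partial V_g$, which likewise requires an $n$-connected tangential structure (satisfied by framings since $\EO(2n+1)$ is contractible) together with $n\ge 4$. Their result identifies the stable homology with that of an infinite loop space built from a handlebody-adapted Thom spectrum; for the framing structure this identification simplifies to $\Omega^\infty_0\bfS$, yielding the claimed map, with the range controlled by their stability theorem. The step requiring the most care is precisely this identification: translating the conventions of \cite{BotvinnikPerlmutter} for the handlebody Madsen--Tillmann spectrum, and verifying that it reduces to $\bfS$ under the present framed/half-boundary conventions. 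A parallel but easier check is needed to match the tangential-structure conventions in \cite{GRWII} with those in \cref{sec:tangential-structures}.

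Finally, the vanishing of reduced rational homology in a stable range follows from the identification: $\pi_k(\Omega^\infty\bfS)=\pi_k^s$ and $\pi_k(\Omega^\infty\bfS^{-2n})=\pi_{k+2n}^s$ are finite in each positive degree by Serre's finiteness theorem, so both target infinite loop spaces have finite homotopy groups in every positive degree, and hence vanishing reduced rational homology (by, for example, the rational Hurewicz theorem applied to their Postnikov towers).
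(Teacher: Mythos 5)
Your proposal takes essentially the same route as the paper: the paper simply cites \cite[Theorem A*]{BotvinnikPerlmutter} together with \cite[Theorem 8.2]{Perlmutter} for the handlebody case, and \cite[Corollary 1.8]{GRWII} for the $W_{g,1}$ case, and you have correctly reconstructed what those theorems say, why the framing tangential structure satisfies the $n$-connectivity hypothesis, and why the targets simplify to $\Omega^\infty_0 \bfS$ and $\Omega^\infty_0 \bfS^{-2n}$ respectively. The only point you elide is that the homological stability half of the handlebody statement is not actually in \cite{BotvinnikPerlmutter} itself but in the companion paper of Perlmutter, which the paper cites separately; this is a bibliographic detail, not a gap in the argument. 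Your final step deducing the vanishing of reduced rational homology from Serre finiteness is correct and matches what the paper implicitly leaves to the reader.
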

\begin{proof}
The statement about the first map follows from \cite[Theorem A*]{BotvinnikPerlmutter} and \cite[Theorem 8.2]{Perlmutter}. The asserted property of the second map is a special case of \cite[Corollary 1.8]{GRWII}.
\end{proof}

By specialising the sequence \eqref{equ:weiss-fs-Wg} and the second sequence of \eqref{equ:weiss-fs-Vg} to the case of framings, restricting to the components of given framings $\ell_{V_g}$ and $\ell_{W_{g,1}}$ of $W_{g,1}$ and $V_g$, and delooping we obtain for $n\ge3$ fibre sequences of the form
\begin{equation}\label{equ:delooped-framed-Weiss-fs}
\begin{gathered}
\BDiff^{\fr}_\partial(W_{g,1};\ell_{\partial})_{\ell_{W_{g,1}}}\lra \BEmb^{\fr, \cong}_{\half\partial}(W_{g,1};\ell_{\half \partial})_{\ell_{W_{g,1}}} \lra B(\BDiff^{\fr}_\partial(D^{2n};\ell_{\partial_0})_B)\\
\BDiff^{\fr}_{D^{2n}}(V_{g};\ell_{D^{2n}})_{\ell_{V_g}} \lra \BEmb^{\fr, \cong}_{\nicefrac{1}{2}D^{2n}}(V_g,W_{g,1};\ell_{\nicefrac{1}{2}D^{2n}})_{\ell_{V_g}} \lra B(\BDiff_{D^{2n}}(D^{2n+1}))
\end{gathered}
\end{equation}
where $\BDiff^{\fr}_\partial(D^{2n};\ell_{\partial_0})_B\subset \BDiff^{\fr}_\partial(D^{2n};\ell_{\partial_0})$ is a collection of components which is finite by \cite[Corollary 8.16]{KR-WAlg}. To arrive at the second sequence, we used the equivalence \eqref{equ:no-tangential-structure-for-moving-boundary}.

\begin{cor}\label{cor:reduction-to-self-embeddings}
For framings $\ell_{W_{g,1}}$ of $W_{g,1}$ and $\ell_{V_g}$ of $V_g$, the maps
\vspace{-0.2cm}
\[
\def\arraystretch{1.5}
\begin{array}{r@{\hskip 0.1cm} c@{\hskip 0.1cm} l@{\hskip 0.1cm}  l@{\hskip 0.1cm}}
\BEmb^{\fr,\cong}_{\half D^{2n}}(V_g,W_{g,1};\ell_{\half \partial})_{\ell_{V_g}}&\lra &B(\BDiff_{D^{2n}}(D^{2n+1}))&\text{for }n\ge4\\
\BEmb^{\fr,\cong}_{\half \partial}(W_{g,1};\ell_{\half \partial})_{\ell_{W_{g,1}}}&\lra &B(\BDiff^{\fr}_{\partial}(D^{2n};\ell_{\partial_0})_B)&\text{for }n\ge3
\end{array}
\]
are rational homology isomorphisms in a range of degrees increasing with $g$. Here the subspace $\BDiff^{\fr}_{\partial}(D^{2n};\ell_{\partial_0})_B\subset \BDiff^{\fr}_{\partial}(D^{2n};\ell_{\partial_0})$ is a finite union of components.
\end{cor}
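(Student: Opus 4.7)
The plan is to read the two maps in the corollary off as the right-hand maps of the two delooped framed Weiss fibre sequences \eqref{equ:delooped-framed-Weiss-fs}, whose fibres are precisely the classifying spaces of framed diffeomorphism groups appearing in \cref{thm:stable-homology}, and then to feed this into the Serre spectral sequence.

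More concretely, I would first identify the fibres of the two maps. In the sequence involving $V_g$ the fibre is $\BDiff^{\fr}_{D^{2n}}(V_g;\ell_{D^{2n}})_{\ell_{V_g}}$, and in the sequence involving $W_{g,1}$ the fibre is $\BDiff^{\fr}_\partial(W_{g,1};\ell_{\partial})_{\ell_{W_{g,1}}}$. Each of these spaces is, by construction, a single path-component of a classifying space and is therefore path-connected. By \cref{thm:stable-homology}, the reduced rational homology of each fibre vanishes in a range of degrees $*\le N(g)$ with $N(g)\to\infty$ as $g\to\infty$; this uses $n\ge 4$ in the $V_g$ case (via Botvinnik--Perlmutter together with Perlmutter's improvement) and $n\ge 3$ in the $W_{g,1}$ case (Galatius--Randal-Williams).

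Next, I would run the rational Serre spectral sequence of each fibration $F\to E\to B$. Its $E^2$-page is $E^2_{p,q}=H_p(B;H_q(F;\bfQ))$ with local coefficients, but since $F$ is connected the coefficient system at $q=0$ is the constant $\bfQ$, and for $0<q\le N(g)$ the group $H_q(F;\bfQ)$ itself vanishes, so $E^2_{p,q}=0$ there irrespective of the $\pi_1(B)$-action on $H_q(F;\bfQ)$. All differentials entering or leaving $E^r_{p,0}$ for $2\le r\le N(g)+1$ land in a zero group, so $E^\infty_{p,0}=E^2_{p,0}=H_p(B;\bfQ)$ for $p\le N(g)$, and for the same degree reason no other term $E^\infty_{p-q,q}$ with $p\le N(g)$ contributes to $H_p(E;\bfQ)$. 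The edge homomorphism, which coincides with the map on rational homology induced by $E\to B$, is therefore an isomorphism for $*\le N(g)$ and a surjection one degree above.

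The finiteness statement about $\pi_0\BDiff^{\fr}_\partial(D^{2n};\ell_{\partial_0})_B$ is not something to be proved here; it is built into the definition of this subspace (via \cite[Corollary 8.16]{KR-WAlg}) already appearing in \eqref{equ:delooped-framed-Weiss-fs}. I do not foresee any genuine obstacle: once the Weiss fibre sequences of \cref{sec:weiss-fs} are in hand and the Botvinnik--Perlmutter / Galatius--Randal-Williams stable vanishing is invoked, the corollary follows by a direct application of the Serre spectral sequence, and no subtlety about local coefficients arises because the fibres are rationally acyclic in the relevant range.
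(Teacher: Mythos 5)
Your proposal is correct and matches the paper's intended argument exactly: the corollary is stated immediately after the delooped Weiss fibre sequences \eqref{equ:delooped-framed-Weiss-fs} and \cref{thm:stable-homology} precisely because it follows by this standard Serre spectral sequence argument, with the rational acyclicity of the fibres (Botvinnik--Perlmutter/Perlmutter for $V_g$, Galatius--Randal-Williams for $W_{g,1}$) rendering the local coefficient system irrelevant in the stable range.
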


\subsection{A standard model}\label{sec:standard-model}
We fix $g\ge0$ and write
\[
\bar{H}_W\coloneq \oH_n(W_{g,1};\bfQ),\quad \bar{H}_V\coloneq \oH_n(V_{g}; \bfQ),\quad\text{and}\quad \bar{K}_W\coloneqq\ker(\iota_*\colon \bar{H}_W\ra \bar{H}_V),
\] 
where $\iota$ is the inclusion $W_{g,1}\subset V_g$. Their integral cousins are denoted by \[
\bar{H}_W^\bfZ\coloneq H_n(W_{g,1};\bfZ),\quad \bar{H}_V^\bfZ\coloneq H_n(V_{g}; \bfZ),\quad\text{and}\quad \bar{K}_W^\bfZ\coloneqq\ker(\iota_*\colon \bar{H}_W^\bfZ\ra \bar{H}_V^\bfZ).
\]
The bar-superscript indicates that we consider these as plain vector spaces or abelian groups, whereas later we shall mostly consider graded objects (see \cref{sec:gradings}) 

At several points in this work, it will be convenient to have fixed a standard model of $V_g$ as a submanifold of $\bfR^{2n+1}$ with fixed embedded cores $e_i,f_i\colon S^n\hookrightarrow \partial V_g$ for $1\le i\le g$ representing a basis of $\bar{H}_W^\bfZ\cong\bfZ^{2g}$, denoted by the same symbols, such that
\begin{enumerate}
\item $(e_i,f_i)_{1\le i\le g}$ is a hyperbolic basis for the intersection form 
\[\lambda\colon \bar{H}_W^\bfZ\otimes \bar{H}_W^\bfZ\lra \bfZ,\]
i.e.\,$\lambda(e_i,e_j)=\lambda(f_i,f_j)=0$ and $\lambda(e_i,f_j)=\delta_{ij}$ for all $1\le i,j\le g$,
\item the $e_i$'s form a basis of the subspace $\bar{K}_W^\bfZ\subset \bar{H}_W^\bfZ$, and
\item the $f_i$'s map to a basis of $\bar{H}_V^\bfZ\cong \bfZ^g$ under the projection $\iota_*\colon \bar{H}_W^\bfZ\ra \bar{H}_V^\bfZ$.
\item the $V_g$'s come with inclusions $V_{h}\subset V_{g}$ for $h\le g$ which preserve the $e_i$'s and $f_i$'s.
\end{enumerate}
To this end, we use the standard embedding $S^n\subset \bfR^{n+1}$ and abbreviate by $\iota_{S^n}\colon S^n\hookrightarrow \bfR^{2n+1}$ the inclusion into the last $(n+1)$-coordinates. We consider the normal bundle $\nu(\iota_{S^n})$ of $\iota_{S^n}$ as a subspace of $S^n\times \bfR^{2n+1}$ using the standard metric and framing of $\bfR^{2n+1}$. Writing $\vec{e}_i$ for the $i$th standard unit vector, we view $V_1$ as the image of the embedding
\[
\maphook{\phi}{\{(x,y)\in \nu(\iota_{S^n})\subset S^n\times \bfR^{2n+1}\mid |y|\le 1/2\}}{\bfR^{2n}\times (0,2)}{(x,y)}
{\tfrac{1}{2}(\iota_{S^n}(x)+y)+\vec{e}_{2n+1},}
\]
equipped with the cores  $e_1\colon S^n\hookrightarrow \partial V_1$ and $f_1\colon S^n\hookrightarrow \partial V_1$  given by precomposing $\phi$ with
 \[
 S^n\ni x\mapsto (\vec{e}_{1},(\tfrac{1}{2}x,0))\in S^n\times \bfR^{2n+1} \quad\text{and}\quad S^n\ni x\mapsto (x,(0,-\tfrac{1}{2}x))\in S^n\times \bfR^{2n+1}
 \] respectively. The cores $e_1$ and $f_1$ intersect in the single point $\phi(\vec{e}_{1},-\tfrac{1}{2}\vec{e}_{n+1})=\tfrac{1}{4}\vec{e}_{n+1}+\vec{e}_{2n+1}$. Now consider the embedding $\psi\colon \sqcup^g V_1 \ra \bfR^{2n}\times (0,2g)$ given by translating the $i$th disjoint summand by $2(i-1)\cdot \vec{e}_{2n+1}$ so that it lies in $\bfR^{2n}\times (2(i-1),2i)$. The translates of the cores $e_1,f_1$ above give a pair of cores $(e_i,f_i)$ for each of the summands. We define $V_g\subset \bfR^{2n+1}$ as the union of the image of $\psi$ with suitable tubular neighborhoods of the arcs 
 $[0,1]\ni s\mapsto (2s+i+\tfrac{1}{2})\cdot \vec{e}_{2n+1}\in \bfR^{2n+1}$
 connecting the $i$th and $(i+1)$th disjoint summand, disjoint from the cores and axially symmetric around the $\vec{e}_{2n+1}$-axis. This embedding $V_g\subset\bfR^{2n+1}$ together with its cores $(e_i,f_i)_{1\le i\le g}$ satisfies the desired properties (i)--(iv). See \cref{equ:vtwo-figure} for a schematic figure.
 
In addition, we once and for all choose an embedded codimension $0$ disc $D^{2n+1}\subset V_g\cap \bfR^{2n}\times(2g-1,2g)$ which is disjoint from the cores and axially symmetric around the $\vec{e}_{2n+1}$-axis. Moreover, we choose the half-disc $D^{2n}_+\subset D^{2n}$ used in the definition of $\Emb_{\half D^{2n}}(V_g,W_{g,1})$ and $\Emb_{\half\partial}(W_{g,1})$ to have the same axial symmetry.

\begin{figure}[h!]
 \centering

\tikzset{every picture/.style={line width=0.75pt}} 

\begin{tikzpicture}[x=0.75pt,y=0.75pt,yscale=-1,xscale=1]

\draw  [color={rgb, 255:red, 0; green, 0; blue, 0 }  ,draw opacity=1 ] (32.33,125.05) .. controls (32.33,75.5) and (73.06,35.33) .. (123.31,35.33) .. controls (173.56,35.33) and (214.29,75.5) .. (214.29,125.05) .. controls (214.29,174.6) and (173.56,214.76) .. (123.31,214.76) .. controls (73.06,214.76) and (32.33,174.6) .. (32.33,125.05) -- cycle ;
\draw  [color={rgb, 255:red, 0; green, 0; blue, 0 }  ,draw opacity=1 ][line width=2.25]  (76.01,126.71) .. controls (76.01,102) and (96.33,81.96) .. (121.4,81.96) .. controls (146.46,81.96) and (166.78,102) .. (166.78,126.71) .. controls (166.78,151.42) and (146.46,171.46) .. (121.4,171.46) .. controls (96.33,171.46) and (76.01,151.42) .. (76.01,126.71) -- cycle ;
\draw  [color={rgb, 255:red, 0; green, 0; blue, 0 }  ,draw opacity=1 ] (259.06,127.28) .. controls (259.06,77.74) and (299.79,37.57) .. (350.04,37.57) .. controls (400.29,37.57) and (441.02,77.74) .. (441.02,127.28) .. controls (441.02,176.83) and (400.29,217) .. (350.04,217) .. controls (299.79,217) and (259.06,176.83) .. (259.06,127.28) -- cycle ;
\draw  [color={rgb, 255:red, 0; green, 0; blue, 0 }  ,draw opacity=1 ][line width=2.25]  (302.65,129.26) .. controls (302.65,104.55) and (322.97,84.52) .. (348.03,84.52) .. controls (373.09,84.52) and (393.41,104.55) .. (393.41,129.26) .. controls (393.41,153.98) and (373.09,174.01) .. (348.03,174.01) .. controls (322.97,174.01) and (302.65,153.98) .. (302.65,129.26) -- cycle ;
\draw  [draw opacity=0] (75.9,125.6) .. controls (75.43,134.73) and (65.86,142.01) .. (54.12,142.01) .. controls (42.15,142.01) and (32.44,134.44) .. (32.33,125.05) -- (54.12,124.89) -- cycle ; \draw  [color={rgb, 255:red, 0; green, 0; blue, 0 }  ,draw opacity=1 ] (75.9,125.6) .. controls (75.43,134.73) and (65.86,142.01) .. (54.12,142.01) .. controls (42.15,142.01) and (32.44,134.44) .. (32.33,125.05) ;  
\draw  [draw opacity=0][line width=2.25]  (120.91,81.95) .. controls (115.72,81.08) and (111.62,70.99) .. (111.62,58.67) .. controls (111.62,45.82) and (116.08,35.39) .. (121.59,35.33) -- (121.64,58.67) -- cycle ; \draw  [color={rgb, 255:red, 0; green, 0; blue, 0 }  ,draw opacity=1 ][line width=2.25]  (120.91,81.95) .. controls (115.72,81.08) and (111.62,70.99) .. (111.62,58.67) .. controls (111.62,45.82) and (116.08,35.39) .. (121.59,35.33) ;  
\draw  [draw opacity=0][line width=2.25]  (350.16,83.71) .. controls (344,82.97) and (339.12,72.88) .. (339.12,60.54) .. controls (339.12,47.76) and (344.36,37.39) .. (350.84,37.33) -- (350.89,60.54) -- cycle ; \draw  [color={rgb, 255:red, 0; green, 0; blue, 0 }  ,draw opacity=1 ][line width=2.25]  (350.16,83.71) .. controls (344,82.97) and (339.12,72.88) .. (339.12,60.54) .. controls (339.12,47.76) and (344.36,37.39) .. (350.84,37.33) ;  
\draw  [draw opacity=0] (213.7,127.88) .. controls (211.23,134.85) and (201.76,140.04) .. (190.44,140.04) .. controls (177.34,140.04) and (166.7,133.09) .. (166.56,124.47) -- (190.44,124.3) -- cycle ; \draw  [color={rgb, 255:red, 0; green, 0; blue, 0 }  ,draw opacity=1 ] (213.7,127.88) .. controls (211.23,134.85) and (201.76,140.04) .. (190.44,140.04) .. controls (177.34,140.04) and (166.7,133.09) .. (166.56,124.47) ;  
\draw  [draw opacity=0] (302.62,127.83) .. controls (302.06,135.37) and (292.53,141.37) .. (280.85,141.37) .. controls (268.89,141.37) and (259.19,135.08) .. (259.06,127.29) -- (280.85,127.13) -- cycle ; \draw  [color={rgb, 255:red, 0; green, 0; blue, 0 }  ,draw opacity=1 ] (302.62,127.83) .. controls (302.06,135.37) and (292.53,141.37) .. (280.85,141.37) .. controls (268.89,141.37) and (259.19,135.08) .. (259.06,127.29) ;  
\draw  [draw opacity=0] (441.02,127.3) .. controls (440.37,135.12) and (429.94,141.33) .. (417.17,141.33) .. controls (404.08,141.33) and (393.44,134.8) .. (393.29,126.71) -- (417.17,126.53) -- cycle ; \draw  [color={rgb, 255:red, 0; green, 0; blue, 0 }  ,draw opacity=1 ] (441.02,127.3) .. controls (440.37,135.12) and (429.94,141.33) .. (417.17,141.33) .. controls (404.08,141.33) and (393.44,134.8) .. (393.29,126.71) ;  
\draw [color={rgb, 255:red, 0; green, 0; blue, 0 }  ,draw opacity=1 ][line width=0.75]    (202,79.46) .. controls (229.43,111.05) and (242.64,113.06) .. (272.1,80.96) ;
\draw [color={rgb, 255:red, 0; green, 0; blue, 0 }  ,draw opacity=1 ][line width=0.75]    (200.98,172.22) .. controls (229.43,141.14) and (244.67,141.14) .. (271.09,172.22) ;
\draw [color={rgb, 255:red, 0; green, 0; blue, 0 }  ,draw opacity=1 ]   (259.06,127.28) ;
\draw [color={rgb, 255:red, 0; green, 0; blue, 0 }  ,draw opacity=1 ][line width=1.5]  [dash pattern={on 5.63pt off 4.5pt}]  (213.72,127.81) -- (259.06,127.28) ;
\draw  [draw opacity=0][line width=0.75]  (234.64,149.48) .. controls (230.74,147.37) and (227.81,138.35) .. (227.81,127.55) .. controls (227.81,115.17) and (231.65,105.14) .. (236.39,105.14) .. controls (236.66,105.14) and (236.93,105.17) .. (237.2,105.24) -- (236.39,127.55) -- cycle ; \draw  [color={rgb, 255:red, 0; green, 0; blue, 0 }  ,draw opacity=1 ][line width=0.75]  (234.64,149.48) .. controls (230.74,147.37) and (227.81,138.35) .. (227.81,127.55) .. controls (227.81,115.17) and (231.65,105.14) .. (236.39,105.14) .. controls (236.66,105.14) and (236.93,105.17) .. (237.2,105.24) ;  

\draw (119.19,56.61) node [anchor=north west][inner sep=0.75pt]   [align=left] {$\displaystyle e_{1}$};
\draw (345.92,58.85) node [anchor=north west][inner sep=0.75pt]   [align=left] {$\displaystyle e_{2}$};
\draw (81.97,121.17) node [anchor=north west][inner sep=0.75pt]   [align=left] {$\displaystyle f_{1}$};
\draw (307.38,122.52) node [anchor=north west][inner sep=0.75pt]   [align=left] {$\displaystyle f_{2}$};

\end{tikzpicture}

\caption{A figure of the standard embedding $V_2\subset \bfR^{2n+1}$, given as the union of two disjoint copies of $V_1=S^n\times D^{n+1}$ containing the cores $e_i$ and $f_i$ in the boundary (the thick circles) with an axially symmetric tubular neighbourhood of an arc connecting the two copies of $V_1$ (the dashed line).} \label{equ:vtwo-figure}

\end{figure}

\subsubsection{Standard (stable) framings}\label{sec:standard-framing}
The standard framing of $\bfR^{2n+1}$ induces by restriction a framing $\ell_{V_g}$ on $V_g\subset \bfR^{2n+1}$, which we call the \emph{standard framing} of $V_g$. Similarly, we fix a standard framing $\ell_{W_{g,1}}$ of $W_{g,1}$ which we require to (i) induce the $\pm$-structure on $W_{g,1}$ of \cite[Lemma 6.10]{KR-WDisc} and (ii) have trivial Arf invariant (see \cite[Section 2.5]{KR-WFramings} for an explanation of this condition). From now on, we shall always use the standard framings $\ell_{V_g}$ and $\ell_{W_{g,1}}$ and the induced boundary conditions on submanifolds on the boundaries on $\ell_{W_{g,1}}$ and $\ell_{W_{g,1}}$. When we consider stable framings of $V_g$ and $W_{g,1}$, we always use the ones induced by the standard framings. If there is no source of confusion we abbreviate the standard framings $\ell_{V_g}$ and $\ell_{W_{g,1}}$ as well as the induced (stable) framings on submanifolds of $V_g$ and $W_{g,1}$ by $\ell$, and we omit the reference to the induces boundary conditions from the notation. For instance we abbreviate 
\[\BDiff^\fr_{D^{2n}}(V_g)=\BDiff^\fr_{D^{2n}}(V_g;\ell_{D^{2n}})\quad\text{and}\quad\BEmb_{\half D^{2n}}^\fr(V_g,W_{g,1})=\BEmb_{\half D^{2n}}^\fr(V_g,W_{g,1};\ell_{\half D^{2n}}).\] 
Also, we consider the different variants of (stably) framed automorphism and self-embedding spaces of $V_g$ and $W_{g,1}$ as based spaces via the points induced by the standard framings. As in \cref{sec:tangential-structures}, a subscript $(-)_\ell$ indicates that we restrict to this basepoint component.

\subsection{Framings and stable framings} At various points in later sections, we will need to transition between framings and stable framings. The following lemma allows this.

\begin{lem}\label{lem:connectivity-framing-comparison}
The forgetful maps 
\vspace{-0.2cm}
\[
\begin{tikzcd}[row sep=0cm, column sep=0.3cm]
\BDiff_{D^{2n}}^{\fr}(V_{g})\rar& \BDiff_{D^{2n}}^{\sfr}(V_{g})&& \BEmb^{\fr,\cong}_{\nicefrac{1}{2}D^{2n}}(V_g,W_{g,1})\rar& \BEmb^{\sfr,\cong}_{\nicefrac{1}{2}D^{2n}}(V_g,W_{g,1})\\
\BEmb^{\fr,\cong}_{\half\partial}(V_g) \rar& \BEmb^{\sfr,\cong}_{\half\partial}(V_g)&&
\BEmb^{\onefr,\cong}_{\half\partial}(W_{g,1}) \rar& \BEmb^{\sfr,\cong}_{\half\partial}(W_{g,1})
\end{tikzcd}\vspace{-0.2cm}
\]
are $n$-connected and rationally $(3n+2)$-connected.
\end{lem}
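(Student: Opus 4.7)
The plan is to identify the homotopy fiber of each comparison map as a space of sections, trivialise it using parallelisability of $V_g$ and $W_{g,1}$, and bound its connectivity via the connectivity of $O/O(2n+1)$.

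First, I would appeal to the general principle that, for a map of tangential structures $\theta_1\to\theta_2$ over $\BO(d)$, obstruction theory identifies the homotopy fiber of the induced map $\Bun_K(\tau_M,\theta_1^*\gamma_d;\ell_K) \to \Bun_K(\tau_M,\theta_2^*\gamma_d;\ell_K)$ with the space of sections, rel $K$, of a bundle over $M$ with fiber $\hofib(\theta_1\to\theta_2)$. Taking homotopy quotients by the relevant diffeomorphism or embedding groups preserves this homotopy fiber, so each of the four maps in the lemma has a homotopy fiber of this form. For the $\fr$-to-$\sfr$ comparison the relevant fiber is $\hofib(O(d)\to O)\simeq \Omega(O/O(d))$, and for the $\onefr$-to-$\sfr$ comparison it is $\hofib(O(d+1)\to O)\simeq \Omega(O/O(d+1))$; in all four cases this works out to $\Omega(O/O(2n+1))$.

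Next, using that $V_g$ and $W_{g,1}$ are parallelised by the standard framings of \cref{sec:standard-framing}, the section bundle trivialises and the homotopy fiber becomes a based mapping space $\Map_*(M/K,\Omega(O/O(2n+1)))$. In cases 1, 2 and 4 the subspace $K$ is contractible, so $M/K$ is homotopy equivalent to $V_g\simeq\vee^g S^n$ or to $W_{g,1}\simeq\vee^{2g} S^n$. In case 3 a short computation using $\half\partial V_g\simeq W_{g,1}$ and the long exact sequence of $(V_g,\half\partial V_g)$ shows that $V_g/\half\partial V_g$ is simply connected with homology $\bfZ^g$ concentrated in degree $n+1$, hence homotopy equivalent to $\vee^g S^{n+1}$.

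To finish, I would bound the connectivity of $O/O(2n+1)$: from the fibration $O(d)\to O(d+1)\to S^d$ and iteration, $\pi_i(O(d))\to\pi_i(O)$ is an isomorphism for $i\le d-2$ and surjective for $i=d-1$, so $O/O(d)$ is $(d-1)$-connected. Rationally, $O(2n+1)\simeq_\bfQ\prod_{k=1}^n S^{4k-1}$ combined with $\pi_*(O)\otimes\bfQ=\bfQ$ in degrees $\equiv 3\pmod 4$ implies via the long exact sequence that $O/O(2n+1)$ is rationally $(4n+2)$-connected. Looping the appropriate number of times and forming finite products then yields the desired connectivity bounds. The delicate ingredient is case 3, where $V_g/\half\partial V_g\simeq\vee^g S^{n+1}$ has one cell-dimension higher than the base in the other cases, and some additional care—either a finer analysis of the top cells of $M/K$ or a comparison argument using the Weiss fibre sequences of \cref{sec:weiss-fs}—is needed to recover the uniform connectivity stated in the lemma.
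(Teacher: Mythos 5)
Your approach is the same as the paper's: identify the homotopy fibre of each forgetful map as a space of sections, trivialise it using the parallelisability of $V_g$ and $W_{g,1}$ so that it becomes $\Map_*(M/K,\,\hofib(\SO(2n+1)\to\SO))$, and bound its connectivity using the $2n$-connectedness and rational $(4n+2)$-connectedness of $\SO(2n+1)\to\SO$. Your computation of the cell dimensions of the three quotients $V_g/D^{2n}\simeq\vee^g S^n$, $W_{g,1}/\half\partial W_{g,1}\simeq\vee^{2g}S^n$ and $V_g/\half\partial V_g\simeq\vee^g S^{n+1}$ is correct, and the case-3 issue you flag is real. In fact the paper's own proof asserts that all three pairs $(V_g,D^{2n})$, $(V_g,\half\partial V_g)$ and $(W_{g,1},\half\partial W_{g,1})$ have ``only relative $n$-cells'', which is wrong for the middle pair: as you note, $H_*(V_g,\half\partial V_g)\cong\bfZ^g$ in degree $n+1$, so $V_g/\half\partial V_g\simeq\vee^g S^{n+1}$. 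Applying obstruction theory naively to this wedge of $(n+1)$-spheres gives that the third map is $(n-1)$-connected and rationally $(3n+1)$-connected, one less than the lemma states. There is no hidden trick to recover the extra unit of connectivity here; for instance, the fibre of the third map has $\pi_{n-1}\cong\pi_{2n+1}(\SO/\SO(2n+1))^{\oplus g}\cong(\bfZ/2)^g$, which is non-zero, so the obstruction-theoretic bound really is sharp at the level of the fibre. You should therefore not spend time looking for the ``additional care'' you mention: the correct resolution is to lower the bound by one for that map. The slip is harmless for the paper's downstream uses of the lemma, which only invoke the rational connectivity in degrees up to roughly $3n-6$, comfortably below $3n+1$; but you were right to be suspicious, and your proposal is in this one respect more careful than the paper's own proof.
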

\begin{proof}
Comparing the spaces in the statement to their analogous without (stable) framings, this follows from the fact that the maps the maps between mapping spaces of pairs
\begin{align*}
\Map((V_g,D^{2n}),(\SO(2n+1),*))&\lra \Map((V_g,D^{2n}),(\SO,*))\\
\Map((V_g,\nicefrac{1}{2}\partial V_g),(\SO(2n+1),*))&\lra \Map((V_g,\nicefrac{1}{2}\partial V_g),(\SO,*))\\
\Map((W_{g,1},\nicefrac{1}{2}\partial W_{g,1}), (\SO(2n+1),*))&\lra \Map((W_{g,1},\nicefrac{1}{2}\partial W_{g,1}), (\SO,*))
\end{align*}
induced by the inclusion $\SO(2n+1)\subset \SO$ are $n$-connected and rationally $(3n+2)$-connected by obstruction theory, since the pairs $(V_g,D^{2n})$, $(V_g, \nicefrac{1}{2}\partial V_g)$, and $(W_g, \nicefrac{1}{2}\partial W_g)$ have only relative $n$-cells and the inclusion $\SO(2n+1)\subset \SO$  is $2n$-connected and rationally $(4n+2)$-connected.
\end{proof}

\subsection{Mapping class groups}\label{sec:MCG}
In view of \cref{cor:reduction-to-self-embeddings}, in order to prove \cref{bigthm:diff-discs}, we are faced with the task of computing the rational cohomology of the spaces (see \cref{sec:standard-framing} for the shortened notation in use)
\begin{equation}\label{equ:self-embedding-spaces}
\BEmb^{\fr}_{\half D^{2n}}(V_g,W_{g,1})_{\ell}\quad\text{and}\quad\BEmb^{\fr,\cong}_{\half \partial}(W_{g,1})_{\ell}.
\end{equation}
To achieve this we need some understanding of the fundamental groups
\begin{equation}\label{equ:fundamental-groups-embedding-spaces}
\check{\Lambda}_{V_{g}}\coloneq \pi_1(\BEmb^{\fr}_{\half D^{2n}}(V_g,W_{g,1})_\ell)\quad\text{and}\quad
\check{\Lambda}_{W_{g,1}}\coloneq \pi_1(\BEmb^{\fr,\cong}_{\half \partial}(W_{g,1})_\ell)
\end{equation}
which come with comparison maps to $\GL(\bar{H}_W^\bfZ)$ (see \cref{sec:standard-model} for the notation)
\begin{equation}\label{equ:homology-action-mcg-emb-spaces}
\check{\Lambda}_{V_{g}}\lra \GL(\bar{H}_W^\bfZ)\quad\text{and}\quad\check{\Lambda}_{W_{g,1}}\lra \GL(\bar{H}_W^\bfZ)
\end{equation}
induced by the action on the homology of $W_{g,1}\subset \partial V_g$. These maps are almost  injective:
\begin{lem}\label{lem:finite-kernel}
For $n\ge3$, the maps \eqref{equ:homology-action-mcg-emb-spaces} have finite kernel. 
\end{lem}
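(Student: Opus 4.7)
The plan is to peel the framing data off of $\check{\Lambda}_M$ for $M\in\{W_{g,1},V_g\}$ with the appropriate boundary condition, and reduce to finiteness results for mapping class groups. The framing fibration $\Bun\to \BEmb^{\fr,\cong}_*(M)_\ell\to B\Emb^{\cong,\ext}_*(M)$ (with $*$ standing for the appropriate boundary condition) yields an exact sequence
\[
\pi_1(\Bun,\ell)\lra \check\Lambda_M\lra \pi_0(\Emb^{\cong,\ext}_*(M))\lra 0
\]
in which the image of $\pi_1(\Bun,\ell)$ in $\check\Lambda_M$ is represented by loops covering the identity embedding, and so acts trivially on $\bar H_W^\bfZ$. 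Consequently $\ker(\check\Lambda_M\to\GL(\bar H_W^\bfZ))$ is an extension of $\ker(\pi_0(\Emb^{\cong,\ext}_*(M))\to\GL(\bar H_W^\bfZ))$ by the image of $\pi_1(\Bun,\ell)$, so it suffices to show both are finite.

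For the image of $\pi_1(\Bun,\ell)$, I would use obstruction theory: since $V_g$ and $W_{g,1}$ have, relative to the fixed portion of the boundary, the homotopy type of wedges of $n$-spheres, $\pi_1(\Bun,\ell)$ is a finitely generated abelian group whose potentially nontorsion part comes from $\pi_{n+1}(\OO(2n+1))^g$ and is nonzero only when $n\equiv 2,6\Mod{8}$. Explicit Dehn-twist-style loops in $\Emb^{\cong,\ext}_*(M)$ supported in tubular neighbourhoods of the cores $e_i,f_i$ of \cref{sec:standard-model} realise generators of each such summand under the connecting homomorphism, so the image of $\pi_1(\Bun,\ell)$ in $\check\Lambda_M$ is torsion, and hence finite by finite generation.

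For the second kernel, I would invoke isotopy extension and the identifications signalled in \cref{rem:components-are-nice} to replace $\pi_0(\Emb^{\cong,\ext}_*(M))$ by a $\pi_0$ of a diffeomorphism group. For $W_{g,1}$ the relevant group is $\pi_0(\Diff^{\ext}_\partial(W_{g,1}))$; Kreck's computation (valid for $n\ge 3$) gives this as an extension of an arithmetic subgroup of $\GL(\bar H_W^\bfZ)$ by a Torelli-type kernel which, after restricting to the $\ext$-components---those classes extending over $V_g$---becomes finite by the surgery-theoretic extensions of \cite[Lemma 4.3]{KR-WAlg} and \cite[Theorem 3.3]{KrannichConc1}. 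For $V_g$ the analogous reduction leads to the mapping class group of the handlebody, whose image in $\GL(\bar H_W^\bfZ)$ preserves the Lagrangian $\bar K_W^\bfZ$ and has finite kernel, by Wall's classical analysis.

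The hard step is the last one: the Torelli subgroup of $\pi_0(\Diff_\partial(W_{g,1}))$ is in general only finitely generated (and even rationally nontrivial), so the finiteness of $\ker\alpha_M$ genuinely depends on carefully tracking which Torelli classes extend to self-embeddings of $V_g$ and fit the extension frameworks of \cite{KR-WAlg,KrannichConc1}; without the passage to $\ext$-components the statement would fail.
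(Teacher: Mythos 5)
Your decomposition via the framing fibration is a structurally reasonable strategy, but it diverges from the paper's argument, and two of its three pieces are not actually established. The paper handles the two cases asymmetrically: for $\check{\Lambda}_W$ it simply cites \cite[Lemma 3.10]{KR-WDisc}, and for $\check{\Lambda}_V$ it uses the zig-zag \eqref{eq:SomeZigZag} to identify $\check{\Lambda}_V$ with $\pi_1(\BlockBDiff^{\sfr}_{D^{2n}}(V_g)_\ell)$ — the key observations being that the Weiss fibre sequence \eqref{equ:weiss-fs-Vg} makes the forget-to-embeddings map a $\pi_1$-isomorphism (since $\oC(D^{2n})$ is connected by Cerf) and that \cref{lem:connectivity-framing-comparison} lets one pass from $\fr$ to $\sfr$ — then applies \cref{thm:blockdiff-identification} together with \cite[Lemma 3.10]{KrannichConc1}. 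Your approach instead tries to prove finiteness of the kernel from scratch by analysing each factor in the extension coming from the $\Bun$-fibration; this is more work and needs more justification than you give.

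Two concrete gaps. First, the claim that the image of $\pi_1(\Bun,\ell)$ in $\check{\Lambda}_M$ is torsion is asserted via ``explicit Dehn-twist-style loops,'' but no such loops are constructed, and the obvious candidate fails: a family $\alpha_t\colon S^n\to\OO(n)$ gives a loop of automorphisms of $S^n\times D^n$ by $(x,v)\mapsto(x,\alpha_t(x)v)$, but this does not fix $\partial(S^n\times D^n)$, so it does not live in the relevant embedding space. Whether some other construction rationally surjects onto the summands of $\pi_{n+1}(\OO(-))^{\oplus\bullet}$ is exactly the kind of question the paper's proof routes \emph{around} by invoking \cref{thm:blockdiff-identification}; it cannot just be asserted. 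Second, you build the base of your fibration as $B\Emb^{\cong,\ext}_*(M)$ and insist that the $\ext$-condition is essential, but $\check{\Lambda}_W$ is defined in \eqref{equ:fundamental-groups-embedding-spaces} using the $\cong$-superscript (components hit by diffeomorphisms of $W_{g,1}$), \emph{not} the smaller $\ext$-collection (components hit by $\Diff_{D^{2n}}(V_g)$); the homotopy quotient defining $\BEmb^{\fr,\cong}_{\half\partial}(W_{g,1})$ is by $\Emb^{\cong}_{\half\partial}(W_{g,1})$. Your closing remark that ``without the passage to $\ext$-components the statement would fail'' is therefore addressing a different group than the one in the lemma, and the finiteness for $\check{\Lambda}_W$ is precisely \cite[Lemma 3.10]{KR-WDisc}, with no $\ext$-restriction required.
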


\begin{proof}For the second map, this is \cite[Lemma 3.8]{KR-WDisc}. To deal with the first map, we consider the zig-zag
\begin{equation}\label{eq:SomeZigZag}
\check{\Lambda}_{V_{g}}\lla \pi_1(\BDiff^{\fr}_{D^{2n}}(V_g)_\ell)\lra \pi_1(\BDiff^{\sfr}_{D^{2n}}(V_g)_\ell)\lra \pi_1(\BlockBDiff^{\sfr}_{D^{2n}}(V_g)_\ell)
\end{equation}
where the rightmost arrow is the block-comparison map appearing in \eqref{equ:comparison-tangential}, the middle arrow is induced by passing from framings to stable framings, and the leftmost arrow forgets from framed diffeomorphisms to framed self-embeddings. All maps in this zig-zag are isomorphisms. For the leftmost arrow this holds as a consequence of  the second fibre sequence in \eqref{equ:weiss-fs-Vg} for $\Xi=\fr$ and the fact that $\BDiff^\fr_{D^{2n}}(D^{2n+1})\simeq \BDiff_{D^{2n}}(D^{2n+1})$ is simply connected since its loop space is equivalent to the space $\oC(D^{2n})$ of concordances of $D^{2n}$ by smoothing corners, and the latter is connected for $n\ge3$ by Cerf's work on pseudoisotopy \cite[Thèoreme 0]{Cerf}. For the middle arrow, this holds  by  \cref{lem:connectivity-framing-comparison}. For the rightmost arrow it holds because the comparison map $\Diff_{D^{2n}}(V_g)\ra \BlockDiff_{D^{2n}}(V_g)$ is $1$-connected as another consequence of Cerf's work just mentioned. It thus suffices to show that the composition 
\[
\pi_1(\BlockBDiff^{\sfr}_{D^{2n}}(V_g)_\ell)\lra \pi_1(\BhAut^{\cong,\ell}_{D^{2n}}(V_g,W_{g,1}))\lra \GL(\bar{H}_W^\bfZ)
\]
has finite kernel. In fact, the two maps each have finite kernel, the first by the second part of \cref{thm:blockdiff-identification} and the second map as a result of \cite[Lemma 3.10]{KrannichConc1}.
\end{proof}

Up to these finite kernels, the groups \eqref{equ:fundamental-groups-embedding-spaces} thus agree with the subgroups 
\begin{equation}\label{equ:linear-part-of-mcgs}
G_{V_g}\coloneqq \im\Big(\check{\Lambda}_{V_{g}}\ra \GL(\bar{H}_W^\bfZ) \Big)\quad \text{and}\quad G_{W_{g,1}}\coloneq \im\Big(\check{\Lambda}_{W_{g,1}}\ra \GL(\bar{H}_W^\bfZ)\Big)
\end{equation}
of $\GL(\bar{H}_W^\bfZ)$. If $g$ is clear from the context, we abbreviate these and the groups in \eqref{equ:fundamental-groups-embedding-spaces} as
\[\check{\Lambda}_{V}\coloneq \check{\Lambda}_{V_{g}},\quad \check{\Lambda}_{W}\coloneq \check{\Lambda}_{W_{g,1}},\quad G_V\coloneq G_{V_g}, \quad\text{and}\quad G_W\coloneq G_{W_{g,1}}.\]

\begin{rem}Our choice of notation differs slightly from that of other work in which these groups have appeared: 
\begin{enumerate}
\item $G_W=G_{W_{g,1}}$ is denoted $G_g^{\fr,[[\ell]]}$ in \cite{KR-WAlg}, 
\item  $\check{\Lambda}_W=\check{\Lambda}_{W_{g,1}}$ is denoted $\check{\Lambda}^{\fr,\ell}_{g}$ in \cite{KR-WAlg}, and
\item $G_V=G_{V_g}$ agrees with the subgroup denoted by $G_{g,\ell}^{\ext}$ in \cite{KrannichConc1}.
\end{enumerate}
\end{rem}

The group $G_V$ is closely related to $\GL(\bar{H}_V^\bfZ)$. Indeed, any self-embedding of the pair $(V_g,W_{g,1})$ preserves the subspace $\bar{K}^\bfZ_W\subset \bar{H}_W^\bfZ$, so we have an inclusion
\[
G_V\subset\{\Phi\in\GL(\bar{H}_W^\bfZ)\mid\Phi(\bar{K}^\bfZ_W)\subset \bar{K}^\bfZ_W\}
\]
and thus---using the canonical isomorphism $\bar{H}_W^\bfZ/\bar{K}_W^\bfZ\cong \bar{H}_V^\bfZ$---a canonical map \begin{equation}\label{equ:homology-action-Vg}G_V\lra \GL(\bar{H}_V^\bfZ).\end{equation} Precomposed with $\check{\Lambda}_{V}\ra G_V$, this agrees with the usual action on homology.

\begin{lem}\label{lem:mcg-is-semidirect-product}
For $n\ge 3$, the map \eqref{equ:homology-action-Vg} gives rise to an extension
\[
0 \lra M_V^\bfZ \coloneq \begin{cases}
 \Lambda^2(\bar{H}_V^\bfZ)^\vee&\text{for }n\text{ even}\\
\Gamma^2(\bar{H}_V^\bfZ)^\vee&\text{for }n\text{ odd}\\
\end{cases} \lra  G_V\lra \GL(\bar{H}_V^\bfZ)\lra 0.
\]
whose kernel is the integral dual of the exterior or divided power square of $\bar{H}_V^\bfZ$, acted upon by $\GL(\bar{H}_V^\bfZ)$ through $\bar{H}_V^\bfZ$. Moreover, this extension admits a preferred splitting, so 
\[
G_V\cong 
M_V^\bfZ \rtimes \GL(\bar{H}_V^\bfZ).
\]
\end{lem}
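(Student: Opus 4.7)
The plan is to identify $G_V$ with the algebraic subgroup $P \subseteq \GL(\bar{H}_W^\bfZ)$ consisting of automorphisms that preserve both the intersection form $\lambda$ on $\bar{H}_W^\bfZ$ and the Lagrangian $\bar{K}_W^\bfZ$, and then read off the extension together with its preferred splitting from the Levi decomposition of $P$.

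First I would carry out the purely algebraic analysis of $P$. With respect to the hyperbolic basis $(e_i,f_i)_{1 \leq i \leq g}$, any $\Phi \in P$ takes the block form $\Phi = \bigl(\begin{smallmatrix}A & B \\ 0 & D\end{smallmatrix}\bigr)$ with $A \in \GL(\bar{K}_W^\bfZ)$, $D \in \GL(\bar{H}_V^\bfZ)$, and $B \in \Hom(\bar{H}_V^\bfZ, \bar{K}_W^\bfZ)$. The equation $\Phi^T J \Phi = J$, with $J$ the $(-1)^n$-symmetric matrix of $\lambda$, forces $A = (D^T)^{-1}$ under the canonical identification $\bar{K}_W^\bfZ \cong (\bar{H}_V^\bfZ)^\vee$ induced by $\lambda$, together with a $(-1)^{n+1}$-symmetry condition on $B$ regarded as a bilinear form on $\bar{H}_V^\bfZ$. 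This identifies the unipotent radical of $P$ (the elements with $D=I$) with $\Lambda^2(\bar{H}_V^\bfZ)^\vee$ for $n$ even and $\Gamma^2(\bar{H}_V^\bfZ)^\vee$ for $n$ odd, i.e.\ with $M_V^\bfZ$, while the block-diagonal map $D \mapsto \bigl(\begin{smallmatrix}(D^T)^{-1} & 0\\ 0 & D\end{smallmatrix}\bigr)$ provides a canonical Levi section, giving $P \cong M_V^\bfZ \rtimes \GL(\bar{H}_V^\bfZ)$ with $\GL(\bar{H}_V^\bfZ)$ acting on $M_V^\bfZ$ through its standard action on $\bar{H}_V^\bfZ$.

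Next I would show the inclusion $G_V \subseteq P$: any $\Phi \in G_V$ comes from a self-embedding of the pair $(V_g,W_{g,1})$ whose restriction to $W_{g,1}$ lies in $\Emb^{\cong}_{\half \partial}(W_{g,1})$, hence induces an isomorphism on $\bar{H}_W^\bfZ$ and therefore preserves $\lambda$, while preservation of $\bar{K}_W^\bfZ$ is automatic. For the converse inclusion $P \subseteq G_V$, it is enough to realise both the Levi $\GL(\bar{H}_V^\bfZ)$ and the unipotent radical $M_V^\bfZ$ separately. For the Levi, I would use that $\GL_g(\bfZ)$ is generated by signed permutations and elementary transvections; in the standard model of $V_g$ as a chain of beads in $\bfR^{2n+1}$ from \cref{sec:standard-model}, permutations are realised by diffeomorphisms permuting the beads, sign changes by reflections of individual handles, and transvections by standard handle-slide diffeomorphisms supported in the corridors between beads, all arranged to fix the disc $D^{2n} \subseteq \partial V_g$. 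For the unipotent radical, each generator is realised by an embedded sphere twist of $W_{g,1}$ along one of the cocore spheres $e_j$ (or along $e_i + e_j$ for off-diagonal classes), which acts on $\bar{H}_W^\bfZ$ as the transvection $x \mapsto x + \lambda(x, e_j) e_j$; since $e_j$ bounds an embedded $(n+1)$-disc in $V_g$ as the cocore of the $j$th handle, this twist extends to a diffeomorphism of the pair $(V_g,W_{g,1})$ fixing $D^{2n}$.

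The main obstacle I anticipate is in this last step: one must verify carefully that the handle slides and sphere twists can be chosen to respect the standard framing, so as to represent bona fide elements of $\check{\Lambda}_V = \pi_1(\BEmb^{\fr}_{\half D^{2n}}(V_g,W_{g,1})_\ell)$ rather than of some larger unframed mapping class group, and that the sphere-twist generators really exhaust the $\GL(\bar{H}_V^\bfZ)$-module $M_V^\bfZ$ instead of a proper invariant subgroup. Once both inclusions $G_V \subseteq P$ and $P \subseteq G_V$ are in place, the extension and its preferred splitting follow immediately from the Levi decomposition of $P$ established in the first step.
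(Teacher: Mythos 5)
Your plan hinges on identifying $G_V$ with the full parabolic $P \subset \GL(\bar{H}_W^\bfZ)$ preserving $\lambda$ and $\bar{K}_W^\bfZ$, but this identification fails for $n$ odd, at an integral subtlety your unipotent-radical computation glosses over. Solving $\Phi^T J \Phi = J$ for $\Phi = \bigl(\begin{smallmatrix}A & B\\0&D\end{smallmatrix}\bigr)$ in the symplectic case gives $A=(D^T)^{-1}$ together with $D^T B$ symmetric, so the unipotent radical of $P$ consists of \emph{all} symmetric integer matrices, i.e.\ $\Gamma^2((\bar{H}_V^\bfZ)^\vee)$. This is not the same as $\Gamma^2(\bar{H}_V^\bfZ)^\vee \cong \Sym^2((\bar{H}_V^\bfZ)^\vee)$: under the norm map the latter embeds as the proper subgroup of symmetric matrices with even diagonal entries. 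The two agree after $\otimes\bfQ$ but differ integrally, and the lemma is about the integral $M_V^\bfZ$. Consequently, for $n$ odd the parabolic $P$ is strictly larger than $M_V^\bfZ \rtimes \GL(\bar{H}_V^\bfZ)$: the inclusion $G_V \subseteq P$ you correctly establish is too weak to pin down $G_V$, and the reverse inclusion $P \subseteq G_V$ you hope to prove geometrically is in fact false (compare the description in \eqref{equ:matrix-description-Gg}, where $B$ is required to lie in $M^{+}_g$).

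The extra condition that $B$ have even diagonal is exactly the framing constraint, parallel to the way $G_W$ is the quadratic subgroup $\Sp^q_{2g}(\bfZ)$ rather than $\Sp_{2g}(\bfZ)$ for $n$ odd. It cannot be detected from the homological conditions ``preserve $\lambda$'' and ``preserve $\bar{K}_W^\bfZ$'', so establishing the exact sequence requires an argument explaining why the framing kills precisely the odd-diagonal unipotents; this is the content of \cite[Proposition\,3.7\,(ii)]{KrannichConc1}, to which the paper delegates the whole identification of the extension. The paper reserves geometric input for the splitting alone, which it obtains via Wall's observation \cite[Lemma 17]{WallII} that the subgroup of $\pi_0\Diff_{D^{2n}}(V_g)$ consisting of diffeomorphisms extending to $\bfR^{2n+1}$ maps isomorphically onto $\GL(\bar{H}_V^\bfZ)$, combined with the simple fact that such diffeomorphisms automatically preserve the standard framing; this neatly sidesteps the generator-by-generator framing checks you flag as the obstacle to realising your Levi section by handle slides and permutations.
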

\begin{proof}
In view of the zig-zag of isomorphisms \eqref{eq:SomeZigZag} in the proof of \cref{lem:finite-kernel}, the extension has already been established as part of \cite[Proposition\,3.7 (ii)]{KrannichConc1}, so it remains to construct the splitting. For this, we consider the subgroup $\pi_0(\Diff_{D^{2n}}(V_g))_{\bfR^{2n+1}}\subset \pi_0(\Diff_{D^{2n}}(V_g))$ of diffeomorphisms of $V_g$ which admit an extension up to isotopy to a diffeomorphism of $\bfR^{2n+1}$ along the standard embedding $V_g\subset\bfR^{2n+1}$ of \cref{sec:standard-model}. Wall \cite[Lemma 17]{WallII} has shown that the map $\pi_0(\Diff_{D^{2n}}(V_g))_{\bfR^{2n+1}}\ra \GL(\bar{H}_V^\bfZ)$ is an isomorphism, so it suffices to  show that the subgroup $\pi_0(\Diff_{D^{2n}}(V_g))_{\bfR^{2n+1}}$ is contained in the image of the map induced by forgetting framings $\pi_1(\BDiff^\fr_{D^{2n}}(V_g)_\ell)\lra \pi_0(\Diff_{D^{2n}}(V_g))$ since the image of the latter in $\GL(H^\bfZ_W)$ is the subgroup $G_V$. Equivalently, the task is to show that the stabiliser of the standard framing $[\ell]\in \pi_0(\Bun_{D^{2n}}(\tau_{V_g},\varepsilon^{2n+1};\ell_{D^{2n}}))$ under the $\pi_0(\Diff_{D^{2n}}(V_g))$-action contains $\pi_0(\Diff_{D^{2n}}(V_g))_{\bfR^{2n+1}}$. This follows by observing that $\ell$ is by construction restricted from a framing of $\bfR^{2n+1}$ and that $\pi_0(\Bun_{D^{2n}}(\tau_{\bfR^{2n+1}},\varepsilon^{2n+1};\ell_{D^{2n}}))$ is a singleton.
\end{proof}

More explicitly, the subgroups $G_V$ and $G_W$ can be described in terms of the hyperbolic basis $\{e_i,f_i\}_{1\le i\le g}$ of $\bar{H}_W^\bfZ\cong\bfZ^{2g}$ fixed in \cref{sec:standard-model}, ordered by $(e_1,\ldots,e_g,f_1,\ldots,f_g)$. Namely, with respect to the induced identification $\GL(\bar{H}_W^\bfZ)\cong \GL_{2g}(\bfZ)$, we have 
\begin{align}\label{equ:matrix-description-Gg}
\begin{split}
G_W&\cong \begin{cases}
\oO_{g,g}(\bfZ)&\text{for }n\text{ even}\\
\Sp^q_{2g}(\bfZ)&\text{for }n\text{ odd},
\end{cases}\le \OSp_{g}(\bfZ)\quad\text{and}\\
G_V&\cong \{\Big(\begin{smallmatrix}A&AB\\0&(A^T)^{-1}\end{smallmatrix}\Big)\in \GL_{2g}(\bfZ),\mid A\in\GL_g(\bfZ), B\in M^{(-1)^{n+1}}_g\},
\end{split}
\end{align}
see \cite[Proposition 3.5]{KR-WFramings} for the first identification (this uses our condition on the standard framing in \cref{sec:standard-framing}) and \cite[Remarks 3.2, 3.5, 3.8]{KrannichConc1} for the second\footnote{There is a typo in \cite[Remark 3.5]{KrannichConc1}: The matrix should have $AM$ instead of $M$ as the upper right entry.}. Here
\begin{enumerate}
\item $\OSp_{g}(\bfZ)$ is defined as the automorphism group $\oO_{g,g}(\bfZ)$ of the symmetric form $\left(\begin{smallmatrix}0&I\\I&0\end{smallmatrix}\right)$ for $n$ even and the automorphism group $\Sp_{2g}(\bfZ)$ of the antisymmetric form $\left(\begin{smallmatrix}0&I\\-I&0\end{smallmatrix}\right)$ for $n$ odd. We omit the parameter $n$ from the notation; it will be clear from the context.
\item $\Sp^q_{2g}(\bfZ)\le \Sp_{2g}(\bfZ)$ is the finite index subgroup of those symplectic matrices which preserve the $\bfZ/2$-valued quadratic refinement
\[
\map{q}{\bfZ^{2g}}{\bfZ/2}{\sum_{i=1}^g(A_i e_i+B_i f_i)}{\sum_{i=1}^gA_iB_i \Mod{2}.}
\]
\item $M^{\pm}_g\subset \bfZ^{g\times g}$ is the additive subgroup of integral $(g\times g)$-matrices which are $(\pm1)$-symmetric and whose diagonal entries are even.
\end{enumerate}

\subsection{The reflection involution}\label{sec:Reflection}
It will be convenient for us to keep track of the following piece of additional structure. The manifold $V_g$ admits an orientation-reversing smooth involution $\rho$---the \emph{reflection involution}---which preserves setwise the subspaces $W_{g,1}\subset W_g\subset V_g$ and the discs $D^{2n+1}\subset V_g$ and $D^{2n}\subset W_g$, and agrees with a reflection on these discs. Using the standard model of $V_g\subset \bfR^{2n+1}$ from \cref{sec:standard-model}, we choose $\rho$ given by acting by $-1$ on the first coordinate (we could use any of the first $n$ coordinates). This fixes the cores $f_i\colon S^n\hookrightarrow V_g$ pointwise, and on the cores $e_i\colon S^n\hookrightarrow V_g$ restricts to a reflection of $S^n$. It also restricts to a reflection on the discs $D^{2n}\subset \partial V_g=W_g$, $D^{2n+1}\subset V_g$, and $D^{2n}_+\subset D^{2n}$ as chosen in \cref{sec:standard-model}.

\subsubsection{Action on automorphism spaces}
Conjugation by the reflection involution $\rho$ induces compatible pointed involutions $\rho$ on each of the spaces in the diagram
\begin{equation}\label{equ:big-diagram-with-the-players}
\begin{tikzcd}
 \BDiff_\partial(V_g) \dar\rar& \BDiff_{D^{2n}}(V_g) \dar \rar& \BDiff_\partial(W_{g,1}) \dar \\ 
\BEmb^\cong_{\half\partial}(V_g) \dar\rar&\BEmb^\cong_{\half D^{2n}}(V_g, W_{g,1})\rar\dar &\BEmb^\cong_{\half\partial}(W_{g,1}) \dar\\
    \BhAut_\partial(V_g)\rar& \BhAut_{D^{2n}}(V_g,W_{g,1})\rar& \BhAut_\partial(W_{g,1}).
\end{tikzcd}
\end{equation}
In \cite[Section\ 6.5]{KR-WDisc}, it is explained that these involutions lift along the forgetful maps
\[\BDiff^{\fr}_\partial(W_{g,1})_{\ell}\ra \BDiff_\partial(W_{g,1})\quad\text{and}\quad \BEmb_{\half\partial}^{\cong,\fr}(W_{g,1})_{\ell}\ra \BEmb_{\half\partial}^\cong(W_{g,1})\] to involutions in the pointed $A_\infty$-sense on $\BDiff^{\fr}_\partial(W_{g,1})_\ell$ and $\BEmb_{\half\partial}^{\cong,\ell}(W_{g,1})_\ell$, so in particular there are involutions on these spaces in the pointed homotopy category. It will be convenient for us to know that the same is true for the framed analogues of the spaces forming the upper left square of \eqref{equ:big-diagram-with-the-players}. We will explain this for $\BDiff_{D^{2n}}(V_g)$; the other cases are analogous. To this effect, we consider the map of fibre sequences 
\[\label{equ:framed-involution-lifts-diagram}
\begin{tikzcd}
\BDiff^\fr_{D^{2n}}(V_g)\rar\dar& \BDiff^\fr(V_g,D^{2n})\rar\dar& \BDiff^{\onefr}(D^{2n})\dar\\
\BDiff_{D^{2n}}(V_g)\rar& \BDiff(V_g,D^{2n})\rar& \BDiff(D^{2n}),
\end{tikzcd}
\]
induced by restriction, and view the spaces forming the upper row as based via the basepoints induced by the standard framing. As the loop space of the total space of any fibration of pointed spaces naturally acts (in the pointed $A_\infty$-sense) on the fibre, it suffices to prove:
\begin{lem}Writing $C_2=\{\pm1\}$, the pointed map $B C_2\ra \BDiff(V_g,D^{2n})$ induced by the reflection involution lifts along the map $\BDiff^\fr(V_g,D^{2n})_{\ell}\ra  \BDiff(V_g,D^{2n})$.
\end{lem}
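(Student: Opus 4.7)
The plan is to exploit the $C_2$-equivariant codimension-zero embedding $V_g \subset \bfR^{2n+1}$ from \cref{sec:standard-model}, where $C_2$ acts on $\bfR^{2n+1}$ via the linear reflection $\tilde\rho$ restricting to $\rho$ on $V_g$. Writing the classifying map $BC_2 \to \BDiff(V_g,D^{2n})$ in terms of Borel constructions, it corresponds to the $V_g$-bundle $V_g \times_{C_2} EC_2 \to BC_2$; a lift along $\BDiff^\fr(V_g,D^{2n})_{\ell}\to \BDiff(V_g,D^{2n})$ amounts to a framing of the vertical tangent bundle whose fibrewise restriction lies in the $\Diff$-orbit of $\ell$.

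First I would analyse the primary obstruction. The long exact sequence of the fibration $\Bun(\tau_{V_g},\varepsilon^{2n+1})\to \BDiff^\fr(V_g,D^{2n})\to\BDiff(V_g,D^{2n})$ shows that the $\pi_1$-level lifting of $[\rho]$ requires $\rho^*\ell$ to lie in the same component of $\Bun$ as $\ell$ modulo the action of $\Diff(V_g,D^{2n})$ on $\pi_0\Bun$. The key observation is that although $\rho^*\ell$ differs from $\ell$ in $\pi_0\Bun=[V_g,\OO(2n+1)]$ by the constant reflection $\tilde\rho$, the element $\rho\in\Diff(V_g,D^{2n})$ itself realises the required identification $\rho\cdot [\ell]=[\rho^*\ell]$. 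Since $\Diff(V_g,D^{2n})$ contains the orientation-reversing element $\rho$, the orientation component $\pi_0\OO(2n+1)\cong\bfZ/2$ of $\pi_0\Bun$ collapses in $\pi_0\BDiff^\fr(V_g,D^{2n})$, so $[\ell]$ represents a $C_2$-fixed class at the $\pi_0$-level.

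Next I would upgrade this $\pi_0$-level matching to an actual pointed map. The standard framing $\ell$ comes from restricting the standard framing of the ambient $\bfR^{2n+1}$; under the linear action, $\ell$ becomes $C_2$-equivariant with respect to the reflection representation on $\bfR^{2n+1}$. I would use this ambient equivariance to write down an explicit $1$-cell in $\BDiff^\fr(V_g,D^{2n})_{\ell}$ realising the identification $(\id,\ell)\simeq(\rho,\rho^*\ell)$, producing a pointed map from the $1$-skeleton of $BC_2$. Extending over higher cells amounts to verifying vanishing of obstructions in $H^{i+1}(BC_2;\pi_i(F))$ for the relevant fibre $F$; these can be controlled using the involutive nature of $\rho$ (so that the relation $\rho^2=\id$ already holds at the diffeomorphism level) and the explicit geometric form of the involution coming from the ambient embedding.

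The hardest step will be the coherence of Step 3: producing honest $A_\infty$-compatibility of the lift, as opposed to a mere existence statement at the $\pi_1$-level. The crux is to manipulate the ambient $C_2$-equivariant framing so that the resulting data patches together on the Borel construction. I expect this to follow in the same spirit as Kupers--Randal-Williams' treatment of the analogous problem for $W_{g,1}$ in \cite[Section 6.5]{KR-WDisc}, with the ambient embedding $V_g\subset\bfR^{2n+1}$ playing the role that the $\pm$-structure plays there.
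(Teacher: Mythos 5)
The geometric idea you start from—using the ambient embedding $V_g \subset \bfR^{2n+1}$ so that $\rho$ is linear and the standard framing is equivariant up to the constant reflection—is the right one, and it is essentially the same observation the paper exploits. But the way you try to cash it in, via a cell-by-cell obstruction-theory argument, has a genuine gap. The relevant fibre $F = \Bun_{D^{2n}}(\tau_{V_g},\varepsilon^{2n+1};\ell_{D^{2n}})$ has homotopy groups related to $[V_g/D^{2n},\OO(2n+1)]$, which carry $2$-torsion in infinitely many degrees, while $\oH^*(B C_2;\bfZ/2)$ is nonzero in every degree. So the obstruction groups $\oH^{i+1}(B C_2;\pi_i(F))$ do not vanish a priori, and ``these can be controlled using the involutive nature of $\rho$'' is a hope rather than an argument; nothing you write forces the successive obstructions to be zero, and the last paragraph is explicitly an expectation rather than a proof.

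What the paper does instead is turn your geometric observation into a tautology, bypassing the iterated obstruction problem entirely. First, one may replace framings by \emph{unoriented framings}: the forgetful map $\BDiff^\fr(V_g,D^{2n})_\ell \to \BDiff^{\pm\fr}(V_g,D^{2n})_\ell$ is an equivalence, since its homotopy fibre is controlled by $\Map_{D^{2n}}(V_g,\OO(1))$ and $V_g/D^{2n}$ is a wedge of $n$-spheres with $n\ge 2$, so this space is contractible. Now a $\pm\fr$-structure on the $V_g$-bundle over $B C_2$ is a lift of the classifying map of its vertical tangent bundle along $\BO(1)\to\BO(2n+1)$. Because $\rho$ is the restriction of a linear reflection of $\bfR^{2n+1}$, the vertical tangent bundle of $\oE C_2\times_\rho V_g\to B C_2$ is literally the pullback of that of $\oE C_2\times_\rho\bfR^{2n+1}\to B C_2$, which is the $(2n+1)$-plane bundle classified by $B C_2\cong \BO(1)\to \BO(2n+1)$. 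That bundle tautologically lifts along $\BO(1)\to\BO(2n+1)$, and the lift restricts to the one induced by the standard framing on the fibre. No higher obstructions need to be analysed. To repair your argument you would need to add precisely this reduction to $\pm\fr$ (or otherwise identify the vertical tangent bundle as a pullback from $\BO(1)$) rather than attempting to kill obstruction classes cell by cell.
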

\begin{proof}
By obstruction theory, the forgetful map $\BDiff^\fr(V_g,D^{2n})_{\ell}\to \BDiff^{\pm\fr}(V_g,D^{2n})_{\ell}$ is an equivalence (see \cref{ex:tangential-structures} for the $\pm\fr$-notation), so it suffices to solve the lifting problem for $\fr$ replaced by $\pm\fr$. This new lifting problem is equivalent to lifting along $\BO(1)\ra \BO(2n+1)$ the map $T_\pi\colon \oE C_2\times_{\rho}V_g\ra \BO(2n+1)$ classifying the vertical tangent bundle of the $V_g$-bundle $\oE C_2\times_{\rho} V_g\to B C_2$ corresponding to the involution $\rho$, ensuring that on a fibre $V_g$ the lift agrees with that induced by the standard framing of $V_g$. Using that the involution $\rho\colon V_g\ra V_g$ is restricted from the reflection of $\bfR^{2n+1}$ in the first coordinate, we see that this vertical tangent bundle is given by the pullback of the vertical tangent bundle of $\oE C_2\times_{\rho}\bfR^{2n+1}\ra B C_2$, which in turn agrees with the pullback of the bundle over $B C_2$ classified by the composition $B C_2\cong \BO(1)\ra \BO(2n+1)$. The latter tautologically has the required lifting property.
\end{proof}

\subsubsection{Action on mapping class groups}\label{sec:involution-on-mcg}
In particular, the reflection involution $\rho$ acts on the groups $\check{\Lambda}_W$, $\check{\Lambda}_V$ by conjugation, and also on the groups $G_W$, $G_V$, and $\GL(\bar{H}^\bfZ_V)$ (see \cref{sec:MCG}). As we chose the cores $f_i$ to form a basis of $\bar{H}_{V}^\bfZ$ and the cores $e_i$ to form a basis of $\bar{K}_W^\bfZ$ (see \cref{sec:standard-model} for the notation), the involution $\rho$ acts trivially on $\bar{H}_V^\bfZ$ and by multiplication by $-1$ on $\bar{K}_W^\bfZ$. From this we see that when considering $G_W$ and $G_V$ as subgroups of $\GL_{2g}(\bfZ)$ via the fixed basis the involution is given by conjugation by $\left(\begin{smallmatrix}-I&0\\0&I\end{smallmatrix}\right)$. In particular, the involution preserves the semi-direct product decomposition $G_V = M_V^\bfZ \rtimes \GL(\bar{H}_V^\bfZ)$: it is given by the identity on $\GL(\bar{H}_V^\bfZ)$ and by multiplication by $-1$ on $M_V^\bfZ$.

\subsubsection{Interaction with the intersection form}\label{sec:involution-and-intersection}As $\rho$ reverses the orientation, care is needed when it comes to its interaction with the intersection form $\lambda$ of $W_g$. Writing $\bfZ^\pm$ for $\bfZ$ acted upon by $\rho$ as $\pm1$, and similarly $\bfQ^\pm$, the intersection form is $\rho$-equivariant when considered as a morphism
$\lambda\colon \bar{H}_W^\bfZ \otimes \bar{H}_W^\bfZ \ra \bfZ^-,$
and similarly after rationalising, and it induces isomorphisms $\bar{H}_W^\vee\cong \bar{H}_W\otimes \bfQ^-$ as $G_W\rtimes\langle \rho\rangle$-modules and $\bar{H}_V^\vee\cong \bar{K}_W^\bfZ\otimes \bfQ^-$ as $G_V\rtimes\langle\rho\rangle$-modules. By dualising, the intersection form induces a map 
$\omega\colon \bfQ^- \ra \bar{H}_W \otimes \bar{H}_W$. Thinking of $\omega$ as an element of $\bar{H}_W \otimes \bar{H}_W$, the involution $\rho$ acts on it by $-1$. Moreover, as the $f_i$'s are fixed by $\rho$, they induce a splitting of the $G_V\rtimes\langle\rho\rangle$-equivariant extension
\begin{equation}\label{equ:filtration-H_W}0\lra \bar{H}_V^\vee\otimes\bfQ^-\lra \bar{H}_W\lra \bar{H}_V\lra 0\end{equation}
when restricted to the subgroup $\GL(\bar{H}^\bfZ_V)\times\langle \rho\rangle\le G_V\rtimes\langle\rho\rangle$, so we have an isomorphism $\bar{H}_W\cong \bar{H}_V\oplus (\bar{H}_V^\vee\otimes\bfQ^-)$ as $\GL(\bar{H}^\bfZ_V)\times\langle \rho\rangle$-modules.

\section{Homotopy automorphisms and mapping spaces}\label{sec:hAut}
Our strategy for computing the rational homology of the spaces of self-embeddings appearing in \cref{cor:reduction-to-self-embeddings} 
is to compare them to the relevant spaces of homotopy automorphisms forming the bottom row in \eqref{equ:big-diagram-with-the-players}. The rational homotopy Lie algebras of the universal covers of these spaces of homotopy automorphisms are known by work of Berglund--Madsen \cite{BerglundMadsen} and Krannich \cite{KrannichConc1}. We explain their answers in this section, alongside with some preliminaries and further information on the rational homotopy type of related spaces needed in later sections.

\subsection{Gradings and Schur functors}\label{sec-gradings-and-schur}
We begin by fixing some conventions on gradings and Schur functors used in the remainder of this work.

\subsubsection{Grading conventions}\label{sec:gradings}Unless said otherwise, throughout this and the following sections, we work in the symmetric monoidal category of $\bfZ$-graded vector spaces over $\bfQ$, where the symmetry includes the Koszul sign rule, i.e.\, is induced by $v\otimes w\mapsto (-1)^{\deg(v)\deg(w)} w\otimes v$. For a graded vector space $A$, we write $A_k$ for the subspace of elements of degree $k$ and $|x|$ for the degree of an homogenous element $x\in A$. For an ungraded vector space $B$, we write $B[k]$ for the graded vector space concentrated in degree $k$ with $B[k]_k=B$. The graded dual of a graded vector space $A$ is denoted $A^\vee\coloneq \Hom(A,\bfQ[0])$, and the $k$-fold suspension by $s^kA$. Total dimension is denoted $\dim(A)$. A superscript $(-)^+$ indicates that we pass to the subspace of elements of positive degree. We write $(-)_\bfQ\coloneq (-)\otimes_\bfZ\bfQ$ for rationalisation. Given a space $X$, we abbreviate its desuspended reduced homology by
\[H_X\coloneq s^{-1}\widetilde{\oH}_*(X;\bfQ).\]
For $X=V_g$ and $X=W_{g,1}$, we often neglect the genus $g$ from the notation and simply write
\[H_V\coloneq H_{V_g}=s^{-1}\widetilde{\oH}_*(V_g;\bfQ)\quad\text{and}\quad H_W\coloneq H_{W_{g,1}}=s^{-1}\widetilde{\oH}_*(W_{g,1};\bfQ).\]
This relates to the ungraded vector spaces $\bar{H}_V$ and $\bar{H}_W$ introduced in \cref{sec:standard-model} by \[H_V=\bar{H}_V[n-1]\quad\text{and}\quad H_W=\bar{H}_W[n-1].\]
Similarly, we write $K_W\coloneq \ker\big(\iota_*\colon H_W\ra H_V\big)$, so that $K_W=\bar{K}_W[n-1]$.

\subsubsection{Schur functors}\label{sec:Schur-functors}
Given a partition $\mu\vdash k$ of an integer $k\ge0$ the \emph{graded Schur functor} associated to $\mu$ is the endofunctor on graded vector spaces given by
\[S_\mu(A)\coloneq [(\mu)\otimes A^{\otimes k}]^{\Sigma_k}\]
where $(\mu)$ is the irreducible $\bfQ[\Sigma_k]$-module associated to $\mu$ and $[-]^{\Sigma_k}$ are the invariants with respect to the symmetric group action. The relation to usual Schur functors on ungraded vector spaces is as follows: because of the Koszul sign rule, given an ungraded vector space $B$, we have
\[
S_{\mu}(B[m])= [(\mu)\otimes (1^k)^{\otimes m}\otimes B^{\otimes k}]^{\Sigma_k}[km]=s^{km}\begin{cases}
S_{\mu}(B[0])&\text{for }m\text{ even}\\
S_{\mu'}(B[0])&\text{for }m\text{ odd}
\end{cases}
\]
where $\mu'\vdash k$ denotes the conjugate partition, so $(\mu)\otimes (1^k)=(\mu')$.
\begin{ex}
For $\mu=(k)$, a vector space $B$, and $m\ge0$, we have 
\[S_{k}(B[m])=s^{km}\begin{cases}
S_{k}(B[0])&\text{for }m\text{ even}\\
S_{1^k}(B[0])&\text{for }m\text{ odd}
\end{cases}=\begin{cases}
\Sym^k(B)&\text{for }m\text{ even}\\
\Lambda^k(B)&\text{for }m\text{ odd}
\end{cases}[k  m],\]
where $\Sym^k(-)$ and $\Lambda^k(-)$ are the usual (ungraded) symmetric and exterior powers.
\end{ex}

\subsubsection{Symplectic Schur functors}\label{sec:symplectic-schur-functors}Now suppose the graded vector space $A$ comes with an antisymmetric nondegenerate pairing of degree $k$, i.e.\ a map of graded $\bfQ[\Sigma_2]$-modules
\[\lambda\colon A\otimes A\lra (1^2)\otimes \bfQ[k]\]
 such that the adjoint $A\ra \Hom(A,\bfQ[k])=s^{k}A^\vee$ is an isomorphism. 
 
 \begin{ex}\label{ex:intersection-pairing}
For a compact oriented $d$-manifold $M$ with boundary a sphere, the intersection form induces a symmetric nondegenerate pairing of degree $d$ on the reduced homology, i.e. a map of graded $\bfQ[\Sigma_2]$-modules $\widetilde{\oH}_*(M;\bfQ) \otimes \widetilde{\oH}_*(M;\bfQ) \to \bfQ[d]$. Via the $\bfQ[\Sigma_2]$-module isomorphism $\bfQ[-1] \otimes \bfQ[-1] \cong (1^2) \otimes \bfQ[-2]$, this induces on the desuspension $H_M=s^{-1}\widetilde{\oH}_*(M;\bfQ)$ an antisymmetric nondegenerate pairing $\lambda$ of degree $d-2$ in the sense above.
\end{ex}

In this case, for $r\ge0$, we write
\[\textstyle{A^{[r]} \coloneq \ker\big(A^{\otimes r}\xra{\lambda_{i,j}}\bigoplus_{1\le i,j\le r}A^{\otimes r-2}\otimes \bfQ[k] \big)},\] where $\lambda_{i,j}$ applies $\lambda$ to the $i$th and $j$th coordinate, and define the \emph{graded symplectic Schur functor}
\[V_{\mu}(A) \coloneq [(\mu)\otimes A^{[k]}]^{\Sigma_k}\] for partitions $\mu\vdash k$. Similarly to the non-symplectic case discussed above, if $A=B[m]$ for an ungraded vector space $B$ and $m\ge0$, we have
\[V_\mu(B[m])\cong s^{km}\begin{cases}
V_{\mu}(B[0])&\text{for }m\text{ even}\\
V_{\mu'}(B[0])&\text{for }m\text{ odd}.
\end{cases}\]

\subsection{Lie algebras and their derivations}
\label{sec:derivations-reminder}
For a graded $\bfQ$-vector space $A$, we write $\bfL(A)$ for the free graded Lie algebra on $A$ and $\bfL^k(A)\subset \bfL(A)$ for the subspace spanned by $k$-fold brackets. For subspaces $C,D\subset L$ of a graded Lie algebra $L$, we write $[C,D]\subset L$ for the subspace spanned by elements $[c,d]$ with $c\in C$ and $d\in D$. For a morphism $f\colon L\ra L'$ of graded Lie algebras, we write $\Der^f(L,L')$ for the graded vector space of \emph{$f$-derivations} whose degree $k$ component consists of linear maps $\theta\colon L\ra L'$ that increase the degree by $k$ and satisfy the condition
$\theta([x,y])=[\theta(x),f(x)]+(-1)^{k|x|}[f(x),\theta(y)]$.
Having fixed an element $\omega\in L$, we write $\Der_\omega^f(L,L')\subset \Der^f(L,L')$ for the subspace of derivations that vanish on $\omega$. If $f=\id$, we abbreviate $\Der(L)=\Der^{\id}(L,L)$ and and consider this graded vector space as a graded Lie algebra via the bracket $[\theta,\psi] \coloneq \theta\circ \psi-(-1)^{|\psi|}\psi\circ\theta$ for homogeneous elements $\theta,\psi\in \Der(L)$. Note that for $\omega \in L$ the subspace $\Der_\omega(L)\subset \Der(L)$ is a graded sub Lie algebra.

\subsubsection{A dimension formula}In calculations within \cref{sec:block-embeddings}, we will use the following decomposition of the subspace of bracket lengths $\le4$ of the free graded Lie algebra $\bfL(A\oplus x)$ on a graded vector space $A$ concentrated either in even or in odd degrees and an additional generator $x$. We write $A^{\even}$ and $A^{\odd}$ for the subspaces of elements of even respectively odd degree.

\begin{lem}\label{lem:decomposition}For a graded vector space $A$ with $A=A^{\even}$ or $A=A^{\odd}$ and an additional generator $x$, we have direct sum decompositions
\begin{align*}
\bfL^1(A\oplus x)&=A\oplus \langle x\rangle\\
\bfL^2(A\oplus x)&=[A,A]\oplus [x,A]\oplus [x,x]\\
\bfL^3(A\oplus x)&=[A,[A,A]]\oplus [A,[x,A]]\oplus [x,[x,A]] \\
\bfL^4(A\oplus x)&=\bfL^4(A)\oplus [A,[A,[x,A]]\oplus \big\langle [x,[A,[x,A]]],[A,[x,[x,A]]] \big\rangle \oplus [x,[x,[x,A]]].
\end{align*}
Moreover, abbreviating $d=\dim(A)$ and setting $m=0$ if $A=A^{\even}$ and $m=1$ if $A=A^{\odd}$, the dimensions of the summands of bracket length $2$ and $3$ are given as
\[
\begin{tabular}{ c|c|c|c|c|c|c} 
 $[A,A]$ &$[x,A]$ & $[x,x]$ & $[A,[A,A]]$ & $[A,[x,A]]$ &  $[x,[x,A]]$ \\ 
\hline 
 $\tfrac{1}{2}(d^2-(-1)^md)$ &$d$ & $\tfrac{1}{2}(1-(-1)^{|x|})$ & $\tfrac{1}{3}(d^3-d)$ &  $d^2$ & $d$  \\
  \end{tabular}
\]
and those of the summands of bracket length $4$ are
\[
\begin{tabular}{c|c|c|c} $\bfL^4(A)$ & $[A,[A,[x,A]]]$ & $\big\langle [x,[A,[x,A]]],[A,[x,[x,A]]] \big\rangle$& $[x,[x,[x,A]]]$\\
 \hline
$\tfrac{1}{4}(d^4-d^2)$ & $d^3$  & $\tfrac{1}{2}(3d^2-(-1)^{m-|x|}d)$& $d$\\
\end{tabular}
\]
\end{lem}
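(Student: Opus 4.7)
My approach would be to exploit the bi-grading on $\bfL(A\oplus\langle x\rangle)$ refining bracket length by the number of occurrences of $x$. Writing $\bfL_{k,j}$ for the subspace of $\bfL^k(A\oplus\langle x\rangle)$ spanned by iterated brackets with exactly $j$ copies of $x$, the decomposition $\bfL^k(A\oplus\langle x\rangle)=\bigoplus_j\bfL_{k,j}$ comes for free from this multi-grading, and I would identify each $\bfL_{k,j}$ with the subspace described in the lemma by reducing arbitrary length-$k$ brackets to the stated canonical forms using only graded Jacobi and graded antisymmetry. For example, at length $3$ with one $x$, the Jacobi relation
\[
[x,[a_1,a_2]]=[[x,a_1],a_2]+(-1)^{|x||a_1|}[a_1,[x,a_2]]
\]
combined with $[a,x]=\pm[x,a]$ shows $\bfL_{3,1}=[A,[x,A]]$; similarly $[A,[x,x]]$ collapses into $[x,[x,A]]$, and the vanishings $\bfL_{3,3}=0=\bfL_{4,4}$ both reduce to the identity $[x,[x,x]]=0$, valid in any graded Lie algebra over $\bfQ$. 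The length-$4$ case follows the same pattern but requires a more systematic enumeration of the six right-combed nesting patterns with two $x$'s, in order to reduce the two-$x$ subspace to the two-element spanning set $\{[x,[a,[x,b]]],\,[a,[x,[x,b]]]\}$.

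For the dimension formulas I would use the graded Poincaré--Birkhoff--Witt theorem, which supplies the bi-graded isomorphism
\[
T(A\oplus\langle x\rangle)\;\cong\;\Sym\bigl(\bfL(A\oplus\langle x\rangle)^{\even}\bigr)\otimes\Lambda\bigl(\bfL(A\oplus\langle x\rangle)^{\odd}\bigr).
\]
Comparing bi-graded Hilbert series with two formal variables tracking the $A$-count and $x$-count and applying Möbius-style inversion yields each $\dim\bfL_{k,j}$ as a polynomial in $d=\dim A$ whose coefficients depend on the parities of $|A|$ (encoded by $m$) and $|x|$. Equivalently, and perhaps more transparently, one writes $\bfL^k(V)=\mathrm{Lie}(k)\otimes_{\Sigma_k}V^{\otimes k}$ and uses the known $\Sigma_k$-representation structure of the Lie operad (namely $\mathrm{sgn}$ for $k=2$, $S_{(2,1)}$ for $k=3$, and $S_{(3,1)}\oplus S_{(2,1,1)}$ for $k=4$); restriction along $\Sigma_j\times\Sigma_{k-j}\subset\Sigma_k$ via Pieri's rule decomposes each $\bfL_{k,j}$ into graded Schur functors of $A$, whose dimensions depend on the parity of $|A|$ via the identities recorded in \cref{sec:Schur-functors}. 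Most of the tabulated dimensions, such as those for $[x,A]$, $[x,x]$, $[A,[x,A]]$, $[x,[x,A]]$, $[A,[A,[x,A]]]$, and $[x,[x,[x,A]]]$, come out directly as dimensions of a single Schur functor and essentially reduce to a direct enumeration in $V^{\otimes k}$.

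The main obstacle will be the middle summand of $\bfL^4$. Here the two nesting patterns $[x,[A,[x,A]]]$ and $[A,[x,[x,A]]]$ interact non-trivially: the pair of $x$'s contributes $\chi_x^{\otimes 2}$, trivial or sign on $\Sigma_2$ according to $|x|$; the pair of $A$-slots contributes $\Sym^2(A)$ or $\Lambda^2(A)$ according to $m$; and both pair-choices must be coupled against the restriction $\mathrm{Lie}(4)|_{\Sigma_2\times\Sigma_2}$, which by Littlewood--Richardson decomposes as the sum of the branchings of $S_{(3,1)}$ and $S_{(2,1,1)}$. Running through the four possible parity combinations for $(m,|x|)$ and carefully tracking the multiplicities of the summands in this branching produces the $(-1)^{m-|x|}$ correction, and hence the stated formula.
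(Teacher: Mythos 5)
Your bigrading of $\bfL(A\oplus\langle x\rangle)$ by $x$-count is exactly the paper's device for the direct-sum decomposition, and your reductions to right-combed normal forms via graded Jacobi and antisymmetry match the paper's spanning argument step for step. Where you genuinely diverge is in how the dimensions are obtained. The paper applies the graded Witt formula to $\dim\bfL^k(A)$ and $\dim\bfL^k(A\oplus x)$, uses evident upper bounds for the ``pure'' pieces, and then closes the gap on the two-$x$ piece of $\bfL^4$ by explicitly exhibiting a family of relations $(v\otimes w\pm w\otimes v,\dots)$ in the kernel of the spanning map and counting them. Your operadic route --- $\bfL^k(V)=Lie(k)\otimes_{\Sigma_k}V^{\otimes k}$, Frobenius reciprocity along $\Sigma_j\times\Sigma_{k-j}\subset\Sigma_k$, and the branching of $Lie(k)$ --- computes each bigraded piece directly, so it sidesteps the paper's somewhat ad hoc kernel calculation at the cost of some Littlewood--Richardson bookkeeping (your PBW/Hilbert-series variant just reproves the bigraded Witt formula, so it ends up closer to the paper). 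Both are valid; the operadic version is arguably cleaner precisely on the $\bfL^4$ two-$x$ piece, which is where the paper has to work hardest. One caution before you run the computation: the printed entry $\tfrac{1}{4}(d^4+4d^3+5d^2+4d-2(-1)^{m-|x|}d)$ for the middle $\bfL^4$ summand is a typo --- that is the Witt dimension of all of $\bfL^4(A\oplus x)$, as the paper's own proof shows, and the correct value for the two-$x$ piece is $\tfrac{1}{2}(3d^2-(-1)^{m-|x|}d)$. (Sanity check at $d=1$ with $A,x$ both even: $\dim\bfL^4(\bfQ^2)=3$, but the table's four entries sum to $0+1+3+1=5$.) Your restriction of $Lie(4)$ to $\Sigma_2\times\Sigma_2$ gives $(2)\boxtimes(2)\oplus 2\,(2)\boxtimes(1^2)\oplus 2\,(1^2)\boxtimes(2)\oplus(1^2)\boxtimes(1^2)$, which paired against $A^{\otimes 2}\boxtimes\langle x\rangle^{\otimes 2}$ (with Koszul signs governed by $m$ and $|x|$) does produce $\tfrac{1}{2}(3d^2-(-1)^{m-|x|}d)$; do not try to force agreement with the typeset table.
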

\begin{proof}We will use two facts throughout the proof.
\begin{enumerate}
\item The free graded Lie algebra $\bfL(B)$ on a graded vector space $B$ is generated by right-normed words, i.e.\,elements of the form $[b_1,[b_2,[\ldots,[b_i,b_{i+1}]\ldots]]]$ for $b_i\in B$. This follows by induction on the word-length using the Jacobi identity.
\item Witt's classical dimension formula for free ungraded Lie algebras in terms of the Möbius function $\mu(-)$ generalises to the graded setting in that we have
\[
\textstyle{\dim(\bfL^k(B))=\tfrac{1}{k}\sum_{m\mid k}\mu(m)\cdot \big(\dim(B^{\even})-(-1)^m\dim(B^{\odd})\big)^{k/m},}
\]
for a graded vector space $B$ (see \cite[Corollary 2]{Petrogradsky}).
\end{enumerate}
We first establish the sum-decomposition. By counting occurrences of $x$'s, it is clear that the summands of the claimed composition have pairwise trivial intersection, so it suffices to show that their union span the subspaces of fixed bracket lengths. This is clear for $\bfL^1(A\oplus x)$ and $\bfL^2(A\oplus x)$. For $\bfL^3(A\oplus x)$, it follows from
\[
[A,[x,x]]\subset [x,[x,A]],\quad [x,[A,A]]\subset [A,[x,A]],\quad\text{and}\quad[x,[x,x]]=0
\]
which holds as a result of the (graded) Jacobi identity and (graded) anti-commutativity. In the case of $\bfL^4(A\oplus x)$, an application of the Jacobi identity to $[A,[x,[A,A]]]$, $[x,[A,[A,A]]]$, and to $[[A,A],[x,A]]$ shows that any Lie-word that contains one $x$ is contained in $[A,[A,[x,A]]$, two applications to $[A,[A,[x,x]]]$ and $[x,[x,A],A]]$ show that any Lie-word with two occurrences of $x$ is contained in $\langle [x,[A,[x,A]]],[A,[x,[x,A]]]\rangle$, and finally three applications to $[x,[A,[x,x]]]$, $[A,[x,[x,x]]]$, and $[[x,x][A,x]]$ show that any Lie-word with three $x$'s is contained in $[x,[x,[x,A]]]$. As $[x,[x,[x,x]]]$ is trivial since already $[x,[x,x]]$ is trivial by anti-commutativity and the Jacobi identity, this proves the claimed decomposition of $\bfL^4(A\oplus x)$. 

To calculate the entries in the two tables, we argue as follows: by the graded Witt formula and $\mu(1)=1$ as well as $\mu(2)=-1$ we have
\[
\dim([A,A])=\dim(\bfL^2(A))=\tfrac{1}{2}(d^2-(-1)^md),\quad\dim([x,x])=\dim(\bfL^2(x))=\tfrac{1}{2}(1-(-1)^{|x|}),
\]
and $[x,A] \cong A$ so $\dim([x,A]) = d$. 
This gives the first half of the first table. Another application of the graded Witt formula using $\mu(3)=-1$ shows 
\[
\dim([A,[A,A]])=\dim(\bfL^3(A))=\tfrac{1}{3}(d^3-d)\quad\text{and}\quad\dim(\bfL^3(A\oplus x))=\tfrac{1}{3}(d^3+3d^2+2d),
\] so we can combine this with the decomposition of $\bfL^3(A\oplus x)$ to conclude
\[
\dim([A,[x,A]])+\dim([x,[x,A]])=\dim(\bfL^3(A\oplus x))-\dim(\bfL^3(A))=d^2+d.
\]
This implies that the evident estimates $\dim([A,[x,A]])\le d^2$ and $\dim([x,[x,A]])\le d$ are equalities and thus confirms the second half of the first table. To deal with the second table, we use $\mu(4)=0$ and the graded Witt formula to compute
\[
\dim(\bfL^4(A))=\tfrac{1}{4}(d^4-d^2)\quad\text{and}\quad \dim(\bfL^4(A\oplus x))=\tfrac{1}{4}(d^4+4d^3+5a^2+4d-2(-1)^{m-|x|}d).
\]
Together with the above decomposition of $\bfL^4(A\oplus x)$ and the obvious estimates 
\begin{equation}\label{equ:first-dimension-estimate-Lie4}
\dim(\bfL^4([A,[A,[x,A]]])\le d^3\quad\text{and}\quad\dim(\bfL^4([A,[x,[x,A]]])\le d,
\end{equation}
this yields
\begin{equation}\label{equ:second-dimension-estimate-Lie4}
\dim\langle [x,[A,[x,A]]],[A,[x,[x,A]]] \big\rangle\ge \tfrac{1}{2}(3d^2-(-1)^{m-|x|}d).
\end{equation}
We claim that \eqref{equ:second-dimension-estimate-Lie4} is in fact an equality, which would imply that the inequalities in \eqref{equ:first-dimension-estimate-Lie4} have to be equalities as well and thus finish the proof. To prove this claim, we may equivalently show that the kernel of the epimorphism
\[
(A\otimes A)\oplus (A\otimes A)\xrightarrow{\big([x,[-,[x,-]],[-,[x,[x,-]] ]\big)}\big\langle [x,[A,[x,A]]],[A,[x,[x,A]]] \big\rangle)
\]
is at least $\tfrac{1}{2}(d^2+(-1)^{m-|x|}d)$-dimensional. A labourious but straight-forward sequence of applications of the Jacobi identity and anti-commutativity shows that this kernel contains
\[
\begin{cases}
(v\otimes w-w\otimes v,w\otimes v-v\otimes w)&\text{if }|x|\not\equiv m\Mod{2}\\
(v\otimes w+w\otimes v,-(w\otimes v+v\otimes w))&\text{if }|x|\equiv m\equiv0\Mod{2}\\
(v\otimes w-w\otimes v,w\otimes v+v\otimes w)&\text{if }|x|\equiv m\equiv1\Mod{2}
\end{cases}
\]
for $v,w\in A$. Applying this to a basis of $A$ implies the claimed dimension bound on the kernel and thus finishes the proof.
\end{proof}

\subsection{Homotopy groups of mapping spaces}\label{sec:homotopy-groups-haut}
We will need some information on the rational homotopy groups of $\BhAut_\partial(M)$ for various compact $d$-manifolds $M$ and on $\Map_*(M,X)$ for spaces $X$. Most of what we need can be extracted from work of Berglund--Madsen \cite{BerglundMadsen}. Their results apply in larger generality, but to simplify the exposition, we restrict to

\begin{itemize}[leftmargin=5ex]
\item a $1$-connected compact oriented $d$-manifold $M$  equivalent to a wedge of spheres, together with an orientation-preserving equivalence $\partial M\simeq S^{d-1}$ and based at a point in $\partial M$,
\item a $1$-connected based space $X$ equivalent to a wedge of spheres.
\end{itemize}
Using the notation of \cref{sec:gradings}, the Hurewicz homomorphism induces maps of graded vector spaces $\pi_*(\Omega_0 M)\ra H_M$ and $\pi_*(\Omega_0 X)\ra H_X$ that are epimorphisms as $M$ and $X$ are equivalent to wedges of spheres. A choice of splittings 
\[s_M\colon H_M\ra \pi_*(\Omega_0 M)\quad\text{and}\quad s_X\colon  H_X\ra \pi_*(\Omega_0 X)\] 
induces isomorphisms of graded Lie algebras $\bfL(H_M) \cong \pi_{*}(\Omega_0M)$ and $\bfL(H_X) \cong \pi_{*}(\Omega_0X)$, the first being equivariant with respect to the subgroup \begin{equation}\label{equ:haut-split}\pi_0\hAut^\spl_*(M,\partial M)\subset \pi_0\hAut_*(M,\partial M)\end{equation}
of homotopy automorphisms of $(M,\partial M)$ that preserve the splitting $s_M$. Note that \eqref{equ:haut-split} is an equality whenever $s_M$ is unique, e.g.\,if $M$ is equivalent to a wedge of \emph{equi}dimensional spheres.

\begin{lem}\label{lem:boundary-restriction-on-mapping-spaces}
For $(M, s_M)$ and $(X, s_X)$ as above, and a basepoint $f\in \Map_*(M,X)$, there is a  diagram of graded vector spaces with vertical isomorphisms in positive degrees
\[
\begin{tikzcd}[column sep=2cm]
\pi_{*}(\Map_*(M,X),f)_\bfQ\arrow[r,"(-)\circ \iota_\partial"]\arrow[d,"\cong"]&\pi_{*}(\Map_*(\partial M,X),f \circ \iota_\partial)_\bfQ\arrow[d,"\cong"]\\
\Der^{f_*}\big(\bfL(H_M),\bfL(H_X)\big)\dar[d,"\cong"]\rar[r,"\ev_\omega"]&s^{-(d-2)}\bfL(H_X)\dar[d,equal]\\
s^{-(d-2)}\bfL(H_X)\otimes H_M\otimes \bfQ^-\arrow[r,"{\left[-,f_*(-)\right]}"]&s^{-(d-2)}\bfL(H_X)
\end{tikzcd}
\]
where $\bfQ^-\coloneq \oH_d(M,\partial M;\bfQ)$ and $\omega\in \bfL(H_M)$ is the element of degree $d-2$ induced by the inclusion $\iota_\partial\colon \partial M\hookrightarrow M$. This diagram enjoys the following properties:
\begin{enumerate}
\item Equivariance with respect to the evident action of $\pi_0\hAut^\spl_*(M,\partial M)$.
\item Naturality in pairs $(X,s_X)$.
\item Commutativity of the upper square and commutativity of the lower square up to a sign.
\end{enumerate}
\end{lem}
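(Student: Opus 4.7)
The plan is to read off the diagram from the standard rational Lie model of the mapping space $\Map_*(M,X)$, exploiting that $M$ and $X$ are rationally wedges of spheres.

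First, the splitting $s_M$ and the wedge-of-spheres hypothesis give a rational equivalence $M\simeq_{\bfQ}\bigvee_i S^{n_i}$ with $H_M\cong\bigoplus_i\bfQ[n_i-1]$, and similarly for $X$. Consequently $\Map_*(M,X)$ is rationally equivalent to $\prod_i\Omega^{n_i}X$, so in positive degrees
\[
\pi_*(\Map_*(M,X),f)_{\bfQ}\cong\Hom(H_M,\pi_*(\Omega X)_{\bfQ})\cong\Hom(H_M,\bfL(H_X)).
\]
Uniquely extending a linear map on generators to an $f_*$-derivation identifies the last term with $\Der^{f_*}(\bfL(H_M),\bfL(H_X))$, giving the left vertical isomorphism. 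The right vertical is the special case $M=\partial M\simeq S^{d-1}$, for which $\Map_*(\partial M,X)\simeq\Omega^{d-1}X$ and $\pi_k(\Omega^{d-1}X)_{\bfQ}\cong\bfL(H_X)_{k+d-2}=(s^{-(d-2)}\bfL(H_X))_k$.

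Commutativity of the upper square is the defining property of $\omega$. By construction, $\omega$ is the class of $\iota_\partial\colon S^{d-1}\hookrightarrow M$ in $\pi_{d-1}(M)_{\bfQ}=\bfL(H_M)_{d-2}$, and under the identifications above the restriction map $(-)\circ\iota_\partial$ sends a derivation $\theta$ (representing a family $S^k\to\Map_*(M,X)$) to the family obtained by restricting to $\partial M$, which is exactly $\theta(\omega)\in\bfL(H_X)_{k+d-2}$. This is the standard evaluation property of derivations representing rational mapping-space homotopy classes.

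The bottom vertical is a two-step identification: first, $\Der^{f_*}(\bfL(H_M),\bfL(H_X))\cong H_M^\vee\otimes\bfL(H_X)$ by the generator-to-derivation tautology; second, Poincar\'e--Lefschetz duality for $(M,\partial M)$ provides an isomorphism $H_M^\vee\cong s^{-(d-2)}H_M\otimes\bfQ^-$ with $\bfQ^-=\oH_d(M,\partial M;\bfQ)$. Commutativity of the lower square (up to a sign) rests on the classical Whitehead-bracket formula for the boundary class of a simply-connected manifold with spherical boundary: under the Poincar\'e--Lefschetz pairing, $\omega$ corresponds up to sign to $\sum_i[x_i,x_i^\vee]$, where $\{x_i\}$ is any basis of $H_M$ and $\{x_i^\vee\}$ its Poincar\'e-dual basis. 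Applying the Leibniz rule for an $f_*$-derivation gives
\[
\theta(\omega)=\sum_i\bigl(\pm[\theta(x_i),f_*(x_i^\vee)]\pm[f_*(x_i),\theta(x_i^\vee)]\bigr),
\]
which after the duality identification collapses to the map $[-,f_*(-)]$ on $s^{-(d-2)}\bfL(H_X)\otimes H_M\otimes\bfQ^-$.

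Properties (i) and (ii) are then formal: equivariance under $\pi_0\hAut^{\spl}_*(M,\partial M)$ holds because such automorphisms preserve the splitting $s_M$, the boundary inclusion (hence $\omega$), and the Poincar\'e--Lefschetz pairing; naturality in $(X,s_X)$ follows from naturality of the Hurewicz map and of the derivation construction. The main technical obstacle is the sign calibration in the Poincar\'e--Lefschetz isomorphism and in the Whitehead-bracket formula for $\omega$, which is precisely what forces the ``up to a sign'' qualifier in (iii).
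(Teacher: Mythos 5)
Your argument takes the same basic route as the paper, which is essentially a pointer to Berglund--Madsen \cite{BerglundMadsen}: the paper cites their Theorem 3.6 for the upper square, their Propositions 3.9 and 3.11 for the lower square in the special case $M=X$, $f=\id$, and \cite[Lemma B.2]{KrannichConc1} to handle general $f$. What you do differently is sketch from scratch the facts that the paper outsources. The ingredients in your sketch are the right ones (product decomposition $\Map_*(\bigvee S^{n_i},X)\cong\prod\Omega^{n_i}X$, extension of linear maps on generators to $f_*$-derivations, the classical formula $\omega\doteq\sum_i[x_i,x_i^\vee]$, the Leibniz rule). That said, the step ``the restriction map $(-)\circ\iota_\partial$ sends a derivation $\theta$ to $\theta(\omega)$ --- this is the standard evaluation property'' is precisely Berglund--Madsen's Theorem 3.6 and not a tautology: under your wedge decomposition the restriction map is a combination of Whitehead products encoded by the Lie word $\omega$, and the claim is that the induced map on rational homotopy groups at the \emph{general} basepoint $f$ (after translating back to the identity component in each factor $\Omega^{n_i}X$) linearises exactly to $\theta\mapsto\theta(\omega)$. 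For a single bracket $\omega=[x,y]$ this is the bilinearity of the Whitehead product; for arbitrary Lie words it requires an inductive argument, and this is the content of the cited theorem. So your proof is a reasonable hands-on alternative to the citation, but to be complete it should either supply that inductive argument or cite the relevant result. One more small point on (i): you say equivariance holds because automorphisms ``preserve the Poincar\'e--Lefschetz pairing'', but the pairing changes sign under orientation-reversing automorphisms; this is exactly why the factor $\bfQ^-=\oH_d(M,\partial M;\bfQ)$ must appear on the bottom left, and your phrasing does not surface this.
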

\begin{proof}
This is essentially contained in \cite{BerglundMadsen}: The upper square (with the claimed naturality) follows from Theorem 3.6 loc.cit.\ together with the isomorphism $\Der^{(f\circ \iota_\partial)}(\bfL(H_{\partial M}),\bfL(H_X))\cong s^{d-2}\bfL(H_X)$ given by evaluation at the fundamental class of $\partial M\simeq S^{d-1}$. The lower left vertical isomorphism is given by the composition
\[\Der^{f_*}\big(\bfL(H_M),\bfL(H_X)\big)\cong \Hom(H_M,\bfL(H_X))\cong \bfL(H_X)\otimes H_M^\vee\cong s^{-(d-2)}\bfL(H_X)\otimes H_M\otimes \bfQ^-\]
which is equivariant with respect to $\pi_0\hAut^\spl_*(M,\partial M)$. Here the first isomorphism restricts to generators and the second isomorphism is induced by the identification $H_M\cong s^{d-2}H_M^\vee$ via the intersection form (c.f. \cref{sec:symplectic-schur-functors}), which is equivariant with respect to the subgroup $\pi_0\hAut^\spl_*(M,\partial M)^+\subset \pi_0\hAut^\spl_*(M,\partial M)$ of orientation-preserving homotopy equivalences and needs to be twisted by $\bfQ^-$ to be equivariant for the action of all of $\pi_0\hAut^\spl_*(M,\partial M)$. This composition satisfies the claimed naturality, so it only remains to establish commutativity of the lower square up to a sign (see Theorem 3.11 loc.cit.). For $M=X$ and $f=\id$, this follows from Proposition 3.9 loc.cit. since the element $\omega$ in that proposition agrees with $\omega$ as above up to a sign (see Theorem 3.11 (2) loc.cit.). The general case follows from \cite[App.\ B Lem.\ 1]{KrannichConc1}.
\end{proof}

In the case $X=M$ and $f=\id$, the bottom map in the diagram of the previous lemma is surjective since $s^{-(d-2)}\bfL(H_M)$ has no indecomposable elements in positive degrees, so combining \cref{lem:boundary-restriction-on-mapping-spaces} with the long exact sequence induced by the fibration
\vspace{-0.1cm}\[
\hAut_\partial(M)\lra \hAut_*(M)\xlra{(-)\circ \iota_\partial} \Map_*(\partial M,M),
\]
one arrives at a chain of $\pi_0\hAut^\spl(M,\partial M)$-equivariant isomorphisms in positive degrees
\vspace{-0.1cm}\[
\pi_*(\Omega_0\BhAut_\partial(M))_\bfQ\cong \Der_\omega(\bfL(H_M))\cong \ker\big(s^{-(d-2)}\bfL(H_M)\otimes H_M\otimes\bfQ^-\xlra{[-,-]}s^{-(d-2)}H_M\big).
\]
Berglund--Madsen \cite{BerglundMadsen} moreover show that the first isomorphism is compatible with the Lie bracket on both sides (see Proposition 3.7 loc.cit.), and a minor extension of their Proposition 6.6 incorporating $\bfQ^-$ identifies the right-hand kernel canonically with $\cL ie\dl H_M,d\dr\otimes \bfQ^-$ where we write more generally for a graded vector space $A$ \begin{equation}\label{eq:interpretation-cyclic-lie}\cL ie\dl A,d\dr\coloneq\textstyle{ s^{-(d-2)}\bigoplus_{k \geq 2} Lie\dl k\dr  \otimes_{\Sigma_k} A^{\otimes k}}
\end{equation}
with $Lie\dl k\dr $ the arity $k$ part of the cyclic Lie operad. We summarise:

\begin{thm}[Berglund--Madsen]\label{thm:haut-of-wg}
For $M$ as above, there are canonical isomorphisms of graded $\bfQ[\pi_0\hAut^\spl(M,\partial M)]$-modules in positive degrees
\[\pi_{*}(\Omega_0\BhAut_\partial(M))_\bfQ \cong \Der_\omega(\bfL(H_M))\cong \cL ie\dl H_M,d\dr\otimes\bfQ^-,\]
with $\bfQ^-=\oH_d(M,\partial M;\bfQ)$. The first isomorphism respects the Lie algebra structure on both sides.
\end{thm}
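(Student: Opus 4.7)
The plan is to combine \cref{lem:boundary-restriction-on-mapping-spaces} with the long exact sequence of the fibration
\[
\hAut_\partial(M)\lra \hAut_*(M)\xlra{(-)\circ\iota_\partial}\Map_*(\partial M,M),
\]
applied at the basepoint $f=\id\in\Map_*(M,M)$. Specialising \cref{lem:boundary-restriction-on-mapping-spaces} to $X=M$ and $f=\id$ identifies, in positive degrees, the restriction map
\[
\pi_*(\Omega_0\Map_*(M,M),\id)_\bfQ\lra \pi_*(\Omega_0\Map_*(\partial M,M),\iota_\partial)_\bfQ
\]
with the bracket map $[-,-]\colon s^{-(d-2)}\bfL(H_M)\otimes H_M\otimes\bfQ^-\to s^{-(d-2)}\bfL(H_M)$. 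From the long exact sequence, the rational homotopy groups of $\Omega_0\BhAut_\partial(M)$ in positive degrees are the kernel of this map, provided we check surjectivity in positive degrees.

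For surjectivity, I would note that the cokernel of $[-,-]$ is the space of indecomposables $s^{-(d-2)}H_M$ of the free Lie algebra. Since $M$ is a $1$-connected compact $d$-manifold with $\partial M\simeq S^{d-1}$ equivalent to a wedge of spheres, Poincar\'e--Lefschetz duality forces $\widetilde{H}_*(M;\bfQ)$ to be concentrated in degrees $[2,d-1]$, so $H_M$ sits in degrees $[1,d-2]$ and hence $s^{-(d-2)}H_M$ sits in nonpositive degrees. Thus the bracket map is surjective in positive degrees, giving the desired isomorphism
\[
\pi_*(\Omega_0\BhAut_\partial(M))_\bfQ\cong \ker([-,-])=\Der_\omega(\bfL(H_M)).
\]
The claim that the first isomorphism is one of graded Lie algebras is part of \cite[Proposition 3.7]{BerglundMadsen} (it boils down to the Samelson bracket on $\pi_*\Omega\hAut_*(M)$ corresponding under rational models to the commutator bracket on derivations).

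The remaining step is the identification $\Der_\omega(\bfL(H_M))\cong \cL ie\dl H_M,d\dr\otimes\bfQ^-$. This is precisely the content of Berglund--Madsen's Proposition 6.6, and I would essentially reproduce their argument: the defining short exact sequence of $\Der_\omega(\bfL(H_M))$ matches the ``Koszul-type'' presentation $\bigoplus_k Lie(k)\otimes_{\Sigma_k} H_M^{\otimes k}\to \bigoplus_k Lie(k-1)\otimes_{\Sigma_{k-1}} H_M^{\otimes k-1}$ giving $Lie\dl k\dr$ after twisting by the intersection form $H_M\cong s^{d-2}H_M^\vee$. The only modification is to track the sign character induced by $\bfQ^-=\oH_d(M,\partial M;\bfQ)$: the intersection form identifies $H_M^\vee$ with $s^{-(d-2)}H_M\otimes\bfQ^-$ as a $\pi_0\hAut^\spl_*(M,\partial M)$-module (not just as a module over the orientation-preserving subgroup), which threads the $\bfQ^-$ through the cyclic Lie formula~\eqref{eq:interpretation-cyclic-lie}.

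The main technical point, and the step I expect to need the most care, is the equivariance of all constructions under $\pi_0\hAut^\spl(M,\partial M)$ (as opposed to merely under the orientation-preserving subgroup), since this is precisely where the twist by $\bfQ^-$ enters and where \cite{BerglundMadsen} need to be extended. Fortunately, \cref{lem:boundary-restriction-on-mapping-spaces} already records the required equivariance of the comparison diagram, and the cyclic Lie reformulation transports it to the right-hand side.
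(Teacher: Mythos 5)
Your proposal matches the paper's argument essentially verbatim: both start from Lemma~\ref{lem:boundary-restriction-on-mapping-spaces} applied at $X=M$, $f=\id$, combine it with the long exact sequence of the restriction fibration, identify the cokernel of the bracket map with the indecomposables $s^{-(d-2)}H_M$ lying in nonpositive degrees to get surjectivity, cite Berglund--Madsen's Proposition~3.7 for the Lie bracket compatibility, and extend their Proposition~6.6 with the $\bfQ^-$-twist for the cyclic Lie identification. The only (harmless) difference is that you spell out the degree count behind the surjectivity claim via Poincar\'e--Lefschetz duality, where the paper leaves it as a remark.
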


\subsubsection{Relative homotopy automorphisms of $V_g$} Setting $M=W_{g,1}$, \cref{thm:haut-of-wg} describes the rational homotopy groups of the base space of the fibration
\begin{equation}\label{equ:fibre-sequence-haut}
\hAut_\partial(V_g)\lra \hAut_{D^{2n}}(V_g,W_{g,1})\lra\hAut_\partial(W_{g,1}).
\end{equation}
induced by restriction. The homotopy groups of base and fibre were determined as part of \cite{KrannichConc1}. The following is a minor extension of Theorem 4.6 loc.cit.\ to include the effect of the reflection involution $\rho\colon V_g\ra V_g$ from \cref{sec:Reflection}. 

\begin{thm}[Krannich]\label{thm:KrannichhAut}
For $n \geq 2$, there is commutative diagram
\[\hspace{-0.2cm}
\begin{tikzcd}[column sep=0.5cm]
\pi_*(\Omega_0\BhAut_{D^{2n}}(V_g,W_{g,1}))_\bfQ\rar\arrow[d,"\cong"]&[-15pt]\pi_*(\Omega_0\BhAut_{\partial}(W_{g,1}))_\bfQ\rar\arrow[d,"\cong"]&[-8pt]\pi_{*-1}(\Omega_0\BhAut_{\partial}(V_{g}))_\bfQ\arrow[d,"\cong"]\\
\ker\big(\cL ie\dl H_W,2n \dr\rar \to\cL ie\dl H_V,2n\dr\big)\otimes\bfQ^-\rar&\cL ie\dl H_W,2n\dr\otimes\bfQ^-\arrow[r,two heads]&\cL ie\dl H_V,2n\dr\otimes \bfQ^-
\end{tikzcd}
\]
of graded $\bfQ[\pi_0\hAut_{D^{2n}}(V_g,W_{g,1})\rtimes \langle \rho\rangle]$-modules whose vertical maps are isomorphisms in positive degrees. The upper row is induced by \eqref{equ:fibre-sequence-haut} and the bottom by the projection $H_{W}\ra H_{V}$. The action on the lower row is through $H_{V}$, $H_{W}$, and $\bfQ^-=\oH_{2n}(W_{g,1},\partial W_{g,1})$.
\end{thm}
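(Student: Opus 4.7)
The plan is to use \cref{thm:haut-of-wg} applied to $M = W_{g,1}$ to handle the middle entry of the diagram, and then to extract the outer entries by chasing the fibration \eqref{equ:fibre-sequence-haut}. The non-equivariant version of the diagram coincides with \cite[Theorem 4.6]{KrannichConc1}, so the essentially new content is equivariance for the reflection involution $\rho$. Applying \cref{thm:haut-of-wg} to $W_{g,1}$ (which is $1$-connected, has boundary $\simeq S^{2n-1}$, and is rationally equivalent to a wedge of $2g$ equidimensional $n$-spheres) yields the middle vertical isomorphism, equivariant for $\pi_0\hAut^\spl_*(W_{g,1},\partial W_{g,1})$; since $W_{g,1}$ is rationally a wedge of \emph{equi}dimensional spheres, this group equals the full mapping class monoid and in particular contains the class of $\rho$.

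For the outer terms I would use that $V_g \simeq \vee^g S^n$ is carried by the $f_i$-cores, so the inclusion $\iota\colon W_{g,1}\hookrightarrow V_g$ is modelled rationally by the projection $\iota_*\colon H_W \twoheadrightarrow H_V$ with kernel $K_W$. Under Berglund--Madsen's identification of the middle term with $\omega$-vanishing derivations of $\bfL(H_W)$, the restriction map from $\pi_*(\Omega_0\BhAut_{D^{2n}}(V_g,W_{g,1}))_\bfQ$ lands precisely on those derivations that descend along $\iota_*$ to derivations of $\bfL(H_V)$. Transported through the isomorphism of \cref{lem:boundary-restriction-on-mapping-spaces}, this descent condition becomes membership in the kernel of the induced map $\cL ie\dl H_W, 2n\dr \otimes \bfQ^- \to \cL ie\dl H_V, 2n\dr \otimes \bfQ^-$. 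A long-exact-sequence argument for \eqref{equ:fibre-sequence-haut} (checking that the sequence is short-exact in positive degrees, as in \cite{KrannichConc1}) then realises the diagram, with the cokernel $\cL ie\dl H_V, 2n\dr \otimes \bfQ^-$ computing $\pi_{*-1}(\Omega_0\BhAut_\partial(V_g))_\bfQ$ via the connecting homomorphism. Equivariance for the larger group $\pi_0\hAut_{D^{2n}}(V_g,W_{g,1})\rtimes\langle\rho\rangle$ then follows because this group acts compatibly on the entire fibration (see \eqref{equ:big-diagram-with-the-players}) and its image in $\pi_0\hAut(W_{g,1},\partial W_{g,1})$ sits inside $\pi_0\hAut^\spl_*$, so the equivariance of the middle identification automatically propagates to the outer terms through the kernel/cokernel descriptions of the horizontal maps.

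The explicit $\rho$-action on the cyclic Lie expressions is then determined by $\rho$'s action on the input data: $\rho$ acts trivially on $H_V$ (fixing the $f_i$'s pointwise), by $-1$ on $K_W$ (reflecting the $e_i$-spheres), and by $-1$ on $\bfQ^- = \oH_{2n}(W_{g,1},\partial W_{g,1};\bfQ)$ (its restriction to $W_{g,1}$ is orientation-reversing). The main technical obstacle is correctly bookkeeping the $\bfQ^-$-twist throughout: the passage from derivations to cyclic Lie expressions in \cref{lem:boundary-restriction-on-mapping-spaces} uses the intersection form $\lambda\colon H_W\otimes H_W\to\bfQ[2n-2]$, which is $\rho$-equivariant only after twisting the target by $\bfQ^-$. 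Ensuring that this twist is absorbed consistently into the final $\bfQ^-$-tensor factor---rather than appearing elsewhere in the formula---is the one place where careful sign-tracking is required, and is exactly the bit of equivariance left implicit in the original formulation of \cite[Theorem 4.6]{KrannichConc1}.
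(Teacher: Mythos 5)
Your argument matches the paper's proof in its essentials: cite \cite[Theorem 4.6]{KrannichConc1} for the non-equivariant diagram, deduce $\rho$-equivariance of the middle column from \cref{thm:haut-of-wg} (using that $\pi_0\hAut^\spl_*(W_{g,1},\partial W_{g,1})$ equals the full mapping class monoid since $W_{g,1}$ is a wedge of equidimensional spheres), and propagate $\rho$-equivariance to the outer columns via the short-exactness of the rows. The extra detail you provide—re-sketching the kernel/cokernel identifications for the outer terms and tracking the $\bfQ^-$-twist—is consistent with but already contained in the cited theorem, and the citation of \cref{lem:boundary-restriction-on-mapping-spaces} for the derivation-to-cyclic-Lie identification should really point to the construction in \cref{thm:haut-of-wg}, though this does not affect the argument.
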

\begin{proof}
Considered as a diagram of  $\bfQ[\pi_0\hAut_{D^{2n}}(V_g,W_{g,1})]$-modules, this follows from \cite[Theorem 4.6]{KrannichConc1} together with the discussion above \cref{thm:haut-of-wg}, so we are left to argue that the diagram is $\langle\rho\rangle$-equivariant. This is clear for the top and bottom row, and part of \cref{thm:haut-of-wg} for the middle column (as $\rho\in\pi_0\hAut_*(W_{g,1},\partial W_{g,1})$, and this group agrees with its $(-)^\spl$-subgroup as $W_{g,1}$ is equivalent to a wedge of $n$-spheres), so it also follows for the right and left column since the bottom row is a short exact sequence (and thus the top row as well).
\end{proof}

\begin{rem}\label{rem:LieChar}
For later calculations we recall (for instance from \cite[p.\,387–388]{Thrall}) the decomposition of first few Lie representations into irreducible $\bfQ[\Sigma_k]$-modules:
\[Lie(1) = (1) \quad Lie(2) = (1^2) \quad Lie(3) = (2,1) \quad Lie(4) = (3,1) \oplus (2,1^2).\]
Using $Res^{\Sigma_{n}}_{\Sigma_{n-1}} Lie\dl n\dr  = Lie(n-1)$ (and by direct calculation in the first case) one concludes 
\[Lie(\!(2)\!) = (2) \quad Lie\dl 3\dr  = (1^3) \quad Lie\dl 4\dr  = (2^2) \quad Lie\dl 5\dr  = (3,1^2).\]
In particular, using the norm to identify coinvariant with invariants, we have \[ \cL ie\dl H_{W},2n\dr=s^{-(2n-2)}\big(S_{1^3}(H_W)\oplus S_{2^2}(H_W)\oplus S_{3,1^2}(H_W)\big)\]
in degrees $*<4n-4$, and similarly for $\cL ie\dl H_{V},2n\dr$ with $H_W$ replaced by $H_V$.
\end{rem}

\subsection{Some characteristic classes}\label{sec:kappas}
Specialising Theorems \ref{thm:haut-of-wg} and \ref{thm:KrannichhAut} to degree $n-1$ and using \cref{rem:LieChar} gives isomorphisms for $n\ge2$ of the form
\[
\pi_n(\BhAut_\partial(W_{g,1}))_\bfQ \cong  S_{1^3}(H_W)_{3(n-1)} \otimes \bfQ^-\ \ \ \text{and}\ \ \ 
\pi_{n-1}(\BhAut_\partial(V_{g}))_\bfQ \cong S_{1^3}(H_V)_{3(n-1)} \otimes \bfQ^-
\]
but for some purposes we will need a more geometric description of these identifications. To this effect, we will interpret them as certain characteristic classes of $W_{g,1}$- or $V_g$-fibrations.

\subsubsection{The case of $W_{g,1}$}\label{sec:kappa-w}
This case has already been discussed in detail in \cite{KR-WDisc}, see Section 5.3.3. We briefly recall the essentials. A class $\xi \in \pi_n(\BhAut_\partial(W_{g,1}))$ classifies a relative fibration
\[\pi\colon (E, S^n \times \partial W_{g,1}) \lra S^n.\]
with an identification of the fibre over the basepoint $*\in S^n$ with $(W_{g,1}, \partial W_{g,1})$. Identifying $\partial W_{g,1}=S^{2n-1}$ and gluing in $S^n \times D^{2n}$ results in an oriented fibration $\bar{\pi}\colon \bar{E} \to S^n$ with closed fibre $W_g$, with $\bar{E}$ a Poincar{\'e} duality space, which comes with a section $s \colon S^n \to \bar{E}$ given by the centres of the discs we glued in. The Serre spectral sequence for $\bar{\pi}$ gives an exact sequence
\[0 \lra \oH^n(S^n;\bfQ) \overset{\bar{\pi}^*}\lra \oH^n(\bar{E};\bfQ) \lra \oH^n(W_{g};\bfQ) \lra 0\]
and the section $s$ provides a preferred splitting, inducing a map $\iota \colon \bar{H}_W^\vee = \oH^n(W_{g};\bfQ) \ra \oH^n(\bar{E};\bfQ)$. This can be used to define a map
\[\map{\kappa_W}{\pi_n(\BhAut_\partial(W_{g,1}))}{\mathrm{Hom}((\bar{H}_W^\vee)^{\otimes 3}, \bfQ) = \bar{H}_W^{\otimes 3}}{\xi}{\left(v_1 \otimes v_2 \otimes v_3 \mapsto \int_{\bar{E}} \iota(v_1) \cdot \iota(v_2) \cdot \iota(v_3)\right)}\]
As the $\iota(v_i)$'s have degree $n$, on permuting the $v_i$ the number $\kappa_W(\xi)(v_1\otimes v_2\otimes v_3)$ transforms as the $n$th tensor power of the sign representation of $\Sigma_3$, and hence taking the grading of $H_W$ into account the image of $\kappa_W$ lies in $[(1^3) \otimes {H}_W^{\otimes 3}]^{\Sigma_3}_{3(n-1)}= S_{1^3}(H_W)_{3(n-1)}$. Furthermore, the $\pi_0\hAut_\partial(W_{g,1})$-action on $\pi_n(\BhAut_\partial(W_{g,1}))$ is given by changing the identification of the fibre over the basepoint, so $\kappa_W$ is equivariant. Finally, the reflection $\rho$ has the effect of changing the identification of the fibre over the basepoint \emph{and} the orientation of $\bar{E}$, so $\kappa_W$ is $\rho$-equivariant when twisting the target by $\bfQ^-$. Together with Proposition 5.13 (and the subsequent discussion) of \cite{KR-WDisc}, this gives the following.
\begin{lem}[Kupers--Randal-Williams]\label{lem:kappaIso}
The map of $\bfQ[\pi_0\hAut_{\partial}(W_{g,1})\rtimes \langle \rho\rangle]$-modules
\[\kappa_W \colon \pi_n(\BhAut_\partial(W_{g,1}))_\bfQ \lra S_{1^3}(H_W)_{3(n-1)}\otimes\bfQ^-\]
is an isomorphism.
\end{lem}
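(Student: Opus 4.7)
My plan is to combine a dimension count with an equivariance reduction and an explicit nontriviality check. First, \cref{thm:haut-of-wg} together with the identification of $Lie\dl 3\dr$ recorded in \cref{rem:LieChar} provides an abstract isomorphism
\[\pi_n(\BhAut_\partial(W_{g,1}))_\bfQ \;\cong\; S_{1^3}(H_W)_{3(n-1)}\otimes\bfQ^-,\]
so both sides of the asserted isomorphism have the same finite dimension, and it suffices to prove that $\kappa_W$ is surjective (equivalently, injective).

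Next, I would record the equivariance of $\kappa_W$ under $\pi_0\hAut_\partial(W_{g,1})\rtimes\langle\rho\rangle$: a homotopy automorphism of $(W_{g,1},\partial W_{g,1})$ simply reidentifies the basepoint fibre of the relative fibration $\bar\pi\colon \bar E \to S^n$ used to define $\kappa_W$, while the reflection $\rho$ additionally reverses the orientation of $\bar E$, which accounts for the $\bfQ^-$-twist. Since the resulting action on the target factors through the image of $\pi_0\hAut_\partial(W_{g,1})$ in $\GL(\bar H_W)$, which by \eqref{equ:matrix-description-Gg} is the group $G_W$, it suffices to prove that $\kappa_W$ hits every $G_W$-isotypic component of the target. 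A standard branching calculation for the orthogonal/symplectic group decomposes $S_{1^3}(H_W)_{3(n-1)}\otimes\bfQ^-$ into at most two irreducible pieces: the primitive part $V_{1^3}(\bar H_W)$, and a copy of $\bar H_W$ obtained by contracting with the intersection form $\lambda$.

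The main obstacle, and the core of the argument, is exhibiting classes in $\pi_n(\BhAut_\partial(W_{g,1}))$ whose $\kappa_W$-images together saturate both of these $G_W$-irreducible summands. Concretely, for a well-chosen triple of basis vectors $v_1, v_2, v_3 \in \bar H_W$, one constructs a relative fibration $\bar\pi\colon \bar E \to S^n$ by a clutching construction---for instance from a self-diffeomorphism of $W_g$ realising a prescribed action on $\bar H_W$---and then computes the triple intersection $\int_{\bar E} \iota(v_1)\cdot\iota(v_2)\cdot\iota(v_3)$ by a direct analysis of the Serre spectral sequence of $\bar\pi$ and the splitting $\iota$ afforded by the section $s$. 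This intersection computation is carried out in \cite[Proposition~5.11 and the subsequent discussion]{KR-WDisc}, where enough classes are produced to realise elements of each $G_W$-irreducible piece up to nonzero scalars. Combined with the equivariance from the previous step, this forces $\kappa_W$ to be surjective, and hence an isomorphism by the initial dimension count.
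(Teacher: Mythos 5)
Your proposal is correct and ultimately rests on the same key input as the paper: the citation of \cite[Proposition~5.11 and the subsequent discussion]{KR-WDisc}. The paper's own argument is somewhat shorter --- it establishes well-definedness of $\kappa_W$ (that the image lands in $S_{1^3}(H_W)_{3(n-1)}$) and the $\pi_0\hAut_\partial(W_{g,1})\rtimes\langle\rho\rangle$-equivariance with the $\bfQ^-$-twist, and then invokes the cited proposition directly for the fact that $\kappa_W$ is an isomorphism. You wrap this citation in an extra layer of structure (abstract dimension count from \cref{thm:haut-of-wg} and \cref{rem:LieChar}, reduction to surjectivity, decomposition of the target into the $G_W$-irreducibles $V_{1^3}(\bar H_W)$ and $\bar H_W$, and then hitting each piece). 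This is a valid alternative organisation, and the decomposition you describe is indeed the one used elsewhere in the paper (see the proof of \cref{prop:HtpyEmbWg}), but here it is not strictly needed since the cited proposition already supplies the isomorphism. The paper's remark following the lemma makes precisely the converse observation: the abstract isomorphism from \cref{thm:haut-of-wg} and \cref{rem:LieChar} is available but unnecessary, and agrees with $\kappa_W$ up to a scalar.

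One minor imprecision worth flagging: you identify the image of $\pi_0\hAut_\partial(W_{g,1})$ in $\GL(\bar H_W^\bfZ)$ with $G_W$ via \eqref{equ:matrix-description-Gg}, but $G_W$ is defined as the image of $\check\Lambda_W = \pi_1(\BEmb^{\fr,\cong}_{\half\partial}(W_{g,1})_\ell)$, which arises from \emph{framed} self-embeddings and is generally a proper subgroup of the image of the full homotopy mapping class group (for instance, arbitrary homotopy equivalences need not preserve the quadratic refinement $q$). This does not affect your argument, since both groups are Zariski dense in the relevant algebraic group and hence yield the same decomposition into algebraic irreducibles, but the equality as stated is not correct.
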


\begin{rem}It will not be necessary for our purposes, but it follows from the cited proposition and the subsequent discussion that this isomorphism agrees up to a scalar with the isomorphism resulting from \cref{thm:haut-of-wg} and \cref{rem:LieChar}.
\end{rem}

\subsubsection{The case of $V_g$} An element $\zeta \in \pi_{n-1}(\BhAut_\partial(V_g))$ classifies a relative fibration
\[(E', S^{n-1} \times  W_{g}) \overset{\pi'}\lra S^{n-1}\]
with an identification of the fibre over the basepoint $*\in S^n$ with $(V_{g},  W_{g})$. Gluing to this the trivial $\tilde{V}_g$-bundle $S^{n-1} \times \tilde{V}_g \to S^{n-1}$ using the $\partial V_g=W_g$ yields an oriented fibration $\bar{\pi}' \colon \bar{E}' \to S^{n-1}$ with closed fibre $U_g \coloneq V_g\cup_{\partial V_g}\tilde{V}_g\cong \sharp^{g} S^n \times S^{n+1}$, and with section $s \colon S^{n-1} \to \bar{E}'$ given by the centre $*\in D^{2n}\subset W_g$. Here $\tilde{(-)}$ indicates reversal of orientation. The Serre spectral sequence for $\bar{\pi}'$ gives an isomorphism 
\[\oH^n(\bar{E}';\bfQ) \xlra{\cong} \oH^n(U_{g};\bfQ)\cong \oH^n(V_{g};\bfQ)=\bar{H}_V^\vee\]
with inverse $\iota' \colon \bar{H}_V^\vee\to \oH^n(\bar{E}';\bfQ)$. 
This can be used to define a map
\[\map{\kappa_V}{\pi_{n-1}(\BhAut_\partial(V_{g}))}{\mathrm{Hom}((\bar{H}_V^\vee)^{\otimes 3}, \bfQ) = \bar{H}_V^{\otimes 3}}{\zeta}{\left(v_1 \otimes v_2 \otimes v_3 \mapsto \int_{\bar{E}'} \iota'(v_1) \cdot \iota'(v_2) \cdot \iota'(v_3)\right),}\]
and the same degree considerations as in the $W_g$-case show that the image of $\kappa_V$ lands in the subspace $[(1^3) \otimes {H}_V^{\otimes 3}]^{\Sigma_3}_{3(n-1)}=S_{1^3}(H_V)_{3(n-1)}$.

\begin{lem}\label{lem:kappasAreCompatible}
The square of $\bfQ$-vector spaces
\[
\begin{tikzcd}
\pi_n(\BhAut_\partial(W_{g,1}))_\bfQ \rar{\kappa_W} \arrow[d,"\partial",swap]& S_{1^3}(H_W)_{3(n-1)} \otimes \bfQ^- \dar\\
\pi_{n-1}(\BhAut_\partial(V_{g}))_\bfQ \rar{\kappa_V} &  S_{1^3}(H_V)_{3(n-1)}\otimes \bfQ^-
\end{tikzcd}
\]
commutes. Here the left map is induced by \eqref{equ:fibre-sequence-haut} and the right map by the projection $H_W\ra H_V$.
\end{lem}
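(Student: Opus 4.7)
The plan is a Stokes-type argument on the total space of a pulled-back $V_g$-bundle over $D^n$, realising both $\bar{E}$ and $\bar{E}'$ simultaneously as pieces of its boundary. To start, I would view $\xi$ as a map of pairs $\xi\colon (D^n,\partial D^n)\to (\BhAut_\partial(W_{g,1}),*)$ and, using that $\BhAut_\partial(V_g)\to \BhAut_{D^{2n}}(V_g,W_{g,1})\to \BhAut_\partial(W_{g,1})$ is a fibration and $D^n$ is contractible, lift it to $\hat{\xi}\colon D^n\to \BhAut_{D^{2n}}(V_g,W_{g,1})$; its restriction $\hat{\xi}|_{\partial D^n}\colon S^{n-1}\to \BhAut_\partial(V_g)$ then represents $\partial\xi$. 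Pulling back the universal $V_g$-bundle along $\hat{\xi}$ produces a $V_g$-bundle $\hat{E}\to D^n$ whose fibrewise $W_g$-boundary $\partial^{fw}\hat{E}\to D^n$ has trivial $D^{2n}$-part and whose restriction $Y\coloneq \hat{E}|_{\partial D^n}\to S^{n-1}$ is the $V_g$-bundle with trivial $W_g$-boundary classifying $\partial\xi$.

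Next, I would recognise $\bar{E}$ and $\bar{E}'$ inside $\hat{E}$. The $W_g$-bundle $\partial^{fw}\hat{E}$ is canonically trivial over $\partial D^n$ (since $Y$ has trivial $W_g$-boundary), and collapsing $\partial D^n\times W_g$ along projection yields the $W_g$-bundle $\bar{E}\to S^n$ of $\xi$; dually, gluing the trivial $\tilde{V}_g$-bundle $\partial D^n\times \tilde{V}_g$ to $Y$ along the common $W_g$-boundary gives the $U_g$-bundle $\bar{E}'\to S^{n-1}$ of $\partial\xi$. Together, these pieces make $(\hat{E},\partial\hat{E})$ a $(3n{+}1)$-dimensional Poincar\'e pair with $\partial\hat{E}=\partial^{fw}\hat{E}\cup_{\partial D^n\times W_g}Y$ a closed $3n$-dimensional Poincar\'e complex. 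The canonical section of $\hat{E}\to D^n$ through the centre of $D^{2n}\subset V_g$, combined with the Serre spectral sequence, produces for each $v\in \bar{H}_V^\vee$ a class $\hat{\iota}(v)\in \oH^n(\hat{E})$ restricting fibrewise to $v\in \oH^n(V_g)$; a diagram chase of Serre spectral sequences shows that $\hat{\iota}(v)$ pulls back $\iota(v|_W)$ along the degree-one collapse $\partial^{fw}\hat{E}\to \bar{E}$ and that its restriction to $Y$ coincides with the restriction of $\iota'(v)\in\oH^n(\bar{E}')$ to $Y\subset \bar{E}'$.

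The final step is Stokes on $(\hat{E},\partial\hat{E})$: the class $\hat{\iota}(v_1)\hat{\iota}(v_2)\hat{\iota}(v_3)\in\oH^{3n}(\hat{E})$ is closed of top boundary degree, so its integral over $\partial\hat{E}$ vanishes. Decomposing this integral along $\partial\hat{E}=\partial^{fw}\hat{E}\cup Y$, the first piece pulls back to $\int_{\bar{E}}\iota(v_1|_W)\iota(v_2|_W)\iota(v_3|_W)=\kappa_W(\xi)(v_1,v_2,v_3)$ via the collapse's compatibility with relative fundamental classes, while the second piece equals $\int_{\bar{E}'}\iota'(v_1)\iota'(v_2)\iota'(v_3)=\kappa_V(\partial\xi)(v_1,v_2,v_3)$ once one observes that the companion contribution from the slab $\partial D^n\times \tilde{V}_g\subset \bar{E}'$ is trivial, because the triple product restricts fibrewise to $\oH^{3n}(\tilde{V}_g)=0$ (as $\dim \tilde{V}_g=2n+1<3n$ for $n\ge 2$). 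The hard part will be the orientation bookkeeping: the signs produced by the Stokes splitting of $\partial\hat{E}$, the collapse $\partial^{fw}\hat{E}\to \bar{E}$, and the decomposition $\bar{E}'=Y\cup(\partial D^n\times \tilde{V}_g)$ must combine to give the commutativity in the statement rather than a variant with an unwanted sign, which I expect to be routine once the orientations induced by the universal $V_g$-bundle and the standard collar structures are tracked systematically.
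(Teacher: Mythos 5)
The proposal is correct, and it takes a genuinely different route from the paper's proof. The paper's proof proceeds by describing $\bar{E}$ and $\bar{E}'$ as twisted doubles, invoking a general lemma that a twisted double $M\cup_\phi\tilde{M}$ is oriented Poincar\'e cobordant to the mapping torus $T(\phi)$, and then chaining two such cobordisms $K_1\colon \bar{E}\leadsto T(\bar{\xi})$ and $K_2\colon T(\bar{\xi})\leadsto \bar{E}'$ through the mapping torus of the clutching map; producing the lift $\bar{H}_V^\vee\to \oH^n(K;\bfQ)$ then requires two Mayer--Vietoris arguments, one for each piece of $K$. You instead construct a single Poincar\'e pair $(\hat{E},\partial\hat{E})$ directly as the total space of the universal $V_g$-bundle pulled back along a lift $\hat{\xi}$ of $\xi$ over $D^n$, with $\bar{E}$ and $\bar{E}'$ appearing naturally as the two pieces of $\partial\hat{E}$. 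This buys two concrete simplifications: the mapping torus never appears, and the lift $\hat{\iota}(v)\in\oH^n(\hat{E})$ is automatic because $\hat{E}\simeq V_g$ over the contractible base, so no Mayer--Vietoris chase is needed. In fact, since $\oH^{3n}(\hat{E})\cong\oH^{3n}(V_g)=0$ for $n\geq 2$, the triple product already vanishes on $\hat{E}$, so the ``Stokes'' step is immediate. Two points you should state more carefully when writing this up: the ``collapse'' $\partial^{fw}\hat{E}\to\bar{E}$ is really the bundle map covering $D^n\to D^n/\partial D^n=S^n$ arising from the pullback description, and what is needed is that this degree-one map of pairs carries the relative fundamental class to $[\bar{E}]$; and the orientation bookkeeping you flag genuinely matters (it decides between the stated commutativity and an unwanted sign), but the paper's cobordism argument faces the same issue and it does work out.
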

\begin{proof}
If $\zeta = \partial(\xi)$ and $\xi$ corresponds to a clutching map $\bar{\xi} \colon S^{n-1} \to \hAut_\partial(W_{g,1}) \to \hAut(W_{g})$ thought of as a homotopy equivalence $\bar{\xi}\colon S^{n-1} \times W_{g}\ra S^{n-1} \times W_{g}$ by taking adjoints,  
then the spaces $\bar{E}$ and $\bar{E}'$ in the description of $\kappa_W$ and $\kappa_B$ are given by the twisted doubles
\[\bar{E} = (D^n \times W_g) \cup_{\bar{\xi}} (D^n \times \tilde{W}_g)\quad\text{and}\quad\quad \bar{E}' = (S^{n-1} \times V_g) \cup_{\bar{\xi}} (S^{n-1} \times \tilde{V}_g).\]
In general, for an oriented Poincar{\'e} complex $M$ and an orientation-preserving homotopy equivalence $\phi\colon \partial M\ra \partial M$, the twisted double $M\cup_\phi\tilde{M}$ is oriented Poincar{\'e}-cobordant to the mapping torus $T(\phi)=([-2,2]\times\partial M)/\sim$, via the bordism $K$ given by the (equivalent) pushouts
\[
\begin{tikzcd}
\{1\}\times (\left[-1,1\right]\times \partial M)\arrow[d,hook]\arrow[d, phantom, shift left=2cm, "\square_{1}"]\arrow[r,hook]&\{1\}\times (\left[-1,1\right]\times M)\dar&[-0.5cm]\{1\}\times (M\sqcup \tilde{M})\arrow[d,hook]\arrow[d, phantom, shift left=2cm, "\square_{2}"]\arrow[r,hook]&\{1\}\times (M\times I)\dar\\
\left[0,1\right]\times T(\phi)\rar&K&\left[0,1\right]\times \big(M\cup_{\phi}\tilde{M}\big)\rar& K
\end{tikzcd}
\]
Applied to ${\bar{\xi}}$ this gives cobordisms of Poincar{\'e} complexes
$K_1 \colon \bar{E} \leadsto T(\bar{\xi})$ and $K_2 \colon T({\bar{\xi}}) \leadsto \bar{E}'$ (it will be helpful to think of $K_1$ and $K_2$ as instances of $\square_{1}$ and $\square_{2}$, respectively). The maps
\[\bar{H}_V^\vee\xlra{\iota'}\oH^n(\bar{E}';\bfQ)\ \substack{\lra\\ \lra}\ \oH^n(S^{n-1} \times V_g;\bfQ)\]
given by restriction to the two copies $S^{n-1} \times V_g$ and $S^{n-1} \times \tilde{V}_g$ inside $\bar{E}'$ are equal (as the map $\bar{\xi}$ is the identity on cohomology, and the inclusion $S^{n-1} \times W_g \subset S^{n-1} \times V_g$ induces an injection on cohomology), so by Mayer--Vietoris for $\square_{2}$ we may choose a lift $\bar{H}_V^\vee \to \oH^n(K_2;\bfQ)$ which by restriction gives a map $\bar{H}_V^\vee \ra \oH^n(T({\bar{\xi}});\bfQ)$. By Mayer--Vietoris for $\square_{1}$, the group $\oH^{n+1}(K_1, T({\bar{\xi}});\bfQ)=\oH^{n+1}(D^n\times W_g,S^{n-1}\times W_g;\bfQ)$ vanishes, so the lift lifts further to a map $\bar{H}_V^\vee \to \oH^n(K_1;\bfQ)$, which may be restricted to a map  $\bar{H}_V^\vee \to \oH^n(\bar{E};\bfQ)$. The latter agrees with the composition $\bar{H}_V^\vee \to \bar{H}_W^\vee \to \oH^n(\bar{E};\bfQ)$ of the dual of the projection with $\iota$ (this may be checked by restriction to $S^{n-1} \times W_g$). Writing $K = K_2 \circ K_1 \colon \bar{E} \leadsto \bar{E}'$, by Mayer--Vietoris we therefore have a map $\bar{H}_V^\vee \to H^n(K;\bfQ)$ whose restriction to $\bar{E}$ is the composition $\bar{H}_V^\vee \to \bar{H}_W^\vee \to \oH^n(\bar{E};\bfQ)$ of the dual of the projection with $\iota$, and whose restriction to $\bar{E}'$ is $\iota'$. For $v_1,v_2, v_3 \in \bar{H}_V^\vee$ it then follows from Stokes' theorem that
$\textstyle{\int_{\bar{E}} \iota(v_1) \cdot \iota(v_2) \cdot \iota(v_3) = \int_{\bar{E}'} \iota'(v_1) \cdot \iota'(v_2) \cdot \iota'(v_3)}$
as required.
\end{proof}

\begin{cor}\label{cor:lowest-htp-grp-haut-v}
The map of $\bfQ$-vector spaces
\[\kappa_V \colon \pi_{n-1}(\BhAut_\partial(V_{g}))_\bfQ \lra S_{1^3}(H_V)_{3(n-1)}\otimes\bfQ^-\]
is an isomorphism of $\bfQ[\pi_0\hAut_{D^{2n}}(V_g,W_{g,1})\rtimes \langle \rho\rangle]$-modules
\end{cor}
\begin{proof}
As a result of \cref{thm:KrannichhAut}, the left vertical map in the diagram of \cref{lem:kappasAreCompatible} is surjective and equivariant with respect to the canonical $\pi_0\hAut_{D^{2n}}(V_g,W_{g,1})\rtimes \langle \rho\rangle$-action, and the same holds evidently for the right map. As $\kappa_W$ is $(\pi_0\hAut_{\partial}(W_{g,1})\rtimes \langle \rho\rangle)$-equivariant and an isomorphism by \cref{lem:kappaIso}, it follows that $\kappa_V$ is $(\pi_0\hAut_{D^{2n}}(V_g,W_{g,1})\rtimes \langle \rho\rangle)$-equivariant and surjective, so it suffices to argue that source and the target of $\kappa_V$ are abstractly isomorphic as $\bfQ$-vector spaces. By \cref{thm:KrannichhAut} and \cref{rem:LieChar}, the source is given by the degree $n-1$ part of $s^{-(2n-2)}\big(S_{1^3}(H_V)\oplus S_{2^2}(H_V)\oplus S_{3,1^2}(H_V))$. As $H_V$ is concentrated in degree $(n-1)$, this is precisely $S_{1^3}(H_V)_{3(n-1)}$, which is also the target of $\kappa_V$, so the claim follows.
\end{proof}

\begin{rem}
As the target of $\kappa_V$ is zero or irreducible as a $\GL(\bar{H}^\bfZ_V)$-representation (see \cref{sec:some-rep-theory} below) and $\pi_0\hAut_{D^{2n}}(V_g,W_{g,1})$ surjects onto $\GL(\bar{H}^\bfZ_V)$ as a result of \cref{lem:mcg-is-semidirect-product}, the target is irreducible or trivial as a $\pi_0\hAut_{D^{2n}}(V_g,W_{g,1})$-representation, so it follows that $\kappa_V$ must be the same, up to a scalar, as the isomorphism resulting from \cref{thm:KrannichhAut}. 
\end{rem}

\cref{cor:lowest-htp-grp-haut-v} allows us to understand the functoriality of the left-hand side with respect to codimension $0$ embeddings:

\begin{lem}\label{lem:lowest-htp-grp-haut-v-naturality}
For $g,h\ge0$ and an orientation-preserving embedding $\phi \colon V_g \hookrightarrow \interior(V_h)$ the square
\[
\begin{tikzcd}
\pi_{n-1}(\BhAut_\partial(V_{g})) \arrow[r,"\kappa_V","\cong"'] \arrow["\phi_*",d,swap]& S_{1^3}(H_{V_g})_{3(n-1)} \otimes \bfQ^- \dar{\phi_*}\\
\pi_{n-1}(\BhAut_\partial(V_{h})) \arrow[r,"\kappa_V","\cong"'] &  S_{1^3}(H_{V_h})_{3(n-1)} \otimes \bfQ^-
\end{tikzcd}
\]
commutes. Here the left-hand map is induced by extension of homotopy automorphisms by the identity, and the right-hand map is induced by $\phi_* \colon \oH_n(V_g;\bfQ) \to \oH_n(V_h;\bfQ)$.
\end{lem}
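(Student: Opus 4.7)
\bigskip

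The plan is to compare the two integrals defining $\kappa_V(\phi_*\zeta)$ and $\kappa_V(\zeta)$ via the geometry of a fibrewise connect-sum decomposition of the bundle $\bar{E}'_h$. First, writing $\zeta$ as classified by a clutching function $\bar\xi_g \colon S^{n-1} \to \hAut_\partial(V_g)$, the class $\phi_*\zeta$ is classified by the extension $\bar\xi_h$ of (a conjugate of) $\bar\xi_g$ by the identity on $C \coloneq V_h\setminus\interior(\phi(V_g))$. The associated closed bundles from the definition of $\kappa_V$ therefore share a common relative subbundle $E' \to S^{n-1}$ with fibre $(V_g,W_g)$:
\[\bar E'_g = E' \cup_{S^{n-1}\times W_g} (S^{n-1}\times \tilde V_g), \qquad \bar E'_h = E' \cup_{S^{n-1}\times W_g} (S^{n-1}\times Q),\]
where $Q \coloneq C\cup_{W_h} \tilde V_h$. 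Moreover, the Serre spectral sequence for $E'\to S^{n-1}$ (in which $E_2^{n-1,1}=0$ since $V_g$ is $1$-connected) gives an isomorphism $H^n(E';\bfQ)\xrightarrow{\cong} H^n(V_g;\bfQ)$, so for every $w\in H^n(V_h;\bfQ)$ the two classes $\iota'_h(w)|_{E'}$ and $\iota'_g(\phi^*w)|_{E'}$ coincide, both being the unique lift of $\phi^*w\in H^n(V_g;\bfQ)$.

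The core argument is cleanest when $\phi$ is the standard $\natural$-summand inclusion $V_g\hookrightarrow V_g\natural V'_{h-g} = V_h$, to which I would reduce by a handle-trading argument in dimension $2n+1$: any orientation-preserving embedding is isotopic to the standard inclusion post-composed with a self-diffeomorphism of $V_h$, and naturality under such a diffeomorphism is part of the $\pi_0\hAut_\partial(V_h)$-equivariance already established in \cref{cor:lowest-htp-grp-haut-v}. In the standard case $\bar\xi_h$ acts as the identity on the new summand $V'_{h-g}$ and hence on $U'_{h-g}\coloneq \#^{h-g}(S^n\times S^{n+1})$ inside $U_h = U_g\# U'_{h-g}$, so $\bar E'_h$ is the fibrewise connect sum $\bar E'_g \#_s (S^{n-1}\times U'_{h-g})$ along the section $s$. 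This produces two canonical fibrewise degree-one bundle maps $c\colon \bar E'_h\to \bar E'_g$ and $c'\colon \bar E'_h\to S^{n-1}\times U'_{h-g}$, and under the splitting $H^n(V_h;\bfQ) = H^n(V_g;\bfQ)\oplus H^n(V'_{h-g};\bfQ)$ dual to the $\natural$-decomposition, writing $w = w^{\mathrm{old}}+w^{\mathrm{new}}$ with $\phi^*w=w^{\mathrm{old}}$, one obtains the decomposition $\iota'_h(w)=c^*\iota'_g(w^{\mathrm{old}})+c'^*\pr_2^*(w^{\mathrm{new}})$.

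Expanding the triple cup product $\iota'_h(w_1)\cup\iota'_h(w_2)\cup\iota'_h(w_3)$ into $2^3=8$ summands, the ``all-old'' term integrates to $\kappa_V(\zeta)(\phi^*w_1,\phi^*w_2,\phi^*w_3)$ by naturality of integration under the fibrewise degree-one collapse $c$; the ``all-new'' term vanishes because $\dim V'_{h-g} = 2n+1 < 3n$; and each of the six mixed terms vanishes because classes pulled back via $c^*$ and $c'^*$ come from opposite sides of the fibrewise connect sum and therefore admit de Rham representatives with fibrewise disjoint supports (obtained by shrinking the supports via bump functions subordinate to the cover by $\bar E'_g\setminus\nu(s)$ and $S^{n-1}\times(U'_{h-g}\setminus D^{2n+1})$). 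Summing gives the equality $\kappa_V(\phi_*\zeta)(w_1,w_2,w_3)=\kappa_V(\zeta)(\phi^*w_1,\phi^*w_2,\phi^*w_3)$, which is equivalent to the commutativity of the square.

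The main obstacle is the preliminary isotopy reduction to a standard $\natural$-summand inclusion, since the handles of $V_g$ have index $n+1$, which is not strictly below the middle dimension of $V_h$. A safer alternative, requiring no such reduction, is to construct directly an oriented $(3n+1)$-dimensional bordism $K$ from $\bar E'_g$ to $\bar E'_h$ together with compatible extensions of the three cohomology classes across $K$: such $K$ exists because $\tilde V_g$ and $Q$ are oriented cobordant rel $W_g$ (their closed double is $\tilde U_h\cong \#^h(S^n\times S^{n+1})$, which is null-bordant), and Stokes' theorem then delivers the equality of integrals.
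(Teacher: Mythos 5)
Your ``safer alternative'' is essentially the paper's proof. The paper compares $\bar E'' \coloneqq \bar{E}'_h$ and $\bar E' \coloneqq \bar{E}'_g$ via an oriented Poincar\'e bordism $K$ obtained by applying fibrewise a relative bordism from $C\cup_{W_h}\tilde C$ to $W_g\times[0,1]$, lifts $\iota''\colon \bar{H}_{V_h}^\vee\to\oH^n(\bar E'';\bfQ)$ across $K$, checks on a fibre that its restriction to $\bar E'$ is $\iota'\circ\phi^*$, and concludes by Stokes -- exactly as in the proof of \cref{lem:kappasAreCompatible}. Your phrasing via null-bordance of $U_h\cong V_g\cup_{W_g}Q$ produces the same $K$ up to a choice of null-bordism, so the argument and its level of rigour match the paper's.

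Your primary argument, however, has a genuine gap. The reduction to a standard $\natural$-summand inclusion post-composed with a diffeomorphism of $V_h$ is not available in general: the lemma is stated for arbitrary $g,h\ge 0$, and in \cref{sec:clasper} it is applied (via the naturality of $\kappa_V$ in the proof of \cref{prop:alphaExists}) to the handle-filling embeddings $n_i\colon V_3\hookrightarrow V_2$, where $g>h$ and no $\natural$-summand inclusion exists to reduce to. Even when $g\le h$ the claimed isotopy is not automatic -- the complement of $\phi(V_g)$ in $V_h$ need not be abstractly diffeomorphic to the complement of the standard copy, so $\phi$ need not factor through a self-diffeomorphism of $V_h$. (Also, the stated worry is misdiagnosed: $V_g$ has $g$ handles of index $n$, not $n+1$, in a $(2n+1)$-manifold, so they are strictly sub-middle-dimensional; this does not rescue the reduction.) Since the fibrewise connect-sum computation, while sensible where it applies, covers only this special case, the cobordism--Stokes argument should be taken as the actual proof, as indeed it is in the paper.
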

\begin{proof}
Let $C \colon W_g \leadsto W_h$ be the cobordism given by complement of the interior of $\phi(V_g) \subset V_h$, so that $V_h \cong V_g \cup_{W_g} C$. For $\zeta \in \pi_{n-1}(\BhAut_\partial(V_{g}))$ corresponding to a relative fibration
\vspace{-0.1cm}\[(V_{g},  W_{g}) \lra (E', S^{n-1} \times  W_{g}) \overset{\pi'}\lra S^{n-1},\]
the element $\phi_*(\zeta)$ corresponds to the relative fibration
\vspace{-0.1cm}\[(V_{h},  W_{h}) \lra (E'' = E' \cup_{S^{n-1} \times  W_{g}} S^{n-1} \times C, S^{n-1} \times  W_{h}) \overset{\pi''}\lra S^{n-1}.\]
Thus
$\bar{E}'' = (E' \cup_{S^{n-1} \times  W_{g}} S^{n-1} \times C) \cup_{S^{n-1} \times {W_h}} (S^{n-1} \times (\tilde{V}_g \cup_{W_g} \tilde{C})).$
There is a cobordism from $C \cup_{W_h} \tilde{C}$ to $W_g \times [0,1]$ relative the its boundary, which applied fibrewise gives a Poincar{\'e} cobordism $K \colon \bar{E}'' \leadsto \bar{E}'$. It is not hard to see that there is a lift $H_{V_h}^\vee \to \oH^n(K;\bfQ)$ of $\iota'' \colon H_{V_h}^\vee \to \oH^n(\bar{E}'';\bfQ)$, and the composition $H_{V_h}^\vee \to \oH^n(K;\bfQ) \to \oH^n(\bar{E}';\bfQ)$ is easily checked to be equal to $(\iota'\circ\phi^*)\colon H_{V_h}^\vee \ra \oH^n(\bar{E}';\bfQ)$, by restriction to a fibre. The conclusion then follows by Stokes' theorem, as in the proof of Lemma \ref{lem:kappasAreCompatible}.
\end{proof}

\subsubsection{A vanishing result}
At a certain point in Section \ref{sec:universal-covers-embedding-spaces} we will need to know that the map 
$\pi_n(\BEmb^{\cong,\fr}_{\half \partial}(W_{g,1})_\ell)_\bfQ \ra \pi_n(\BhAut_\partial(W_{g,1}))_\bfQ$
induced by the bottom map in \eqref{equ:self-embs-to-haut} is not surjective. To analyse this problem, we consider the composition
\vspace{-0.1cm}
\[\chi \colon \pi_n(\BhAut_\partial(W_{g,1}))_\bfQ \overset{\kappa_W}\lra \bar{H}_W^{\otimes 3} \xra{(x_1, x_2, x_3) \mapsto \lambda(x_1, x_2) x_3} \bar{H}_W,\]
which is nontrivial for $g \geq 2$, as \cref{lem:kappaIso} shows that the image of $\kappa_W$ are the $(-1)^n$-symmetric tensors, and the second map sends the $(-1)^n$-symmetric tensor
\[e_1 \otimes f_1 \otimes e_2 + e_2 \otimes e_1 \otimes f_1 + f_1 \otimes e_2 \otimes e_1 + (-1)^n (f_1 \otimes e_1 \otimes e_2 + e_2 \otimes f_1 \otimes e_1 + e_1 \otimes e_2 \otimes f_1)\]
to $2 e_2 \neq 0$. This implies the second part of the following lemma, whose statement uses \eqref{equ:self-embs-to-haut} and the framing conventions from \cref{sec:standard-framing}.
\begin{lem}\label{lem:ChiVanishesOnEmbFr}
For $n\ge3$, the composition
\[\pi_n(\BEmb^{\cong,\fr}_{\half \partial}(W_{g,1})_\ell)_\bfQ \lra \pi_n(\BhAut_\partial(W_{g,1}))_\bfQ \overset{\chi}\lra \bar{H}_W\]
is zero, so the first map is not surjective for $g \geq 2$.
\end{lem}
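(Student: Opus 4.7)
The plan is to exploit the fact that $\xi$, viewed in $\pi_n(\BhAut_\partial(W_{g,1}))_\bfQ$ via the forgetful maps from $\BEmb^{\cong,\fr}_{\half \partial}(W_{g,1})_\ell$, comes from an embedding family equipped with a framing. By the discussion in \cref{sec:kappa-w}, its image classifies a Poincar\'e fibration $\bar{\pi}\colon\bar{E}\to S^n$ with fibre $W_g$ and section $s$; the key point is that, coming from embeddings via \eqref{equ:self-embs-to-haut}, this fibration is actually realised as a topological $W_g$-bundle, and the framing furnishes a rigidity on the vertical (micro)bundle over the subbundle $E_0\subset\bar{E}$ of $W_{g,1}$-fibres (the complement of the interior of the glued $S^n\times D^{2n}$).

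First I will unwind $\chi(\xi)$ into an integral. Working with the symplectic basis $\{e_i,f_i\}_{1\le i\le g}$ of $\bar{H}_W$ and its dual $\{e_i^*,f_i^*\}$ in $\bar{H}_W^\vee=\oH^n(W_g;\bfQ)$, the definition of $\kappa_W$ together with the contraction defining $\chi$ gives, for $\phi\in\bar{H}_W^\vee$,
\[
\langle \chi(\xi),\phi\rangle \;=\; \int_{\bar{E}}\alpha\cup\iota(\phi), \qquad \alpha\;\coloneqq\;\textstyle\sum_{i,j}\lambda(b_i,b_j)\,\iota(b_i^*)\cup\iota(b_j^*)\;\in\;\oH^{2n}(\bar{E};\bfQ),
\]
where $\{b_i\}=\{e_l,f_l\}$. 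So the task reduces to identifying the class $\alpha$.

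Next I will relate $\alpha$ to the fibrewise Euler class $e_{\mathrm{fib}}\coloneqq\Delta^*[\Delta_{\bar{\pi}}]\in\oH^{2n}(\bar{E};\bfQ)$, where $\Delta_{\bar{\pi}}\subset\bar{E}\times_{S^n}\bar{E}$ is the fibrewise diagonal and $\Delta$ its embedding (for a topological bundle this is the Euler class of the vertical microbundle). Using the fibrewise K\"unneth decomposition of $[\Delta_{\bar{\pi}}]$, the $(0,2n)$- and $(2n,0)$-summands each contribute $U\in\oH^{2n}(\bar{E};\bfQ)$ (the Poincar\'e dual of $s(S^n)$) under $\Delta^*$, while the $(n,n)$-summand contributes $(-1)^n\alpha$. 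Hence
\[
\alpha \;=\; (-1)^n\bigl(e_{\mathrm{fib}} - 2U\bigr).
\]

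The main step is to pin down $e_{\mathrm{fib}}$. The framing of the smooth $W_{g,1}$-bundle $E_0\subset\bar{E}$ trivialises its vertical tangent bundle, giving $e_{\mathrm{fib}}|_{E_0}=0$. Combined with the long exact sequence of the pair $(\bar{E},E_0)$ and the excision identification $\oH^{2n}(\bar{E},E_0;\bfQ)\cong\widetilde{\oH}{}^{2n}(S^n_+\wedge S^{2n};\bfQ)\cong\bfQ$, whose generator maps to $U$ in $\oH^{2n}(\bar{E};\bfQ)$, this forces $e_{\mathrm{fib}}=c\cdot U$ for some $c\in\bfQ$. Restricting to a fibre $W_g$ and using $e(TW_g)=\chi(W_g)\,[\mathrm{pt}]^\vee$ pins down $c=\chi(W_g)$. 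It then follows that $\alpha$ is a multiple of $U$ and, since $\int_{\bar{E}}U\cup\iota(\phi)=\int_{s(S^n)}s^*\iota(\phi)=0$ by construction of $\iota$, we conclude $\chi(\xi)=0$. The non-surjectivity assertion for $g\ge 2$ then follows from the nontriviality of $\chi$ displayed in the paragraph preceding the statement.

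The hardest step will be the third: one must track carefully how the smooth framing carried by a class in $\BEmb^{\cong,\fr}_{\half\partial}(W_{g,1})_\ell$ interacts with the Alexander-trick extension producing the topological bundle $\bar{E}$ via \eqref{equ:self-embs-to-haut}, so as to ensure both the vanishing $e_{\mathrm{fib}}|_{E_0}=0$ and the stated identification of the generator of $\oH^{2n}(\bar{E},E_0;\bfQ)$ with $U$.
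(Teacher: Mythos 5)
Your proposal is correct and follows essentially the same route as the paper's proof: both factor the framed self-embedding class through topological $W_{g,1}$-bundles equipped with a framing of the vertical tangent microbundle, identify $\chi$ (up to terms killed by $\int_{\bar{E}}U\cup\iota(-)=0$) with the fibrewise Euler class integral, use the framing to deduce $e_{\mathrm{fib}}|_{E_0}=0$ so that $e_{\mathrm{fib}}$ is a multiple of $U$, and conclude via $s^*\circ\iota=0$. The only cosmetic difference is that you re-derive the Euler-class/$\chi$ relation from the K\"unneth decomposition of the fibrewise diagonal, whereas the paper cites the Contraction Formula of \cite[Proposition 3.10]{KR-WTorelli}; you also correctly flag the genuinely delicate step (lifting the smooth framing to a microbundle framing on the topological Alexander extension), which the paper handles by explicitly factoring through $\mathrm{Microbun}_{\nicefrac{1}{2}\partial}(\tau^{\mathrm{top}}_{W_{g,1}},\varepsilon^{2n};\ell_{\nicefrac{1}{2}\partial})\dslash \Homeo_\partial(W_{g,1})$.
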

\begin{proof}
As explained in Section \ref{sec:weiss-fs}, the map $\BEmb^{\cong}_{\half \partial}(W_{g,1}) \to \BhAut_\partial(W_{g,1})$ factors over $\BHomeo_\partial(W_{g,1})$. Thus the map from the framed self-embedding space may be factored up to homotopy over
$\mathrm{Microbun}_{\half \partial}(\tau^{\mathrm{top}}_{W_{g,1}},\varepsilon^{2n};\ell_{\half\partial})\dslash \Homeo_\partial(W_{g,1}),$
the homotopy orbits of the space of topological framings, i.e.\ microbundle maps from the tangent microbundle of $W_{g,1}$ to the trivial $2n$-dimensional microbundle (all constructed as simplicial sets and with appropriate boundary conditions). This space classifies topological $W_{g,1}$-bundles with trivialised boundary and a framing of the vertical tangent microbundle, standard on half the boundary. Now suppose that $\pi \colon (E, S^n \times \partial W_{g,1}) \to S^n$ is such a topological bundle, with framing $\zeta \colon \tau^{\mathrm{top}}_\pi  \to \varepsilon^{2n}$ of the vertical tangent microbundle. Neglecting for now the framing, we form $\bar{\pi} \colon \bar{E} = E \cup_{S^n \times \partial W_{g,1}} (S^n \times D^{2n}) \to S^n$. Alternatively to the description in \cref{sec:kappa-w}, the map $\kappa_W$ can be described in terms of a twisted Miller--Morita--Mumford classes in the sense of \cite[Section 3]{KR-WTorelli}: it corresponds to evaluating homotopy classes against the twisted cohomology class $\kappa_{\epsilon^3}(\bar{\pi}) \coloneq \bar{\pi}_!(\epsilon^3) = \int_{\bar{\pi}} \epsilon^3$ from that paper. This is because over the simply-connected space $S^n$ any local coefficient system is trivial and the cohomology class $\epsilon \in H^n(\bar{E} ; \bar{\pi}^*\mathcal{H}_n(W_g ; \bfQ))$ from that paper corresponds to the map $\iota : H_n(W_g;\bfQ)^\vee = H^n(W_g;\bfQ) \to H^n(\bar{E};\bfQ)$ we have used to define $\kappa_W$. Our construction of $\chi$ is what would be denoted $\lambda_{1,2}(\kappa_{\epsilon^3})$ in that paper, and using the Contraction Formula of \cite[Proposition 3.10]{KR-WTorelli} (which is stated for smooth bundles but its proof goes through without change for topological bundles) we see that we have
\begin{align*}
\chi(\pi)(-) & = \textstyle{\int_{S^n} \lambda_{1,2}\left(\int_{\bar{\pi}} \epsilon^3\right)} & \text{by definition}\\
  &\textstyle{= \int_{S^n} \int_{\bar{\pi}} e(\tau^\mathrm{top}_{\bar{\pi}}) \cdot\epsilon } & \text{by the Contraction Formula}\\
  &\textstyle{= \int_{\bar{E}} e(\tau^\mathrm{top}_{\bar{\pi}}) \cdot \iota(-) }& \text{by definition of $\epsilon$ and Fubini.}
\end{align*}
Because of the framing of the vertical tangent microbundle the restriction of $e(\tau^\mathrm{top}_{\bar{\pi}})$ to $E \subset \bar{E}$ is trivial, so this class is in the image of the composition
\[\oH^0(S^n ; \bfQ) \cong \oH^{2n}(S^n \times D^{2n}, S^n \times \partial D^{2n};\bfQ) \cong \oH^{2n}(\bar{E}, E ; \bfQ) \lra \oH^{2n}(\bar{E};\bfQ).\]
That is, the class  $e(\tau^\mathrm{top}_{\bar{\pi}})$ is a multiple of the Poincar{\'e} dual $u \in \oH^{2n}(\bar{E};\bfQ)$ of the section $s \colon S^n \to \bar{E}$, and by evaluating on a fibre we see that the multiple is the Euler characteristic $(2 + (-1)^n 2g)$ of $W_g$. It follows that $\chi(\pi)(-) = (2 + (-1)^n 2g) \int_{S^n}s^*(\iota(-))$, but this vanishes as $s^* \circ \iota$ is trivial by the definition of the map $\iota$ in \cref{sec:kappa-w}.
\end{proof}

\section{The fibre to homotopy automorphisms}\label{sec:Disj}
To compute the rational cohomology of the framed self-embedding spaces featuring in \cref{cor:reduction-to-self-embeddings}, we first consider the stably framed self-embedding spaces 
\[
\BEmb^{\sfr,\cong}_{\half\partial}(V_{g})_{\ell}\quad \text{and} \quad
\BEmb^{\sfr,\cong}_{\half\partial}(W_{g,1})_{\ell}.
\]
By forgetting stable framings and using \eqref{equ:self-embs-to-haut}, they admit maps (see \cref{sec:tangential-structures} for the notation)
\begin{equation}\label{sec:recall-maps-to-haut}
\BEmb^{\sfr,\cong}_{\half\partial}(W_{g,1})_\ell\ra \BhAut_\partial^{\cong,\ell}(W_{g,1}) \quad\text{and}\quad
\BEmb^{\sfr,\cong}_{\half\partial}(V_{g})_\ell\ra \BhAut^{\cong,\ell}_\partial(V_g)
\end{equation}
whose homotopy fibres we denote by
\begin{equation}\label{equ:hofibs-self-emb-to-haut}
\hAut^{\cong,\ell}_\partial(V_g)/\Emb^{\sfr,\cong}_{\nicefrac{1}{2}\partial}(V_g)_\ell\quad\text{and}\quad \hAut^{\cong,\ell}_\partial(W_{g,1})/\Emb^{\sfr,\cong}_{\nicefrac{1}{2}\partial}(W_{g,1})_\ell.
\end{equation}
These fibres are connected, since the maps \eqref{sec:recall-maps-to-haut} are surjective on fundamental groups by construction. In this section, we compute their rational homotopy groups in a range.

\begin{thm}\label{thm:fibre-to-haut}For $n\ge3$, the fundamental groups of the homotopy fibres of the maps \eqref{sec:recall-maps-to-haut} are finite and their higher rational homotopy groups are in the range $1<*<3n-5$ given by 
\begin{align*}
\pi_*\big(\hAut^{\cong,\ell}_\partial(V_g)/\Emb^{\sfr,\cong}_{\nicefrac{1}{2}\partial}(V_g)_\ell\big)_\bfQ&\cong \begin{cases}
\bfQ^{\frac{g(g-1)}{2}}[2n-2]&n\text{ even}\\
\bfQ^{\frac{g(g+1)}{2}}[2n-2]&n\text{ odd}
\end{cases}\\
\pi_*\big(\hAut^{\cong,\ell}_\partial(W_{g,1})/\Emb^{\sfr,\cong}_{\nicefrac{1}{2}\partial}(W_{g,1})_\ell\big)_\bfQ&=0.
\end{align*}
\end{thm}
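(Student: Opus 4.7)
The plan is to prove both statements together by induction on $g$, using Goodwillie's multiple disjunction theorem combined with the Arone--Fresse--Turchin--Willwacher computation of the rational homotopy of long embedding spaces via hairy graph complexes. The manifolds $V_g$ and $W_{g,1}$ are built from a $0$-handle $D^{2n+1}$ (resp.\ $D^{2n}$) by attaching $g$ handles of index $n+1$ (resp.\ $n$), whose cores have codimension $n \geq 3$, so disjunction is applicable. The base case $g=0$ reduces to the fibre of $\BEmb^{\sfr,\cong}_{\half\partial}(D^d)_\ell \to \BhAut^{\cong,\ell}_\partial(D^d)$: a homotopy automorphism of a disc rel boundary is essentially an element of $\Omega^d S^d$, and the embedding analogue differs only by long-embedding data that is rationally trivial in the stated range by AFTW.

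\textbf{Inductive step.} Passing from $g$ to $g+1$, restrict self-embeddings along the collection of all $g+1$ handle-cores. Goodwillie's multi-disjunction theorem then yields a highly Cartesian cube of embedding spaces. Mapping it to the analogous (essentially constant) cube for $\hAut_\partial(-)$ and taking termwise homotopy fibres expresses the fibre $\hAut^{\cong,\ell}_\partial(-)/\Emb^{\sfr,\cong}_{\half\partial}(-)_\ell$ as the total homotopy fibre of a cube whose vertices are fibres over individual subsets of cores. By the connectivity estimates of disjunction combined with the AFTW rational computation, only subsets of size at most two contribute in the range $*<3n-5$: subsets of size one give the rational homotopy of long self-embeddings of a single disc, which is trivial in this range, while subsets of size two contribute a single rational class per unordered pair of cores in degree $2n-2$, representing a Haefliger/Whitehead-type invariant of how the two cores link. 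For $V_g$ this pair-linking class is antisymmetric in the two cores when $n$ is even and symmetric when $n$ is odd, contributing $\Lambda^2 \bar H_V$ of dimension $g(g-1)/2$ or $\Sym^2 \bar H_V$ of dimension $g(g+1)/2$ respectively, matching the statement. For $W_{g,1}$ the analogous pair invariant is an intersection number of $n$-submanifolds in a $2n$-manifold, which is already detected by the action on $\bar H_W$ and hence by the map to $\BhAut_\partial(W_{g,1})$, so the pair contribution is killed in the fibre and one is left with zero in the range.

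\textbf{Finiteness of $\pi_1$ and main obstacle.} The fundamental group of the fibre is controlled by the map $\pi_1 \BEmb \to \pi_1 \BhAut$ modulo the image of $\pi_2 \BhAut$; its finiteness follows from the long exact sequence together with \cref{lem:finite-kernel} and the identification of the images $G_V$, $G_W$ of $\check{\Lambda}_V$, $\check{\Lambda}_W$ in $\GL(\bar H_W^\bfZ)$ from \cref{sec:MCG}. The main technical obstacle will be pinning down the precise connectivity bound $*<3n-5$ by combining Goodwillie's disjunction estimate with the AFTW hairy graph-complex computation, and verifying that the pair-linking class for $V_g$ sits in the kernel of the action on $\bar H_V$ and is therefore invisible to the homotopy automorphism group; in other words, that it corresponds to the summand $M_V^\bfQ$ in the semidirect-product decomposition of \cref{lem:mcg-is-semidirect-product}, so that it survives in the homotopy fibre as claimed.
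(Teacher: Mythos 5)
Your strategy is in the right neighbourhood---disjunction theory for embedding spaces of discs plus the Arone--Turchin/Fresse--Turchin--Willwacher computations are indeed the main inputs---but the architecture you propose has two gaps that the paper's proof fills with substantial intermediate machinery.

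\emph{First gap: you cannot compare an embedding cube directly to $\hAut$.} Disjunction theory compares spaces of smooth embeddings to spaces of \emph{block} embeddings: the highly cartesian cubes live in $\Emb^{(\sim)}_\partial(-,-) = \hofib(\Emb_\partial \to \BlockEmb_\partial)$. There is no natural map from a Goodwillie cube of embedding spaces to a cube of homotopy automorphism spaces, and $\hAut_\partial$ does not fit into disjunction theory. The paper's crucial reduction is \cref{prop:disjunction-problem}, which uses surgery theory---specifically Krannich's identification of $\BlockBDiff$ with $\BhAut$ in \cref{thm:blockdiff-identification}---to rewrite $\Omega(\hAut^{\cong,\ell}_\partial(M)/\Emb^{\sfr,\cong}_{\half\partial}(M)_\ell)$ rationally as the total homotopy fibre of a square comparing $\BDiff_\partial$ to $\BlockBDiff_\partial$. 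Only after this reduction do the total homotopy fibres become spaces of the form $\Emb^{(\sim)}_\partial(D^k,M)$ to which Morlet/Goodwillie disjunction applies. Your proposal silently assumes the $\hAut$ side fits into a disjunction cube, which it does not.

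\emph{Second gap: AFTW covers only discs; the ``all subsets'' cube needs more.} When you restrict a self-embedding along handle cores, the pieces that appear are not all spaces of long embeddings into discs---you also get $\Emb^{(\sim)}(*,V_g)$, $\Emb^{(\sim)}(*,W_{g,1})$, and $\Emb^{(\sim)}(*,W_{g,1}\natural T_h)$ for block embeddings of a point into the ambient manifolds. The rational homotopy of these is not in the literature and is not a formal consequence of AFTW; the paper devotes all of \cref{sec:block-embeddings} (\cref{thm:blockemb-point}, proved via an involved reduction to mapping spaces and free Lie algebra bookkeeping with \cref{lem:decomposition}) to computing it. In particular your claim that the $W_{g,1}$ fibre vanishes ``because the pair invariant is already detected by the action on $\bar H_W$'' is not a proof: the paper instead establishes $\pi_*(\Emb^{(\sim)}(*,W_{g,1}))_\bfQ = 0$ in a range by direct calculation in \cref{section:computation-Wg}. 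A correct argument must show this vanishing, and there is no shortcut via ``detected on homology.''

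\emph{Third point: you are claiming more than the theorem asserts, and proving it would require a different tool.} You propose to identify the contributing class in degree $2n-2$ as $\Lambda^2(\bar H_V)$ (for $n$ even) or $\Sym^2(\bar H_V)$ (for $n$ odd). That equivariant identification is precisely \cref{thm:ClasperMain}, proved in \cref{sec:clasper} by an entirely different method (Watanabe's clasper/Borromean construction). The introduction notes explicitly that the handle-induction breaks the symmetry of $V_g$ and gives only the dimension, not the module structure; \cref{thm:fibre-to-haut} only asserts a dimension count, and the paper's proof computes it by summing contributions $\bfQ^{g-1}$ or $\bfQ^g$ over the handle filtration $V_0\subset V_1\subset\cdots\subset V_g$, not by exhibiting a pair-linking invariant with a definite symmetry type. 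Your route would need to justify that symmetry independently, and a disjunction cube argument alone will not do that, because the differentials between the subcubes are not visibly $\Sigma_2$-equivariant in the relevant way.

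Concretely: your plan would be salvageable if you (i) insert the $\BlockBDiff$ intermediate and reduce via \cref{thm:blockdiff-identification} to total homotopy fibres of $\Diff$-vs-$\BlockBDiff$ squares before invoking disjunction; (ii) compute $\pi_*(\Emb^{(\sim)}(*,M))_\bfQ$ for the ambient manifolds, not just discs; and (iii) drop the claim about the symmetry type from the statement being proved, since \cref{thm:fibre-to-haut} asserts only the dimension.
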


\begin{conv}We assume $n\ge3$ throughout this section.
\end{conv}

\subsection{Reduction to a disjunction problem and separating the handles}
As a first step towards \cref{thm:fibre-to-haut}, we will express the homotopy fibres in consideration rationally in terms of the total homotopy fibres of the commutative squares
\begin{equation}\label{equ:diff-blockdiff-squares}
\begin{tikzcd}
\BDiff_\partial(D^{2n+1})\arrow[d, phantom, shift left=1.4cm, "\square_{V_g}"]\dar\rar & \BDiff_\partial(V_g)\dar\\
\BlockBDiff_\partial(D^{2n+1})\rar& \BlockBDiff_\partial(V_g)
\end{tikzcd}\quad\text{and}\quad
\begin{tikzcd}
\BDiff_\partial(D^{2n})\dar\arrow[d, phantom, shift left=1.4cm, "\square_{W_g}"] \rar& \BDiff_\partial(W_{g,1})\dar\\
\BlockBDiff_\partial(D^{2n})\rar& \BlockBDiff_\partial(W_{g,1})
\end{tikzcd}
\end{equation}
induced by extending (block)-diffeomorphisms of an embedded disc by the identity.

\begin{prop}\label{prop:disjunction-problem}For $n\ge3$, the total homotopy fibres of $\square_{V_g}$ and $\square_{W_g}$ are nilpotent, the homotopy fibres \eqref{equ:hofibs-self-emb-to-haut} have finite fundamental groups, and there are rational equivalences
\[\begin{gathered}
\tohofib\big(\square_{V_g} \big)\lra \Omega_0\left(\hAut^{\cong,\ell}_\partial(V_{g})/\Emb^{\sfr,\cong}_{\nicefrac{1}{2}\partial}(V_{g})_\ell\right)\text{\ \ and}
\\
\tohofib \big(\square_{W_g} \big)\lra \Omega_0\left(\hAut^{\cong,\ell}_\partial(W_{g,1})/\Emb^{\sfr,\cong}_{\nicefrac{1}{2}\partial}(W_{g,1})_\ell\right).
\end{gathered}
\]
\end{prop}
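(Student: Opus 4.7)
The strategy is to set up block analogues of the Weiss fibre sequences \eqref{equ:weiss-fs-Vg} and \eqref{equ:weiss-fs-Wg} and stack them under the smooth versions, producing $2\times 3$ rectangles of horizontal fibre sequences whose left squares are $\square_{V_g}$ and $\square_{W_g}$. The standard algebra of total homotopy fibres will then reduce the claim to the identification of the right-hand column. I describe the plan for $\square_{V_g}$; the case $\square_{W_g}$ is analogous using \eqref{equ:weiss-fs-Wg} in place of \eqref{equ:weiss-fs-Vg}.

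First I would observe that the construction producing \eqref{equ:weiss-fs-Vg} (for $\Xi=\mathrm{id}$) applies verbatim to block diffeomorphisms and yields a fibre sequence
\[
\BlockBDiff_\partial(D^{2n+1}) \lra \BlockBDiff_\partial(V_g) \lra \BlockBEmb^{\cong}_{\half\partial}(V_g),
\]
where the block embedding space is defined as the realisation of the natural semi-simplicial model and restricted to the components in the image of the block diffeomorphism group. Stacking this under \eqref{equ:weiss-fs-Vg} produces the desired $2\times 3$ rectangle. The standard identity relating the total hofib of the left square of a rectangle of horizontal fibre sequences to the loop of the hofib of its right vertical map then gives
\[
\tohofib(\square_{V_g}) \simeq \Omega_0\,\hofib\bigl(\BEmb^{\cong}_{\half\partial}(V_g) \lra \BlockBEmb^{\cong}_{\half\partial}(V_g)\bigr),
\]
taken at the point induced by the identity.

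To identify the right-hand side with the target in the proposition, I would factor the right vertical map through the forgetful map to homotopy automorphisms,
\[
\BEmb^{\cong}_{\half\partial}(V_g) \lra \BlockBEmb^{\cong}_{\half\partial}(V_g) \lra \BhAut^{\cong}_\partial(V_g),
\]
and apply \cref{thm:blockdiff-identification} to the first two terms of the block Weiss fibre sequence: comparing its induced long exact sequence to that of the corresponding homotopy-automorphism sequence forces the second map above to have nilpotent homotopy fibres with finite homotopy groups on the relevant components. Hence, rationally, the hofib in question agrees with the hofib of the composite $\BEmb^{\cong}_{\half\partial}(V_g) \to \BhAut^{\cong}_\partial(V_g)$. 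Replacing this by its stably framed analogue on the $\ell$-component — which on the homotopy-automorphism side amounts to restricting to the subspace $\BhAut^{\cong,\ell}_\partial(V_g)$ of relevant path components and on the embedding side to $\BEmb^{\sfr,\cong}_{\half\partial}(V_g)_\ell$ — reproduces the map from \eqref{sec:recall-maps-to-haut}, so that its hofib is the target $\hAut^{\cong,\ell}_\partial(V_g)/\Emb^{\sfr,\cong}_{\half\partial}(V_g)_\ell$. The nilpotence of $\tohofib(\square_{V_g})$ and the finiteness of $\pi_1$ of the spaces in \eqref{equ:hofibs-self-emb-to-haut} fall out of the same application of \cref{thm:blockdiff-identification}.

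The main obstacle is twofold: (i) verifying the delooped form of the block Weiss fibre sequence with the same care as for its smooth counterpart in \cite[Thm.\,4.17]{KupersFiniteness} and \cite[Prop.\,8.7]{KR-WAlg}, and (ii) matching the stable-framing data on the embedding side with the $(-)_\ell$-component restriction on the homotopy-automorphism side. The second point is where the rational ``cancellation'' between the extra tangential-structure data on either side of \eqref{sec:recall-maps-to-haut} must take place, and requires a careful comparison of tangential-structure homotopy quotients together with the compatibility assertions in \eqref{equ:comparison-tangential}.
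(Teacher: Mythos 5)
Your overall strategy is sound and runs close in spirit to the paper's own argument: both you and the paper identify $\tohofib(\square_{V_g})$ with a loop of a homotopy fibre of a map into $\BhAut^{\cong,\ell}_\partial(V_g)$ and invoke \cref{thm:blockdiff-identification} to obtain rational triviality of the ``block-to-$\hAut$'' correction term. The main structural difference is the middle term. You propose a block self-embedding space $\widetilde{\mathrm{Emb}}^{\cong}_{\half\partial}(V_g)$; the paper never forms such a space. Instead it uses the restriction-to-moving-boundary fibration of block diffeomorphism groups, $\BlockDiff_\partial(V_g) \to \BlockDiff_{\half\partial}(V_g) \to \BlockDiff_\partial(D^{2n})$, and then observes that the boundary map $\Omega\BlockDiff_\partial(D^{2n}) \to \BlockDiff_\partial(V_g)$ factors through the \emph{equivalence} $\Omega\BlockDiff_\partial(D^{2n}) \simeq \BlockDiff_\partial(D^{2n+1})$ coming from the Alexander trick. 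This gives the block analogue of the Weiss fibre sequence ``for free'', with base $\BlockBDiff^{\sfr,\cong}_{\half\partial}(V_g)$, without having to argue that Kupers' delooping result transfers verbatim to a block setting, and without needing a separate comparison of block embeddings with block diffeomorphisms fixing half the boundary. This is a genuine shortcut your proposal misses.

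There are two gaps in the details you give. First, the order in which you introduce tangential structures does not match the tool you intend to use: the unframed part of \cref{thm:blockdiff-identification} lands in $\BhAut^\cong_{\partial_0}(\tau^s_M,\partial_1 M)$, which retains bundle data, whereas the framed part (which lands in $\BhAut^{\cong,\ell}_{\partial_0}(M,\partial_1 M)$ with no bundle data) takes $\BlockBDiff^{\sfr}_{\partial_0}$ as its source. So you cannot set up the $\Xi=\id$ block Weiss sequence, deduce a rational triviality over $\BhAut^\cong_\partial(V_g)$, and only afterwards ``replace by the stably framed analogue'': you need to form the stable-framing homotopy orbits \emph{before} the comparison, as the paper does via the $3\times 3$ diagram built from \eqref{equ:comparison-tangential}. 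Second, the nilpotence assertion in the proposition is an a priori input needed before one may speak of rational equivalences. The paper supplies it independently from Cerf's theorem (the map $\BDiff_\partial(-) \to \BlockBDiff_\partial(-)$ is $2$-connected, so the vertical fibres of $\square_{V_g}$ are $1$-connected and the total hofib is a map of simply-connected spaces). Your remark that nilpotence ``falls out of the same application of \cref{thm:blockdiff-identification}'' is not quite right as stated; if you instead appeal to $\tohofib(\square_{V_g})$ being a loop space once the grid identity is established, that works, but you should say so explicitly, and should also note that the grid identity requires keeping careful track of basepoints and path-components in both rows.
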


\begin{proof}
The proofs for $V_g$ and $W_{g,1}$ are essentially identical; we shall explain the argument for $V_g$. It follows from work of Cerf \cite[Thèoreme 0]{Cerf} that the map $\BDiff_\partial(V_g)\ra \BlockBDiff_\partial(V_g)$ is $2$-connected for all $g\ge0$, so the vertical homotopy fibres of the square $\square_{V_g}$ are $1$-connected. The total homotopy fibre of this square thus arises as the homotopy fibre of a map between $1$-connected spaces and is therefore nilpotent (in particular connected). To prove the second claim, we consider the fibre sequence $\BlockDiff_\partial(V_g)\ra \BlockDiff_{\half\partial}(V_g)\ra \BlockDiff_\partial(D^{2n})$ induced by restricting block diffeomorphisms to the moving part of the boundary. The induced map $\Omega \BlockDiff_\partial(D^{2n})\ra \BlockDiff_\partial(V_g)$ factors as
$\Omega \BlockDiff_\partial(D^{2n})\ra \BlockDiff_\partial(D^{2n+1}) \ra \BlockDiff_\partial(V_g)$ where the first map results from identifying $D^{2n}\times [0,1]$ with $D^{2n+1}$ and the second one is given by extending diffeomorphisms of a disc $D^{2n+1}\subset \interior(V_g)$ by the identity. As the first map in this composition is an equivalence, the sequence 
\[\BlockDiff_\partial(D^{2n+1}) \lra \BlockDiff_\partial(V_g)\lra \BlockDiff_{\half\partial}(V_g)\] 
is a fibre sequence. Restricting the rightmost space to the components $\BlockDiff^\cong_{\half\partial}(V_g)$ in the image of the right map and taking homotopy orbits of the action on the space of stable framings (see \cref{sec:tangential-structures}) yields the rightmost column in the commutative diagram
\begin{equation}\label{equ:blockdiff-diff-big-diagram}
\begin{tikzcd}[column sep=0.5cm]
\BlockDiff_\partial(D^{2n+1})/\Diff_\partial(D^{2n+1})\rar\dar&\BDiff^{\sfr}_\partial(D^{2n+1})\dar\rar&\BlockBDiff^{\sfr}_\partial(D^{2n+1})\dar\\
\BlockDiff_\partial(V_g)/\Diff_\partial(V_g)\rar&\BDiff^{\sfr}_\partial(V_g)\dar\rar&\BlockBDiff^{\sfr}_\partial(V_g)\dar\\
&\BEmb^{\sfr,\cong}_{\nicefrac{1}{2}\partial}(V_{g})&\BlockBDiff^{\sfr,\cong}_{\nicefrac{1}{2}\partial}(V_g)
\end{tikzcd}
\end{equation}
whose rows and columns are fibration sequences; the identification of the homotopy fibres of the rows follows from \eqref{equ:comparison-tangential}, the middle column is the case $\Xi=\sfr$ of the first sequence in \eqref{equ:weiss-fs-Vg}. Extending the diagram to the right by the fibre sequence $\BhAut_\partial(D^{2n+1})\ra\BhAut_{\partial}(V_g)\ra \BhAut_{\half \partial}(V_g,D^{2n})$,
taking vertical homotopy fibres in \eqref{equ:blockdiff-diff-big-diagram} at the base points induced by $\ell$ and using contractibility of $\hAut_\partial(D^{2n+1})$, we obtain up to canonical equivalences a diagram of horizontal fibre sequence
\[
\begin{tikzcd}
\tohofib\big(\square_{V_g} \big)\rar\dar&\Omega \BEmb^{\sfr,\cong}_{\nicefrac{1}{2}\partial}(V_{g})_\ell\rar\dar&\Omega\BlockBDiff^{\sfr,\cong}_{\nicefrac{1}{2}\partial}(V_g)_\ell\dar\\
*\rar&\Omega\BhAut^{\cong,\ell}_{\half \partial}(V_g,D^{2n})\arrow[r,equal]&\Omega\BhAut^{\cong,\ell}_{\half \partial}(V_g,D^{2n}).
\end{tikzcd}
\]
Taking vertical homotopy fibres, the claim would follow by showing that the homotopy fibre of the rightmost vertical map has finitely many components and that these are rationally trivial. As the triad $(V_g;\partial V_g\backslash \interior(D^{2n}),D^{2n})$ satisfies the $\pi$-$\pi$-condition, this holds by \cref{thm:blockdiff-identification}.
\end{proof}

Motivated by \cref{prop:disjunction-problem}, we now focus on analysing the total homotopy fibres of the squares $\square_{V_{g}}$ and $\square_{W_g}$. It will be convenient to factor $\square_{V_{g}}$ as
\begin{equation}\label{equ:factorisationVg}
\begin{tikzcd}
\BDiff_\partial(D^{2n+1})\dar\arrow[d, phantom, shift left=1.4cm, "\square_{V_{g-1}}"]\rar & \BDiff_\partial(V_{g-1})\dar\rar\arrow[d, phantom, shift left=1.4cm, "\square_{V_{g-1,g}}"] &\BDiff_\partial(V_{g})\dar\\
\BlockBDiff_\partial(D^{2n+1})\rar& \BlockBDiff_\partial(V_{g-1})\rar & \BlockBDiff_\partial(V_{g})
\end{tikzcd}
\end{equation}
by using the inclusion $V_{g-1}\subset V_g$ and extending diffeomorphisms by the identity. Moreover, viewing $W_{g,1}$ as $W_{g-1,1} \natural (D^{n}\times S^n\cup_{D^n\times D^n}S^n\times D^n)$, we factor $\square_{W_g}$ as
\begin{equation}\label{equ:factorisationWg}
\begin{tikzcd}[column sep=0.4cm]
\BDiff_\partial(D^{2n})\dar\rar\arrow[d, phantom, shift left=1.4cm, "\square_{W_{g-1}}"]& \BDiff_\partial(W_{g-1,1})\dar\rar \arrow[d, phantom, shift left=1.7cm, "\square_{W_{g-1,g-1/2}}"] &\BDiff_\partial(W_{g-1,1}\natural D^n\times S^n)\dar\rar\arrow[d, phantom, shift left=2cm, "\square_{W_{g-1/2,g}}"] &\BDiff_\partial(W_{g,1})\dar\\
\BlockBDiff_\partial(D^{2n})\rar& \BlockBDiff_\partial(W_{g-1,1})\rar& \BlockBDiff_\partial(W_{g,1}\natural D^n\times S^n)\rar&\BlockBDiff_\partial(W_{g,1}).
\end{tikzcd}
\end{equation}
This is advantageous since the manifolds featuring in the right columns of $\square_{V_{g-1,g}}$, $\square_{W_{g-1, g-1/2}}$, and $\square_{W_{g-1/2,g}}$ 
are obtained from those in the respective left columns by attaching a single handle. As we shall see in the next subsections, the total homotopy fibres of squares of this form can be expressed in terms of (block) embedding spaces of discs.

\subsection{Splitting the handles}\label{section:splitting-handles}This part of the argument is not particular to the manifolds $V_g$ and $W_{g,1}$, so we shall phrase it more generally.

\begin{notation}Given a manifold $M$ and a system submanifolds $K\subset L \subset M$, we denote by $\Emb_{K}(L,M)$ and $\BlockEmb_{K}(L,M)$ the space of (block) embeddings of $L$ in $M$ which agree with the inclusion on $K$ and on an open neighbourhood of $K\cap \partial M$ (see \cite[p.\,122]{BLR} for a definition of spaces of block embeddings). If $K=\partial L$, we abbreviate $\partial=\partial L$. We also write \[\Emb^{(\sim)}_K(L,M)\coloneq \hofib_{L\subset M}\big(\Emb_{K}(L,M)\lra \BlockEmb_{K}(L,M)\big)\] for the homotopy fibre at the inclusion of the canonical comparison map.
\end{notation}
Let $N$ be a compact $d$-manifold, possibly with boundary, and $M=N\cup \{(d-k)\text{-handle}\}$ a handle attachment of index $d-k\ge 3$. Smoothing corners, $N$ is diffeomorphic to the complement in $M$ of a tubular neighborhood $D^k\times D^{n-k}\subset M$ of the cocore $D^k\subset M$, so the isotopy extension theorem (see \cite[p.\,129]{BLR} for the block-case) provides a map of fibre sequences
\[
\begin{tikzcd}
\Emb_{\partial D^k\times D^{d-k}}(D^{k}\times D^{d-k},M)_N\dar\rar&\BDiff_{\partial}(N)\rar\dar&\BDiff_\partial(M)\dar\\
\BlockEmb_{\partial D^k\times D^{d-k}}(D^{k}\times D^{d-k},M)_N\rar&\BlockBDiff_{\partial}(N)\rar&\BlockBDiff_\partial(M),
\end{tikzcd}
\]
where $(-)_N$ indicates that we pass to the appropriate collection of components, so the total homotopy fibre of right-hand square is equivalent to \[\hofib\big(\Emb_{\partial D^k\times D^{d-k}}(D^{k}\times D^{d-k},M)_N\ra \BlockEmb_{\partial D^k\times D^{d-k}}(D^{k}\times D^{d-k},M)_N\big).\]
Inclusion induces a map of this homotopy fibre to $\Emb^{(\sim)}_{\partial D^k\times D^{n-k}}(D^{k}\times D^{d-k},M)$ which is an equivalence since the latter is connected (this uses the assumption $d-k\ge3$, c.f.\ \cite[Remark 2.7]{GoodwillieKlein}). Restricting embeddings to the cocore $D^k$ induces an equivalence 
\begin{equation}\label{equ:no-normal-data}
\Emb^{(\sim)}_{\partial D^k\times D^{d-k}}(D^{k}\times D^{d-k},M)\xlra{\simeq}\Emb^{(\sim)}_{\partial}(D^{k},M),
\end{equation}
since the homotopy fibres of the individual maps between the spaces of embeddings and between the spaces of block embeddings are, by taking differentials, both equivalent to the space of automorphisms of the normal bundle of the cocore $D^k\subset M$ which cover the identity and are standard near the boundary.

Returning to the factorisation \eqref{equ:factorisationVg}, we specialise to $N=V_{g-1}$ and $M=V_{g-1}\cup\{n\text{-handle}\}\cong V_g$ to derive an equivalence
\begin{equation}\label{equ:tohofib-embeddings-Vg}
\tohofib\big(\square_{V_{g-1,g}}\big)\simeq\Emb^{(\sim)}_\partial(D^{n+1},V_g),
\end{equation}
where $D^{n+1}\times\{*\}\subset V_{g-1}\natural (D^{n+1}\times S^n)=V_g$ is the cocore of the ``last'' handle. Similarly, we obtain 
\begin{align}\label{equ:tohofib-embeddings-Wg}
\begin{split}
\tohofib\big(\square_{W_{g-1,g-1/2}}\big) &\simeq\Emb^{(\sim)}_\partial(D^{n},W_{g-1,1}\natural (D^n\times S^n))\\
\tohofib\big(\square_{W_{g-1/2,g-1}}\big) &\simeq\Emb^{(\sim)}_\partial(D^{n},W_{g,1})
\end{split}
\end{align}
of the total homotopy fibre of the middle and right square of \eqref{equ:factorisationWg}, using the cocores 
\[
D^n\times\{*\}\subset W_{g-1,1}\natural (D^n\times S^n)\quad \text{and}\quad \{*\}\times D^n\subset W_{g-1,1}\natural (D^n\times S^n\cup_{D^n\times D^n}S^n\times D^n).
\]

\subsection{The delooping trick and disjunction}\label{sect:delooping-disjunction}
To analyse the right-hand sides of \eqref{equ:tohofib-embeddings-Vg} and \eqref{equ:tohofib-embeddings-Wg}, we use a convenient trick to relate embedding spaces of discs of different dimension, sometimes called the \emph{delooping trick}. It applies to several variants of embedding spaces and has various incarnations (see e.g.\,\cite[p.\,7-8]{GoodwillieThesis} or \cite[p.\,23-25]{BLR}). Here is a version suitable for our needs, which we later apply to \eqref{equ:tohofib-embeddings-Vg} and \eqref{equ:tohofib-embeddings-Wg}:

Let $M$ be a compact $d$-manifold with an embedded neat disc $D^k\subset M$, i.e.\,$D^k$ is transverse to $\partial M$ and $D^k\cap\partial M=\partial D^k$. We assume  $d-k\ge3$. Let $D^{k-1}\subset D^k$ be an equatorial disc separating $D^k$ into half-discs $D^k=D^k_+\cup D^k_-$. We write $S^{k-1}_{\pm}=D^k_{\pm}\cap \partial D^{k}$ for the boundary hemispheres. We have a map of fibre sequences
\[
\begin{tikzcd}
\Emb_{S^{k-1}_+\cup D^{k-1}}(D^k_+,M)\dar\rar&\Emb_{S^{k-1}_+}(D^k_+,M)\rar\dar& \Emb_{\partial}(D^{k-1},M)\dar\\
\BlockEmb_{S^{k-1}_+\cup D^{k-1}}(D^k_+,M)\rar&\BlockEmb_{S^{k-1}_+}(D^k_+,M)\rar& \BlockEmb_{\partial}(D^{k-1},M)
\end{tikzcd}
\]
by restricting embeddings of $D^k_+$ to the equatorial disc. Shrinking $D^k_+$ to a small  neighborhood of $S^{k-1}_+$ shows that the spaces forming the middle column are contractible, so taking vertical homotopy fibres yields a preferred (up to contractible choices) equivalence
\[
\Emb_{S^{k-1}_+\cup D^{k-1}}^{(\sim)}(D^k_+,M)\simeq \Omega\Emb_\partial^{(\sim)}(D^{k-1},M).
\]
Now let $T\subset M$ be a closed tubular neighborhood of $D^{k-1}$. Restricting embeddings induces a map of fibre sequences
\[
\begin{tikzcd}[column sep=0.25cm]
\Emb_{\partial}\big(D^k_+\backslash (\interior(T)\cap D^k_+),M\backslash \interior(T) \big)\dar\rar&\Emb_{S^{k-1}_+\cup D^{k-1}}(D^k_+,M)\rar\dar& \Emb_{D^{k-1}}(T\cap D^k_+,M)\arrow[d]\\
\BlockEmb_{\partial} \big(D^k_+ \backslash(\interior(T)\cap D^k_+),M\backslash \interior(T) \big)\rar&\BlockEmb_{S^{k-1}_+\cup D^{k-1}}(D^k_+,M)\rar& \BlockEmb_{D^{k-1}}(T\cap D^k_+,M)
\end{tikzcd}
\]
whose right column is an equivalence, since by taking derivatives both spaces are equivalent to the space of normal vector fields of $D^{k-1}$. Up to smoothing corners, $D^k_+\backslash (\interior(T)\cap D^k_+)$ is a $k$-disc and $M\backslash \interior(T)$ is up to diffeomorphism obtained from $M$ by attaching an $(d-k)$-handle $H$ (see \cite[p.\,8]{GoodwillieThesis} for an enlightening picture in the situation of concordance embeddings), so we arrive at equivalences
\begin{equation}\label{equ:delooping-trick}
\Emb^{(\sim)}_{\partial}(D^k,M\cup H)\simeq \Emb_{S^{k-1}_+\cup D^{k-1}}^{(\sim)}(D^k_+,M)\simeq \Omega\Emb_\partial^{(\sim)}(D^{k-1},M).
\end{equation}
Viewing $M\cong (M\cup H)\backslash \nu(D^k)$ as being obtained from $M\cup H$ by removing an open tubular neighborhood $\nu(D^k)\subset H$ of the cocore, we have an inclusion $\Emb^{(\sim)}_\partial (D^k,M)\subset \Emb^{(\sim)}_{\partial}(D^k,M\cup H)$, which gives, via the equivalence above, a map of the form
\begin{equation}
\label{equ:delooping}
\Emb^{(\sim)}_\partial(D^k,M)\lra \Omega\Emb_\partial^{(\sim)}(D^{k-1},M).
\end{equation} 
By a form of Morlet's lemma of disjunction \cite[p.\,22]{BLR} (and an improvement by one degree in some cases due to Goodwillie \cite[p.\,6]{GoodwillieThesis}), the inclusion 
\[\Emb^{(\sim)}_\partial (D^k, (M\cup H)\backslash \nu(D^k))\subset \Emb^{(\sim)}_{\partial}(D^k,M\cup H)\]
is $(2d-2k-4)$-connected, so in this case the map \eqref{equ:delooping} is $(2d-2k-4)$-connected as well. Iterating this argument, we arrive at the following (c.f.\,\cite[Lemma 2.3]{BLR}).

\begin{thm}[Morlet]\label{thm:disjunction}
Let $M$ be a compact $d$-manifold with boundary and $D^k\subset M$ a neat $k$-disc. Up to contractible choices, there is a map
\[
\Emb^{{(\sim)}}_\partial(D^k,M)\lra \Omega^k\Emb^{{(\sim)}}(*,M)
\]
which is $(2d-2k-4)$-connected if $d-k\ge3$. 
\end{thm}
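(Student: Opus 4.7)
The plan is to iterate the delooping trick that has just been established a total of $k$ times. Starting from a neat $k$-disc $D^k\subset M$, I would choose an equatorial $(k-1)$-disc $D^{k-1}\subset D^k$ together with a tubular neighbourhood, and apply the construction leading to the map \eqref{equ:delooping} to obtain
\[\Emb^{(\sim)}_\partial(D^k,M)\lra \Omega\Emb^{(\sim)}_\partial(D^{k-1},M),\]
where the $(k-1)$-disc appearing on the right is the chosen equator. I would then repeat this construction for the new neat disc, looping the resulting map once, and iterate down to $j=0$. Since $\partial D^0=\emptyset$, the boundary condition becomes vacuous in the final step, so this produces a map
\[\Emb^{(\sim)}_\partial(D^k,M)\lra \Omega^k\Emb^{(\sim)}(\ast,M)\]
as required. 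The contractible choices alluded to in the statement are the choices of equatorial subdiscs and their tubular neighbourhoods at each stage; these choices form convex (hence contractible) spaces, so the iterated composition is well-defined up to contractible ambiguity.

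The main bookkeeping lies in tracking the connectivity. Under the hypothesis $d-k\geq 3$ one automatically has $d-j\geq 3$ for every $1\leq j\leq k$, so the disjunction estimate applies at every stage and asserts that the $j$-th delooping map is $(2d-2j-4)$-connected. After looping $k-j$ times to match levels in the telescoping composite, this map contributes a connectivity of $(2d-2j-4)-(k-j)=2d-j-k-4$. The minimum over $j\in\{1,\ldots,k\}$ is attained at $j=k$ and equals $2d-2k-4$, which is therefore the connectivity of the composite. Observe that this is the worst bound precisely because looping decreases connectivity by one per step while the disjunction estimate improves by two per decrease in disc dimension; so the binding constraint is the very first delooping, and no improvement comes from the deeper stages.

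The only subtle step I anticipate is ensuring naturality of the delooping equivalence \eqref{equ:delooping-trick} in the chosen equatorial disc, so that the telescope of maps really does assemble into a single map up to contractible choices (rather than just a zig-zag). This amounts to checking that the two relevant parameter spaces—pairs consisting of an equator and a compatible tubular neighbourhood, and the analogous data for iterated equators—are contractible, which is standard. Otherwise there is no new input beyond the delooping trick and the disjunction estimate already cited, and in particular no further geometric ingredient is needed.
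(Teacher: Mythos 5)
Your proposal is correct and takes essentially the same route as the paper: iterate the delooping trick (\ref{equ:delooping}) from $D^k$ down to $D^0=\{*\}$, applying Morlet's disjunction lemma at each stage to get the connectivity bound $2d-2j-4$ for the $j$-th delooping, and noting that since looping $k-j$ times costs $k-j$ while the disjunction bound improves by $2(k-j)$ as $j$ decreases, the first step $j=k$ is the binding constraint, yielding overall connectivity $\min_{1\le j\le k}(2d-j-k-4)=2d-2k-4$. The paper states this as an iteration of the argument around (\ref{equ:delooping}), citing \cite[Lemma 2.3]{BLR} for the assembled statement, so your explicit bookkeeping and your remark on contractibility of the choice spaces fill in exactly what the paper leaves implicit.
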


From work of Haefliger, one can deduce the following (see \cite[Proposition 2.4]{BLR}).

\begin{thm}[Haefliger]\label{thm:connectivity-bound-haefliger}
For a compact $d$-manifold $M$ that is $c$-connected for $0\le c\le d-2$, the space $\Emb^{{(\sim)}}(*,M)$ is $(d+c-3)$-connected.
\end{thm}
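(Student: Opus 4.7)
The plan is to unpack the homotopy fibre $\Emb^{(\sim)}(*,M)$ and invoke Haefliger's metastable embedding theorem.

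Since smooth embeddings of a point into $M$ are just points of $\interior(M)$, we have $\Emb(*,M) \simeq M$, which is $c$-connected by hypothesis. It thus suffices to show that the comparison map $\Emb(*,M) \to \BlockEmb(*,M)$ is $(d+c-2)$-connected on homotopy groups, i.e., that its homotopy fibre at the inclusion is $(d+c-3)$-connected.

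Next I would unpack a $k$-simplex of $\BlockEmb(*,M)$: it is a smooth proper embedding of a $k$-disc $\Delta^k \hookrightarrow M \times \Delta^k$ whose restriction to each face of $\Delta^k$ projects to $\Delta^k$ as the identity. Projecting to $M$ and taking singular realisation produces a natural map $\BlockEmb(*,M) \to M$, and the composite $\Emb(*,M) \to \BlockEmb(*,M) \to M$ is the canonical equivalence $\Emb(*,M) \simeq M$. The task thus reduces to showing that the fibre of $\BlockEmb(*,M) \to M$ over a chosen point of $M$ is $(d+c-3)$-connected; its $k$-simplices parameterise smooth $k$-disks in $M \times \Delta^k$ passing through a fixed vertex, with prescribed block structure on faces.

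This is precisely the setting of Haefliger's theorem: for the smooth embeddings of a compact $n$-manifold into an $m$-manifold which is $c$-connected, the comparison to the space of continuous maps agrees in the metastable range. Applied to embeddings of $\Delta^k$ into the $(d+k)$-dimensional, $c$-connected ambient $M \times \Delta^k$, the metastable bound yields connectivity $d+c-1-k$ at each simplex level---the exact bound needed to assemble to the global $(d+c-3)$-connectivity of the fibre. The hard part will be carrying out this assembly, since the face-matching condition of a block embedding couples the embeddings across different parameter levels, so Haefliger's metastable bound must be applied compatibly with the simplicial face structure on $\Delta^\bullet$. This is the technical core of the Burghelea--Lashof--Rothenberg argument \cite[Proposition 2.4]{BLR}, handled by an obstruction-theoretic induction over the skeleta of $\Delta^\bullet$, using Haefliger's disjunction lemma to straighten interior deviations from the block condition one skeleton at a time.
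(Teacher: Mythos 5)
The paper does not prove this theorem at all: it is quoted as a black box with the citation ``see \cite[Proposition 2.4]{BLR}'', and no argument is reproduced. Your proposal tries to sketch what such a proof would look like and in the end also defers to \cite[Proposition 2.4]{BLR} for ``the technical core,'' so both routes ultimately rest on the same reference. In that sense the two are the same.

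However, since you do offer a sketch, a few imprecisions are worth flagging. First, the block condition is misstated: a $k$-simplex of $\BlockEmb(*,M)$ is an embedding $\phi \colon \Delta^k \hookrightarrow M\times\Delta^k$ satisfying $\phi(\sigma)\subset M\times\sigma$ for every face $\sigma\subset\Delta^k$; there is no requirement that the $\Delta^k$-component of $\phi$ be the identity on faces. (Imposing that stronger condition would collapse $\BlockEmb(*,M)$ to $\mathrm{Sing}(M)$, which would make the theorem trivially false rather than nontrivially true, since $\Emb^{(\sim)}(*,M)$ is genuinely non-contractible below the stated range.) Second, there is an off-by-one in your reduction: from the square comparing $\Emb(*,M)\to M$ with $\BlockEmb(*,M)\to M$ one gets $\Emb^{(\sim)}(*,M)\simeq\Omega\,\hofib\big(\BlockEmb(*,M)\to M\big)$, so you need that homotopy fibre to be $(d+c-2)$-connected, not $(d+c-3)$-connected. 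Third, and more seriously, the description ``its $k$-simplices parameterise smooth $k$-disks in $M\times\Delta^k$ passing through a fixed vertex'' conflates the strict fibre with the homotopy fibre: the strict fibre of $\BlockEmb(*,M)\to\mathrm{Sing}(M)$ over the constant simplex at $p$ consists of block embeddings landing in $\{p\}\times\Delta^k$, i.e.\ block self-embeddings of $\Delta^k$, which is not the same thing, and identifying it with the homotopy fibre requires showing the projection is a Kan fibration (itself a nontrivial point). The overall strategy---control the defect of $\BlockEmb(*,M)\to M$ by applying Haefliger's metastable theorem skeleton-by-skeleton---is plausible and roughly in the spirit of \cite{BLR}, but as written the sketch does not constitute an argument independent of that reference, nor does the paper attempt one.
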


In view of \cref{thm:disjunction}, this has the following corollary (see \cite[Proposition 2.5]{BLR}).

\begin{cor}\label{cor:connectivity-bound-emb-tilde}
Let $M$ be a compact $d$-manifold with boundary and $D^k\subset M$ a neat $k$-disc. If $M$ is $c$-connected and $d-k\ge3$, then $\Emb^{{(\sim)}}_\partial(D^k,M)$ is $\min(2d-2k-5,d-k+c-3)$-connected.
\end{cor}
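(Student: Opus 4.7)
The plan is to deduce this as an essentially formal consequence of the two preceding results. First I would apply \cref{thm:connectivity-bound-haefliger} to the $c$-connected manifold $M$, which gives that $\Emb^{(\sim)}(*,M)$ is $(d+c-3)$-connected. Since forming the $k$-fold (based) loop space shifts connectivity down by $k$, the space $\Omega^k\Emb^{(\sim)}(*,M)$ is $(d-k+c-3)$-connected.

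Next I would invoke \cref{thm:disjunction}, whose hypothesis $d-k\ge 3$ is part of the assumptions of the corollary. This furnishes a $(2d-2k-4)$-connected map
\[ \Emb^{(\sim)}_\partial(D^k, M) \lra \Omega^k \Emb^{(\sim)}(*, M). \]
Combining the two inputs is now a routine chase through the long exact sequence of homotopy groups: if $f\colon X\to Y$ is $n$-connected (so $\pi_i(f)$ is an isomorphism for $i<n$ and surjective for $i=n$) and $Y$ is $m$-connected, then $\pi_i(X)=0$ for all $i\le \min(n-1,m)$, i.e.\ $X$ is $\min(n-1,m)$-connected. Applying this with $n=2d-2k-4$ and $m=d-k+c-3$ produces the stated bound $\min(2d-2k-5,\,d-k+c-3)$ on the connectivity of $\Emb^{(\sim)}_\partial(D^k, M)$.

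I do not expect any serious obstacle: both of the nontrivial inputs are already in hand, and the dimension arithmetic is straightforward. The only point worth double-checking would be that the hypothesis $0\le c\le d-2$ of \cref{thm:connectivity-bound-haefliger} is compatible with the regime in which the corollary is meant to be used (negative $c$ only weakens the bound, and the range $d-k\ge 3$ keeps $d$ comfortably larger than $c$), so no additional case analysis should be needed.
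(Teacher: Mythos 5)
Your proof is correct and is exactly the argument the paper has in mind (the paper simply cites \cite[Proposition~2.5]{BLR} for this corollary, and that result is proved by precisely this combination of the disjunction theorem and Haefliger's connectivity bound). The arithmetic checks out: an $n$-connected map into an $m$-connected space has $\min(n-1,m)$-connected source, so with $n = 2d-2k-4$ and $m = d-k+c-3$ one gets the stated $\min(2d-2k-5, d-k+c-3)$. Your remark about the hypothesis $0 \le c \le d-2$ in \cref{thm:connectivity-bound-haefliger} is the right point to flag, and your resolution is sound: for $c > d-2$ one applies the theorem with $c' = d-2$ and checks $2d-2k-5 \le d-k+c-3$, while negative $c$ only weakens the claim.
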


Applied to the cases \eqref{equ:tohofib-embeddings-Vg} and \eqref{equ:tohofib-embeddings-Wg} of our interest, we conclude that the total homotopy fibres of the squares $\square_{V_{g-1,g}}$, $\square_{W_{g-1,g-1/2}}$, and $\square_{W_{g-1/2,g-1}}$ are $(2n-5)$-connected, so an induction on $g$ using the factorisations \eqref{equ:factorisationVg} and \eqref{equ:factorisationWg} shows that the same holds for the squares $\square_{V_{g}}$ and $\square_{W_{g}}$. Together with \cref{prop:disjunction-problem}, this proves \cref{thm:fibre-to-haut} in the weaker range $*<2n-5$.

\subsection{Secondary disjunction}
To go beyond this range, we use a relative version of \cref{thm:disjunction} which can be seen as a special case of a multi-relative disjunction lemma for $\Emb^{(\sim)}_\partial(-,-)$. The latter can be derived from Goodwillie's multi-relative disjunction lemma for concordance embeddings \cite{GoodwillieThesis} by a relatively straight-forward induction (see \cite[Lemma\,7.2]{GoodwillieKlein}).

In the situation of \cref{sect:delooping-disjunction}, suppose we are given an additional neat submanifold $N^m\subset M^d$ disjoint from the previously fixed disc $D^k$. Applying the above procedure to the pair $(M,M\backslash \nu(N))$ where $\nu(N)$ is an open tubular neighbourhood of $N\subset M$ yields a square
\[
\begin{tikzcd}
\Emb^{{(\sim)}}_\partial(D^k,M\backslash \nu(N))\dar\rar&\Omega\Emb^{{(\sim)}}_\partial(D^{k-1},M\backslash \nu(N))\dar\\
\Emb^{{(\sim)}}_\partial(D^k,M)\rar&\Omega\Emb^{{(\sim)}}_\partial(D^{k-1},M)
\end{tikzcd}
\]
which commutes up to preferred homotopy and agrees up to equivalence with the square induced by inclusion
\[
\begin{tikzcd}[column sep=0.3cm]
\Emb^{(\sim)}_\partial\big(D^k, (M\cup H)\backslash (\nu(D^k)\cup \nu(N))\big)\rar\dar& \Emb^{(\sim)}_\partial \big(D^k, (M\cup H)\backslash \nu(N)\big)\dar\\
\Emb^{(\sim)}_\partial\big(D^k, (M\cup H)\backslash \nu(D^k)\big)\rar&\Emb^{(\sim)}_{\partial}\big(D^k,M\cup H\big)
\end{tikzcd}
\]
which is $(3d-2k-m-6)$-cartesian if $d-k\ge3$ and $d-m\ge3$ by an application of \cite[Lemma\,7.2]{GoodwillieKlein}. Iterating this argument yields the following relative from of \cref{thm:disjunction}.

\begin{thm}[Goodwillie]\label{thm:secondary-disjunction}
Let $M^d$ be a compact manifold with boundary, $D^k\subset M$ a neat $k$-disc, and $N^m\subset M$ a neat submanifold disjoint from $D^k$. There is a square
\[
\begin{tikzcd}
\Emb^{{(\sim)}}_\partial(D^k,M\backslash \nu(N))\arrow[d,hook]\rar&\Omega^k\Emb^{{(\sim)}}(*,M\backslash \nu(N))\dar[d,hook]\\
\Emb^{{(\sim)}}_\partial(D^k,M)\rar& \Omega^k\Emb^{{(\sim)}}(*,M)
\end{tikzcd}
\]
which commutes up to a preferred homotopy and is $(3d-2k-m-6)$-cartesian if $d-k\ge3$ and $d-m\ge3$. Here $\nu(N)\subset M$ is an open tubular neighborhood of $N$ disjoint from $D^k$.
\end{thm}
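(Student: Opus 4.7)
The plan is to iterate the single-step construction sketched in the paragraph immediately preceding the theorem, following the same pattern used in the proof of \cref{thm:disjunction}. Proceed by induction on $k$. For $k=1$ the target square \emph{is} the single-step square described there, and by the identification given in that paragraph with the square of inclusions of embedding spaces in $M\cup H$ with the two disjoint forbidden regions $\nu(D^1)$ and $\nu(N)$, Goodwillie's multi-relative disjunction lemma \cite[Lemma 7.2]{GoodwillieKlein} immediately gives $(3d-2-m-6)$-cartesian, as required.

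For the inductive step $k \to k+1$, I would start from the single-step square
\[
\begin{tikzcd}
\Emb^{(\sim)}_\partial(D^{k+1}, M\setminus \nu(N))\rar\dar & \Omega\Emb^{(\sim)}_\partial(D^{k}, M\setminus \nu(N))\dar\\
\Emb^{(\sim)}_\partial(D^{k+1}, M)\rar & \Omega\Emb^{(\sim)}_\partial(D^{k}, M)
\end{tikzcd}
\]
produced by the delooping trick applied to $M$ and $M\setminus \nu(N)$ simultaneously. As explained in the paragraph before the theorem, this square is equivalent to a square of inclusions in $M\cup H$ involving the two disjoint forbidden regions $\nu(D^{k+1})$ and $\nu(N)$, and is therefore $(3d-2(k+1)-m-6)$-cartesian by \cite[Lemma 7.2]{GoodwillieKlein}. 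The inductive hypothesis applied to $D^{k}$ (with the same $M$ and $N$), looped once and pasted on the right, then produces the desired outer rectangle for $k+1$.

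For the connectivity estimate: the inductive hypothesis is $(3d-2k-m-6)$-cartesian; looping once decreases connectivity by one to $(3d-2k-m-7)$-cartesian. A horizontal pasting of a $c_1$-cartesian and a $c_2$-cartesian square is at least $\min(c_1, c_2)$-cartesian, so the outer rectangle is at least
\[
\min(3d-2(k+1)-m-6,\ 3d-2k-m-7) \;=\; (3d-2(k+1)-m-6)\text{-cartesian},
\]
as claimed.

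The main subtlety lies in coherence: each single-step square commutes only up to the preferred homotopy arising from the contractible choices in the delooping equivalence \eqref{equ:delooping-trick} and in the isotopy-extension identifications of \cref{sect:delooping-disjunction}. Assembling these preferred homotopies coherently through the $k$ iterations to produce the ``preferred homotopy'' asserted in the statement is a bookkeeping exercise and is the only point requiring genuine care. All of the geometric content is concentrated in Goodwillie's multi-relative disjunction lemma, which supplies the essential cartesian estimate at each step.
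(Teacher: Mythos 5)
Your proof is correct and is essentially the same argument the paper intends by the phrase ``Iterating this argument yields the following relative form of \cref{thm:disjunction}'': paste together the $k$ single-step squares, each of which is handled by \cite[Lemma 7.2]{GoodwillieKlein}, and observe that the minimum of the relevant connectivities $\{3d-2j-m-6-(k-j)\}_{j=1,\dots,k}$ is achieved at $j=k$ and equals $3d-2k-m-6$. Your inductive formulation is a faithful and slightly more explicit unpacking of the iteration the paper leaves implicit, and your acknowledgement that the only genuine care required is in assembling the preferred homotopies coherently matches the level of detail the paper itself supplies.
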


In \cref{sec:proof-computation-by-hand}, we will use \cref{thm:secondary-disjunction} to compare the embedding spaces \eqref{equ:tohofib-embeddings-Vg} and \eqref{equ:tohofib-embeddings-Wg} to spaces of embeddings of discs. In the next section, we explain how to compute the homotopy groups of the relevant embedding spaces of discs.

\subsection{The case of discs}
As a next step, we compute the rational homotopy groups of the spaces $\Emb_\partial^{(\sim)}(D^k,D^d)$ based at the standard inclusion in a range by combining \cref{thm:disjunction} with computations for the spaces $\Emb_{\partial}(D^k,D^d)$ of \emph{long embeddings} due to Arone--Turchin and Fresse--Turchin--Willwacher \cite{AroneTurchin,FresseTurchinWillwacher}. Note that $\Emb_\partial^{(\sim)}(D^k,D^d)$ carries an $H$-space structure induced by stacking, so its fundamental group is abelian and may be rationalised.

\begin{prop}\label{prop:longembeddings}
For $k\ge2$ and $d-k\ge3$, we have
\[\pi_*(\Emb^{(\sim)}_\partial(D^k,D^d);\iota)_\bfQ\cong 
\begin{cases}
\bigoplus_{m>0,\ m\equiv0,2 (\mathrm{mod}\,4)}\bfQ[m(d-k-2)]&k\text{ odd}, d\text{ odd}\\
\bigoplus_{m>1,\ m\equiv1(\mathrm{mod}\,4)}\bfQ[m(d-k-2)]&k\text{ odd},d\text{ even}\\
\bigoplus_{m>0,\ m\equiv3(\mathrm{mod}\,4)}\bfQ[m(d-k-2)]&k\text{ even},d\text{ odd}\\
\bigoplus_{m>1,\ m\equiv1,3(\mathrm{mod}\,4)}\bfQ[m(d-k-2)]&k\text{ even},d\text{ even.}
\end{cases}
\]
in the range $0<*<2d-k-5$ where $\iota\colon D^k\hookrightarrow D^d$ is the standard inclusion.
\end{prop}

During the proof of this proposition, we write 
\[\oG(m)\coloneq \hAut(S^{m-1}) \quad\text{and}\quad G \coloneq {\hocolim}_m\oG(m)\] 
where the colimit is taking over the maps induced by unreduced suspension. The standard action of $\oO(m)$ on $S^{m-1}$ defines a map $\oO(m)\ra\oG(m)$ which is compatible with the stabilisation maps given by inclusion and unreduced suspension. Moreover, this map fits into a commutative diagram of horizontal fibre sequences
\[
\begin{tikzcd}
\oO(m-1)\rar{\subset}\dar &\oO(m)\dar\rar&S^{m-1}\arrow[d,equal]\\
\Omega^{m-1}_{\pm1}S^{m-1}=\hAut_*(S^{m-1})\rar{\subset}&G(m)\rar{\ev}&S^{m-1}.
\end{tikzcd}
\]
Using the induced map of long exact sequences of homotopy groups, one computes that
\begin{equation}\label{equ:homotopy-g(n)}
\pi_*(G(m))_\bfQ\cong \begin{cases}
\bfQ[m-1]&m\text{ even}\\
\bfQ[2m-3]&m \text{ odd}
\end{cases}\quad\text{for }*>0,\end{equation}
a fact we shall make use of in the proof of the proposition.
\begin{proof}[Proof of \cref{prop:longembeddings}]
We begin the proof by developing a commutative (up to preferred homotopy) diagram of horizontal homotopy fibre sequences
\begin{equation}\label{eq:PfLongembeddings}
\begin{tikzcd}[column sep=0.4cm]
\overline{\Emb}_\partial(D^k,D^d)\rar&\Emb_\partial(D^k,D^d)\rar\dar&\Omega^k \oO(d)/\oO(d-k)\dar&\\
&\BlockEmb_\partial(D^k,D^d)\rar &\Omega^k\oO/\oO(d-k)\rar&\Omega^k\oG/\oG(d-k).
\end{tikzcd}
\end{equation}
Taking derivatives gives a map $\Emb_\partial(D^k,D^d)\ra \Bun_\partial(\tau_{D^k},\tau_{D^d})$ to the space of bundle maps $\tau_{D^k}\ra \tau_{D^d}$ which agree with the differential of $D^k\subset D^d$ at the boundary, equipped with the compact-open topology. The standard framing of the disc provides an equivalence between this space and $\Omega^k \OO(d)/\OO(d-k)$, which explains the upper row; $\overline{\Emb}_\partial(D^k,D^d)$ is defined as the homotopy fibre of this map over the base point. This derivative map does not factor over $\BlockEmb_\partial(D^k,D^d)$, but up to preferred equivalence, there is a compatible block-derivative map from $\BlockEmb_\partial(D^k,D^d)$ to the stabilisation $\colim_{i}\Bun_\partial(\tau_{D^k}\oplus\epsilon^i,\tau_{D^d}\oplus \epsilon^i)\simeq\Omega^k\oO/\oO(d-k)$ with the trivial line bundle $\epsilon$ (see \cite[Section\,1.9]{KrannichConc1} for a point-set model of this map in the case of diffeomorphisms; this directly generalises to embeddings). This defines the bottom middle map, so we are left with justifying that this map is equivalent to the fibre inclusion of the homotopy fibre of the standard map $\Omega^k\oO/\oO(d-k)\ra\Omega^k\oG/\oG(d-k)$. Making use of the canonical map $\Omega^k\BlockEmb(*,D^{n-k})\ra \BlockEmb_\partial(D^k,D^d)$, which one checks on homotopy groups to be an equivalence, it suffices to prove the claimed identification for $k=1$, which is explained for instance in \cite[p.\,241-242]{GoodwillieKleinWeiss}.

In \cite{FresseTurchinWillwacher}, Fresse, Turchin, and Willwacher identified the rational homotopy groups of $\smash{\overline{\Emb}_\partial(D^k,D^d)}$ under the assumption that $d-k\ge3$ with a shift of the homology of a certain chain complex denoted $\mathrm{HGC}_{k},d$ of ``hairy graphs'': one has $\smash{\pi_*(\overline{\Emb}_\partial(D^k,D^d))_\bfQ\cong H_{*+k}(\mathrm{HGC}_{k},d)}$ (combine in loc.cit.\,Equation (7) and (9) with Corollary 3). The homology of this chain complex has a direct sum decomposition by ``loop order'' and the part of loop order $\le 2$ is known to be concentrated in degree $*+k\ge 2(d-3)+1$ (see the paragraph after Corollary 3 loc.cit.), so in degrees $*<2d-k-5$ it is spanned by the part of loop order $0$ and $1$. From the known computation of the latter (see Equation (1) and (2) in loc.cit.) we thus obtain that for $*<2d-k-5$, the groups $\pi_*(\overline{\Emb}_\partial(D^k,D^d))_\bfQ$ are given by
\begin{equation}\label{equ:AroneTurchin-computation}
\begin{cases}
\bfQ[d-2k-1]\oplus \bigoplus_{m>0,\ m\equiv0,2(\mathrm{mod}\,4)}\bfQ[m(d-k-2)]&k\text{ odd}, d\text{ odd}\\
\bfQ[2d-3k-3]\oplus \bigoplus_{m>0,\ m\equiv1(\mathrm{mod}\,4)}\bfQ[m(d-k-2)]&k\text{ odd},d\text{ even}\\
\bfQ[2d-3k-3]\oplus\bigoplus_{m>0,\ m\equiv3(\mathrm{mod}\,4)}\bfQ[m(d-k-2)]&k\text{ even},d\text{ odd}\\
\bfQ[d-2k-1]\oplus\bigoplus_{m>0,\ m\equiv1,3(\mathrm{mod}\,4)}\bfQ[m(d-k-2)]&k\text{ even},d\text{ even.}
\end{cases}
\end{equation}

Looping the bottom fibre sequence in \eqref{eq:PfLongembeddings} then taking vertical homotopy fibres, we obtain a homotopy fibre sequence
\begin{equation}\label{equ:framed-embeddings-fibration}
\hofib\left(\overline{\Emb}_\partial(D^k,D^d)\ra\Omega^{k+1}\oG/\oG(d-k)\right)\ra \Emb^{(\sim)}_\partial(D^k,D^d)\ra \Omega^{k+1}\oO/\oO(d)
\end{equation}
and as a next next step we show that the induced map 
\[
\pi_*(\overline{\Emb}_\partial(D^k,D^d))_\bfQ\lra \pi_{*}(\Omega^{k+1}\oG/\oG(d-k))_\bfQ\cong\begin{cases}
\bfQ[d-2k-1]&d-k\text{ even}\\
\bfQ[2d-3k-3]&d-k\text{ odd}
\end{cases}
\]
is surjective in positive degrees. 

If this were to fail for $d-k$ even, then using \eqref{equ:AroneTurchin-computation} the homotopy fibre in \eqref{equ:framed-embeddings-fibration} would be rationally nontrivial in degree $d-2k-1$ and $d-2k-2$, so using 
\begin{equation}
\label{equ:so-sod}
\pi_*(\Omega^{k+1}\oO/\oO(d))_\bfQ\cong \begin{cases}
\bfQ[d-k-1]&d \text{ even}\\
0&d\text{ odd}
\end{cases}\quad\text{for }*<2d-k-2
\end{equation}
and the fibre sequence \eqref{equ:framed-embeddings-fibration}, this would mean that $\Emb^{(\sim)}_\partial(D^k,D^d)$ is rationally nontrivial in at least one of the degrees $d-2k-1$ or $d-2k-2$. But this space is $(2d-2k-5)$-connected by \cref{cor:connectivity-bound-emb-tilde} and $d-2k-1\le 2d-2k-5$ as $d-k\ge3$ and $k\ge1$, so this cannot be true. 

For $d-k$ odd, the claimed surjectivity must hold as well, by a similar argument: if it does not, then  the homotopy fibre in \eqref{equ:framed-embeddings-fibration} would be nontrivial in degrees $2d-3k-4$ and $2d-3k-3$, so $\Emb^{(\sim)}_\partial(D^k,D^d)$ would in view of \eqref{equ:so-sod} be rationally nontrivial in at least one of these degrees, which cannot be the case since by \cref{cor:connectivity-bound-emb-tilde} this space is $(2d-2k-5)$-connected and $2d-3k-3\le 2d-2k-5$ since we assumed $k\ge2$.

As a result, in degrees $*<2d-k-4$, the rational homotopy of the homotopy fibre in \eqref{equ:framed-embeddings-fibration} is given by \eqref{equ:AroneTurchin-computation} minus the first summand. Using again that $\Emb^{(\sim)}_\partial(D^k,D^d)$ is $(2d-2k-5)$-connected, we see that in the long exact sequence of \eqref{equ:framed-embeddings-fibration}, the classes in \eqref{equ:so-sod} must cancel against classes in the homotopy fibre. This kills the classes in degree $d-k-2$ in the case where $d$ is even, so the claim follows.
\end{proof}

We now turn towards the proof of the main result of the section, \cref{thm:fibre-to-haut}.

\subsection{Proof of \cref{thm:fibre-to-haut}}\label{sec:proof-computation-by-hand}
The claim regarding the finiteness of the fundamental groups has been dealt with as part of \cref{prop:disjunction-problem}, so we are left with showing that the higher rational homotopy groups are as claimed, which we do for $V_g$ first. By \cref{prop:disjunction-problem}, these homotopy groups agree up to a shift by $1$ with the rational homotopy groups of the total homotopy fibre of the square $\square_{V_{g}}$ in \eqref{equ:diff-blockdiff-squares}, so our task is to compute
\[
\pi_*(\tohofib(\square_{V_{g}}))_\bfQ\cong \begin{cases}
\bfQ^{\frac{g(g-1)}{2}}[2n-3]&n\text{ even}\\
\bfQ^{\frac{g(g+1)}{2}}[2n-3]&n\text{ odd}
\end{cases}\quad\text{for }0<*<3n-6.
\]
Using the factorisation in \eqref{equ:factorisationVg} and the equivalence \eqref{equ:tohofib-embeddings-Vg}, these homotopy groups fits into a long exact sequence of the form
\[
\ldots\ra\pi_*(\tohofib(\square_{V_{g-1}}))\ra\pi_*(\tohofib(\square_{V_{g}}))\ra\pi_{*}(\Emb^{(\sim)}_\partial(D^{n+1},V_g))\ra \ldots,
\]
where $D^{n+1}\subset V_g$ is the cocore of the last handle. Given that $\tohofib(\square_{V_{0}})$ is evidently contractible, the claim for $V_g$ will follow by induction on $g$ once we show
\[\pi_*\Emb^{(\sim)}_\partial(D^{n+1},V_{g})_\bfQ\cong \begin{cases}\bfQ^{g-1}[2n-3]&n\text{ even}\\\bfQ^g[2n-3]&n\text{ odd}\end{cases}\quad\text{for }0<* < 3n-6.
\]
To this end, we apply \cref{thm:secondary-disjunction}  with $M=V_g$ and $N=\coprod_{i=1}^gD^{n+1}$ a choice of cocores of the $g$ handles, disjoint from the previously fixed cocore $D^{n+1}\subset V_g$ of the last handle. In this case, $M\backslash \nu(N)\cong D^{2n+1}$ and this diffeomorphism can be chosen such that $D^{n+1}\subset M\backslash \nu(N)\cong D^{2n+1}$ is the standard inclusion. We thus have a $(3n-6)$-cartesian square
\begin{equation}\label{equ:disunction-square-vg}
\begin{tikzcd}
\Emb^{{(\sim)}}_\partial(D^{n+1},D^{2n+1})\arrow[d,hook]\rar&\Omega^{n+1}\Emb^{{(\sim)}}(*,D^{2n+1})\arrow[d,hook]\\
\Emb^{{(\sim)}}_\partial(D^{n+1},V_g)\rar& \Omega^{n+1}\Emb^{{(\sim)}}(*,V_g).
\end{tikzcd}
\end{equation}
By \cref{thm:connectivity-bound-haefliger}, $\Omega^{n+1}\Emb^{{(\sim)}}(*,D^{2n+1})$ is $(3n-4)$-connected. From \cref{prop:longembeddings}, we obtain
\[
\pi_*(\Emb^{(\sim)}(D^{n+1},D^{2n+1}))_\bfQ\cong
\begin{cases}
\bfQ[2n-4]&n\text{ even}\\
0&n\text{ odd}\\
\end{cases}\quad\text{for }0<*<3n-6
\]
and in the next section we will compute $\pi_*(\Emb^{(\sim)}(*,V_g))_\bfQ\cong \bfQ^g[3n-2]$ for $0<*< 4n-5$ as part of \cref{thm:blockemb-point}. Putting these computations together, we see that in the range $0<*<3n-6$, the square \eqref{equ:disunction-square-vg} induces an exact sequence of the form
\[
\ldots\xlra{\partial} \begin{cases}
\bfQ[2n-4]&n\text{ even}\\
0&n\text{ odd}\\
\end{cases}\lra \pi_*(\Emb^{{(\sim)}}_\partial(D^{n+1},V_g))_\bfQ\lra \bfQ^g[2n-3] \lra\ldots,
\]
which leaves us with showing that the boundary map $\partial$ is surjective in degree $2n-4$ if $n$ is even. As the square \eqref{equ:disunction-square-vg} is natural (up to homotopy) with respect to the inclusion $V_g\subset V_{g+1}$, it suffices to check surjectivity for $g=1$, where we have \[\Emb_\partial^{(\sim)}(D^{n+1},V_1)\simeq \Omega\Emb_\partial^{(\sim)}(D^{n},D^{2n+1})\] by applying the delooping trick \eqref{equ:delooping-trick} for $M=D^{2n+1}$. From \cref{prop:longembeddings}, we see that this space has no rational homotopy in degree in $2n-4$, so $\partial$ must be surjective and the proof in the case of $V_g$ is finished. 

The argument for $W_{g,1}$ proceeds analogously: using \cref{prop:disjunction-problem}, the factorisation \eqref{equ:factorisationWg} and the equivalences \eqref{equ:tohofib-embeddings-Wg}, it suffices to show that the spaces
\[
\Emb^{(\sim)}_\partial(D^{n},W_{g-1,1}\natural (D^n\times S^n))\quad\text{and}\quad\Emb^{(\sim)}_\partial(D^{n},W_{g,1})
\]
are rationally $(3n-7)$-connected. We then apply \cref{thm:secondary-disjunction} in both cases to the cocores of the manifolds to reduce the claim to showing that the spaces
\[\Emb^{(\sim)}_\partial(D^{n},D^{2n}),\quad\Omega^{n}\Emb^{(\sim)}_\partial(*,W_{g-1,1}\natural (D^n\times S^n)),\quad\text{and}\quad\Omega^{n}\Emb^{(\sim)}_\partial(*,W_{g,1})\]
are rationally $(3n-7)$-connected, which follows from \cref{prop:longembeddings} and \cref{thm:blockemb-point} below.

\section{Block embeddings of a point}\label{sec:block-embeddings}
This section serves to explain a method to compute rational homotopy groups of the spaces $\Emb^{(\sim)}(*,M)$ (see \cref{section:splitting-handles} for the notation) for $1$-connected compact manifolds $M$ and to use it to provide the following missing ingredient in the proof of \cref{thm:fibre-to-haut}. Note that spaces involved are $1$-connected by \cref{thm:connectivity-bound-haefliger}, so rationalising their homotopy groups is no issue.

\begin{thm}\label{thm:blockemb-point}
For $n\ge3$ and in the range $*<4n-5$, we have
\vspace{-0.2cm}
\[
\def\arraystretch{1.5}
\begin{array}{r@{\hskip 0.2cm} c@{\hskip 0.2cm} l@{\hskip 0.2cm} }
\pi_*(\Emb^{(\sim)}(*,V_g))_\bfQ&\cong&\bfQ^g[3n-2]\\
\pi_*(\Emb^{(\sim)}(*,W_{g,1}))_\bfQ&=&0\\
\pi_*(\Emb^{(\sim)}(*,W_{g,1}\natural T_h))_\bfQ&=&0
\end{array}
\]
for $g,h\ge0$, where $T_h=\natural^h D^n\times S^n$.
\end{thm}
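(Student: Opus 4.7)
The plan is to prove \cref{thm:blockemb-point} by induction on a handle decomposition of $M$, following the strategy of \cref{sec:proof-computation-by-hand}. The base cases $V_0 = D^{2n+1}$ and $W_{0,1} = T_0 = D^{2n}$ are handled by Haefliger's connectivity estimate (\cref{thm:connectivity-bound-haefliger}): since a disc is $(d-1)$-connected, the space $\Emb^{(\sim)}(*, D^d)$ is $(2d-4)$-connected, well beyond the range $* < 4n-5$ in both dimensions $d = 2n$ and $d = 2n+1$.

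For the inductive step, I will compare $\Emb^{(\sim)}(*, M)$ with $\Emb^{(\sim)}(*, M \cup H)$ for a handle attachment by applying the delooping trick of \cref{sect:delooping-disjunction} together with Goodwillie's disjunction (\cref{thm:secondary-disjunction}) to the cocore of $H$, reducing the fibrewise ``new contribution'' from the handle to a question about the long embedding spaces $\Emb^{(\sim)}_\partial(D^k, D^d)$ whose rational homotopy is given by \cref{prop:longembeddings}. For $V_g$, built from $V_{g-1}$ by an $(n+1)$-handle attachment, each handle should contribute a single rational class in degree $3n-2$---a Haefliger-type linking class between the embedded point and the new core sphere $S^n \subset V_g$---giving the total $\bfQ^g[3n-2]$. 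For $W_{g,1}$ and $W_{g,1}\natural T_h$, built from $D^{2n}$ by $n$-handle attachments, the analogous first Haefliger class would sit in degree $3n-3$ but should vanish rationally by parity (the relevant pairing on $H_n$ in an even-dimensional manifold is skew-symmetric and contributes no rational class of this type), and any further class lies in degree $\geq 4n-5$, outside the range.

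The main obstacle will be to precisely identify the ``new'' rational class contributed by each $(n+1)$-handle attachment in $V_g$ and to verify that no higher rational class appears within $* < 4n-5$ in any of the three cases. This amounts to tracking the Goodwillie--Weiss tower of $\Emb^{(\sim)}(*, M)$ beyond Haefliger's first obstruction and matching the Lie-algebraic output against the handle structure, an analysis analogous to the use of Arone--Turchin's graph complex in \cref{prop:longembeddings}. In practice I expect the cleanest route to be a rational model for the embedding space (for instance along the lines of Fresse--Turchin--Willwacher) specialised to the rational homotopy type of a wedge of $n$-spheres, to which $V_g$, $W_{g,1}$, and $T_h$ all retract.
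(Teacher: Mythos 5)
Your approach is genuinely different from the paper's, and it has gaps. The paper proves \cref{thm:blockemb-point} not by handle induction but via \cref{prop:identification-block-point-embeddings}, which produces a rational fibre sequence
\[\Omega_0\big(\hAut_{\partial\sqcup *}(M)/\hAut_\partial(M^\circ)\big)\lra \Emb^{(\sim)}(*,M)\lra \Omega\big(\oO/\oO(d)\big)\]
by comparing $\BlockEmb(D^d,M)$ with spaces of (stable) bundle maps and homotopy automorphisms and invoking \cref{thm:blockdiff-identification}; the rational homotopy of the $\hAut$-quotient is then computed directly in Sections \ref{section:computation-Wg}--\ref{sec:WgTh-block-computation}, using Berglund--Madsen's identification of the rational homotopy groups of mapping spaces (\cref{lem:boundary-restriction-on-mapping-spaces}) and the free graded Lie algebra bookkeeping of \cref{lem:decomposition}. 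No disjunction, handle induction, or embedding calculus is involved.

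Your plan runs into two concrete obstacles. The disjunction tools of the paper, \cref{thm:disjunction} and \cref{thm:secondary-disjunction}, are statements about disc embedding spaces $\Emb^{(\sim)}_\partial(D^k,-)$ with $k\ge1$; to compare $\Emb^{(\sim)}(*,M)$ with $\Emb^{(\sim)}(*,M\cup H)$ you would need $k=0$, but then the square in \cref{thm:secondary-disjunction} has both horizontal maps equal to the identity and the cartesianness estimate is vacuous, so no handle-by-handle fibre sequence for $\Emb^{(\sim)}(*,-)$ is available from the paper's toolkit. (Note also that the handle induction of \cref{sec:proof-computation-by-hand} already \emph{uses} \cref{thm:blockemb-point} as an input in the square \eqref{equ:disunction-square-vg}, so running that argument in reverse risks circularity.) Second, your parity heuristic for the $W_{g,1}$ case is wrong: the intersection form on $\oH_n(W_{g,1};\bfQ)$ is $(-1)^n$-symmetric, hence symmetric when $n$ is even, yet the vanishing holds for all $n\ge3$. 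The actual mechanism is the rational homotopy of $\SO/\SO(d)$ in the fibre sequence above: for $d=2n$ the Euler class contributes a $\bfQ$ in $\pi_{2n}(\SO/\SO(2n))_\bfQ$ and Haefliger's bound (\cref{thm:connectivity-bound-haefliger}) forces the connecting map to kill the potential class, whereas $\SO/\SO(2n+1)$ is rationally $(4n+2)$-connected so no such cancellation happens for $V_g$. A parity phenomenon is at work, but it is the parity of $d$, entering through $\pi_*(\SO/\SO(d))_\bfQ$ and through the degree $d-2$ of the auxiliary generator $x$ in the free Lie algebra model, not the symmetry type of the intersection pairing. Finally, the Fresse--Turchin--Willwacher fallback you mention would be a substantially larger project: their machinery treats long embeddings in discs, and extending it to embeddings into the compact handlebodies $V_g$, $W_{g,1}$, $W_{g,1}\natural T_h$ would require configuration-space models for manifolds with boundary---which the paper sidesteps entirely by routing through $\hAut$-quotients.
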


\begin{rem}This in particular shows that the range in \cref{thm:connectivity-bound-haefliger} is sharp, even rationally.
\end{rem}

We first translate the problem into unstable homotopy theory. To this end, we fix more generally a compact $d$-manifold $M$ together with a fixed embedded disc $D^d\subset \interior(M)$ and denote by $\hAut_{\partial\sqcup *}(M)/\hAut_\partial(M^\circ)$ the homotopy fibre of the map $\BhAut_{\partial M^\circ}(M^\circ)\ra \BhAut_{\partial M\sqcup *}(M)$ induced by extending homotopy equivalences by the identity, where $*\in \interior(M)$ is the centre of the disc and $M^\circ\coloneq M\backslash\interior(D^d)$. Note that $\partial M^\circ=\partial M\sqcup \partial D^d$.

\begin{prop}\label{prop:identification-block-point-embeddings}
If $d\ge6$ and $M$ is $1$-connected and with nonempty boundary, then the space $\Emb^{(\sim)}(*,M)$ is $1$-connected and fits into a rational fibre sequence of the form
\[\Omega_0\big(\hAut_{\partial\sqcup *}(M)/\hAut_\partial(M^\circ)\big)\lra \Emb^{(\sim)}(*,M)\lra \Omega{\big(\oO/\oO(d)\big)}.\]
\end{prop}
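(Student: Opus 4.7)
The plan is to derive the fibre sequence by comparing the smooth and block versions of the isotopy-extension fibration for a fixed disc $D^d\subset \interior(M)$ with centre $*$, and then invoking \cref{thm:blockdiff-identification} to pass rationally to homotopy-automorphism spaces, isolating the stable-tangent contribution as $\Omega(\OO/\OO(d))$.

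First I would set up the comparison of isotopy-extension fibrations
\[
\begin{tikzcd}
\Diff_\partial(M^\circ) \dar \rar & \Diff_\partial(M) \dar \rar & \Emb(D^d, M) \dar \\
\BlockDiff_\partial(M^\circ) \rar & \BlockDiff_\partial(M) \rar & \BlockEmb(D^d, M),
\end{tikzcd}
\]
whose rows are fibre sequences by isotopy extension and its block analogue. Taking vertical homotopy fibres yields a fibre sequence
\[
\BlockDiff_\partial(M^\circ)/\Diff_\partial(M^\circ) \lra \BlockDiff_\partial(M)/\Diff_\partial(M) \lra \Emb^{(\sim)}(D^d, M).
\]
I would then combine this with the centre-evaluation square relating $\Emb(D^d,M)$ with $\Emb(*,M)\simeq M$ and its block counterpart; its vertical fibres are $\OO(d)$ (the smooth frame) on top and a block-framing analogue on the bottom, whose comparison after rationalising contributes the $\Omega(\OO/\OO(d))$ factor on the right of the sequence of the proposition, via a total-homotopy-fibre argument relating $\Emb^{(\sim)}(D^d,M)$ to $\Emb^{(\sim)}(*,M)$.

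Second, I would invoke \cref{thm:blockdiff-identification} to replace $\BlockBDiff_\partial(M)$ and $\BlockBDiff_\partial(M^\circ)$ rationally by $\BhAut^{\cong}_\partial(\tau^s_M)$ and $\BhAut^{\cong}_\partial(\tau^s_{M^\circ})$, using the triads with $\partial_1 = \emptyset$ so the $\pi$-$\pi$-condition holds trivially. Composing with the forgetful map $\BhAut^{\cong}_\partial(\tau^s_-) \to \BhAut_\partial(-)$, whose fibres parametrise stable framings relative to the boundary and are modelled by mapping spaces into $\OO/\OO(d)$, allows me to split the resulting data into an underlying homotopy-automorphism contribution and a stable-framing contribution. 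The underlying part assembles (after looping) into the homotopy fibre $\hAut_{\partial \sqcup *}(M)/\hAut_\partial(M^\circ)$ of $\BhAut_\partial(M^\circ) \to \BhAut_{\partial \sqcup *}(M)$, while the framing part contributes the $\Omega(\OO/\OO(d))$ appearing on the right.

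Third, the $1$-connectedness of $\Emb^{(\sim)}(*,M)$ is immediate from \cref{thm:connectivity-bound-haefliger}: since $M$ is $1$-connected and $d \geq 6$, that theorem applied with $c=1$ yields $(d-2)$-connectedness, so $\Emb^{(\sim)}(*,M)$ is at least $4$-connected and in particular $1$-connected.

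The main technical obstacle lies in the second step: cleanly separating the stable-framing contribution from the underlying-homotopy contribution when applying \cref{thm:blockdiff-identification}, and checking that the residual homotopy-automorphism data is captured precisely by $\hAut_{\partial\sqcup *}(M)/\hAut_\partial(M^\circ)$ rather than by some quotient or stabilised variant. This amounts to carefully tracking the stable tangent framing of $M$, its restriction to $\partial M$ and to a neighbourhood of $*$, and the compatibility of basepoints along the various comparison maps.
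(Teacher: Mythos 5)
Your third step — establishing $1$-connectedness via \cref{thm:connectivity-bound-haefliger} — is fine and agrees with the paper. However, there are two genuine gaps in the main argument.

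The central error is in your application of \cref{thm:blockdiff-identification}. You propose to use the triads $(M;\partial M,\emptyset)$ and $(M^\circ;\partial M^\circ,\emptyset)$ with $\partial_1 = \emptyset$, claiming the $\pi$-$\pi$-condition "holds trivially." It in fact fails trivially: the $\pi$-$\pi$-condition requires the inclusion $\partial_1 M \subset M$ to induce an equivalence of fundamental groupoids, and when $\partial_1 M = \emptyset$ this can only hold if $M$ itself is empty. The paper gets around this by fixing an embedded disc $D^{d-1} \subset \partial M$, working with the boundary condition $\half\partial$ (i.e.\ fixing $\partial M \setminus \interior(D^{d-1})$), and using the triads with $\partial_1 M = D^{d-1}$; the $\pi$-$\pi$-condition is then satisfied because both $D^{d-1}$ and $M$ (resp.\ $M^\circ$) are simply connected. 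Switching to the $\half\partial$ setting has knock-on effects that your proposal does not address, and they are not cosmetic: one must first apply \cref{thm:blockdiff-identification} to the $\half\partial$-versions, then carefully descend back to the $\partial$-versions using the contractibility of $\hAut_\partial(D^{d-1})$. This is what occupies the bulk of the paper's proof.

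There is a second, smaller issue in your step 2. You assert that the centre-evaluation square for $\Emb(D^d,M) \to M$ has vertical fibre $\OO(d)$ in the smooth case and "a block-framing analogue" in the block case, and that this comparison contributes $\Omega(\OO/\OO(d))$. But \eqref{equ:no-normal-data} (specialised to $k=0$) says these two vertical fibres are equivalent — both are the space of automorphisms of the tangent space at the centre, i.e.\ $\OO(d)$ — which is precisely why $\Emb^{(\sim)}(D^d,M)\simeq\Emb^{(\sim)}(*,M)$ and why this comparison contributes nothing. The $\Omega(\OO/\OO(d))$ factor instead comes from comparing the smooth derivative map $\Emb(D^d,M) \to \Bun(\tau_{D^d},\tau_M)$, which is an equivalence onto the \emph{unstable} bundle-map space, with the block derivative map $\BlockEmb(D^d,M) \to \Bun(\tau^s_{D^d},\tau^s_M)$, which only sees the \emph{stable} tangent data. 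The fibre of the vertical comparison $\Bun(\tau_{D^d},\tau_M) \to \Bun(\tau^s_{D^d},\tau^s_M)$ is $\hofib(\OO(d)\to\OO)\simeq\Omega(\OO/\OO(d))$; this is where the base of the fibre sequence actually arises.
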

\begin{proof}
It follows from \cref{thm:connectivity-bound-haefliger} that $\Emb^{(\sim)}(*,M)$ is $4$-connected, so it is in particular simply connected. Specialising the equivalence \eqref{equ:no-normal-data} to $k=0$, we see that restriction to the centre induces an equivalence $\Emb^{(\sim)}(D^d,M)\simeq \Emb^{(\sim)}(*,M)$.
Similarly to the beginning of the proof of \cref{prop:longembeddings}, taking derivatives induces the first of the two squares of based spaces
\[
\begin{tikzcd}
\Emb(D^d,M)\rar{\simeq}\dar&\Bun(\tau_{D^d},\tau_{M})\dar&&\oO(d)\rar\dar{\subset}&\Bun(\tau_{D^d},\tau_M)\rar\dar&M\arrow[d,equal]\\
\BlockEmb(D^d,M)\rar&\Bun(\tau^s_{D^d},\tau^s_M)&&\oO\rar&\Bun(\tau^s_{D^d},\tau^s_{M})\rar&M.
\end{tikzcd}
\]
where $\Bun(\tau^s_{D^d},\tau^s_{M})=\colim_i\Bun(\tau_{D^d}\oplus \epsilon^i,\tau_{M}\oplus \epsilon^i)$ is the space of stable bundle maps and the right-hand diagram of horizontal fibre sequences is induced by evaluating bundle maps at the centre of the disc and stabilising with trivial bundles. Combining these diagrams with the above equivalence, we obtain a fibre sequence
\begin{equation}\label{equ:block-emb-point-fibre-sequence}\Omega \hofib\big(\BlockEmb(D^d,M)\ra\Bun(\tau^s_{D^d},\tau^s_M)\big)\lra \Emb^{(\sim)}(*,M)\lra \Omega\big(\oO/\oO(d)\big).\end{equation}
To simplify the left-hand term, we choose an embedded disc $D^{d-1}\subset \partial M$ and consider the square based at the identities (see \cref{sec:block-diffeos} for the notation)
\[
\begin{tikzcd}
\BlockDiff_{\half \partial}(M^\circ)\dar\rar&\BlockDiff_{\half \partial}(M)\dar\\
\hAut_{\half \partial}(\tau^s_{M^\circ},D^{d-1})\rar&\hAut_{\half \partial}(\tau^s_M,D^{d-1})
\end{tikzcd}
\]
whose horizontal arrows are induced by extension by the identity and whose vertical arrows are explained in \cref{sec:block-diffeos}. As $M$ is simply connected, so is $M^\circ$ and hence \cref{thm:blockdiff-identification} (using the assumption $d \geq 6$) implies that the total homotopy fibre of this square (which is a loop space) has finitely many components, each of them rationally trivial. As the homotopy fibre of the upper horizontal arrow is equivalent to $\Omega\BlockEmb(D^d,M)$, we have a loop map
\begin{equation}\label{equ:block-emb-to-tangential-haut}
\Omega \BlockEmb(D^d,M)\lra\Omega\big( \hAut_{\half \partial}(\tau^s_M,D^{d-1})/\hAut_{\half \partial}(\tau^s_{M^\circ},D^{d-1})\big)
\end{equation}
whose homotopy fibre is the total homotopy fibre of the previous square, so it has finitely many components which are rationally trivial. Now consider the based square 
\begin{equation}\label{equ:tohofibsquare}
\begin{tikzcd}
\hAut_{\half \partial}(\tau^s_{M^\circ},D^{d-1})\dar\rar& \hAut_{\half \partial}(\tau^s_M,D^{d-1})\dar\\
\hAut_{\half \partial}(M^\circ,D^{d-1})\rar& \hAut_{\half \partial}(M,D^{d-1})
\end{tikzcd}
\end{equation}
induced by forgetting bundle maps. The map on vertical homotopy fibres agrees with a map of the form $\Omega\Map_{\half \partial}(M^\circ,\BO)\ra \Omega\Map_{\half \partial}(M,\BO)$ where the loop spaces are taking at a classifier $TM^s\colon M\ra \BO$ of the stable tangent bundle and its restriction to $M^\circ$ respectively. The map is induced by extending maps along $M^\circ\subset M$ using $TM^s|_{D^d}$. This map agrees with the inclusion of the homotopy fibre at $TM^s|_{D^d}$ of the map $\smash{\Omega\Map_{\half \partial}(M,\BO)\ra \Omega\Map(D^d,\BO)\simeq\oO}$ induced by restriction along $D^d\subset M$, so it follows that the total homotopy fibre of the square \eqref{equ:tohofibsquare} is equivalent to $\Omega\oO$. This implies that the based square
\[
\begin{tikzcd}[column sep=0.3cm]
\Omega\big(\hAut_{\half\partial}(\tau^s_M,D^{d-1})/\hAut_{\half \partial}(\tau^s_{M^\circ},D^{d-1})\big) \rar\dar& \Omega\big(\hAut_{\half \partial}(M,D^{d-1})/\hAut_{\half \partial}(M^\circ,D^{d-1})\big)\dar\\
\Omega\Bun(\tau_{D^d}^s,\tau_M)\rar&\Omega M
\end{tikzcd}
\] 
induced by restriction to the disc (or its center) is homotopy cartesian since the horizontal homotopy fibres are compatibly equivalent to $\Omega\oO$. Since $\hAut_\partial(D^{d-1})$ is contractible, the inclusion $\hAut_{\partial}(M)\subset \hAut_{\half\partial}(M,D^{d-1})$ is an equivalence (and similarly for $M^\circ$), so the homotopy fibre of the right vertical map in the diagram is equivalent to $\Omega(\hAut_{\partial\sqcup *}(M)/\hAut_\partial(M^\circ))$ and hence so is that of the left vertical map. Finally, we consider the square of based spaces
 \[
\begin{tikzcd}
\Omega\BlockEmb(D^d,M)\rar\dar&\Omega\big(\hAut_{\half\partial}(\tau^s_M,D^{d-1})/\hAut_{\half \partial}(\tau^s_{M^\circ},D^{d-1})\big)\dar\\
\Omega\Bun(\tau^s_{D^d},\tau^s_M)\arrow[r,equal]&\Omega\Bun(\tau^s_{D^d},\tau^s_M)
\end{tikzcd}
\] 
whose top horizontal arrow is \eqref{equ:block-emb-to-tangential-haut}, so the total homotopy fibre of this square has finitely many components which are rationally trivial. By the discussion above, the homotopy fibre of the right-hand map agrees with $\Omega(\hAut_{\partial\sqcup *}(M)/\hAut_\partial(M^\circ))$, so by taking vertical homotopy fibres, we arrive at a map
\begin{equation}\label{equ:framed-block-emb-to-haut}
\Omega\hofib(\BlockEmb(D^d,M)\ra \Bun(\tau^s_{D^d},\tau^s_M))\lra \Omega(\hAut_{\partial\sqcup *}(M)/\hAut_\partial(M^\circ))
\end{equation}
whose homotopy fibre has finitely many components that are rationally trivial. Combining this with the fibre sequence \eqref{equ:block-emb-point-fibre-sequence}, this would show the claim if we knew that the source in \eqref{equ:framed-block-emb-to-haut} is connected and the map thus only hits the basepoint component of $\Omega(\hAut_{\partial\sqcup *}(M)/\hAut_\partial(M^\circ))$. But this holds by \eqref{equ:block-emb-point-fibre-sequence} and the fact that $\Emb^{(\sim)}(*,M)$ is $1$-connected, so the proof is finished.
\end{proof}

\subsection{Pointed Poincaré embeddings of discs}\label{section:poincare-embeddings}
In view of \cref{prop:identification-block-point-embeddings}, we are interested in the rational homotopy groups of the space
\[
\Omega \hAut_{\partial\sqcup *}(M)/\hAut_\partial(M^\circ)\simeq \hofib_{\id}\big(\Map_\partial(M^\circ,M^\circ)\lra \Map_{\partial \sqcup*}(M,M)\big).
\] 
for a $1$-connected $d$-manifold $M$ with nonempty boundary and $d\ge6$. We assume the boundary to be connected for simplicity. To explain our method, we fix a disc $D^d\subset \interior(M)$ with centre $*\in D^d$ and recall that $M^\circ=M\backslash \interior(D^d)$, so $\partial(M^\circ)=\partial M\sqcup S^{d-1}$. We denote by $\iota^{\circ}\colon S^{d-1}\hookrightarrow M^\circ$ the inclusion of the additional boundary sphere, and by $\iota_\partial \colon \partial M \hookrightarrow M^\circ$ the inclusion of the original boundary. Extension by the identity induces a map of horizontal fibre sequences \begin{equation}\label{boundaryrestriction}
\begin{tikzcd}[column sep=1.5cm]
\Map_\partial(M^{\circ},M^{\circ})\rar\dar&\Map_{S^{2n-1}}(M^{\circ},M^{\circ})\rar{(-)\circ \iota_\partial}\dar\arrow[d, phantom, shift left=1.9cm, "\circled{$1$}"]&\Map(\partial M,M^{\circ})\arrow[d,"\inc \circ (-)"]\\
\Map_{\partial \sqcup*}(M,M)\rar&\Map_{*}(M,M)\rar{(-)\circ\iota_\partial}&\Map(\partial M,M),
\end{tikzcd}
\end{equation}
which we consider as a diagram of based spaces using the standard inclusions. Our goal is to compute the rational homotopy groups of the homotopy fibre of the left vertical map, i.e.\,those of the total homotopy fibre
\begin{equation}\label{equ:hAut-as-tohofib}
\tohofib(\circled{$1$})\simeq \Omega \hAut_{\partial\sqcup *}(M)/\hAut_\partial(M^\circ).
\end{equation}
Fixing inverse homotopy equivalences
\[
\begin{tikzcd}
M\vee S^{d-1}\arrow[rr,"{j}","\simeq"']&&M^{\circ}& M^{\circ}\arrow[rr,"p","\simeq"']&&M\vee S^{d-1}\\
&S^{d-1}\arrow[ul,"\inc_2"]\arrow[ur,"\iota^{\circ}",swap]&&&S^{d-1}\arrow[ul,"\iota^{\circ}"]\arrow[ur,"\inc_2",swap]&
\end{tikzcd}
\]
over $S^{d-1}$supported in a disc in $M$, there is a diagram
\[
\begin{tikzcd}[column sep=1.5cm]
\Map_{S^{d-1}}(M^{\circ},M^{\circ})\dar{p\circ(- )\circ (j\circ \inc_{1})}[swap]{\simeq}\arrow[rr,"(-)\circ\iota_\partial"]&&\Map(\partial M,M^{\circ})\dar{p\circ(-)}[swap]{\simeq}\\
\Map_{*}(M,M\vee S^{d-1})\arrow[r,"(-)\circ \iota_\partial+ \inc_2"]&\Map_{*}(\partial M,M\vee S^{d-1})\arrow[r,"\subset"]&\Map(\partial M,M\vee S^{d-1})
\end{tikzcd}
\]
which commutes up to preferred homotopy. Here "+" denotes the $\Map_*(S^{d-1},M\vee S^{d-1})$-module structure on $\Map_*(\partial M,M\vee S^{d-1})$ induced by a pinch map $\partial M\ra \partial M\vee S^{d-1}$ and all spaces inherit base points by the image of $\id_{M^{\circ}}$; for instance the lower left corner is based at the inclusion $M\subset M\vee S^{d-1}$. Consequently, the square $\circled{$1$}$ factors up to equivalence as the composition of two squares commuting up to preferred homotopy 
\[
\begin{tikzcd}[column sep=1.5cm]
\Map_{*}(M,M\vee S^{d-1})\arrow[r,"(-)\circ \iota_\partial+ \inc_2"]\arrow[d,"\pr_1\circ(-)"]\arrow[d, phantom, shift left=2.2cm, "\circled{$2$}"]&\Map_{*}(\partial M,M\vee S^{d-1})\arrow[r,"\subset"]\arrow[d,"\pr_1\circ(-)"]\arrow[d, phantom, shift left=2.5cm, "\circled{$3$}"]&\Map(\partial M,M\vee S^{d-1})\arrow[d,"\pr_1\circ(-)"]\\
\Map_{*}(M,M)\arrow[r,"(-)\circ \iota_\partial"]&\Map_{*}(\partial M,M)\arrow[r,"\subset"]&\Map(\partial M,M),
\end{tikzcd}
\]
so we obtain a fibre sequence
\begin{equation}\label{equ:lestohofib}
\tohofib(\circled{$2$})\lra \tohofib(\circled{$1$})\lra \tohofib(\circled{$3$}).
\end{equation}
It will be sometimes convenient to replace $\tohofib(\circled{$2$})$ by the total homotopy fibre of the square $\circled{$2$}'$ obtained from $\circled{$2$}$ by replacing the upper horizontal map  $(-)\circ \iota_\partial+ \inc_2$ by $(-)\circ\iota_\partial$. These total homotopy fibres are equivalent via the map
\begin{equation}\label{equ:modified-square}
\tohofib(\circled{$2$})\xlra{\simeq}\tohofib(\circled{$2$}')
\end{equation}
induced by acting with $-\inc_2$.

The homotopy groups of $\tohofib(\circled{$3$})$ can easily be computed: the responsible square is obtained by taking horizontal fibres from the square
\[
\begin{tikzcd}
\Map(\partial M,M\vee S^{d-1})\arrow[d,"\pr_1\circ(-)",swap]\rar{\ev}&M\vee S^{d-1}\dar{\pr_1}\\
\Map(\partial M,M)\rar{\ev}&M,
\end{tikzcd}
\]
so we have an equivalence $\tohofib(\circled{$3$})\simeq \Omega\hofib(M\vee S^{d-1}\xra{\pr_1} M)$. Since the projection $M\vee S^{d-1}\ra M$ has an evident section, we conclude
\begin{equation}\label{equ:homotopy-tohofib3}
\pi_*(\tohofib(\circled{$3$}))= \ker\big(\pi_{*+1}(M\vee S^{d-1})\xra{(\pr_1)_*} \pi_{*+1}(M)\big).
\end{equation}
In all our examples, $M$ comes with an equivalence to a wedge of spheres, so employing the notation of Sections~\ref{sec:gradings} and \ref{sec:homotopy-groups-haut} we choose a splitting $s_M\colon H_M\ra \pi_{*+1}(M)_\bfQ$ of the Hurewicz map which induces an analogous splitting for $M\vee S^{d-1}$ and thus provides isomorphisms
\[\bfL(H_M)\cong \pi_{*+1}(M)_\bfQ\quad\text{and}\quad \bfL(H_M\oplus x)\cong \pi_{*+1}(M\vee S^{d-1})_\bfQ,\]
where $\bfL(-)$ is the free graded Lie algebra and $x$ is a class in degree $d-2$. Then \eqref{equ:homotopy-tohofib3} becomes
\begin{equation}\label{equ:homotopy-tohofib3-rational}
\pi_*(\tohofib(\circled{$3$}))_\bfQ\cong \widetilde{\bfL}(H_M\oplus x)\coloneq \ker\big(\bfL(H_M\oplus x)\xra{\pr_{H_M}}\bfL(H_M)\big).
\end{equation}

We now apply this setup to the three manifolds featuring in \cref{thm:blockemb-point}.

\begin{notation}\label{conv:vs-for-block-emb-comp}In addition to the notation $H_V$, $H_W$, $K_W$, etc. of \cref{sec:gradings}, we write 
\[
T_h\coloneq \natural^h D^n\times S^n,\quad U_h\coloneq \partial T_h\cong \sharp^hS^{n-1}\times S^n,\quad U_{h,1}=\sharp^hS^{n-1}\times S^n\backslash \interior(D^{2n-1})
\]
and abbreviate $H_{WT}\coloneq s^{-1}\widetilde{\oH}_*(W_{g,1}\natural  T_h;\bfQ)$. The latter decomposes  canonically $H_{WT}=H_W\oplus H_T$ into the subspaces corresponding to $W_{g,1}$ and $T_h$. Further we write $H_U\coloneq s^{-1}\widetilde{\oH}_*(U_{g,1};\bfQ)$. The map $H_U\ra H_{WT}$ induced by inclusion lands in the subspace $H_T$ and we denote the kernel by $K_U\coloneq \ker(H_U\ra H_{T})$. Note that there are canonical isomorphisms
\[
H_W/K_W\cong H_V\quad\text{and}\quad H_U/K_U\cong H_T.
\]
Poincaré duality moreover induces canonical isomorphisms of graded $\bfQ$-vector spaces \[
(H_V)^{\vee}\cong s^{-(2n-2)}K_W\quad (H_W)^\vee\cong s^{-(2n-2)}H_W\quad\text{and}\quad (H_T)^{\vee}\cong s^{-(2n-3)}K_U.
\]
In this section, we work in plain graded $\bfQ$-vector spaces instead of $\bfQ[\langle\rho\rangle]$-modules. In particular, we use that for graded $\bfQ[\langle \rho\rangle]$-modules $V$ we have $V\cong V \otimes\bfQ^-$ as graded $\bfQ$-vector spaces.
\end{notation}

\subsection{Case I: $M=W_{g,1}$}\label{section:computation-Wg}
Applying \cref{lem:boundary-restriction-on-mapping-spaces} we see that the square $\circled{$2$}'$ has on rational homotopy groups in positive degrees the form (this uses that the projection $W_{g,1}\vee S^{2n-1}\ra W_{g,1}$ is compatible with the chosen splittings of the rational Hurewicz map)
\[
\begin{tikzcd}
s^{-(2n-2)}\bfL(H_W\oplus x)\otimes H_W\rar{[-,-]}\dar{\id\otimes \pr_{H_W}}& s^{-(2n-2)}\bfL(H_W\oplus x)\dar{\pr_{H_W}}\\
s^{-(2n-2)}\bfL(H_W)\otimes H_W\rar{[-,-]}& s^{-(2n-2)}\bfL(H_W).
\end{tikzcd}
\]
Using the notation introduced in \eqref{equ:homotopy-tohofib3-rational}, the long exact sequence obtained by taking vertical homotopy fibres in $\circled{$2$}'$ is thus in positive degrees of the form
\[
\ldots\ra \pi_*\tohofib(\circled{$2$}')_\bfQ \ra s^{-(2n-2)} \widetilde{\bfL}(H_W\oplus x) \otimes H_W\xra{[-,-]}s^{-(2n-2)}\widetilde{\bfL}(H_W\oplus x) \ra\ldots,
\]
Suppressing degree shifts from the notation, we see from \cref{lem:decomposition} that in degrees $0<*< 4n-4$, the bracket has the form
\begin{equation}\label{equ:decomp-HW}
\begin{tikzcd}[column sep=-0.2cm]
{\big(x\otimes H_W\big)}&\oplus& {\big([x,H_W]\otimes H_W\big)}&\oplus\dar{[-,-]}& {\big([H_W,[x,H_W]]\otimes H_W\big)}&&\\
{[x,H_W]}&\oplus&{[[x,H_W],H_W]}&\oplus&{ [[H_W,[x,H_W]],H_W]}&\oplus&{ [x,[x,H_W]]}.
\end{tikzcd}
\end{equation}
As the first three direct summands of the target are evidently surjected upon by the corresponding summands of the source and also have the same dimension as a result of the dimension formulas in \cref{lem:decomposition}, we conclude 
\[
\pi_*(\tohofib(\circled{$2$}'))_\bfQ\cong s^{-(2n-3)}[x,[x,H_W]]\quad\text{for }*< 4n-5.
\]
 Moreover, we have $\widetilde{\bfL}(H_W\oplus x)=x\oplus [x,H_W]$ in degrees $*< 4n-4$, so using \eqref{equ:modified-square} and \eqref{equ:homotopy-tohofib3-rational}, the long exact sequence induced by \eqref{equ:lestohofib} has in degrees $*< 4n-5$ the form
 \vspace{-0.1cm}
\begin{equation}\label{equ:les-tohofib1}
\ldots \lra s(x\oplus [x,H_W])\xlra{\partial} s^{-(2n-3)}[x,[x,H_W]]\lra \pi_*(\tohofib(\circled{$1$}))_\bfQ\lra \ldots.
\end{equation}
The class $\partial(x)$ vanishes for the degree reasons, and moreover we claim that $\partial$ maps $[x,H_W]$ surjectively onto $[x,[x,H_W]]$ (and hence isomorphically, as they are equidimensional by \cref{lem:decomposition}). Chasing through the computation, we see that this map is given by the composition 
\[s[x,H_W]\subset s\widetilde{\bfL}(H_W\oplus x)\lra s^{-(2n-3)}\widetilde{\bfL}(H_W\oplus x)
\lra s^{-(2n-3)}[x,[x,H_W]],\] 
where the first arrow is the restriction of the boundary map of the evaluation fibration 
\[\hspace{-0.15cm}
s\bfL(H_W\oplus x)\cong\pi_*(W_{g,1}\vee S^{2n-1})_\bfQ\ra \pi_{*-1}(\Map_*(\partial W_{g,1},W_{g,1}\vee S^{2n-1});\iota_\partial+\inc_2)_\bfQ\cong s^{-(2n-3)}\bfL(H_W\oplus x).
\]
and the second is the projection onto $[x,[x,H_W]]$ with respect to the above decomposition of $\widetilde{\bfL}(H_W\oplus x)$ in a range. The boundary map of the evaluation fibration is up to signs given by taking bracket with the class $(\iota_\partial+\inc_2)\in \pi_{*+1}(W_{g,1}\vee S^{2n-1})_\bfQ\cong \bfL(H_W\oplus x)$ (see \cite[Theorem\,3.2]{Whitehead}). The inclusion $\inc_2$ corresponds to the class $x$ and $\iota_\partial$ corresponds to an element $\omega\in[H_W,H_W]$, so the image of $\partial$ in \eqref{equ:les-tohofib1} is given by the projection of $[x+\omega,[x,H_W]]$ onto $[x,[x,H_W]]$ with respect to the decomposition of $\widetilde{\bfL}(H_W\oplus x)$ in \eqref{equ:decomp-HW}. As the projection of $[\omega,[x,H_W]]\subset [[H_W,H_W],[x,H_W]]\subset [[H_W,[x,H_W]],H_W]$ is trivial (see \cref{lem:decomposition}), this is indeed surjective. Going back to the exact sequence \eqref{equ:les-tohofib1}, we conclude $\pi_*(\tohofib(\circled{$1$}))_\bfQ\cong \bfQ[2n-2]$ for $0<*< 4n-5$, so using the equivalence \eqref{equ:hAut-as-tohofib}, the long exact sequence resulting from \cref{prop:identification-block-point-embeddings} has in degrees $*< 4n-5$ the form
\[
\ldots\lra  \pi_{*+2}(\SO/\SO(2n))_\bfQ\xlra{\partial} \bfQ[2n-2]\lra \pi_*(\Emb^{(\sim)}(*,W_{g,1})_\bfQ\lra\ldots .
\]
In this range, we have $\pi_{*+2}(\SO/\SO(2n))_\bfQ\cong\bfQ[2n-2]$, as already noted in \eqref{equ:so-sod}, so given that $\Emb^{(\sim)}(*,W_{g,1})$ is $(3n-4)$-connected by \cref{thm:connectivity-bound-haefliger}, the boundary map $\partial$ has to be an isomorphism and we conclude that $\Emb^{(\sim)}(*,W_{g,1})$ is rationally $(4n-6)$-connected, which finishes the proof of \cref{thm:blockemb-point} for $W_{g,1}$.

\subsection{Case II: $M=V_{g}$}\label{section:computation-Vg}
To study the right vertical arrow of $\circled{$2$}$ (tentatively denoted as $\psi$) for $M=V_g$ and $\partial M=\partial V_g=W_g$, we use the map of fibre sequences
\[
\begin{tikzcd}
\Map_*(W_g,V_g\vee S^{2n})\arrow[d,"(\pr_1\circ (-))\eqcolon\psi"]\arrow[r]&\Map_*(W_{g,1},V_g\vee S^{2n})\dar{(\pr_1\circ (-))}\rar&\Map_*(S^{2n-1},V_g\vee S^{2n})\dar{(\pr_1\circ (-))}\\
\Map_*(W_g,V_g)\rar&\Map_*(W_{g,1},V_g)\rar&\Map_*(S^{2n-1},V_g).
\end{tikzcd}
\]
induced by the cofibre sequence $S^{2n-1}\ra W_{g,1}\ra W_g$. Using \cref{lem:boundary-restriction-on-mapping-spaces} for $M=W_{g,1}$ and the splittings of the vertical arrows induced by the inclusion $V_g\subset V_{g}\vee S^{2n-1}$, we see that the short exact sequence of the fibration obtained by taking vertical homotopy fibres at the inclusion $\iota_\partial\colon W_g\hookrightarrow V_g$ is of the form 
\vspace{-0.1cm}
\[
\ldots\lra \pi_*(\hofib(\psi);\iota_\partial)_\bfQ\lra s^{-(2n-2)}\widetilde{\bfL}(H_V\oplus x)\otimes H_W\xlra{[-,\iota(-)]}s^{-(2n-2)}\widetilde{\bfL}(H_V\oplus x)\lra \ldots,
\]
where $\iota\colon H_W\ra H_V$ is the canonical projection and $x$ is of degree $2n-1$. As $V_g\simeq \vee^gS^n$, the homotopy groups of the homotopy fibre of the left vertical arrow of $\circled{$2$}'$ are given by 
\[\widetilde{\bfL}(H_V\oplus x)\otimes H_{V}^\vee\cong s^{-(2n-2)}\widetilde{\bfL}(H_V\oplus x)\otimes K_W.\]
 Mapping in these groups into the previous long exact sequence and using $H_{W}/K_W\cong H_V$, we arrive at a long exact sequence 
 \vspace{-0.1cm}
\[
\ldots\lra \pi_{*-1}(\tohofib(\circled{$2$}'))_\bfQ\lra s^{-(2n-2)}\widetilde{\bfL}(H_V\oplus x)\otimes H_V\xlra{[-,-]}s^{-(2n-2)}\widetilde{\bfL}(H_V\oplus x)\lra \ldots.
\]
Omitting degree shifts, we see from \cref{lem:decomposition} that the bracket is of the form
\[
\begin{tikzcd}[column sep=-0.2cm]
{\big(x\otimes H_V\big)}&\oplus&{\big([x,H_V]\otimes H_V\big)}&\oplus&{ \big([H_V,[x,H_V]]\otimes H_V\big)}\dar{[-,-]}& \oplus &{\big([x,x]\otimes H_V\big) }&&\\
{[x,H_V]}&\oplus&{[[x,H_V],H_V]}&\oplus &{ [[H_V,[x,H_V]],H_V]}&\oplus&{ [[x,x],H_V]}&\oplus&{[x,x]}
\end{tikzcd}
\]
 in degrees $1<*< 4n-3$. In addition to \cref{lem:decomposition}, this uses $[H_V,[x,x]]=[x,[x,H_V]]$ which follows from an application of the Jacobi identity. As for $W_{g,1}$ we combine this with the dimension formulas in \cref{lem:decomposition} to conclude $\pi_*(\tohofib(\circled{$2$}'))_\bfQ\cong s^{-2n}[x,x]$ for $0<*< 4n-5$. From \eqref{equ:homotopy-tohofib3-rational}, we see $\pi_*(\tohofib(\circled{$3$}))_\bfQ\cong \widetilde{\bfL}(H_V\oplus x)=x\oplus [x,H_V]$ for $*< 4n-4$, so using \eqref{equ:modified-square} the long exact sequence of \eqref{equ:lestohofib} has in the range $0<*< 4n-5$ the form
\[
\ldots  \lra s^{-1}(x\oplus [x,H_V]) \lra s^{-2n}[x,x] \lra\pi_*(\tohofib(\circled{$1$}))_\bfQ\lra\ldots.
\]
By a similar argument as for $W_{g,1}$, one shows that the first map in this sequence maps $s^{-1}x$ isomorphically to $s^{-2n}[x,x]$, so we deduce $\pi_*(\tohofib(\circled{$1$}))_\bfQ\cong [x,H_V]$ for $0<*< 4n-5$. As $\SO/\SO(2n+1)$ is rationally $(4n+2)$-connected, we can combine this computation with  \eqref{equ:hAut-as-tohofib} and \cref{prop:identification-block-point-embeddings} to conclude that $\pi_*(\Emb^{(\sim)}(*,V_g))_\bfQ\cong [x,H_V]\cong\bfQ^g[3n-2]$ for $*<4n-5$, which proves \cref{thm:blockemb-point} for $V_g$.

\subsection{Case III: $M=W_{g,1}\natural  T_h$}\label{sec:WgTh-block-computation}To ease the notation, we abbreviate $M\coloneq W_{g,1}\natural  T_h$ and write
 \vspace{-0.1cm}
\[
\overline{\Map}_A(X,M\vee S^{2n-1})\coloneq \hofib\big(\Map_A(X,M\vee S^{2n-1})\xra{\pr \circ-} \Map_A(X,M)\big)
\] for a space $X$ together with a subspace $A\subset X$; the fixed map $A\ra M\vee S^{2n-1}$ and the basepoints will be clear from the context. Restriction along the system of inclusions
\[
\begin{tikzcd}[row sep=-0.2cm]
&U_{h,1}\arrow[dr,hook]&\\
S^{2n-2}\arrow[ur,hook]\arrow[dr,hook]&&W_{g,1}\natural  T_h=M\\
&D^{2n-1}\arrow[ur,hook]&
\end{tikzcd}, 
\]
where the left arrows are boundary inclusions, $D^{2n-1}=\partial W_{g,1}\cap \partial(W_{g,1}\natural  T_h)$, and $U_{h,1}=\partial T_h\cap \partial (W_{g,1}\natural T_h)$ induces a commutative diagram of horizontal and vertical fibre sequences
\[
\begin{tikzcd}[column sep=0.4cm,row sep=0.4cm]
\tohofib(\circled{2}')\dar\rar&\overline{\Map}_{D^{2n-1}}(M,M\vee S^{2n-1})\arrow[d]\rar&\overline{\Map}_*(U_h,M\vee S^{2n-1})\dar\\
\overline{\Map}_{U_{h,1}}(M,M\vee S^{2n-1})\dar\rar&\overline{\Map}_*(M,M\vee S^{2n-1})\dar\rar&\overline{\Map}_*(U_{h,1},M\vee S^{2n-1})\dar\\
\overline{\Map}_{S^{2n-2}}(D^{2n-1},M\vee S^{2n-1})\rar&\overline{\Map}_*(D^{2n-1},M\vee S^{2n-1})\rar&\overline{\Map}_*(S^{2n-2},M\vee S^{2n-1}).
\end{tikzcd}
\]
The long exact sequence of the middle row has the form
\[
\ldots \ra \pi_*(\overline{\Map}_{U_{h,1}}(M,M\vee S^{2n-1}))_{\bfQ}\ra \widetilde{\bfL}(H_{WT}\oplus x)\otimes H_{WT}^\vee\ra \widetilde{\bfL}(H_{WT}\oplus x)\otimes H_U^\vee\ra \ldots,
\]
where the right map is induced by the map $H_U\ra H_{WT}$ induced by inclusion. The image of the latter is $H_T\subset H_{WT}$, its kernel is $K_U$, and its cokernel is $H_W$, so the long exact sequence yields a short exact sequence of the form
\begin{equation}\label{equ:intermediate-ses-Map-Uh}
0\ra s^{-1} \widetilde{\bfL}(H_{WT}\oplus x)\otimes K_U^\vee \ra \pi_*(\overline{\Map}_{U_{h,1}}(M,M\vee S^{2n-1}))_{\bfQ}\ra  \widetilde{\bfL}(H_{WT}\oplus x)\otimes H_{W}^\vee\ra 0.
\end{equation}
Extension by the identity along the inclusion $W_{g,1}\subset W_{g,1}\natural T_h=M$ induces a map 
\[\overline{\Map}_{D^{2n-1}}(W_{g,1},M\vee S^{2n-1})\lra \overline{\Map}_{U_{h,1}}(M,M\vee S^{2n-1})\] (here $D^{2n-1}\subset \partial W_{g,1}$ is the disc at which the boundary connected sum $W_{g,1}\natural T_h$ is taken) which on homotopy groups induces a splitting of \eqref{equ:intermediate-ses-Map-Uh} which combined with the isomorphisms $K_U^\vee\cong s^{-(2n-3)}H_T$ and $H_W^\vee\cong s^{-(2n-2)}H_W$ provides an isomorphism
\[
\pi_*(\overline{\Map}_{U_{h,1}}(M,M\vee S^{2n-1}))_{\bfQ}\cong  s^{-(2n-2)}\widetilde{\bfL}(H_{WT}\oplus x)\otimes H_{WT}.
\]
The long exact sequence of the leftmost column therefore has the form
\begin{equation}\label{equ:les-tohofib2-WgTh}
\ldots\ra\pi_*(\tohofib(\circled{2}'))_\bfQ\ra s^{-(2n-2)}\widetilde{\bfL}(H_{WT}\oplus x)\otimes H_{WT}\to s^{-(2n-2)}\widetilde{\bfL}(H_{WT}\oplus x)\ra \ldots,
\end{equation}
and we claim that the right map is given by bracketing. On the subspace $s^{-(2n-2)}\widetilde{\bfL}(H_{WT}\oplus x)\otimes H_T$, this map is a quotient of the induced map on $\pi_{*+1}(-)_\bfQ$ of the lower right vertical arrow in the above diagram, which is given by $[-,\iota(-)]\colon s^{-(2n-3)}\widetilde{\bfL}(H_{WT}\oplus x)\otimes H_U\ra\widetilde{\bfL}(H_{WT}\oplus x)$ by \cref{lem:boundary-restriction-on-mapping-spaces}. Secondly, on the subspace $s^{-(2n-2)}\widetilde{\bfL}(H_{WT}\oplus x)\otimes H_W$, this map is the induced map on homotopy groups by the composition
\[
\overline{\Map}_{D^{2n-1}}(W_{g,1},M\vee S^{2n-1})\lra \overline{\Map}_{U_{h,1}}(M,M\vee S^{2n-1})\lra \overline{\Map}_{D^{2n-1}}(S^{2n-1},M\vee S^{2n-1}),
\]
which is induced by restricting along the boundary inclusion $S^{2n-1}\subset W_{g,1}$, so is given by $[-,\iota(-)]\colon s^{-(2n-2)}\widetilde{\bfL}(H_{WT}\oplus x)\otimes H_W\ra\widetilde{\bfL}(H_{WT}\oplus x)$ by another application of \cref{lem:boundary-restriction-on-mapping-spaces}.

Using the exact sequence \eqref{equ:les-tohofib2-WgTh}, the rest of the argument proceeds as for $W_{g,1}$ in 
 \cref{section:computation-Wg} and eventually shows that $\Emb^{(\sim)}(*,W_{g,1}\natural  T_h)$ is rationally $(4n-6)$-connected.

\section{Claspers}\label{sec:clasper}
By Theorem \ref{thm:fibre-to-haut}, the dimensions of the vector spaces (see \cref{sec:gradings} for the notation) \[\pi_{2n-2}(\hAut^{\cong,\ell}_\partial(V_g)/\Emb^{\sfr,\cong}_{\nicefrac{1}{2}\partial}(V_g)_\ell)_\bfQ\ \ \text{and}\ \  S_{2}(H_{V_g})_{2(n-1)}=\begin{cases}
\Lambda^2(\bar{H}_{V_g})&n\text{ even}\\
\Sym^2(\bar{H}_{V_g})&n\text{ odd}
\end{cases}\] 
agree for $n>3$. Both sides are acted upon by the semi-direct product of the group \[\check{\Lambda}_{V} \coloneq \pi_1(\BEmb^{\fr, \cong}_{\nicefrac{1}{2} D^{2n}}(V_g,W_{g,1})_\ell)\cong \pi_1(\BEmb^{\sfr, \cong}_{\nicefrac{1}{2} D^{2n}}(V_g,W_{g,1})_\ell)\] acted upon by the reflection involution $\langle \rho\rangle\cong \bfZ/2$ (see Sections \ref{sec:MCG} and \ref{sec:Reflection}); the action on $S_{2}(H_{V_g})$ is trivial since $\rho$ fixes $H_{V_g}$. This section serves to improve this identification of the underlying vector spaces to an equivariant statement. 

\begin{thm}\label{thm:ClasperMain}
For $n>3$, there is an isomorphism of $\bfQ[\check{\Lambda}_{V}\rtimes \langle\rho\rangle]$-modules
\[
\pi_{2n-2}\big(\hAut^{\cong,\ell}_\partial(V_g)/\Emb^{\sfr,\cong}_{\nicefrac{1}{2}\partial}(V_g)_\ell\big)_\bfQ\cong S_{2}(H_{V_g})_{2(n-1)}.
\]
\end{thm}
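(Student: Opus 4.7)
The plan is to upgrade the abstract dimension count of \cref{thm:fibre-to-haut} to an equivariant statement by reducing the problem to the quotient $G_V\rtimes\langle\rho\rangle = (M_V^\bfZ\rtimes\GL(\bar H_V^\bfZ))\rtimes\langle\rho\rangle$ and identifying the action of each factor separately. By \cref{lem:finite-kernel} the map $\check{\Lambda}_V\to G_V$ has finite kernel, so rationally this reduction is automatic. By \cref{lem:mcg-is-semidirect-product} and the analysis of the reflection involution in \cref{sec:involution-on-mcg}, the group $G_V\rtimes\langle\rho\rangle$ has the asserted form, with $\rho$ acting trivially on $\GL(\bar H_V^\bfZ)$ and by $-1$ on $M_V^\bfZ$. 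On the target $S_2(H_V)_{2(n-1)}$, $\GL(\bar H_V^\bfZ)$ acts through the defining representation, while both $M_V^\bfZ$ and $\rho$ act trivially. The task therefore splits into (a) matching the $\GL(\bar H_V^\bfZ)\rtimes\langle\rho\rangle$-action on the left-hand side with the right one, and (b) showing that $M_V^\bfZ$ acts trivially on the left-hand side.

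For (a), I would invoke Wall's splitting from the proof of \cref{lem:mcg-is-semidirect-product}: every element of $\GL(\bar H_V^\bfZ)$ is realised by a diffeomorphism of $V_g$ that extends to a self-diffeomorphism of the ambient $\bfR^{2n+1}$ of the standard model, and similarly for $\rho$ by the choice in \cref{sec:Reflection}. Every ingredient in the proof of \cref{thm:fibre-to-haut}---the handle factorisations \eqref{equ:factorisationVg}, the delooping trick \eqref{equ:delooping}, and the disjunction input of \cref{thm:secondary-disjunction}---is functorial under ambient self-diffeomorphisms preserving the chosen cores. Combined with the equivariance of $\kappa_V$ from \cref{cor:lowest-htp-grp-haut-v} and with its naturality \cref{lem:lowest-htp-grp-haut-v-naturality} applied at the level of the ``last handle'' in \eqref{equ:tohofib-embeddings-Vg}, this bookkeeping promotes the abstract isomorphism to a $\GL(\bar H_V^\bfZ)\rtimes\langle\rho\rangle$-equivariant one.

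The heart of the theorem is (b), and this is where claspers enter. The plan is to realise an explicit generating family of $M_V^\bfZ$ by (isotopy classes of) Watanabe clasper diffeomorphisms supported inside sub-handlebodies of $V_g$ of genus $\leq 3$, and then to show that each such diffeomorphism fixes every class in $\pi_{2n-2}(\hAut^{\cong,\ell}_\partial(V_g)/\Emb^{\sfr,\cong}_{\nicefrac{1}{2}\partial}(V_g)_\ell)_\bfQ$. To this end I would give an axiomatic description of the family of genus $3$ handlebodies underlying Watanabe's Borromean-ring construction, isolating the minimal structural features (support inside a genus $3$ handlebody, behaviour on the capping disc, prescribed clasper combinatorics) that determine it up to the relevant ambiguity. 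Given such an axiomatic model, a representative of each class in $\pi_{2n-2}$ produced by the handle-inductive argument of \cref{thm:fibre-to-haut} has support of bounded complexity, and the support of any Wall twist in $M_V^\bfZ$ can be isotoped off it by general position in dimension $2n+1\geq 7$, giving the required triviality.

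The main obstacle is setting up this axiomatic framework in a form robust enough to plug into the handle-inductive machinery of \cref{sec:Disj} but still rigid enough to recover Watanabe's explicit Borromean construction up to the required equivalence. In particular, one must verify that any genus $3$ parametrised handlebody satisfying the axioms interacts with the naturality of \cref{lem:lowest-htp-grp-haut-v-naturality} so that its contribution to $\pi_{2n-2}$ is pinned down by the $\kappa_V$-invariant, rather than only after a direct Kontsevich-integral computation. Once this axiomatisation is in place, the remaining general-position argument in step (b) is routine in dimension $2n+1\geq 7$, and the theorem follows by the dimension count of \cref{thm:fibre-to-haut}.
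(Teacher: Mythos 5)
Your proposal diverges fundamentally from the paper's argument, and both of its main steps contain genuine gaps.

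Step (a) proposes to trace $\GL(\bar{H}_V^\bfZ)\rtimes\langle\rho\rangle$-equivariance through the handle-inductive proof of \cref{thm:fibre-to-haut}. This is exactly what the paper says cannot be done: the introduction (\cref{intro-proof-overview}) states explicitly that ``the handle-induction breaks the symmetry of the manifolds: it gives the rational homotopy groups of the fibres \dots\ only as vector spaces instead of as modules.'' The factorisation \eqref{equ:factorisationVg} depends on a fixed ordering and nesting $V_{g-1}\subset V_g$, the identification \eqref{equ:tohofib-embeddings-Vg} picks out the cocore of the ``last'' handle, and the disjunction square \eqref{equ:disunction-square-vg} compares $V_g$ to $D^{2n+1}$ by removing a chosen system of cocores. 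None of these constructions is functorial under arbitrary self-diffeomorphisms preserving the cores setwise, and your claim that they are is unsubstantiated. Moreover \cref{lem:lowest-htp-grp-haut-v-naturality} concerns naturality of $\kappa_V$ on $\pi_{n-1}\BhAut_\partial(V_g)$ along codimension-zero embeddings; it does not by itself feed into the (iterated) total-homotopy-fibre computation of \cref{sec:Disj}, nor into the fibre $\hAut/\Emb$ under consideration.

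Step (b) has no workable mechanism as written. Elements of $M_V^\bfZ$ are realised by isotopy classes of diffeomorphisms of $V_g$ that do \emph{not} extend to $\bfR^{2n+1}$ (that extendability is what characterises the complementary $\GL(\bar{H}_V^\bfZ)$-factor in \cref{lem:mcg-is-semidirect-product}); calling them ``Watanabe clasper diffeomorphisms'' conflates two very different objects. More seriously, the claim that every class in $\pi_{2n-2}(\hAut^{\cong,\ell}_\partial(V_g)/\Emb^{\sfr,\cong}_{\nicefrac{1}{2}\partial}(V_g)_\ell)_\bfQ$ ``has support of bounded complexity'' is not explained and is not an output of the proof of \cref{thm:fibre-to-haut}, and ``isotoping the support of a Wall twist off it'' would not in any case give that conjugation by the twist acts trivially on the homotopy class of the fibre. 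You would at minimum need to produce explicit cycles and explicit isotopies, and the argument as stated does not do this.

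The paper's actual strategy is essentially orthogonal to yours: it constructs a $\bfQ[\check{\Lambda}_V\rtimes\langle\rho\rangle]$-equivariant map \emph{from} $S_2(H_{V_g})$ \emph{to} the homology of $\BEmb^{\fr,\cong}_{\half\partial}(V_g)_\ell$. This is done by using an axiomatically characterised class $\alpha\in\pi_{n-1}(\BDiff^\fr_\partial(V_3))_\bfQ$ (\cref{prop:alphaExists}, the Borromean/clasper input) and a chosen ``pattern'' embedding $b\colon V_3\sqcup V_3\hookrightarrow V_2$ to build a bilinear, $(-1)^{n+1}$-symmetric function $\bar c$ on $\bar H_V^\bfZ\times\bar H_V^\bfZ$ (\cref{prop:ClasperMain}). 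The equivariance is then automatic from the construction, and the result follows from a dichotomy argument: either the induced map $\Phi$ is zero, or --- because $S_2(H_{V_g})$ is irreducible and has the same dimension as $\pi_{2n-2}(F)_\bfQ$ by \cref{thm:fibre-to-haut} --- it forces the desired isomorphism; the non-vanishing case is singled out by Watanabe's theorem that the theta-graph class is nontrivial. The axiomatic description of the genus-$3$ handlebody that you correctly anticipate as a key ingredient is used to set up this map $\Phi$ and its linearity (\cref{lem:clasperlinear}), not to show triviality of the $M_V^\bfZ$-action directly.
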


The arguments in Section \ref{sec:Disj} break the symmetry of the manifold $V_g$ in an essential way, so we will not try to trace the equivariance through that section. Rather, we shall use a variant of Watanabe's parametrised form \cite{WatanabeII} of the Goussarov--Habiro theory of claspers to produce an equivariant map from the right to the left term in \cref{thm:ClasperMain}, use Watanabe's results to deduce that this map is non-zero, and then argue that it has to be an isomorphism.

\begin{conv}\label{equ:clasper-convention}To simplify the notation, we adopt the following conventions:
\begin{enumerate} 
\item We fix an integer $n\ge3$; the assumption $n>3$ will only play a role at the end. 
\item We consider $V_g$ as a submanifold of $D^{2n+1}$ by intersecting the standard model $V_g\subset \bfR^{2n+1}$ of \cref{sec:standard-model} with an  oriented disc $D^{2n+1}\subset \bfR^{2n+1}$ with $V_g$ in its interior. 
\item As usual, we consider $V_g$ and all its submanifolds as framed via the standard framing from \cref{sec:standard-framing} which we omit from the notation.
\item We use the basis $f_1,\ldots,f_g\in H^\bfZ_{V_g}$ induced by the cores of the handles from \cref{sec:standard-model}.
\end{enumerate}
\end{conv}

We begin with a brief recollection on framed embedding spaces.

\subsection{Framed embeddings}\label{sec:framed-embeddings}
Given $d$-manifolds $M$ and $N$ with fixed framings $\ell_M$ and $\ell_N$, the space of \emph{framed embeddings} $\Emb^{\fr}((M,\ell_M),(N,\ell_N))$ is the homotopy fibre 
\[\Emb^{\fr}((M,\ell_M),(N,\ell_N))\coloneq \hofib_{\ell_M}\big(\Emb(M,N)\xra{(-)^*\ell_N}\Bun(\tau_M,\varepsilon^{d} \big)\big).\]
of the map given by taking the derivative followed by postcomposition with the framing $\ell_N$. The usual composition of embeddings extends to a composition map 
\[\Emb^{\fr}((M,\ell_M),(N,\ell_N))\times \Emb^{\fr}((N,\ell_N),(K,\ell_K)) \lra \Emb^{\fr}((M,\ell_M),(K,\ell_K))\]
which is unital and associative if one uses Moore-paths in the definition of the homotopy fibres, so it gives rise to a topologically enriched category of framed $d$-manifolds and framed embeddings. In particular $\Emb^{\fr}(M,\ell_M)\coloneq \Emb^{\fr}((M,\ell_M),(M,\ell_M))$ is a topological monoid. Replacing embeddings by diffeomorphisms in this definition yields a group-like monoid $\Diff^{\fr}(M,\ell_M)$ which is related to $\BDiff^\fr(M)$ as defined in \cref{sec:tangential-structures} by an equivalence $\Diff^{\fr}(M,\ell_M)\simeq \Omega \BDiff^\fr(M)$ where the loop space is based at the fixed framing $\ell_M$.

Basing $\BDiff^\fr_\partial(M)$ and $\BDiff^\fr_\partial(N)$ at $\ell_M$ and $\ell_N$, extension by the identity induces a map
\[\pi_0(\Emb^\fr(M,N))\lra [\BDiff^\fr_\partial(M),\BDiff^\fr_\partial(N)]_*\] where $[-,-]_*$ denotes based homotopy classes\footnote{This map is not strictly base point preserving, but the image of the basepoint of the source is connected to the basepoint by a preferred path, and this is good enough.}. This is functorial, so in particular $\pi_0(\Emb^\fr(M,M))$ acts on $\BDiff^\fr_\partial(M)$ in the based homotopy category. Restricting this action along the map \[\pi_1(\BDiff^\fr(M),\ell_M)\cong \pi_0(\Diff^\fr(M,\ell_M))\lra \pi_0(\Emb^\fr(M,\ell_M))\] agrees with the action of $\pi_1(\BDiff^\fr(M),\ell_M)$ on $\BDiff^\fr_\partial(M)$ induced by conjugation.

\subsubsection{Framed embeddings between $V_g$}
We will occasionally need to produce framed embeddings of the type $V_a \sqcup V_b \hookrightarrow V_g$, and it will be convenient to do so by indicating where (thickenings of) the cores of the handles go and ignoring the rest. Let $U_g = \sqcup^g V_1$, so that $V_g$ is obtained from $U_g$ by attaching $(g-1)$ 1-handles: more precisely, using the notation of \cref{sec:standard-model} we write $U_g \coloneq \psi(\sqcup^g V_1) \subset V_g$, and denote $\inc_i : V_1 \hookrightarrow V_3$ for the inclusion of the $i$th thickened core.

\begin{lem}\label{lem:EmbeddingsAndCores}
For $n \geq 3$ the map
\[\textstyle{\pi_0\Emb^{\fr}(V_a \sqcup V_b, V_g) \lra \pi_0\Emb(U_a \sqcup U_b, V_g)},\]
induced by precomposition with $\bigsqcup_{i=1}^a \mathrm{inc}_i \sqcup \bigsqcup_{j=1}^b \mathrm{inc}_j: U_a \sqcup U_b \hookrightarrow V_a \sqcup V_b$, is a bijection.
\end{lem}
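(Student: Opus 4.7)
I would factor the map under consideration as
\[
\pi_0\Emb^{\fr}(V_a \sqcup V_b, V_g) \xrightarrow{(1)} \pi_0\Emb(V_a \sqcup V_b, V_g) \xrightarrow{(2)} \pi_0\Emb(U_a \sqcup U_b, V_g),
\]
where $(1)$ is the forgetful map from framed embeddings to embeddings and $(2)$ is restriction along the canonical inclusion $U_a \sqcup U_b \hookrightarrow V_a \sqcup V_b$. I would show that each factor is a bijection.

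\textbf{Step~1: Map $(2)$ is a bijection.} The manifold $V_a$ is obtained from $U_a = \sqcup^a V_1$ by attaching $(a-1)$ one-handles of index $1$ (boundary-connected sums), and similarly for $V_b$. Given an embedding $\phi\colon U_a \sqcup U_b \hookrightarrow V_g$, an extension to an embedding of $V_a \sqcup V_b$ amounts to choosing, in $V_g \setminus \phi(\mathrm{int}(U_a \sqcup U_b))$, arcs connecting the prescribed boundary discs in the pattern given by the iterated boundary-connected-sum $\natural^a$ (and $\natural^b$), and thickening them. Since $V_g$ is simply-connected, has dimension $2n+1 \geq 7$, and the complement of $\phi(U_a \sqcup U_b)$ in $V_g$ is again simply-connected, such arcs exist and are unique up to ambient isotopy rel endpoints by general position, and the thickenings to embedded $D^1 \times D^{2n}$ are unique up to isotopy because the normal bundle of each arc is canonically trivial and the relevant component of the space of extensions is contractible for $n\ge 3$. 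This gives both an inverse construction (surjectivity) and uniqueness (injectivity) for the restriction map $(2)$.

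\textbf{Step~2: Map $(1)$ is a bijection.} Using the defining homotopy fibre sequence
\[
\Emb^{\fr}(V_a \sqcup V_b, V_g) \lra \Emb(V_a \sqcup V_b, V_g) \lra \Bun(\tau_{V_a \sqcup V_b}, \varepsilon^{2n+1}),
\]
the forgetful map on $\pi_0$ is injective when $\pi_1\Bun$ acts trivially on the relevant component of $\pi_0\Emb^\fr$, and surjective when every class in $\pi_0\Emb$ maps to the class of the standard framing in $\pi_0\Bun(\tau_{V_a \sqcup V_b}, \varepsilon^{2n+1}) = \prod^{a+b}\pi_n O(2n+1)$. Both conclusions follow from the observation that the standard embedding $V_g \subset \bfR^{2n+1}$ of Section~\ref{sec:standard-model} embeds any $\phi(V_a \sqcup V_b)$ into $\bfR^{2n+1}$, and the space $\Emb(V_a \sqcup V_b, \bfR^{2n+1})$ is connected in our dimension range: hence every embedding into $V_g$ is isotopic in $\bfR^{2n+1}$ to the standard inclusion of $V_a \sqcup V_b$, under which the derived framing is tautologically the standard one. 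A similar simple-connectivity argument shows the $\pi_1\Bun$-action is trivial.

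\textbf{Main obstacle.} The content is concentrated in Step~2: carefully verifying that the framing ambiguities produced by the exact sequence vanish, and not in the topological extension of Step~1. Concretely, one must check that the comparison of derived framings under an isotopy of embeddings into $\bfR^{2n+1}$ really does land in the correct component of $\Bun(\tau_{V_a \sqcup V_b}, \varepsilon^{2n+1})$; this requires bookkeeping using the decomposition of $V_a \sqcup V_b$ into its handle summands and the compatibility of the standard framings of $V_g$, $V_a$, and $\bfR^{2n+1}$ fixed in Sections~\ref{sec:standard-model} and \ref{sec:standard-framing}. The hypothesis $n \geq 3$ enters through the connectivity of $\Emb(V_a \sqcup V_b, \bfR^{2n+1})$ and through the triviality of the relevant components of framing spaces, both of which fail in lower dimensions.
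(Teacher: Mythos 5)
Your proposed factorization does not work, because neither of the factors $(1)$ and $(2)$ is a bijection in isolation; the lemma is only about the composite. The most concrete error is in your Step~2, where you assert that $\Emb(V_a\sqcup V_b, \bfR^{2n+1})$ is connected. This is false for $a+b\geq 2$: the cores of the $V_1$-summands are $n$-spheres, and two $n$-spheres in $\bfR^{2n+1}$ can be Hopf linked, giving a nontrivial linking invariant of $\pi_0$; indeed the paper itself exploits a Hopf-linked framed embedding $V_1\sqcup V_1\hookrightarrow D^{2n+1}$ in the construction of the map $b$ a few lines below this very lemma. So the argument that the derived framing is always the standard one breaks down, and with it your claimed surjectivity of $(1)$. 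There is a separate unaddressed problem in your Step~1: the thickening of an embedded arc rel its endpoints (to a copy of $[0,1]\times D^{2n}$ with prescribed boundary behaviour) is not unique up to isotopy. The space of such thickenings is a torsor over $\Omega\OO(2n)$, and $\pi_0\,\Omega\OO(2n)=\pi_1\OO(2n)=\bfZ/2$: the ``full twist'' of a tube around an arc is a genuine isotopy invariant rel boundary. Your claim that ``the relevant component of the space of extensions is contractible for $n\geq3$'' addresses neither the number of components nor how they might be identified by moving $U_a\sqcup U_b$ around in $V_g$; contractibility of a single component would be irrelevant to $\pi_0$ in any case.

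The interaction of these two phenomena is exactly what the paper's proof handles, and your factorization through the unframed space $\Emb(V_a\sqcup V_b,V_g)$ severs them. The paper treats the single map $\Emb^{\fr}(V_a\sqcup V_b, V_g)\to\Emb(U_a\sqcup U_b,V_g)$ and identifies its fibre over $e$ (by parametrised isotopy extension) with a space of \emph{framed} embeddings of thickened arcs, $\Emb^{\fr}_\partial\big(\sqcup^{a+b-2}[0,1]\times\bfR^{2n},\,V_g\setminus\mathrm{int}(\im(e))\big)$. In this framed space the normal-thickening ambiguity and the framing ambiguity cancel against one another, and what remains is governed by the space of the underlying arcs, which is $(n-2)$-connected because $V_g\setminus\mathrm{int}(\im(e))$ is still $(n-1)$-connected; this is precisely where the hypothesis $n\geq 3$ enters, to guarantee $1$-connectivity of the fibre. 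If you insist on factoring as you propose, you would have to show that the failure of surjectivity of $(1)$ and the failure of injectivity of $(2)$ compensate one another, which is at least as hard as the original statement and is not addressed by your argument.
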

\begin{proof}
By the parametrised isotopy extension theorem the fibre of the restriction map over an embedding $e : U_a \sqcup U_b \to \mathrm{int}(V_g)$ is equivalent to $\Emb^\fr_\partial(\sqcup^{a-1+b-1} [0,1] \times \bfR^{2n}, V_g \setminus \mathrm{int}(\im(e)))$. This is $(n-2)$-connected  by general position, so $1$-connected if $n \geq 3$.
\end{proof}

\subsection{Splitting the Weiss fibre sequence}
The proof of \cref{thm:ClasperMain} makes use of the framed Weiss fibre sequence $\BDiff^{\fr}_\partial(D^{2n+1}) \ra \BDiff^{\fr}_{\partial}(V_g) \ra \BEmb^{\fr, \cong}_{\half \partial}(V_g)$ from \cref{sec:weiss-fs}, which has the following convenient property that distinguishes it from the Weiss fibre sequence for $W_{g,1}$: the framed standard embedding $V_g \subset D^{2n+1}$ splits the inclusion of the fibre up to equivalence, so it induces an equivalence \begin{equation}\label{eq:FrWeissSplit}
\BDiff^{\fr}_{\partial}(V_g) \overset{\simeq}\lra \BDiff^{\fr}_\partial(D^{2n+1}) \times \BEmb^{\fr, \cong}_{\half \partial}(V_g).
\end{equation}
This is natural up to homotopy with respect to framed embeddings $V_g\hookrightarrow V_h$ that commute with the standard framed embedding into $D^{2n+1}$ up to isotopy of framed embeddings.

We shall be particularly interested in the induced splitting on $\pi_{n-1}(-)_\bfQ$,
\begin{equation}\label{equ:splitting-homotopy-group}
\pi_{n-1}(\BDiff^{\fr}_{\partial}(V_g))_\bfQ\cong \pi_{n-1}(\BDiff^{\fr}_\partial(D^{2n+1}))_\bfQ \oplus \pi_{n-1}(\BEmb^{\fr, \cong}_{\half \partial}(V_g))_{\bfQ},
\end{equation}
which one can further simplify by using the isomorphisms
\[\pi_{n-1}(\BEmb^{\fr, \cong}_{\half \partial}(V_g))_\bfQ\xlra{\cong} \pi_{n-1}(\BEmb^{\sfr, \cong}_{\half \partial}(V_g))_\bfQ \xlra{\cong} \pi_{n-1}(\BhAut_\partial(V_g))_\bfQ\]
resulting from \cref{lem:connectivity-framing-comparison} and \cref{thm:fibre-to-haut}. Furthermore, \cref{cor:lowest-htp-grp-haut-v} gives an isomorphism 
\begin{equation}\label{equ:iso-first-haut-group}
\kappa_V\colon \pi_{n-1}(\BhAut_\partial(V_g))_\bfQ \xlra{\cong} S_{1^3}(H_{V_g})_{3(n-1)}=\begin{cases}
 \Lambda^3(\bar{H}_{V_g})&n\text{ odd}\\
 \Sym^3(\bar{H}_{V_g})&n\text{ even}\end{cases}\end{equation}
which is $\pi_0\hAut_{D^{2n}}(V_g,W_{g,1})$-equivariant and natural with respect to orientation-preserving embeddings $V_g\hookrightarrow V_h$ by \cref{lem:lowest-htp-grp-haut-v-naturality}. The splitting \eqref{equ:splitting-homotopy-group} thus becomes
\begin{equation}\label{equ:splitting-homotopy-group2}
\pi_{n-1}(\BDiff^{\fr}_{\partial}(V_g))_\bfQ\cong \pi_{n-1}(\BDiff^{\fr}_\partial(D^{2n+1}))_\bfQ \oplus S_{1^3}(H_{V_g})_{3(n-1)}
\end{equation}
and is natural with respect to framed embeddings $V_g\hookrightarrow V_h$ that commute with the standard framed embedding into $D^{2n+1}$ up to isotopy of framed embeddings.

\subsection{Watanabe's Borromean construction}
In \cite{WatanabeII} Watanabe has constructed classes in the homotopy groups $\pi_{k(n-1)}(\BDiff^{\fr}_\partial(D^{2n+1}))$ for $k\ge2$ and $n\ge3$ indexed by trivalent graphs with $k$ vertices (and some extra data). The basic building block of his construction is a class \begin{equation}\label{equ:alpha-wat}\alpha_{\text{Wat}} \in \pi_{n-1}(\BDiff^{\fr}_\partial(V_3))\end{equation}
which he produces (see Section 4.1 loc.cit.) by first constructing an explicit $V_3$-bundle over $S^{n-1}$ using a high-dimensional version of the Borromean rings, giving rise to $\alpha_{\text{Bor}} \in \pi_{n-1}(\BDiff_\partial(V_3))$ (denoted $\delta(\alpha')$ on p.\ 638 loc.cit.), and then showing by obstruction theory that, as long as $n$ is odd, this class may be lifted along $\BDiff^{\fr}_\partial(V_3)\ra \BDiff_\partial(V_3) $ after perhaps scaling by some non-zero integer (see Proposition 4.2 loc.cit.).

\subsubsection{The Borromean property}The crucial feature of the class $\alpha_{\text{Bor}}$, which can be seen directly from Watanabe's construction (see in particular the proof of Lemma 4.3 loc.\ cit.), is the following ``Borromean property'': attaching a handle to any of the three summands of $V_3 = \natural^3 S^n \times D^{n+1}$ along $S^n \times D^n_{-} \subset S^n \times S^n$ where $D^n_-\subset D^n$ is a hemisphere gives ``handle-filling'' embeddings
\begin{equation}\label{equ:handle-filing}n_i \colon V_3 \longhookrightarrow V_2\quad\text{for } i=1,2,3,\end{equation}
and under any of the induced maps $(n_i)_* \colon \pi_{n-1}(\BDiff_\partial(V_3)) \to \pi_{n-1}(\BDiff_\partial(V_2))$ the class $\alpha_{\text{Bor}}$ is sent to zero. In the corrigendum \cite{WatanabeIIerr} Watanabe shows that there is a choice of lift \eqref{equ:alpha-wat} to the classifying space for \emph{framed} smooth bundles which still satisfies this Borromean property, and he also removes the assumption that $n$ be odd (see Lemma A and Remark 6 loc.cit.). 

For our purposes, it is convenient to work with the following axiomatic description of Watanabe's building block \eqref{equ:alpha-wat}. This alternative description is slightly surprising, as it is by definition symmetric in the three handles of $V_3$, whereas Watanabe's construction favours one.

\begin{prop}\label{prop:alphaExists}
There exists a unique class $\alpha \in \pi_{n-1}(\BDiff^{\fr}_\partial(V_3))_\bfQ$
such that
\begin{enumerate}
\item\label{it:alphaExists2} under each of the handle-filling maps
\[(n_i)_*: \pi_{n-1}(\BDiff^{\fr}_\partial(V_3))_\bfQ \lra \pi_{n-1}(\BDiff^{\fr}_\partial(V_2))_\bfQ\quad\text{for }i=1,2,3\]
the class $\alpha$ is sent to zero, and
\item\label{it:alphaExists1} under the composition
\[
 \pi_{n-1}(\BDiff^{\fr}_\partial(V_3))_\bfQ\ra \pi_{n-1}(\BhAut_\partial(V_3))_\bfQ\xra{\kappa_V} S_{1^3}(H_{V_3})_{3(n-1)}=\begin{cases}
 \Lambda^3(\bar{H}_{V_g})&n\text{ odd}\\
 \Sym^3(\bar{H}_{V_g})&n\text{ even}\end{cases},
\]
the class $\alpha$ maps to $[f_1 \otimes f_2 \otimes f_3]$.
\end{enumerate}
Moreover, $\alpha$ agrees up to a nonzero scalar with $\alpha_{\text{Wat}}\in\pi_{n-1}(\BDiff^{\fr}_\partial(V_3))_\bfQ$ from \cite[Lem A]{WatanabeIIerr}.
\end{prop}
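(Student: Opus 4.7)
The strategy is to leverage the splitting \eqref{equ:splitting-homotopy-group2},
\[\pi_{n-1}(\BDiff^{\fr}_{\partial}(V_g))_\bfQ\cong \pi_{n-1}(\BDiff^{\fr}_\partial(D^{2n+1}))_\bfQ \oplus S_{1^3}(H_{V_g})_{3(n-1)},\]
whose second summand arises by identifying $\pi_{n-1}(\BEmb^{\fr, \cong}_{\half \partial}(V_g))_\bfQ$ first with $\pi_{n-1}(\BhAut_\partial(V_g))_\bfQ$ and then with $S_{1^3}(H_{V_g})_{3(n-1)}$ via $\kappa_V$. Thus under the splitting the composition featuring in \ref{it:alphaExists1} is precisely the projection onto the second summand, so \ref{it:alphaExists1} is equivalent to prescribing the second component of $\alpha$ to be $\kappa_V^{-1}([f_1 \otimes f_2 \otimes f_3])$.

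Next I would apply the naturality of the splitting to the three handle-filling maps $n_i\colon V_3 \hookrightarrow V_2$, having first verified that the composition of $n_i$ with the standard inclusion $V_2 \subset D^{2n+1}$ is isotopic, as a framed embedding, to the standard inclusion $V_3 \subset D^{2n+1}$. Granted this compatibility, $(n_i)_*$ assumes the block-diagonal form $(\id,\ (n_i)^{\mathrm{Lie}}_*)$ under the splitting, where $(n_i)^{\mathrm{Lie}}_*\colon S_{1^3}(H_{V_3}) \to S_{1^3}(H_{V_2})$ is the Schur-functoriality (natural by \cref{lem:lowest-htp-grp-haut-v-naturality}) of the homology map that sends $f_i \mapsto 0$ and preserves the remaining $f_j$'s; in particular $(n_i)_*^{\mathrm{Lie}}[f_1 \otimes f_2 \otimes f_3] = 0$ for every $i$. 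Existence and uniqueness then fall out in parallel: the class $\alpha$ whose splitting components are $(0,\ \kappa_V^{-1}[f_1 \otimes f_2 \otimes f_3])$ satisfies \ref{it:alphaExists1} by construction and \ref{it:alphaExists2} because both entries vanish under every $(n_i)_*$; conversely, \ref{it:alphaExists1} prescribes the second component and \ref{it:alphaExists2} then forces the first to vanish.

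For the comparison with $\alpha_{\mathrm{Wat}}$, Watanabe's Borromean property is precisely condition \ref{it:alphaExists2}, so the uniqueness argument above gives $\alpha_{\mathrm{Wat}} = \kappa_V^{-1}(\beta)$ for some $\beta \in \bigcap_{i=1,2,3} \ker\!\big((n_i)_*^{\mathrm{Lie}}\big)$. A direct calculation in both parities of $n$---elements of $\Lambda^3(\bar{H}_{V_3})$ or $\Sym^3(\bar{H}_{V_3})$ which vanish under every $f_i \mapsto 0$---shows this intersection is one-dimensional and spanned by $[f_1 \otimes f_2 \otimes f_3]$; hence $\alpha_{\mathrm{Wat}}$ is a scalar multiple of $\alpha$. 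The scalar is non-zero because Watanabe's configuration-space-integral pairings detect $\alpha_{\mathrm{Wat}}$ rationally. The one genuine input from outside this argument is precisely this rational non-triviality of $\alpha_{\mathrm{Wat}}$; within the plan itself, the most delicate step is the framed-isotopy verification that underwrites the naturality of the splitting applied to the $n_i$'s, after which everything reduces to formal bookkeeping and a little representation theory of $S_{1^3}$.
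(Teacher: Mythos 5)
Your proposal is correct and follows essentially the same argument as the paper: the candidate is $(0, [f_1\otimes f_2\otimes f_3])$ under the splitting \eqref{equ:splitting-homotopy-group2}, uniqueness comes from the block-diagonal form of $(n_i)_*$ (whose first-factor component is the identity because $V_3\subset D^{2n+1}$ factors through $n_i$ up to framed isotopy), and the comparison with $\alpha_{\mathrm{Wat}}$ combines its non-triviality with the one-dimensionality of $\bigcap_i\ker((n_i)_*^{\mathrm{Lie}})$.
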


\begin{proof}
Our candidate for $\alpha$ is the class corresponding to $(0,[f_1\otimes f_2\otimes f_3])$ under the splitting \eqref{equ:splitting-homotopy-group2}. This satisfies \ref{it:alphaExists1} by construction. Property \ref{it:alphaExists2} follows from the naturality of the splitting \eqref{equ:splitting-homotopy-group2} with respect to the $n_i$'s together with the observation that the induced maps
\begin{equation}\label{eq:contraction}
(n_i)_* \colon S_{1^3}(H_{V_3})_{3(n-1)} \lra S_{1^3}(H_{V_2})_{3(n-1)}
\end{equation}
all annihilate $[f_1 \otimes f_2 \otimes f_3] \in S_{1^3}(H_{V_3})_{3(n-1)}$. To prove uniqueness, let $\bar{\alpha}$ be another class satisfying \ref{it:alphaExists2} and \ref{it:alphaExists1}, and $\bar{\alpha}=(\bar{\alpha}_1,\bar{\alpha}_2)$ its splitting with respect to \eqref{equ:splitting-homotopy-group2}. By \ref{it:alphaExists1}, we must have $\bar{\alpha}_2=[f_1\otimes f_2\otimes f_3]$, so we only need to show that $\bar{\alpha}_1$ vanishes. This follows from \ref{it:alphaExists2} and the fact that the inclusion $V_g\subset D^{2n+1}$ factors up to isotopy over the handle-filling embeddings $n_i$. 

To show the final part of the claim, we decompose $\alpha_{\text{Wat}}$ with respect to the splitting \eqref{equ:splitting-homotopy-group2} as $\alpha_{\text{Wat}}=(\nu,\beta)$ for some $\smash{\nu\in \pi_{n-1}(\BDiff^{\fr}_\partial(D^{2n+1}))_\bfQ}$ and $\beta\in S_{1^3}(H_{V_3})_{{3(n-1)}}$. By construction of the splitting, $\nu$ is the image of $\alpha_{\text{Wat}}$ under the map $\BDiff^{\fr}_{\partial}(V_3) \ra \BDiff^{\fr}_\partial(D^{2n+1})$ induced by the framed standard embedding $V_3\subset D^{2n+1}$. Since the latter factors over any of the handle-filling embeddings \eqref{equ:handle-filing} and $\alpha_{\text{Wat}}$ satisfies the Borromean property \cite[Lemma A]{WatanabeIIerr}, it follows that $\nu$ is trivial, so $\alpha_{\text{Wat}}=(0,\beta)$. As $\alpha_{\text{Wat}}$ is non-trivial \cite[Corollary 5.4]{WatanabeII}, $\beta \in S_{1^3}(H_{V_3})_{{3(n-1)}}$ must be nontrivial, and by the Borromean property of $\alpha_{\text{Wat}}$, the nontrivial class $\beta \in S_{1^3}(H_{V_3})_{{3(n-1)}}$ is contained in the common kernel of the maps \eqref{eq:contraction}. But this common kernel is one-dimensional and generated by $[f_1\otimes f_2\otimes f_3]$, so $\beta$ must indeed be a multiple of this class.
\end{proof}

\begin{rem}
The final part of the proof of \cref{prop:alphaExists} shows that Property \ref{it:alphaExists2}  already characterises $\alpha$ up to a non-zero scalar, so Property \ref{it:alphaExists1} is merely a normalisation.
\end{rem}

\subsection{The construction}\label{equ:the-construction}We begin the construction of the isomorphism in \cref{thm:ClasperMain} by choosing once and for all a framed embedding
\[b \colon V_3 \sqcup V_3 \hookrightarrow V_2\] as depicted in \cref{fig:Clasper1}. More precisely, this is constructed as follows: inside the standard model
\[V_2 \cong \big(S^n \times D^{n+1} \big) \cup ([0,1] \times D^{2n}) \cup \big(S^n \times D^{n+1}\big)\]
of \cref{sec:standard-model} there are disjoint framed embeddings $V_1 \hookrightarrow V_2$ given by the two copies of $S^n \times \tfrac{1}{2}D^{n+1}$, and inside two disjoint discs away from these we can form Hopf linked framed embeddings $V_1 \sqcup V_1 \hookrightarrow D^{2n+1}$, where each individual embedding $V_1 \hookrightarrow D^{2n+1}$ is framed isotopic to the tautological framed embedding.  Appealing to \cref{lem:EmbeddingsAndCores}, these are then joined together as shown in \cref{fig:Clasper1}.

As further piece of terminology, we say that a framed embedding $\varphi\colon V_3\hookrightarrow V_g$ has a \emph{null handle} if it factors through $V_2$ via one of the handle-filling embeddings $n_i$ in \eqref{equ:handle-filing}, up to isotopy of framed embeddings. Note that the restrictions of $b$ to both copies of $V_3$ are of this form. By the Borromean property of \cref{prop:alphaExists} \ref{it:alphaExists2}, it follows that the class $\varphi_*(\alpha)\in \pi_{n-1}(\BDiff^\fr_\partial(V_g))$ obtained from $\alpha$ by extension along a framed embedding $\varphi$ with a null-handle vanishes.

\begin{figure}
\begin{center}
\centering{
\resizebox{10.5cm}{!}{
\begingroup%
  \makeatletter%
  \providecommand\color[2][]{%
    \errmessage{(Inkscape) Color is used for the text in Inkscape, but the package 'color.sty' is not loaded}%
    \renewcommand\color[2][]{}%
  }%
  \providecommand\transparent[1]{%
    \errmessage{(Inkscape) Transparency is used (non-zero) for the text in Inkscape, but the package 'transparent.sty' is not loaded}%
    \renewcommand\transparent[1]{}%
  }%
  \providecommand\rotatebox[2]{#2}%
  \newcommand*\fsize{\dimexpr\f@size pt\relax}%
  \newcommand*\lineheight[1]{\fontsize{\fsize}{#1\fsize}\selectfont}%
  \ifx\svgwidth\undefined%
    \setlength{\unitlength}{362.4168408bp}%
    \ifx\svgscale\undefined%
      \relax%
    \else%
      \setlength{\unitlength}{\unitlength * \real{\svgscale}}%
    \fi%
  \else%
    \setlength{\unitlength}{\svgwidth}%
  \fi%
  \global\let\svgwidth\undefined%
  \global\let\svgscale\undefined%
  \makeatother%
  \begin{picture}(1,0.39460563)%
    \lineheight{1}%
    \setlength\tabcolsep{0pt}%
    \put(0,0){\includegraphics[width=\unitlength,page=1]{./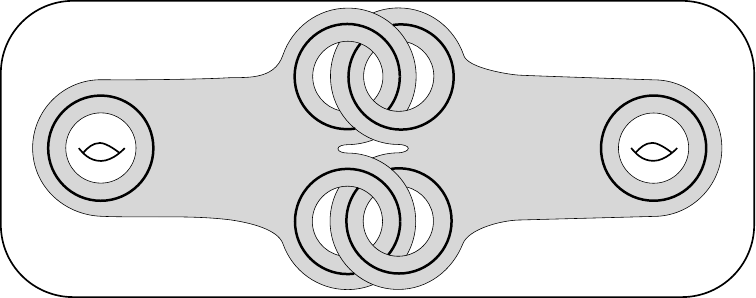}}%
    \put(0.21508299,0.19692724){\makebox(0,0)[lt]{\lineheight{1.25}\smash{\begin{tabular}[t]{l}$f_3$\end{tabular}}}}%
    \put(0.75908056,0.19645361){\makebox(0,0)[lt]{\lineheight{1.25}\smash{\begin{tabular}[t]{l}$f_3$\end{tabular}}}}%
    \put(0.35852678,0.25751827){\makebox(0,0)[lt]{\lineheight{1.25}\smash{\begin{tabular}[t]{l}$f_2$\end{tabular}}}}%
    \put(0.60804765,0.25836494){\makebox(0,0)[lt]{\lineheight{1.25}\smash{\begin{tabular}[t]{l}$f_2$\end{tabular}}}}%
    \put(0.36083678,0.11720455){\makebox(0,0)[lt]{\lineheight{1.25}\smash{\begin{tabular}[t]{l}$f_1$\end{tabular}}}}%
    \put(0.60669933,0.1178787){\makebox(0,0)[lt]{\lineheight{1.25}\smash{\begin{tabular}[t]{l}$f_1$\end{tabular}}}}%
  \end{picture}%
\endgroup%
}}
\caption{The basic pattern}\label{fig:Clasper1}
\end{center}
\end{figure}

The embedding $b$ induces by extension with the identity a map \[b \colon \BDiff^{\fr}_\partial(V_3) \times \BDiff^{\fr}_\partial(V_3) \lra \BDiff^{\fr}_\partial(V_2)\] which we combine with the class from \cref{prop:alphaExists} to define a class
\[
\alpha^{(2)} \coloneq b_*(\alpha \otimes \alpha) \in \oH_{2(n-1)}(\BDiff^{\fr}_\partial(V_2)_\ell;\bfQ).\]

\begin{lem}
$\alpha^{(2)}\in \oH_{2(n-1)}(\BDiff^{\fr}_\partial(V_2)_\ell;\bfQ)$ lies in the image of the rational Hurewicz map.
\end{lem}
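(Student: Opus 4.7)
The plan is to exhibit a positive integer multiple of $\alpha^{(2)}$ as the image, under the Hurewicz map, of a genuine homotopy class $S^{2(n-1)} \to \BDiff^{\fr}_\partial(V_2)_\ell$. First, clearing denominators, I would choose an integer $N$ and a pointed map $\tilde\alpha \colon S^{n-1} \to \BDiff^{\fr}_\partial(V_3)_\ell$ representing $N\alpha \in \pi_{n-1}(\BDiff^{\fr}_\partial(V_3)_\ell)$, and form
\[
f \coloneq b \circ (\tilde\alpha \times \tilde\alpha) \colon S^{n-1} \times S^{n-1} \lra \BDiff^{\fr}_\partial(V_2)_\ell.
\]
By naturality of the K{\"u}nneth isomorphism, $f_*[S^{n-1} \times S^{n-1}] = N^2 \cdot \alpha^{(2)}$, so it will suffice to arrange that $f$ factors, up to homotopy, through the collapse $S^{n-1} \times S^{n-1} \to S^{2(n-1)}$.

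The key point is that the restriction of $f$ to each wedge summand of $S^{n-1} \vee S^{n-1} \subset S^{n-1} \times S^{n-1}$ is rationally nullhomotopic. Such a restriction is obtained from $\tilde\alpha$ by extending framed diffeomorphisms by the identity along one of the two framed embeddings $b_i \colon V_3 \hookrightarrow V_2$ that make up $b$. By construction two of the three cores of each copy of $V_3$ are placed inside a common disc of $V_2$ disjoint from the handles, so dissolving one of these cores exhibits $b_i$ as the composition of a handle-filling map $n_j$ from \eqref{equ:handle-filing} with a framed inclusion $V_2 \hookrightarrow V_2$; equivalently, each $b_i$ has a null handle in the sense of \cref{equ:the-construction}. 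The Borromean property, \cref{prop:alphaExists}\ref{it:alphaExists2}, then gives $(b_i)_*(\alpha) = 0$ in $\pi_{n-1}(\BDiff^{\fr}_\partial(V_2)_\ell)_\bfQ$ for $i=1,2$.

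In particular, both classes $(b_i)_*[\tilde\alpha] \in \pi_{n-1}(\BDiff^{\fr}_\partial(V_2)_\ell)$ are torsion. After replacing $\tilde\alpha$ by a positive integer multiple $M\tilde\alpha$ annihilating both of these torsion classes, the restriction of the resulting map $f^M$ to $S^{n-1} \vee S^{n-1}$ becomes genuinely nullhomotopic. Since $S^{n-1} \vee S^{n-1} \hookrightarrow S^{n-1} \times S^{n-1}$ is a cofibration, a choice of nullhomotopy lets me replace $f^M$ by a homotopic map that is constant on the wedge; such a map factors through the cofibre $S^{2(n-1)}$ as some $\bar f \colon S^{2(n-1)} \to \BDiff^{\fr}_\partial(V_2)_\ell$ with $\bar f_*[S^{2(n-1)}] = (NM)^2 \cdot \alpha^{(2)}$. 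Thus $\alpha^{(2)}$ lies in the image of the rational Hurewicz map. The definition of $b$ was tailor-made so that each of its two $V_3$-summands has a null handle, so I do not anticipate any substantive obstacle in executing this plan.
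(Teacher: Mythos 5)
Your proof is correct and follows essentially the same approach as the paper: both rely on the observation that the restriction of $b \circ (\alpha \times \alpha)$ to the wedge $S^{n-1}\vee S^{n-1}$ is nullhomotopic (via the Borromean property and the null handles of the two restrictions of $b$), and then use a choice of nullhomotopy to factor the map through $S^{2(n-1)}$. The only cosmetic difference is that you clear denominators and torsion to work with genuine integral homotopy classes, whereas the paper passes directly to the rationalisation $\BDiff^{\fr}_\partial(V_2)_\bfQ$ where $\alpha$ and its pushforwards already live.
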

\begin{proof}As the restrictions of $b$ to the two copies of $V_3$ have null handles, the restriction of \[b \circ (\alpha \times \alpha): S^{n-1} \times S^{n-1} \ra \BDiff^{\fr}_\partial(V_2)_\bfQ\] along $S^{n-1} \vee S^{n-1} \subset S^{n-1} \times S^{n-1}$ is nullhomotopic. Choosing a nullhomotopy induces a class in $\pi_{2(n-1)}(\BDiff^{\fr}_\partial(V_2))_\bfQ$ whose Hurewicz image in $\oH_{2(n-1)}(\BDiff^{\fr}_\partial(V_2)_\ell;\bfQ)$ is the class $\alpha^{(2)}$.
\end{proof}

In the following, we abbreviate the image of the rational Hurewicz map by
\[h_i(-)_\bfQ \coloneq \im\big(\pi_i(-)_\bfQ \to \oH_i(-;\bfQ) \big)\quad\text{for }i>1.\]
Given a framed embedding $\varphi \colon V_2 \hookrightarrow V_g$ we say that $(\varphi \circ b) \colon V_3 \sqcup V_3 \hookrightarrow V_g$
is the \emph{pattern associated to $e$}. Using the previous lemma, this induces a class \[\varphi_*(\alpha^{(2)}) \in h_{2(n-1)}(\BDiff^{\fr}_\partial(V_g))_\bfQ,\] and this gives rise to a function
\[c_{\Diff}\colon \pi_0(\Emb^{\fr}(V_2, V_g)) \lra h_{2(n-1)}(\BDiff^{\fr}_\partial(V_g)_\ell)_\bfQ\]
which is equivariant with respect to the action of $\pi_1(\BDiff^\fr(V_g))\cong \pi_0\Diff^\fr(V_g,\ell_{V_g})$ by postcomposition in the source and conjugation in the target (see \cref{sec:framed-embeddings}). From the discussion at the beginning of this section, it follows that $c_{\Diff}$ vanishes on framed embeddings with null handles. Restricting the action along the morphism induced by relaxing the boundary condition
 \[\check{\Lambda}_V \rtimes \langle \rho\rangle \cong \pi_1(\BDiff^\fr_{D^{2n}}(V_g)) \rtimes \langle \rho\rangle \lra \pi_1(\BDiff^\fr(V_g))\] 
 (see the proof of \cref{lem:finite-kernel} for the first isomorphism) and forgetting from diffeomorphisms to embeddings, the function $c_{\Diff}$ yields a $(\check{\Lambda}_V\rtimes\langle \rho\rangle)$-equivariant function
\begin{equation}\label{equ:c-function}c\colon \pi_0(\Emb^{\fr}(V_2, V_g)) \lra h_{2(n-1)}(\BEmb^{\fr,\cong}_{\half\partial}(V_g)_\ell)_\bfQ.\end{equation} This function is the subject of the following proposition central to the proof of \cref{thm:ClasperMain}.
\begin{prop}\label{prop:ClasperMain}\ 
\begin{enumerate}[(i)]
\item\label{it:ClasperMain1} The $(\check{\Lambda}_V\rtimes \langle \rho\rangle)$-equivariant function
\[
h\colon \pi_0(\Emb^{\fr}(V_2, V_g)) \lra \bar{H}^\bfZ_{V_g} \times \bar{H}^\bfZ_{V_g}
\]
that sends $\varphi$ to $(\varphi_*(f_1), \varphi_*(f_2))$ is surjective.
\item\label{it:ClasperMain2} The function $c$ from \eqref{equ:c-function} factors as a composition
\[\pi_0(\Emb^{\fr}(V_2, V_g)) \xlratwohead{h} \bar{H}^\bfZ_{V_g} \times \bar{H}^\bfZ_{V_g}\xlra{\bar{c}} h_{2(n-1)}(\BEmb^{\fr}_{\half\partial}(V_g)_\ell)_\bfQ.\]
 \item\label{it:ClasperMain3} The map $\bar{c}$ is bilinear and $(-1)^{n+1}$-symmetric, so induces a $\bfQ[\check{\Lambda}_V\rtimes\langle \rho\rangle]$-module map
 \[S_{2}(H_{V_g})_{2(n-1)}\lra h_{2(n-1)}(\BEmb^{\fr}_{\half\partial}(V_g)_\ell)_\bfQ.\]
\end{enumerate}
\end{prop}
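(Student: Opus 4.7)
The three parts are treated in turn, with part (ii) the technical core.

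For (i), one represents a given pair $(v_1, v_2) \in \bar{H}^\bfZ_{V_g} \times \bar{H}^\bfZ_{V_g}$ by embedded framed $n$-spheres in $V_g$: the existence of embedded representatives follows from the Hurewicz theorem in $V_g$ combined with the Whitney trick in dimension $2n+1 \ge 7$, and stable parallelizability of $V_g$ together with an obstruction calculation in the range $n\ge 3$ yields a compatible framed thickening to $V_1\hookrightarrow V_g$. Joining two such disjoint $V_1$'s by a framed embedded arc (always possible by general position in dimension $\ge 5$) produces the required framed embedding $V_2\hookrightarrow V_g$, and the equivariance of $h$ is clear from its definition.

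For (ii), the strategy is to fix, for each pair $(v_1,v_2)$, a reference framed embedding $\varphi_{v_1,v_2}\colon V_2\hookrightarrow V_g$ as produced in (i), and to show that any other $\varphi$ with $\varphi_*(f_i)=v_i$ satisfies $c(\varphi)=c(\varphi_{v_1,v_2})$. The key point is that the pattern $b$ places its Hopf-linked $V_1$'s inside arbitrarily small embedded discs of $V_2$, so by general position in $V_g$ one can ambient-isotope $\varphi\circ b$ so that the Hopf-linked parts of $\varphi\circ b$ and $\varphi_{v_1,v_2}\circ b$ lie inside common small discs with the same linking pattern. Via \cref{lem:EmbeddingsAndCores}, the remaining comparison reduces to comparing how the two ``central'' handles of each $V_3$ sit inside $V_g$; any residual discrepancy either can be absorbed by an ambient isotopy or gives rise to an embedding $V_3\hookrightarrow V_g$ acquiring a null handle, in which case the Borromean property of \cref{prop:alphaExists} forces the contribution to vanish. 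Setting $\bar{c}(v_1,v_2):=c(\varphi_{v_1,v_2})$ then yields the required factorisation, with $\check\Lambda_V\rtimes \langle\rho\rangle$-equivariance following from the known equivariance of $c$ and $h$ together with the surjectivity in (i). The main obstacle is precisely this null-handle reduction---rigorously expressing differences between $\varphi$ and $\varphi_{v_1,v_2}$ as sums of null-handle configurations---which requires careful isotopy-theoretic bookkeeping in the spirit of Watanabe's clasper calculus.

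For (iii), bilinearity in (say) the first variable follows from the observation that an embedded framed $n$-sphere representing $v_1+v_1'$ can be realised as the framed connect-sum along an embedded arc of spheres representing $v_1$ and $v_1'$; after applying $b$ and pushing forward $\alpha$ along the resulting $V_3$'s, the outcome is $\bar c(v_1,v_2)+\bar c(v_1',v_2)$ modulo cross-terms that produce null-handle embeddings and hence vanish by the Borromean property. Symmetry up to the sign $(-1)^{n+1}$ arises from the very construction of $\alpha^{(2)}=b_*(\alpha\otimes\alpha)$: on the one hand, interchanging the two tensor factors of $\alpha\otimes\alpha$, each of degree $n-1$, produces a Koszul sign $(-1)^{(n-1)^2}=(-1)^{n+1}$; on the other hand, the pattern $b$ is, up to isotopy of framed embeddings, symmetric under swapping the two copies of $V_3$ provided one simultaneously interchanges the two handles $f_1\leftrightarrow f_2$ of $V_2$. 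Combining these two observations with the identity $c(\varphi)=\bar c(\varphi_*(f_1),\varphi_*(f_2))$ yields $\bar c(v_2,v_1)=(-1)^{n+1}\bar c(v_1,v_2)$, so that $\bar c$ descends to the required $\bfQ[\check\Lambda_V\rtimes \langle\rho\rangle]$-module map out of $S_2(H_{V_g})_{2(n-1)}$.
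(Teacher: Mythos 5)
Your outline for (i) is correct but takes a genuinely different route from the paper: you build embedded framed $V_1$'s by the Whitney trick plus obstruction theory and join them by an arc, whereas the paper observes that $\pi_0\Emb^{\fr}(V_2,V_g)\to\pi_0\Imm^{\fr}(V_2,V_g)$ is surjective by general position and identifies the latter via Smale--Hirsch with $\pi_0\Map(V_2,V_g)\cong\bar{H}^\bfZ_{V_g}\times\bar{H}^\bfZ_{V_g}$. Both arguments work.

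For (ii), however, there is a genuine gap. Your dichotomy---``any residual discrepancy either can be absorbed by an ambient isotopy or gives rise to an embedding $V_3\hookrightarrow V_g$ acquiring a null handle''---is false as stated. The paper first reduces (via the fact that $\Emb^{\fr}(V_2,V_g)\to\Imm^{\fr}(V_2,V_g)$ is $(n-2)$-cartesian and Smale--Hirsch) to showing that $c$ is invariant under the moves relating two regularly homotopic embeddings of $S^n\sqcup S^n$ into $V_g$, which by elementary singularity theory are self-linking changes of each sphere and linking changes between the two spheres. Self-linking changes are handled by the linearity Lemma \ref{lem:clasperlinear} together with a cleverly constructed $\phi_1\colon V_4\sqcup V_3\hookrightarrow V_g$ for which the discrepancy term $(\phi_1\circ(\varphi_4\sqcup V_3))_*(\alpha\otimes\alpha)$ has a null handle and so vanishes. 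But linking changes between $S^n_1$ and $S^n_2$ do \emph{not} produce a null handle: there the discrepancy is $(\phi_2\circ(\varphi_4\sqcup\varphi_4))_*(\alpha\otimes\alpha)$, which factors through $D^{2n+1}\subset V_g$, hence lands in the image of $h_{2(n-1)}(\BDiff^{\fr}_\partial(D^{2n+1})_\ell)_\bfQ$ and is killed only because the target of $c$ is the \emph{quotient} $h_{2(n-1)}(\BEmb^{\fr}_{\half\partial}(V_g)_\ell)_\bfQ$ coming from the framed Weiss fibre sequence. Your sketch never uses this quotient structure, and without it the linking-change case does not go away. Likewise, your bilinearity argument in (iii) asserts that the cross-terms from a framed connect sum ``produce null-handle embeddings''; in the paper this step is exactly where the linearity statement $\alpha_{34}=\alpha_3+\alpha_4$ of Lemma \ref{lem:clasperlinear} enters, and that lemma is itself nontrivial (proved from the axiomatic characterisation of $\alpha$). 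Nothing in your sketch establishes or invokes this linearity, so the cross-terms are not controlled. The $(-1)^{n+1}$-symmetry computation in (iii) is fine and matches the paper.
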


Before proving \cref{prop:ClasperMain}, we explain how it can be used to accomplish the goal of this section, proving \cref{thm:ClasperMain}. In addition to \cref{prop:ClasperMain} and \cref{thm:fibre-to-haut}, this relies on Watanabe's main result of \cite{WatanabeII}.

\subsection{Proof of Theorem \ref{thm:ClasperMain}}
Assume $n>3$. Proposition \ref{prop:ClasperMain} gives $\bfQ[\check{\Lambda}_{V}\rtimes \langle \rho\rangle]$-module maps
\[\Phi: S_2(H_{V_g})_{2(n-1)} \lra h_{2n-2}(\BEmb_{\half\partial}^{\fr, \cong}(V_g)_\ell)_\bfQ \lra h_{2n-2}(\BEmb_{\half\partial}^{\sfr, \cong}(V_g)_\ell)_\bfQ.\]
By \cref{thm:fibre-to-haut} the homotopy fibre of the universal cover of
$F\coloneq \hAut^{\cong,\ell}_\partial(V_g)/\Emb^{\sfr, \cong}_{\half\partial}(V_g)_\ell$
has trivial rational homology in degrees $* < 2n-2$, so $\oH_{2n-2}(F;\bfQ)\cong [\pi_{2n-2}(F)_\bfQ]_{\pi_1(F)}$ and the Serre spectral sequence provides an exact sequence of $\bfQ[\check{\Lambda}_{V}\rtimes \langle \rho\rangle]$-modules
\vspace{-0.1cm}
\[ \cdots \overset{d^{2n-1}}\lra [\pi_{2n-2}(F)_\bfQ]_{\pi_1(F)} \lra \oH_{2n-2}(\BEmb_{\half\partial}^{\sfr, \cong}(V_g)_\ell;\bfQ) \lra \oH_{2n-2}(\BhAut_{\partial}^{\cong,\ell}(V_g);\bfQ).\]
The reflection $\rho$ acts trivially on $H_{V_g}$ and so also on the source of the composition
\vspace{-0.1cm}
\[S_2(H_{V_g})_{2(n-1)}\overset{\Phi}\lra h_{2n-2}(\BEmb_{\half\partial}^{\sfr, \cong}(V_g))_\bfQ \lra h_{2n-2}(\BhAut_{\partial}^{\cong,\ell}(V_g))_\bfQ,\]
but it follows from \cref{thm:KrannichhAut} that $\rho$ acts by $-1$ on the target, so this composition must be trivial and we obtain a factorisation 
\[\Phi: S_2(H_{V_g})_{2(n-1)} \lra [\pi_{2n-2}(F)_\bfQ]_{\pi_1(F)}/\mathrm{im}(d^{2n-1}) \longhookrightarrow h_{2n-2}(\BEmb_{\half\partial}^{\sfr, \cong}(V_g))_\bfQ.\]
By \cref{thm:fibre-to-haut}, $\pi_{2n-2}(F)_\bfQ$ has the same dimension as $S_2(H_{V_g})_{2(n-1)}$, so the dimension of the middle group in this composition is at most that of $S_2(H_{V_g})_{2(n-1)}$. As the latter is zero or irreducible as a $\check{\Lambda}_V$-module (see \cref{sec:some-rep-theory} below) we conclude that either
\begin{enumerate}[(i)]
\item\label{it:dichotomy1} the composition $\Phi$ is zero, or
\item\label{it:dichotomy2} the maps in the zig-zag of $\bfQ[\check{\Lambda}_{V}\rtimes \langle \rho\rangle]$-modules
\[S_2(H_{V_g})_{2(n-1)} \lra [\pi_{2n-2}(F)_\bfQ]_{\pi_1(F)}/\mathrm{im}(d^{2n-1})\lla\pi_{2n-2}(F)_\bfQ\] are isomorphisms and \cref{thm:ClasperMain} follows.
\end{enumerate}
We wish to show that the latter case holds. Suppose for a contradiction that the former case holds and consider the framed inclusion $\varphi \colon V_2 \hookrightarrow V_g$ as the first two handles, giving a class \[\xi\coloneq c_{\Diff}(\varphi)\in h_{2n-2}(\BDiff_\partial^{\fr}(V_g)_\ell)_\bfQ.\] Under the splitting 
\[h_{2(n-1)}(\BDiff_\partial^{\fr}(V_g)_\ell)_\bfQ\cong h_{2(n-1)}(\BDiff_\partial^{\fr}(D^{2n+1})_\ell)_\bfQ\oplus  h_{2(n-1)}(\BEmb_{\half \partial}^{\fr,\cong}(V_g)_\ell)_\bfQ
\] induced by \eqref{eq:FrWeissSplit}, the class $\xi$ maps to $(c_{\Diff}(\std),c(\varphi))$ where $\std\colon V_2\hookrightarrow D^{2n+1}$ is the standard embedding. As both of the components vanish---$c_{\Diff}(\std)$ because $\std$ has a null-handle and $c(\varphi)$ by our assumption that  \ref{it:dichotomy1} holds---the class $\xi$ is trivial. On the other hand, we will now argue that Watanabe's work \cite{WatanabeII} implies that $\xi$ is nontrivial, which yields a contradiction and thus finishes the proof. To this end, we choose a nonstandard framed embedding $\psi\colon V_g \hookrightarrow D^{2n+1}$ for which the cores of the first two handles link. Sending forwards the class $\xi$ using this embedding gives a class in $h_{2n-2}(\BDiff_\partial^\fr(D^{2n+1}))$ which agrees by definition with $\theta_*(\alpha\otimes \alpha)$ where $\theta$ is the composition of embeddings $ (\psi\circ \varphi\circ b)\colon V_3 \sqcup V_3\hookrightarrow D^{2n+1}$ visualised in \cref{fig:Clasper5}. As a result of the final part of \cref{prop:alphaExists}, the class $\theta_*(\alpha\otimes \alpha)$ agrees up to a nontrivial constant with the class $\theta_*(\alpha_{\text{Wat}}\otimes \alpha_{\text{Wat}})$ where $\alpha_{\text{Wat}}\in \pi_{n-1}(\BDiff^{\fr}_\partial(V_3))_\bfQ$ is the class from \cite[Lemma A]{WatanabeIIerr}. Using the notation from \cite{WatanabeII}, the class $\theta_*(\alpha_{\text{Wat}}\otimes \alpha_{\text{Wat}})$ is thus by definition exactly the image of the theta-graph $\Theta \in\cG_{2,3}$ under the map $\psi_2$ from Definition 1 on p.\ 641 loc.cit.. But this image is nontrivial by Theorem 3.1 (i) loc.cit., since the image of $\Theta \in\cG_{2,3}$ in $\cA_{2,3}\cong\bfQ$ is a generator (see Remark 2 on p.\ 632 loc.cit.\ and Figure 2 on p.\ 633). Note that the statement of Theorem 3.1 assumes that $n\ge3$ is odd, but it also holds for even $n\ge3$ by \cite[Remark 6]{WatanabeIIerr}.

\begin{figure}[ht]
\begin{center}
\centering{
\resizebox{10.5cm}{!}{
\begingroup%
  \makeatletter%
  \providecommand\color[2][]{%
    \errmessage{(Inkscape) Color is used for the text in Inkscape, but the package 'color.sty' is not loaded}%
    \renewcommand\color[2][]{}%
  }%
  \providecommand\transparent[1]{%
    \errmessage{(Inkscape) Transparency is used (non-zero) for the text in Inkscape, but the package 'transparent.sty' is not loaded}%
    \renewcommand\transparent[1]{}%
  }%
  \providecommand\rotatebox[2]{#2}%
  \newcommand*\fsize{\dimexpr\f@size pt\relax}%
  \newcommand*\lineheight[1]{\fontsize{\fsize}{#1\fsize}\selectfont}%
  \ifx\svgwidth\undefined%
    \setlength{\unitlength}{362.4168408bp}%
    \ifx\svgscale\undefined%
      \relax%
    \else%
      \setlength{\unitlength}{\unitlength * \real{\svgscale}}%
    \fi%
  \else%
    \setlength{\unitlength}{\svgwidth}%
  \fi%
  \global\let\svgwidth\undefined%
  \global\let\svgscale\undefined%
  \makeatother%
  \begin{picture}(1,0.40828264)%
    \lineheight{1}%
    \setlength\tabcolsep{0pt}%
    \put(0,0){\includegraphics[width=\unitlength,page=1]{./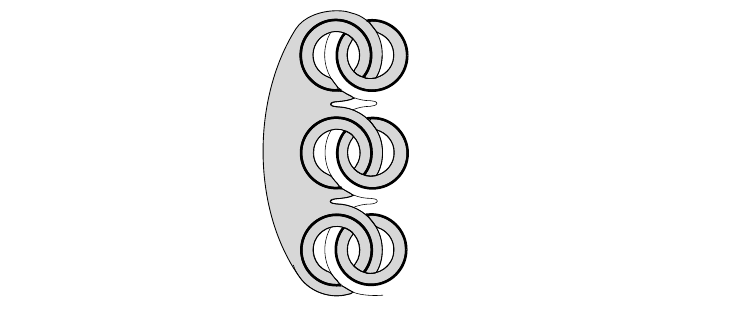}}%
    \put(0.36985376,0.28757217){\makebox(0,0)[lt]{\lineheight{1.25}\smash{\begin{tabular}[t]{l}$f_3$\end{tabular}}}}%
    \put(0,0){\includegraphics[width=\unitlength,page=2]{./figures/Clasper52.pdf}}%
    \put(0.36998936,0.10657059){\makebox(0,0)[lt]{\lineheight{1.25}\smash{\begin{tabular}[t]{l}$f_1$\end{tabular}}}}%
    \put(0.54546549,0.10535163){\makebox(0,0)[lt]{\lineheight{1.25}\smash{\begin{tabular}[t]{l}$f_1$\end{tabular}}}}%
    \put(0.54546549,0.19530997){\makebox(0,0)[lt]{\lineheight{1.25}\smash{\begin{tabular}[t]{l}$f_2$\end{tabular}}}}%
    \put(0.36917122,0.19531){\makebox(0,0)[lt]{\lineheight{1.25}\smash{\begin{tabular}[t]{l}$f_2$\end{tabular}}}}%
    \put(0.54266111,0.28643627){\makebox(0,0)[lt]{\lineheight{1.25}\smash{\begin{tabular}[t]{l}$f_3$\end{tabular}}}}%
    \put(0,0){\includegraphics[width=\unitlength,page=3]{./figures/Clasper52.pdf}}%
  \end{picture}%
\endgroup%
}}
\caption{The embedding $\theta$.}\label{fig:Clasper5}
\end{center}
\end{figure}
\subsection{Some clasper calculus}\label{sec:some-claspers}Before giving the proof of \cref{prop:ClasperMain}, we describe a linearity property of the class $\alpha$ from \cref{prop:alphaExists}. To this end, we consider framed embeddings
\begin{equation}\label{equ:linearity-embeddings}
\varphi_3, \varphi_4, \varphi_{34} \colon V_3 \longhookrightarrow V_4
\end{equation}
as depicted in \cref{fig:Clasper2}, which are such that each embedding $V_3 \overset{\varphi}\hookrightarrow V_4 \subset D^{2n+1}$ is framed isotopic to the tautological embedding, and  whose induced maps on homology fix the classes $f_1$ and $f_2$ and satisfy
\[
(\varphi_3)_*(f_3) = f_3,\quad (\varphi_4)_*(f_3) = f_4,\quad\text{and}\quad(\varphi_{34})_*(f_3) = f_3+f_4.
\]
These embeddings induce classes \[\alpha_3 \coloneq (\varphi_3)_*(\alpha),\quad \alpha_4 \coloneq (\varphi_4)_*(\alpha),\quad\text{ and }\alpha_{34} \coloneq (\varphi_{34})_*(\alpha)\] in  the group $\pi_{n-1}(\BDiff^{\fr}_\partial(V_4))_\bfQ$ which are related as follows.
\begin{figure}
\begin{center}
\centering{
\resizebox{10.5cm}{!}{\input{./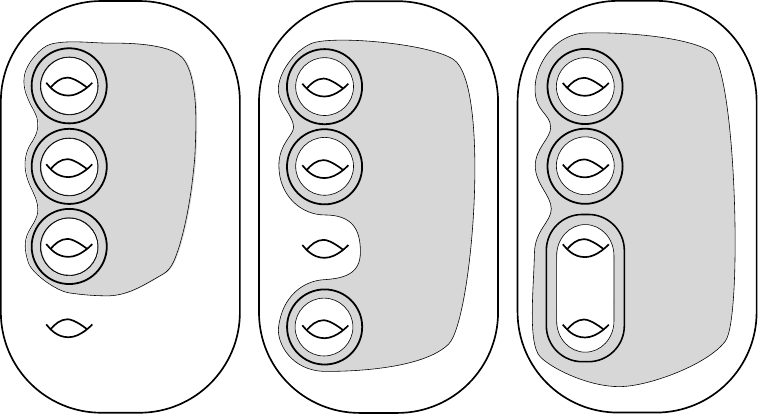_tex}}}
\caption{An example of embeddings $\varphi_3, \varphi_4, \varphi_{34}$.}\label{fig:Clasper2}\end{center}
\end{figure}
\begin{lem}\label{lem:clasperlinear}
We have $\alpha_{34} = \alpha_{3} + \alpha_4\in\pi_{n-1}(\BDiff^{\fr}_\partial(V_4))_\bfQ$.
\end{lem}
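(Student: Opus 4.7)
The plan is to reduce the additivity to the tautological multilinearity of the symmetric/exterior cube via the splitting \eqref{equ:splitting-homotopy-group2}, using only the characterisation of $\alpha$ from \cref{prop:alphaExists} together with the naturality of $\kappa_V$ from \cref{lem:lowest-htp-grp-haut-v-naturality}.

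First, I note that each of the three framed embeddings $\varphi_3,\varphi_4,\varphi_{34}\colon V_3\hookrightarrow V_4$ in \eqref{equ:linearity-embeddings} is framed isotopic, after post-composition with the standard framed inclusion $V_4\subset D^{2n+1}$, to the standard framed inclusion $V_3\subset D^{2n+1}$ (this is exactly the condition imposed when specifying them). Consequently each of them is compatible with the splittings \eqref{eq:FrWeissSplit} and \eqref{equ:splitting-homotopy-group2}, so the induced maps $(\varphi_i)_*$ on $\pi_{n-1}(-)_\bfQ$ preserve the direct sum decomposition
\[
\pi_{n-1}(\BDiff^{\fr}_\partial(V_k))_\bfQ \;\cong\; \pi_{n-1}(\BDiff^{\fr}_\partial(D^{2n+1}))_\bfQ \;\oplus\; S_{1^3}(H_{V_k})_{3(n-1)},
\]
and act on the second summand via the induced maps $(\varphi_i)_*\colon S_{1^3}(H_{V_3})_{3(n-1)}\to S_{1^3}(H_{V_4})_{3(n-1)}$.

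Secondly, by \cref{prop:alphaExists} the class $\alpha\in\pi_{n-1}(\BDiff^{\fr}_\partial(V_3))_\bfQ$ corresponds under this splitting to the pair $(0,[f_1\otimes f_2\otimes f_3])$: the first coordinate is forced to vanish by the handle-filling property \ref{it:alphaExists2} (since $V_3\subset D^{2n+1}$ factors up to framed isotopy through every $n_i$), and the second coordinate is the normalisation \ref{it:alphaExists1}. Applying the three pushforwards and using that $(\varphi_i)_*$ on $S_{1^3}(H_{V_k})$ is induced functorially from the linear map on $H$, we obtain
\[
\alpha_3 = \bigl(0,\,[f_1\otimes f_2\otimes f_3]\bigr),\quad \alpha_4 = \bigl(0,\,[f_1\otimes f_2\otimes f_4]\bigr),\quad \alpha_{34} = \bigl(0,\,[f_1\otimes f_2\otimes (f_3+f_4)]\bigr)
\]
in the direct sum decomposition of $\pi_{n-1}(\BDiff^{\fr}_\partial(V_4))_\bfQ$.

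The conclusion then follows from multilinearity of the Schur functor $S_{1^3}$ in its argument: the symmetrised tensor $[f_1\otimes f_2\otimes(f_3+f_4)]$ equals $[f_1\otimes f_2\otimes f_3]+[f_1\otimes f_2\otimes f_4]$. No step here is really an obstacle; the only thing to check carefully is that the chosen isotopies in the definition of $\varphi_3,\varphi_4,\varphi_{34}$ genuinely make them compatible with the splitting \eqref{equ:splitting-homotopy-group2}, which amounts to unpacking what it means for each $\varphi_i$ to be framed isotopic to the tautological embedding after inclusion into the standard disc.
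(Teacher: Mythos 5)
Your proposal is correct and follows essentially the same approach as the paper. Both proofs decompose the classes along the splitting \eqref{equ:splitting-homotopy-group2}, argue the first coordinate vanishes for each of $\alpha_3,\alpha_4,\alpha_{34}$ (the paper phrases this via the null-handle property of $\varphi_i$ composed with $V_4\subset D^{2n+1}$, you phrase it via the compatibility of the $\varphi_i$ with the splitting and the fact that $\alpha$ has first coordinate zero—these are equivalent since ``first coordinate zero for $\alpha$'' is itself the null-handle vanishing), and then use the naturality of $\kappa_V$ from \cref{lem:lowest-htp-grp-haut-v-naturality} together with the normalisation of $\alpha$ to compute the second coordinate and conclude by linearity of $S_{1^3}$.
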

\begin{proof}
The claim is equivalent to showing that both summands of $\delta \coloneq \alpha_{34} - \alpha_3 - \alpha_4$ with respect to the decomposition \eqref{equ:splitting-homotopy-group2} are trivial. In the first summand all of $\alpha_3$, $\alpha_4$,  $\alpha_{34}$ are zero, as the embeddings \eqref{equ:linearity-embeddings} have null handles when postcomposed with the inclusion $V_4\subset D^{2n+1}$. In the second summand, by naturality of the isomorphisms \eqref{equ:iso-first-haut-group} with respect to framed embeddings we have
$\kappa_V(\alpha_3) = [f_1 \otimes f_2 \otimes f_3]$, $\kappa_V(\alpha_4) = [f_1 \otimes f_2 \otimes f_4]$, and $\kappa_V(\alpha_{34}) = [f_1 \otimes f_2 \otimes (f_3+f_4)]$
and so $\kappa_V(\delta)$ is trivial too.
\end{proof}

\begin{proof}[Proof of \cref{prop:ClasperMain}]
To show \ref{it:ClasperMain1}, we first use that $\pi_0(\Emb^{\fr}(V_2, V_g)) \to \pi_0 (\Imm^{\fr}(V_2, V_g))$ is surjective by general position. By Smale--Hirsch theory the forgetful map \[\Imm^{\fr}(V_2, V_g) \lra \Map(V_2, V_g)\] is an equivalence, so we have $\pi_0( \Imm^{\fr}(V_2, V_g)) \cong \bar{H}^\bfZ_{V_g} \times \bar{H}^\bfZ_{V_g}$ by evaluating on $f_1, f_2 \in \oH_n(V_2;\bfZ)$. 

To show \ref{it:ClasperMain2}, we thus need to prove that the class $c(\varphi)$ only depends on the image of $\varphi$ in $\pi_0(\Imm^{\fr}(V_2, V_g))\cong \bar{H}^\bfZ_{V_g} \times \bar{H}^\bfZ_{V_g}$. To do so consider the commutative diagram
\vspace{-0.1cm}
\[
\begin{tikzcd}
\Emb^{\fr}(V_2, V_g) \rar \dar& \Emb^{\fr}(U_2, V_g) \dar \rar & \Emb( S^n \sqcup  S^n, V_g) \dar\\
\Imm^{\fr}(V_2, V_g) \rar & \Imm^{\fr}(U_2, V_g) \rar& \Imm( S^n \sqcup S^n, V_g),
\end{tikzcd}
\]
where the horizontal maps are all given by restriction. The right-hand square is homotopy cartesian. By Smale--Hirsch theory, the lower left-hand map has homotopy fibre equivalent to $\Omega V_g$, which is $(n-2)$-connected. The homotopy fibre at $e : U_2 \hookrightarrow V_g$ of the upper left-hand map is discussed in the proof of \cref{lem:EmbeddingsAndCores} and is $(n-2)$-connected. Thus the left-hand square is $(n-2)$-cartesian, so the outer square is too. The path-components of the homotopy fibre of the right-hand vertical map (and thus those of the left-hand vertical map) in that square can be studied by elementary singularity theory: given two embeddings $\varphi, \varphi': S_1^n \sqcup S_2^n = S^n \sqcup S^n \hookrightarrow V_g$ which are regularly homotopic, putting such a regular homotopy in general position shows that $\varphi$ and $\varphi'$ differ by a sequence of isotopies, self-linking changes of each $S^n_i$, and linking changes between $S^n_1$ and $S^n_2$. Here by a (self-)linking change, we mean finding a chart in which $\varphi(S^n_1 \sqcup S^n_2)$ has the form
$\bfR^n \times \{0\} \times \{0\} \sqcup \{0\} \times \{2\} \times \bfR^n \subset \bfR^{n} \times \bfR \times \bfR^n$
and replacing $\bfR^n \times \{0\} \times \{0\}$ with $\Gamma \times \{0\}$ where $\Gamma \subset \bfR^n \times \bfR$ is the graph of a compactly-supported function $\bfR^n \to \bfR$ sending 0 to 3. More conceptually, we form the ambient connect-sum of $\bfR^n \times \{0\} \times \{0\}$ with the unit meridian sphere of $\{0\} \times \{2\} \times \bfR^n$, along the path $\{0\} \times [0,1] \times \{0\}$ with its natural normal framing. To prove \ref{it:ClasperMain2} we must therefore show that modifying a framed embedding $\varphi\in\pi_0(\Emb^{\fr}(V_2, V_g))\cong \pi_0(\Emb^{\fr}(U_2, V_g))$ by the analogous (self-)linking changes does not change the class $c(\varphi)$. We will use the framed embeddings indicated in \cref{fig:Clasper2}.

\medskip

\noindent \textbf{Case 1}. If $\varphi$ is modified to $\varphi'$ by a self-linking change of $S_1^n$ (or analogously of $S^n_2$), then there is an embedding
\[\phi_1: V_4 \sqcup V_3 \hookrightarrow V_2 \overset{e}\hookrightarrow V_g\]
 whose restriction along $(\varphi_3 \sqcup V_3) \colon V_3 \sqcup V_3 \hookrightarrow V_4 \sqcup V_3$ is the pattern associated to $\varphi$, whose restriction along $\varphi_{34} \sqcup V_3$ is the pattern associated to $\varphi'$, and whose restriction along $\varphi_4 \sqcup V_3$ has a null handle. Such an embedding is depicted in \cref{fig:Clasper3}. Now 
\begin{align*}c(\varphi') &= (\phi_1 \circ (\varphi_{34} \sqcup V_3))_*(\alpha \otimes \alpha)\\ 
&= (\phi_1)_*(\alpha_{34} \otimes \alpha)\\
&= (\phi_1)_*((\alpha_3 + \alpha_4) \otimes \alpha)\quad\text{ by Lemma \ref{lem:clasperlinear} }\\
&= c(\varphi) + (\phi_1 \circ (\varphi_4 \sqcup V_3))_*(\alpha \otimes \alpha).\end{align*} As $\phi_1 \circ (\varphi_4 \sqcup V_3)$ has a null handle, the last term vanishes, as required.
\begin{figure}[ht]
\begin{center}
\centering{
\resizebox{10.5cm}{!}{
\begingroup%
  \makeatletter%
  \providecommand\color[2][]{%
    \errmessage{(Inkscape) Color is used for the text in Inkscape, but the package 'color.sty' is not loaded}%
    \renewcommand\color[2][]{}%
  }%
  \providecommand\transparent[1]{%
    \errmessage{(Inkscape) Transparency is used (non-zero) for the text in Inkscape, but the package 'transparent.sty' is not loaded}%
    \renewcommand\transparent[1]{}%
  }%
  \providecommand\rotatebox[2]{#2}%
  \newcommand*\fsize{\dimexpr\f@size pt\relax}%
  \newcommand*\lineheight[1]{\fontsize{\fsize}{#1\fsize}\selectfont}%
  \ifx\svgwidth\undefined%
    \setlength{\unitlength}{362.36684003bp}%
    \ifx\svgscale\undefined%
      \relax%
    \else%
      \setlength{\unitlength}{\unitlength * \real{\svgscale}}%
    \fi%
  \else%
    \setlength{\unitlength}{\svgwidth}%
  \fi%
  \global\let\svgwidth\undefined%
  \global\let\svgscale\undefined%
  \makeatother%
  \begin{picture}(1,0.39452213)%
    \lineheight{1}%
    \setlength\tabcolsep{0pt}%
    \put(0,0){\includegraphics[width=\unitlength,page=1]{./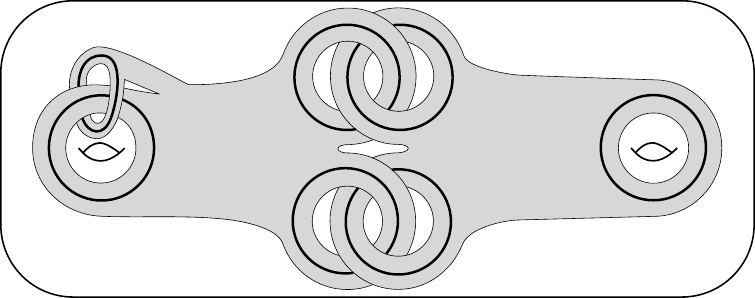}}%
    \put(0.36540612,0.13645683){\makebox(0,0)[lt]{\lineheight{1.25}\smash{\begin{tabular}[t]{l}$f_1$\end{tabular}}}}%
    \put(0.59458689,0.22847834){\makebox(0,0)[lt]{\lineheight{1.25}\smash{\begin{tabular}[t]{l}$f_2$\end{tabular}}}}%
    \put(0.59900082,0.13652204){\makebox(0,0)[lt]{\lineheight{1.25}\smash{\begin{tabular}[t]{l}$f_1$\end{tabular}}}}%
    \put(0.365064,0.22921399){\makebox(0,0)[lt]{\lineheight{1.25}\smash{\begin{tabular}[t]{l}$f_2$\end{tabular}}}}%
    \put(0.2120488,0.19022452){\makebox(0,0)[lt]{\lineheight{1.25}\smash{\begin{tabular}[t]{l}$f_3$\end{tabular}}}}%
    \put(0.76378643,0.19022452){\makebox(0,0)[lt]{\lineheight{1.25}\smash{\begin{tabular}[t]{l}$f_3$\end{tabular}}}}%
    \put(0.16349588,0.29689379){\makebox(0,0)[lt]{\lineheight{1.25}\smash{\begin{tabular}[t]{l}$f_4$\end{tabular}}}}%
  \end{picture}%
\endgroup%
}}
\caption{The embedding $\phi_1$.}\label{fig:Clasper3}
\end{center}
\end{figure}

\medskip

\noindent \textbf{Case 2}.  If $\varphi$ is modified to $\varphi'$ by a linking change of $S_1^n$ and $S^n_2$, say, then there is an embedding
\[\phi_2: V_4 \sqcup V_4 \hookrightarrow V_2 \overset{e}\hookrightarrow V_g\]
 whose restriction along $\varphi_3 \sqcup \varphi_3 \colon V_3 \sqcup V_3 \subset V_4 \sqcup V_4$ is the pattern associated to $\varphi$, whose restriction along $\varphi_{34} \sqcup \varphi_{34}$ is the pattern associated to $\varphi'$, whose restriction along $\varphi_{3} \sqcup \varphi_{4}$ or $\varphi_4 \sqcup \varphi_3$ has a null handle (the ``4''), and whose restriction along $\varphi_4 \sqcup \varphi_4$ factors up to isotopy through $D^{2n+1} \subset V_g$; see \cref{fig:Clasper4} for such an embedding. By the same reasoning as above this gives 
\[c(\varphi') = c(\varphi) + (\phi_2 \circ (\varphi_4 \sqcup \varphi_4))_*(\alpha \otimes \alpha).\] 
The last term need no longer be zero, but it is in the image of $h_{2(n-1)}(\BDiff^{\fr}_\partial(D^{2n+1})_\ell)_\bfQ$, and hence vanishes in $h_{2(n-1)}(\BEmb^{\fr}_{\half\partial}(V_g)_\ell)_\bfQ$. 
This finishes the proof of \ref{it:ClasperMain2}.
\begin{figure}[ht]
\begin{center}
\centering{
\resizebox{10.5cm}{!}{
\begingroup%
  \makeatletter%
  \providecommand\color[2][]{%
    \errmessage{(Inkscape) Color is used for the text in Inkscape, but the package 'color.sty' is not loaded}%
    \renewcommand\color[2][]{}%
  }%
  \providecommand\transparent[1]{%
    \errmessage{(Inkscape) Transparency is used (non-zero) for the text in Inkscape, but the package 'transparent.sty' is not loaded}%
    \renewcommand\transparent[1]{}%
  }%
  \providecommand\rotatebox[2]{#2}%
  \newcommand*\fsize{\dimexpr\f@size pt\relax}%
  \newcommand*\lineheight[1]{\fontsize{\fsize}{#1\fsize}\selectfont}%
  \ifx\svgwidth\undefined%
    \setlength{\unitlength}{362.4168408bp}%
    \ifx\svgscale\undefined%
      \relax%
    \else%
      \setlength{\unitlength}{\unitlength * \real{\svgscale}}%
    \fi%
  \else%
    \setlength{\unitlength}{\svgwidth}%
  \fi%
  \global\let\svgwidth\undefined%
  \global\let\svgscale\undefined%
  \makeatother%
  \begin{picture}(1,0.39460563)%
    \lineheight{1}%
    \setlength\tabcolsep{0pt}%
    \put(0,0){\includegraphics[width=\unitlength,page=1]{./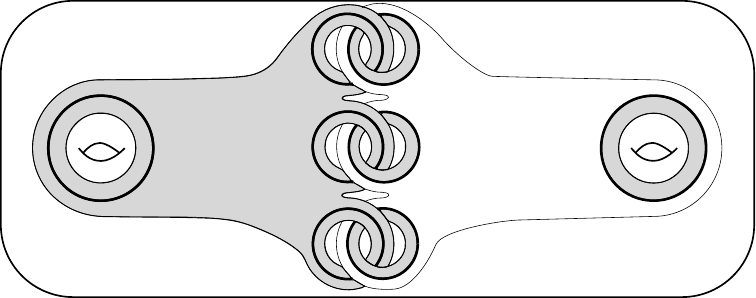}}%
    \put(0.21392094,0.19695445){\makebox(0,0)[lt]{\lineheight{1.25}\smash{\begin{tabular}[t]{l}$f_3$\end{tabular}}}}%
    \put(0.38516459,0.28196611){\makebox(0,0)[lt]{\lineheight{1.25}\smash{\begin{tabular}[t]{l}$f_4$\end{tabular}}}}%
    \put(0,0){\includegraphics[width=\unitlength,page=2]{./figures/Clasper42.pdf}}%
    \put(0.3853002,0.10096453){\makebox(0,0)[lt]{\lineheight{1.25}\smash{\begin{tabular}[t]{l}$f_1$\end{tabular}}}}%
    \put(0.56077633,0.09974557){\makebox(0,0)[lt]{\lineheight{1.25}\smash{\begin{tabular}[t]{l}$f_1$\end{tabular}}}}%
    \put(0.56077633,0.1897039){\makebox(0,0)[lt]{\lineheight{1.25}\smash{\begin{tabular}[t]{l}$f_2$\end{tabular}}}}%
    \put(0.38448205,0.18970394){\makebox(0,0)[lt]{\lineheight{1.25}\smash{\begin{tabular}[t]{l}$f_2$\end{tabular}}}}%
    \put(0.55797194,0.2808302){\makebox(0,0)[lt]{\lineheight{1.25}\smash{\begin{tabular}[t]{l}$f_4$\end{tabular}}}}%
    \put(0.76253321,0.19781403){\makebox(0,0)[lt]{\lineheight{1.25}\smash{\begin{tabular}[t]{l}$f_3$\end{tabular}}}}%
  \end{picture}%
\endgroup%
}}
\caption{The embedding $\phi_2$.}\label{fig:Clasper4}
\end{center}
\end{figure}

\vspace{1ex}

The first part of \ref{it:ClasperMain3} follows from the linearity property described in Lemma \ref{lem:clasperlinear}. More precisely, given $h_1, h_1', h_2 \in \bar{H}_{V_g}$ we can represent these classes by disjoint framed embeddings $V_1  \hookrightarrow V_g$, and hence find a framed embedding
$\phi: V_4 \sqcup V_3 \hookrightarrow V_g$
where the first and second handles of the $V_4$ and $V_3$ are linked to each other, the third and fourth handles of the $V_4$ are sent to $h_1$ and $h'_1$ respectively, and the third handle of the $V_3$ is sent to $h_2$. Now the embedding
$\phi \circ (\varphi_{34} \sqcup V_3) \colon V_3 \sqcup V_3 \hookrightarrow V_g$
may be factored as $\varphi \circ b$ for an embedding $\varphi \colon V_2 \hookrightarrow V_g$ with $\varphi_*(f_1) = h_1+h'_1$ and $\varphi_*(f_2)=h_2$, and similarly $\phi \circ (\varphi_{3} \sqcup V_3)$ may be factored as an embedding corresponding to $(h_1, h_2)$ and $\phi\circ (\varphi_{4} \sqcup V_3)$ may be factored as an embedding corresponding to $(h_1', h_2)$. We therefore have
\begin{align*}
\bar{c}(h_1+h'_1, h_2) &= (\phi \circ (\varphi_{34} \sqcup V_3))_*(\alpha \otimes \alpha) = \phi_*(\alpha_{34} \otimes \alpha)\\
&= \phi_*((\alpha_3 + \alpha_4) \otimes \alpha) \quad\text{ by Lemma \ref{lem:clasperlinear} }\\
&= (\phi \circ \varphi_3)_*(\alpha \otimes \alpha) +  (\phi \circ \varphi_4)_*(\alpha \otimes \alpha)\\
&= \bar{c}(h_1, h_2) + \bar{c}(h'_1, h_2)
\end{align*}
as required. For the second part of \ref{it:ClasperMain3}, by naturality it is enough to consider the case $V_g = V_2$. To calculate $\bar{c}(f_2, f_1)$ we choose a framed embedding $\varphi \colon V_2 \hookrightarrow V_2$ such that $\varphi_*(f_1)=f_2$ and $\varphi_*(f_2) = f_1$. This may be chosen so that $\varphi \circ b$ is given by
\[b' \colon V_3 \sqcup V_3 \xlra{\text{swap}} V_3 \sqcup V_3 \overset{b}\longhookrightarrow V_2,\]
so that we have
\[b' \colon  \BDiff^{\fr}_\partial(V_3) \times \BDiff^{\fr}_\partial(V_3) \overset{\text{swap}}\lra \BDiff^{\fr}_\partial(V_3) \times \BDiff^{\fr}_\partial(V_3) \overset{b_*}\lra \BDiff^{\fr}_\partial(V_2).\]
As $\text{swap}_*(\alpha \otimes \alpha) = (-1)^{(n-1)^2} \alpha \otimes \alpha = (-1)^{n-1} \alpha \otimes \alpha$, and $b_*(\alpha \otimes \alpha) = \bar{c}(\varphi_1, \varphi_2)$, it follows that $\bar{c}(f_2, f_1) = (-1)^{n-1}\bar{c}(f_1, f_2)$ as claimed. 
\end{proof}

\section{Homology of self-embedding spaces}\label{sec:HEmbSpace}
Based on Theorems \ref{thm:KrannichhAut}, \ref{thm:fibre-to-haut}, and \ref{thm:ClasperMain}, we determine in a range the rational homology  of the self-embedding spaces appearing in \cref{cor:reduction-to-self-embeddings}. To state the result, note that these spaces are related to certain stabilised arithmetic groups by compositions 
\[
\BEmb^{\fr,\cong}_{\nicefrac{1}{2}\partial}(W_{g,1})_\ell\ \ra B G_W\ra \BOSp_\infty(\bfZ)\ \ \ \text{and}\ \ \ 
\BEmb^{\fr,\cong}_{\nicefrac{1}{2}D^{2n}}(V_g, W_{g,1})_\ell\ \ra \BGL(\bar{H}_V^\bfZ)\ra \BGL_\infty(\bfZ)
\]
where the first maps are given by the action on homology (see \cref{sec:MCG}) and the second maps use the standard bases from \cref{sec:standard-model} and the stabilisation maps induced by block-inclusion.

\begin{thm}\label{thm:homology-framed-selfemb}
For each $n\ge3$ and for all large enough $g$ the maps
\[\BEmb^{\fr,\cong}_{\nicefrac{1}{2}\partial}(W_{g,1})_\ell\lra\BOSp_\infty(\bfZ)\quad\text{and}\quad \BEmb^{\fr}_{\nicefrac{1}{2}D^{2n}}(V_g,W_{g,1})_\ell\lra \BGL_{\infty}(\bfZ)\]
induce isomorphisms on rational homology in degrees $*<3n-5$ and epimorphisms for $*=3n-5$.
\end{thm}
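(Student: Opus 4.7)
The approach is to analyse the Serre spectral sequence of the universal-cover fibration $\widetilde X \to X \to B\Lambda$, where $X$ is one of the two self-embedding spaces in the statement and $\Lambda \in \{\check{\Lambda}_W,\check{\Lambda}_V\}$ is its fundamental group. The kernel of $\Lambda \to G \in \{G_W,G_V\}$ is finite by \cref{lem:finite-kernel}, so rationally the base may be replaced by $BG$. For the $V_g$-case, we will additionally show that the abelian normal subgroup $M_V^\bfZ \trianglelefteq G_V$ from \cref{lem:mcg-is-semidirect-product} acts trivially on $H_*(\widetilde X_V;\bfQ)$ in our range---a direct consequence of the homology computation outlined below---so the corresponding Hochschild--Serre sequence collapses and the base may be reduced further to $B\GL(\bar H_V^\bfZ)$. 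The comparison map to $\BOSp_\infty(\bfZ)$ or $\BGL_\infty(\bfZ)$ then factors through this intermediate step, and the theorem reduces to showing that the Serre spectral sequence
\[E_2^{p,q} = H^p(G; H^q(\widetilde X;\bfQ)) \Longrightarrow H^{p+q}(X;\bfQ)\]
collapses onto the bottom row $q=0$ in degrees $<3n-5$ (with the analogous epimorphism statement at $p+q=3n-5$), and that this row agrees stably with $H^*$ of the target arithmetic group.

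The first substantive task is to compute $H_*(\widetilde X;\bfQ)$ as a $G$-representation in the range. By combining the fibre sequence \eqref{sec:recall-maps-to-haut} with Theorems~\ref{thm:KrannichhAut}, \ref{thm:fibre-to-haut}, and \ref{thm:ClasperMain}, we know $\pi_*(\widetilde X)_\bfQ$ additively in the range, as an extension of a truncation of $\cL ie\dl H_{W/V},2n\dr \otimes \bfQ^-$ by either $0$ (for $W_{g,1}$) or by the class $S_2(H_V)_{2(n-1)}$ concentrated in degree $2n-2$ (for $V_g$). We upgrade this to a partial description of the Whitehead Lie-algebra structure as a $G$-equivariant graded Lie algebra: most brackets are forced by the Berglund--Madsen identification of \cref{thm:haut-of-wg} and by the naturality of $\kappa_V$ and $\kappa_W$ (Lemmas~\ref{lem:kappaIso} and \ref{lem:lowest-htp-grp-haut-v-naturality}), leaving only a finite list of $G$-equivariant scalar ambiguities, concerning brackets with the $V_g$-specific class and certain internal brackets of (symplectic) Schur summands. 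A truncated Chevalley--Eilenberg computation then expresses $H_q(\widetilde X;\bfQ)$ in the range as a direct sum of irreducible algebraic $G$-representations ((symplectic) Schur functors of $\bar H_{W/V}$) plus controlled contributions from the ambiguities.

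Plugging this into the Serre spectral sequence, we invoke stable rational cohomology computations for $\OSp_g(\bfZ)$ and $\GL_g(\bfZ)$ with coefficients in the algebraic representations that appear: Borel's vanishing theorem together with twisted homological stability (Charney, Randal-Williams--Wahl) shows that $H^p(G;V)$ vanishes in our range for every non-trivial irreducible algebraic summand $V$ appearing in $H^q(\widetilde X;\bfQ)$ with $q>0$, except for the $\bar H_W$-isotypic piece in the $W_{g,1}$-case, which is ruled out by \cref{lem:ChiVanishesOnEmbFr}. This collapses the spectral sequence onto its bottom row, which for $g \gg 0$ stably agrees with $H^*(\BOSp_\infty(\bfZ);\bfQ)$ or $H^*(\BGL_\infty(\bfZ);\bfQ)$ by classical stability for arithmetic groups. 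The principal difficulty will be disposing of the ambiguities from the previous step: the plan is to exploit the naturality of the construction under handle stabilisation $V_g \subset V_{g+1}$ (and its analogue for $W_{g,1}$) and to argue that any ambiguity contributing non-trivially at $E_\infty$ would factor through a $\GL_\infty$- or $\OSp_\infty$-equivariant map into the stable cohomology of a non-trivial algebraic representation, which vanishes in positive degree by the same Borel--Charney input.
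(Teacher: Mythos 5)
The overall skeleton of your plan is right for the $W_{g,1}$-case: compute $\pi_*(\widetilde X)_\bfQ$ as a $G_W$-module, pass to homology by a truncated Chevalley--Eilenberg argument, then run the universal-cover spectral sequence and kill the higher rows via Borel's vanishing theorem (the paper formulates precisely this input as \cref{thm:Borel-vanishing}). The paper likewise isolates a residual bracket ambiguity and disposes of it by showing source and target decompose into distinct nonzero $V_\mu(\bar H_W)$'s plus one trivial summand, which the bracket is forced to hit; see \cref{prop:WgSSCohCalc}. Your handle-stabilisation heuristic is in the same spirit.

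However, there is a genuine gap in the $V_g$-case. You claim that the abelian normal subgroup $M_V^\bfZ \trianglelefteq G_V$ acts trivially on $H_*(\widetilde X_V;\bfQ)$ in the range and then conclude that the Hochschild--Serre sequence for $1\to M_V^\bfZ \to G_V \to \GL(\bar H_V^\bfZ)\to 1$ collapses, reducing the base to $B\GL(\bar H_V^\bfZ)$. Both steps fail. First, the relevant coefficient modules are sub- and quotient-modules of $S_\mu(H_W)$ (e.g.\ $K_{1^3}=\ker(S_{1^3}(H_W)\to S_{1^3}(H_V))$ by \cref{prop:HtpyEmbVg} and \cref{cor:HomologyUnivCovEmbVg}), and an element $\bigl(\begin{smallmatrix} I & B\\ 0 & I\end{smallmatrix}\bigr)\in M_V^\bfZ$ sends $f_j\mapsto f_j+\sum_i B_{ij}e_i$, so it acts by a non-identity unipotent on $\bar H_W$ and hence non-trivially on $K_{1^3}$ (e.g.\ $e_1\wedge f_1\wedge e_2 \mapsto e_1\wedge f_1\wedge e_2 + \sum_i B_{i1}\,e_1\wedge e_i\wedge e_2$). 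Second, even for genuinely trivial $M_V^\bfZ$-coefficients the Hochschild--Serre spectral sequence does not collapse onto $q=0$: since $M_V^\bfZ$ is free abelian of rank $\sim g^2$, $\oH^*(M_V^\bfZ;\bfQ)\cong\Lambda^*((M_V^\bfZ)^\vee\otimes\bfQ)$ contributes non-trivially in every positive degree, and these terms do appear at $E_\infty$. This is precisely the phenomenon the paper addresses in \cref{sec:stable-cohomolog-gv} and Appendix \ref{sec:AppendixA}: one cannot avoid computing $\oH_*(G_V;S_\lambda(H_V)\otimes S_\mu(H_V^\vee))$, which is done in \cref{cor:GradedGpCoh} by combining the Hochschild--Serre sequence with Borel's theorem \emph{and} the first/second fundamental theorems of invariant theory, and the result is not simply $\oH_*(\GL_\infty(\bfZ);\bfQ)\otimes$(invariants). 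Moreover, to make $G_V$-equivariant sense of the computation of $H_*(\widetilde X_V;\bfQ)$ one needs a weight-filtration argument as in the proof of \cref{prop:VgSSCohCalc} (the action of $M_V^\bfZ$ on the associated graded \emph{is} trivial, but the extensions are not), so the "reduction to $\GL(\bar H_V^\bfZ)$" you propose does not literally hold, only at the associated graded level. You should replace the collapse claim by this weight-graded analysis plus the $G_V$-homology computations supplied by the appendix.

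Two minor points: your appeal to Charney / Randal-Williams--Wahl twisted homological stability is not what the paper uses and is not needed; Borel's vanishing theorem for $\OSp_g(\bfZ)$ and $\SL_g(\bfZ)$ (\cref{thm:Borel-vanishing}) already gives the required stable vanishing for nontrivial algebraic coefficients. And the unwanted $\bar H_W$-summand in the $W$-case (which you observe must be ruled out) is in the paper killed already at the level of homotopy groups via \cref{lem:ChiVanishesOnEmbFr} inside \cref{prop:HtpyEmbWg}, not in the spectral sequence; that is a cleaner place to handle it.
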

\noindent The plan to prove \cref{thm:homology-framed-selfemb} is:

\begin{enumerate}[label=$\circled{\arabic*}$, ref={$\circled{\arabic*}$} ]
\item\label{enum:homotopy-selfemb} Compute (in a range) the homotopy Lie algebras \[\pi_*(\Omega_0\BEmb^{\fr,\cong}_{\nicefrac{1}{2}\partial}(W_{g,1})_\ell)_\bfQ\quad\text{and}\quad\pi_*(\Omega_0\BEmb^{\fr}_{\nicefrac{1}{2}D^{2n}}(V_g,W_{g,1})_\ell)_\bfQ.\]
\item\label{enum:homology-cover-selfemb} Use \ref{enum:homotopy-selfemb} to compute (in a range and up to some ambiguity) the rational homology of the universal covers 
\vspace{-0.2cm}
\[
\def\arraystretch{1.5}
\begin{array}{r@{\hskip 0.2cm} c@{\hskip 0.2cm} l@{\hskip 0.2cm} }
\BEmb^{\fr,\cong}_{\nicefrac{1}{2}\partial}(W_{g,1})_\ell^\sim&\lra&\BEmb^{\fr,\cong}_{\nicefrac{1}{2}\partial}(W_{g,1})_\ell\\
\BEmb^{\fr}_{\nicefrac{1}{2}D^{2n}}(V_g,W_{g,1})_\ell^\sim&\lra&\BEmb^{\fr}_{\nicefrac{1}{2}D^{2n}}(V_g,W_{g,1})_\ell.
\end{array}
\]
\item\label{enum:homology-selfemb}Run the universal cover spectral sequence to prove \cref{thm:homology-framed-selfemb}, based on \ref{enum:homology-cover-selfemb}. In doing so, we show that the ambiguity in \ref{enum:homology-cover-selfemb} does not affect the end result.
\end{enumerate}

\begin{conv}
Throughout this section, we fix $n\ge3$ and assume $g\ge2$. 
\end{conv} 

\subsection{Representations of $G_W$ and $G_V$ and their cohomology}\label{sec:irreducibility}
We begin this section with some representation-theoretic and group-cohomological preliminaries on the groups $\check{\Lambda}_V$ and $\check{\Lambda}_W$ mapping to $G_V \cong M_V^\bfZ\rtimes \GL(\bar{H}^\bfZ_V)$ and $G_W$ respectively. For this, it is worth recalling the explicit descriptions of $G_W$ and $G_V$ as matrix groups from \cref{sec:MCG} in terms of $\Sp^q_{2g}(\bfZ)$, $\oO_{g,g}(\bfZ)$, and $\GL_g(\bfZ)$. Furthermore, recall from \cref{sec:involution-on-mcg} that these groups are acted upon by the reflection involution $\rho$ (though $\rho$ acts trivially on the subgroup $\GL(\bar{H}^\bfZ_V)\le G_V$) and that the intersection form induces isomorphisms (see \cref{sec:involution-and-intersection}) $H_W\cong s^{2n-2}H_W^\vee \otimes \bfQ^-$ as a $(G_W\rtimes\langle \rho\rangle)$-module and an isomorphism $H_W\cong H_V\oplus s^{2n-2}H_V^\vee\otimes\bfQ^-$ as a module over the subgroup $\GL(\bar{H}_V^\bfZ)\times\langle \rho\rangle\le G_V\rtimes \langle \rho\rangle$. 

\subsubsection{Some representation theory}\label{sec:some-rep-theory}
Recall from classical representation theory (see e.g.\,\cite[Sections 6, 17, 19]{FultonHarris}) that the $\bfC[\GL_g(\bfC)]$-module $S_\mu(\bfC^g)=S_\mu(\bfC^g[0])$ for a partition $\mu$ (see \cref{sec-gradings-and-schur}) is nonzero if and only if $\mu$ has at most $g$ rows (when thought of as a Young diagram), and in this case it is irreducible. Moreover, for partitions $\mu\neq \lambda$, the $\bfC[\GL_g(\bfC)]$-modules $S_\mu(\bfC^g)$ and $S_{\lambda}(\bfC^g)$ are distinct if they are nonzero. By dualising, the same discussion applies when replacing the $S_\mu(\bfC^g)$'s with their duals $S_\mu(\bfC^g)^\vee\cong S_\mu((\bfC^g)^\vee)$. Similarly, the $\bfC[\OSp_{2g}(\bfC)]$-module $V_\mu(\bfC^g)$ is nonzero for $n$ odd if and only if $\mu$ has at most $g$ rows, and for $n$ even if and only if the first two columns of $\mu$ have in sum at most $g$ boxes, and in these cases $V_\mu(\bfC^{2g})$ is an irreducible $\bfC[\OSp_{2g}(\bfC)]$-module. As in the $\GL_g(\bfC)$-case, for $\mu\neq\lambda$ the $\bfC[\OSp_{2g}(\bfC)]$-modules $V_\mu(\bfC^{2g})$ and $V_{\lambda}(\bfC^{2g})$ are distinct if they are nonzero. 

Using that the representations discussed above are algebraic (i.e.\ extend to representations of the ambient algebraic groups $\GL_g(-)$ or $\OSp_{g}(-)$) and the subgroups $\OO_{g,g}(\bfZ)\subset \OO_{g,g}(\bfC)$ and $\Sp_{2g}^q(\bfZ)\subset\Sp_{2g}(\bfZ)\subset \Sp_{2g}(\bfC)$ are Zariski dense, the analogous properties hold for the $\bfQ[G_W]$-modules $V_\mu(\bar{H}_{W})$ and thus also for their graded versions $V_\mu(H_W)$; this remains true after pulling back along any epimorphism onto $G_W$, such as $\check{\Lambda}_W\twoheadrightarrow G_W$.

Unfortunately $\GL_g(\bfZ) \subset \GL_g(\bfC)$ is not Zariski dense (its Zariski closure consists of the complex matrices of determinant $\pm 1$) so more care is needed in this setting. It is still true that the $\bfQ[\GL_g(\bfZ)]$-modules $S_\mu(\bfQ^g)$ and $S_\mu(\bfQ^g)^\vee$ are irreducible when they are non-zero, because $S_\mu(\bfC^g)$ remains irreducible as a $\SL_g(\bfC)$-module (as every element of $\GL_g(\bfC)$ is a scalar multiple of an element of $\SL_g(\bfC)$) and $\SL_g(\bfZ) \subset \SL_g(\bfC)$ is Zariski dense. Thus the $\bfQ[\GL(\bar{H}^\bfZ_V)]$-modules $S_\mu(\bar{H}_V)$ and $S_\mu(\bar{H}_V)^\vee$ are still zero or irreducible, as are their graded versions $S_\mu(H_V)$ and $S_\mu(H_V)^\vee$; this remains true after pulling back these modules along any epimorphism onto $\GL(\bar{H}^\bfZ_V)$, such as $\check{\Lambda}_V\twoheadrightarrow\GL(\bar{H}^\bfZ_V)$, or $G_V\twoheadrightarrow \GL(\bar{H}^\bfZ_V)$. What is no longer true is that these are \emph{distinct} irreducible modules for distinct partitions: the square of the determinant representation of $\GL_g(\bfC)$ is trivial when restricted to $\GL_g(\bfZ)$, so e.g.\ $S_{(2^g)}(\bfQ^g) \cong S_0(\bfQ^g)$ as $\bfQ[\GL_g(\bfZ)]$-modules. But we may still decompose algebraic $\GL_g(\bfZ)$-representations into irreducibles by decomposing some $\GL_g(\bfC)$-representation which induces it into irreducibles.

\subsubsection{Manipulating Schur functors}\label{sec:schur-manipulations}At various points in this section, we have to calculate tensor products, plethysm, or restrictions of (symplectic or orthogonal) Schur functors, such as $s^{-(2n-2)}S_{1^3}(H_W)\cong s^{-(2n-2)}V_{1^3}(H_W)\oplus H_W$ as $G_W$-modules or $S_2(H_V)\otimes H_V=S_{2,1}(H_V)\oplus S_3(H_V)$ as $\GL(\bar{H}^\bfZ_V)$-modules (and thus also as $G_V$ or $\check{\Lambda}_V$-modules). In all cases we performed these calculations using the {\tt SymmetricFunctions} package provided by {\tt SageMath} \cite{sagemath}.

\subsubsection{Stable cohomology of $G_W$ and $\GL(\bar{H}^\bfZ_V)$}\label{sec:stable-cohomology-borel}

Recall from \cite{Borel1} that $\oH^*(\BGL_{\infty}(\bfZ);\bfQ)$ is an exterior algebra with generators in degrees $4i+1$ for $i\ge1$, and that $\oH^*(\BOSp_\infty(\bfZ);\bfQ)$ is a polynomial algebra with generators in degrees $4i$ for $i\ge1$ if $n$ is even and in degrees $4i-2$ for $i\ge1$ if $n$ is odd. For $\oH^*(\BOSp_\infty(\bfZ);\bfQ)$, it will be convenient for us to use explicit generators
\[\oH^*(\BOSp_\infty(\bfZ);\bfQ)\cong \begin{cases} \bfQ[\sigma_{4},\sigma_8,\ldots]&n\text{ even}\\ \bfQ[\sigma_{2},\sigma_6,\ldots]&n\text{ odd}\end{cases}
\]
defined in the terms of the signature as explained e.g.\ in \cite[Lemma 4.1]{KR-WDisc}. The definition in terms of the signature makes apparent that the reflection $\rho$ of \cref{sec:involution-on-mcg} acts on the $\sigma_i$'s as $-1$ whereas it acts trivially on $\oH^*(\GL({\bar{H}_{V_g}}^\bfZ);\bfQ)$ for all $g$ and thus also on $\oH^*(\BGL_\infty(\bfZ);\bfQ)$.

Combined with further work of Borel \cite{Borel2}, the calculation of $\oH^*(\BGL_{\infty}(\bfZ);\bfQ)$ and  $\oH^*(\BOSp_\infty(\bfZ);\bfQ)$ allows for a computation of the $G_W$- and $\GL(\bar{H}^\bfZ_V)$-homology with coefficients in large class of $\bfQ[G_{W}]$-modules $M_W$ and $\bfQ[\GL(\bar{H}^\bfZ_V)]$-modules $M_V$ in a range of degrees increasing with $g$ in terms of the invariants. Concretely, the maps
\begin{equation}\label{equ:borel-vanishing-maps}\begin{gathered}\oH^*( \OSp_\infty(\bfZ);\bfQ)\otimes M_W^{G_{W}}\xlra{\mathrm{res}\otimes\id}\oH^*(G_W;\bfQ)\otimes M_W^{G_{W}}\xlra{\mathrm{cup}} \oH^*(G_{W};M_W)\quad\text{and}\\
\oH^*( \GL_\infty(\bfZ);\bfQ)\otimes M_V^{\GL(\bar{H}_{V})}\xlra{\mathrm{res}\otimes\id}\oH^*(\GL(\bar{H}_{V});\bfQ)\otimes M_V^{\GL(\bar{H}_{V})}\xlra{\mathrm{cup}} \oH^*(\GL(\bar{H}_{V});M_V).\end{gathered}\end{equation}
are often isomorphisms in a range of degrees. The following instance will be sufficient for us:

\begin{thm}\label{thm:Borel-vanishing}Fix $p,q\ge0$. There is a range of degrees tending to $\infty$ with $g$ in which the maps \eqref{equ:borel-vanishing-maps} are isomorphisms for all summands $M_W$ and $M_V$ of $\bar{H}_{W}^{\otimes p}$ and $\bar{H}_{V}^{\otimes p}\otimes (\bar{H}^\vee_{V})^{\otimes q}$ respectively.
\end{thm}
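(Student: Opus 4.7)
The theorem will follow from Borel's stability and vanishing theorems for arithmetic groups with algebraic coefficients \cite{Borel1,Borel2}, combined with standard representation theory.

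First, I would decompose the tensor powers into irreducibles. By the (symplectic or orthogonal) Schur--Weyl decomposition, $\bar{H}_W^{\otimes p}$ splits as a finite direct sum of $\bfQ[G_W]$-modules of the form $V_\mu(\bar{H}_W)$, with $\mu$ ranging over finitely many partitions determined by $p$; similarly $\bar{H}_V^{\otimes p} \otimes (\bar{H}_V^\vee)^{\otimes q}$ splits into finitely many irreducible $\bfQ[\GL(\bar{H}_V^\bfZ)]$-modules (mixed Schur modules $S_{\mu,\nu}(\bar{H}_V)$, obtained by iteratively contracting with the canonical pairing and applying Schur functors). Any summand $M_W$ or $M_V$ is therefore a sub-sum, and since both sides of \eqref{equ:borel-vanishing-maps} commute with direct sums, it suffices to verify the claim on each irreducible summand.

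Second, I would treat the irreducible summands case by case. By \cref{sec:some-rep-theory}, each nontrivial irreducible summand has zero $G_W$- or $\GL(\bar{H}_V^\bfZ)$-invariants, so the left-hand side of \eqref{equ:borel-vanishing-maps} vanishes. The corresponding cohomology vanishes in a range by Borel's twisted vanishing theorem: for a nontrivial irreducible algebraic representation $V$ of a semisimple $\bfQ$-algebraic group $G$ and an arithmetic subgroup $\Gamma\subset G(\bfZ)$, the cohomology $\oH^*(\Gamma;V)$ is trivial in a range of degrees tending to infinity with the rank. For the trivial summand $\bfQ$, both sides of \eqref{equ:borel-vanishing-maps} reduce to the ordinary rational cohomology of $G_W$ or $\GL(\bar{H}_V^\bfZ)$, and the map is precisely the restriction from the stable cohomology $\oH^*(\BOSp_\infty(\bfZ);\bfQ)$ or $\oH^*(\BGL_\infty(\bfZ);\bfQ)$, which is an isomorphism in a range by Borel's classical stability result \cite{Borel1}. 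When $G_W = \Sp^q_{2g}(\bfZ)$ is only a finite-index subgroup of $\Sp_{2g}(\bfZ)$, an elementary transfer argument recovers the same conclusion from the result for $\Sp_{2g}(\bfZ)$. The $V$-case is identical, using that $G_V$ surjects onto $\GL(\bar{H}_V^\bfZ)$ and acts on $\bar{H}_V$ through this quotient (so any summand of $\bar{H}_V^{\otimes p}\otimes(\bar{H}_V^\vee)^{\otimes q}$ factors through $\GL(\bar{H}_V^\bfZ)$, and the relevant group cohomology is that of $\GL(\bar{H}_V^\bfZ)$).

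The main subtlety lies in uniformity: Borel's stable ranges depend on the irreducible representation, but since only finitely many partitions $\mu,\nu$ appear in the decompositions of the fixed tensor powers $\bar{H}_W^{\otimes p}$ and $\bar{H}_V^{\otimes p} \otimes (\bar{H}_V^\vee)^{\otimes q}$, taking the minimum over these finitely many ranges yields a single range tending to $\infty$ with $g$ that works simultaneously for all summands $M_W$ and $M_V$.
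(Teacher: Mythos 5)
Your overall structure — decompose into irreducibles, handle trivial and nontrivial summands separately, and note uniformity via finiteness of the partition set — matches the spirit of the paper's argument, and the reduction from summands $M_W, M_V$ to the full tensor powers is correct. However, there is a genuine gap at the crucial step.

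You assert as "Borel's twisted vanishing theorem" that for a nontrivial irreducible algebraic representation $V$ of a semisimple $\bfQ$-group $G$ and arithmetic $\Gamma\subset G(\bfZ)$, the cohomology $\oH^*(\Gamma;V)$ vanishes in a range of degrees tending to infinity with the rank of $G$. This is not quite what Borel proves. Borel's theorem \cite[Theorem 4.4]{Borel2} gives vanishing in a range $* \le \min(M(G(\bfR)), C(G,V))$, where $M$ is a stability constant growing (roughly linearly) with rank, but $C(G,V)$ is a separate constant built out of the weights of $V$ and the Weyl vector $\rho$ of $G$, and a priori it could fail to grow with $g$ even when $V$ is "the same" Schur functor $S_\mu$ applied to the growing standard representation. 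Establishing that $C(\SL_g, S_\mu(\bfC^g))$ tends to infinity with $g$ for fixed $\mu$ is exactly the computational content that your proposal leaves out; it requires unpacking the definition of $C$ in terms of positive roots, expressing the weights of the tensor representation in the basis of simple roots, and comparing coefficients against those of $\rho$ (whose $i$th coefficient grows like $\tfrac{1}{2}i(g-i)$, hence quadratically in $g$ near the middle). The paper devotes the bulk of the proof to precisely this estimate, arriving at a bound of the form $C(\SL_g,\tau_{p,q})\ge \tfrac{1}{8}g^2-\max(p,q)-1$. Your finitely-many-partitions observation addresses uniformity over $\mu$ once the per-$\mu$ growth is known, but it cannot substitute for proving that growth. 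To repair the argument, you would need either to supply this weight-theoretic estimate or to cite a reference (e.g.\ for the symplectic/orthogonal case, the paper leans on \cite[Theorem 2.3]{KR-WTorelli}, which packages exactly such an estimate) that explicitly asserts the range grows with $g$.
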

\begin{proof}
As the maps in question are functorial in $M_V$, it suffices to show the statement for $M_W=\bar{H}^{\otimes p}_W$ and $M_V=\bar{H}_{V}^{\otimes p}\otimes (\bar{H}^\vee_{V})^{\otimes q}$. Using that $G_W\cong \Sp_{2g}^q(\bfZ)$ for $n$ odd and $G_W\cong \oO_{g,g}(\bfZ)$ for $n$ even, the claim about the maps forming the first row in \eqref{equ:borel-vanishing-maps} follows from the consequence of Borel's work explained in \cite[Theorem 2.3]{KR-WTorelli}, so we may focus on the second row. To make the statement fit closer to Borel's work, we may extend coefficient from $\bfQ$ to $\bfC$ and replace $\GL_\infty(\bfZ)$ by $\SL_\infty(\bfZ)$ and $\GL(\bar{H}_{V})$ by $\SL(\bar{H}_{V})$; the $\GL$-case follows from this by taking invariants with respect to the action of $\bfZ^\times=\GL(\bar{H}_{V})/\SL(\bar{H}_{V})=\GL_\infty(\bfZ)/\SL_\infty(\bfZ)$. Denoting the semi-simple algebraic $\SL_g(\bfC)$-representation $(\bfC^g)^{\otimes p}\otimes ((\bfC^g)^\vee)^{\otimes q}$ by $\tau_{p,q}$, decomposing this module into irreducibles and using that (i) the statement is vacuously true for sums of trivial representations, (ii) $V^{\SL_g(\bfC)}=V^{\SL_g(\bfZ)}$ for an algebraic $\SL_g(\bfC)$-representation $V$ by Zariski density, and (iii) $\SL_g$ is (for $g \geq 2$) simple and connected as an algebraic group over $\bfQ$, we see from \cite[Theorem 4.4]{Borel2} that the statement for $\tau_{p,q}$ holds in degrees $*\le \min(M(\SL_g(\bfR),\tau_{p,q}), C(\SL_g,\tau_{p,q}))$ for certain non-negative constants $M(\SL_g(\bfR),\tau_{p,q})$ and $C(\SL_g,\tau_{p,q})$ defined in Section 4.1 loc.\,cit.\,(see also the introduction of \cite{TshishikuBorel} for an efficient description of these constants). According to \cite[Section 4.1]{Borel2}, we have $M(\SL_g(\bfR),\tau_{p,q})\ge \rk_\bfR(\SL_g(\bfR))-1=g-2$, so we are left to show that the constant $C(\SL_g,\tau_{p,q})$ tends to $\infty$ with $g$. To this end, we first spell out the definition of $C(\SL_g,\tau_{p,q})$ for which we use the Cartan subalgebra $\mathfrak{a}\subset \mathfrak{sl}_g$ of trace-free diagonal matrices whose dual $\mathfrak{a}^\vee$ is spanned by the standard coordinate functions $\alpha_i$, subject to the relation $\sum_{i=1}^g\alpha_i=0$. The standard  positive roots are $\alpha_i-\alpha_j$ for $i<j$ and the $g-1$ positive simple roots (which form a basis of $\mathfrak{a}^\vee$) are $\alpha_i-\alpha_{i+1}$ for $i\le g-1$. We abbreviate half the sum of the positive roots (sometimes called the \emph{Weyl vector}) by \[\textstyle{\rho\coloneq\tfrac{1}{2}\sum_{i<j}(\alpha_i-\alpha_j)},\] write $\phi>0$ for $\phi\in \mathfrak{a}^\vee$ if $\phi=\sum_{i=1}^{g-1}c_i\cdot(\alpha_i-\alpha_{i+1})$ with $c_i>0$ for all $i$, and call $c_i$ the $i$th \emph{coefficient} of $\phi$, so $\phi>0$ if all coefficients of $\phi$ are positive. In these terms, the constant $C(\SL_g,\tau_{p,q})$ is given as the largest $k\ge0$ such that $\rho+\mu>0$ for every weight $\mu$ of $\Lambda^k\mathfrak{u}^\vee \otimes\tau_{p,q}$, where $\mathfrak{u}\subset \mathfrak{sl}_g$ is the sum of the positive root spaces. The weights of $\mathfrak{u}^\vee$ are the negatives of the positive roots, so the weights of $\Lambda^k\mathfrak{u}^\vee$ are of the form $-\sum\{k\text{ distinct positive roots}\}$. Moreover, since the weights of $\bfC^g$ are the $\alpha_i$'s and those of $(\bfC^g)^\vee$ the $-\alpha_i$'s, the weights of $\tau_{p,q}$ are of the form $\sum\{p\text{ }\alpha_i\text{'s}\}-\sum\{q\text{ }\alpha_i\text{'s}\}$, so the weights $\mu$ of $\Lambda^k\mathfrak{u}^\vee \otimes\tau_{p,q}$ have the form $\textstyle{-\sum\{k\text{ distinct positive roots}\}+\sum\{p\text{ }\alpha_i\text{'s}\}-\sum\{q\text{ }\alpha_i\text{'s}\}}$, and the task becomes to estimate the coefficients of weights of the form
\begin{equation}\label{equ:root-to-be-estimated}\textstyle{\rho+\mu=\rho-\sum\{k\text{ distinct positive roots}\}+\sum\{p\text{ }\alpha_i\text{'s}\}-\sum\{q\text{ }\alpha_i\text{'s}\}}.\end{equation} The $\alpha_i$'s can be expressed in terms of simple roots as \[\textstyle{\alpha_i=-\sum_{j=1}^{i-1}\tfrac{j}{g}(\alpha_j-\alpha_{j+1})+\sum_{l=i}^{g-1}\tfrac{g-l}{g}(\alpha_l-\alpha_{l+1})}.\] In particular, the absolute value of their coefficients is $\le1$, so the absolute value of the coefficients of weights of the form $\sum\{p\text{ }\alpha_i\text{'s}\}-\sum\{q\text{ }\alpha_i\text{'s}\}$ is $\le p+q$. Moreover, writing $\alpha_i-\alpha_j=(\alpha_i-\alpha_{i+1})+\ldots +(\alpha_{j-1}-\alpha_j)$, we see that the coefficients of any positive root are $0$ or $1$, so the coefficients of weights of the form $-\sum\{k\text{ distinct positive roots}\}$ is $\ge -k$. Moreover, it is easy to see that the $i$th coefficient $f_i(g)$ of $\rho=\tfrac{1}{2}\sum_{i<j}(\alpha_i-\alpha_j)$ is $\tfrac{1}{2} i (g-i)$, and hence is $\geq \tfrac{1}{2}(g-1)$ using that $1 \leq i \leq g-1$. Thus each coefficient of \eqref{equ:root-to-be-estimated} is $\smash{\ge \tfrac{1}{2}(g-1) -k-(p+q)}$. By definition of $C(\SL_g,\tau_{p,q})$, this gives $\smash{C(\SL_g,\tau_{p,q})\ge \tfrac{1}{2}(g-3)-(p+q)}$ which indeed tends to $\infty$ with $g$.
\end{proof}

\begin{ex}\label{ex:Borel-vanishing-for-irred}It follows from \cref{thm:Borel-vanishing} that for partitions $\mu\vdash k>0$ there is a range of degrees tending to $\infty$ with $g$ in which the groups $\oH^*(G_{W};V_\mu(H_W))$, $\oH^*(\GL(\bar{H}^\bfZ_V);S_\mu(H_V))$, and $\oH^*(\GL(\bar{H}^\bfZ_V);S_\mu(H_V)^\vee)$ vanish, since $V_\mu(H_W)$, $S_\mu(H_V)$, and $S_\mu(H_V)^\vee$ have no invariants and are (via the norm and up to regrading) summands of  $\bar{H}_{W}^{\otimes k}$, $\bar{H}_{V}^{\otimes k}$, and $(\bar{H}_{V}^{\otimes k})^\vee$ respectively.
\end{ex}

\subsubsection{Stable homology of $G_V$}\label{sec:stable-cohomolog-gv}
In \cref{cor:GradedGpCoh}, as a replacement for \cref{thm:Borel-vanishing} for the group $G_V$, we establish an isomorphism of bigraded groups (the first grading is the homological degree and the second the degree of the coefficients)
\begin{equation}\label{equ:stable-gv-homology}\oH_*\big(G_V; S_\lambda({H}_{V}) \otimes S_\mu({H}_{V}^\vee)\big)\cong\oH_*(\mathrm{GL}_\infty(\bfZ);\bfQ[0]) \otimes \chi(\lambda, \mu)\end{equation} in a range of homological degrees increasing with $g$ for partitions $\lambda$ and $\mu$. Here $\chi(\lambda, \mu)$ vanishes if $|\lambda|-|\mu|$ is odd or negative, and if $|\lambda|-|\mu|=2t \geq 0$ then
\[\chi(\lambda, \mu) \coloneq \left[ \Ind_{\Sigma_{|\mu|} \times \Sigma_{2t}}^{\Sigma_{|\lambda|}} \left( (\mu) \boxtimes \Ind^{\Sigma_{2t}}_{\Sigma_t \wr \Sigma_2} (1^t) \wr (2) \right) \otimes (\lambda) \right]^{\Sigma_{|\lambda|}} [t, (|\lambda|-|\mu|)(n-1)]. \]
We use this to prove the following lemma, which collects all computations of $G_V$-homology groups necessary for this section. In the statement (and the subsequent sections), 
in addition to Schur functors applied to $H_V$ and $H_V^\vee$ we will make use of the graded $\bfQ[G_V\rtimes \langle \rho\rangle ]$-modules 
\[K_\mu \coloneq \ker\big(S_{\mu}(H_{W}) \twoheadrightarrow S_{\mu}(H_{V})\big)\] associated to the canonical projection $H_W\ra H_V$. 

\begin{lem}\label{lem:GV-homology}
In a range of homological degrees increasing with $g$, there are isomorphisms of bigraded $\oH_*(\GL_\infty(\bfZ);\bfQ)$-comodules
\begin{enumerate}
\item\label{enum:gv-homology-i}  $\oH_*( G_V ; \bfQ[0]) \cong \oH_*( \GL_{\infty}(\bfZ) ; \bfQ[0])$,
\item\label{enum:gv-homology-ii} $\oH_*(G_V;H_V^{\otimes p} \otimes (H_V^\vee)^{\otimes q})=0$ if $p-q<0$,
\item\label{enum:gv-homology-iii}  $\oH_*(G_V;S_2(H_V^\vee))=0$,
\item\label{enum:gv-homology-iv} $\oH_*( G_V ; K_{1^3})=0$,
\item\label{enum:gv-homology-v}  $\oH_*(G_V;K_{1^3}\otimes S_2(H_V)^\vee) =0$,
\item\label{enum:gv-homology-vi} $\oH_*(G_V ; S_{1^2}(H_V)\otimes H_V \otimes S_{1^2}(H_V^\vee) \otimes H_V^\vee) \cong \oH_*(\GL_\infty(\bfZ);\bfQ[0])^{\oplus 2}$,
\item\label{enum:gv-homology-vii} $\oH_*(G_V ; S_2(H_V) \otimes S_2(H_V^\vee)) \cong \oH_*(\GL_\infty(\bfZ);\bfQ[0])$,
\item\label{enum:gv-homology-viii} $\oH_*(G_V ;  S_{1^2}(H_V) \otimes S_{1^2}(H_V^\vee)) \cong \oH_*(\GL_\infty(\bfZ);\bfQ[0])$,
\item\label{enum:gv-homology-ix} $\oH_*(G_V;S_{1^2}(S_{1^2}(H_V) \otimes H_V^\vee))=0$,
\item\label{enum:gv-homology-x} $\oH_*(G_V;S_2(H_V)) \cong \oH_*(\GL_\infty(\bfZ);\bfQ[0]) \otimes \bfQ[1,2(n-1)]$,
\item\label{enum:gv-homology-xi} $\oH_*(G_V; S_{2,1}(H_V) \otimes H_V^\vee) \cong \oH_*(\GL_\infty(\bfZ);\bfQ[0]) \otimes \bfQ[1,2(n-1)]$.
\end{enumerate}
\end{lem}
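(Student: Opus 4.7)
The plan is to reduce each case to the bigraded formula \eqref{equ:stable-gv-homology} (to be established in \cref{cor:GradedGpCoh}), which computes $\oH_*(G_V; S_\lambda(H_V) \otimes S_\mu(H_V^\vee))$ in a stable range in terms of the Schur-functor-valued expression $\chi(\lambda, \mu)$, and in particular vanishes unless $|\lambda| - |\mu|$ is a non-negative even integer. Given this formula, items (i)--(iii), (vii), (viii), (x) and (xi) are immediate: (i) is the case $\lambda = \mu = \emptyset$; (ii) follows from Schur--Weyl, since every summand of $H_V^{\otimes p} \otimes (H_V^\vee)^{\otimes q}$ has $|\lambda| - |\mu| = p - q$; (iii) is the case $(\emptyset, (2))$; (vii) and (viii) amount to $\chi(\nu,\nu) = \bfQ[0,0]$ for $\nu \in \{(2), (1^2)\}$ by Schur's lemma; and (x), (xi) reduce to short Pieri-plus-Schur calculations giving $\chi((2), \emptyset) = \chi((2,1), (1)) = \bfQ[1, 2(n-1)]$.

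For items (iv) and (v), the module $K_{1^3}$ is not itself of the form $S_\lambda(H_V) \otimes S_\mu(H_V^\vee)$, so an intermediate step is needed. I would first observe from the matrix description \eqref{equ:matrix-description-Gg} that $M_V^\bfZ \le G_V$ acts trivially on $K_W \subset H_W$ (the relevant unipotent block fixes the $e_i$'s pointwise), so the $G_V$-action on $K_W$ factors through $\GL(\bar{H}_V^\bfZ)$ and identifies $K_W$ with a grading shift of $H_V^\vee$ up to a $\bfQ^-$ twist that is irrelevant for $G_V$-homology. The $G_V$-equivariant short exact sequence $0 \to K_W \to H_W \to H_V \to 0$ then functorially induces a $G_V$-equivariant filtration on $S_{1^3}(H_W)$ with associated graded $\bigoplus_{i+j=3} S_{1^i}(K_W) \otimes S_{1^j}(H_V)$, and $K_{1^3}$ is the sub-filtration with $i \geq 1$. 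In each such piece $|\lambda| - |\mu| \in \{-3, -1, 1\}$ is negative or odd, so the stable homology vanishes by the formula, and an induction on the filtration length via the long exact sequence in group homology yields (iv). Item (v) follows from the same filtration tensored with $S_2(H_V^\vee)$ and expanded via Littlewood--Richardson, which pushes $|\lambda| - |\mu|$ down by a further two on every piece and so preserves vanishing.

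For the remaining items (vi) and (ix), the plan is to decompose the coefficient module into $S_\lambda(H_V) \otimes S_\mu(H_V^\vee)$ summands using Pieri's rule or plethystic identities (mechanisable by the {\tt SymmetricFunctions} package recalled in \cref{sec:schur-manipulations}), and then apply the formula. For (vi), the expansion $S_{1^2}(H_V) \otimes H_V = S_{1^3}(H_V) \oplus S_{2,1}(H_V)$ and its $H_V^\vee$-analogue produce four summands indexed by $\lambda, \mu \in \{(1^3), (2,1)\}$; Schur's lemma then gives $\chi(\lambda, \mu) = \delta_{\lambda, \mu}\bfQ[0,0]$, whence the claimed rank two. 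For (ix), the (graded analogue of the) plethystic identity $S_{1^2}(S_{1^2}(V) \otimes W) \cong (S_{2,2}(V) \oplus S_{1^4}(V)) \otimes S_{1^2}(W) \oplus S_{2,1,1}(V) \otimes S_2(W)$ reduces matters to three symmetric-group invariance calculations, each of which is zero because the shape $\lambda$ in question does not appear in the relevant induction $\Ind_{\Sigma_2 \times \Sigma_2}^{\Sigma_4}((\mu) \boxtimes (2))$.

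The main obstacle is the plethystic computation for (ix): one must carry the ungraded plethystic identity above faithfully over to the graded setting using the Koszul sign rule, and in parallel keep track of the bigrading shifts predicted by the $[t, (|\lambda|-|\mu|)(n-1)]$ factor in $\chi$. A secondary concern is verifying the $G_V$-equivariance of the filtration in (iv) and (v), but this reduces to the explicit matrix check already sketched above, and hence poses no serious difficulty.
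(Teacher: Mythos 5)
Your proposal is correct, and for all items except (iv) and (v) it follows the paper's argument: decompose the coefficient module into summands of the form $S_\lambda(H_V)\otimes S_\mu(H_V^\vee)$ and then read off the answer from the formula \eqref{equ:stable-gv-homology} of \cref{cor:GradedGpCoh}, evaluating $\chi(\lambda,\mu)$ by hand or with \texttt{SageMath}. Your plethystic expansion in (ix) matches the paper's decomposition $S_{2,1,1}(H_V)\otimes S_2(H_V^\vee)\oplus S_{1^4}(H_V)\otimes S_{1^2}(H_V^\vee)\oplus S_{2^2}(H_V)\otimes S_{1^2}(H_V^\vee)$, and the graded/ungraded issue you worry about in the last paragraph is in fact harmless: the identity $S_{1^2}(A\otimes B)\cong S_{1^2}(A)\otimes S_2(B)\oplus S_2(A)\otimes S_{1^2}(B)$ holds verbatim for graded Schur functors with the Koszul sign rule, and the formula in \cref{cor:GradedGpCoh} is already stated for the graded $S_\lambda(H_V)\otimes S_\mu(H_V^\vee)$ with all degree shifts absorbed into the $\bfQ[t,(|\lambda|-|\mu|)(n-1)]$ factor.

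Where you diverge from the paper is in items (iv) and (v). Your argument---that the $G_V$-equivariant extension $0\to K_W\to H_W\to H_V\to 0$ induces a filtration on $S_{1^3}(H_W)$ whose associated graded pieces $S_{1^i}(K_W)\otimes S_{1^j}(H_V)$ with $i\geq 1$ all have $j-i$ odd, hence vanishing stable homology by \cref{cor:GradedGpCoh}, with the $S_2(H_V)^\vee$-tensored version pushing $|\lambda|-|\mu|$ down by $2$---is correct but longer than needed. The paper instead observes that $-\id\in\GL(\bar H_W)$ lies in $G_V$ (taking $A=-I$, $B=0$ in the matrix description \eqref{equ:matrix-description-Gg}) and is central; it acts on $K_{1^3}\subset S_{1^3}(H_W)$ by $(-1)^3=-1$ and on $K_{1^3}\otimes S_2(H_V)^\vee$ by $(-1)^{3+2}=-1$, so by the ``centre kills'' trick the homology vanishes \emph{identically}, not just stably, and no appeal to \cref{cor:GradedGpCoh} or to the weight filtration is needed. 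Your filtration argument does, however, offer the small advantage of fitting uniformly into the framework used for the remaining items and of not requiring $-\id$ to act by a sign: it would continue to work for any coefficient module built from $H_W$ whose associated graded pieces all have $|\lambda|-|\mu|$ odd or negative, regardless of the total tensor degree.
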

\begin{rem}We could state this lemma for $\check{\Lambda}_V$ instead of $G_V$, because the surjection $\check{\Lambda}_W\twoheadrightarrow G_V$ has finite kernel by \cref{lem:finite-kernel}, so the $G_V$- and $\check{\Lambda}_W$-homology of any $\bfQ[G_V]$-module agree.
\end{rem}

\begin{proof}[Proof of \cref{lem:GV-homology}]
Items \ref{enum:gv-homology-iv} and \ref{enum:gv-homology-v} follow from the ``centre kills'' trick, as $G_V\subset \GL(\bar{H}_W)$ contains the central element $-\mathrm{id} \in \GL(\bar{H}_W)$ (this is clear from the description in \eqref{equ:matrix-description-Gg}) which acts as $(-1)$ on both $K_{1^3}$ and $K_{1^3}\otimes S_2(H_V)^\vee$. For the remainder we will apply \eqref{equ:stable-gv-homology}. The dimensions of the bigraded vector space $\chi(\lambda, \mu)$ can be calculated in terms of products and plethysm of Schur functions. In the following we did all such calculations using {\tt SageMath} (in fact only cases $t \leq 1$ arise and no plethysm is necessary).

\begin{enumerate}
\item Taking $(\lambda)=(\mu)=(0)$, we have $\chi((0), (0)) = \bfQ[0,0]$.
\item $\oH_*(G_V;H_V^{\otimes p} \otimes (H_V^\vee)^{\otimes q}) = \bigoplus_{\lambda \vdash p} \bigoplus_{\mu \vdash q} \oH_*(G_V; S_\lambda({H}_{V}) \otimes S_\mu({H}_{V}^\vee))=0$ for $p-q<0$.
\item This follows from \ref{enum:gv-homology-ii}.
\setcounter{enumi}{5}
\item We have $S_{1^2}(H_V)\otimes H_V \cong S_{1^3}(H_V) \oplus S_{2,1}(H_V)$, so we must calculate $\oH_*(G_V; S_\lambda({H}_{V}) \otimes S_\mu({H}_{V}^\vee))$ for $((\lambda), (\mu))$ being $((1^3), (1^3))$, $((1^3), (2,1))$, $((2,1), (1^3))$, and $((2,1),(2,1))$. We find that
$\chi((1^3), (1^3)) = \chi((2,1),(2,1)) = \bfQ[0,0]$ and $\chi((1^3), (2,1)) = \chi((2,1), (1^3))=0.$
\item We compute $\chi((2),(2)) = \bfQ[0,0]$.
\item We compute $\chi((1^2),(1^2)) = \bfQ[0,0]$.
\item We must first expand out $S_{1^2}(S_{1^2}(H_V) \otimes H_V^\vee)$. In general we have $S_{1^2}(A \otimes B) \cong S_{1^2}(A) \otimes S_2(B) \oplus S_2(A) \otimes S_{1^2}(B)$, so 
\begin{align*}
S_{1^2}(S_{1^2}(H_V) \otimes H_V^\vee) &\cong S_{1^2}(S_{1^2}(H_V)) \otimes S_2(H_V^\vee) \oplus S_{2}(S_{1^2}(H_V)) \otimes S_{1^2}(H_V^\vee)\\
&\cong S_{2,1,1}(H_V) \otimes S_2(H_V^\vee) \oplus S_{1^4}(H_V) \otimes S_{1^2}(H_V^\vee) \oplus S_{2^2}(H_V) \otimes S_{1^2}(H_V^\vee).
\end{align*}
Then $\chi((2,1,1), (2))=0$, $\chi((1^4), (1^2)) = 0$, and $\chi((2^2), (1^2))=0$.
\item We compute $\chi((2),(0)) = \bfQ[1,2(n-1)]$.
\item We compute $\chi((2,1),(1)) = \bfQ[1, 2(n-1)]$.\qedhere
\end{enumerate}
\end{proof}

We now begin executing the outlined plan to prove \cref{thm:homology-framed-selfemb}.

\subsection{Step \ref{enum:homotopy-selfemb}}\label{sec:universal-covers-embedding-spaces}We first settle the $W_{g,1}$-case.
\begin{prop}\label{prop:HtpyEmbWg}
There is an isomorphism of graded $\bfQ[\check{\Lambda}_{W} \rtimes \langle \rho \rangle]$-modules
\[\pi_*(\Omega_0\BEmb^{\fr,\cong}_{\nicefrac{1}{2}\partial}(W_{g,1})_\ell)_\bfQ \cong s^{-(2n-2)}\big(V_{1^3}(H_{W}) \oplus S_{2^2}(H_{W})\big)\otimes \bfQ^-\quad\text{for }*<3n-6.\]
Furthermore if $2n-2$ lies in this range (i.e. for $n>4$), then the image of the Lie bracket
\[[-,-] \colon \pi_{*}(\Omega_0\BEmb^{\fr,\cong}_{\nicefrac{1}{2}\partial}(W_{g,1})_\ell)_\bfQ^{\otimes 2} \lra \pi_{*}(\Omega_0\BEmb^{\fr,\cong}_{\nicefrac{1}{2}\partial}(W_{g,1})_\ell)_\bfQ\]
contains a trivial $\bfQ[2n-2]$-summand as a $\bfQ[\check{\Lambda}_{W}]$-module.
\end{prop}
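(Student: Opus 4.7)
The plan is to combine three ingredients: Theorem \ref{thm:fibre-to-haut}, which compares stably framed self-embeddings to homotopy automorphisms; a direct analysis of the framing-comparison map $\BEmb^{\fr,\cong}_{\nicefrac{1}{2}\partial}\to\BEmb^{\onefr,\cong}_{\nicefrac{1}{2}\partial}$; and an explicit Lie-bracket calculation in $\Der_\omega(\bfL(H_W))$. First I would invoke Theorem \ref{thm:fibre-to-haut} (the fibre of $\BEmb^{\sfr,\cong}_{\nicefrac{1}{2}\partial}(W_{g,1})_\ell \to \BhAut^{\cong,\ell}_\partial(W_{g,1})$ has finite $\pi_1$ and vanishing higher rational homotopy in $1<*<3n-5$) together with Lemma \ref{lem:connectivity-framing-comparison} (the map $\BEmb^{\onefr,\cong}_{\nicefrac{1}{2}\partial}(W_{g,1})\to\BEmb^{\sfr,\cong}_{\nicefrac{1}{2}\partial}(W_{g,1})$ is rationally $(3n+2)$-connected) to obtain a $\bfQ[\check{\Lambda}_W\rtimes\langle\rho\rangle]$-equivariant isomorphism of graded Lie algebras $\pi_*(\Omega_0\BEmb^{\onefr,\cong}_{\nicefrac{1}{2}\partial}(W_{g,1})_\ell)_\bfQ \cong \pi_*(\Omega_0\BhAut_\partial(W_{g,1}))_\bfQ$ for $*<3n-6$. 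Theorem \ref{thm:haut-of-wg} and Remark \ref{rem:LieChar} identify the right-hand side with $s^{-(2n-2)}(S_{1^3}(H_W)\oplus S_{2^2}(H_W))\otimes\bfQ^-$, concentrated in degrees $n-1$ and $2n-2$.

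Next I would analyse the forgetful map $\BEmb^{\fr,\cong}_{\nicefrac{1}{2}\partial}(W_{g,1})_\ell\to\BEmb^{\onefr,\cong}_{\nicefrac{1}{2}\partial}(W_{g,1})_\ell$, whose homotopy fibre at $\ell$ is, via the standard framing and the fibration $\Omega S^{2n}\to \OO(2n)\to\OO(2n+1)$, equivalent to $\Map_{\nicefrac{1}{2}\partial}(W_{g,1},\Omega S^{2n})$. Using $(W_{g,1},\nicefrac{1}{2}\partial W_{g,1})\simeq\vee^{2g}S^n$ together with $\pi_*(\Omega S^{2n})_\bfQ\cong\bfQ[2n-1]\oplus\bfQ[4n-2]$, the rational homotopy of this fibre in the range is concentrated in degree $n-1$ and equivariantly isomorphic to $H_W\otimes\bfQ^-$ via the intersection pairing. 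In the long exact sequence the fibre thus contributes only at $*=n-1$, leaving $\pi_{2n-2}(\Omega_0\BEmb^{\fr,\cong})_\bfQ\cong S_{2^2}(H_W)\otimes\bfQ^-$ unchanged, and giving the short exact sequence
\[
0 \lra \pi_{n-1}\bigl(\Omega_0\BEmb^{\fr,\cong}_{\nicefrac{1}{2}\partial}(W_{g,1})_\ell\bigr)_\bfQ \lra S_{1^3}(H_W)\otimes\bfQ^- \xlra{\partial} H_W\otimes\bfQ^- \lra 0.
\]
By equivariance and the decomposition $S_{1^3}(H_W)=V_{1^3}(H_W)\oplus H_W$ as $G_W$-modules, Schur's lemma forces $\partial$ to be either zero or a nonzero multiple of the canonical intersection-form contraction $\chi$. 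Vanishing would force $\pi_n(\BEmb^{\fr,\cong}_{\nicefrac{1}{2}\partial})_\bfQ\to\pi_n(\BhAut_\partial(W_{g,1}))_\bfQ$ to be surjective, contradicting Lemma \ref{lem:ChiVanishesOnEmbFr} for $g\ge 2$. Hence $\ker\partial\cong V_{1^3}(H_W)\otimes\bfQ^-$, completing the first assertion.

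For the Lie bracket claim with $n>4$, both summands lie in range and the bracket on $\pi_*(\Omega_0\BEmb^{\fr,\cong})_\bfQ$ corresponds under the previous isomorphisms to the restriction along $V_{1^3}(H_W)\hookrightarrow S_{1^3}(H_W)$ of the Lie bracket on $\cL ie\dl H_W,2n\dr\otimes\bfQ^-$, which via Berglund--Madsen's isomorphism with $\Der_\omega(\bfL(H_W))$ is the commutator of bracket-length-$2$ derivations landing in bracket-length-$3$ derivations. As a $G_W$-representation $S_{2^2}(H_W)$ contains a unique trivial summand (the image of double contraction by $\omega$), and by Schur's lemma the projection of the restricted bracket to this summand defines a $G_W$-invariant bilinear form on $V_{1^3}(H_W)$, unique up to a scalar. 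To finish I would rule out vanishing by a direct computation: choose explicit elements of $V_{1^3}(H_W)$ (for example distinct trivalent Lie-words in the $e_i$'s and $f_j$'s with $g$ large enough), compute the commutator of the corresponding derivations in $\Der_\omega(\bfL(H_W))$ using a Hall basis of $\bfL^3(H_W)$, and verify that its double contraction against $\omega^{\otimes 2}$ is non-zero. The hardest step will be this explicit computation; the indirect identification of $\partial$ with $\chi$ in the preceding paragraph, via Lemma \ref{lem:ChiVanishesOnEmbFr}, is the other delicate point.
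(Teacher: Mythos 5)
Your argument for the module isomorphism is correct and essentially the same as the paper's. You route the framing comparison $\fr \to \sfr$ through the intermediate $\onefr$-structure (using Lemma~\ref{lem:connectivity-framing-comparison} for the second leg), whereas the paper passes directly to stable framings via the fibre $\Map_{\nicefrac{1}{2}\partial}(W_{g,1},\Omega\SO/\SO(2n))$; in the range $*<3n-6$ these coincide, since the extra $\bfQ[4n-2]$ class of $\pi_*(\Omega S^{2n})_\bfQ$ lands in degree $3n-2$, outside the range. Both arguments then identify the boundary map $\partial$ as non-zero using Lemma~\ref{lem:ChiVanishesOnEmbFr}, and conclude $\ker\partial \cong s^{-(2n-2)}V_{1^3}(H_W)\otimes\bfQ^-$ by irreducibility of $H_W$ and $V_{1^3}(H_W)$. (A typographical note: several of your displays drop the $s^{-(2n-2)}$ shift, e.g.\ you write $S_{1^3}(H_W)\otimes\bfQ^-$ where $s^{-(2n-2)}S_{1^3}(H_W)\otimes\bfQ^-$ is meant.)

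For the Lie bracket claim, however, there is a genuine gap. You correctly reduce the statement to showing that the commutator map $S_{1^2}(s^{-(2n-2)}V_{1^3}(H_W))\to s^{-(2n-2)}S_{2^2}(H_W)$ has non-zero projection onto the unique trivial $\bfQ[2n-2]$-summand, and you correctly observe that this projection is a $G_W$-invariant form on $V_{1^3}(H_W)$ which is unique up to scalar by Schur's lemma, so it suffices to verify non-vanishing on one explicit pair of derivations. But you then defer that verification to an unperformed ``direct computation,'' and explicitly flag it as the hardest step. As written, your argument establishes that the image of the bracket either contains the trivial summand or has trivial projection to it, but does not decide which. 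The paper avoids this computation by citing the proof of Corollary~5.12 of \cite{KR-WDisc}: there a geometrically defined family produces an invariant vector of degree $2n-2$ in the image of the bracket, via a map which factors through $\Omega_0\BDiff^{\fr}_\partial(W_{g,1})_\ell$ and hence through $\Omega_0\BEmb^{\fr,\cong}_{\nicefrac{1}{2}\partial}(W_{g,1})_\ell$. Your proposed explicit Lie-bracket computation in $\Der_\omega(\bfL(H_W))$ would be a valid alternative (it is the exact analogue of the $\Delta_1, \Delta_2$ calculation the paper performs for $V_g$ in Proposition~\ref{prop:HhtyhAutVgWg}), but you would need to actually carry it out---in particular choosing representatives of $V_{1^3}(H_W)$, i.e.\ elements of $S_{1^3}(H_W)$ whose image under the contraction $\chi$ vanishes, which constrains the choices more than in the $V_g$ case.
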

\begin{proof}
By \cref{thm:fibre-to-haut} the homotopy fibre of the map $\BEmb^{\sfr}_{\nicefrac{1}{2}\partial}(W_{g,1})_{\ell} \ra \BhAut_{\partial}^{\cong,\ell}(W_{g,1})$  from \eqref{sec:recall-maps-to-haut} has trivial rational homotopy groups in degrees $1<*< 3n-5$. \cref{thm:haut-of-wg} provides an isomorphism $\pi_*(\Omega_0 \BhAut_{\partial}^{\cong,\ell}(W_{g,1}))_\bfQ \cong \cL ie\dl H_{W},2n\dr\otimes\bfQ^-$ in positive degrees which combined with \cref{rem:LieChar} gives
\[\pi_*(\Omega_0 \BEmb^{\sfr,\cong}_{\nicefrac{1}{2}\partial}(W_{g,1})_{\ell^s})_\bfQ \cong s^{-(2n-2)}\big(S_{1^3}(H_{W}) \oplus S_{2^2}(H_{W})\big)\otimes \bfQ^-\quad\text{for }*<3n-6.\]
Now consider the homotopy fibre sequence
\begin{equation}\label{eq:FrToSfr}
\Map_{\nicefrac{1}{2}\partial}(W_{g,1}, \Omega\SO/\SO(2n))\lra \BEmb^{\fr,\cong}_{\nicefrac{1}{2}\partial}(W_{g,1})_\ell \lra \BEmb^{\sfr,\cong}_{\nicefrac{1}{2}\partial}(W_{g,1})_{\ell^s}
\end{equation}
induced by passing from framings to stable framings. The restriction $\bar{e}$ of the Euler class $e \in \oH^{2n}(\BSO(2n);\bfQ)$ represents a rationally $(4n-1)$-connected map
$\bar{e} \colon \SO/\SO(2n) \to K(\bfQ,2n)$, so using the equivalence $(W_{g,1},\half \partial W_{g,1})\simeq (\vee^{2g}S^n,*)$
we get
\[\pi_*(\Map_{\nicefrac{1}{2}\partial}(W_{g,1}, \Omega\SO/\SO(2n)))_\bfQ \cong s^{2n-2}H_W^\vee\cong H_{W} \otimes \bfQ^-\quad\text{for }  *< 3n-2.\] In degrees $*<3n-5$, the $(\check{\Lambda}_{W} \rtimes \langle \rho \rangle)$-equivariant long exact sequence of rational homotopy groups induced by \eqref{eq:FrToSfr} thus has only one potentially nontrivial map: the connecting map
\[ \partial \colon \pi_n(\BEmb^{\sfr,\cong}_{\nicefrac{1}{2}\partial}(W_{g,1})_\ell)_\bfQ \lra  \pi_{n-1}(\Map_{\nicefrac{1}{2}\partial}(W_{g,1}, \Omega\SO/\SO(2n)))_\bfQ\cong (H_{W})_{n-1} \otimes \bfQ^-.\] But by Lemma \ref{lem:ChiVanishesOnEmbFr} the composition
\[ \pi_n(\BEmb^{\fr,\cong}_{\nicefrac{1}{2}\partial}(W_{g,1})_\ell)_\bfQ \ra \pi_n(\BEmb^{\sfr,\cong}_{\nicefrac{1}{2}\partial}(W_{g,1})_\ell)_\bfQ \xra{\cong} \pi_n(\BhAut_{\partial}^{\cong,\ell}(W_{g,1}))_\bfQ\]
is not surjective, so $\partial$ must be non-zero and, as $(H_{W})_{n-1}$ is irreducible, it must in fact be surjective. Thus, the long exact sequence induced by \eqref{eq:FrToSfr} yields in degrees $*<3n-6$ a short exact sequence of graded $\bfQ[\check{\Lambda}_{W} \rtimes \langle \rho \rangle]$-modules of the form
\[0\ra \pi_*(\Omega_0\BEmb^{\fr,\cong}_{\nicefrac{1}{2}\partial}(W_{g,1})_\ell)_\bfQ\ra s^{-(2n-2)}\big(S_{1^3}(H_{W}) \oplus S_{2^2}(H_{W})\big)\otimes \bfQ^- \ra H_W\otimes\bfQ^-\lra 0.\]
Using the decomposition $s^{-(2n-2)}S_{1^3}(H_W)=s^{-(2n-2)}V_{1^3}(H_W)\oplus H_W$ into irreducibles (see \cref{sec:schur-manipulations}), this gives the claimed description of the rational homotopy groups in the statement. Moreover, it shows that if $2n-2<3n-6$, then the target of the bracket map in the statement
\[[-,-] \colon \pi_{*}(\Omega_0\BEmb^{\fr,\cong}_{\nicefrac{1}{2}\partial}(W_{g,1})_\ell)_\bfQ^{\otimes 2} \lra \pi_{*}(\Omega_0\BEmb^{\fr,\cong}_{\nicefrac{1}{2}\partial}(W_{g,1})_\ell)_\bfQ\]
is $S_{2^2}(H_W)_{4(n-1)}\otimes\bfQ^-$, which is semi-simple. To prove the second part of the claim, it thus suffices to exhibit a nontrivial $\check{\Lambda}_W$-invariant vector of degree $2n-2$ in the image. This follows from Corollary 5.15 of \cite{KR-WDisc}, since the map $\Omega_0X_1(g)\ra \Omega_0\BhAut_\partial(W_{g,1})$ in that corollary factors through $\Omega_0\BDiff^\fr_\partial(W_{g,1})_\ell$ by construction, and thus also through $\Omega_0\BEmb^{\fr,\cong}_{\nicefrac{1}{2}\partial}(W_{g,1})_\ell$.
\end{proof}

The $V_g$-case of Step~\ref{enum:homotopy-selfemb} is harder. We first establish two preparatory results.

\begin{prop}\label{prop:HhtyhAutVgWg}
There is an isomorphism of graded $\bfQ[\check{\Lambda}_{V} \rtimes \langle \rho \rangle]$-modules
\[\pi_*(\Omega_0\BhAut^{\cong,\ell}_{D^{2n}}(V_g, W_{g,1}))_\bfQ \cong s^{-(2n-2)}(K_{1^3}\oplus K_{2^2})\otimes\bfQ^-\quad\text{for }*<3n-3.\] In particular, the action factors in this range over the surjection $\check{\Lambda}_{V} \rtimes \langle \rho \rangle \twoheadrightarrow G_V\rtimes \langle\rho \rangle$. Moreover, as a module over the subgroup $\GL(\bar{H}_V^\bfZ)\le  G_V\rtimes\langle\rho\rangle$, the image of the Lie bracket map
\[[-,-] \colon \pi_{*}(\Omega_0\BhAut^{\cong,\ell}_{D^{2n}}(V_g, W_{g,1}))_\bfQ^{\otimes 2} \lra \pi_{*}(\Omega_0\BhAut^{\cong,\ell}_{D^{2n}}(V_g, W_{g,1}))_\bfQ\]
contains a $\bfQ[2n-2]^{\oplus 2}$-summand.
\end{prop}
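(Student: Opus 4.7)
The plan is to bootstrap off Theorem~\ref{thm:KrannichhAut} and Remark~\ref{rem:LieChar} for the first two assertions, and then unpack the cyclic Lie bracket for the third.

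First, since $\BhAut^{\cong,\ell}_{D^{2n}}(V_g, W_{g,1})$ is a union of path components of $\BhAut_{D^{2n}}(V_g, W_{g,1})$ containing the basepoint, the two spaces share an identity component and hence have the same rational homotopy groups in positive degrees. Applying Theorem~\ref{thm:KrannichhAut} (pulled back along the map $\check\Lambda_V \to \pi_0\hAut^{\cong,\ell}_{D^{2n}}(V_g,W_{g,1})$ induced by \eqref{equ:self-embs-to-haut}) identifies $\pi_*(\Omega_0\BhAut^{\cong,\ell}_{D^{2n}}(V_g, W_{g,1}))_\bfQ$ equivariantly with $\ker\bigl(\cL ie\dl H_W, 2n\dr \to \cL ie\dl H_V, 2n\dr\bigr) \otimes \bfQ^-$ in positive degrees. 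By Remark~\ref{rem:LieChar}, for $M \in \{W_{g,1}, V_g\}$ one has $\cL ie\dl H_M, 2n\dr \cong s^{-(2n-2)}\bigl(S_{1^3}(H_M) \oplus S_{2^2}(H_M) \oplus S_{3,1^2}(H_M)\bigr)$ in degrees $* < 4n-4$. The $s^{-(2n-2)}$-shift places $s^{-(2n-2)}S_\mu(H_M)$ with $|\mu|=k$ in degree $(k-2)(n-1)$, so the three summands contribute in degrees $n-1$, $2n-2$ and $3n-3$ respectively. Restricting to $* < 3n-3$ kills the last piece and leaves $s^{-(2n-2)}(K_{1^3} \oplus K_{2^2}) \otimes \bfQ^-$. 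Both $K_{1^3}$ and $K_{2^2}$ are functorial in $H_W$, so the $\check\Lambda_V$-action factors through its image $G_V \le \GL(\bar{H}_W^\bfZ)$ (Section~\ref{sec:MCG}); combined with the $\rho$-twist of Section~\ref{sec:involution-and-intersection} this yields the claimed $G_V \rtimes \langle\rho\rangle$-module structure.

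For the final claim, the identification of Theorem~\ref{thm:KrannichhAut} is one of graded Lie algebras (inherited from Theorem~\ref{thm:haut-of-wg}), so the kernel is a Lie subalgebra of $\cL ie\dl H_W, 2n\dr \otimes \bfQ^-$ and the bracket $K_{1^3}\otimes K_{1^3} \to K_{2^2}$ is the restriction of the cyclic Lie bracket $S_{1^3}(H_W)^{\otimes 2} \to S_{2^2}(H_W)$ (up to suspension). Using the $\GL(\bar{H}_V^\bfZ)$-splitting $H_W \cong H_V \oplus (H_V^\vee \otimes \bfQ^-)$ of Section~\ref{sec:involution-and-intersection}, one decomposes $S_{2^2}(H_W)$ via Littlewood--Richardson and computes that the $\GL(\bar{H}_V^\bfZ)$-invariants of $K_{2^2}$ form a two-dimensional space, spanned by the canonical invariants of $S_2(H_V)\otimes S_2(H_V^\vee)$ and of $S_{1^2}(H_V)\otimes S_{1^2}(H_V^\vee)$.

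The main obstacle is then to show that \emph{both} of these invariants lie in the image of the bracket. I would interpret elements of $K_{1^3}$ under Theorem~\ref{thm:haut-of-wg} as degree-$(n-1)$ derivations $\theta \colon H_W \to [H_W, H_W]$ vanishing on $\omega = \sum_i [e_i, f_i]$ and inducing the trivial derivation on $\bfL H_V$, and compute the graded commutator $[\theta_1, \theta_2]$ directly on a hyperbolic basis of $H_W$ (Section~\ref{sec:standard-model}). The strategy is to exhibit two explicit pairs $(\theta_1, \theta_2)$ whose commutators pair with each invariant nontrivially, one supported on contractions of ``symmetric'' type in the $H_V$/$H_V^\vee$-coordinates and the other on ``antisymmetric'' type. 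The principal difficulty is the combinatorial bookkeeping across the various Littlewood--Richardson summands of $S_{2^2}(H_W)$, and verifying that after contracting with the intersection form the output has nontrivial component along \emph{both} invariant directions without artificial cancellations; I expect this to be most tractable working in the explicit hyperbolic coordinates $\{e_i, f_i\}$ from Section~\ref{sec:standard-model}.
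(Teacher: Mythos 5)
Your proposal follows the paper's route essentially exactly. For the first part you correctly apply Theorem~\ref{thm:KrannichhAut} together with Remark~\ref{rem:LieChar} to identify the homotopy groups in the stated range with $s^{-(2n-2)}(K_{1^3}\oplus K_{2^2})\otimes\bfQ^-$ (noting that the $S_{3,1^2}$-piece sits in degree $3n-3$ and so is cut off). For the bracket, the paper does precisely what you outline: it observes that the Lie algebra structure is inherited from the commutator bracket on $\Der_\omega(\bfL(H_W))$ via Theorem~\ref{thm:haut-of-wg}, uses the explicit Berglund--Madsen map $t$ sending $(1^3)\otimes_{\Sigma_3}H_W^{\otimes3}$ to derivations, and produces two manifestly $\GL(\bar{H}_V^\bfZ)$-invariant commutators $\Delta_1,\Delta_2$ in hyperbolic coordinates $\{e_i,f_i\}$, verifying their linear independence by evaluating both on $f_1$ in the free Lie algebra $\bfL(H_W)$.

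One small remark on organization: rather than computing pairings against dual invariants as you suggest, the paper sidesteps the Littlewood--Richardson bookkeeping by building the invariant vectors \emph{directly in the image} (as $\sum_{i,j,k}[t(\cdot),t(\cdot)]$-type sums, whose invariance is visible from their construction out of $\nu=\sum_i e_i\otimes f_i$), and then simply checks linear independence by direct evaluation. Combined with semisimplicity of $K_{2^2}$ in degree $4(n-1)$, linear independence of two invariants in the image suffices. If you pursue the pairing route instead, be careful that you also need the commutators themselves to span a $\GL$-invariant subspace of the image; constructing them as invariant vectors from the start is cleaner and removes that extra check.
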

\begin{proof}
\cref{thm:KrannichhAut} provides an isomorphism 
\[\pi_*(\Omega_0 \BhAut^{\cong,\ell}_{D^{2n}}(V_g, W_{g,1}))_\bfQ \cong \mathrm{ker}\big(\cL ie\dl H_W,2n\dr \to \cL ie\dl H_V,2n\dr \big) \otimes \bfQ^-\subset \Der_\omega(\bfL(H_W))\] of graded $\bfQ[G_V\rtimes \langle\rho\rangle]$-modules, so the first claim follows from \cref{rem:LieChar}. To prove the claim about the Lie bracket, note that \cref{thm:KrannichhAut} in particular implies that the restriction map $\Omega_0\BhAut_{D^{2n}}(V_g, W_{g,1})\ra \Omega_0\BhAut_\partial(W_{g,1})$ is injective on rational homotopy groups, so it follows from \cref{thm:haut-of-wg} that the Lie algebra structure on $\pi_*(\Omega_0 \BhAut^{\cong,\ell}_{D^{2n}}(V_g, W_{g,1}))_\bfQ$ is induced by the restriction of the commutator bracket on the derivation space $\Der_\omega(\bfL(H_W))$. 

The isomorphism 
\[s^{-(2n-2)}(1^3) \otimes_{\Sigma_3} H_W^{\otimes 3} = \cL ie((H_W, 2n))_{n-1} \otimes \bfQ^- \cong \Der_\omega(\bfL(H_W))_{n-1}\]
of Theorem \ref{thm:haut-of-wg} is given explicitly, in \cite[Proposition 6.6]{BerglundMadsen}, by
\[
\map{t}{s^{-(2n-2)}(1^3) \otimes_{\Sigma_3} H_W^{\otimes 3}}{\Der_\omega(\bfL(H_W))_{n-1}}{[1 \otimes v_1 \otimes v_2 \otimes v_3]}{\lambda(v_3, -) [v_1, v_2] + \lambda(v_2, -)[v_3, v_1] + \lambda(v_1, -)[v_2, v_3].}
\]
Moreover, for $[1 \otimes v_1 \otimes v_2 \otimes v_3] \in (1^3) \otimes_{\Sigma_3} H_W^{\otimes 3}$ to represent an element of the subspace $K_{1^3} \subset S_{1^3}(H_W)\cong (1^3) \otimes_{\Sigma_3} H_W^{\otimes 3}$ it suffices for some $v_i$ to lie in $K_W=\ker(H_W\ra H_V)$. Recall from \cref{sec:standard-model} the basis $(e_i,f_i)_{1\le i\le g}$ such that the $e_i$'s form a basis of $K_W$ and the $f_i$'s map injectively to a basis of $H_V$. Using this basis, we consider the commutator of derivations
\[\textstyle{\Delta_1 \coloneq \sum_{i,j,k=1}^g [t(e_i, f_j, e_k), t(f_i, f_k, e_j)] \in \Der_\omega(\bfL(H_W))_{2(n-1)}}.\]
This is a $\GL(\bar{H}_V^\bfZ)$-invariant vector, as it is obtained from the invariant vector $\nu^{\otimes 3} \in (H_W^{\otimes 2})^{\otimes 3}$ with $\nu \coloneq \sum_{i=1}^g e_i \otimes f_i \in H_W^{\otimes 2}$ by permuting factors, applying $t \otimes t$ to get an element of $\Der_\omega(\bfL(H_W))_{n-1}^{\otimes 2}$, then taking the commutator. Furthermore, both $e_i\otimes f_j\otimes e_i$ and $f_i \otimes e_j \otimes f_k$ lie in $K_{1^3}$, as they contain $e$'s. Thus $\Delta_1$ is a $\GL(\bar{H}_V^\bfZ)$-invariant vector in the image of the Lie bracket map in question. Similarly, 
we consider the commutator
\[\textstyle{\Delta_2 \coloneq \sum_{i,j,k=1}^g [t(e_i, f_i, e_j), t(f_j, e_k, f_k)] \in \Der_\omega(\bfL(H_W))_{2(n-1)}},\]
which is also $\GL(\bar{H}_V^\bfZ)$-invariant and in the image of the Lie bracket map in question, by a similar argument. Since $K_{2^2}$ is semi-simple as a $\GL(\bar{H}_V^\bfZ)$-module, to show that the image of this Lie bracket contains two copies of the trivial representation it suffices to show that $\Delta_1$ and $\Delta_2$ are linearly-independent. By explicitly writing out the definition of these derivations $\Delta_1$ and $\Delta_2$ using the above map $t$, expanding everything out, and using the Jacobi identity and the Leibniz rule repeatedly, one may painstakingly evaluate these derivations on $f_1\in H_W$ to be
\begin{align*}
\Delta_1(f_1) &= (4g + 4(-1)^n) \cdot \textstyle{\sum_{i=1}^g [f_1, [e_i, f_i]] + (-(-1)^n 2g-4) \cdot \sum_{i=1}^g[[f_1, f_i], e_i]},\\
\Delta_2(f_1) &= (-(-1)^{n}2g -2) \cdot \textstyle{\sum_{i=1}^g[f_1, [e_i, f_i]]}, 
\end{align*}
which are indeed linearly independent in the free Lie algebra $\bfL(H_W)$ since we assumed $g\ge2$.
\end{proof}
\begin{rem}\vspace{-0.1cm}
We produced $\Delta_1$ and $\Delta_2$ by contemplating the trivalent graphs
$\begin{tikzpicture}
 [baseline=-.65ex]
 \node[circle,draw,fill,inner sep=1pt] (v) at (0,0) {};
 \node[circle,draw,fill,inner sep=1pt] (w) at (0.5,0) {};
\draw (v) edge[bend left] (w) edge[bend right] (w) edge (w);
\end{tikzpicture}$ and $\begin{tikzpicture}
 [baseline=-.65ex]
 \node[circle,draw,fill,inner sep=1pt] (v) at (0,0) {};
 \node[circle,draw,fill,inner sep=1pt] (w) at (0.5,0) {};
\draw (v) edge (w);
\draw (v) to[in=-140,out=-220,loop] (v);
\draw (w) to[in=40,out=-40,loop] (w);
\end{tikzpicture}$. 
\end{rem}

The second ingredient in the proof of the $V_g$-case of Step~\ref{enum:homotopy-selfemb} is the following:

\begin{lem}\label{lem:stable-twisted-homology}Fix $k>1$ and $n\ge4$. The twisted homology $\oH_*(\BEmb^{\fr,\cong}_{\nicefrac{1}{2}D^{2n}}(V_g,W_{g,1})_\ell;S_k(H^\vee_V))$ vanishes in a range of homological degrees increasing with $g$. 
\end{lem}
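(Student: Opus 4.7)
The natural approach is via the Serre spectral sequence of the fibration
\[\widetilde{\BEmb}\ \lra\ \BEmb^{\fr,\cong}_{\nicefrac{1}{2}D^{2n}}(V_g,W_{g,1})_\ell\ \lra\ BG_V,\]
where $\widetilde{\BEmb}$ is the cover of $\BEmb^{\fr,\cong}_{\nicefrac{1}{2}D^{2n}}(V_g,W_{g,1})_\ell$ corresponding to the kernel of $\check{\Lambda}_V\twoheadrightarrow G_V$; by \cref{lem:finite-kernel} this kernel is finite, so $\widetilde{\BEmb}$ has the same rational homology as the universal cover. The coefficient system $S_k(H_V^\vee)$ is pulled back from $BG_V$ via $G_V\twoheadrightarrow \GL(\bar H_V^\bfZ)$, so the spectral sequence takes the form
\[E^2_{p,q}\ =\ \oH_p\bigl(G_V;\,\oH_q(\widetilde{\BEmb};\bfQ)\otimes S_k(H_V^\vee)\bigr)\ \Longrightarrow\ \oH_{p+q}\bigl(\BEmb^{\fr,\cong}_{\nicefrac{1}{2}D^{2n}}(V_g,W_{g,1})_\ell;S_k(H_V^\vee)\bigr),\]
and it is enough to see that the $E^2$-page vanishes in a range of $p+q$ increasing with $g$.

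To control $\oH_q(\widetilde{\BEmb};\bfQ)$ I would appeal to Step~\ref{enum:homotopy-selfemb}: using \cref{thm:KrannichhAut}, \cref{thm:fibre-to-haut} and \cref{thm:ClasperMain}, the rational homotopy Lie algebra $L\coloneq\pi_*(\Omega_0\BEmb^{\fr,\cong}_{\nicefrac{1}{2}D^{2n}}(V_g,W_{g,1})_\ell)_\bfQ$ is computed, in a range, in terms of $K_{1^3}$, $K_{2^2}$ and Schur functors in $H_W$. Quillen's theorem identifies $\oH_q(\widetilde{\BEmb};\bfQ)$ in a range with the Chevalley--Eilenberg homology $\oH_q^{\mathrm{CE}}(L)$, and the $\GL(\bar H_V^\bfZ)\times\langle\rho\rangle$-equivariant splitting of the extension \eqref{equ:filtration-H_W} lets one expand this as a sum of $G_V$-representations of the form $S_\lambda(H_V)\otimes S_\mu(H_V^\vee)$. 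After tensoring with $S_k(H_V^\vee)$ and decomposing the $H_V^\vee$-part via the Littlewood--Richardson rule, each resulting summand has the shape $S_\lambda(H_V)\otimes S_{\mu'}(H_V^\vee)$ with $|\mu'|=|\mu|+k$.

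The final step is then a case-by-case application of \cref{lem:GV-homology}. Summands with $|\lambda|<|\mu'|$ have $G_V$-homology vanishing in a range by \cref{lem:GV-homology}\ref{enum:gv-homology-ii}, while summands with $|\lambda|=|\mu'|$ are computed by the formula \eqref{equ:stable-gv-homology}, whose $\chi$-coefficients will vanish whenever the Young diagrams involved are distinct. The main obstacle is precisely to check, using the explicit form of $L$ and the strict inequality $k>1$, that the ``defect-positive'' summands (those with $|\lambda|>|\mu'|$—which would contribute nontrivially via \cref{lem:GV-homology}\ref{enum:gv-homology-x},~\ref{enum:gv-homology-xi}) do not survive in the Chevalley--Eilenberg cohomology in the relevant range, and that the equality case produces only distinct-diagram pairings. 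Both assertions reduce to combinatorial identities for products and plethysm of Schur functions, and can be verified—as in the proof of \cref{lem:GV-homology}—in {\tt SageMath}.
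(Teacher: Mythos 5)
Your proposal is circular. You propose computing $\oH_q(\widetilde{\BEmb};\bfQ)$ by ``appealing to Step~\ref{enum:homotopy-selfemb}'', i.e.\ to the computation of $\pi_*(\Omega_0\BEmb^{\fr,\cong}_{\nicefrac{1}{2}D^{2n}}(V_g,W_{g,1})_\ell)_\bfQ$. But for $V_g$ that is \cref{prop:HtpyEmbVg}, whose proof \emph{uses} \cref{lem:stable-twisted-homology}: the lemma is invoked precisely to show (by contradiction) that the connecting map $\partial_V$ is surjective, which pins down $\pi_{2n-2}$ of the embedding space. Without already knowing the lemma, Step~(I) only gives you the exact sequence involving $\partial_V$, and the whole point of the lemma is to resolve that ambiguity. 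So the rational homotopy Lie algebra $L$ you want to feed into Quillen/Chevalley--Eilenberg is not available to you at this stage. (The issue you flag at the end about ``defect-positive'' summands is real but secondary; the argument does not get off the ground.)

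The paper's proof avoids this entirely by not touching the rational homotopy of the embedding space at all. It first uses the second delooped Weiss fibre sequence \eqref{equ:delooped-framed-Weiss-fs} to reduce the claim to vanishing of $\oH_*(\BDiff^{\fr}_{D^{2n}}(V_g)_\ell;S_k(H_V^\vee))$. This is then proved by a clean auxiliary-structure trick: introduce the tangential structure $\theta_m = \fr \times \Map(-,K(U,m))$ for a one-dimensional $\bfQ$-vector space $U$ and odd $m>n$, so that $\BDiff^{\theta_m}(V_g)_\ell$ fibres over $\BDiff^{\fr}(V_g)_\ell$ with fibre $\Map_{D^{2n}}(V_g,K(U,m))$. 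The stable homology theorem of Botvinnik--Perlmutter identifies $\oH_*(\BDiff^{\theta_m}(V_g)_\ell;\bfQ)\cong S^*(U[m])$ in a stable range, the Serre spectral sequence of the fibration collapses by keeping track of $\GL(U)=\bfQ^\times$-weights, and comparing weight-$k$ parts shows $\oH_p(\BDiff^{\fr}(V_g)_\ell;S^k(\bar{H}_V^\vee\otimes U[m-n]))\cong S^k(U[m])_{p+k(m-n)}$, which vanishes for $k\ge 2$ since $U[m]$ is one-dimensional and odd. If you want to salvage your spectral-sequence route you would have to make it logically independent: either treat both possible values of $\ker(\partial_V)$ and show the $E^2$-page vanishes stably in each case, or reorganise the section so that the lemma is established first by a different method (as the paper does).
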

\begin{proof}From the Serre spectral sequence of the second delooped Weiss fibre sequence in \eqref{equ:delooped-framed-Weiss-fs} with coefficients in $S_k(H^\vee_V)$, we see that it suffices to show vanishing of $\oH_*(\BDiff^{\fr}_{D^{2n}}(V_g)_\ell;S_k(H^\vee_V))$ in a range of homological degrees increasing with $g$. Fixing an auxiliary 1-dimensional $\bfQ$-vector space $U$, an odd number $m>n$, and a functorial model of Eilenberg-MacLane spaces $K(-,m)$, we consider the tangential structure $\theta_m\colon K(U,m)\times \EO(2n+1)\ra \BO(2n+1)$ given by the projection to $\EO(2n+1)$ followed by the canonical map $\EO(2n+1)\ra \BO(2n+1)$. A $\theta_m$-structure on $V_g$ has an underlying framing, and this induces a fibration sequence
\begin{equation}\label{equ:auxiliary-structure}\Map_{D^{2n}}(V_g,K(U,m))\lra \BDiff^{\theta_m}(V_g)_\ell\lra \BDiff^{\fr}(V_g)_\ell.\end{equation}
By \cite[Theorem A*, Corollary B*]{BotvinnikPerlmutter}, there is a map
$\BDiff^{\theta_m}(V_g)_\ell\ra \Omega^\infty_0\Sigma^\infty_+K(U,m)$ which induces an isomorphism in homology in a range of degrees increasing with $g$. This map is functorial in $U$ up to homotopy, so denoting the free graded commutative algebra by $S^*(-)$ we obtain an isomorphism of graded $\bfQ[\GL(U)]$-modules
\[\oH_*(\BDiff^{\theta_m}(V_g)_\ell;\bfQ)\cong \oH_*(\Omega^\infty_0\Sigma^\infty_+K(U,m);\bfQ)\cong S^*(\widetilde{\oH}_{*}(K(U,m);\bfQ))\cong S^*(U[m])\] 
in a range of degrees increasing with $g$. Combining this with the isomorphism \[\oH_*(\Map_{D^{2n}}(V_g,K(U,m));\bfQ)\cong S^*(\bar{H}_V^\vee\otimes U[m-n])\] of graded $\bfQ[\GL(U)]$-modules, we see that the Serre spectral sequence of \eqref{equ:auxiliary-structure} has in range of total degrees increasing with $g$ the form
\begin{equation}\label{equ:e2-twisted-ss}
E_{p,q}^2=\oH_p\big(\BDiff^{\fr}(V_g)_\ell;S^*(\bar{H}_V^\vee\otimes U[m-n])_q\big)\implies S^*(U[m])_{p+q}.
\end{equation}
This is a spectral sequence of $\bfQ[\GL(U)]$-modules, and $\GL(U) = \bfQ^\times$ acts on the row $q=(m-n) \cdot k$ with weight $k$, i.e.\ as $(-)^k$. From \eqref{equ:e2-twisted-ss}, we see that different rows of the spectral sequence have different weights, so the spectral sequence collapses at the $E_2$-page and there are no extension issues. Similarly, the weight $k$ part of $S^*(U[m])$ is $S^k(U[m])$, so there in an isomorphism
\[\oH_p\big(\BDiff^{\fr}(V_g)_\ell;S^k(\bar{H}_V^\vee\otimes U[m-n])\big)\cong S^k(U[m])_{p+k(m-n)},\]
and this vanishes for $k \geq 2$, as $U[m]$ is $1$-dimensional and in odd degree. Finally, as $U \cong \bfQ$ and $m$ is odd we have $S^k(\bar{H}_V^\vee\otimes U[m-n]) \cong S^k(s^{m-1}{H}_V^\vee) \cong s^{k(m-1)}S^k(H_V^\vee)$, so the result follows.
\end{proof}

Equipped with \cref{prop:HhtyhAutVgWg} and Lemma \ref{lem:stable-twisted-homology}, we establish the $V_g$-part of Step~\ref{enum:homotopy-selfemb}.

\begin{prop}\label{prop:HtpyEmbVg}\ 
\begin{enumerate}
 \item For $n> 4$ and all large enough $g$, there is an epimorphism of graded $\bfQ[\check{\Lambda}_{V} \rtimes \langle \rho \rangle]$-modules $\partial_V\colon s^{-(2n-2)}K_{2^2}\otimes\bfQ^-\ra S_2(H_V)$ and an isomorphism of graded $\bfQ[\check{\Lambda}_{V} \rtimes \langle \rho \rangle]$-modules
\[\pi_*(\Omega_0\BEmb^{\fr,\cong}_{\nicefrac{1}{2}D^{2n}}(V_g,W_{g,1}))_\bfQ \cong (
s^{-(2n-2)}K_{1^3}\otimes \bfQ^-)\oplus \ker(\partial_V)\quad\text{for }*<3n-6.
 \] In particular, the action factors over $\check{\Lambda}_{V} \rtimes \langle \rho \rangle \twoheadrightarrow G_V\rtimes \langle\rho \rangle$ in this range. Moreover, when considered as a module over the subgroup $\GL(\bar{H}_V^\bfZ)\le G_V\rtimes\langle \rho\rangle$, the image of the bracket 
 \[[-,-] \colon \pi_{*}(\Omega_0\BEmb^{\fr,\cong}_{\nicefrac{1}{2}D^{2n}}(V_g,W_{g,1}))_\bfQ^{\otimes 2} \lra \pi_{*}(\Omega_0\BEmb^{\fr,\cong}_{\nicefrac{1}{2}D^{2n}}(V_g,W_{g,1}))_\bfQ\]
contains a $\bfQ[2n-2]^{\oplus 2}$-summand.
\item For $n=3,4$ and all large enough $g$, there is an isomorphism of graded $\bfQ[\check{\Lambda}_{V} \rtimes \langle \rho \rangle]$-modules
\[\pi_*(\Omega_0\BEmb^{\fr,\cong}_{\nicefrac{1}{2}D^{2n}}(V_g,W_{g,1}))_\bfQ \cong (
s^{-(2n-2)}K_{1^3}\otimes \bfQ^-)\quad\text{for }*<3n-6.
 \]
\end{enumerate}
\end{prop}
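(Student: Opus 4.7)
The plan is to exploit the commutative ladder of fibrations from \cref{sec:diff-and-self-emb}:
\[
\begin{tikzcd}[column sep=small,row sep=small]
\BEmb^{\sfr,\cong}_{\nicefrac{1}{2}\partial}(V_g)_\ell \ar[r] \ar[d] & \BEmb^{\sfr,\cong}_{\nicefrac{1}{2}D^{2n}}(V_g,W_{g,1})_\ell \ar[r] \ar[d] & \BEmb^{1\text{-}\sfr,\cong,\ext}_{\nicefrac{1}{2}\partial}(W_{g,1})_\ell \ar[d] \\
\BhAut^{\cong,\ell}_\partial(V_g) \ar[r] & \BhAut^{\cong,\ell}_{D^{2n}}(V_g,W_{g,1}) \ar[r] & \BhAut^{\cong,\ell,\ext}_\partial(W_{g,1}),
\end{tikzcd}
\]
first using \cref{lem:connectivity-framing-comparison} to pass between $\fr$, $\sfr$, and $1\text{-}\sfr$, which is a rational equivalence in the range $* < 3n - 6$. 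Write $F_V, F_M, F_W$ for the vertical homotopy fibres. By \cref{thm:fibre-to-haut} combined with \cref{thm:ClasperMain}, in the range $1 < * < 3n-5$ one has $\pi_{2n-2}(F_V)_\bfQ \cong S_2(H_V)_{2(n-1)}$ equivariantly and all other higher rational homotopy zero, while $F_W$ is rationally trivial; both have finite $\pi_1$.

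For case (i), $n \geq 5$: since $2n-1 < 3n-5$ we have $\pi_{2n-1}(F_W)_\bfQ = 0$, so the long exact sequence of $F_V \to F_M \to F_W$ gives $\pi_*(F_M)_\bfQ \cong S_2(H_V)_{2(n-1)}$ concentrated in degree $2n-2$. Running the long exact sequence of $F_M \to \BEmb^{\sfr,\cong}_{\nicefrac{1}{2}D^{2n}}(V_g,W_{g,1})_\ell \to \BhAut^{\cong,\ell}_{D^{2n}}(V_g,W_{g,1})$ with input from \cref{prop:HhtyhAutVgWg} produces a $\check{\Lambda}_V$-equivariant boundary map $\partial_V \colon s^{-(2n-2)} K_{2^2} \otimes \bfQ^- \to S_2(H_V)_{2(n-1)}$ and identifies
\[\pi_{n-1}(\Omega_0\BEmb)_\bfQ \cong K_{1^3} \otimes \bfQ^-, \quad \pi_{2n-2}(\Omega_0\BEmb)_\bfQ \cong \ker(\partial_V), \quad \pi_{2n-3}(\Omega_0\BEmb)_\bfQ \cong \coker(\partial_V),\]
with all other groups in the range vanishing. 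The asserted direct-sum splitting is therefore equivalent to showing $\partial_V$ is surjective. Both the factorisation of the action through $G_V \rtimes \langle \rho \rangle$ and the assertion about the bracket image then follow from naturality and the analogous properties in \cref{prop:HhtyhAutVgWg}: $S_2(H_V)_{2(n-1)}$ is a $\GL(\bar{H}_V^\bfZ)$-module and the isomorphism $\pi_{n-1}(\Omega_0 \BEmb)_\bfQ \cong \pi_{n-1}(\Omega_0 \BhAut^{\cong,\ell}_{D^{2n}}(V_g, W_{g,1}))_\bfQ$ transports the two trivial $\GL$-summands produced by \cref{prop:HhtyhAutVgWg} in $K_{2^2}$ into $\ker(\partial_V) \subseteq K_{2^2} \otimes \bfQ^-$.

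For case (ii), $n = 3, 4$: the bound $2n - 2 \geq 3n - 6$ places the would-be $S_2(H_V)$-contribution outside the asserted range $* < 3n - 6$, so only the $K_{1^3}$-summand in degree $n-1$ survives and the formula is immediate from the same long exact sequence argument as above.

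The principal obstacle is the surjectivity of $\partial_V$ in case (i). Since $S_2(H_V)_{2(n-1)}$ is irreducible and nontrivial as a $\GL(\bar{H}_V^\bfZ)$-module (see \cref{sec:some-rep-theory}) and $\partial_V$ is $\GL$-equivariant, the map is either zero or surjective, and the plan is to rule out the former by a bracket-comparison across the ladder. Concretely, \cref{prop:HhtyhAutVgWg} produces two trivial $\GL$-summands in the image of the bracket on $\pi_*(\Omega_0 \BhAut^{\cong,\ell}_{D^{2n}}(V_g, W_{g,1}))_\bfQ$, whereas \cref{prop:HtpyEmbWg} produces only one such trivial summand for $\BEmb^{\fr,\cong}_{\nicefrac{1}{2}\partial}(W_{g,1})_\ell$. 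Tracing the missing trivial summand through the long exact sequence of the left column and using a $\GL$-equivariant dimension count — obtained by expanding $S_{2^2}(H_W) \cong S_{2^2}(H_V \oplus s^{2n-2}H_V^\vee \otimes \bfQ^-)$ via Littlewood–Richardson and restricting to the kernel $K_{2^2}$ — should force a nonzero contribution from $\partial_V$, giving the required surjectivity. This representation-theoretic bookkeeping is the delicate step of the proof; the remaining identifications are formal consequences of the long exact sequences displayed above.
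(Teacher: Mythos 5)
The overall architecture of your argument matches the paper's: pass to stable framings by \cref{lem:connectivity-framing-comparison}, use the ladder with vertical fibres, feed in \cref{thm:fibre-to-haut} and \cref{thm:ClasperMain}, extract the boundary map $\partial_V$ from the long exact sequence, and observe that surjectivity of $\partial_V$ is the crux. That much is right, and the ``moreover'' claim about the bracket summand indeed follows by naturality from \cref{prop:HhtyhAutVgWg}.

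The gap is in your proposed proof of surjectivity of $\partial_V$. You argue that since $S_2(H_V)$ is irreducible and $\partial_V$ is equivariant, it is zero or onto, and then propose to rule out $\partial_V = 0$ by comparing trivial $\GL(\bar{H}_V^\bfZ)$-summands in bracket images and doing a Littlewood--Richardson dimension count on $K_{2^2}$. This cannot work: $S_2(H_V)$ has no trivial $\GL$-summand, so $\partial_V$ kills every trivial $\GL$-summand of $s^{-(2n-2)}K_{2^2}\otimes\bfQ^-$ regardless of whether $\partial_V$ is zero or surjective. Consequently the number of trivial summands in $\ker(\partial_V)$ (namely two, sitting in weight zero) is the same in both cases, and no count of trivial summands in bracket images — on either side of the ladder — can distinguish them. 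The paper's actual argument is of a completely different nature: it assumes $\partial_V = 0$, computes $\widetilde\oH_*(\BEmb^{\fr}_{\nicefrac{1}{2}D^{2n}}(V_g,W_{g,1})^\sim_\ell;\bfQ)$ through degree $2n-2$ by Hurewicz, runs the universal cover spectral sequence with $S_2(H_V^\vee)$-coefficients using \cref{lem:GV-homology} \ref{enum:gv-homology-iii} and \ref{enum:gv-homology-iv}, and deduces that $\oH_{2n-2}(\BEmb;S_2(H_V^\vee))$ would be nontrivial — directly contradicting \cref{lem:stable-twisted-homology}, whose proof rests on Botvinnik--Perlmutter's stable homology calculation. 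That geometric input is genuinely needed and your sketch has no substitute for it.

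A smaller issue: for $n=4$ the degree $2n-3 = 5$ lies inside the asserted range $*<3n-6=6$, so it is not enough to note that $2n-2$ is excluded. One must show $\pi_{2n-3}(\Omega_0\BEmb)_\bfQ = 0$, which the paper does by distinguishing the cases $L=0$ (surjectivity argument applies) and $L\neq 0$ (then $S_2(H_V)/L = 0$ by irreducibility), where $L = \im(\pi_{2n-1}(\hofib\varphi_2)_\bfQ \to \pi_{2n-2}(\hofib\varphi_0)_\bfQ)$. Your proposal skips this.
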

\begin{proof}
By \cref{lem:connectivity-framing-comparison}, in the range of degrees in question we may exchange framings with stable framings. We consider the map of horizontal fibre sequences (see Sections~\ref{sec:diff-and-self-emb} and \ref{sec:weiss-fs})
\[
\begin{tikzcd}
\BEmb^{\sfr,\cong}_{\nicefrac{1}{2}\partial}(V_g)_C \rar \dar{\varphi_0}&\BEmb^{\sfr}_{\nicefrac{1}{2}D^{2n}}(V_g,W_{g,1})_\ell \rar \dar{\varphi_1}& \BEmb^{\sfr,\cong,\ext}_{\nicefrac{1}{2}\partial}(W_{g,1})_\ell \dar{\varphi_2}\\
\overline{\BhAut_\partial(V_g)} \rar& \BhAut^{\cong,\ell}_{D^{2n}}(V_g, W_{g,1})  \rar& \BhAut_{\partial}^{\cong,\ell,\ext}(W_{g,1})
\end{tikzcd}
\]
where $(-)_C$ stands for a certain collection of path-components and $\overline{(-)}$ for a certain covering space. The homotopy fibre $\hofib(\varphi_2)$ is a covering space of the homotopy fibre of the analogous map without the $\ext$-superscripts and as the latter has finite fundamental group and trivial rational homotopy groups in the range $1<*< 3n-5$, by \cref{thm:fibre-to-haut}, the same holds for $\hofib(\varphi_2)$. By a similar argument, using Theorems \ref{thm:fibre-to-haut} and \ref{thm:ClasperMain} the homotopy fibre $\hofib(\varphi_0)$ has finite fundamental group and its  higher rational homotopy groups agree with $S_2(H_{V})$ in degrees $1< * < 3n-5$, which is concentrated in degree $2n-2$. Thus $\pi_*(\hofib(\varphi_1))_\bfQ$ vanishes in degrees $*<3n-5$ for $*\neq 2n-2$, and in degree $* = 2n-2$ it agrees with $S_2(H_V)_{2(n-1)}/L$ where $L\coloneq\im(\pi_{2n-1}(\hofib(\varphi_2))_\bfQ\ra \pi_{2n-2}(\hofib(\varphi_0)_\bfQ=S_2(H_V)_{2(n-1)}))$, which is trivial if $2n-1<3n-5$, i.e.\ $n>4$. The long exact sequence for the fibration induced by $\varphi_1$, combined with Corollary \ref{prop:HhtyhAutVgWg}, thus gives an isomorphism of $\bfQ[\check{\Lambda}_{V} \rtimes \langle \rho \rangle]$-modules
\begin{equation}\label{equ:homotopy-lower-degree}\pi_*(\varphi_1) \colon \pi_{*}(\Omega_0\BEmb^{\fr}_{\nicefrac{1}{2}D^{2n}}(V_g,W_{g,1})_\ell)_\bfQ \xlra{\cong} s^{-(2n-2)}(K_{1^3} \oplus K_{2^2})\otimes \bfQ^-\end{equation}
in degrees $*<3n-6, *\neq 2n-2,2n-3$. Under the assumption $n>4$ of the first claim, we thus have an exact sequence of $\bfQ[\check{\Lambda}_{V} \rtimes \langle \rho \rangle]$-modules
\begin{equation*}
\begin{tikzcd}[column sep=0.5cm, row sep=0.2cm]
   0 \rar & \pi_{2n-2}(\Omega_0\BEmb^{\fr}_{\nicefrac{1}{2}D^{2n}}(V_g,W_{g,1})_\ell)_\bfQ\rar\ar[draw=none]{d}[name=Y, anchor=center]{}
             & (K_{2^2} \otimes \bfQ^-)_{4(n-1)}  \rar{\partial_V} & S_2(H_V)_{2(n-1)} \ar[rounded corners,
                to path={ -- ([xshift=2ex]\tikztostart.east)
                	|- (Y.center) \tikztonodes
                	-| ([xshift=-2ex]\tikztotarget.west)
                	-- (\tikztotarget)}]{dll}\\
          & \pi_{2n-3}(\Omega_0\BEmb^{\fr}_{\nicefrac{1}{2}D^{2n}}(V_g,W_{g,1})_\ell)_\bfQ\rar
               & 0, & 
\end{tikzcd}
\end{equation*}
so to finish the proof of the first part it suffices to show that $\partial_V$ is an epimorphism, or equivalently (as $S_2(H_V)$ is an irreducible $\bfQ[\check{\Lambda}_{V}]$-module) that $\partial_V$ is nontrivial. 

Suppose for a contradiction that $\partial_V$ is trivial. Using \eqref{equ:homotopy-lower-degree}, we see that the rational Hurewicz homomorphism of the universal cover $\BEmb^{\fr}_{\nicefrac{1}{2}D^{2n}}(V_g,W_{g,1})^\sim_\ell$ is an isomorphism in degrees $\le 2n-2$, so we conclude that
\[\widetilde{\oH}_{*}(\BEmb^{\fr}_{\nicefrac{1}{2}D^{2n}}(V_g,W_{g,1})^\sim_\ell;\bfQ)\cong s^{-(2n-2)}K_{1^3}\otimes \bfQ^-\oplus S_2(H_V)\quad\text{for }*\le 2n-2.\]
Using that $\oH_*(\check{\Lambda}_V;S_2(H_V^\vee))$ and $\oH_*(\check{\Lambda}_V;(K_{1^3})\otimes S_2(H_V^\vee))$ vanish in a range of homological degrees increasing with $g$ by \cref{lem:GV-homology} \ref{enum:gv-homology-iii} and \ref{enum:gv-homology-iv}, we see from the universal cover spectral sequence with $S_2(H_V^\vee)$-coefficients that \begin{equation}\label{equ:contradicting-coinvariants}\oH_{2n-2}(\BEmb^{\fr}_{\nicefrac{1}{2}D^{2n}}(V_g,W_{g,1})_\ell;S_2(H_V^\vee))\cong \left[S_2(H_V) \otimes S_2(H_V^\vee)\right]_{\check{\Lambda}_V}\end{equation}
for large enough $g$. But evaluation induces a nontrivial $\check{\Lambda}_V$-equivariant map $S_2(H_V) \otimes S_2(H_V^\vee) \to \bfQ[0]$, so \eqref{equ:contradicting-coinvariants} is nontrivial and this contradicts \cref{lem:stable-twisted-homology}. Thus $\partial_V$ must be nontrivial and hence an epimorphism, which finishes the proof of the isomorphism statement of the first claim. To show the claim about the bracket, note that since $2n-2$ lies in the isomorphism range the target of the bracket map in degree $2n-2$ is semi-simple as a $\GL(\bar{H}^\bfZ_V)$-module. Using this, the claim follows from the addendum of Corollary \ref{prop:HhtyhAutVgWg} and naturality of the bracket under $\pi_*(\varphi_1)_\bfQ$.

For $n=3$ the claim follows directly from \eqref{equ:homotopy-lower-degree}, since $2n-3\ge 3n-6$ for $n=3$. For $n=4$, we have $2n-2\ge 3n-6$, so given  \eqref{equ:homotopy-lower-degree} we only need to show that $\pi_{2n-3}(\Omega_0\BEmb^{\fr}_{\nicefrac{1}{2}D^{2n}}(V_g,W_{g,1})_\ell)_\bfQ$ vanishes. If $L=0$, then the same argument as in the case $n>4$ applies. If $L\neq 0$, then $S_2(H_V)_{2n-2}/L=0$ since $S_2(H_V)_{2n-2}$ is irreducible, so the required vanishing follows from the analogue of the above long exact sequence for $n=4$.
\end{proof}

\subsection{Step \ref{enum:homology-cover-selfemb}} Propositions~\ref{prop:HtpyEmbWg} and \ref{prop:HtpyEmbVg} allow us to carry out Step \ref{enum:homology-cover-selfemb}.
\begin{cor}\label{cor:HomologyUnivCovEmbWg} 
There is an isomorphism of graded $\bfQ[\check{\Lambda}_{W} \rtimes \langle \rho \rangle]$-modules
\[\widetilde{\oH}_*(\BEmb^{\fr,\cong}_{\nicefrac{1}{2}\partial}(W_{g,1})_\ell^{\sim};\bfQ)\cong \big(s^{-(2n-3)}V_{1^3}(H_{W})\otimes \bfQ^-\big) \oplus s(\coker^{\Wh}_{W})\oplus s^{2}(\ker^{\Wh}_{W})\quad\text{for }*<3n-5\]
where $(\mathrm{co})\ker^{\Wh}_{W}$ is defined to be trivial for $n\le 4$ and otherwise as the (co)kernel of the bracket
\[[-,-] \colon S_{1^2}(s^{-(2n-2)}V_{1^3}(H_{W})) \lra s^{-(2n-2)}S_{2^2}(H_{W})\otimes \bfQ^-\]
resulting from Proposition \ref{prop:HtpyEmbWg}. In particular, the $(\check{\Lambda}_{W} \rtimes \langle \rho \rangle)$- action factors in this range over the surjection $\check{\Lambda}_{W} \rtimes \langle \rho \rangle \twoheadrightarrow G_W\rtimes \langle\rho \rangle$.
\end{cor}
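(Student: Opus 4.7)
The plan is to deduce the corollary from \cref{prop:HtpyEmbWg} by standard rational homotopy theory applied to the simply-connected universal cover $\BEmb^{\fr,\cong}_{\nicefrac{1}{2}\partial}(W_{g,1})_\ell^{\sim}$. Loop-suspension identifies $\pi_k(\BEmb^{\fr,\cong}_{\nicefrac{1}{2}\partial}(W_{g,1})_\ell^{\sim})_\bfQ$ with $\pi_{k-1}(\Omega_0\BEmb^{\fr,\cong}_{\nicefrac{1}{2}\partial}(W_{g,1})_\ell)_\bfQ$ for $k\ge 2$, so in degrees $k<3n-5$ the rational homotopy of the universal cover is concentrated in degree $k=n$ where it is $P\coloneq s^{-(2n-3)}V_{1^3}(H_{W})\otimes\bfQ^-$ and, for $n>4$, additionally in degree $k=2n-1$ where it is $Q\coloneq s^{-(2n-3)}S_{2^2}(H_{W})\otimes\bfQ^-$.

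For $n\le 4$ the range does not reach $2n-1$, so the rational Hurewicz theorem yields the claim directly, in agreement with the convention that $\coker^{\Wh}_{W}=\ker^{\Wh}_{W}=0$ in this case. For $n>4$, within the range $*<3n-5$ the space $\BEmb^{\fr,\cong}_{\nicefrac{1}{2}\partial}(W_{g,1})_\ell^{\sim}$ is rationally modelled by the first two stages of its Postnikov tower, i.e.\ by a principal rational fibration
\[K(Q,2n-1)\lra \BEmb^{\fr,\cong}_{\nicefrac{1}{2}\partial}(W_{g,1})_\ell^{\sim}\lra K(P,n)\]
whose $k$-invariant in $\oH^{2n}(K(P,n);Q)$ is, up to a nonzero scalar, the Whitehead bracket $P\otimes P\to Q$; equivalently, it is the $s$-suspension of the graded Lie bracket on $\pi_{*-1}(\Omega_0\BEmb^{\fr,\cong}_{\nicefrac{1}{2}\partial}(W_{g,1})_\ell)_\bfQ$ produced by \cref{prop:HtpyEmbWg}. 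Running the rational Serre spectral sequence, the only potentially nontrivial differential in the range is
\[d^{2n-1}\colon E^2_{2n,0}=S_2(P)_{2n}\lra E^2_{0,2n-1}=Q,\]
and it agrees up to a nonzero scalar with this bracket. The desuspension identification $S_2(P)\cong s^{2}S_{1^2}(s^{-1}P)$ of graded Schur functors (coming from the formulae in \cref{sec:Schur-functors}) matches $d^{2n-1}$ with the appropriate shift of the map $[-,-]$ entering the definitions of $\coker^{\Wh}_{W}$ and $\ker^{\Wh}_{W}$. Extracting $E^\infty$ then yields $\oH_n(\BEmb^{\fr,\cong}_{\nicefrac{1}{2}\partial}(W_{g,1})_\ell^{\sim};\bfQ)\cong P$, $\oH_{2n-1}\cong s(\coker^{\Wh}_{W})$, and $\oH_{2n}\cong s^2(\ker^{\Wh}_{W})$, with all other reduced rational homology vanishing in the stated range.

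Equivariance with respect to $\check{\Lambda}_{W}\rtimes\langle\rho\rangle$ is automatic from this approach: the Postnikov tower, its $k$-invariant, and the Serre spectral sequence are all natural in the deck-transformation action, and the identification of $d^{2n-1}$ with the Lie bracket is equivariant by construction (the bracket is already equivariant by \cref{prop:HtpyEmbWg}). The factorisation over $G_{W}\rtimes\langle\rho\rangle$ then follows from the corresponding factorisation on the level of $\pi_*(\Omega_0\BEmb)_\bfQ$ established in \cref{prop:HtpyEmbWg}.

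The main obstacle is the precise identification of the Serre differential $d^{2n-1}$ with the Lie bracket from \cref{prop:HtpyEmbWg}, including signs, scalars, and the $\bfQ^-$-twist, together with the bookkeeping needed to translate between the Koszul-graded bracket on $\pi_*(\Omega_0\BEmb)_\bfQ$ and its shifted Whitehead counterpart on $\pi_*(\BEmb^{\fr,\cong}_{\nicefrac{1}{2}\partial}(W_{g,1})_\ell^{\sim})_\bfQ$ — but this is standard and the freedom by a nonzero scalar does not affect the (co)kernels.
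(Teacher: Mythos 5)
Your proof is correct and is essentially the paper's argument: the paper also runs the Serre spectral sequence of the map $\BEmb^{\fr,\cong}_{\nicefrac{1}{2}\partial}(W_{g,1})_\ell^{\sim}\to K(V_{1^3}(H_W)_{3(n-1)},n)$, which is the first Postnikov stage, and identifies the transgression with the bracket from \cref{prop:HtpyEmbWg}. (A tiny slip: the relevant Serre differential from bidegree $(2n,0)$ to $(0,2n-1)$ is $d^{2n}$, not $d^{2n-1}$, but this does not affect the argument.)
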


\begin{proof}For $n\le 4$, the claim follows from \cref{prop:HtpyEmbWg} and the rational Hurewicz theorem. For $n\ge5$, it follows from \cref{prop:HtpyEmbWg} and an application of the Serre spectral sequence to the  map
$\BEmb^{\fr,\cong}_{\nicefrac{1}{2}\partial}(W_{g,1})_\ell^{\sim}\ra K\big(V_{1^3}(H_{W})_{3(n-1)},n\big)$
induced by rationalising and truncating.
\end{proof}

\begin{cor}\label{cor:HomologyUnivCovEmbVg}
For all large enough $g$, there is an isomorphism of graded $\bfQ[\check{\Lambda}_{V} \rtimes \langle \rho \rangle]$-modules
\[\widetilde{\oH}_*(\BEmb^{\fr,\cong}_{\nicefrac{1}{2}D^{2n}}(V_g,W_{g,1})^\sim;\bfQ)\cong \big(s^{-(2n-3)}K_{1^3}\otimes\bfQ^-\big)\oplus s(\coker^{\Wh}_{V})\oplus s^2(\ker^{\Wh}_{V})\quad\text{for }*<3n-5\]
where $(\mathrm{co})\ker^{\Wh}_{V}$ is defined to be trivial for $n\le4$ and otherwise as the (co)kernel of the bracket 
\[[-,-] \colon S_{1^2}(s^{-2(n-1)}K_{1^3} \otimes \bfQ^{-}) \lra \ker(\partial_V)\]
resulting from Proposition \ref{prop:HtpyEmbVg}. In particular, the $(\check{\Lambda}_{V} \rtimes \langle \rho \rangle)$- action factors in this range over the surjection $\check{\Lambda}_{V} \rtimes \langle \rho \rangle \twoheadrightarrow G_V\rtimes \langle\rho \rangle$.
\end{cor}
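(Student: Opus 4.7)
The approach I would take is to adapt the proof of \cref{cor:HomologyUnivCovEmbWg} mutatis mutandis, substituting \cref{prop:HtpyEmbVg} for \cref{prop:HtpyEmbWg} throughout. I would split the argument into a low-dimensional case $n \in \{3, 4\}$ handled by the rational Hurewicz theorem, and a general case $n \geq 5$ handled by a Serre spectral sequence of a Postnikov truncation.

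For $n = 3, 4$, part (ii) of \cref{prop:HtpyEmbVg} shows that in the range $* < 3n-6$ the rational homotopy groups of the universal cover $\BEmb^{\fr,\cong}_{\nicefrac{1}{2} D^{2n}}(V_g, W_{g,1})^\sim$ are concentrated in degree $n$ with value $K_{1^3} \otimes \bfQ^-$. Since this universal cover is simply connected and rationally $(n-1)$-connected, the rational Hurewicz theorem identifies its rational homology with its rational homotopy in the range $* \le 2n-2$, which covers the target range $* < 3n-5$ when $n \le 4$. The claim then follows with $\ker^{\Wh}_V = \coker^{\Wh}_V = 0$ by the stated convention.

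For $n \geq 5$, I would consider the $(G_V \rtimes \langle \rho \rangle)$-equivariant rationalised Postnikov truncation
\[\psi \colon \BEmb^{\fr,\cong}_{\nicefrac{1}{2} D^{2n}}(V_g, W_{g,1})^\sim \lra K\big(s^{-(2n-2)} K_{1^3} \otimes \bfQ^-,\, n\big)\]
induced by the rational Hurewicz map on $\pi_n$, noting that the $(\check{\Lambda}_V \rtimes \langle \rho \rangle)$-action factors through $G_V \rtimes \langle \rho \rangle$ by \cref{prop:HtpyEmbVg}. By that proposition the rational homotopy fibre of $\psi$ is, in the range $* \le 3n-6$, a rational Eilenberg--MacLane space $K(\ker(\partial_V),\, 2n-1)$. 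Running the Serre spectral sequence of this rational fibration and checking in the range of total degrees $\le 3n-6$, the nontrivial contributions sit at the positions $(0,0)$, $(n,0)$, $(2n,0)$ (only when $n \ge 6$), and $(0, 2n-1)$, so the only potentially nonzero differential is the transgression
\[d^{2n}\colon E^{2n}_{2n, 0} \cong S_{1^2}\big(s^{-(2n-2)}K_{1^3}\otimes \bfQ^-\big) \lra E^{2n}_{0, 2n-1} \cong \ker(\partial_V).\]

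The main obstacle I anticipate is identifying this transgression, up to sign, with the Whitehead bracket appearing in \cref{prop:HtpyEmbVg}, compatibly with the $(G_V \rtimes \langle \rho \rangle)$-action. This is a classical fact (see e.g.\ \cite{Whitehead}) combined with the standard correspondence between Samelson brackets on $\pi_*(\Omega X)$ and Whitehead brackets on $\pi_*(X)$, but it requires some care to track signs and equivariance. Granting it, the surviving pieces of the $E^\infty$-page contribute precisely the summands $\bfQ$, $s^{-(2n-3)}K_{1^3}\otimes\bfQ^-$, $s(\coker^{\Wh}_V)$, and $s^2(\ker^{\Wh}_V)$ in degrees $0$, $n$, $2n-1$, and $2n$ respectively, assembling to the claimed formula, with the $(G_V \rtimes \langle \rho \rangle)$-module structure inherited from the equivariance of all constructions involved.
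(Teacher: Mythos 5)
Your proposal takes essentially the same route as the paper, which simply says the claim "is proved in the same way as \cref{cor:HomologyUnivCovEmbWg}" using the data of \cref{prop:HtpyEmbVg}; that proof in turn uses the rational Hurewicz theorem for $n \le 4$ and the Serre spectral sequence of the rational Postnikov truncation $\psi$ for $n \ge 5$, exactly as you describe. The one cosmetic slip is the target of $\psi$: since $s^{-(2n-2)}K_{1^3}\otimes\bfQ^-$ is concentrated in degree $n-1$ rather than $n$, the Eilenberg--MacLane space should be written $K\big((K_{1^3})_{3(n-1)}\otimes\bfQ^-,\,n\big)$ (or with $s^{-(2n-3)}$) to match the degree of the truncation; this does not affect the argument.
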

\begin{proof}
Using the data from \cref{prop:HtpyEmbVg}, this is proved in the same way as \cref{cor:HomologyUnivCovEmbWg}.
\end{proof}

\subsection{Step \ref{enum:homology-selfemb}} 
For Step \ref{enum:homology-selfemb}, we first compute the homology of $ G_W$ and $G_V$ with coefficients in the modules featuring in Corollaries \ref{cor:HomologyUnivCovEmbWg} and \ref{cor:HomologyUnivCovEmbVg}. As before, $W_{g,1}$ is easier and comes first.

\begin{prop}\label{prop:WgSSCohCalc}
In a range of homological degrees increasing with $g$, the bigraded groups $\oH_*( G_W ; s^{-(2n-3)}V_{1^3}(H_W))$, $\oH_*( G_W ; \coker^{\Wh}_{W})$, and $\oH_*(G_W; \ker^{\Wh}_{W})$ vanish.
\end{prop}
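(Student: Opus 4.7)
The first vanishing is immediate from Example \ref{ex:Borel-vanishing-for-irred}: $V_{1^3}(H_W)$ is the symplectic (or orthogonal) Schur functor at the nonempty partition $(1^3)$, so its $G_W$-homology vanishes in a stable range, and the shift $s^{-(2n-3)}$ only relocates the bigrading.

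For the remaining two, my plan is to decompose the bracket
\[
[-,-]\colon S_{1^2}\bigl(s^{-(2n-2)}V_{1^3}(H_W)\bigr)\lra s^{-(2n-2)}S_{2^2}(H_W)\otimes\bfQ^-
\]
into $G_W$-isotypic pieces. Since $G_W$ is Zariski dense in the corresponding reductive algebraic group and all representations involved are algebraic, both source and target split canonically as direct sums of irreducibles $V_\mu(H_W)$, a decomposition I would compute via plethysm as in \cref{sec:schur-manipulations}; the bracket respects this decomposition by $G_W$-equivariance and semisimplicity. Then Example \ref{ex:Borel-vanishing-for-irred} ensures every nonempty $V_\mu(H_W)$ isotypic component contributes trivially to $G_W$-homology in a stable range, reducing the problem to showing that the bracket restricted to $G_W$-invariant summands is an isomorphism.

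The plethysm calculation should show that on each side the $G_W$-invariant summand is $1$-dimensional and concentrated in degree $2n-2$: on the source this is the unique trivial summand of $\Lambda^2$ or $\Sym^2$ of the self-dual irreducible $V_{1^3}(H_W)$ coming from its evaluation pairing; on the target it is the unique trivial summand of $S_{2^2}(H_W)$ produced by the orthogonal or symplectic form. Granted this, the restriction of the bracket to invariants is either zero or an isomorphism, and the addendum of Proposition \ref{prop:HtpyEmbWg} rules out the former by exhibiting a nontrivial trivial summand in the image of $[-,-]$. The cases $n\le 4$ are vacuous, as $\ker^{\Wh}_W$ and $\coker^{\Wh}_W$ are then defined to be zero.

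The hard part will be the dimension count of invariant summands, particularly confirming that no extra trivial summand appears in the target that would be missed by the addendum of Proposition \ref{prop:HtpyEmbWg}. I expect this to reduce to a standard symmetric-function identity (equivalently, a count of Brauer diagrams), verifiable with SageMath.
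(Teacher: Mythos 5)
Your proposal matches the paper's proof in all essentials: decompose source and target of the bracket into $G_W$-irreducibles via plethysm (which the paper also does using {\tt SageMath}), observe that each side has exactly one trivial summand $\bfQ[2n-2]$, use the addendum of \cref{prop:HtpyEmbWg} to conclude these are mapped isomorphically, and deduce that $\ker^{\Wh}_W$ and $\coker^{\Wh}_W$ are sums of nontrivial irreducibles whose $G_W$-homology vanishes stably by \cref{ex:Borel-vanishing-for-irred}. The only cosmetic difference is for the first group: you invoke \cref{ex:Borel-vanishing-for-irred} directly, whereas the paper uses the quicker ``centre kills'' trick ($-\id \in G_W$ acts by $-1$ on $V_{1^3}(H_W)$); both work.
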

\begin{proof}
From \eqref{equ:matrix-description-Gg}, we see that the central element $-\id\in\GL(\bar{H}_W)$ is contained in $G_W$. As it acts as $-1$ on $V_{1^3}(H_W)$, the ``centre kills'' trick implies that the $G_W$-homology vanishes. To deal with the second and third groups, we may assume $n>4$; otherwise $\ker^{\Wh}_{W}$ and $\coker^{\Wh}_{W}$ are zero by definition. For $n>4$, these modules are defined as the (co)kernel of the bracket
\[[-,-] \colon S_{1^2}\big(s^{-(2n-2)}V_{1^3}(H_{W})\big) \lra s^{-(2n-2)}S_{2^2}(H_{W})\]
resulting from \cref{prop:HtpyEmbWg}. We may decompose the source and target of this map as $\bfQ[G_{W}]$-modules (see \cref{sec:schur-manipulations})
\begin{align*}
s^{-(2n-2)}S_{2^2}(H_W) &= s^{-(2n-2)}V_{2^2}(H_W) \oplus V_{1^2}(H_W) \oplus \bfQ[2n-2]\\
S_{1^2}(s^{-(2n-2)}V_{1^3}(H_W)) &= s^{-(4n-4)}V_{1^6}(H_W) \oplus s^{-(2n-2)}V_{1^4}(H_W)
\oplus V_{1^2}(H_W) \\ &\phantom{=\ }\oplus  
 s^{-(2n-2)}V_{2^2}(H_W)
\oplus s^{-(4n-4)}V_{2^2, 1^2}(H_W) \oplus \bfQ[2n-2]
\end{align*} 
In both equations, the summands forming the right hand side are all either zero or irreducible, and any two nonzero summands are mutually distinct. Consequently, in view of the final part of \cref{prop:HtpyEmbWg}, the copies of $\bfQ[2n-2]$ have to be sent isomorphically to each other by the bracketing map. It follows that $\ker^{\Wh}_{W}$ and $\coker^{\Wh}_{W}$ are up to regrading both sums of $V_\lambda(\bar{H}_W)$'s for $\lambda\neq \emptyset$, so the claim follows from \cref{thm:Borel-vanishing}, more specifically from \cref{ex:Borel-vanishing-for-irred}.
\end{proof}

\begin{prop}\label{prop:VgSSCohCalc}
In a range of homological degrees increasing with $g$, the bigraded groups $\oH_*(G_V; \coker^{\Wh}_{V})$ and $\oH_*( G_V; \ker^{\Wh}_{V})$
vanish.
\end{prop}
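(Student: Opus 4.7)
\emph{Proof plan.} The argument parallels that of Proposition \ref{prop:WgSSCohCalc}: decompose source and target of the bracket defining $\ker^{\Wh}_V$ and $\coker^{\Wh}_V$ into pieces whose stable $G_V$-homology is accessible, then match up the surviving contributions using the fact that the image of the bracket has been pinned down. As in the previous proposition, both modules are zero by definition for $n \leq 4$, so we may assume $n > 4$. The main structural difference from the $W_{g,1}$-case is that the clean Borel-type stable vanishing of \cref{thm:Borel-vanishing}/\cref{ex:Borel-vanishing-for-irred} is unavailable for $G_V$ (because the abelian normal subgroup $M_V^\bfZ \le G_V$ from \cref{lem:mcg-is-semidirect-product} produces extra stable homology, cf.\ the shape of \eqref{equ:stable-gv-homology}), so the Borel input must be replaced by the finer computations of \cref{lem:GV-homology}.

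The first step is to reduce the problem to stable $G_V$-homology computations of summands of the form $S_\alpha(H_V) \otimes S_\beta(H_V^\vee)$. For this, the $\GL(\bar{H}_V^\bfZ) \times \langle \rho\rangle$-equivariant splitting $H_W \cong H_V \oplus K_W$ from \cref{sec:involution-and-intersection}, combined with the Poincaré-duality identification $K_W \cong s^{2n-2}H_V^\vee \otimes \bfQ^-$, upgrades to a two-step $G_V$-equivariant filtration on $H_W$ with $M_V^\bfZ$ acting as the upper-triangular part and associated graded $H_V \oplus K_W$. Applying Schur functors and the Littlewood--Richardson rule functorially yields $G_V$-equivariant filtrations on $K_{1^3}$, $K_{2^2}$, $\ker(\partial_V)$, and $S_{1^2}(s^{-(2n-2)}K_{1^3}\otimes \bfQ^-)$, whose associated gradeds decompose into sums of $S_\alpha(H_V) \otimes S_\beta(H_V^\vee)$ (up to suspension and $\bfQ^-$-twist). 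Since vanishing of $G_V$-homology is closed under extensions, it suffices to work on the associated gradeds.

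The second step is to expand these decompositions explicitly (carried out via the {\tt SymmetricFunctions} package, as in \cref{sec:schur-manipulations}) and read off the stable $G_V$-homology of each summand from \cref{lem:GV-homology}. Items \ref{enum:gv-homology-ii}, \ref{enum:gv-homology-iii}, and \ref{enum:gv-homology-ix} kill the vast majority of summands; in particular, every summand with $|\alpha|<|\beta|$ vanishes by \ref{enum:gv-homology-ii}. The only contributions that can survive come from the short explicit list in items \ref{enum:gv-homology-vi}--\ref{enum:gv-homology-xi}, each of which produces one copy of $\oH_*(\GL_\infty(\bfZ);\bfQ)$ placed in a single bidegree.

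The main obstacle is then to show that these surviving contributions cancel between source and target of the bracket, so that both $\ker^{\Wh}_V$ and $\coker^{\Wh}_V$ are stably acyclic for $G_V$ in the required range. This is the $V_g$-analogue of the matching of the $\bfQ[2n-2]$-summand carried out in the proof of \cref{prop:WgSSCohCalc}, and its crucial input is the addendum to \cref{prop:HtpyEmbVg}: the image of the bracket contains a $\bfQ[2n-2]^{\oplus 2}$-summand when restricted to $\GL(\bar{H}_V^\bfZ)$. Combined with the mutual distinctness of the irreducible $\GL(\bar{H}_V^\bfZ)$-summands appearing in the decompositions and the same dimension bookkeeping as in the $W_{g,1}$-case, this forces each potentially nonvanishing contribution in the source to be sent isomorphically onto a matching contribution in the target, proving the proposition.
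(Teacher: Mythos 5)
Your plan is correct and follows essentially the same route as the paper's proof: filter $H_W$ by weight to get a $G_V$-equivariant filtration on all the relevant Schur-functor pieces, pass to associated gradeds (where everything decomposes into $S_\alpha(H_V)\otimes S_\beta(H_V^\vee)$-summands), read off their stable $G_V$-homology from \cref{lem:GV-homology}, and force the bracket to account for the surviving copies of $\oH_*(\GL_\infty(\bfZ);\bfQ)$ using the $\bfQ[2n-2]^{\oplus 2}$ in the image from \cref{prop:HtpyEmbVg}. One small imprecision: in weight~$2$ the source already has trivial $G_V$-homology by \cref{lem:GV-homology}~\ref{enum:gv-homology-ix}, so the surviving contribution from \ref{enum:gv-homology-x}--\ref{enum:gv-homology-xi} is not ``matched against the source'' but cancels internally in the target, via the splitting $S_{2,1}(H_V)\otimes H_V^\vee \cong S_2(H_V)\oplus \ker(\gr_2(\partial_V))$ and the coincidence of the stable $G_V$-homologies of $S_{2,1}(H_V)\otimes H_V^\vee$ and $S_2(H_V)$.
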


\begin{proof}
Before starting on the proof let us introduce a general concept. We say that a finite-dimensional (rational) $G_V$-representation $U$ is \emph{algebraic} if it admits a filtration $F_\bullet U$ by $G_V$-subrepresentations such that each representation $F_k U / F_{k-1} U$ is obtained up to isomorphism by restriction along $G_V \twoheadrightarrow \GL(\bar{H}_V^\bfZ)$ of a representation which is a sum of summands of $\{\bar{H}_V^{\otimes p} \otimes (\bar{H}_V^\vee)^{\otimes q}\}_{p-q=k}$ (i.e.\ is an algebraic representation of weight $k$). If $U$ has such a filtration then it has a unique one: restricted to $\GL(\bar{H}_V^\bfZ) \leq G_V$ the representation $U$ is algebraic, and $F_k U$ is the sum of all algebraic $\GL(\bar{H}_V^\bfZ)$-subrepresentations of weight $ \leq k$. We call it the \emph{weight filtration} of $U$, and write $\gr_\bullet(U)$ for its associated graded. Using the latter description, it is clear that maps of algebraic $G_V$-representations preserve the weight filtration, that the property of being algebraic is closed under forming sub- and quotient representations, and under tensoring (so also under applying Schur functors), and that $\gr_\bullet(-)$ is exact and symmetric monoidal. As for any filtered $G_V$-representation, there is a spectral sequence
\begin{equation}\label{eq:FiltModSS}
E^1_{s,t} =  \oH_t(G_V ; \gr_s(U))\implies \oH_t(G_V ; U).
\end{equation}

To start the proof proper, recall that $\ker^{\Wh}_{V}$ and $\coker^{\Wh}_{V}$ are trivial for $n \leq 4$ so there is nothing to show, and for $n>4$ they are defined the (co)kernel of the Lie bracket map 
\[[-,-] \colon S_{1^2}(s^{-2(n-1)}K_{1^3}) \lra \mathrm{ker}(\partial_V) = \mathrm{ker}(\partial_V : s^{-(2n-2)}K_{2^2}\otimes \bfQ^- \twoheadrightarrow S_2(H_V))\]
from \cref{cor:HomologyUnivCovEmbVg}. Denote the image of this map by $\im^{\Wh}_{V}$. We first claim that the source and target of this bracket map are algebraic $G_V$-representations, and so by  the inheritance properties of algebraic representations described above $\ker^{\Wh}_{V}$, $\coker^{\Wh}_{V}$, and $\im^{\Wh}_{V}$ are again algebraic. Using these inheritance properties it suffices to see that $\bar{H}_W$ is an algebraic $G_V$-representation, but the extension $0 \ra \bar{H}_{V}^\vee \ra \bar{H}_{W} \ra \bar{H}_{V} \ra 0$ shows that indeed it is, with $F_1 \bar{H}_W = \bar{H}_W$, $F_0 \bar{H}_W = F_{-1} \bar{H}_W = \bar{H}_V^\vee$, and $F_{-2} \bar{H}_W = 0$, and so $\gr_\bullet(\bar{H}_W) = \bar{H}_V[1] \oplus \bar{H}_V^\vee[-1]$. In the graded setting, we have $\gr_\bullet(H_W) = {H}_V[1] \oplus s^{2(n-1)}{H}_V^\vee[-1]$.

Our strategy will now be as follows. To show that $\oH_*(G_V; \coker^{\Wh}_{V})$ and $\oH_*( G_V; \ker^{\Wh}_{V})$ vanish in a range of homological degrees increasing with $g$, using \eqref{eq:FiltModSS} it suffices to show that $\oH_*(G_V; \gr_s(\coker^{\Wh}_{V}))$ and $\oH_*( G_V; \gr_s(\ker^{\Wh}_{V}))$ vanish in such a range for each $s \in \bfZ$. Using the short exact sequences
\[\begin{gathered}0 \lra \gr_s(\ker^{\Wh}_{V}) \lra  \gr_s(S_{1^2}(s^{-2(n-1)}K_{1^3})) \lra \gr_s(\im^{\Wh}_{V}) \lra 0\\
0 \lra \gr_s(\im^{\Wh}_{V}) \lra  \gr_s(\ker(\partial_V)) \lra \gr_s(\coker^{\Wh}_{V}) \lra 0\end{gathered}\]
this is equivalent to showing that both of the maps
\begin{equation}\label{eq:GrMaps}
\gr_s(S_{1^2}(s^{-2(n-1)}K_{1^3})) \lra \gr_s(\im^{\Wh}_{V}) \lra \gr_s(\ker(\partial_V))
\end{equation}
induce isomorphisms on $G_V$-homology in such a range for each $s \in \bfZ$. This is what we shall do.

We first calculate the left- and right-hand terms in \eqref{eq:GrMaps}. The decomposition of $S_\lambda(A \oplus B)$ as a sum of $S_\mu(A) \otimes S_\nu(B)$'s is given by the coproduct of the Schur function $s_\lambda$ in the ring of symmetric functions (whenever necessary, we compute these using {\tt SageMath}). We have 
\begin{align*}
\gr_\bullet(S_{2^2}(H_W)) &= S_{2^2}(\gr_\bullet(H_W)) = S_{2^2}(H_V[1] \oplus s^{2(n-1)}H_V^\vee[-1])\\
&= S_{2^2}(H_V)[4] \oplus \big(S_{2,1}(H_V) \otimes s^{2n-2}H_V^\vee\big)[2]\\
&\ \quad \oplus\big(S_2(H_V) \otimes S_2(s^{2(n-1)}H_V^\vee) \big)[0] \oplus \big( S_{1^2}(H_V) \otimes S_{1^2}(s^{2(n-1)}H_V^\vee)\big)[0]\\
&\ \quad \oplus \big(H_V \otimes S_{2,1}(s^{2(n-1)}H_V^\vee)\big)[-2] \oplus S_{2^2}(s^{2n-2}H_V^\vee)[-4].
\end{align*}
Neglecting terms of negative weight we then have
\begin{align*}
\gr_\bullet(K_{2^2}) &= \big(S_{2,1}(H_V) \otimes s^{2(n-1)}H_V^\vee\big)[2] \oplus\big(S_2(H_V) \otimes S_2(s^{2(n-1)}H_V^\vee) \big)[0]\\
&\quad \oplus \big( S_{1^2}(H_V) \otimes S_{1^2}(s^{2(n-1)}H_V^\vee)\big)[0] \oplus \cdots,
\end{align*}
and $\gr_\bullet(\ker(\partial_V))$ is the same but desuspended $2(n-1)$ times and with the weight 2 term replaced by $\ker(\gr_2(\partial_V) : S_{2,1}(H_V) \otimes H_V^\vee \twoheadrightarrow S_2(H_V))$. Similarly
\begin{align*}
\gr_\bullet(S_{1^3}(H_W)) &= S_{1^3}(\gr_\bullet(H_W)) = S_{1^3}(H_V[1] \oplus s^{2(n-1)}H_V^\vee[-1])\\
&= S_{1^3}(H_V)[3] \oplus \big(S_{1^2}(H_V) \otimes s^{2(n-1)}H_V^\vee\big)[1]\\
&\quad \oplus \big(H_V \otimes S_{1^2}(s^{2(n-1)}H_V^\vee)\big)[-1] \oplus S_{1^3}(s^{2(n-1)}H_V^\vee)[-3]
\end{align*}
and so $\gr_\bullet(K_{1^3}) = \big(S_{1^2}(H_V) \otimes s^{2(n-1)}H_V^\vee\big)[1] \oplus \big(H_V \otimes S_{1^2}(s^{2(n-1)}H_V^\vee)\big)[-1] \oplus S_{1^3}(s^{2(n-1)}H_V^\vee)[-3]$. Neglecting terms of negative weight, we then have
\begin{align*}
\gr_\bullet(S_{1^2}(s^{-2(n-1)} K_{1^3})) &= S_{1^2}(s^{-2(n-1)} \gr_\bullet(K_{1^3}))\\
&= S_{1^2}(S_{1^2}(H_V) \otimes H_V^\vee)[2] \oplus s^{2(n-1)}\big( S_{1^2}(H_V)\otimes H_V \otimes S_{1^2}(H_V^\vee) \otimes H_V^\vee  \big)[0] \oplus \cdots.
\end{align*}

\subsubsection*{Weight $< 0$}
The first observation is that if $U$ is \emph{any} algebraic $G_V$-representation then for each $s <0$ we have $\oH_*(G_V; \gr_s(U))=0$ in a stable range of degrees, by \cref{lem:GV-homology} \ref{enum:gv-homology-ii}. Thus for $s< 0$ the maps \eqref{eq:GrMaps} all induce isomorphisms on $G_V$-homology in a stable range of degrees.

\subsubsection*{Weight $0$}We must show that the maps
\[S_{1^2}(H_V)\otimes H_V \otimes S_{1^2}(H_V^\vee) \otimes H_V^\vee \ra s^{-2(n-1)}\gr_0(\im^{\Wh}_{V}) \ra \big(S_2(H_V) \otimes S_2(H_V^\vee) \big) \oplus \big( S_{1^2}(H_V) \otimes S_{1^2}(H_V^\vee)\big)\]
are both isomorphisms on $G_V$-homology in a stable range. By \cref{lem:GV-homology} \ref{enum:gv-homology-vi} the left-hand terms has $G_V$-homology $\oH_*(\GL_\infty(\bfZ);\bfQ[0])^{\oplus 2}$ in a stable range, and by \cref{lem:GV-homology} \ref{enum:gv-homology-vii} and \ref{enum:gv-homology-viii} the right-hand term does too.  From the last part of \cref{prop:HtpyEmbVg} it follows that $s^{-2(n-1)}\gr_0(\im^{\Wh}_{V})$ contains two copies of the trivial $G_V$-representation $\bfQ$, so by \cref{lem:GV-homology} \ref{enum:gv-homology-i} in a stable range the $G_V$-homology of the middle term contains $\oH_*(\GL_\infty(\bfZ);\bfQ[2])^{\oplus 2}$ as a summand. As the middle term is a summand of the outer ones, it follows that both maps are isomorphisms on $G_V$-homology in a stable range as required.

\subsubsection*{Weight $2$}We must show that the maps
\[S_{1^2}(S_{1^2}(H_V) \otimes H_V^\vee) \ra \gr_2(\im^{\Wh}_{V}) \ra \ker(\gr_2(\partial_V) : S_{2,1}(H_V) \otimes H_V^\vee \twoheadrightarrow S_2(H_V))\]
are both isomorphisms on $G_V$-homology in a stable range, and we will do so by showing that all three terms have trivial $G_V$-homology in a stable range. By \cref{lem:GV-homology} \ref{enum:gv-homology-ix} the left-hand term has trivial $G_V$-homology in a stable range, and as the middle term is a summand of the left-hand term it does too. For the right-hand term we have a decomposition
\[S_{2,1}(H_V) \otimes H_V^\vee \cong S_2(H_V) \oplus \ker(\gr_2(\partial_V))\]
and by \cref{lem:GV-homology} \ref{enum:gv-homology-x} and \ref{enum:gv-homology-xi} the representations $S_2(H_V)$ and $S_{2,1}(H_V) \otimes H_V^\vee$ both have $G_V$-homology  $\oH_*(\GL_\infty(\bfZ);\bfQ[0]) \otimes \bfQ[1,2(n-1)]$ in a stable range, so $\ker(\gr_2(\partial_V))$ has trivial $G_V$-homology in a stable range.
\end{proof}

Putting together the pieces, we finish the proof of this section's main result, \cref{thm:homology-framed-selfemb}.

\begin{proof}[Proof of \cref{thm:homology-framed-selfemb}]
As the compositions $\check{\Lambda}_{W_{g,1}}\twoheadrightarrow G_W\ra\OSp_\infty(\bfZ)$ and $\check{\Lambda}_{V_{g}}\twoheadrightarrow G_V\ra\GL_\infty(\bfZ)$ consist homology isomorphism in a range of degrees increasing with $g$ as a result of \cref{lem:finite-kernel},  \cref{thm:Borel-vanishing}, and \cref{lem:GV-homology} \ref{enum:gv-homology-i}, it suffices to show that the $E_2$-pages $E_{p,q}^2$ of the Serre spectral sequences of the universal cover fibrations 
\[\begin{gathered}
\BEmb^{\fr,\cong}_{\nicefrac{1}{2}\partial}(W_{g,1})^\sim_\ell \lra \BEmb^{\fr,\cong}_{\nicefrac{1}{2}\partial}(W_{g,1})_\ell \lra B
\check{\Lambda}_W\\
\BEmb^{\fr,\cong}_{\nicefrac{1}{2}D^{2n}}(V_{g},W_{g,1})^\sim_\ell  \lra \BEmb^{\fr,\cong}_{\nicefrac{1}{2}D^{2n}}(V_{g},W_{g,1})_\ell \lra B
\check{\Lambda}_V
\end{gathered}
\]
are concentrated along the bottom row for $q<3n-5$ for large enough $g$ with respect to $p$. We have computed the reduced homology of the fibres in this range in  \cref{cor:HomologyUnivCovEmbWg} and \cref{cor:HomologyUnivCovEmbVg} which in particular show that the $\check{\Lambda}_{W}$- respectively $\check{\Lambda}_{V}$-action factors through $G_W$ respectively $G_V$. As the maps between these groups have finite kernel by \cref{lem:finite-kernel}, the required vanishing follows from \cref{prop:WgSSCohCalc}, \cref{prop:VgSSCohCalc}, and \cref{lem:GV-homology} \ref{enum:gv-homology-iv}.
\end{proof}

\section{Rational homotopy groups of diffeomorphism groups of discs}\label{sec:bdiff-and-btop}
Based on \cref{thm:homology-framed-selfemb}, we compute the rational homotopy groups of $\BDiff_\partial(D^d)$ and $\BDiff_{D^d}(D^{d+1})$ in degrees $*\lesssim \tfrac{3}{2} d$ and deduce a computation of the rational homotopy groups of various related spaces in a similar range. In particular, we prove  \cref{bigthm:diff-discs} and \cref{bigcor:relative-concordance-groups}.

\subsection{Transgression and Pontryagin classes}\label{sec:transgression}
Given a sequence of based spaces $F\ra E\ra B$ whose composition is constant and an abelian group $\bfk$, consider the maps
\[
\oH^k(B;\bfk)\cong \oH^k(B,*;\bfk)\lra \oH^k(E,F;\bfk)\longleftarrow \oH^{k-1}(F;\bfk)\quad\text{for }k>0.
\]
By the long exact sequence of a pair, the image in $\oH^k(E,F;\bfk)$ of any class $c\in \oH^k(B;\bfk)$ that vanishes in $\oH^k(E;\bfk)$ lifts to a (usually non-unique) class $\tilde{c}\in \oH^{k-1}(F;\bfk)$. This has an obvious naturality property: if the sequence $F\ra E\ra B$ receives compatible maps from another sequence $F'\ra E'\ra B'$ of the same type, then the pullback of a lift $\tilde{c}$ to $\oH^{k-1}(F;\bfk)$ is a lift of the pullback of $c$ to $\oH^k(B;\bfk)$. Moreover, note that if $\oH^{k-1}(E;\bfk)=0$, then the rightmost map in the above zig-zag is injective, so the lift $\tilde{c}$ is unique; in this case we write $c^\tau\coloneq \tilde{c}\in \oH^{k-1}(F;\bfk)$. If $F\ra E\ra B$ is a fibration sequence, then $c$ is the transgression of $c^\tau$ in the usual sense, so in this case it enjoys the following the well-known properties:
\begin{enumerate}
\item The pullback of $c^\tau$ along the induced map $\Omega B\ra F$ agrees with $\Omega c\in \oH^{k-1}(\Omega B;\bfk)$.
\item For $\bfk=\bfZ$ and $x \in \pi_k(B)$ we have $\langle c, x \rangle = \langle c^\tau, \partial(x)\rangle$ for the boundary map $\partial\colon \pi_k(B) \to \pi_{k-1}(F)$, where $\langle -, - \rangle\colon \pi_k(X)\otimes \oH^k(X;\bfZ)\ra \bfZ$ is the canonical pairing.\end{enumerate}

\subsubsection{Transgressed Pontryagin and Euler classes}\label{sec:transgressed-pontryagin}We will use this to define classes
\begin{align}\label{equ:transgressed-classes}
\begin{split}
p_i^\tau&\in \oH^{4i-1}(\TOP(d)/\oO(d);\bfQ)\text{ for }i>\lfloor \tfrac{d}{2}\rfloor,\\
(p_n-e^2)^\tau&\in \oH^{4n-1}(\TOP(2n)/\oO(2n);\bfQ),\text{ and}\\
(p_n-E)^\tau&\in \oH^{4n-1}(\TOP(2n+1)/\oO(2n+1);\bfQ)
\end{split}
\end{align}
such that the $p_i^\tau$ are preserved by pullback along the stabilisation map $\TOP(d)/\oO(d)\ra \TOP(d+1)/\oO(d+1)$, and $(p_n-E)^\tau$ pulls back to $(p_n-e^2)^\tau$ for $d=2n$. First, recall that the map $\BO\ra \BTOP$ is a rational homotopy equivalence \cite[p.\,246, 5.0 (5)]{KS}, so there are well-defined Pontryagin classes $p_i\in\oH^{4i}(\BTOP;\bfQ)$ which we may pull back to classes $p_i\in\oH^{4i}(\BTOP(d);\bfQ)$. The first classes in \eqref{equ:transgressed-classes} result from these by applying the above principle to 
\begin{equation}\label{equ:o-to-top}
\TOP(d)/\oO(d)\lra \BO(d)\lra \BTOP(d),
\end{equation}
using the fact that $p_i\in \oH^{4i}(\BO(d);\bfQ)$ vanishes in $\oH^{4i}(\BO(d);\bfQ)$ for $i>\lfloor \tfrac{d}{2}\rfloor$. To construct the other two classes, we write $\G(d) = \hAut(S^{d-1})$ whose delooping receives a map from $\BTOP(d)$ classifying the underlying spherical fibration of an $\bfR^d$-bundle. Passing to the orientation preserving submonoid $\SG(d)\subset\G(d)$ we see from the computation of the rational homotopy groups in \eqref{equ:homotopy-g(n)} that there are rational homotopy equivalences
\begin{equation}\label{equ:homotopy-type-g(n)}
\BSG(2n) \simeq_\bfQ K(\bfQ^-, 2n) \quad\quad\quad \BSG(2n+1) \simeq_\bfQ K(\bfQ^+, 4n).
\end{equation}
The first of these is induced by the Euler class $e \in\oH^{2n}(\BSG(2n);\bfQ)$ and the second is induced by the unique rational cohomology class \[E \in\oH^{4n}(\BSG(2n+1);\bfQ)\] that pulls back to $e^2 \in \oH^{4n}(\BSG(2n);\bfQ)$ under the stabilisation map $\BSG(2n)\ra \BSG(2n+1)$. The $\pm$-superscripts indicate the effect of the involution on $\BSG(d)$ induced by conjugation with a reflection in $\bfR^d$. Using the map $\BSTOP(2n+1) \to \BSG(2n+1)$ we form the class $p_n - E \in\oH^{4n}(\BSTOP(2n+1);\bfQ)$, which vanishes in $\BSO(2n)$ as $E$ becomes $e^2=p_n$. As the map $\BSO(2n)\ra \BSO(2n+1)$ is injective on rational cohomology, it also vanishes on $\BSO(2n+1)$, so the orientation preserving analogue of the fibration \eqref{equ:o-to-top} induces the third class in \eqref{equ:transgressed-classes}. Its pullback to $\TOP(2n)/\oO(2n)$ agrees with the second class in \eqref{equ:transgressed-classes}, which is obtained by performing the same construction to $p_n-e^2\in\oH^{4n}(\BSTOP(2n);\bfQ)$.

\subsection{Smoothing theory, Morlet style}\label{sec:smoothing theory}
By smoothing theory and the Alexander trick there is a homotopy commutative diagram \begin{equation}\label{equ:smoothing-theory}
\begin{tikzcd}
\Omega^{d}\OO(d)\arrow[d,equal]\rar&\BDiff^\fr_\partial(D^d)\rar\dar&\BDiff_\partial(D^d)\dar\\
\Omega^{d}\OO(d)\rar&\Omega^{d}\TOP(d)\rar&\Omega^d\TOP(d)/\oO(d)
\end{tikzcd}
\end{equation}
whose vertical maps are $0$-coconnected as long as $d\neq 4$ by a result due to Morlet \cite[p.\ 241]{KS}. The induced isomorphisms
\begin{equation}\label{equ:smoothing-theory-component}
\pi_*(\BDiff^\fr_\partial(D^d)_\ell)\cong \pi_{*+d}(\TOP(d))\quad\text{and}\quad \pi_*(\BDiff_\partial(D^d))\cong \pi_{*+d}(\TOP(d)/\oO(d))
\end{equation}
in positive degrees are \emph{anti}-equivariant with respect to the reflection involution on the source and the involution on the target that is induced by conjugation with a reflection in $\bfR^d$. For the first isomorphism (the second one is similar), this can be seen by chasing through the smoothing theory equivalence and observing that: (i) the induced isomorphism $\pi_*(\BDiff^\fr_\partial(D^d)_\ell)\cong \pi_{*}(\Omega^d_0\TOP(d))$ is equivariant with respect to the reflection involution on the source and the involution on the target induced by reflection in $\TOP(d)$ \emph{and} a reflection in the loop coordinate, and (ii) reflection in the loop coordinate induces multiplication by $-1$ on homotopy groups. More precisely, the effect of the involution on a class represented by a map $(D^d,\partial D^d)\ra (\TOP(d),*)$ is by precomposition with the reflection $\rho\colon (D^d,S^{d-1})\ra (D^d,S^{d-1})$ and postcomposition with the selfmap of $\TOP(d)$ induced by conjugation with a reflection of $\bfR^d$ in one of the coordinates. 

Similarly, there is an equivalence 
\begin{equation}\label{equ:smoothing-theory-concordances}\BDiff_{D^d}(D^{d+1})\xlra{\simeq} \Omega^d_0\hofib\big(\TOP(d)/\oO(d)\ra\TOP(d+1)/\oO(d+1)\big)\end{equation} for $d\neq 4,5$ involving the stabilisation map $\TOP(d)/\oO(d)\ra\TOP(d+1)/\oO(d+1)$ induced by $(-)\times\bfR$, and this equivalence is compatible up to homotopy with the second equivalence in \eqref{equ:smoothing-theory-component} for $d$ and $d+1$ via the fibre sequence
\[\BDiff_{\partial}(D^{d+1})\lra \BDiff_{D^d}(D^{d+1})\lra \BDiff^\id_{\partial}(D^{d})\]
induced by restricting diffeomorphisms of $D^{d+1}$ to the moving part of the boundary. The induced isomorphism in positive degrees
\begin{equation}\label{equ:smoothing-theory-on-homotopy-conc}\pi_*(\BDiff_{D^d}(D^{d+1}))\cong \pi_{*+d}(\hofib\big(\TOP(d)/\oO(d)\ra\TOP(d+1)/\oO(d+1)\big)\end{equation} is \emph{anti}-equivariant with respect to the reflection involution on the source and the involution on the target induced by conjugation with reflections of $\bfR^d$ and $\bfR^{d+1}$.

\subsubsection{Pontryagin classes on diffeomorphism of discs}\label{sec:pon-classes-on-diff}Combining \eqref{equ:smoothing-theory} with the canonical equivalence $\Omega^{d}\TOP(d)\simeq \Omega^{d+1}\BSTOP(d)$, we may pull back the appropriate loopings of the classes considered in \cref{sec:transgressed-pontryagin} along the vertical maps to obtain classes
\[
\Omega^{d+1}p_i\in \oH^{4i-d-1}(\BDiff^\fr_\partial(D^d);\bfQ)\ \ \text{ and }\ \ \Omega^{d}p_i^\tau\in \oH^{4i-d-1}(\BDiff_\partial(D^d);\bfQ)\text{ for }i>\lfloor d/2\rfloor
\]
and classes
\[
\Omega^{2n}(p_n-e^2)^\tau\in \oH^{2n-1}(\BDiff_\partial(D^{2n});\bfQ)\ \ \text{ and }\ \ \Omega^{2n+1}(p_n-E)^\tau\in \oH^{2n-2}(\BDiff_\partial(D^{2n+1});\bfQ).
\]

\subsection{Diffeomorphisms of discs}\label{section:homotopy-even-discs} 
Recall that for a path-connected homotopy commutative $H$-space $X$, there are canonical cohomology classes in $\oH^i(X;\pi_i(X)_\bfQ)$ for $i\ge1$: they are given by  the composition $\oH_i(X;\bfQ)\ra Q(\oH_*(X;\bfQ))_i\cong P(\oH_*(X;\bfQ))_i\cong \pi_i(X)_\bfQ$ where the second isomorphism is induced by the Hurewicz map and the first is the usual isomorphism between the primitives and indecomposables of the Hopf algebra $\oH_*(X;\bfQ)$ (see \cite[Corollary 4.18, Appendix]{MilnorMoore}). Applying this to $\BGL_{\infty}(\bfZ)^+$ and $\BOSp_{\infty}(\bfZ)^+$ and writing 
\[K_i(\bfZ)\coloneq \pi_i(\BGL_{\infty}(\bfZ)^+)\quad\text{and}\quad K\OSp_i(\bfZ)\coloneq \pi_i(\BOSp_{\infty}(\bfZ)^+)\quad\text{ for }i>0,\]
 we obtain classes
\[
 \xi_i\in \oH^i(\BGL_{\infty}(\bfZ);K_i(\bfZ)_\bfQ)\quad\text{and}\quad \xi^{\OSp}_i\in \oH^i(\BOSp_{\infty}(\bfZ);K\OSp_i(\bfZ)_\bfQ)
\]
for $i>0$. We now consider the framed Weiss fibre sequences \eqref{equ:delooped-framed-Weiss-fs}
\begin{equation}\label{equ:framed-weiss-fs-ready-for-trangression}
\begin{gathered}
\BDiff_{D^{2n}}(D^{2n+1})\lra \BDiff^{\fr}_{D^{2n}}(V_{g})_{\ell}\lra \BEmb^{\fr}_{\nicefrac{1}{2}D^{2n}}(V_g,W_{g,1})_{\ell}\\
\BDiff^{\fr}_\partial(D^{2n})_B\lra \BDiff^{\fr}_\partial(W_{g,1})_{\ell}\lra \BEmb^{\fr, \cong}_{\half\partial}(W_{g,1})_{\ell}
\end{gathered}
\end{equation}
whose total spaces have trivial rational homology in a range of degrees with $g$ by \cref{thm:stable-homology} (assuming $n\ge4$ for the first fibration and $n\ge3$ for the second fibration). Pulling back the classes $\xi_i$ and $\xi^{\OSp}$ along the compositions
\[
\BEmb^{\fr}_{\nicefrac{1}{2}D^{2n}}(V_g,W_{g,1})_{\ell}\ra B G_{V}\ra \BGL_\infty(\bfZ)\quad\text{and}\quad
\BEmb^{\fr, \cong}_{\half\partial}(W_{g,1})_{\ell}\ra B G_{W}\ra \BOSp_\infty(\bfZ)
\]
induced by the homology actions and choosing $g\gg i$ large enough, we thus obtain classes
\begin{equation}\label{equ:xis}
\xi_i^\tau\in \oH^{i-1}(\BDiff_{D^{2n}}(D^{2n+1});K_i(\bfZ)_\bfQ)\quad\text{and}\quad (\xi_i^{\OSp})^\tau\in \oH^{i-1}(\BDiff^{\fr}_\partial(D^{2n})_\ell;K\OSp_i(\bfZ)_\bfQ).
\end{equation}
Before stating the first result of this section, let us recall that all spaces and groups above are compatibly acted upon by the reflection involution $\rho$. As pointed out in \cref{sec:involution-on-mcg} this action is trivial on the group $\GL_\infty(\bfZ)$ and hence also on $K_*(\bfZ)_\bfQ$. On $K\OSp_*(\bfZ)_\bfQ$ however $\rho$ acts by $-1$, because it does so on the indecomposables of $\oH^*(\BOSp_\infty(\bfZ);\bfQ)$, as explained in \cref{sec:stable-cohomology-borel}.

\begin{thm}\label{thm:homotopy-framed-even-disc}
The maps of graded $\bfQ[\langle \rho\rangle]$-modules
\vspace{-0.1cm}
\[
\def\arraystretch{1.5}
\begin{array}{c@{\hskip 0cm} l@{\hskip 0.1cm} c@{\hskip 0.1cm} l@{\hskip 0.1cm} l@{\hskip 0.1cm} l@{\hskip 0cm} }
\textstyle{\bigoplus_{i\ge1} \xi_i^\tau}&\colon&\pi_*(\BDiff_{D^{2n}}(D^{2n+1}))_\bfQ&\lra&s^{-1}K_{*>0}(\bfZ)_\bfQ&\text{ for }n\ge4\\
\textstyle{\bigoplus_{i\ge1} (\xi_i^{\OSp})^\tau}&\colon&\pi_*(\BDiff^{\fr}_\partial(D^{2n})_\ell)_\bfQ&\lra&s^{-1}K\OSp_{*>0}(\bfZ)_\bfQ&\text{ for }n\ge3
\end{array}
\]
are isomorphisms in degrees $*<3n-6$ and epimorphisms in degree $3n-6$.
\end{thm}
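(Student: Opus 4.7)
The plan is to feed the input from \cref{thm:homology-framed-selfemb} and \cref{thm:stable-homology} through the framed Weiss fibre sequences \eqref{equ:framed-weiss-fs-ready-for-trangression}. I describe the $V_g$-case; the $W_{g,1}$-case is entirely analogous, with $\GL_\infty(\bfZ)$ and $K_*(\bfZ)_\bfQ$ replaced by $\OSp_\infty(\bfZ)$ and $K\OSp_*(\bfZ)_\bfQ$.

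Writing $F\to E\to B$ for the first fibre sequence in \eqref{equ:framed-weiss-fs-ready-for-trangression} and picking $g$ large enough that $\widetilde{\oH}_*(E;\bfQ)$ vanishes throughout the degrees of interest by \cref{thm:stable-homology}, a collapse argument for the rational Serre spectral sequence of the extended fibration $\Omega B\to F\to E$ exhibits the connecting map $\Omega B\to F$ as a rational cohomology isomorphism in the same range. Since $F\simeq \BDiff_{D^{2n}}(D^{2n+1})$ is a loop space by smoothing theory \eqref{equ:smoothing-theory-concordances} and $\Omega B$ is one tautologically, both are connected H-spaces, so the Milnor--Moore theorem upgrades this to an isomorphism $\pi_i(F)_\bfQ\cong \pi_i(\Omega B)_\bfQ=\pi_{i+1}(B)_\bfQ$ on rational homotopy. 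To compute $\pi_{i+1}(B)_\bfQ$ I would then run a parallel argument comparing the connected monoid $\Omega B=\Emb_\ell$ with the H-space $\Omega\BGL_\infty(\bfZ)^+$: the map $B\to\BGL_\infty(\bfZ)\to \BGL_\infty(\bfZ)^+$ is a rational homology isomorphism in degrees $*<3n-5$ and an epimorphism in degree $3n-5$ by \cref{thm:homology-framed-selfemb} together with the homological invariance of the plus construction, and passing to primitives via Milnor--Moore on both H-space loop spaces---using that $\BGL_\infty(\bfZ)^+$ is an infinite loop space with $\pi_i=K_i(\bfZ)$ by Borel \cite{Borel1}---yields an isomorphism $\pi_{i+1}(B)_\bfQ\cong K_{i+1}(\bfZ)_\bfQ$ for $i+1<3n-5$ and an epimorphism for $i+1=3n-5$.

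The identification of this composite isomorphism with $\bigoplus_i\xi_i^\tau$ is now a formal consequence of the transgression identity $\langle \xi_{i+1}^\tau,x\rangle=\langle \xi_{i+1},\tilde x\rangle$ of \cref{sec:transgression}, together with the tautological property that $\xi_{i+1}\in\oH^{i+1}(\BGL_\infty(\bfZ)^+;K_{i+1}(\bfZ)_\bfQ)$ realises the projection from $\oH_{i+1}$ onto the degree-$(i{+}1)$ primitives $K_{i+1}(\bfZ)_\bfQ$. Finally, $\rho$-equivariance is built in by naturality of every construction, with the $\rho$-action on the targets being trivial on $s^{-1}K_{>0}(\bfZ)_\bfQ$ (since $\rho$ acts trivially on $\GL_\infty(\bfZ)$, \cref{sec:involution-on-mcg}) and by $-1$ on $s^{-1}K\OSp_{>0}(\bfZ)_\bfQ$ (since $\rho$ acts as $-1$ on the $\sigma_i$-generators and hence on primitives, \cref{sec:stable-cohomology-borel}). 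The main obstacle I anticipate is the looping step in the second paragraph: because neither $B$ nor $\BGL_\infty(\bfZ)$ is simply-connected, upgrading a rational homology isomorphism of classifying spaces to one of their loop-space/monoid analogues requires care, and is cleanest to execute directly on the Hopf algebra structures of $\oH_*(B;\bfQ)$ and $\oH_*(\BGL_\infty(\bfZ);\bfQ)$ via the edge homomorphism of the relevant bar spectral sequence; once this is handled, the degree bookkeeping for the iso/epi range is routine.
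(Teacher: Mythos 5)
Your argument breaks at the Serre spectral sequence step in the first paragraph. For the fibre sequence you write as $F \to E \to B$ with $F = \BDiff_{D^{2n}}(D^{2n+1})$, $E = \BDiff^{\fr}_{D^{2n}}(V_g)_\ell$ and $B = \BEmb^{\fr}_{\nicefrac{1}{2}D^{2n}}(V_g, W_{g,1})_\ell$, you extend to $\Omega B \to F \to E$ and claim that rational acyclicity of $E$ forces the connecting map $\Omega B \to F$ to be a rational cohomology isomorphism. This fails already on $\oH^0$: the fundamental group $\pi_1(B)=\check{\Lambda}_V$ is infinite (it surjects with finite kernel onto the arithmetic group $G_V$ by \cref{lem:finite-kernel}), so $\Omega B$ has infinitely many components and $\oH^0(\Omega B;\bfQ)$ is infinite-dimensional, whereas $F$ is connected. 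In particular the assertion that ``$\Omega B$ is a connected $H$-space'' is false. Moreover, the Serre spectral sequence for $\Omega B \to F \to E$ has $E_2$-page $\oH^p(E;\mathcal{H}^q(\Omega B;\bfQ))$ with non-constant local coefficients (as $\pi_1(E)\cong\check{\Lambda}_V$ acts on $\oH^*(\Omega B;\bfQ)$), and $\widetilde{\oH}_*(E;\bfQ)=0$ is a statement about constant coefficients only; since $E$ is far from nilpotent, this does not make the positive-degree columns vanish, so the collapse you invoke is unjustified.

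The paper's proof avoids both problems by running the argument on the \emph{delooped} Weiss fibre sequence \eqref{equ:delooped-framed-Weiss-fs}, which shifts everything one stage to the left:
\[
\BDiff^{\fr}_{D^{2n}}(V_g)_\ell \lra \BEmb^{\fr,\cong}_{\nicefrac{1}{2}D^{2n}}(V_g, W_{g,1})_\ell \lra B\big(\BDiff_{D^{2n}}(D^{2n+1})\big).
\]
Here the rationally acyclic space $\BDiff^{\fr}_{D^{2n}}(V_g)_\ell$ is the \emph{fibre}, and the base $B(\BDiff_{D^{2n}}(D^{2n+1}))$ is simply connected (being the bar construction of a connected $E_{2n}$-space), so the Serre spectral sequence has constant coefficients and the total-space-to-base map is a rational cohomology isomorphism in a range. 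Composing with the rational homology isomorphism of \cref{thm:homology-framed-selfemb} exhibits the product of the $\xi_i'$ as a rational homology isomorphism from $B(\BDiff_{D^{2n}}(D^{2n+1}))$ to $\prod_{i>0}K(K_i(\bfZ)_\bfQ,i)$ between simply-connected spaces in the range, hence a rational homotopy isomorphism; looping once gives the statement. You do flag a non-simply-connectedness concern in your final paragraph, but the concern should be aimed at the comparison $\Omega B\to F$ rather than at the map to $\BGL_\infty(\bfZ)$, and the clean resolution is to deloop so that a simply-connected space sits exactly where a rational homology statement must be converted into a rational homotopy statement.
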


\begin{proof}The proof of the two cases are analogous; we carry out that of the first. Consider the framed Weiss fibre sequence from \eqref{equ:delooped-framed-Weiss-fs} \[\BDiff^{\fr}_{D^{2n}}(V_{g})_{\ell} \lra \BEmb^{\fr, \cong}_{\nicefrac{1}{2}D^{2n}}(V_g,W_{g,1})_{\ell} \lra B(\BDiff_{D^{2n}}(D^{2n+1}))\] whose right-hand map is an isomorphism on rational cohomology in a range increasing with $g$ by \cref{thm:stable-homology}. From \cref{thm:homology-framed-selfemb}, we know that for large $g$ the map
$\BEmb^{\fr, \cong}_{\nicefrac{1}{2}D^{2n}}(V_g,W_{g,1})_{\ell}\ra \BGL_\infty(\bfZ) $ induces a rational homology isomorphisms in degrees $*< 3n-5$ and a surjection in degree $3n-5$, so we conclude, firstly, that for large enough the $\xi_i$ give classes $\xi'_{i}\in \oH^{i}(B(\BDiff_{D^{2n}}(D^{2n+1}));K_i(\bfZ)_\bfQ)$, and, secondly, that the map
\[
\textstyle{\prod_{i>0} \xi'_{i}\colon B(\BDiff_{D^{2n}}(D^{2n+1})) \lra \prod_{i>0} K(K_i(\bfZ)_\bfQ,\bfQ)}
\]
is an isomorphism on rational homology in degrees $*< 3n-5$ and an epimorphism in degree $3n-5$. As the left-hand side is a loop space (see \cref{sec:smoothing theory}), the same holds on rational homotopy groups, so the claim follows by looping this map and using $\Omega \xi'_{i}=\xi^\tau_{i}$.
\end{proof}

For even discs \cref{thm:homotopy-framed-even-disc} can be reinterpreted in terms of Pontryagin classes in the sense of \cref{sec:pon-classes-on-diff}. To do so, recall from \cref{sec:stable-cohomology-borel} that for $i>0$ the group $K\OSp_i(\bfZ)_\bfQ$ is trivial unless $i=4j-2n$ for some $j$ and in this case we have the canonical isomorphism $\sigma_{4j-2n}\colon K\OSp_{4j-2n}(\bfZ)_\bfQ\ra\bfQ^-$ respecting the action of the reflection involution, given by evaluating the classes $\sigma_{4j-2n}\in\oH^{4j-2n}(\BOSp_\infty(\bfZ);\bfQ)$ described in \cref{sec:stable-cohomology-borel}. 

\begin{lem}\label{lem:kosp=pi}For $4j-2n>0$, the composition of $\bfQ[\langle\rho\rangle]$-modules
\[(\sigma_{4j-2n}\circ (\xi_{4j-2n}^\OSp)^\tau)\colon \pi_{4j-2n-1}(\BDiff^{\fr}_\partial(D^{2n})_\ell)_\bfQ\lra \bfQ^-\] agrees up to a nontrivial scalar with the evaluation of $\Omega^{2n+1}p_{j}^\tau\in\oH^{4j-2n-1}(\BDiff^{\fr}_\partial(D^{2n})_\ell;\bfQ)$.
\end{lem}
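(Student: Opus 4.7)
My plan is to identify both classes as transgressions, along the delooped framed Weiss fibre sequence
\[
\BDiff^{\fr}_\partial(D^{2n})_B \lra \BDiff^{\fr}_\partial(W_{g,1})_\ell \lra \BEmb^{\fr,\cong}_{\half\partial}(W_{g,1})_\ell,
\]
of a single class on the base (up to a non-zero scalar). By \cref{thm:stable-homology} the rational cohomology of the total space vanishes in a stable range, so classes on the base that pull back to zero there admit a unique transgression, and pairings of such transgressed classes with homotopy classes of the fibre can be computed on the base. The class $\sigma_{4j-2n} \circ (\xi^{\OSp}_{4j-2n})^\tau$ is by construction such a transgression: indeed, $\xi^{\OSp}_{4j-2n}$ is characterised by pairing to the identity map $\pi_{4j-2n}(\BOSp_\infty(\bfZ)^+) \to K\OSp_{4j-2n}(\bfZ)_\bfQ$, so composing with $\sigma_{4j-2n}\colon K\OSp_{4j-2n}(\bfZ)_\bfQ \to \bfQ^-$ identifies $\sigma_{4j-2n}\circ\xi^{\OSp}_{4j-2n}$ with the class $\sigma_{4j-2n}$ itself (since the stable rational cohomology of $\BOSp_\infty(\bfZ)$ is one-dimensional in this degree). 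Hence $\sigma_{4j-2n} \circ (\xi^{\OSp}_{4j-2n})^\tau$ is exactly the transgression of $\sigma_{4j-2n}$, pulled back along $\BEmb^{\fr,\cong}_{\half\partial}(W_{g,1})_\ell \to BG_W \to \BOSp_\infty(\bfZ)$.

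For the Pontryagin side, I would use the comparison with the Weiss fibre sequence for homeomorphisms of \cref{sec:weiss-fs}: the Alexander trick gives $\BHomeo_\partial(W_{g,1}) \simeq B\mathrm{TopEmb}^{\cong}_{\half\partial}(W_{g,1})$, and composing the resulting map \mbox{$\BEmb^{\fr,\cong}_{\half\partial}(W_{g,1})_\ell \to B\mathrm{TopEmb}^{\cong}_{\half\partial}(W_{g,1})$} with the classifying map for the vertical topological tangent microbundle yields a map to $\BSTOP(2n)$, along which we pull back $p_j$ to a class on the base. Transgression of this class gives a class on $\BDiff^{\fr}_\partial(D^{2n})_B$ which, after identifying with $\BDiff^{\fr}_\partial(D^{2n})_\ell$ (up to components) and unfolding the Morlet equivalence $\BDiff^{\fr}_\partial(D^{2n})_\ell \to \Omega^{2n}_0\TOP(2n) \simeq \Omega^{2n+1}_0\BSTOP(2n)$, is tautologically a non-zero scalar multiple of $\Omega^{2n+1} p_j$.

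The main step, and the one I expect to be the principal obstacle, is to show that the two cohomology classes on the base just described—the pullback of $\sigma_{4j-2n}$ via the homology action, and the fibrewise Pontryagin class via the topological tangent microbundle—agree up to a non-zero scalar. The tool is the Hirzebruch signature theorem, extended to topological $\bfR^{2n}$-bundles via Kirby--Siebenmann: by the definition of $\sigma_{4j-2n}$ in \cite[Appendix A]{KR-WDisc}, its pullback to $B\mathrm{TopEmb}^{\cong}_{\half\partial}(W_{g,1}) \simeq \BHomeo_\partial(W_{g,1})$ is a non-zero scalar multiple of the family signature of the closed $W_g$-bundle obtained by capping off the universal $W_{g,1}$-bundle with $B \times D^{2n}$, and this in turn is a non-zero scalar multiple of the fibre integral of $L_j$ of the vertical tangent microbundle. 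The lower Pontryagin classes entering $L_j$ contribute only via products of fibre integrals in strictly positive degrees on the base, which lie in the range where the total space is rationally acyclic and hence cancel against each other after transgression—so only the $p_j$-term survives, up to a non-zero rational scalar. Checking that the scalar is non-zero is then a matter of evaluating both sides on a single concrete class (for instance on a bundle coming from a map $S^{4j-2n} \to \BO \to \BSTOP$), and the $\bfQ^-$ twist on both sides matches since the signature and the orientation-reversing reflection of $D^{2n}$ both respond by a sign.
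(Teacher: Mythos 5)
Your overall strategy—recast both sides as transgressions along the delooped Weiss fibre sequence and invoke a family signature theorem—is in the same spirit as the paper, but your execution has two real gaps, and the paper's route around them is worth noting.

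First, the claim that the transgression of the fibrewise Pontryagin class is ``tautologically'' a nonzero multiple of $\Omega^{2n+1}p_j$ is not a tautology: the class $\Omega^{2n+1}p_j$ is defined via the Morlet/smoothing-theory equivalence $\BDiff^\fr_\partial(D^{2n})_\ell \to \Omega^{2n}\TOP(2n) \simeq \Omega^{2n+1}_0\BSTOP(2n)$, whereas your fibrewise class lives on the base of the delooped Weiss sequence and must be pushed across the stable rational homology isomorphism and then looped. The compatibility of these two constructions is precisely the content of the family signature theorem cited in the paper (\cite[Theorem 4.24]{KR-WDisc}); it is the crux, not a formality.

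Second, your argument that the lower Pontryagin terms of $\cL_j$ ``contribute only via products of fibre integrals \ldots and hence cancel'' conflates fibre integrals of products with products of fibre integrals: $\int_\pi(p_{j_1}\cdot p_{j_2})$ is \emph{not} $(\int_\pi p_{j_1})(\int_\pi p_{j_2})$, and it is not obvious that the fibre-integral of a decomposable vertical class is itself decomposable in $\oH^*(\BEmb^{\fr,\cong}_{\half\partial}(W_{g,1})_\ell;\bfQ) \cong \bfQ[\sigma_{4i-2n}]$, so the claim that these terms die after transgression needs justification. The paper sidesteps all of this by performing the reduction from $p_j$ to $\cL_j$ one step earlier, directly on $\BSTOP(2n)$: since $p_j$ agrees with a nonzero multiple of $\cL_j$ modulo decomposables, and looping $2n+1$ times annihilates decomposables in $\widetilde{\oH}^*(\BSTOP(2n);\bfQ)$, one immediately has $\Omega^{2n+1}p_j = a\cdot\Omega^{2n+1}\cL_j$; only then is the family signature theorem invoked, applied to $\cL_j$ itself rather than to its individual monomials. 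That ordering avoids ever having to argue that fibre-integrated cross-terms vanish, and is the clean way to go.
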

\begin{proof}
As $p_j$ agrees with a nonzero multiple of the Hirzebruch L-class $\cL_j$ plus decomposable cohomology classes, and looping annihilates decomposable classes, the class $\Omega^{2n+1}p_j$ is a non-zero multiple of $\Omega^{2n+1}\cL_{j}$. The latter agrees with $(\sigma_{4j-2n}\circ (\xi_{4j-2n}^\OSp)^\tau)$ as a result of the family signature theorem (see the proof of \cite[Theorem 4.23]{KR-WDisc}).
\end{proof}

\begin{cor}\label{cor:homotopy-diff-even-disc}The maps of graded $\bfQ[\langle \rho\rangle]$-modules
\vspace{-0.1cm}
\[
\def\arraystretch{1.5}
\begin{array}{c@{\hskip 0cm} l@{\hskip 0.1cm} c@{\hskip 0.1cm} l@{\hskip 0.1cm} l@{\hskip 0cm} l@{\hskip 0cm}}
\textstyle{\big(\bigoplus_{4i>2n+1} \Omega^{2n+1}p_i\big)}&\colon&\pi_*(\BDiff^{\fr}_\partial(D^{2n})_\ell)_\bfQ&\lra&\textstyle{\bigoplus_{4i>2n+1}\bfQ^-[4i-2n-1]}\\
\textstyle{\big(\Omega^{2n}(p_n-e^2)^\tau, \bigoplus_{i>n} \Omega^{2n}p^\tau_i\big)}&\colon&\pi_*(\BDiff_\partial(D^{2n}))_\bfQ&\lra&\textstyle{ \bigoplus_{i\ge n}\bfQ^-[4i-2n-1]}
\end{array}
\]
are isomorphisms in degrees $*<3n-6$ and an epimorphism in degree $3n-6$.
\end{cor}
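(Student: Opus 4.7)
The plan is to deduce the corollary from Theorem \ref{thm:homotopy-framed-even-disc} and Lemma \ref{lem:kosp=pi} by reinterpreting the target of that isomorphism in terms of Pontryagin classes, and then to pass from the framed to the unframed diffeomorphism group via the Morlet fibration.

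For the first map, I would combine Borel's calculation of $\oH^*(\BOSp_\infty(\bfZ);\bfQ)$ recalled in Section \ref{sec:stable-cohomology-borel} with the Milnor--Moore theorem to obtain $K\OSp_i(\bfZ)_\bfQ \cong \bfQ^-$ precisely when $i=4j-2n$ for some $j$ with $4j-2n>0$, and zero otherwise (the residue class $4j-2n$ modulo $4$ automatically matches the parity required by Borel in both $n$ even and $n$ odd). Since $4j>2n$ if and only if $4j>2n+1$ by parity, Theorem \ref{thm:homotopy-framed-even-disc} then yields the stated abstract isomorphism in the claimed range, and Lemma \ref{lem:kosp=pi} identifies its components with the evaluations of $\Omega^{2n+1}p_j$ up to non-zero scalars.

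For the second map, I would apply the long exact sequence of rational homotopy groups to the fibre sequence $\Omega^{2n}\OO(2n)\to \BDiff^\fr_\partial(D^{2n})_\ell\to \BDiff_\partial(D^{2n})$ arising from the top row of Morlet's diagram \eqref{equ:smoothing-theory}. Using the classical rational equivalence $\SO(2n)_\bfQ\simeq S^3\times S^7\times \cdots\times S^{4n-5}\times S^{2n-1}$, one identifies $\pi_*(\Omega^{2n}\OO(2n))_\bfQ$ in positive degrees with $\bigoplus_{(n+1)/2\le k\le n-1}\bfQ^-[4k-2n-1]$; the $\bfQ^-$-structure comes from the anti-equivariance of \eqref{equ:smoothing-theory-component}, while the $S^{2n-1}$-factor contributes only in non-positive degree. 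Since for $k\le n-1$ the Pontryagin class $p_k\in \oH^{4k}(\BO(2n);\bfQ)$ transgresses to a generator of the $(4k-1)$-sphere factor, the summand in degree $4k-2n-1$ is detected by $\Omega^{2n+1}p_k$; and because $p_k\in \oH^{4k}(\BTOP(2n);\bfQ)$ restricts to $p_k\in \oH^{4k}(\BO(2n);\bfQ)$, combining this with the first part shows that $\pi_*(\Omega^{2n}\OO(2n))_\bfQ\to \pi_*(\BDiff^\fr_\partial(D^{2n})_\ell)_\bfQ$ is an isomorphism onto the summands indexed by $(n+1)/2\le k\le n-1$. The long exact sequence then yields $\pi_*(\BDiff_\partial(D^{2n}))_\bfQ\cong\bigoplus_{i\ge n}\bfQ^-[4i-2n-1]$ in degrees $*<3n-6$, with surjectivity in degree $3n-6$ inherited from Theorem \ref{thm:homotopy-framed-even-disc} (one checks by parity that $\pi_{3n-7}(\Omega^{2n}\OO(2n))_\bfQ=0$ so there is no obstruction).

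To identify the detecting classes I would invoke the naturality of transgression (Section \ref{sec:transgression}): the pullback of $\Omega^{2n}p_i^\tau$ along $\BDiff^\fr_\partial(D^{2n})_\ell\to \BDiff_\partial(D^{2n})$ equals $\Omega^{2n+1}p_i$ for $i>n$, and the pullback of $\Omega^{2n}(p_n-e^2)^\tau$ equals $\Omega^{2n+1}p_n$ after noting that $e^2\in\oH^*(\BTOP(2n);\bfQ)$ is decomposable and therefore pairs trivially with the image of the Hurewicz map in degree $2n-1$. Combined with the first part these classes detect the generators of the corresponding $\bfQ^-$-summands. The hard part is the identification of $\pi_*(\Omega^{2n}\OO(2n))_\bfQ\to\pi_*(\BDiff^\fr_\partial(D^{2n})_\ell)_\bfQ$ as an isomorphism onto the predicted summands, which rests on matching the Bott generators of $\pi_*(\OO(2n))_\bfQ$ with iterated transgressions of the Pontryagin classes and tracking their behaviour under the inclusion $\OO(2n)\hookrightarrow\TOP(2n)$.
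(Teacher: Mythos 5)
Your proposal is correct and takes essentially the same approach as the paper: both arguments rest on Theorem \ref{thm:homotopy-framed-even-disc} together with the rational homotopy type of $\BSO(2n)$, the only difference being that you run the long exact sequence of the Morlet fibration $\Omega^{2n}\OO(2n)\to\BDiff^\fr_\partial(D^{2n})_\ell\to\BDiff_\partial(D^{2n})$ directly on homotopy groups, whereas the paper works with the diagram \eqref{equ:rational-fibration-o-top-even} of fibration sequences over Eilenberg--MacLane targets and uses that its middle column is a rational equivalence. (One small slip: the vanishing of $\pi_{3n-7}(\Omega^{2n}\OO(2n))_\bfQ$ for $n\ge 3$ is a degree bound, $5n-7 > 4n-5$, not a parity consideration.)
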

\begin{proof}The claim about the first map is vacuous for $2n\le 4$ and follows otherwise directly by combining \cref{thm:homotopy-framed-even-disc} with \cref{lem:kosp=pi} and the discussion before the statement of that lemma. To establish the second claim we extend the diagram \eqref{equ:smoothing-theory} downwards by suitably looping the map of horizontal fibration sequences
\begin{equation}\label{equ:rational-fibration-o-top-even}
\begin{tikzcd}[column sep=0.2cm]
\TOP(2n)/\oO(2n)\rar\arrow["{((p_n-e^2)^\tau, p_i^\tau)}",d,swap]&\BSO(2n)\dar{{(e , p_i)}}\rar&\BSTOP(2n)\dar{{(e,  p_i)}}\\
 \prod_{i\ge n}K(\bfQ,4i-1)\rar&K(\bfQ,2n)\times \prod_{i=1}^{n-1}K(\bfQ,4i)\rar&K(\bfQ, 2n)\times \prod_{i=1}^\infty K(\bfQ, 4i),
\end{tikzcd}
\end{equation}
whose bottom row is the product of the fibration
\vspace{-0.1cm}
\begin{equation}\label{equ:fibre-sequence-low-pis}
\textstyle{\prod_{i>n}K(\bfQ,4i-1)\lra \prod_{i=1}^{n-1}K(\bfQ,4i)\xlra{\subset} \prod_{i\neq n}K(\bfQ,4i)}\end{equation}
with the fibration involving the square of the fundamental class $\iota\in \oH^{2n}(K(\bfQ,2n);\bfQ)$
\vspace{-0.1cm}
\[K(\bfQ,4i-1)\lra K(\bfQ,2n)\xlra{(\id , \iota^2)} K(\bfQ,2n)\times K(\bfQ,4n).\] Using the extended diagram of fibre sequences, the claim follows from \cref{thm:homotopy-framed-even-disc} and the well-known fact that the middle column in \eqref{equ:rational-fibration-o-top-even} is a rational equivalence.
\end{proof}

To compute $\pi_*(-)_\bfQ$ of $\BDiff_\partial(D^{2n+1})$ and $\BDiff^\fr_\partial(D^{2n+1})_\ell$, we consider the fibre sequence
\begin{equation}\label{equ:fibration-diff-vs-concordances}
\BDiff_\partial(D^{d+1})\lra \BDiff_{D^{d}}(D^{d+1})\lra \BDiff^{\id}_\partial(D^{d}),
\end{equation}
for $d=2n$, acted upon by the reflection involution. The answer involves the classes $\Omega^{2n+1}p_i^\tau\in\oH^{4i-2n-2}(\BDiff_\partial(D^{d+1});\bfQ)$ for $i>n$ and $\Omega^{2n+1}(p_n-E)^\tau\in\oH^{2n-2}(\BDiff_\partial(D^{d+1});\bfQ)$ from \cref{sec:smoothing theory}, and  the pullbacks of the $(2n+2)$th loopings of $E$ and $p_i$ on $\BTOP(2n+1)$ along the map $\BDiff^\fr_\partial(D^{2n+1})\ra \Omega^{2n+2}\BSTOP(2n+2)$. Together with \cref{cor:homotopy-diff-even-disc} this proves \cref{bigthm:diff-discs}.

\begin{cor}\label{cor:homotopy-diff-odd-disc}\label{cor:homotopy-framed-odd-disc}
The maps of graded $\bfQ[\langle\rho\rangle]$-modules
\vspace{-0.1cm}
\[\begin{tikzcd}[column sep=0.2cm]
\pi_*(\BDiff^\fr_\partial(D^{2n+1})_\ell)_\bfQ\arrow[d,"{\big(\Omega^{2n+2}E, \bigoplus_{i\ge n} \Omega^{2n+2}p_i , \bigoplus_{i\ge 1}\xi_{i}^\tau\big)}",swap]&
\pi_*(\BDiff_\partial(D^{2n+1}))_\bfQ\arrow[d,"{\big(\big(\Omega^{2n+1}(p_n-E)^\tau , \bigoplus_{i>n} \Omega^{2n+1}p_i^\tau\big), \bigoplus_{i\ge 1} \xi_{i}^\tau\big)}",swap]\\
\bfQ^-[2n-2]\oplus \bigoplus_{i\ge 1}\bfQ^-[4i-2n-2]\oplus s^{-1}K_{*>0}(\bfZ)_\bfQ&\bigoplus_{i\ge n}\bfQ^-[4i-2n-2]\oplus s^{-1}K_{*>0}(\bfZ)_\bfQ
\end{tikzcd}
\]
are isomorphisms in degrees $*< 3n-7$ and an epimorphism in degree $3n-7$.
\end{cor}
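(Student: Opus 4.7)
The plan is to reduce to the already-established computations of $\pi_*(\BDiff_{D^{2n}}(D^{2n+1}))_\bfQ$ (\cref{thm:homotopy-framed-even-disc}) and $\pi_*(\BDiff_\partial(D^{2n}))_\bfQ$ (\cref{cor:homotopy-diff-even-disc}), first by analysing the long exact sequence of the restriction fibre sequence \eqref{equ:fibration-diff-vs-concordances} to obtain the statement for $\BDiff_\partial(D^{2n+1})$, and then by analysing the LES of the forgetful fibration $\Omega^{2n+1}_0 \oO(2n+1) \to \BDiff^\fr_\partial(D^{2n+1})_\ell \to \BDiff_\partial(D^{2n+1})$ from \eqref{equ:smoothing-theory} to deduce the framed statement.

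For the unframed case, applying $\pi_*(-)_\bfQ$ to \eqref{equ:fibration-diff-vs-concordances} I would first note the following parity property: by \cref{thm:homotopy-framed-even-disc} the rational homotopy of $\BDiff_{D^{2n}}(D^{2n+1})$ is supported in degrees $\equiv 0 \pmod 4$, whereas by \cref{cor:homotopy-diff-even-disc} that of $\BDiff_\partial(D^{2n})$ is supported in the single residue class $\equiv 2n-1 \pmod 4$, which is odd. As these residue classes are disjoint modulo $2$, both the restriction map $\pi_k(\BDiff_{D^{2n}}(D^{2n+1}))_\bfQ \to \pi_k(\BDiff_\partial(D^{2n}))_\bfQ$ and its counterpart one degree above (which controls injectivity of $\partial$) vanish identically in the range $*<3n-7$. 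This produces short exact sequences
\[0 \to \pi_{k+1}(\BDiff_\partial(D^{2n}))_\bfQ \xrightarrow{\partial} \pi_k(\BDiff_\partial(D^{2n+1}))_\bfQ \to \pi_k(\BDiff_{D^{2n}}(D^{2n+1}))_\bfQ \to 0,\]
whose $K$-theory quotient is detected in the map of the statement by composition with $\xi_i^\tau$. To identify the Pontryagin summand, I would then use \eqref{equ:smoothing-theory-concordances} to recognise this fibre sequence as the $2n$-fold loops of $\hofib(s) \to \TOP(2n)/\oO(2n) \xrightarrow{s} \TOP(2n+1)/\oO(2n+1)$, so that $\partial$ corresponds to stabilisation $s_*$. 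By the naturality of the $\tau$-construction of \cref{sec:transgression}, we have $s^*(p_n-E)^\tau = (p_n-e^2)^\tau$ and $s^*p_i^\tau = p_i^\tau$ for $i>n$; dualising via $\langle c, s_*x\rangle = \langle s^*c, x\rangle$ shows that the restriction of the claimed map to $\im(\partial)$ agrees with the Pontryagin iso of \cref{cor:homotopy-diff-even-disc}, which combined with a dimension count completes the unframed argument.

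For the framed case, the fibre $\Omega^{2n+1}_0\oO(2n+1)$ has rational homotopy $\bigoplus_{1\le i\le n,\, 4i>2n+2}\bfQ^-[4i-2n-2]$, detected by the classical Pontryagin classes $p_i$ of $\BSO(2n+1)$, each of which is the restriction to the fibre of $\Omega^{2n+2}p_i$ on the total space. Hence the connecting map in the LES of this second fibration vanishes rationally, and the resulting SES combined with the unframed answer yields the claimed decomposition: $\Omega^{2n+2}p_i$ for $i<n$ detects the fibre contributions, $\Omega^{2n+2}p_i$ for $i\ge n$ together with $\Omega^{2n+2}E$ detects the lifts of the Pontryagin--Weiss classes from the base (with $E-p_n\not=0$ on $\BSTOP(2n+1)$ ensuring that the slot in degree $2n-2$ is two-dimensional), and the $\xi_i^\tau$ detect the $K$-theory classes. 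The hard part will be carrying the $\tau$-construction through smoothing theory to verify that $\partial$ sends Pontryagin-style classes to Pontryagin-style classes with the correct signs, in particular that the resulting identification is $\langle\rho\rangle$-equivariant in light of the anti-equivariance noted in \eqref{equ:smoothing-theory-on-homotopy-conc}.
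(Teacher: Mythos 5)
Your proposal is correct and follows the same route as the paper's (very terse) proof, which simply invokes the long exact sequence of \eqref{equ:fibration-diff-vs-concordances} for $d=2n$ together with \cref{thm:homotopy-framed-even-disc} and \cref{cor:homotopy-diff-even-disc} to get the unframed statement, and then the top row of \eqref{equ:smoothing-theory} for the framed one; your parity observation for splitting the LES, the identification of $\partial$ with the stabilisation $s_*$ via \eqref{equ:smoothing-theory-concordances}, and the naturality of the $\tau$-construction are exactly the implicit details. The one small omission is the case $n\le 3$, which the paper treats separately as vacuous since \cref{thm:homotopy-framed-even-disc} requires $n\ge 4$ so the parity argument does not directly apply there.
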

\begin{proof}
For $n\le 3$ the claim about the second map is vacuous since $\pi_1(\BDiff_\partial(D^{2n+1}))$ and $K_1(\bfZ)$ vanish rationally. For $n\ge4$ it follows  from \cref{thm:homotopy-framed-even-disc} and \cref{cor:homotopy-diff-even-disc} by considering the long exact sequence induced by the fibration \eqref{equ:fibration-diff-vs-concordances} for $d=2n$. The first part follows from the second, similarly to the proof \cref{cor:homotopy-diff-even-disc} by using the top row in \eqref{equ:smoothing-theory}.
\end{proof}

Finally, denoting the pullback of the cohomology classes appearing in the previous corollary along $\BDiff_{D^{2n+1}}(D^{2n+2})\ra \BDiff_\partial(D^{2n+1})$ by the same name, we conclude the following by combining \cref{cor:homotopy-diff-even-disc} and \cref{cor:homotopy-diff-odd-disc} with the fibration \eqref{equ:fibration-diff-vs-concordances} for $d=2n+1$.

\begin{cor}\label{cor:homotopy-concordances-odd-disc}
The map of graded $\bfQ[\langle\rho\rangle]$-modules
\[
\textstyle{\big(\Omega^{2n+1}(p_n-E)^\tau,\bigoplus_{i\ge 1} \xi_{i}^\tau\big)\colon \pi_*(\BDiff_{D^{2n+1}}(D^{2n+2}))_\bfQ\lra \bfQ^-[2n-2]\oplus s^{-1}K_{*>0}(\bfZ)_\bfQ }
\]
is an isomorphism in degrees $*< 3n-7$ and an epimorphism in degree $3n-7$.
\end{cor}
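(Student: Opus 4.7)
The plan is to derive the corollary from Corollaries \ref{cor:homotopy-diff-even-disc} and \ref{cor:homotopy-diff-odd-disc} via the fibre sequence \eqref{equ:fibration-diff-vs-concordances} specialised to $d=2n+1$, namely
\[
\BDiff_\partial(D^{2n+2}) \lra \BDiff_{D^{2n+1}}(D^{2n+2}) \lra \BDiff^{\id}_\partial(D^{2n+1}).
\]
Each term carries the reflection involution compatibly, so the associated long exact sequence on rational homotopy groups is a sequence of $\bfQ[\langle\rho\rangle]$-modules. First I would record, applying Corollary \ref{cor:homotopy-diff-even-disc} with $n$ replaced by $n+1$, that in degrees $*<3n-3$ the rational homotopy groups of the fibre are $\bigoplus_{i\geq n+1}\bfQ^{-}[4i-2n-3]$, detected by $\Omega^{2n+2}(p_{n+1}-e^2)^\tau$ together with $\Omega^{2n+2}p_i^\tau$ for $i>n+1$. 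Similarly, Corollary \ref{cor:homotopy-diff-odd-disc} shows that in degrees $*<3n-7$ the rational homotopy groups of the base are $\bigoplus_{i\geq n}\bfQ^{-}[4i-2n-2]\oplus s^{-1}K_{*>0}(\bfZ)_\bfQ$, detected by $\Omega^{2n+1}(p_n-E)^\tau$, the classes $\Omega^{2n+1}p_i^\tau$ for $i>n$, and the $\xi_i^\tau$.

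The heart of the argument is to identify the connecting homomorphism
\[
\partial\colon \pi_{*+1}(\BDiff_\partial(D^{2n+1}))_\bfQ \lra \pi_{*}(\BDiff_\partial(D^{2n+2}))_\bfQ
\]
on these generators. For this I would exploit the smoothing-theoretic identification \eqref{equ:smoothing-theory-concordances}, which exhibits the fibration above as a $(2n{+}1)$-fold loop of the stabilisation fibration $\Omega(\TOP(2n+2)/\OO(2n+2))\to\hofib\to\TOP(2n+1)/\OO(2n+1)$. Because the transgressed Pontryagin class $p_i^\tau$ of \cref{sec:transgressed-pontryagin} is by construction pulled back along the stabilisation map $\TOP(d)/\OO(d)\to\TOP(d+1)/\OO(d+1)$ for all $i>\lfloor d/2\rfloor$, each generator $\Omega^{2n+1}p_i^\tau$ with $i>n$ on the base pairs with $\Omega^{2n+2}p_i^\tau$ on the fibre, and $\partial$ carries the former isomorphically onto the latter. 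In contrast, the ``exotic'' class $\Omega^{2n+1}(p_n-E)^\tau$ cannot lift to a stable class along the stabilisation because $E$ is an odd-dimensional phenomenon, and the $\xi_i^\tau$ originate from the framed Weiss fibre sequences for $V_g$, which have no direct even-dimensional counterpart in the relevant degrees; both types of generators are therefore annihilated by $\partial$ for degree reasons, as there is no target summand in the appropriate degree of the fibre.

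Granted this computation of $\partial$, the long exact sequence collapses in each degree $*<3n-7$ to a short exact sequence
\[
0\lra \pi_{*}(\BDiff_{D^{2n+1}}(D^{2n+2}))_\bfQ \lra \ker(\partial_{*}) \lra 0,
\]
identifying the left-hand side with the surviving summand $\bfQ^-[2n-2]\oplus s^{-1}K_{*>0}(\bfZ)_\bfQ$. Pulling back $\Omega^{2n+1}(p_n-E)^\tau$ and the $\xi_i^\tau$ along $\BDiff_{D^{2n+1}}(D^{2n+2})\to\BDiff_\partial(D^{2n+1})$ yields the detecting map in the statement, and the epimorphism in degree $*=3n-7$ is inherited from the corresponding surjectivity statements in \cref{cor:homotopy-diff-even-disc} and \cref{cor:homotopy-diff-odd-disc}. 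The main obstacle will be the case-by-case verification that $\partial$ acts on the Pontryagin summand as described; once the stability of $p_i^\tau$ under $\TOP(2n+1)/\OO(2n+1)\to\TOP(2n+2)/\OO(2n+2)$ is invoked, everything else is a mechanical long-exact-sequence computation.
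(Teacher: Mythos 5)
Your overall strategy is the same as the paper's: the paper's entire proof is the single sentence preceding the corollary, which says to combine Corollaries \ref{cor:homotopy-diff-even-disc} and \ref{cor:homotopy-diff-odd-disc} with the long exact sequence of the fibration \eqref{equ:fibration-diff-vs-concordances} for $d=2n+1$. Your proposal makes this explicit, and your account of the Pontryagin summand (using stability of $p_i^\tau$ under $\TOP(d)/\OO(d)\to\TOP(d+1)/\OO(d+1)$, together with the observation that $e$ restricts trivially to $\BSTOP(2n+1)$ so that $(p_{n+1}-e^2)^\tau$ pulls back to $p_{n+1}^\tau$) is the right way to see that $\partial$ is surjective onto the fibre in the relevant range.

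However, there is a genuine gap in your justification that $\partial$ annihilates the $K$-theory classes $\xi_i^\tau$. You write that both $\Omega^{2n+1}(p_n-E)^\tau$ and the $\xi_i^\tau$ are ``annihilated by $\partial$ for degree reasons, as there is no target summand in the appropriate degree of the fibre.'' This is correct for $\Omega^{2n+1}(p_n-E)^\tau$: it lives in degree $2n-2$, and the fibre $\BDiff_\partial(D^{2n+2})$ is rationally trivial below degree $2n+1$ (lowest class $\Omega^{2n+2}(p_{n+1}-e^2)^\tau$). But for the $\xi_{i}^\tau$ it is false in general. A generator of $s^{-1}K_{4j+1}(\bfZ)_\bfQ$ sits in degree $4j$ and would land under $\partial$ in degree $4j-1$; the fibre is nontrivial in degrees $4i-2n-3$ for $i\ge n+1$, and $4j-1 = 4i-2n-3$ has integer solutions with $i\ge n+1$ precisely when $n$ is odd (e.g.\ for $n=11$, $j=6$ pairs with $i=12$ within the range $*<3n-7$). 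So degree considerations alone do not rule out a nonzero $\partial(\xi_i^\tau)$ for odd $n$.

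The missing ingredient is the reflection involution: the fibre sequence \eqref{equ:fibration-diff-vs-concordances} is $\rho$-equivariant (see \cref{sec:smoothing theory}), and you are asked to prove the statement as a map of $\bfQ[\langle\rho\rangle]$-modules anyway. By \cref{cor:homotopy-diff-even-disc} (applied with $n+1$ in place of $n$), every homotopy group of the fibre lies in the $(-1)$-eigenspace for $\rho$, whereas the $\xi_i^\tau$ summand of the base carries the trivial $\rho$-action, since $\rho$ acts trivially on $\GL_\infty(\bfZ)$ and hence on $K_*(\bfZ)_\bfQ$ (\cref{sec:involution-on-mcg}, and the discussion preceding \cref{thm:homotopy-framed-even-disc}). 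Thus $\partial$ takes $\rho$-invariant classes to $\rho$-anti-invariant ones and must vanish on the $K$-theory summand. With this substituted for your degree argument, the remainder of the proposal (the bookkeeping in the long exact sequence and the identification of the detecting classes on the total space) goes through as stated.
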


There is a different description of the class $\Omega^{2n+1}(p_n-E)^\tau\in\oH^{2n-2}(\BDiff_{D^{2n+2}}(D^{2n+1});\bfQ)$ which is sometimes more convenient. Recall that smoothing theory provides a fibre sequence
\[
\BDiff_{D^{2n+1}}(D^{2n+2})\lra \Omega^{2n+1}\OO(2n+2)/\OO(2n+1)\lra \Omega^{2n+1}\TOP(2n+2)/\TOP(2n+1).
\]
Pulling back $E\in \oH^{4n}(\BTOP(2n+1);\bfQ)$ to $\TOP(2n+2)/\TOP(2n+1)$ gives a class which vanishes in $\OO(2n+2)/\OO(2n+1)$ (as $E=p_n$ in $\BO(2n+1)$), so we obtain a class $E^{\tau}$ of degree $4n-1$ in the cohomology of the fibre of $\OO(2n+2)/\OO(2n+1)\ra \TOP(2n+2)/\TOP(2n+1)$. This gives a class $\Omega^{2n+1}E^\tau\in\oH^{2n-2}(\BDiff_{D^{2n+1}}(D^{2n+2});\bfQ)$, which we now show agrees with the negative of the class $\Omega^{2n+1}(p_n-E)^\tau$ featuring in \cref{cor:homotopy-concordances-odd-disc}:
\begin{lem}We have $-\Omega^{2n+1}E^\tau=\Omega^{2n+1}(p_n-E)^\tau$ in $\oH^{2n-2}(\BDiff_{D^{2n+2}}(D^{2n+1});\bfQ)$.
\end{lem}
\begin{proof}
It suffices to show the statement before looping, i.e.\ that we have $(-E)^\tau=(p_n-E)^\tau$ in the cohomology of $F\coloneq\mathrm{fib}(\OO(2n+2)/\OO(2n+1)\ra \TOP(2n+2)/\TOP(2n+1))$. For this we consider the commutative diagram of horizontal and vertical fibre sequences
\[
\begin{tikzcd}[column sep=0.2cm, row sep=0.2cm]
F\rar\dar&\oO(2n+2)/\oO(2n+1)\rar\dar&\TOP(2n+2)/\TOP(2n+1)\dar\\
\TOP(2n+1)/\oO(2n+1)\rar\dar&\BO(2n+1)\rar\dar&\BTOP(2n+1)\dar\\
\TOP(2n+2)/\oO(2n+2)\rar&\BO(2n+2)\rar&\BTOP(2n+2)
\end{tikzcd}
\]
By definition, the class $(p_n-E)^\tau$ in the cohomology of $F$ is the pullback of the same-named class in $\TOP(2n+1)/\oO(2n+1)$ resulting from the $(-)^\tau$-construction for the middle horizontal fibre sequence applied to the to the class $(p_n-E)\in\oH^{4n}(\BTOP(2n+1);\bfQ)$ which vanishes in $\BO(2n+1)$. By the naturality of $(-)^\tau$ (see \cref{sec:transgression}), the class $(p_n-E)^\tau$ in $F$ thus agrees with applying $(-)^\tau$ to the top horizontal fibre sequence applied to the pullback of $(p_n-E)$ to $\TOP(2n+2)/\TOP(2n+1)$. But the latter agrees with $-E$, since $p_n$ in $\BTOP(2n+1)$ is pulled back from $\BTOP(2n+2)$, so we have $(p_n-E)^\tau=(-E)^\tau$ as claimed. 
\end{proof}

\subsection{Concordances of discs}\label{sec:concordances}
Smoothing corners induces an identification
\begin{equation}\label{equ:smoothing-corners}\Diff_{D^d}(D^{d+1})\cong \oC(D^d)\end{equation}
of $\Diff_{D^d}(D^{d+1})$ with the group of \emph{concordances} $\oC(M)$ for $M=D^d$, where 
\[\oC(M)\coloneq \Diff_{M\times \{0\}\cup \partial M\times [0,1]}(M\times [0,1])\]
for a compact $d$-manifold $M$. Restricting a concordance to $M\times \{1\}$ induces a fibre sequence
\begin{equation}\label{equ:concordance-fibration}
\Diff_\partial(M\times [0,1])\lra \oC(M)\lra \Diff_\partial(M)\end{equation}
which for $M=D^d$ agrees with the looping of \eqref{equ:fibration-diff-vs-concordances} under the appropriate identifications. 

\subsubsection{Two involutions}\label{sec:two-involutions-concordances}For any manifold $M$, there is a \emph{concordance involution} on $\oC(M)$ given by sending a concordance $\phi$ to $(\id_M\times r)\circ (\phi|_{M\times\{1\}}^{-1}\times\id_{[0,1]})\circ \phi\circ (\id_M\times r)$
where $r(t)=(1-t)$, which makes \eqref{equ:concordance-fibration} an equivariant fibre sequence when equipping the base space with the involution given by inversion in the group $\Diff_\partial(M)$ and the fibre with the involution induced by conjugation with $(\id_M\times r)$. If $M=M'\times [0,1]$, then there is another involution on $\oC(M'\times [0,1])$ given by conjugation with $(\id_{M'} \times r\times \id_{[0,1]})$, to which we refer to as the \emph{reflection involution}. On the subspace $\Diff_\partial(M'\times[0,1]\times [0,1])\subset \oC(M'\times [0,1])$ the two involutions agree up to homotopy, since $\pi_0(\Diff([0,1]^2))=\bfZ/2$. We call the second involution the reflection involution, because it agrees for $M=D^d\cong D^{d-1}\times [0,1]$ via the identification \eqref{equ:smoothing-corners} with the reflection involution on $\Diff_{D^d}(D^{d+1})$ we considered so far, given by conjugation with a diffeomorphism $\rho\colon D^{d+1}\ra D^{d+1}$ given by reflection in one coordinate that also induces a reflection on the chosen half-disc $D^d\subset \partial D^{d+1}$ (note that there are several choices for $\rho$, but they are all homotopic as $\pi_0\Diff(D^{d+1},D^d)\cong \bfZ/2$ for $d\ge5$).

\medskip

\cref{thm:homotopy-framed-even-disc} and \cref{cor:homotopy-concordances-odd-disc} determine $\pi_*(\oC(D^d))_\bfQ$ in a range with respect to the reflection involution. This can be upgraded to incorporate the concordance involution:

\begin{thm}\label{thm:concordances-both-involutions}
The maps resulting from \eqref{equ:smoothing-corners}, \cref{thm:homotopy-framed-even-disc}, and \cref{cor:homotopy-concordances-odd-disc} induce
\vspace{-0.1cm}
\[
\def\arraystretch{1.5}
\begin{array}{r@{\hskip 0cm} c@{\hskip 0.1cm} l@{\hskip 0.2cm} l@{\hskip 0.1cm} }
\pi_*(\oC(D^{2n}))_\bfQ&\cong &\big(s^{-2}K_*(\bfZ)_\bfQ\otimes\bfQ^{+,+}[0]\big)&\text{ for }*<3n-7\text{ and }n\ge4\\
\pi_*(\oC(D^{2n+1}))_\bfQ&\cong & \big(s^{-2}K_*(\bfZ)_\bfQ\otimes\bfQ^{+,-}[0]\big)\oplus \bfQ^{-,-}[2n-3] &\text{ for }*<3n-8\text{ and }n\ge3
\end{array}
\]
where the $(\pm,\pm)$-superscripts indicate the (reflection, concordance)-involution. 
\end{thm}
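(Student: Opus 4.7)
The plan is to exploit the fibre sequence obtained by looping \eqref{equ:fibration-diff-vs-concordances},
\[\Diff_\partial(D^{d+1}) \xlra{j} \oC(D^d) \xlra{r} \Diff_\partial(D^d),\]
which is equivariant for both the reflection and the concordance involutions. First I would identify, via smoothing corners \eqref{equ:smoothing-corners}, the underlying graded $\bfQ[\langle\rho\rangle]$-module structure of $\pi_*(\oC(D^d))_\bfQ$ with respect to the reflection involution using \cref{thm:homotopy-framed-even-disc} for $d=2n$ and \cref{cor:homotopy-concordances-odd-disc} for $d=2n+1$, together with the compatibility of reflection involutions under smoothing corners noted in \cref{sec:two-involutions-concordances}. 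This yields abstractly the summands $s^{-2}K_*(\bfZ)_\bfQ$ and $\bfQ^{-}[2n-3]$ in the indicated ranges, with reflection acting trivially on $K$-theory (since $\rho$ acts trivially on $\GL_\infty(\bfZ)$, cf.\,\cref{sec:stable-cohomology-borel}) and by $-1$ on the Pontryagin--Weiss summand, as already recorded in Corollaries \ref{cor:homotopy-diff-even-disc} and \ref{cor:homotopy-diff-odd-disc}.

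To pin down the concordance involution $\tau$, I would use two features of its definition from \cref{sec:two-involutions-concordances}. On the subspace $j(\Diff_\partial(D^{d+1}))$, which equals $\ker(r)$ up to homotopy, the two involutions $\tau$ and $\rho$ agree, by the observation in \cref{sec:two-involutions-concordances} that $\pi_0\Diff([0,1]^2)=\bfZ/2$. On the quotient, direct inspection of the formula defining $\tau$ (evaluating at $M\times\{1\}$) gives $r\circ \tau = \iota \circ r$, where $\iota$ is inversion in the topological group $\Diff_\partial(D^d)$; since inversion acts as $-1$ on all $\pi_{*\ge 1}(-)_\bfQ$, the involution $\tau$ acts as $-1$ on $\mathrm{image}(r_*)\subseteq \pi_*(\Diff_\partial(D^d))_\bfQ$. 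Each summand of $\pi_*(\oC(D^d))_\bfQ$ therefore has concordance decoration determined by whether it lies in $\ker(r_*)$ (so that $\tau$ equals $\rho$) or maps isomorphically to a summand of $\mathrm{image}(r_*)$ (so that $\tau$ equals $-1$).

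For $d=2n$, \cref{cor:homotopy-diff-even-disc} shows that $\pi_*(\Diff_\partial(D^{2n}))_\bfQ$ has no $K$-theory summand, so the $K$-theory in $\pi_*(\oC(D^{2n}))_\bfQ$ lies in $\ker(r_*)$ and its concordance involution therefore equals its (trivial) reflection involution, yielding $\bfQ^{+,+}$. For $d=2n+1$, a shifted application of \cref{cor:homotopy-diff-even-disc} shows that $\pi_*(\Diff_\partial(D^{2n+2}))_\bfQ$ vanishes in degrees $*\le 2n-1$; in the range $*<3n-8$ this forces $r_*$ to be injective on both the $\bfQ^{-}[2n-3]$ and the $K$-theory summands (which appear in $\pi_*(\Diff_\partial(D^{2n+1}))_\bfQ$ by \cref{cor:homotopy-diff-odd-disc}, yielding the required dimension match in the long exact sequence), so $\tau$ acts on them as $-1$. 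Combined with the reflection decorations, this produces the direct sum $(s^{-2}K_*(\bfZ)_\bfQ\otimes\bfQ^{+,-})\oplus \bfQ^{-,-}[2n-3]$.

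The chief technical obstacle is verifying that the summands of $\pi_*(\oC(D^{2n+1}))_\bfQ$ identified via \cref{cor:homotopy-concordances-odd-disc} restrict under $r_*$ to the corresponding summands of $\pi_*(\Diff_\partial(D^{2n+1}))_\bfQ$ detected in \cref{cor:homotopy-diff-odd-disc}; this follows by construction, since the $\xi_i^\tau$ and $\Omega^{2n+1}(p_n-E)^\tau$ classes on $\BDiff_{D^{2n+1}}(D^{2n+2})$ are defined precisely as pullbacks of their namesakes along the right-hand map of \eqref{equ:fibration-diff-vs-concordances}. The slight shrinking of the range to $*<3n-8$ in the odd case is designed to avoid the contribution of Pontryagin classes from $\pi_*(\Diff_\partial(D^{2n+2}))_\bfQ$ (which first appear in degree $2n$), for which the above fibre-or-quotient dichotomy would require a more delicate analysis of the boundary map.
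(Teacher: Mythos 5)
Your overall strategy matches the paper's: use the looping of~\eqref{equ:fibration-diff-vs-concordances} together with equivariance of both involutions, with $\tau=\rho$ on the fibre $\Diff_\partial(D^{d+1})$ and $\tau=-\mathrm{id}$ on the image in the base $\Diff_\partial(D^d)$, and then locate each summand on one side of this dichotomy. For $d=2n$ your argument (that the $K$-theory summand must die under $r_*$ because $\pi_*(\Diff_\partial(D^{2n}))_\bfQ$ is concentrated in degrees $4i-2n-2$ while the $K$-theory is in degrees $4j-1$, of opposite parity) is a legitimate variant of what the paper does, which is to observe instead that $j_*$ is surjective; the paper's phrasing is slightly cleaner but yours works once the degree/equivariance comparison is spelled out. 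For $d=2n+1$ your ``technical obstacle'' paragraph is essentially the paper's argument: the detection classes $\xi_i^\tau$ and $\Omega^{2n+1}(p_n-E)^\tau$ on $B\oC(D^{2n+1})$ are pulled back along $r$, so $r_*$ is automatically injective in the entire isomorphism range $*<3n-8$, after which $\tau=-\mathrm{id}$ follows from inversion.

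The genuine problem is the interplay between your two arguments in the odd case and, above all, your final sentence. Your first argument, injectivity of $r_*$ from the vanishing of $\pi_*(\Diff_\partial(D^{2n+2}))_\bfQ$ in degrees $*\le 2n-1$, does \emph{not} cover the whole range $*<3n-8$ once $n>8$, since then $2n<3n-8$. You acknowledge this, but then claim that ``the slight shrinking of the range to $*<3n-8$'' is ``designed to avoid'' those Pontryagin degrees. That is not the reason for the bound: the bound $*<3n-8$ is just the looped isomorphism range coming from Corollaries~\ref{cor:homotopy-diff-odd-disc} and~\ref{cor:homotopy-concordances-odd-disc}, and it does not exclude degrees $\ge 2n$ for large $n$. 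Fortunately no delicate analysis of the connecting map is needed at all, because the pullback argument you give as the ``technical obstacle'' already establishes injectivity of $r_*$ throughout the range without any reference to $\pi_*(\Diff_\partial(D^{2n+2}))_\bfQ$. You should therefore drop the vanishing-based preliminary argument and the final sentence entirely, and make the pullback argument the main point: once it is in place, the dichotomy immediately determines the concordance decorations in both parities.
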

\begin{proof}By \cref{thm:homotopy-framed-even-disc} and \cref{cor:homotopy-diff-odd-disc}, the map $\pi_{*}(\Diff_\partial(D^{2n+1}))_\bfQ\ra \pi_*(\oC(D^{2n}))_\bfQ$ is an epimorphism for $*<3n-7$. On $\Diff_\partial(D^{2n+1})$ the two involutions agree, so the first claim follows from \cref{thm:homotopy-framed-even-disc}. From Corollaries  \ref{cor:homotopy-diff-odd-disc} and \ref{cor:homotopy-concordances-odd-disc}, we see that the map $ \pi_*(\oC(D^{2n+1}))_\bfQ\ra \pi_*(\Diff_\partial(D^{2n+1}))_\bfQ$ is injective for $*<3n-8$. As inversion in a topological group induces multiplication by $-1$ on homotopy groups, the concordance involution acts on $\pi_*(\oC(D^{2n+1}))_\bfQ$ in this range by $-1$. \cref{cor:homotopy-concordances-odd-disc} thus implies the second claim.
\end{proof}

\subsubsection{Stabilisation of concordances}\label{sec:concordance-stability}Concordance spaces can be stabilised: taking products with $[0,1]$ and ``bending'' the result suitably to make it satisfy the boundary condition induces a map
\begin{equation}\label{equ:conc-stabilisation}
\oC(M)\lra \oC(M\times [0,1]),\end{equation}
often called the \emph{Hatcher suspension map} (see e.g.\,\cite[Section 6.2]{IgusaBook}, where this map is called the \emph{upper suspension map}). With respect to the equivalence \eqref{equ:smoothing-corners}, this map agrees for $M=D^d$ with the map induced by $(-)\times \id_{[-1,1]}$ and identifying $D^{d+1}\times[-1,1]\cong D^{d+2}$. This is equivariant with respect to the reflection involution and \emph{anti}-equivariant with respect to the concordance involution (see e.g.\,\cite[Lemma 6.5.1.2]{IgusaBook}). Choosing the reflection involution in the target of \eqref{equ:conc-stabilisation} to be conjugation with the reflection in the stabilised coordinate, we see that the image of this map on homotopy groups lands in the $(+1)$-eigenspace of $\oC(D^{d+1})$ with respect to the reflection involution, so \eqref{equ:conc-stabilisation} is trivial on $(-1)$-eigenspaces with respect to the reflection involution. Its effect on $(+1)$-eigenspaces is the content of the following proposition.

\begin{prop}\label{thm:concordance-stability-eigenspaces}
Let $k\ge1$ and $d>6$. The stabilisation map
\[\pi_*(\oC(D^{d}))_\bfQ\lra \pi_*(\oC(D^{d+k}))_\bfQ\]
is trivial on $(-1)$-eigenspaces with respect to the reflection involution. On $(+1)$-eigenspaces it is
\begin{enumerate}
\item for $d=2n$ an isomorphism for $*<3n-7$ and an epimorphism for $*=3n-7$, and
\item for $d=2n+1$ an isomorphism for $*<3n-8$ and an epimorphism for $*=3n-8$.
\end{enumerate}
\end{prop}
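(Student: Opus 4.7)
The assertion about $(-1)$-eigenspaces is a direct reformulation of the observation already made in the paragraph preceding the statement: the Hatcher suspension map is equivariant for the reflection involution, and choosing the target involution to be conjugation by a reflection in one of the newly stabilised coordinates places its image pointwise in the $(+1)$-eigenspace, so the $(-1)$-eigenspace is annihilated.

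For the $(+1)$-eigenspace statement, the idea is to use the characteristic-class identification from Theorem \ref{thm:concordances-both-involutions}, combined with naturality under Hatcher suspension. Specifically, via the identification $\oC(D^d) \cong \Diff_{D^d}(D^{d+1})$ of \eqref{equ:smoothing-corners}, Theorem \ref{thm:homotopy-framed-even-disc} and Corollary \ref{cor:homotopy-concordances-odd-disc} show that in the stated ranges the $(+1)$-eigenspace of $\pi_*(\oC(D^d))_\bfQ$ for the reflection involution is detected by the characteristic classes $\xi_i^\tau$ of \eqref{equ:xis} and identifies canonically with $s^{-2}K_*(\bfZ)_\bfQ$; the extra $\bfQ^{-,-}[2n-3]$-summand appearing in odd source dimension lies in the $(-1)$-eigenspace of the reflection involution and is hence irrelevant here. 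It thus suffices to establish that the $\xi_i^\tau$ are natural under Hatcher suspension, i.e.\ that the pullback of $\xi_i^\tau$ on $\oC(D^{d+k})$ along the stabilisation map agrees with $\xi_i^\tau$ on $\oC(D^d)$. Granted this, the restriction of the stabilisation map to the $(+1)$-eigenspaces is, in the specified range, the identity on the $K$-theory summand, giving the isomorphism for $*<3n-7$ (resp.\ $*<3n-8$), and the epimorphism at the top degree follows from the corresponding epimorphism statements in Theorem \ref{thm:homotopy-framed-even-disc} and Corollary \ref{cor:homotopy-concordances-odd-disc}.

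The main obstacle is verifying the naturality of $\xi_i^\tau$ under the Hatcher suspension $\oC(D^d)\to \oC(D^{d+1})$, as the framed Weiss fibre sequences \eqref{equ:framed-weiss-fs-ready-for-trangression} used to define these classes involve the auxiliary manifolds $V_g^{(d)}$ and $W_{g,1}^{(d)}$, which in adjacent dimensions are genuinely different. The cleanest route is probably to factor each $\oC(D^d)\to \oC(D^{d+k})$ through the stable concordance space $\oC^{\mathrm{st}}(*) = \hocolim_k \oC(D^k)$, which by the Waldhausen--Farrell--Hsiang description rationally identifies with $\Omega(K(\bfZ)/\bfS)_\bfQ$, and to recognise that both $\xi_i^\tau$ on $\oC(D^d)$ and on $\oC(D^{d+k})$ are by construction pulled back from the canonical classes on $\Omega(K(\bfZ)/\bfS)_\bfQ$; naturality then becomes tautological. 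Making this last point precise requires unravelling the construction of $\xi_i^\tau$ through the Weiss fibre sequences and matching it with the direct route through $\oC^{\mathrm{st}}(*)$, which ultimately uses that the map $\BEmb^{\fr,\cong}_{\nicefrac{1}{2}\partial}(W_{g,1})_\ell\to \BOSp_\infty(\bfZ)$ (and its $V_g$-analogue) factors compatibly through a stabilisation compatible with Hatcher suspension.
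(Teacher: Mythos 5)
Your handling of the $(-1)$-eigenspace claim and the reduction to the compatibility of the classes $\xi_i^\tau$ with stabilisation is the same as the paper's, as is the observation that the extra $\bfQ^{-,-}[2n-3]$-summand in odd source dimension lives in the $(-1)$-eigenspace and drops out. However, your proposed route to the compatibility of the $\xi_i^\tau$ has a genuine gap.

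You propose to show that the $\xi_i^\tau$ on $\oC(D^d)$ and on $\oC(D^{d+k})$ are both pulled back from canonical classes on the stable concordance space $\oC^{\mathrm{st}}(*) \simeq_\bfQ \Omega(K(\bfZ)/\bfS)_\bfQ$, asserting that this is "by construction" and that naturality then becomes "tautological". But this is exactly what needs to be proved, and it does not follow from the construction: the classes $\xi_i^\tau$ are defined by transgression in the framed Weiss fibre sequence for the specific $(2n+1)$-manifold $V_g$, which has no direct map to the Weiss fibre sequence for the $(2n+1+k)$-manifold $V_g^{(2n+1+k)}$. Establishing that these transgressed classes agree with classes pulled back from $\oC^{\mathrm{st}}(*)$ is, once unwound, essentially equivalent in difficulty to the proposition itself; your plan to prove it --- that the maps $\BEmb^{\fr,\cong}_{\nicefrac{1}{2}\partial}(W_{g,1})_\ell\to \BOSp_\infty(\bfZ)$ (and their $V_g$-analogues) "factor compatibly through a stabilisation compatible with Hatcher suspension" --- is announced but not carried out, and is not obviously achievable given that the manifolds in adjacent dimensions differ.

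The paper's proof sidesteps the mismatch of Weiss fibre sequences across dimensions by a change of sequence: rather than comparing with the Weiss fibre sequence of $V_g^{(2n+1+k)}$, it compares the defining sequence $\BDiff_{D^{2n}}(D^{2n+1})\to \BDiff^{\fr}_{D^{2n}}(V_g)_\ell\to \BGL_\infty(\bfZ)^+$ with the sequence $\BDiff_{D^{2n+k}}(D^{2n+1+k})\to \BDiff_{D^{2n+k}}(V_g\times D^k)\to \BGL_\infty(\bfZ)^+$ obtained by forgetting framings and taking products with $D^k$, which does admit a natural map from the former. The remaining task is then only to show that $\xi_i$ pulls back to zero on $\BDiff_{D^{2n+k}}(V_g\times D^k)$ so that the transgression exists and, by the naturality in \cref{sec:transgression}, restricts to $\xi_i^\tau$. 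The crucial ingredient doing this, which your proposal does not contain or replace, is the Dwyer--Weiss--Williams improved Riemann--Roch theorem: it shows the map $\BDiff_{D^{2n+k}}(V_g\times D^k)\to \BGL_\infty(\bfZ)^+\simeq \Omega^\infty_0 K(\bfZ)$ factors (up to a sign) through the Becker--Gottlieb transfer to $\Omega^\infty_0\bfS$, and is therefore rationally trivial. Note also that the paper only shows the $\xi_i^\tau$ pull back from \emph{some} classes on $\BDiff_{D^{2n+k}}(D^{2n+1+k})$, which is a weaker and cleaner target than your proposal of matching both sides with stable classes.
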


\begin{proof}
We use the identification \eqref{equ:smoothing-corners} throughout the proof. The claim about the $(-1)$-eigenspaces was already observed in the discussion preceding the statement. To see the desired effect on $(+1)$-eigenspaces we use that in the considered range of degrees the $(+1)$-eigenspace of $\pi_*(\oC(D^{d+k}))_\bfQ$ agrees with $s^{-2}K_{*>0}(\bfZ)_\bfQ$ by \cref{thm:homotopy-framed-even-disc} and \cref{cor:homotopy-concordances-odd-disc}, including in the degree of the surjectivity claim (as we assumed $k\ge1$). Moreover, using that $\pi_*(\oC(D^{2n+1}))\cong s^{-2}K_{*>0}(\bfZ)_\bfQ$ for $*<3n-8$ by \cref{cor:homotopy-concordances-odd-disc}, we see that it is enough to show the claim for $d=2n$ even. In this case, using \cref{thm:homotopy-framed-even-disc}, it suffices to show that the classes  $\xi_i^\tau\in\oH^{i-1}(\BDiff_{D^{2n}}(D^{2n+1});K_i(\bfZ)_\bfQ)$ from \cref{section:homotopy-even-discs} pull back from $\BDiff_{D^{2n+k}}(D^{2n+1+k})$ along the iterated stabilisation map. Let us recall how the $\xi_i^\tau$ arose, by considering the composition
\[\BDiff_{D^{2n}}(D^{2n+1})\lra \BDiff^{\fr}_{D^{2n}}(V_g)_\ell\lra \BGL_\infty(\bfZ)^+\]
for large $g$ whose second map is the homology action, followed by the plus construction. The composition is clearly constant and the total space is rationally acyclic in a range by \cref{thm:stable-homology}, so the classes $\xi_i\in \oH^{i}(\BGL_\infty(\bfZ);K_i(\bfZ)_\bfQ)$ give rise to  $\xi_i^\tau\in\oH^{i-1}(\BDiff_{D^{2n}}(D^{2n+1});K_i(\bfZ)_\bfQ)$ via the procedure of \cref{sec:transgression}. Forgetting framings, taking products $(-)\times D^k$, and smoothing corners, this sequence maps compatibly to the analogous maps 
\[\BDiff_{D^{2n+k}}(D^{2n+1+k})\lra \BDiff_{D^{2n+k}}(V_g\times D^k)\lra \BGL_\infty(\bfZ)^+,\]
so by the naturality property explained in \cref{sec:transgression}, it suffices to show that the pullback of the classes $\xi_i\in \oH^{i}(\BGL_\infty(\bfZ);K_i(\bfZ)_\bfQ)$ along the map 
\begin{equation}\label{equ:map-to-K-theory}\BDiff_{D^{2n+k}}(V_g\times D^k)\lra \BGL_\infty(\bfZ)^+\simeq \Omega^\infty_0K(\bfZ)\end{equation}
vanishes. But by Dwyer--Weiss--Williams' improved Riemann--Roch theorem \cite[p.\ 3]{DWW}, this map factors (up to homotopy and multiplication by $(-1)^n$) through the composition 
\[\BDiff_{D^{2n+k}}(V_g\times D^k)\xra{\mathrm{trf}} \Omega_{\chi(V_g\times D^k)}^\infty\bfS\xra{\simeq }\Omega_{0}^\infty\bfS\]
of the Becker--Gottlieb transfer followed by translation by the negative of the Euler characteristic. As $\Omega_{0}^\infty\bfS$ is rationally trivial, the map \eqref{equ:map-to-K-theory} is trivial on cohomology, so the claim follows.
\end{proof}

Combining \cref{thm:concordance-stability-eigenspaces} with the calculations in Corollaries \ref{thm:homotopy-framed-even-disc}, \ref{cor:homotopy-diff-even-disc}, and \ref{cor:homotopy-concordances-odd-disc} with the long exact sequence for the rational homotopy groups of a pair, one deduces the following corollary. It involves the composition $\pi_{2n-2}(\oC(D^\infty),\oC(D^{2n+1}))\ra \bfQ^-$ of the boundary map $\partial$ to $\pi_{2n-3}(\oC(D^{2n+1}))$ with the evaluation against the class $\Omega^{2n+2}E^\tau\in\oH^{2n-2}(\oC(D^{2n+1});\bfQ)^{(-)}$ resulting from \eqref{equ:smoothing-corners} and the discussion below \cref{cor:homotopy-concordances-odd-disc}. We write $\oC(D^\infty)\coloneq \hocolim_k\oC(D^k)$ for the space of \emph{stable concordances} whose homotopy groups are isomorphic to $K_{*+2}(\bfZ)_\bfQ$ as a result of \cref{thm:concordances-both-involutions} and \cref{thm:concordance-stability-eigenspaces}. 

\begin{cor}\label{cor:stable-relative-concordance-spaces}
We have $\pi_*(\oC(D^{\infty}),\oC(D^{2n}))_\bfQ=0$ for $n\ge4$ in degrees $*< 3n-6$. The map \[(\Omega^{2n+2}E^\tau\circ \partial)\colon \pi_*(\oC(D^\infty),\oC(D^{2n+1}))_\bfQ\lra \bfQ^-[2n-2]\] is for $n\ge3$ an isomorphism in degrees $*< 3n-7$ and an epimorphism in degree $*=3n-7$.
\end{cor}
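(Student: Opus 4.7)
The plan is to deduce both assertions from the long exact sequence
\begin{equation*}
\cdots \lra \pi_{*}(\oC(D^d))_\bfQ \lra \pi_*(\oC(D^\infty))_\bfQ \lra \pi_*(\oC(D^\infty),\oC(D^d))_\bfQ \xlra{\partial} \pi_{*-1}(\oC(D^d))_\bfQ \lra \cdots
\end{equation*}
of the pair $(\oC(D^\infty),\oC(D^d))$, by comparing the absolute computations of \cref{thm:concordances-both-involutions} with the behaviour of the iterated Hatcher stabilisation described in \cref{thm:concordance-stability-eigenspaces}. Throughout, I would use $\pi_*(\oC(D^\infty))_\bfQ\cong K_{*+2}(\bfZ)_\bfQ$, which is immediate from those two results by passing to the colimit, and that the stabilisation is trivial on the $(-1)$-reflection eigenspace while being an isomorphism on $(+1)$-eigenspaces in the ranges stated there.

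For the even case $d=2n$ with $n\ge 4$, \cref{thm:concordances-both-involutions} shows that in degrees $*<3n-7$ the group $\pi_*(\oC(D^{2n}))_\bfQ\cong s^{-2}K_*(\bfZ)_\bfQ\otimes\bfQ^{+,+}$ lies entirely in the $(+1)$-reflection eigenspace. By \cref{thm:concordance-stability-eigenspaces} the map $\pi_*(\oC(D^{2n}))_\bfQ\to\pi_*(\oC(D^\infty))_\bfQ$ is therefore an isomorphism for $*<3n-7$ and an epimorphism for $*=3n-7$, so the long exact sequence yields the vanishing $\pi_*(\oC(D^\infty),\oC(D^{2n}))_\bfQ=0$ for $*<3n-6$.

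For the odd case $d=2n+1$ with $n\ge 3$, \cref{thm:concordances-both-involutions} provides the splitting
\[
\pi_*(\oC(D^{2n+1}))_\bfQ \cong \bigl(s^{-2}K_*(\bfZ)_\bfQ \otimes \bfQ^{+,-}\bigr) \oplus \bfQ^{-,-}[2n-3]
\]
in degrees $*<3n-8$. Since stabilisation kills the $(-1)$-reflection eigenspace, the $\bfQ^{-,-}[2n-3]$ summand maps to zero in $\pi_*(\oC(D^\infty))_\bfQ$, while on the $(+)$-part stabilisation is an isomorphism for $*<3n-8$ and surjective for $*=3n-8$. Substituting into the long exact sequence, relative groups vanish in every degree $*<3n-7$ except possibly at $*=2n-2$; at that degree (when it falls in the range, which reduces to $n>5$; for smaller $n$ both sides are forced to be zero) the boundary map $\partial$ identifies $\pi_{2n-2}(\oC(D^\infty),\oC(D^{2n+1}))_\bfQ$ with the kernel of stabilisation on $\pi_{2n-3}$, namely $\bfQ^{-,-}[2n-3]$. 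By the discussion immediately following the statement of \cref{cor:homotopy-concordances-odd-disc}, the cohomology class $\Omega^{2n+2}E^\tau$ is up to sign equal to $\Omega^{2n+1}(p_n-E)^\tau$ after looping once, and hence detects the $\bfQ^-$-summand nontrivially; this identifies the composite $\Omega^{2n+2}E^\tau\circ\partial$ as an isomorphism onto $\bfQ^-$ in degree $2n-2$.

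The one subtlety is the edge $n=5$ of the stabilisation range, where $2n-3=3n-8=7$ and \cref{thm:concordance-stability-eigenspaces} only gives a surjection (not an isomorphism) on the $(+1)$-eigenspace at the critical degree. In that situation one must verify that the potentially larger kernel of stabilisation does not contribute to the composition $\Omega^{2n+2}E^\tau\circ\partial$, which follows from the fact that $\Omega^{2n+2}E^\tau$ by construction detects only the $\bfQ^{-,-}$-summand and vanishes on the $K$-theoretic $(+,-)$-summand; this then yields the epimorphism claim in degree $3n-7$ in all cases. This checking of the edge behaviour is the main place where some care is required.
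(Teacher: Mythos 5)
Your proof is correct and follows essentially the same route the paper indicates: running the long exact sequence of the pair $(\oC(D^\infty),\oC(D^d))$ against the absolute computations and the stabilisation behaviour from \cref{thm:concordance-stability-eigenspaces}, using that the stable groups land entirely in the $(+1)$-reflection eigenspace. One small imprecision is in your handling of the edge degree: for $n=5$ (where $2n-3=3n-8$) the splitting quoted from \cref{thm:concordances-both-involutions} only covers degrees $*<3n-8$ and so does not describe $\pi_{2n-3}(\oC(D^{2n+1}))_\bfQ$ itself; the surjectivity of $\Omega^{2n+2}E^\tau$ on the $(-1)$-eigenspace at that degree instead comes from the one-degree-better epimorphism range in \cref{cor:homotopy-concordances-odd-disc}, combined with $\rho$-equivariance of $\Omega^{2n+2}E^\tau$ (which forces any class detected by $\bfQ^-$ into the $(-1)$-eigenspace, hence into $\ker$ of stabilisation). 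This is precisely why the paper cites Corollaries~\ref{thm:homotopy-framed-even-disc}, \ref{cor:homotopy-diff-even-disc}, \ref{cor:homotopy-concordances-odd-disc} rather than the slightly weaker \cref{thm:concordances-both-involutions}; also, phrasing-wise, the point is not that the extra part of the kernel ``does not contribute'' but that the $(-1)$-eigenspace already contributes surjectively, so the extra piece cannot hurt.
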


Using the long exact sequence of the triple $(\oC(D^\infty),\oC(D^{d+1}),\oC(D^d))$, this yields \cref{bigcor:relative-concordance-groups}.

\begin{cor}\label{cor:relative-concordance-spaces}
There is a map compatible with the reflection involution 
\[\pi_*(\oC(D^{d+1}),\oC(D^{d}))_\bfQ\lra \bfQ^{-}[d-3]\]
which is an isomorphism in degrees $*<\lfloor \tfrac{3}{2} d\rfloor -8$ and an epimorphism in degree $*=\lfloor \tfrac{3}{2} d\rfloor -8$.
\end{cor}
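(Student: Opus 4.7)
The strategy, as announced just before the statement, is to feed the calculation of \cref{cor:stable-relative-concordance-spaces} into the long exact sequence of the triple
\[
\cdots\lra\pi_{*+1}(\oC(D^\infty),\oC(D^{d+1}))_\bfQ\lra\pi_{*}(\oC(D^{d+1}),\oC(D^{d}))_\bfQ\lra\pi_{*}(\oC(D^\infty),\oC(D^{d}))_\bfQ\lra\pi_{*}(\oC(D^\infty),\oC(D^{d+1}))_\bfQ\lra\cdots
\]
and conclude by a short diagram chase in each parity.

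In the odd case $d=2n+1$ (so $d+1=2(n+1)$), \cref{cor:stable-relative-concordance-spaces} tells us that $\pi_*(\oC(D^\infty),\oC(D^{d+1}))_\bfQ$ vanishes for $*<3(n+1)-6=3n-3$, which covers both flanking terms of the triple sequence in the range $*<3n-7$; the connecting map then identifies $\pi_*(\oC(D^{d+1}),\oC(D^{d}))_\bfQ$ with $\pi_*(\oC(D^\infty),\oC(D^{d}))_\bfQ$, and composing with $\Omega^{2n+2}E^\tau\!\circ\!\partial$ from \cref{cor:stable-relative-concordance-spaces} yields the desired map to $\bfQ^-[d-3]=\bfQ^-[2n-2]$ that is an isomorphism for $*<3n-7$. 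In the even case $d=2n$, the left flanking term $\pi_{*+1}(\oC(D^\infty),\oC(D^{d}))_\bfQ$ vanishes for $*+1<3n-6$ and the right flanking term $\pi_{*}(\oC(D^\infty),\oC(D^{d}))_\bfQ$ vanishes for $*<3n-6$; exactness then supplies an isomorphism $\pi_*(\oC(D^{d+1}),\oC(D^d))_\bfQ\cong\pi_{*+1}(\oC(D^\infty),\oC(D^{d+1}))_\bfQ$ for $*<3n-8$, and post-composing with the boundary-shift of $\Omega^{2n+2}E^\tau\!\circ\!\partial$ of \cref{cor:stable-relative-concordance-spaces} (applied to the odd disc $D^{d+1}=D^{2n+1}$) gives the asserted map to $\bfQ^-[2n-3]=\bfQ^-[d-3]$.

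The surjectivity at the boundary degree $*=\lfloor 3d/2\rfloor-8$ is either automatic (when that degree differs from $d-3$, since then the target is zero) or follows directly from exactness together with the epimorphism statement of \cref{cor:stable-relative-concordance-spaces}: in the odd case the relevant piece of the triple sequence reads
\[
\pi_{3n-7}(\oC(D^{d+1}),\oC(D^d))_\bfQ\lratwohead\pi_{3n-7}(\oC(D^\infty),\oC(D^{d}))_\bfQ\lra 0,
\]
and composing with the surjection of \cref{cor:stable-relative-concordance-spaces} yields the epi onto $\bfQ^-[d-3]$; the even case is analogous after the index shift. Compatibility with the reflection involution holds throughout because the triple sequence is natural, the identifications \eqref{equ:smoothing-corners} are $\rho$-equivariant, and the map of \cref{cor:stable-relative-concordance-spaces} is built from $E^\tau$ which is $\rho$-equivariant.

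There is no conceptually hard step here; the only thing to verify carefully is the bookkeeping of degrees and parities of $\lfloor 3d/2\rfloor-8$ against the two stable ranges $3n-6$ (even) and $3n-7$ (odd) of \cref{cor:stable-relative-concordance-spaces}, in particular in the borderline case $d-3=\lfloor 3d/2\rfloor-8$ where the surjectivity statement has nontrivial content.
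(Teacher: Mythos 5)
Your argument is correct and is exactly the approach the paper has in mind (which it states tersely: ``Using the long exact sequence of the triple \((\oC(D^\infty),\oC(D^{d+1}),\oC(D^d))\), this yields [the corollary]''); you merely spell out the parity-dependent degree bookkeeping in more detail than the paper does. One small remark: in the even case the triple sequence actually gives that \(\partial\colon\pi_{*+1}(\oC(D^\infty),\oC(D^{d+1}))_\bfQ\to\pi_*(\oC(D^{d+1}),\oC(D^d))_\bfQ\) is an isomorphism for \(*<3n-7\) (not just \(*<3n-8\)), but since the composite's range is pinned by the shifted map from \cref{cor:stable-relative-concordance-spaces}, your conservative bound does no harm.
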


\begin{rem}In particular, we can read off the optimal rational concordance stable range for discs in large dimensions from \cref{cor:relative-concordance-spaces}: the stabilisation map $\oC(D^d)\ra \oC(D^{d+1})$ is rationally $(d-4)$-connected for $d>9$ and there is a map $\pi_{d-3}(\oC(D^{d+1}), \oC(D^{d}))_\bfQ\ra \bfQ^-$ which is an epimorphism for $d> 9$ and an isomorphism for $d>11$.
\end{rem}

\begin{rem} 
The exclusion of the case $n=3$ in the claim about the first maps in \cref{thm:homotopy-framed-even-disc}, \cref{thm:concordances-both-involutions}, and \cref{cor:stable-relative-concordance-spaces} as well as the case $d=6$ in \cref{thm:concordance-stability-eigenspaces} and in \cref{thm:square-top-to-g} below are due to the fact that the analogue of \cref{thm:stable-homology} in dimension $2n+1=7$ is not yet available.
If this were to be established, then these cases need not be excluded.
\end{rem}

\section{The rational homotopy type of $\BSTOP(d)$ and orthogonal calculus}\label{sec:htp-type-BSTOP}
We conclude this work by determining the rational homotopy type of $\BSTOP(d)$ in a range and by spelling out the consequences of our results to orthogonal calculus.

\subsection{The rational homotopy type of $\BSTOP(d)$}
Our formula for the rational homotopy type of $\BSTOP(d)$ in a range involves the forgetful map to $\BSG(d)$ and the stabilisation map to $\BSTOP$
\[g\colon \BSTOP(d)\lra \BSG(d)\quad\text{and}\quad s\colon \BSTOP(d)\lra \BSTOP.\]
In odd dimensions it also involves a certain spectrum $\Theta\fbt^{(1)}\simeq_\bfQ K(\bfZ)$. 

\subsubsection{The rational homotopy type of $\BTOP(2n)$}
Even dimensions are simpler, so they come first.

\begin{thm}\label{thm:BSTOP2nHty}
For $2n\ge6$, the map \[(g,s)\colon \BSTOP(2n) \lra \BSG(2n)\times\BSTOP\] is rationally $(5n-5)$-connected.
\end{thm}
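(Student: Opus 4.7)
The plan is to compute the rational homotopy groups of both sides in the range $k \le 5n-5$ and to verify that $(g, s)$ realizes an identification degree by degree. For the target, using $\BSG(2n) \simeq_\bfQ K(\bfQ^-, 2n)$ from \eqref{equ:homotopy-type-g(n)}, the group $\pi_k(\BSG(2n) \times \BSTOP)_\bfQ$ is one-dimensional in degrees $k = 2n$ (generated by the fundamental class of $\BSG(2n)$, pulled back via $g$) and $k = 4i$ with $i \ge 1$ (generated by $p_i$, pulled back via $s$), with an extra dimension at $k = 2n$ when $n$ is even. For the source, I would use the long exact sequence of the fibration $\TOP(2n)/\OO(2n) \to \BSO(2n) \to \BSTOP(2n)$: the rational homotopy of $\BSO(2n) \simeq_\bfQ K(\bfQ, 2n) \times \prod_{i=1}^{n-1} K(\bfQ, 4i)$ is standard, and Morlet's equivalence \eqref{equ:intro-Morlet} together with \cref{cor:homotopy-diff-even-disc} identifies $\pi_k(\TOP(2n)/\OO(2n))_\bfQ \cong \pi_{k-2n}(\BDiff_\partial(D^{2n}))_\bfQ$, which is one-dimensional for $k = 4i-1$ with $i \ge n$ in the range $k \le 5n-6$ (with an epimorphism at $k = 5n-6$) and vanishes otherwise. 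Plugging this into the LES yields $\pi_k(\BSTOP(2n))_\bfQ$ with precisely the same dimensions as the target in degrees $k \le 5n-5$.

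It then remains to check that each generator is detected by $(g, s)$. For $k = 2n$ or $k = 4i$ with $1 \le i < n$, the generator of $\pi_k(\BSTOP(2n))_\bfQ$ comes from $\BSO(2n) \to \BSTOP(2n)$ and is detected by $e$ or $p_i$, both pulled back from the target. For $k = 4i$ with $i > n$, the generator $\alpha_i \in \pi_{4i}(\BSTOP(2n))_\bfQ$ lifts a generator of $\pi_{4i-1}(\TOP(2n)/\OO(2n))_\bfQ$ under the boundary map; since $p_i = 0$ in $\BO(2n)$ for $i > n$, the class $p_i \in \oH^{4i}(\BSTOP(2n);\bfQ)$ transgresses to $p_i^\tau$ (\cref{sec:transgressed-pontryagin}), and the transgression formula of \cref{sec:transgression} gives $\langle p_i, \alpha_i\rangle \ne 0$; hence $s_*(\alpha_i)$ is non-zero in $\pi_{4i}(\BSTOP)_\bfQ$, so $(g, s)$ is an isomorphism on $\pi_{4i}$.

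The main technical obstacle is the degree $k = 4n$: the class $p_n \in \oH^{4n}(\BSTOP(2n);\bfQ)$ does not transgress, since $p_n = e^2 \ne 0$ in $\BSO(2n)$, and only the Weiss-type class $(p_n - e^2)^\tau$ is available on the fibre, giving merely $\langle p_n - e^2, \alpha\rangle \ne 0$ for the generator $\alpha \in \pi_{4n}(\BSTOP(2n))_\bfQ$. To deduce the crucial $\langle p_n, \alpha\rangle \ne 0$ (equivalently $s_*(\alpha) \ne 0$), I would exploit that the Euler class $e$ is pulled back along $g\colon \BSTOP(2n) \to \BSG(2n)$ (see \cref{sec:transgressed-pontryagin}): since $\BSG(2n) \simeq_\bfQ K(\bfQ^-, 2n)$ from \eqref{equ:homotopy-type-g(n)} has $\pi_{4n}(\BSG(2n))_\bfQ = 0$, we have $g_*(\alpha) = 0$, and therefore $\langle e^2, \alpha\rangle = \langle g^*(e^2), \alpha\rangle = \langle e^2, g_*(\alpha)\rangle_{\BSG(2n)} = 0$; combined with $\langle p_n - e^2, \alpha\rangle \ne 0$ this forces $\langle p_n, \alpha\rangle \ne 0$, as required. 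Finally, the epimorphism at $k = 5n-5$ is inherited from the corresponding surjectivity at $k = 3n-6$ in \cref{cor:homotopy-diff-even-disc}.
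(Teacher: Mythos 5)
Your proof is correct, and the overall plan—degree-by-degree comparison via the fibration $\TOP(2n)/\OO(2n) \to \BSO(2n) \to \BSTOP(2n)$ and Morlet's equivalence—is close in spirit to the paper's, but the route you take is genuinely different in two ways. First, the paper post-composes with the rational equivalence $(e,\prod_{i\ge1}p_i)\colon \BSG(2n)\times\BSTOP \to K(\bfQ,2n)\times\prod_{i\ge1}K(\bfQ,4i)$, observes from the diagram \eqref{equ:rational-fibration-o-top-even} that the resulting map out of $\BSTOP(2n)$ is already rationally $(2n+3)$-connected, loops $(2n+2)$ times, and then cites the \emph{first} part of \cref{cor:homotopy-diff-even-disc} (about $\BDiff^\fr_\partial(D^{2n})_\ell$ and the classes $\Omega^{2n+1}p_i$), so it never has to separate $p_n$ from $e^2$. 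You instead use the \emph{second} part (about $\BDiff_\partial(D^{2n})$ and the transgressed classes $p_i^\tau$, $(p_n-e^2)^\tau$), which forces you to confront degree $k=4n$ directly. Second—and this is what your route buys—your argument that $\langle e^2,\alpha\rangle=0$ via $g_*\alpha = 0$ in $\pi_{4n}(\BSG(2n))_\bfQ=0$ is an elementary replacement for the family signature input hidden in \cref{lem:kosp=pi}; it is a nice observation that the square $e^2$ cannot see any class in $\pi_{4n}$ because it is pulled back from the rational Eilenberg--MacLane space $\BSG(2n)\simeq_\bfQ K(\bfQ,2n)$. (Of course the second part of \cref{cor:homotopy-diff-even-disc} itself relies on the first part, so the two proofs are not logically independent, but the way your proof packages the $k=4n$ step is cleaner than a naive transgression argument.)

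Two small points worth tightening. The identification $\pi_k(\TOP(2n)/\OO(2n))_\bfQ\cong\pi_{k-2n}(\BDiff_\partial(D^{2n}))_\bfQ$ via Morlet's equivalence only makes sense for $k\ge 2n+1$; for the LES computation you also need to know $\pi_k(\TOP(2n)/\OO(2n))_\bfQ=0$ for $k\le 2n+2$, which follows from the map $\TOP(2n)/\OO(2n)\to\TOP/\OO$ being $(2n+2)$-connected and $\TOP/\OO$ being rationally contractible (as the paper recalls). And when you write that ``the transgression formula gives $\langle p_i,\alpha_i\rangle\ne 0$,'' the transgression formula only gives the equality $\langle p_i,\alpha_i\rangle=\langle p_i^\tau,\partial\alpha_i\rangle$; the nonvanishing of the right-hand side is the content of \cref{cor:homotopy-diff-even-disc} and should be cited as such.
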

\begin{proof}
We may show the claim after post-composing the target with the rational equivalence \[\textstyle{(e,\prod_{i\ge 1}p_i)\colon \BSG(2n)\times\BSTOP\xlra{\simeq_\bfQ} K(\bfQ,2n)\times \prod_{i\ge}K(\bfQ,4i)}.\] To prove the modified claim, we consider the map of fibration sequences \eqref{equ:rational-fibration-o-top-even}.
The map $\TOP(2n)/\oO(2n)\ra \TOP/\oO$ given by stabilisation is $(2n+2)$-connected and $\TOP/\oO$ is rationally contractible \cite[p.\,246]{KS}, so the leftmost vertical map in this diagram is rationally $(2n+2)$-connected. As the middle vertical map is a rational equivalence, the righmost vertical map is rationally $(2n+3)$-connected, so we may test its connectivity after looping $(2n+2)$ times. Combining this with $\Omega \BDiff^{\fr}_\partial(D^{2n})\simeq \Omega^{2n+2}\BSTOP(2n)$ (see \cref{sec:smoothing theory}), the claim follows from the first part of \cref{cor:homotopy-diff-even-disc}.
\end{proof}

\subsubsection{$\Theta\fbt^{(1)}$ and the rational homotopy type of\ \ $\TOP(d+1)/\TOP(d)$}\label{sec:FirstDerTop}
Before turning to $\BTOP(2n+1)$, we study the homotopy fibres $\TOP(d+1)/\TOP(d)$ of the stabilisation map $\BTOP(d)\ra \BTOP(d+1)$ induced by $(-)\times \bfR$ by comparison to similarly defined fibres $\oG(d+1a)/\oG(d)$. So far we considered $\oG(d)$ as the space of homotopy automorphisms of $S^{d-1}$ and the stabilisation map $\oG(d)\ra \oG(d+1)$ to be induced by unreduced suspension, but for the remainder of this section it is convenient to work with a different model with better functoriality, namely as the space of \emph{proper} homotopy equivalences of $\bfR^d$, related to $\hAut(S^{d-1})$ by an equivalence
\[\hAut(S^{d-1}) \lra \hAut(\bfR^d)^{\mathrm{prop}}\]
given by taking open cones. With this model for $\oG(d)$ the comparison map $\TOP(d)\ra \oG(d)$ is simply given by inclusion, the stabilisation map $\oG(d)\ra \oG(d+1)$ by taking product with $\bfR$, and we have compatible analogues of the loop-suspension map
\begin{equation}
\label{equ:loop-suspension}S^d=\OO(d+1)/\OO(d)\lra \Omega\OO(d+2)/\OO(d+1)=S^{d+1},\end{equation}
for $\TOP(d)$ and $\oG(d)$ in the form of stabilisation maps
\vspace{-0.1cm}
\begin{equation}\label{equ:loop-suspension-gtop}
\def\arraystretch{1.5}
\begin{array}{c@{\hskip 0.1cm} l@{\hskip 0.1cm} c@{\hskip 0.1cm} }
\TOP(d+1)/\TOP(d)&\lra &\Omega(\TOP(d+2)/\TOP(d+1))\\
\oG(d+1)/\oG(d)&\lra &\Omega(\oG(d+2)/\oG(d+1))
\end{array}
\end{equation}
induced by conjugation with rotations in a $2$-plane (see \cite[Section 1]{BurgheleaLashofStability}). These maps define sequential spectra with $d$th spaces $\TOP(d+1)/\TOP(d)$ and $\oG(d+1)/\oG(d)$, and for reasons that will be become clear later when we discuss orthogonal calculus we denote these spectra by $\Theta\fbt^{(1)}$ and  $\Theta\fbg^{(1)}$, respectively. To have a uniform notation, we denote the sphere spectrum thought of as a sequential spectrum using \eqref{equ:loop-suspension} by $\Theta\fbo^{(1)}$. By construction, the inclusions $\OO(d)\subset\TOP(d)\subset\oG(d)$ induce maps of spectra
\[
\Theta\fbo^{(1)} \lra \Theta\fbt^{(1)}\lra \Theta\fbg^{(1)}
\]
whose composition turns out to be an equivalence; this is a consequence of the fact that the map $\OO(d+1)/\OO(d)\ra \oG(d+1)/\oG(d)$ is $(2d-3)$-connected for $d>2$ by \cite[Main Theorem 3.4, Corollary 6.6]{Haefliger}. We thus have preferred equivalences
\begin{equation}\label{equ:first-derivatives}
\bfS= \Theta\fbo^{(1)}\simeq  \Theta\fbg^{(1)}\quad\text{and}\quad \Theta\fbt^{(1)}\simeq \bfS\vee (\Theta\fbt^{(1)}/\bfS).\end{equation} The homotopy type of $\Theta\fbt^{(1)}$ was determined by Waldhausen as his $A(*)$ \cite[Theorem 2 Addendum (4)]{WaldhausenManifold}, but instead of using his result we reproduce it rationally from our point view.

\begin{prop}\label{prop:first-derivative}
We have $\pi_*(\Theta\fbt^{(1)})_\bfQ\cong K_*(\bfZ)_\bfQ$.
\end{prop}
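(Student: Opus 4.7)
The plan is to use the splitting $\Theta\fbt^{(1)}\simeq \bfS\vee(\Theta\fbt^{(1)}/\bfS)$ from \eqref{equ:first-derivatives} and identify the rational homotopy of the complement $\Theta\fbt^{(1)}/\bfS$ with the rational homotopy of the stable concordance space $\oC(D^\infty)$ shifted by two, via smoothing theory and the computations of the previous sections.

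In more detail: consider the sequential spectrum $F$ whose $d$th space is the based space $F_d\coloneq \hofib(S^d=\OO(d+1)/\OO(d)\to \TOP(d+1)/\TOP(d))$, with structure maps induced by the commutative squares defining \eqref{equ:loop-suspension} and \eqref{equ:loop-suspension-gtop}. By construction $F$ is the fibre of the map $\bfS\simeq \Theta\fbo^{(1)}\to \Theta\fbt^{(1)}$, so from the splitting $\Theta\fbt^{(1)}\simeq \bfS\vee(\Theta\fbt^{(1)}/\bfS)$ we obtain an equivalence $F\simeq \Omega(\Theta\fbt^{(1)}/\bfS)$; in particular $\pi_k(\Theta\fbt^{(1)}/\bfS)\cong \pi_{k-1}(F)$. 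Applying the vertical hofib construction to the map of horizontal fibre sequences
\[
\begin{tikzcd}
\OO(d)\rar\dar & \OO(d+1)\rar\dar & S^d\dar \\
\TOP(d)\rar & \TOP(d+1)\rar & \TOP(d+1)/\TOP(d)
\end{tikzcd}
\]
yields a fibre sequence $\TOP(d)/\OO(d)\to \TOP(d+1)/\OO(d+1)\to F_d$, and hence $\Omega F_d\simeq \hofib(\TOP(d)/\OO(d)\to \TOP(d+1)/\OO(d+1))$. Combined with the smoothing theory equivalence \eqref{equ:smoothing-theory-concordances} and \eqref{equ:smoothing-corners}, this provides an identification $\pi_{k+d+1}(F_d)\cong \pi_k(\BDiff_{D^d}(D^{d+1}))\cong \pi_{k-1}(\oC(D^d))$ for $k\geq 1$ and $d\neq 4,5$.

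Taking colimits, we therefore get $\pi_k(F)_\bfQ\cong \colim_d \pi_{k-1}(\oC(D^d))_\bfQ$ where the colimit is taken along the map induced by the structure map $F_d\to \Omega F_{d+1}$ of the spectrum $F$. A diagram chase through the smoothing theory equivalence \eqref{equ:smoothing-theory-concordances} and the definition of the loop-suspension structure maps \eqref{equ:loop-suspension-gtop} identifies this stabilisation map with the Hatcher suspension \eqref{equ:conc-stabilisation} up to homotopy---this is the technical heart of the argument. Granting this, \cref{thm:concordance-stability-eigenspaces} tells us that the colimit only picks up the $(+1)$-eigenspace of the reflection involution, which by \cref{thm:concordances-both-involutions} (and the discussion following \cref{thm:concordance-stability-eigenspaces}) agrees with $\pi_{k-1}(\oC(D^\infty))_\bfQ\cong K_{k+1}(\bfZ)_\bfQ$. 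Unwinding the degree shift gives $\pi_k(\Theta\fbt^{(1)}/\bfS)_\bfQ\cong K_k(\bfZ)_\bfQ$ for $k\geq 2$, and the low-degree cases $k=0,1$ are verified directly using $K_0(\bfZ)_\bfQ=\bfQ$ and $K_1(\bfZ)_\bfQ=0$. Combining with $\pi_*(\bfS)_\bfQ=\bfQ[0]$ from the sphere summand yields the claim.

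The main obstacle will be the identification of the spectrum-level structure map $F_d\to \Omega F_{d+1}$, which is induced by rotations in a 2-plane, with the Hatcher suspension map on concordance spaces, which is defined by taking product with $[0,1]$ and bending. This requires a careful point-set manipulation interchanging the two coordinates used for stabilisation and looping; once this is in place, the degree count and involution bookkeeping proceed routinely.
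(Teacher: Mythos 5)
Your approach is essentially the paper's: the paper likewise combines the smoothing-theory fibre sequence relating $\BC(D^d)$, $\Omega^d S^d$, and $\Omega^d\TOP(d+1)/\TOP(d)$ with the stabilisation of concordances, takes colimits, and appeals to $\pi_*(\oC(D^\infty))_\bfQ \cong K_{*+2}(\bfZ)_\bfQ$. The ``technical heart'' you defer---that the spectrum-level structure maps agree with the Hatcher suspension up to homotopy---is exactly what the paper resolves by citing \cite[Theorem~A]{BurgheleaLashofStability} for the commutative diagram \eqref{equ:concordance-stability-smoothing-theory}, so this is not something you would need to carry out from scratch.

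Two precision issues, neither of which affects the final answer. First, the vertical homotopy fibre of $\OO(d) \to \TOP(d)$ is $\Omega(\TOP(d)/\OO(d))$, not $\TOP(d)/\OO(d)$, so your square yields the fibre sequence $\Omega(\TOP(d)/\OO(d)) \to \Omega(\TOP(d+1)/\OO(d+1)) \to F_d$. It is cleaner to apply the same lemma to the sequences $S^d \to \BO(d) \to \BO(d+1)$ and $\TOP(d+1)/\TOP(d) \to \BTOP(d) \to \BTOP(d+1)$, which gives $F_d \simeq \hofib(\TOP(d)/\OO(d) \to \TOP(d+1)/\OO(d+1))$ on the nose. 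Combined with \eqref{equ:smoothing-theory-concordances} this yields $\pi_{k+d}(F_d) \cong \pi_k(\BDiff_{D^d}(D^{d+1}))$, not $\pi_{k+d+1}(F_d)$. Your displayed colimit $\pi_k(F)_\bfQ \cong \colim_d \pi_{k-1}(\oC(D^d))_\bfQ$ and the conclusion are nevertheless correct; with your intermediate index one would instead have obtained $\colim_d \pi_{k-2}(\oC(D^d))_\bfQ$, so there is a compensating off-by-one in the passage to the colimit. Second, the remark about eigenspaces of the reflection involution is true but superfluous: once the structure maps are identified with the Hatcher suspension, the colimit is simply $\pi_{k-1}(\oC(D^\infty))_\bfQ$, as the paper observes.
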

\begin{proof}
Consider the map of fibre sequences provided by smoothing theory (see \cite[Thm A]{BurgheleaLashofStability})
\begin{equation}
\begin{tikzcd}\label{equ:concordance-stability-smoothing-theory}
\BC(D^d)\dar  \rar & \Omega^{d}\OO(d+1)/\OO(d) \dar  \rar& \Omega^{d}\TOP({d+1})/\TOP(d)\dar\ \\
\BC(D^{d+1}) \rar & \Omega^{d+1} \OO(d+2)/\OO(d+1) \rar & \Omega^{d+1} \TOP({d+2})/\TOP({d+1}),
\end{tikzcd}
\end{equation}
where the map between fibres is the stabilisation map from \cref{sec:concordance-stability}. Taking vertical homotopy colimits and rational homotopy groups, we conclude \[\pi_*(\Theta\fbt^{(1)})_\bfQ\cong \pi_*(\bfS)_\bfQ\oplus \pi_{*-1}({\hocolim}_d\BC(D^d))_\bfQ.\] But $\pi_{*-1}(\hocolim_d\BC(D^d))_\bfQ\cong K_{*>0}(\bfZ)_\bfQ$ by the discussion in \cref{sec:concordance-stability}, and this implies the claim since $\pi_0(\bfS)_\bfQ\cong K_0(\bfZ)_\bfQ$.
\end{proof}

Having determined $\Theta\fbt^{(1)}$, we express the rational homotopy type of $\TOP(d+1)/\TOP(d)$ in a range in terms of $\Theta\fbt^{(1)}$ and  $\oG(d+1)/\oG(d)$. Being the $d$th space in the spectrum  $\Theta\fbt^{(1)}$, the space $\TOP(d+1)/\TOP(d)$ has a canonical map to $\Omega^\infty( S^d\wedge \Theta\fbt^{(1)})$, and similarly for $\oG(d+1)/\oG(d)$.

\begin{thm}\label{thm:square-top-to-g}
For $d> 6$, the square
\[
\begin{tikzcd}
\TOP(d+1)/\TOP(d)\dar\rar& \oG(d+1)/\oG(d)\dar\\
\Omega^\infty(S^d\wedge\Theta\fbt^{(1)})\rar &\Omega^\infty(S^d\wedge\Theta\fbg^{(1)})
 \end{tikzcd}
\]
is rationally $5\lfloor \tfrac{d}{2}\rfloor-5$-cartesian.
\end{thm}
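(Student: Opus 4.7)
The plan is to show the square is rationally $(5\lfloor d/2\rfloor -5)$-cartesian by analysing horizontal fibres. Using the splitting $\Theta\fbt^{(1)}\simeq\bfS\vee\Theta\fbt^{(1)}/\bfS$ of \eqref{equ:first-derivatives}, the bottom horizontal fibre of the square is $\Omega^\infty(S^d\wedge\Theta\fbt^{(1)}/\bfS)$, which by \cref{prop:first-derivative} has rational homotopy $K_{*-d}(\bfZ)_\bfQ$. The task thus reduces to showing that the natural comparison map
\[H_t\coloneq \hofib\big(\TOP(d+1)/\TOP(d)\to\oG(d+1)/\oG(d)\big)\lra \Omega^\infty(S^d\wedge\Theta\fbt^{(1)}/\bfS)\]
is rationally $(5\lfloor d/2\rfloor -5)$-connected.

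To compute $\pi_*(H_t)_\bfQ$ in the appropriate range, I would follow the smoothing-theoretic route of \cref{prop:first-derivative}. The $3\times3$ grid induced by $\OO(d)\subset\OO(d+1)\subset\TOP(d+1)$ and $\OO(d)\subset\TOP(d)\subset\TOP(d+1)$ produces a fibre sequence $\Omega(\TOP(d+1)/\TOP(d))\to \hofib(\TOP(d)/\OO(d)\to \TOP(d+1)/\OO(d+1))\to S^d$, which after $d$-fold looping and an application of \eqref{equ:smoothing-theory-concordances} gives
\[\Omega^{d+1}_0(\TOP(d+1)/\TOP(d))\lra \BDiff_{D^d}(D^{d+1})\lra \Omega^d_0 S^d,\]
along with an analogous sequence for $\oG$ in place of $\TOP$ whose middle term is (rationally, in a range) governed just by $\Omega^d_0 S^d$ thanks to Haefliger's theorem $\OO(d+1)/\OO(d)\to\oG(d+1)/\oG(d)$ being $(2d-3)$-connected. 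Comparing the two sequences, the sphere contributions $\Omega^d_0 S^d$ cancel in the map on fibres, and $\Omega^{d+1}_0 H_t$ is rationally captured, in a range, by $\BDiff_{D^d}(D^{d+1})$ modulo the sub-part of its rational homotopy that lifts to $\oG$. By \cref{thm:homotopy-framed-even-disc} ($d=2n$) and \cref{cor:homotopy-concordances-odd-disc} ($d=2n+1$), $\pi_*(\BDiff_{D^d}(D^{d+1}))_\bfQ\cong s^{-1}K_{*>0}(\bfZ)_\bfQ$ (with an extra summand $\bfQ^-[2n-2]$ when $d=2n+1$) in degrees $*<3\lfloor d/2\rfloor - 6$, respectively $*<3\lfloor d/2\rfloor - 7$. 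Since the additional class for $d=2n+1$ is detected by $\Omega^{2n+1}E^\tau$ and $E\in \oH^{4n}(\BSG(2n+1);\bfQ)$ is pulled back from $\oG$, this class lifts to the $\oG$-analog and is killed in the comparison fibre. The residual $K$-theoretic part, shifted back by the $(d+1)$-fold loop, recovers $\pi_*(H_t)_\bfQ\cong K_{*-d}(\bfZ)_\bfQ$ in degrees $*<5\lfloor d/2\rfloor -5$, matching the right-hand side numerically; indeed $3\lfloor d/2\rfloor-6+d+1=5\lfloor d/2\rfloor-5$ for $d$ even and $3\lfloor d/2\rfloor-7+d+1=5\lfloor d/2\rfloor-5$ for $d$ odd.

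The main obstacle is naturality: one must verify that the map $H_t\to\Omega^\infty(S^d\wedge\Theta\fbt^{(1)}/\bfS)$ induced by the spectrum structure of $\Theta\fbt^{(1)}$ sends the $K$-theoretic classes identified above (essentially the classes $\xi_i^\tau$ of \eqref{equ:xis}, transported through the smoothing-theoretic identification with concordances) isomorphically to the natural generators of $\pi_{*-d}(\Theta\fbt^{(1)}/\bfS)_\bfQ \cong K_{*-d}(\bfZ)_\bfQ$. This is expected from the construction of the $\xi_i^\tau$ as transgressions of stable $K$-theory characteristic classes along the Weiss fibre sequence \eqref{equ:framed-weiss-fs-ready-for-trangression}, together with the rational identification $\Theta\fbt^{(1)}\simeq_\bfQ K(\bfZ)$ of \cref{prop:first-derivative}; rigorously checking it requires tracking how the spectrum structure maps $\TOP(d+1)/\TOP(d)\to \Omega(\TOP(d+2)/\TOP(d+1))$ interact with stabilisation of the Weiss fibre sequences used to define the $\xi_i^\tau$.
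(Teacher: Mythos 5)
Your outline takes the same smoothing‐theoretic ingredients as the paper's proof (the fibre sequences from \eqref{equ:concordance-stability-smoothing-theory}, Haefliger's connectivity, the rational fibre sequence \eqref{equ:homotopy-type-g-mod-g}, and the concordance computations) but organises them around the \emph{horizontal} homotopy fibres of the square, whereas the paper works with the \emph{vertical} fibres. Your identification of the bottom horizontal fibre with $\Omega^\infty(S^d\wedge\Theta\fbt^{(1)}/\bfS)$ is correct, and the computation $\pi_*(H_t)_\bfQ\cong K_{*-d}(\bfZ)_\bfQ$ in degrees $*<5\lfloor d/2\rfloor-5$ checks out numerically. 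The problem is exactly the one you flag at the end: having computed both sides abstractly, you still need the comparison map $H_t\to\Omega^\infty(S^d\wedge\Theta\fbt^{(1)}/\bfS)$ to induce that isomorphism, and the plan of ``tracking the $\xi_i^\tau$ through the spectrum structure maps and Weiss fibre sequences'' is genuinely nontrivial. That is a gap, not a formality.

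The paper's choice of vertical fibres sidesteps it. After $d$-fold looping, its vertical maps are by construction the stabilisation maps from $\BC(D^d)$, $\Omega^d(S^d)$, $\Omega^d(\TOP(d+1)/\TOP(d))$, and $\Omega^d(\oG(d+1)/\oG(d))$ into their respective colimits, so the connectivity of the induced maps on vertical fibres falls straight out of \cref{cor:stable-relative-concordance-spaces} (plus \eqref{equ:homotopy-type-g-mod-g} for $d$ odd). That corollary is a statement about the actual natural map $\oC(D^d)\to\oC(D^\infty)$, so the ``naturality'' you worry about is already built in; no identification of generators against $\xi_i^\tau$ is needed. So the missing idea in your sketch is that reorganising around vertical fibres makes the naturality automatic, and that the one identification that \emph{is} needed in the odd case (the map $\BC(D^{2n+1})\to K(\bfQ,2n-2)$ is represented by $\Omega^{2n+1}E^\tau$) is immediate from how $E^\tau$ was defined, rather than a $K$-theoretic coherence problem. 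A separate small correction: your parenthetical claim that the middle term of the $\oG$-analogue is ``governed just by $\Omega^d_0 S^d$'' is not right. That middle term is $\Omega^d\hofib\big(S^d\to\oG(d+1)/\oG(d)\big)$, which by Haefliger is $(d-4)$-connected, and rationally it is contractible for $d$ even and equivalent to $K(\bfQ,2n-2)$ for $d=2n+1$ odd by \eqref{equ:homotopy-type-g-mod-g}; your subsequent numerics use the latter facts correctly, so the error is in the wording, but it should be fixed since it is exactly the input that, combined with the concordance calculations, produces the sharp range $5\lfloor d/2\rfloor-5$.
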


\begin{proof}
Combining the facts that $\OO(d+1)/\OO(d)\ra \oG(d+1)/\oG(d)$ is $(2d-3)$-connected for $d\ge 3$ as mentioned above and that $\OO(d+1)/\OO(d)\ra \TOP(d+1)/\TOP(d)$ is $(d+2)$-connected for $d\ge5$ \cite[p.\,246]{KS}, it follows that the top map is $(d+2)$-connected for all $d\ge5$, so the same holds for the bottom map. We may thus test cartesianess of the square in question after looping $d$ times, in which case we consider the commutative diagram 
\[
\begin{tikzcd}[column sep=0.2cm]
\BC(D^d)\dar{\circled{1}}  \rar & \Omega^{d}\OO(d+1)/\OO(d) \dar{\circled{2}}  \rar& \Omega^{d}\TOP({d+1})/\TOP(d)\dar{\circled{3}}\rar&\Omega^{d}\oG({d+1})/\oG(d)\dar\dar{\circled{4}} \\
\hocolim_d\BC(D^{d}) \rar & \Omega^{\infty} (\Theta\fbo^{(1)}\wedge S^d) \rar & \Omega^{\infty} (S^d\wedge\Theta\fbt^{(1)})\rar&\Omega^{\infty} (S^d\wedge\Theta\fbg^{(1)}),
\end{tikzcd}
\]
obtained from extending the diagram \eqref{equ:concordance-stability-smoothing-theory} to the right using the inclusion $\TOP(-)\subset\oG(-)$ and stabilising the bottom row. The task is now to show that the map $\hofib(\circled{3})_\bfQ\ra \hofib(\circled{4})_\bfQ$ has the claimed connectivity. 

If $d=2n>6$ is even, then the map $\OO(d+1)/\OO(d)\ra \oG({d+1})/\oG(d)$ is easily seen to be a rational equivalence using \eqref{equ:homotopy-type-g(n)} so it follows that the map $\hofib(\circled{2})_\bfQ\ra \hofib(\circled{4})_\bfQ$ is an equivalence. It thus suffices to show that the map $\hofib(\circled{2})\ra \hofib(\circled{3})$ is rationally $(3n-5)$-connected or equivalently that $\hofib(\circled{1})$ is rationally $(3n-6)$-connected. But the latter holds by \cref{cor:stable-relative-concordance-spaces}, so we are done with this case. 

If on the other hand $d=2n+1>6$ is odd, then one can deduce from \eqref{equ:homotopy-type-g(n)} that the sequence
\begin{equation}\label{equ:homotopy-type-g-mod-g}\oO(2n+2)/\oO(2n+1)\lra \oG(2n+2)/\oG(2n+1)\lra \BG(2n+1)\underset{E}{\simeq}K(\bfQ,4n)\vspace{-0.1cm}\end{equation}
is a rational fibre sequence, so we obtain a rational fibre sequence\vspace{-0.1cm}
\[\hofib(\circled{2})\lra \hofib(\circled{4})\xlra{\Omega^{2n+1}E}K(\bfQ,2n-1).\] From the diagram of horizontal rational fibre sequences
\[\begin{tikzcd}[row sep=0.3cm, column sep=0.8cm]
\hofib(\circled{1})\rar&\hofib(\circled{2})\arrow[d,equal]\rar&\hofib(\circled{3})_\bfQ\dar&\\
&\hofib(\circled{2})\rar&\hofib(\circled{4})\rar{\Omega^{2n+1}E}&K(\bfQ,2n-1)
\end{tikzcd}
\]
we see that to finish the proof that $\hofib(\circled{3})\ra \hofib(\circled{4})$ is rationally $(5n-5)-(2n+1)=3n-6$-connected, it suffices to show that the induced map $\hofib(\circled{1})\ra K(\bfQ,2n-2)$ is rationally $(3n-7)$-connected. By construction, this map factors up to homotopy as the composition $\hofib(\circled{1})\ra\BC(D^{2n+1})_\bfQ\ra K(\bfQ,2n-2)$ where the final map is represented by the class $\Omega^{2n+1}E^\tau\in\oH^{2n-2}(\BC(D^{2n+1});\bfQ)$, so the claim follows from \cref{cor:stable-relative-concordance-spaces}.
\end{proof}

\subsubsection{The rational homotopy type of $\BTOP(2n+1)$}In view of \eqref{equ:first-derivatives} \cref{thm:square-top-to-g} equivalently says that the map 
\begin{equation}\label{equ:description-btop-odd-single-map}\TOP(d+1)/\TOP(d)\lra \oG(d+1)/\oG(d)\times \Omega^\infty (S^d\wedge \Theta\fbt^{(1)}/\bfS)\end{equation} is rationally highly connected. In order to use this to provide a description of $\BSTOP(2n+1)$, we rely on the following lemma. We postpone its proof to \cref{sec:pf-factorisation}, as it will involve some orthogonal calculus which we have not yet discussed (but see \cref{rem:alternative-factorisation}).

\begin{lem}\label{lem:factorisation}
The composition
\[
\TOP(2n+2)/\TOP(2n+1)\lra \Omega^\infty (S^{2n+1}\wedge\Theta\fbt^{(1)}/\bfS)\lra \Omega^\infty (S^{2n+1}\wedge \Theta\fbt^{(1)}/\bfS)_\bfQ,\]
where $(-)_\bfQ$ denotes rationalisation, admits a factorisation up to homotopy over a map 
\[\tilde{k} : \BSTOP(2n+1) \lra \Omega^\infty (S^{2n+1}\wedge \Theta\fbt^{(1)}/\bfS)_\bfQ.\]
\end{lem}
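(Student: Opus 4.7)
The plan is to construct $\tilde k$ using the first stage of the Taylor tower in Weiss' orthogonal calculus applied to the functor $\fbst = \BSTOP(-)$, and then to verify compatibility with $k$. Orthogonal calculus supplies a natural transformation $\alpha_1\colon \fbst(V) \to T_1\fbst(V)$ fitting into a fibre sequence
\[\Omega^\infty\left((S^V \wedge \Theta\fbst^{(1)})_{h\OO(1)}\right) \lra T_1\fbst(V) \lra T_0\fbst(V) = \BSTOP.\]
Since $\STOP(d)$ is the identity component of $\TOP(d)$, taking derivatives gives an $\OO(1)$-equivariant equivalence $\Theta\fbst^{(1)} \simeq \Theta\fbt^{(1)}$.

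At $V = \bfR^{2n+1}$ I would simplify the layer rationally using the splitting $\Theta\fbt^{(1)} \simeq \bfS \oplus \Theta\fbt^{(1)}/\bfS$ from \eqref{equ:first-derivatives}. The $\OO(1)$-action is trivial on $\bfS$ and, by \cref{intro-first-derivative-spectrum}, acts by $-1$ on the rational homotopy of $\Theta\fbt^{(1)}/\bfS$; on $S^{2n+1}$ it is the sign representation, contributing $(-1)^{2n+1} = -1$ on rational reduced homology. Hence on $S^{2n+1} \wedge \bfS$ the total sign is $-1$ (so its rational homotopy orbits vanish), while on $S^{2n+1}\wedge\Theta\fbt^{(1)}/\bfS$ the total sign is $+1$ (so the homotopy orbits coincide rationally with the plain infinite loop space). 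This yields a rational equivalence of layers
\[\Omega^\infty\left((S^{2n+1}\wedge\Theta\fbt^{(1)})_{h\OO(1)}\right)_\bfQ \simeq \Omega^\infty(S^{2n+1}\wedge\Theta\fbt^{(1)}/\bfS)_\bfQ.\]

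Next I would produce a rational splitting of the resulting fibre sequence
\[\Omega^\infty(S^{2n+1}\wedge\Theta\fbt^{(1)}/\bfS)_\bfQ \lra T_1\fbst(\bfR^{2n+1})_\bfQ \lra \BSTOP_\bfQ\]
by comparison with the Taylor tower of $\fbso = B\SO(-)$: since $\BSO \to \BSTOP$ is a rational equivalence and Arone's complete description of $T_1\fbso$ gives a rational section $\BSTOP_\bfQ \simeq \BSO_\bfQ \to T_1\fbso(\bfR^{2n+1})_\bfQ$, naturality of $T_1$ with respect to $\fbso \to \fbst$ supplies the required rational section of $T_1\fbst(\bfR^{2n+1})_\bfQ \to \BSTOP_\bfQ$. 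I would then define $\tilde k$ as the composition
\[\BSTOP(2n+1) \xra{\alpha_1} T_1\fbst(\bfR^{2n+1}) \lra T_1\fbst(\bfR^{2n+1})_\bfQ \twoheadrightarrow \Omega^\infty(S^{2n+1}\wedge\Theta\fbt^{(1)}/\bfS)_\bfQ.\]

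To verify the compatibility $\tilde k \circ \iota \simeq k$ (after rationalisation), with $\iota\colon \TOP(2n+2)/\TOP(2n+1) = \fbst^{(1)}(\bfR^{2n+1}) \to \BSTOP(2n+1)$ the fibre inclusion of the stabilisation fibration, I would invoke the naturality square
\[\begin{tikzcd}
\fbst^{(1)}(\bfR^{2n+1}) \rar \arrow[d,"\iota",swap] & \Omega^\infty((S^{2n+1}\wedge\Theta\fbt^{(1)})_{h\OO(1)}) \dar \\
\fbst(\bfR^{2n+1}) \rar{\alpha_1} & T_1\fbst(\bfR^{2n+1}),
\end{tikzcd}\]
whose top horizontal arrow is the canonical derivative-to-layer map; this map matches $k$ after rationalisation and quotienting by $\bfS$. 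The main technical obstacle will be establishing the rational splitting in the previous paragraph: it is the one step where orthogonal calculus is genuinely needed, resting on Arone's rational splitting of the Taylor tower for $\fbo$ together with its compatibility with the comparison $\fbo \to \fbst$.
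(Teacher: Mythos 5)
Your overall strategy---working with the Taylor tower of $\fbst$, computing the first layer rationally via $\OO(1)$-weight considerations, and matching the statement's map with the derivative-to-layer map---is precisely the one the paper uses, and your rational identification $\Omega^\infty((S^{2n+1}\wedge\Theta\fbt^{(1)})_{h\OO(1)})_\bfQ\simeq\Omega^\infty(S^{2n+1}\wedge\Theta\fbt^{(1)}/\bfS)_\bfQ$ is correct. However, there is a genuine gap in the step that produces the map $T_1\fbst(\bfR^{2n+1})_\bfQ\twoheadrightarrow\Omega^\infty(S^{2n+1}\wedge\Theta\fbt^{(1)}/\bfS)_\bfQ$. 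You obtain a rational \emph{section} of $T_1\fbst(\bfR^{2n+1})_\bfQ\to\BSTOP_\bfQ$ (which is fine, and for this you do not actually need Arone---the vanishing of the rational first layer of $\fbso$ in odd dimensions is enough), but a section of a fibration $F\to E\to B$ does \emph{not} in general give a retraction $E\to F$, and $T_1\fbst(\bfR^{2n+1})$ is not an H-space, so the usual shear trick is unavailable. Concretely, the fibration of rational spaces with Sullivan model $(\Lambda(u_3)\otimes\bfQ[v_2,w_4],\ d u=dv=0,\ dw=uv)$ has fibre the GEM $K(\bfQ,2)\times K(\bfQ,4)$, admits a section (kill $v,w$), yet no retraction onto its fibre exists since no closed degree-$4$ cochain reduces to $w$ mod $u$; so a section really is insufficient.

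What saves the argument---and is the actual content of the paper's proof---is a parity observation: $\pi_*(T_1\fbst(\bfR^{2n+1}))_\bfQ$ is concentrated in even degrees (both $\pi_*(\BSTOP)_\bfQ$ and the rational homotopy of the layer are even, and the boundary maps must vanish for degree reasons), so $T_1\fbst(\bfR^{2n+1})_\bfQ$ is a product of even Eilenberg--MacLane spaces. The paper phrases this as the collapse of the Serre spectral sequence of $L_1\fbst(\bfR^{2n+1})\to T_1\fbst(\bfR^{2n+1})\to\BSTOP$, whose $E_2$-page is supported in (even,\,even) bidegree. Once this is in place, the sought map exists because the fundamental classes of the layer extend to $T_1\fbst(\bfR^{2n+1})$; the section, and hence the comparison with $\fbso$, is then superfluous. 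Your write-up should replace the section-implies-splitting step by this parity argument (or the equivalent spectral-sequence collapse).
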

\begin{rem}\label{rem:alternative-factorisation}There is more than one way to prove this lemma. Using more of the theory of orthogonal calculus than in the proof we shall give in \cref{sec:pf-factorisation}, we explain in \cref{sec:KThyEuler} that there is in fact a \emph{preferred} factorisation $\tilde{k}$. On the other hand, if one is willing to replace the target of $\tilde{k}$ by its $(5n-5)$-truncation then also a more elementary calculus-free argument can be given, by studying the Serre spectral sequence for the fibre sequence $\TOP(2n+2)/\TOP(2n+1) \to \BSTOP(2n+1) \to \BSTOP(2n+2)$ and exploiting the reflection involution.
\end{rem}

Fixing any factorisation $\tilde{k}$ promised by \cref{lem:factorisation} the odd-dimensional analogue of \cref{thm:BSTOP2nHty} reads as follows. Together with \cref{thm:BSTOP2nHty} it proves \cref{bigthm:top-d}, using \cref{prop:first-derivative} and rational the equivalences \eqref{equ:homotopy-type-g(n)}.

\begin{thm}\label{thm:BSTOP2nPlus1Hty}
For $n\ge3$, the map
\[(g,s,\tilde{k})\colon \BSTOP(2n+1)\lra \BSG(2n+1)\times \BSTOP \times \Omega^\infty (S^{2n+1} \wedge \Theta\fbt^{(1)}/\bfS)_\bfQ\]
is rationally $(5n-5)$-connected.
\end{thm}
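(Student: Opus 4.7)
The strategy is to reduce the statement to \cref{cor:homotopy-diff-odd-disc}, which determines $\pi_{*}(\BDiff_\partial(D^{2n+1}))_\bfQ$ in degrees $*<3n-7$, by using the Morlet-style smoothing theory equivalence \eqref{equ:smoothing-theory} to transfer this to rational homotopy information about $\BSTOP(2n+1)$ in degrees $*<5n-5$. Since the target is a product of spaces whose rational homotopy groups are easily read off, the task becomes to (a) compute $\pi_{*}(\BSTOP(2n+1))_\bfQ$ in that range and (b) check that the three components of $(g,s,\tilde{k})$ together realise the resulting isomorphism.

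For (a), I would work with the fibre sequence $\TOP(2n+1)/\OO(2n+1)\to \BSO(2n+1)\to \BSTOP(2n+1)$. By Kirby--Siebenmann the stabilisation $\TOP(2n+1)/\OO(2n+1)\to \TOP/\OO$ is $(2n+3)$-connected and $\TOP/\OO$ is rationally contractible, so $\BSO(2n+1)\to \BSTOP(2n+1)$ is rationally $(2n+3)$-connected; this handles $k\le 2n+3$ directly. In degrees $2n+3<k<5n-5$, smoothing theory \eqref{equ:smoothing-theory-component} identifies $\pi_k(B(\TOP(2n+1)/\OO(2n+1)))_\bfQ$ with $\pi_{k-2n-2}(\BDiff_\partial(D^{2n+1}))_\bfQ$, which by \cref{cor:homotopy-diff-odd-disc} is concentrated in degrees $k\in\{4i:i\ge n\}\cup\{4j+2n+2:j\ge 1\}$. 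A parity check shows that the connecting map $\pi_k(B(\TOP(2n+1)/\OO(2n+1)))_\bfQ \to \pi_{k-1}(\BSO(2n+1))_\bfQ$ vanishes (neither $4i-1$ nor $4j+2n+1$ is divisible by $4$), so the long exact sequence splits and yields the expected dimensions: one-dimensional at each $k=4i$ for $1\le i<n$ or $i>n$, two-dimensional at $k=4n$, and one-dimensional at each $k=4j+2n+2$ for $j\ge 1$.

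For (b), by \cref{prop:first-derivative} and the rational equivalence $\BSG(2n+1)\simeq_\bfQ K(\bfQ,4n)$ from \eqref{equ:homotopy-type-g(n)}, $\pi_{*}$ of the target has matching dimensions. One then identifies the images of the classes produced in (a):
\begin{itemize}
\item $p_1,\dots,p_{n-1}\in\pi_{*}(\BSO(2n+1))_\bfQ$ map via $s$ to the corresponding Pontryagin classes in $\BSTOP$;
\item the $\BSO(2n+1)$-generator $p_n$ in degree $4n$ maps to $(E,p_n)$ in $\BSG(2n+1)\times \BSTOP$ (recalling $E|_{\BSO(2n+1)}=p_n$), and the class coming from $(p_n-E)^\tau$ in the fibre maps to the complementary direction $E-p_n$, exhausting $\bfQ^2$ at $k=4n$;
\item the classes $p_i^\tau$ for $i>n$ detect $p_i\in\BSTOP$ by the transgression property from \cref{sec:transgression};
\item the $\xi_i^\tau$-classes from \cref{section:homotopy-even-discs} detect the $K$-theory component via $\tilde{k}$.
\end{itemize}
The first four items are essentially unlooped reformulations of the identifications of classes already used to prove \cref{thm:homotopy-framed-even-disc} and \cref{cor:homotopy-framed-odd-disc}, and the commutativity of the relevant classifying diagrams is built into the definitions in \cref{sec:transgression}.

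The main obstacle will be the last item: showing that $\tilde{k}$ indeed detects the $\xi_i^\tau$-classes nontrivially. The $\xi_i^\tau$ were defined by transgression from universal classes on $\BGL_\infty(\bfZ)$ against the stable homology of $\BDiff^\fr_{D^{2n}}(V_g)_\ell$, whereas $\tilde{k}$ arose from \cref{lem:factorisation} by factoring Waldhausen's map $\Theta\fbt^{(1)}\simeq A(*)$ followed by linearisation to $K(\bfZ)$. Matching these requires invoking (rationally) Waldhausen's identification $\Theta\fbt^{(1)}\simeq A(*)$---or equivalently our reproduction in \cref{prop:first-derivative} via the smoothing-theory diagram \eqref{equ:concordance-stability-smoothing-theory} and the computation of $\pi_{*}(\hocolim_d\BC(D^d))_\bfQ\cong K_{*+2}(\bfZ)_\bfQ$ of \cref{sec:concordances}---which expresses the $K$-theoretic content of $\BDiff_\partial(D^{2n+1})$ in terms of the first Weiss derivative. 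Together with the splitting behaviour of $\Theta\fbt^{(1)}\simeq_\bfQ \bfS\vee K(\bfZ)/\bfS$ used in the proof of \cref{thm:square-top-to-g}, this will confirm that the induced map on the $K$-theory summand of $\pi_{4j+2n+2}(\BSTOP(2n+1))_\bfQ$ is an isomorphism in the required range, completing the proof.
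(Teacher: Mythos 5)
Your approach is a genuine alternative to the paper's, not the same argument. You compute $\pi_*(\BSTOP(2n+1))_\bfQ$ directly via the fibre sequence $\TOP(2n+1)/\OO(2n+1)\to\BSO(2n+1)\to\BSTOP(2n+1)$ and the smoothing-theory identification with $\BDiff_\partial(D^{2n+1})$ from \cref{cor:homotopy-diff-odd-disc}, and then verify term by term that $(g,s,\tilde{k})$ realises the resulting isomorphism. The paper instead compares the two vertical fibre sequences
$\TOP(2n+2)/\TOP(2n+1)\to\BSTOP(2n+1)\to\BSTOP(2n+2)$ and its analogue on the target side, which reduces the whole claim at once to the already-proved \cref{thm:BSTOP2nHty} (applied to $\BSTOP(2n+2)$) and \cref{thm:square-top-to-g} (for the fibre). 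That route is significantly shorter and, crucially, completely sidesteps the detection question for $\tilde{k}$: the top square of the paper's diagram commutes by the very construction of $\tilde{k}$ in \cref{lem:factorisation}, so one never has to match $\tilde{k}$ against the $\xi_i^\tau$-classes. Each approach does essentially the same dimension count somewhere (the paper does it for $\BSTOP(2n)$ when proving \cref{thm:BSTOP2nHty}), but the paper's bootstrapping avoids redoing it and avoids the new detection argument in odd dimensions.

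Your step (b) leaves that detection question — which you rightly flag as the ``main obstacle'' — as a sketch, and the argument you gesture at inevitably invokes \cref{thm:square-top-to-g} or its ingredients anyway (the comparison of $\Theta\fbt^{(1)}$ with concordance theory via \eqref{equ:concordance-stability-smoothing-theory}). In that sense your route is not genuinely independent of the paper's. Two smaller remarks: what you call the ``connecting map'' in your parity argument is really the fibration map $\pi_{k-1}(\TOP(2n+1)/\OO(2n+1))_\bfQ\to\pi_{k-1}(\BSO(2n+1))_\bfQ$, not the boundary map of the long exact sequence (though the parity check for vanishing is correct either way); and you do not explicitly address the surjectivity at degree $5n-5$ that is needed for ``rationally $(5n-5)$-connected'' as opposed to merely ``iso in degrees $<5n-5$''.
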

\begin{proof}
Consider the commutative diagram of vertical fibration sequences
\[\begin{tikzcd}[row sep=0.4cm, column sep=0.2cm]
\TOP(2n+2)/\TOP(2n+1)\dar\rar &\oG(2n+2)/\oG(2n+1)\times \Omega^\infty ( S^{2n+1} \wedge\Theta\fbt^{(1)}/\bfS)_\bfQ\dar\\
\BSTOP(2n+1)\rar\dar&\BSG(2n+1)\times\BSTOP\times  \Omega^\infty( S^{2n+1} \wedge \Theta\fbt^{(1)}/\bfS )_\bfQ\dar \\
\BSTOP(2n+2)\rar &\BSG(2n+2)\times\BSTOP.
\end{tikzcd}
\]
The top and bottom horizontal arrows are rationally $(5n-5)$-connected by \cref{thm:BSTOP2nHty} and \cref{thm:square-top-to-g}, so the same holds for the middle arrow.
\end{proof}

\begin{rem}As $\BSG(2n+1)\simeq K(\bfQ,4n)$ and $\BSTOP\simeq \prod_{i\ge1}K(\bfQ,4i)$, \cref{lem:factorisation} in particular shows that $\BSTOP(2n+1)$ is rationally a product of Eilenberg--McLane spaces in the $\approx5n$-range.
\end{rem}

\subsection{Orthogonal calculus and the second derivative of $\BTOP(-)$}\label{sec:orthogonal-calc}

The assignment $V \mapsto \BTOP(V)$ defines an \emph{orthogonal functor} $\fbt$, i.e.\ a continuous functor from the topologically enriched category of finite-dimensional inner product spaces to the category of pointed spaces. Similarly $\BO(-)$ and $\BG(-)$ define orthogonal functors $\fbo$ and $\fbt$ (using the model $\oG(V)=\hAut(V)^{\mathrm{prop}}$ discussed in \cref{sec:FirstDerTop}), and there are maps of orthogonal functors \begin{equation}\label{equ:the-orthogonal-stars}\fbo\lra \fbt\lra \fbg\end{equation} induced by the inclusions $\OO(-)\subset\TOP(-)\subset\oG(-)$. By taking homotopy fibres we can also form the orthogonal functor $\fto$ having $\fto(V) = \TOP(V)/\OO(V)$. Such functors are the input for Weiss' theory of orthogonal calculus \cite{WeissOrthogonal} as recalled in the introduction. Before studying the derivatives of \eqref{equ:the-orthogonal-stars}, we add a little more to this recollection. We refer to \cite{WeissOrthogonal} for details.

\subsubsection{Orthogonal derivatives}\label{sec:recollection-derivatives}
Given an orthogonal functor $\fe$, one can use the inclusion $V\subset \bfR\oplus V$ to define a new functor $\fe^{(1)}$ by $\fe^{(1)}(V)\coloneq \hofib(\fe(V)\ra \fe(\bfR\oplus V))$. This new functor comes with additional structure in the form of natural maps of the form
\begin{equation}\label{equ:structure-maps-first-derivative}
\sigma\colon \fe^{(1)}(V)\lra \Omega \fe^{(1)}(\bfR\oplus V),
\end{equation} 
so gives rise to a spectrum $\Theta \fe^{(1)}$---the first \emph{derivative} of $\fe$---whose $d$th space is $\fe^{(1)}(\bfR^d)$. Moreover, the reflection along $\{0\}$ in $\bfR$ induces a $\oO(1)$-action on $\fe^{(1)}(V)$ with respect to which the maps \eqref{equ:structure-maps-first-derivative} are equivariant if $\Omega(-)$ is formed with respect to the standard $\oO(1)$-representation. By a standard untwisting argument (see \cite[p.\, 3758]{WeissOrthogonal}), this makes $\Theta \fe^{(1)}$ into a (na{\"i}ve) $\oO(1)$-spectrum, with $\Omega^\infty \Theta \fe^{(1)} \simeq \hocolim_{d} \Omega^d \fe^{(1)}(\bfR^d)$ on which $\OO(1)$ acts as described above on $\fe^{(1)}(\bfR^d)$ and by reflection in each of the $d$ coordinates of $\Omega^d$. Proceeding as above, one sets $\fe^{(2)}(V)\coloneq \hofib(\sigma\colon \fe^{(1)}(V)\ra \Omega \fe^{(1)})$, which again admits natural maps, this time of the form \[\sigma\colon \fe^{(2)}(V)\lra \Omega^2 \fe^{(2)}(\bfR\oplus V),\]
and these give rise to the second derivative $\Theta \fe^{(2)}$; the $2d$th space is $\fe^{(2)}(\bfR^{2d})$. There is an alternative description of $\fe^{(2)}(V)$ which endows it with an $\OO(2)$-action, which by the untwisting trick mentioned above leads to an $\OO(2)$-spectrum structure on $\Theta \fe^{(2)}$ (see \cite[Section\,3]{WeissOrthogonal}). This procedure can be continued and yields spectra $\Theta \fe^{(k)}$ with $\oO(k)$-action, one for each $k\ge1$.

\subsubsection{The first two rational derivatives of $\BO(-)$, $\BTOP(-)$, and $\BG(-)$}
In \cref{sec:FirstDerTop} we preemptively introduced the notation $\Theta\fbo^{(1)}$, $\Theta\fbt^{(1)}$, and $\Theta\fbg^{(1)}$ for what we can now call the first derivatives of these functors, and in \eqref{equ:first-derivatives} and \cref{prop:first-derivative} we described the rational homotopy types of these spectra. Determining their rational homotopy types as $\OO(1)$-spectra amounts to understanding the $\OO(1)$-action on their rational homotopy groups, which is as follows. We write $\bfQ^\pm$ for $\bfQ$ acted upon by $\oO(1)$ via multiplication by $\pm1$.

\begin{lem}\label{lem:FirstDerivatives}
We have
\[\pi_*(\Theta\fbo^{(1)})_\bfQ\cong \pi_*(\Theta\fbg^{(1)})_\bfQ\cong\bfQ^+[0]\quad\text{and}\quad \pi_*(\Theta\fto^{(1)})_\bfQ\cong \bfQ^+[0] \oplus (K_{*>0}(\bfZ)_\bfQ \otimes \bfQ^-).\]
\end{lem}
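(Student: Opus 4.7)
The plan is to identify each spectrum rationally first, and then to compute the $\OO(1)$-action by tracing through the geometric definition. The underlying rational homotopy types are essentially already in hand: $\Theta\fbo^{(1)}\simeq \bfS$ via the identification $\OO(d{+}1)/\OO(d)\cong S^d$ with the loop--suspension \eqref{equ:loop-suspension} as the stabilisation; $\Theta\fbg^{(1)}\simeq \bfS$ was recorded in \eqref{equ:first-derivatives} using Haefliger's $(2d-3)$-connectivity of $\OO(d{+}1)/\OO(d)\to\oG(d{+}1)/\oG(d)$; and $\pi_*(\Theta\fbt^{(1)})_\bfQ\cong K_*(\bfZ)_\bfQ$ is \cref{prop:first-derivative}. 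As derivatives preserve fibre sequences, $\Theta\fto^{(1)}=\hofib(\Theta\fbo^{(1)}\to\Theta\fbt^{(1)})$ as $\OO(1)$-spectra, and combining this with the splitting induced by the equivalence $\Theta\fbo^{(1)}\to\Theta\fbg^{(1)}$ gives, rationally, an $\OO(1)$-equivariant splitting $\Theta\fbt^{(1)}\simeq_\bfQ \Theta\fbo^{(1)}\oplus (\Theta\fbt^{(1)}/\bfS)$.

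For the $\OO(1)$-action on $\Theta\fbo^{(1)}$, the action on $\fbo^{(1)}(\bfR^d)=S^d$ is by reflection in the added $\bfR$-coordinate; this reflection has degree $-1$ on $S^d$, but the action on the sequential spectrum $\Theta\fbo^{(1)}$ uses the structure maps $\sigma\colon \fbo^{(1)}(\bfR^d)\to\Omega\fbo^{(1)}(\bfR\oplus\bfR^d)$ in which $\OO(1)$ acts on $\Omega$ by reversal of the loop coordinate. After the standard untwisting procedure of \cite[p.\,3758]{WeissOrthogonal}, these two signs cancel and the induced action on $\pi_0(\bfS)_\bfQ$ is trivial. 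The same computation, transported along the $\OO(1)$-equivariant equivalence $\Theta\fbo^{(1)}\to\Theta\fbg^{(1)}$, gives the claim for $\fbg$.

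For $\Theta\fto^{(1)}$ it remains to identify the $\OO(1)$-action on the summand $\Theta\fbt^{(1)}/\bfS$, whose rational homotopy is $K_{*>0}(\bfZ)_\bfQ$ by \cref{prop:first-derivative}. This is done via the smoothing theory diagram \eqref{equ:concordance-stability-smoothing-theory}, whose homotopy colimit in $d$ produces, as in the proof of \cref{prop:first-derivative}, an identification of $\Omega^\infty(\Theta\fbt^{(1)}/\bfS)_\bfQ$ with $(\hocolim_d \BC(D^d))_\bfQ$ under which the $\OO(1)$-action (coming from reflection in the stabilisation coordinate) corresponds, up to the untwisting signs discussed above, to the reflection involution on $\BC(D^d)$ from \cref{sec:two-involutions-concordances}. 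The effect of this involution on $\pi_{*-1}(\oC(D^d))_\bfQ\cong s^{-2}K_*(\bfZ)_\bfQ$ is then read off from \cref{thm:concordances-both-involutions} and \cref{thm:concordance-stability-eigenspaces}, yielding the $\bfQ^-$ summand and completing the identification.

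The hard part will be the careful bookkeeping of signs. Several contributions must be combined consistently: the degree $-1$ of the reflection on $S^d$, the $(-1)^d$ coming from the untwisting of loop coordinates in passing from the prespectrum to the spectrum, and the sign of the reflection involution on $\hocolim_d \BC(D^d)$ as it acts on the $K$-theoretic summand of the stable rational homotopy. Any mistake in one of these signs would turn a $\bfQ^+$ into a $\bfQ^-$ or vice versa. This will be dealt with by working throughout with a single explicit model for the $\OO(1)$-action at the space level and only passing to the spectrum after the untwisting is performed, so that each sign can be attributed to a geometric source and cross-checked against the known action on $\Theta\fbo^{(1)}$.
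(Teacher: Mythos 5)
Your overall architecture matches the paper's: compute the underlying rational homotopy groups first, split off $\bfS$ via $\Theta\fbo^{(1)}\to\Theta\fbt^{(1)}\to\Theta\fbg^{(1)}$, and identify the residual $\OO(1)$-action by smoothing theory, reading it off from \cref{thm:concordances-both-involutions}. However, the two concrete sign identifications you commit to are both incorrect, and not in a way that can be repaired by ``careful bookkeeping'' later, because with your signs the would-be action on the spectrum is not even well-defined (it would depend on $d$).

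First, you assert that the $\OO(1)$-action on $\fbo^{(1)}(\bfR^d)=S^d$ has degree $-1$, identifying it with the naive reflection of the added $\bfR$-coordinate of $S^d\subset\bfR\oplus\bfR^d$. This is not the induced action. The $\OO(1)$-action on $\hofib(\BO(V)\to\BO(\bfR\oplus V))\simeq\OO(d+1)/\OO(d)$ is via functoriality, i.e.\ conjugation by $R=\mathrm{diag}(-1,1,\dots,1)$, and under $gH\mapsto g\cdot e_1$ this sends $v\mapsto -Rv=(t,-x)$: it negates the $\bfR^d$-part, not the $\bfR$-part, and so has degree $(-1)^d$. That the per-level degree must be $(-1)^d$ (not constant) is forced: the untwisting contributes $(-1)^d$ on $\Omega^d$, so only $(-1)^d\cdot(-1)^d=1$ gives a $d$-independent, hence well-defined, action on $\pi_*\Theta\fbo^{(1)}$; your $(-1)\cdot(-1)^d$ alternates with $d$.

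Second, and more seriously, you claim that under the identification of $\Omega^{d+1}\fto^{(1)}(\bfR^d)$ with $\oC(D^d)$ the $\OO(1)$-action corresponds (up to untwisting) to the \emph{reflection} involution of \cref{sec:two-involutions-concordances}. It corresponds instead to the \emph{concordance} involution. The added $\bfR$-coordinate of $\bfR\oplus\bfR^d$ becomes, under the smoothing-theory equivalence, the interval direction of $D^d\times[0,1]$, not one of the $d$ disc coordinates; the former is reversed by the concordance involution and the latter by the reflection involution. The distinction is not cosmetic: by \cref{thm:concordances-both-involutions} the reflection involution acts by $+1$ on the $K$-theoretic summand of $\pi_*(\oC(D^d))_\bfQ$ for both parities of $d$, whereas the concordance involution acts by $(-1)^d$. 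Combined with the $(-1)^{d+1}$ from untwisting $\Omega^{d+1}$, the concordance involution gives the $d$-independent answer $(-1)^{d+1}\cdot(-1)^d=-1$ (hence $\bfQ^-$), while the reflection involution would give $(-1)^{d+1}$, which is $d$-dependent and therefore cannot be the action on the colimit. So as written your identification would not produce the $\bfQ^-$ summand you claim.
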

\begin{proof}As just mentioned we already determined these groups without the $\oO(1)$-action, so we are left to show that $\oO(1)$ acts as claimed. By definition the $\OO(1)$-spectrum $\Theta\fbo^{(1)}$ has $d$th space $S^d = \bfR^d\cup\{\infty\}$, on which $\OO(1)$ acts by reflecting each of the $d$ coordinates, so it acts by $(-1)^d$ on the homotopy groups and thus trivially on $\Theta\fbo^{(1)}$ under the untwisting equivalence mentioned in \cref{sec:recollection-derivatives}. The composition $\Theta\fbo^{(1)} \to \Theta\fbt^{(1)} \to \Theta\fbg^{(1)}$ is an equivalence, so it remains to show that the $\OO(1)$-action on $\pi_{*>0}(\Theta\fbt^{(1)})_\bfQ$ is by $(-1)$, or equivalently that the $\OO(1)$-action on $\pi_{*}(\Theta\fto^{(1)})_\bfQ$ is by $(-1)$ where $\fto=\TOP(-)/\oO(-)$. As in \eqref{equ:concordance-stability-smoothing-theory} we have $\Omega^{d+1} \fto^{(1)}(\bfR^d) \simeq \oC(D^d)$, and---taking the untwisting into account---under this equivalence $\OO(1)$ acts on homotopy groups by $(-1)^{d+1}$ times the concordance involution. By \cref{thm:concordances-both-involutions} the concordance involution on $\oC(D^d)$ acts by $(-1)^d$ on rational homotopy groups in degrees $* < d-4$, so the claim follows.
\end{proof}

As for second derivatives, for $\fbo$ we already mentioned in the introduction that the metastable EHP-description of the loop-suspension map $S^d= \fbo^{(1)}(\bfR^d)\to \Omega\fbo^{(1)}(\bfR^{d+1})=\Omega S^{d+1}$ shows that $\Theta\fbo^{(2)} \simeq \bfS^{-1}$, with trivial $\OO(2)$-action (see \cite[Example 2.7]{WeissOrthogonal} for details). Based on \eqref{equ:homotopy-type-g(n)} there are several ways to determine the second derivative of $\fbg$; Reis--Weiss \cite[Proposition 3.5]{ReisWeiss} did it by describing an equivalence
\begin{equation}\label{eq:ThetaBg2}
\Theta\fbg^{(2)} \simeq_\bfQ \Map(S^1_+, \bfS^{-1})
\end{equation}
under which the map $\Theta\fbo^{(2)} \to \Theta\fbg^{(2)}$ is the inclusion of the constant maps and $\OO(2)$ acts on $\Map(S^1_+, \bfS^{-1})$ through the canonical action on $S^1$. We complete this picture with the following.

\begin{thm}\label{thm:second-derivative}
The map $\Theta\fbt^{(2)} \to \Theta\fbg^{(2)}$ induced by inclusion is a rational equivalence.
\end{thm}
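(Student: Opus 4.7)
The idea is to compute the homotopy fibre of $\Theta\fbt^{(2)}\to\Theta\fbg^{(2)}$ level-wise from what we already know about first derivatives. Writing $F_d\coloneq\TOP(d+1)/\TOP(d)$ and $G_d\coloneq\oG(d+1)/\oG(d)$ for the $d$th spaces of $\Theta\fbt^{(1)}$ and $\Theta\fbg^{(1)}$, the $2d$th spaces of $\Theta\fbt^{(2)}$ and $\Theta\fbg^{(2)}$ are by definition
\[\fbt^{(2)}(\bfR^d)=\hofib\big(F_d\xra{\sigma}\Omega F_{d+1}\big)\quad\text{and}\quad \fbg^{(2)}(\bfR^d)=\hofib\big(G_d\xra{\sigma}\Omega G_{d+1}\big),\]
where $\sigma$ denotes the stabilisation map of \eqref{equ:loop-suspension-gtop}. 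Applying the $3\times 3$ lemma to the commutative square formed by $F_d\ra G_d$ and its stabilised version $\Omega F_{d+1}\ra \Omega G_{d+1}$, the fibre of the induced map $\fbt^{(2)}(\bfR^d)\to\fbg^{(2)}(\bfR^d)$ is $\hofib(H_d\to\Omega H_{d+1})$, where $H_d\coloneq\hofib(F_d\to G_d)$. So I would reduce \cref{thm:second-derivative} to showing that this fibre is rationally trivial in a range growing with $d$.

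The key input is \cref{thm:square-top-to-g}: the square relating $F_d\to G_d$ with the canonical maps to the infinite loop spaces $\Omega^\infty(S^d\wedge \Theta\fbt^{(1)})\to \Omega^\infty(S^d\wedge \Theta\fbg^{(1)})$ is rationally $(5\lfloor d/2\rfloor-5)$-cartesian. Taking horizontal homotopy fibres gives, in a range growing with $d$, a rational equivalence
\[H_d\simeq_\bfQ \Omega^\infty\big(S^d\wedge \hofib(\Theta\fbt^{(1)}\to\Theta\fbg^{(1)})\big)=\Omega^\infty\big(S^d\wedge (\Theta\fbt^{(1)}/\bfS)\big),\]
where the last identification uses the splitting coming from $\Theta\fbo^{(1)}\simeq\Theta\fbg^{(1)}$ established in \eqref{equ:first-derivatives}. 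By naturality in $d$ of the construction in \cref{thm:square-top-to-g}, under this identification the stabilisation map $H_d\to\Omega H_{d+1}$ corresponds, in a range, to the canonical map
\[\Omega^\infty\big(S^d\wedge \Theta\fbt^{(1)}/\bfS\big)\lra \Omega\Omega^\infty\big(S^{d+1}\wedge\Theta\fbt^{(1)}/\bfS\big)=\Omega^\infty\big(S^d\wedge \Theta\fbt^{(1)}/\bfS\big),\]
which is tautologically an equivalence. By \cref{prop:first-derivative} the spectrum $\Theta\fbt^{(1)}/\bfS$ is rationally equivalent to the connective spectrum $K(\bfZ)/\bfS$, so its rationalisation is a product of shifted Eilenberg--MacLane spectra and the above infinite loop space model is well-defined and independent of $d$; hence $\hofib(H_d\to \Omega H_{d+1})$ is rationally contractible in a range increasing with $d$.

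It then follows that $\fbt^{(2)}(\bfR^d)\to\fbg^{(2)}(\bfR^d)$ is a rational equivalence in a range increasing with $d$, so the underlying map of spectra $\Theta\fbt^{(2)}\to \Theta\fbg^{(2)}$ is a rational equivalence. Both sides carry natural $\OO(2)$-actions (coming from rotating a $2$-plane in $\bfR^\infty$), and since every map and identification in the argument is functorial in the orthogonal input, the comparison map is $\OO(2)$-equivariant; in particular the rational equivalence is one of $\OO(2)$-spectra. The main obstacle is the naturality assertion in the second paragraph: one must verify that the identifications in \cref{thm:square-top-to-g} are compatible with the stabilisation maps in $d$, not merely level-wise. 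This should follow from inspecting the proof of \cref{thm:square-top-to-g} (which uses smoothing theory and is natural in the manifold variable), but it is the step that requires genuine spelling out rather than invoking results as black boxes; no further ingredients beyond those already in the paper are needed.
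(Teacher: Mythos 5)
Your argument is correct and essentially the same as the paper's: both reduce the claim to comparing the square of \cref{thm:square-top-to-g} for $d$ with its looped analogue for $d+1$, which share a bottom row of infinite loop spaces, and then read off connectivity that grows faster than $2d$. The naturality point you flag as the main remaining obstacle is sidestepped in the paper by working directly with the vertical total fibres of this comparison (rather than first identifying the horizontal fibres $H_d$ and then tracking the stabilisation under that identification), so the identification never needs to be made explicit.
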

\begin{proof}
Comparing the square in \cref{thm:square-top-to-g} with the analogous looped square for $d+1$, it follows that the square
\[
\begin{tikzcd}
\fbt^{(1)}(\bfR^{d})=\TOP(d+1)/\TOP(d)\rar \dar["\sigma",d]& \fbg^{(1)}(\bfR^{d}) =\oG(d+1)/\oG(d)\dar["\sigma",d]\\
\Omega\fbt^{(1)}(\bfR^{d+1})=\Omega (\TOP(d+2)/\TOP(d+1))\rar &\Omega\fbg^{(1)}(\bfR^{d+1})=\Omega (\oG(d+2)/\oG(d+1))
\end{tikzcd}
\]
is rationally $(5\lfloor \tfrac{d}{2}\rfloor-c)$-cartesian for a constant $c$, so taking vertical homotopy fibres we conclude that the map $\fbt^{(2)}(\bfR^d) \to \fbg^{(2)}(\bfR^d)$ is rationally $(5\lfloor \tfrac{d}{2}\rfloor-c)$-connected. As these are the $2d$th spaces of $\Theta\fbt^{(2)}$ and $\Theta\fbg^{(2)}$ and $5\lfloor \tfrac{d}{2}\rfloor-2d$ tends to infinity with $d$, the result follows.
\end{proof}

Combining \cref{lem:FirstDerivatives} with Borel's calculation $K_{*>0}(\bfZ)_\bfQ \cong  \oplus_{k \geq 1} \bfQ[4k+1]$ (see \cref{sec:stable-cohomology-borel}), and \cref{thm:second-derivative} with \eqref{eq:ThetaBg2}, we can summarise our discussion of the first two derivatives of $\fbt$ as follows. This in particular includes \cref{bigthm:second-derivative} from the introduction.

\begin{cor}\label{cor:explicit-rational-derivatives}There are equivalences
\begin{enumerate}
\item $\Theta\fbt^{(1)}_\bfQ\simeq H\bfQ^+\vee \bigvee_{k\ge1} \Sigma^{4k+1}H\bfQ^-$ as $\OO(1)$-spectra, and
\item $\Theta\fbt^{(2)}_\bfQ\simeq \Map(S^1_+, \bfS^{-1})_\bfQ$ as $\OO(2)$-spectra.
\end{enumerate}
\end{cor}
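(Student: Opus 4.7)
The plan is to assemble both parts of the corollary from results already established in the excerpt, together with the standard fact that rationalisation makes spectra with finite cyclic group actions formal.

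For part (i), I would proceed in two steps. First, I would use Proposition \ref{prop:first-derivative} to identify $\pi_*(\Theta\fbt^{(1)})_\bfQ$ with $K_*(\bfZ)_\bfQ$, which by Borel's theorem is $\bfQ$ in degree $0$ and in degrees $4k+1$ for $k \geq 1$, and zero otherwise. Next, I would upgrade this to an $\OO(1)$-equivariant computation using Lemma \ref{lem:FirstDerivatives}: the $\OO(1)$-equivariant map $\Theta\fbo^{(1)} \to \Theta\fbt^{(1)}$ is a split injection in $\pi_*(-)_\bfQ$ (split by the equivalence $\Theta\fbo^{(1)} \to \Theta\fbg^{(1)}$), and on $\pi_0$ it is an isomorphism onto a trivial $\OO(1)$-summand, while on $\pi_{>0}(\Theta\fbt^{(1)})_\bfQ$ the same lemma shows the $\OO(1)$-action is by $-1$. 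It remains to promote this homotopy-group statement to an equivalence of $\OO(1)$-spectra. For this, the rational stable $\infty$-category of $\OO(1)$-spectra decomposes as a product of two copies of rational spectra via the idempotent splitting induced by $\bfQ[\OO(1)] \cong \bfQ^+ \times \bfQ^-$, and each rational spectrum is formal (equivalent to the wedge of Eilenberg--Mac Lane spectra on its homotopy groups). Applying formality separately to the $\pm 1$-eigenspaces produces the claimed equivalence
\[\Theta\fbt^{(1)}_\bfQ \;\simeq\; H\bfQ^+ \vee \bigvee_{k \geq 1} \Sigma^{4k+1} H\bfQ^-.\]

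Part (ii) is immediate by composing Theorem \ref{thm:second-derivative}, which asserts that $\Theta\fbt^{(2)} \to \Theta\fbg^{(2)}$ is a rational equivalence of $\OO(2)$-spectra, with the Reis--Weiss identification \eqref{eq:ThetaBg2}, which gives $\Theta\fbg^{(2)} \simeq_\bfQ \Map(S^1_+, \bfS^{-1})$ as $\OO(2)$-spectra, where $\OO(2)$ acts on $S^1$ in the canonical way.

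There is no real obstacle here: the corollary is a summary of results already proved. The only subtlety worth flagging is the invocation of formality in the equivariant setting for part (i), but after the $\bfQ$-linear decomposition of $\OO(1) = C_2$ into its sign-eigenspaces this reduces to the classical formality of rational spectra, so the equivariant equivalence follows from the ordinary one on each eigenspace.
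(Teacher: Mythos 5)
Your proposal is correct and follows essentially the same route as the paper: the paper's one-line proof is "Combining Lemma~\ref{lem:FirstDerivatives} with Borel's calculation ... and Theorem~\ref{thm:second-derivative} with \eqref{eq:ThetaBg2}", which is exactly your parts (i) and (ii). The only thing you add that the paper leaves implicit is the formality step for part (i) — splitting the rational $\OO(1)$-spectrum along the idempotent decomposition $\bfQ[\OO(1)] \cong \bfQ^+ \times \bfQ^-$ and using that rational spectra are formal — which is a worthwhile thing to make explicit but does not constitute a different approach.
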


We close this subsection by making good for the postponed proof of \cref{lem:factorisation}.

\subsubsection{Proof of \cref{lem:factorisation}}\label{sec:pf-factorisation}
During the proof we use the Taylor tower of an orthogonal functor as summarised in the introduction (see \cite[Section  9]{WeissOrthogonal} for details). We furthermore consider the orientation preserving analogue $\fbst(V) = \BSTOP(V)$ of $\fbt$, the functor  $\ff(V) = \TOP(V \oplus \bfR)/\TOP(V)$ and the canonical morphism $\ff \to \fbst$. Note that $\ff = \fbst^{(1)}$, and that $\fbst$ and $\fbt$ differ by the constant functor on $B\bfZ^\times$ and thus have the same derivatives, which we continue to denote by $ \Theta \fbt^{(k)}$. Now $T_0 \ff$ has contractible values and by general properties of orthogonal calculus we have a preferred equivalence $\Theta \ff^{(1)} \simeq \mathrm{Ind}_e^{\OO(1)} \mathrm{Res}_e^{\OO(1)}\Theta \fbt^{(1)}$ with respect to which the map of $\OO(1)$-spectra $\Theta \ff^{(1)} \to \Theta \fbt^{(1)}$ is adjoint to the identity map. We consider the diagram
\[
\begin{tikzcd}[column sep=0.4cm, ar symbol/.style = {draw=none,"\textstyle#1" description,sloped},
  equivalent/.style = {ar symbol={\simeq}}]
\TOP(2n+2)/\TOP(2n+1) \dar \rar&[-0.25cm] T_1 \ff(\bfR^{2n+1}) \dar &[-0.25cm] \Omega^\infty(S^{2n+1} \wedge_{\OO(1)} {\Theta \ff^{(1)}}) \arrow[l,equivalent ]&[-0.3cm]\Omega^\infty(S^{2n+1} \wedge {\Theta \fbt^{(1)}}) \dar\arrow[l,equivalent] \\
\BSTOP(2n+1) \rar& T_1 \fbst(\bfR^{2n+1}) \arrow[rr, dashed]& & \Omega^\infty(S^{2n+1} \wedge {\Theta \fbt^{(1)}}/\bfS)_\bfQ
\end{tikzcd}
\]
where upper left equivalence is because $T_0 \ff$ has contractible values and the right because ${\Theta \ff^{(1)}}$ is induced from ${\Theta \fbt^{(1)}}$.  As the top row of this diagram is compatible with the canonical map from the statement, it reduces the problem to producing the dashed map---a problem about first Taylor approximations. To construct the dashed map we consider the diagram
\[
\begin{tikzcd}[column sep=0.5cm]
\Omega^\infty(S^{2n+1} \wedge_{\OO(1)} {\Theta \fbt^{(1)}}) \rar \arrow[rd, dotted]&T_1 \fbst(\bfR^{2n+1}) \rar \arrow[d, dashed] &[-0.5cm] T_0 \fbst(\bfR^{2n+1}) \simeq \BSTOP\\
\Omega^\infty(S^{2n+1} \wedge {\Theta \fbt^{(1)}}) \uar \rar & \Omega^\infty(S^{2n+1} \wedge {\Theta \fbt^{(1)}}/\bfS)_\bfQ
\end{tikzcd}
\]
and first produce a dotted map making the lower triangle commute. From \cref{cor:explicit-rational-derivatives} we get
\begin{align*}
\pi_*(S^{2n+1} \wedge {\Theta \fbt^{(1)}})_\bfQ &\cong \textstyle{\bigoplus_{k \geq 1} \bfQ[2n+1+4k+1]\oplus \bfQ[2n+1]}\\
\pi_*(S^{2n+1} \wedge {\Theta \fbt^{(1)}}/\bfS)_\bfQ &\cong \textstyle{\bigoplus_{k \geq 1} \bfQ[2n+1+4k+1]}\\
\pi_*(S^{2n+1} \wedge_{\OO(1)} {\Theta \fbt^{(1)}})_\bfQ &\cong \textstyle{\bigoplus_{k \geq 1} \bfQ[2n+1+4k+1]},
\end{align*}
so there is indeed a unique homotopy class of dotted infinite loop map making the lower triangle commute. To extend this to a map as indicated by the dashed arrow, we observe that $\Omega^\infty(S^{2n+1} \wedge {\Theta \fbt^{(1)}}/\bfS)_\bfQ\simeq \prod_{k \geq 1} K(\bfQ, 2n+4k+2)$, so the problem is to show that certain cohomology classes in $\oH^{2n+4k+2}(\Omega^\infty(S^{2n+1} \wedge_{\OO(1)} {\Theta \fbt^{(1)}});\bfQ)$ are permanent cycles in the Serre spectral sequence for the top row. But this spectral sequence collapses, as $\BSTOP$ is simply-connected and has rational cohomology supported in even degrees. \qed

\subsection{The algebraic $K$-theory Euler class}\label{sec:KThyEuler}
The purpose of this final subsection is twofold: firstly, we explain how a more sophisticated use of orthogonal calculus yields an improvement on \cref{lem:factorisation} in the form of a \emph{preferred} map
\[\tilde{k} : \BSTOP(2n+1) \lra \Omega^\infty (S^{2n+1} \wedge \Theta\fbt^{(1)}/\bfS)_\bfQ=\Omega^\infty (S^{2n+1} \wedge (\Theta\fbt^{(1)}/\Theta\fbo^{(1)}))_\bfQ,\]
together with a nullhomotopy of its precomposition with $\BSO(2n+1) \to \BSTOP(2n+1)$. Secondly, we show how this provides a reinterpretation of our results on $\BSTOP(d)$ in terms of obstructions to finding vector bundle structures on topological $\bfR^d$-bundles. Throughout this section, we freely use the theory of orthogonal calculus and refer to \cite{WeissOrthogonal} for details.

\subsubsection{The intrinsic Euler class}

There is a homotopy cofibre sequence of based $\OO(1)$-spaces \[S^V \lra S^{\bfR \oplus V} \lra \mathrm{Ind}_e^{\OO(1)} S^1 \wedge S^V,\]
where the $\OO(1)$-action on the left-hand and middle spaces is by negation in $V$ and $\bfR \oplus V$ respectively. If $\fe$ is an orthogonal functor with first derivative the $\OO(1)$-spectrum $\Theta \fe^{(1)}$, then the above gives a fibration sequence
\[\Omega^\infty(S^V \wedge_{\OO(1)} \Theta \fe^{(1)}) \lra \Omega^\infty(S^{\bfR \oplus V} \wedge_{\OO(1)} \Theta \fe^{(1)}) \lra \Omega^\infty(S^1 \wedge S^V \wedge  \Theta \fe^{(1)} ),\]
so as $T_0\fe(V)\simeq T_0\fe(\bfR\oplus V)$ on first Taylor approximations there is a homotopy fibre sequence
\[\Omega^{\infty}(S^V \wedge \Theta \fe^{(1)}) \lra T_1 \fe(V) \lra T_1 \fe(\bfR \oplus V).\]
This can be expressed in the following more useful way. The $\OO(1)$-spectrum $\Theta \fe^{(1)}$ arises as the fibre of a parametrised $\OO(1)$-spectrum $\underline{\Theta \fe}^{(1)}$ over $\fe(\bfR^\infty) \simeq T_0 \fe(\bfR \oplus V)$ (i.e. a parametrised spectrum with a fibrewise $\OO(1)$-action), and there is a homotopy cartesian square 
\begin{equation}\label{eq:The UnivEulerClass}
\begin{tikzcd}
T_1 \fe(V) \rar\dar &  T_0 \fe(\bfR \oplus V) \dar{z}\\
T_1 \fe(\bfR \oplus V) \rar{e_\fe} & \underline{\Omega}^\infty(S^{\bfR \oplus V} \wedge \underline{\Theta \fe}^{(1)}),
\end{tikzcd}
\end{equation}
where $\underline{\Omega}^\infty(-)$ denotes the fibrewise infinite loop space of a parametrised spectrum, $z$ is the basepoint section, and $e_\fe$ is a map covering the standard map $T_1 \fe(\bfR \oplus V) \to T_0 \fe(\bfR \oplus V)$. This square is given by \cite[Corollary 8.5]{WeissOrthogonal}. 

Generally speaking, if $\underline{\mathrm{T}}$ is a parameterised spectrum over a space $X$, then homotopy classes of sections of $\underline{\Omega}^\infty(\underline{\mathrm{T}}) \to X$ may be seen as twisted cohomology classes on $X$ with coefficients in $\underline{\mathrm{T}}$, and the set of them is denoted $H^0(X ; \underline{\mathrm{T}})$. If the parameterised spectrum $\underline{\mathrm{T}}$ is pulled back from a (parametrised) spectrum $\mathrm{T}$ over a point, i.e.\,a regular spectrum, then this notion recovers $\mathrm{T}$-cohomology in the usual sense. The basepoint section $z : X \to \underline{\Omega}^\infty(\underline{\mathrm{T}})$ represents the zero cohomology class. If $(X,Y)$ is a space pair then a section $s$ of $\underline{\Omega}^\infty(\underline{\mathrm{T}}) \to X$ together with a homotopy $s\vert_Y \simeq z\vert_Y$ fibrewise over $X$ may be seen as a relative twisted cohomology class, and the set of homotopy classes of such is denoted $H^0(X, Y ; \underline{\mathrm{T}})$.

In the case at hand, the square \eqref{eq:The UnivEulerClass} may thus be interpreted as a relative twisted cohomology class on the pair $(T_1 \fe(\bfR \oplus V), T_1 \fe(V))$ with coefficients in the pullback along $T_1 \fe(\bfR \oplus V) \to T_0 \fe(\bfR \oplus V)$ of the parametrised spectrum $\underline{\Theta \fe}^{(1)}$ over $T_0 \fe(\bfR \oplus V)$. Neglecting that it is a relative class, and pulling back further along $\fe(\bfR \oplus V) \to T_1 \fe(\bfR \oplus V)$, gives an absolute twisted cohomology class 
\[e_\fe \in \oH^{0}\big(\fe(\bfR \oplus V) ; S^{\bfR \oplus V} \wedge \underline{\Theta \fe}^{(1)}\big).\]
If $f : \fe \to \ff$ is a morphism of orthogonal functors, there is an induced map $\Theta f^{(1)} : \underline{\Theta \fe}^{(1)} \to f(\bfR^\infty)^*\underline{\Theta \ff}^{(1)}$ of first derivatives (the target is pulled back along the map $f(\bfR^\infty) : \fe(\bfR^\infty) \to \ff(\bfR^\infty)$), and by naturality we have \[(T_1 f)^*(e_\ff) = (\Theta f^{(1)})_*(e_\fe)\in \oH^0\big(T_1 \fe(\bfR \oplus V), T_1 \fe(V) ; S^{\bfR \oplus V} \wedge f(\bfR^\infty)^*\underline{\Theta \ff}^{(1)}\big).\] More precisely, there are preferred homotopies making the 3-cube given by the corresponding diagrams \eqref{eq:The UnivEulerClass} commute, so again forgetting that $e_\fe$ and $e_\ff$ are relative classes and pulling back we obtain a twisted cohomology class
\[
(e_\ff, e_\fe) \in \oH^0\big( \ff(\bfR \oplus V),  \fe(\bfR \oplus V) ; S^{\bfR \oplus V} \wedge (\underline{\Theta \ff}^{(1)}, \underline{\Theta \fe}^{(1)})\big).
\]

\subsubsection{Application to $\fbo$ and $\fbt$}

Applying the above discussion to the map $\fbo \to \fbt $ gives
\[(e_\fbt, e_\fbo) \in \oH^d(\BTOP(d), \BO(d) ; (\underline{\Theta \fbt}^{(1)}, \underline{\Theta \fbo}^{(1)}).\]

\begin{rem}
If we have understood \cite[p.\ 6 (1)]{DWW} correctly it suggests that $e_{\fbt}$ is the characteristic class associated to the excisive $A$-theory characteristic as constructed in that paper, and that $e_{\fbo}$ is Becker's stable cohomotopy Euler class, so that the above corresponds to \cite[Equation (0.5)]{DWW}. We think of $e_{\fbt}$ as the algebraic $K$-theory Euler class for this reason.
\end{rem}

The discussion in \cite[Remark 5.2]{ReisWeiss} shows that the parametrised spectrum $\underline{\Theta \fbo}^{(1)}$ over $\BO$, and the pullback of the parametrised spectrum $\underline{\Theta \fbt}^{(1)}$ along $\BO \to \BTOP$, are both pulled back along the map $\BO \to \BG$ from the parametrised spectrum given by the canonical action of $\G = \GL_1(\bfS)$ on  $\Theta \fbo^{(1)} =  \Theta \fbo^{(1)} \wedge \bfS$ and $\Theta \fbt^{(1)} =  \Theta \fbt^{(1)} \wedge \bfS$. Upon rationalising this action factors over $\G \to \bfZ^\times$, so ${\underline{\Theta \fbo}^{(1)}}_\bfQ$ and the pullback of ${\underline{\Theta \fbt}^{(1)}}_\bfQ$ to $\BO$ are the pullbacks of $\Theta \fbo^{(1)}_\bfQ \wedge \underline{\bfS}_\mathrm{sign}$ and  $\Theta \fbt^{(1)}_\bfQ \wedge \underline{\bfS}_\mathrm{sign}$ along the orientation map $\BO \to B\bfZ^\times$, where $\underline{\bfS}_\mathrm{sign}$ is the $\bfZ^\times$-spectrum given by $\bfS$ with the sign involution.  Using that $\OO \to \TOP$ is a rational equivalence, it follows that also $\underline{\Theta \fbt}^{(1)}_\bfQ$ itself is the pullback of $\Theta \fbt^{(1)}_\bfQ \wedge \underline{\bfS}_\mathrm{sign}$ along the orientation map $\BTOP \to B\bfZ^\times$. Extending \eqref{eq:The UnivEulerClass} there is a commutative diagram (similarly for $\fbo$)
\begin{equation*}
\begin{tikzcd}[ar symbol/.style = {draw=none,"\textstyle#1" description,sloped},
  equivalent/.style = {ar symbol={\simeq}}]
T_1 \fbt(V) \rar\dar & T_0 \fbt(\bfR \oplus V)  \arrow[r,equivalent ] \dar{z} &[-1cm] \BTOP \rar &[-1cm] B\bfZ^\times \dar{z}\\
T_1 \fbt(\bfR \oplus V) \rar{e_\fbt} & \underline{\Omega}^\infty(S^{\bfR \oplus V} \wedge \underline{\Theta \fbt}^{(1)})  \arrow[rr] &  & \underline{\Omega}^\infty(S^{\bfR \oplus V} \wedge \Theta \fbt^{(1)}_\bfQ \wedge \underline{\bfS}_\mathrm{sign}),
\end{tikzcd}
\end{equation*}
where the left-hand square is homotopy cartesian, the right-hand square is rationally homotopy cartesian, and $z$ is the basepoint section. In particular the lower map and its analogue for $\fbo$ gives a diagram of the form
\begin{equation*}
\begin{tikzcd}[column sep=0.5cm]
T_1 \fbo(\bfR^d) \dar \rar{e_\fbo} & \underline{\Omega}^\infty(S^d \wedge \underline{\Theta \fbo}^{(1)})  \rar \dar & \underline{\Omega}^\infty(S^{d} \wedge \Theta \fbo^{(1)}_\bfQ \wedge \underline{\bfS}_\mathrm{sign}) \dar \rar &[-0.2cm] B\bfZ^\times \dar{z}\\
T_1 \fbt(\bfR^d) \rar{e_\fbt} & \underline{\Omega}^\infty(S^{d} \wedge \underline{\Theta \fbt}^{(1)})  \rar  & \underline{\Omega}^\infty(S^{d} \wedge \Theta \fbt^{(1)}_\bfQ \wedge \underline{\bfS}_\mathrm{sign}) \rar & \underline{\Omega}^\infty(S^{d} \wedge (\tfrac{\Theta \fbt^{(1)}}{\Theta \fbo^{(1)}})_\bfQ \wedge \underline{\bfS}_\mathrm{sign}),
\end{tikzcd}
\end{equation*}
where the right-hand square is developed by taking cofibres of (parametrised) spectra. When $d$ is even the left-hand map in this diagram is a rational equivalence (over $B\bfZ^\times$), so the composition along the bottom is trivial. When $d$ is odd, we consider the composition
\[\BTOP(d) \xlra{e_{\fbt}} \underline{\Omega}^\infty\big(S^{d} \wedge \underline{\Theta \fbt}^{(1)}\big)  \lra \underline{\Omega}^\infty\big(S^{d} \wedge (\tfrac{\Theta \fbt^{(1)}}{\Theta \fbo^{(1)}}\big)_\bfQ \wedge \underline{\bfS}_\mathrm{sign}) \xlra{\cdot 1/2}\underline{\Omega}^\infty\big(S^{d} \wedge (\Theta \fbt_\bfQ^{(1)}/\Theta \fbo_\bfQ^{(1)})_\bfQ \wedge \underline{\bfS}_\mathrm{sign}\big)\]
which qualifies as $\tilde{k}$ in \cref{lem:factorisation} when precomposed with $\BSTOP(2n+1)\ra \BTOP(2n+1)$ as a result of the following lemma, using $\bfS=\Theta\fbo^{(1)}$.

\begin{lem}
The diagram
\begin{equation*}
\begin{tikzcd}
\TOP(d+1)/\TOP(d) \dar \rar& \Omega^\infty(S^{d} \wedge \Theta \fbt^{(1)}) \rar& \Omega^\infty(S^{d} \wedge (\Theta \fbt^{(1)}/\Theta \fbo^{(1)})_\bfQ) \dar{(1 + (-1)^{d+1}) \cdot \mathrm{inc}}\\
\BTOP(d) \rar & T_1 \fbt(\bfR^{d}) \rar & \underline{\Omega}^\infty(S^{d} \wedge (\Theta \fbt^{(1)}/\Theta \fbo^{(1)})_\bfQ \wedge \underline{\bfS}_\mathrm{sign})
\end{tikzcd}
\end{equation*}
commutes up to homotopy.
\end{lem}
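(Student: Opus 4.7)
My approach is to analyse the diagram by restricting to the homotopy fibre of $\BTOP(d)\to\BTOP(d+1)$ and then unfolding the Euler class $e_{\fbt}$ using its cartesian-square definition \eqref{eq:The UnivEulerClass}. First I would observe that the composition $\TOP(d+1)/\TOP(d)\to \BTOP(d)\to \BTOP\to B\bfZ^{\times}$ is null-homotopic (the first map is the fibre of $\BTOP(d)\to\BTOP(d+1)$ and the map to $\BTOP$ factors through $\BTOP(d+1)$). Consequently the restriction of the bottom composition to $\TOP(d+1)/\TOP(d)$ factors, after choosing a null-homotopy, through the fibre over the basepoint $\Omega^{\infty}(S^{d}\wedge(\Theta\fbt^{(1)}/\Theta\fbo^{(1)})_{\bfQ})$ of the twisted target. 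The problem thus reduces to identifying the induced map $\TOP(d+1)/\TOP(d)\to\Omega^{\infty}(S^{d}\wedge(\Theta\fbt^{(1)}/\Theta\fbo^{(1)})_{\bfQ})$ with $(1+(-1)^{d+1})$ times the quotient of the canonical unit map.

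Next, I would apply the cartesian square \eqref{eq:The UnivEulerClass} with $V=\bfR^{d-1}$, yielding the extended fibre sequence (over the basepoint of $\BTOP$)
\[
T_{1}\fbt(\bfR^{d-1})\lra T_{1}\fbt(\bfR^{d})\xlra{e_{\fbt}}\Omega^{\infty}(S^{d}\wedge\Theta\fbt^{(1)}).
\]
Comparing this with the layer fibre sequences $\Omega^{\infty}(S^{d-1}\wedge_{\OO(1)}\Theta\fbt^{(1)})\to T_{1}\fbt(\bfR^{d-1})\to \BTOP$ and $\Omega^{\infty}(S^{d}\wedge_{\OO(1)}\Theta\fbt^{(1)})\to T_{1}\fbt(\bfR^{d})\to\BTOP$ identifies the composition
\[
\Omega^{\infty}(S^{d}\wedge_{\OO(1)}\Theta\fbt^{(1)})\lra T_{1}\fbt(\bfR^{d})\xlra{e_{\fbt}}\Omega^{\infty}(S^{d}\wedge\Theta\fbt^{(1)})
\]
with the third map of the orbit cofibre sequence coming from $S^{d-1}\to S^{d}\to \mathrm{Ind}_{e}^{\OO(1)}S^{d}$ smashed with $\Theta\fbt^{(1)}$, i.e.\ with the natural projection from $\OO(1)$-orbits to $\OO(1)$-untwisted (which is the transfer map). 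On the other hand, the canonical unit map $\TOP(d+1)/\TOP(d)\to\Omega^{\infty}(S^{d}\wedge\Theta\fbt^{(1)})$ from the top row arises from the analogous fibre sequence applied with $V=\bfR^{d}$, and factors through the layer $\Omega^{\infty}(S^{d}\wedge_{\OO(1)}\Theta\fbt^{(1)})$ followed by the tautological delooping map $\Omega^{\infty}(S^{d}\wedge_{\OO(1)}\Theta\fbt^{(1)})\to \Omega^{\infty}(S^{d}\wedge\Theta\fbt^{(1)})$.

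The final step is a rational computation of this transfer map on $(\Theta\fbt^{(1)}/\Theta\fbo^{(1)})_{\bfQ}$: rationally, the map $X_{h\OO(1)}\to X$ induced by $\OO(1)_{+}\to S^{0}$ acts on $\pi_{*}(X)_{\bfQ}$ by $1+\sigma$, where $\sigma$ is the action of the non-trivial element. On $S^{d}\wedge(\Theta\fbt^{(1)}/\Theta\fbo^{(1)})_{\bfQ}$ the sign $\sigma$ is the product of $(-1)^{d}$ (the $\OO(1)$-action on $S^{d}$ by reflection of each coordinate) and of the sign by which $\OO(1)$ acts on $(\Theta\fbt^{(1)}/\Theta\fbo^{(1)})_{\bfQ}$, which is $-1$ by \cref{lem:FirstDerivatives}. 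This yields the factor $1+(-1)^{d+1}$ as required, completing the comparison.

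I expect the main technical obstacle to be the middle step: precisely identifying the composite $\Omega^{\infty}(S^{d}\wedge_{\OO(1)}\Theta\fbt^{(1)})\to T_{1}\fbt(\bfR^{d})\xrightarrow{e_{\fbt}}\Omega^{\infty}(S^{d}\wedge\Theta\fbt^{(1)})$ with the projection from $\OO(1)$-orbits, i.e.\ showing that the cartesian-square description of $e_{\fbt}$ is compatible with the cofibre sequence of $\OO(1)$-spaces in the way claimed. This is essentially a consequence of Weiss' construction, but tracking all the identifications (in particular the sign conventions entering the untwisting of $\OO(k)$-spectra) requires care. Everything else is formal once this comparison is in place.
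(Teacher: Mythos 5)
Your overall strategy is correct and essentially the one the paper uses: reduce to showing that the relevant composite is a norm/transfer map rationally computable as $1+\tau$, and then compute $\tau = (-1)\cdot(-1)^d$ via Lemma~\ref{lem:FirstDerivatives} and the $\OO(1)$-action on $S^d$. The final rational computation you give is right.

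The gap is exactly where you flag it, and I would say it is not a small bookkeeping issue but the entire content of the lemma. Your middle step asserts that the composite $\Omega^\infty(S^d\wedge_{\OO(1)}\Theta\fbt^{(1)})\to T_1\fbt(\bfR^d)\xrightarrow{e_\fbt}\Omega^\infty(S^d\wedge\Theta\fbt^{(1)})$ agrees with the boundary map of the cofibre sequence of $\OO(1)$-spaces ``by comparing with the layer fibre sequences,'' but you give no argument, and your description of the top row as ``factoring through the layer $\Omega^\infty(S^d\wedge_{\OO(1)}\Theta\fbt^{(1)})$ followed by a tautological delooping map'' does not match the actual construction: the map $\TOP(d+1)/\TOP(d)\to\Omega^\infty(S^d\wedge\Theta\fbt^{(1)})$ is the first Taylor approximation $\ff(\bfR^d)\to T_1\ff(\bfR^d)$ of the auxiliary functor $\ff(V)=\TOP(\bfR\oplus V)/\TOP(V)$, transported through the equivalence $T_1\ff(\bfR^d)\simeq\Omega^\infty(S^d\wedge_{\OO(1)}\Theta\ff^{(1)})\simeq\Omega^\infty(S^d\wedge\Theta\fbt^{(1)})$ coming from $T_0\ff\simeq*$ and $\Theta\ff^{(1)}=\mathrm{Ind}_e^{\OO(1)}\mathrm{Res}_e^{\OO(1)}\Theta\fbt^{(1)}$, rather than through an intermediate orbit space. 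There is no ``tautological'' map $\Omega^\infty(S^d\wedge_{\OO(1)}\Theta\fbt^{(1)})\to\Omega^\infty(S^d\wedge\Theta\fbt^{(1)})$ of the sort you invoke; the only candidates are the transfer and quotient, and identifying the composite with the transfer is precisely what has to be proved. In addition, your opening reduction requires choosing a nullhomotopy of $\TOP(d+1)/\TOP(d)\to B\bfZ^\times$, and one then needs this choice to be compatible with the identification of the top row, which adds a further compatibility you would have to track.

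The paper sidesteps all of this by working with the morphism of orthogonal functors $\ff\to\fbt$ (adjoint to the identity on $\Theta\fbt^{(1)}$) and invoking naturality of the intrinsic Euler class $e_{(-)}$ from \eqref{eq:The UnivEulerClass}. Since $T_0\ff\simeq*$, the Euler class $e_\ff$ is directly identified, via the induction--restriction adjunction, with the norm on $\Omega^\infty(S^d\wedge\Theta\fbt^{(1)})$; the naturality square then transports this identification to the composite through $T_1\fbt(\bfR^d)\xrightarrow{e_\fbt}\underline{\Omega}^\infty(\cdots)$, with no nullhomotopy choice and no untwisting of orbits to track. This single naturality diagram replaces all three of your intermediate claims at once. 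If you want to pursue your approach you would need to carry out, for $\fbt$ itself, the same type of bookkeeping that Weiss's Corollary~8.5 packages in the cartesian square, and show that the induced map on fibres over $T_0\fbt\simeq\BTOP$ is the boundary map of the cofibre sequence of $\OO(1)$-spaces you describe --- this is essentially reproving the key input the paper's route gets for free from the functor $\ff$.
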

\begin{proof}
This is similar to \cref{sec:pf-factorisation}. We set  $\ff(V) \coloneq \TOP(\bfR \oplus V)/\TOP(V)$ and consider $\ff \to \fbt$, where $\Theta \ff^{(1)} = \mathrm{Ind}_e^{\OO(1)} \mathrm{Res}_e^{\OO(1)}\Theta \fbt^{(1)}$ and the map of $\OO(1)$-spectra $\Theta \ff^{(1)} \to \Theta \fbt^{(1)}$ is adjoint to the identity map. There is a diagram
\begin{equation*}
\begin{tikzcd}[ar symbol/.style = {draw=none,"\textstyle#1" description,sloped},
  equivalent/.style = {ar symbol={\simeq}}]
\Omega^\infty(S^{d} \wedge \underline{\Theta \fbt}^{(1)}) \arrow[r,equivalent ] \arrow{dr} &[-0.5cm]\Omega^\infty(S^{d} \wedge_{\OO(1)} \underline{\Theta \ff}^{(1)}) \arrow[r,equivalent] \dar &[-0.5cm]T_1 \ff(\bfR^d) \rar{e_{\ff}}\dar &  \underline{\Omega}^\infty(S^{d} \wedge \underline{\Theta \ff}^{(1)}) \dar\\
&\Omega^\infty(S^{d} \wedge_{\OO(1)} \underline{\Theta \fbt}^{(1)}) \arrow[r]&T_1 \fbt(\bfR^d) \rar{e_{\fbt}} & \underline{\Omega}^\infty(S^{d} \wedge \underline{\Theta \fbt}^{(1)}),
\end{tikzcd}
\end{equation*}
where the right-hand square comes from naturality of the intrinsic Euler class, the middle square is given by the inclusion of the 1st layers (and its top map is an equivalence as $T_0 \ff \simeq *$), the left-hand equivalence is because $\Theta \ff^{(1)}$ is induced from $\Theta \fbt^{(1)}$, the diagonal map is the canonical quotient map, and the quotient along the bottom is the norm. It follows that the map from $T_1 \ff(\bfR^d) \simeq \Omega^\infty(S^{d} \wedge \underline{\Theta \fbt}^{(1)})$ to the bottom right-hand corner is given by $1 + \tau$ for the involution $\tau$. The lemma now follows as the involution acts by $(-1)$ on $(\Theta \fbt^{(1)}/\Theta \fbo^{(1)})_\bfQ$ by \cref{lem:FirstDerivatives}, so acts by $(-1)^{d+1}$ on $S^d \wedge (\Theta \fbt^{(1)}/\Theta \fbo^{(1)})_\bfQ$.
\end{proof}

\subsubsection{Obstruction-theoretic reinterpretation}\label{sec:obstruction-theory-yoga}
It is now simply a reinterpretation of \cref{thm:BSTOP2nHty} and \cref{thm:BSTOP2nPlus1Hty} that the square
\begin{equation*}
\begin{tikzcd}
 \BO(2n+1) \dar \rar & \BTOP(2n+1) \dar{(p_n - E, \prod_{i>n} p_i, \tilde{k})}\\
 B\bfZ^\times \rar& \prod_{i \geq n} K(\bfQ, 4i) \times \underline{\Omega}^\infty\big(S^{2n+1} \wedge (\Theta \fbt^{(1)}/\Theta \fbo^{(1)})_\bfQ \wedge \underline{\bfS}_\mathrm{sign}\big)
\end{tikzcd}
\end{equation*}
is rationally $(5n-6)$-cartesian if $d = 2n+1 \geq 5$, and similarly for $d=2n \geq 6$ with the final factor omitted, $B\bfZ^\times$ replaced by a point, and $p_n-E$ replaced by $p_n-e^2$. In particular, given $d\ge6$ with $d=2n$ or $d=2n+1$ and a map of pairs \[\phi: \big(X, A\big) \lra \big(\BTOP(d), \BO(d)\big)\] such that $(X,A)$ only has relative cells of dimension $\leq 5n-6$ then a complete set of rational obstructions to compressing $\phi$ (i.e.\,homotoping it into $(\BO(d),\BO(d))$ is: 
\begin{enumerate}[label=(O\arabic*), ]
\item $\phi^*(p_i)$\tabto{1.7cm} $\in\oH^{4i}(X,A)$\tabto{3.7cm}  for $i > n$,
\item $\phi^*(p_n-e^2)$\tabto{1.7cm}$\in \oH^{4n}(X,A)$\tabto{3.7cm} if $d=2n$,
\item $\phi^*(p_n-E)$\tabto{1.7cm}$\in \oH^{4n}(X,A)$\tabto{3.7cm} if $d=2n+1$, 
\item $\phi^*(e_\fbt, e_\fbo)$\tabto{1.7cm}$\in \oH^{d}(X, A ;(\underline{\Theta \fbt}^{(1)}, \underline{\Theta \fbo}^{(1)}))_\bfQ$ if $d=2n+1$ ($(e_\fbt, e_\fbo)$ vanishes for $d=2n$).
\end{enumerate}

\appendix

\section{Some cohomology of $G_{V}$}\label{sec:AppendixA}
As announced and used in \cref{sec:stable-cohomolog-gv} this appendix serves to compute the bigraded groups
\[\oH_*\big(G_V ; S_\lambda({H}_{V}) \otimes S_\mu({H}_V^\vee)\big)\]
in a stable range of homological degrees. To do so we first pass to the ungraded setting and dualise: in \cref{lem:GpCoh} we calculate $\oH^*(G_V; (\bar{H}_{V}^\vee)^{\otimes p} \otimes \bar{H}_{V}^{\otimes q})$ in a stable range as a graded $\bfQ[\Sigma_p \times \Sigma_q]$-module, which we then use in \cref{cor:GradedGpCoh} to answer the original question.

In fact it will be more convenient to calculate the groups $\oH^*(G_V; (\bar{H}_{V}^\vee)^{\otimes p} \otimes \bar{H}_{V}^{\otimes q})$ with arbitrary finite sets $P$ and $Q$ indexing the tensor powers. To state the answer  (which also take the $\oH^*(\GL_\infty(\bfZ);\bfQ)$-module structure induced by the surjection $G_V \to \GL(\bar{H}_V^\bfZ)\ra \GL_\infty(\bfZ)$ followed by the stabilisation map into account) we abbreviate $\underline{k}\coloneq \{1,\ldots,k\}$ for $k\ge0$, write $\mathrm{Bij}(S,T)$ for the set of bijections from $S$ to $T$, and denote $(+)$ and $(-)$ for the trivial and sign representations of the symmetric group $\Sigma_2$.

\begin{dfn}
For finite sets $P$ and $Q$, define vector spaces $C_+(P,Q)$ and $C_-(P,Q)$ by
\[
C_\pm(P,Q)\coloneq \begin{cases}
\Ind^{\Sigma_{2t}}_{\Sigma_t \wr \Sigma_2} ((1^t) \wr (\mp)) \otimes_{\Sigma_{2t}} \bfQ[\mathrm{Bij}(Q \sqcup \underline{2t}, P)]&\text{if } |P|-|Q|=2t \geq 0\\
0&\text{otherwise},
\end{cases}
\]
considered as a graded vector space concentrated in degree $t$. Writing $\cat{FB}$ for the category of finite sets and bijections, this assembles to a functor \[C_\pm(-,-)\colon \cat{FB} \times \cat{FB} \lra \cat{grVect},\] where the action is by $(f, g) \cdot h = g \circ h \circ (f^{-1} \sqcup \underline{2t})$ on $\mathrm{Bij}(Q \sqcup \underline{2t}, P)$. 
\end{dfn}

\begin{rem}$C_{\pm}(P,Q)$ may be interpreted the vector space of injections $Q \hookrightarrow P$, with an ordered matching of the complement, and an ordering of the parts of the matching, modulo 
\begin{enumerate}[(i)]
\item reordering a matched pair and multiplying by $\mp 1$,
\item reordering the parts and multiplying by the sign of the permutation.
\end{enumerate}
\end{rem}

\begin{lem}\label{lem:GpCoh}
There is a natural transformation $\Phi$ of functors from $\cat{FB} \times \cat{FB}$ to the category of $\oH^*(\GL_\infty(\bfZ);\bfQ)$-modules with components
\[
\begin{gathered}
\Phi_{P,Q}: \oH^*(\mathrm{GL}_\infty(\bfZ);\bfQ) \otimes C_{(-1)^n}(P,Q) \lra \oH^*(G_V; (\bar{H}_{V}^\vee)^{\otimes P} \otimes \bar{H}_{V}^{\otimes Q}),
\end{gathered}
\]
which for each fixed $P$ and $Q$ are isomorphisms in a range of degrees increasing with $g$.
\end{lem}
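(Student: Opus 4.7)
The plan is to use the Lyndon--Hochschild--Serre spectral sequence for the semidirect-product extension
\[0\lra M_V^\bfZ \lra G_V \lra \GL(\bar H_V^\bfZ)\lra 0\]
of \cref{lem:mcg-is-semidirect-product}. For $V=(\bar H_V^\vee)^{\otimes P}\otimes \bar H_V^{\otimes Q}$, pulled back from $\GL(\bar H_V^\bfZ)$, this has $E_2^{p,q}=\oH^p(\GL(\bar H_V^\bfZ);\oH^q(M_V^\bfZ;V))$ converging to $\oH^{p+q}(G_V;V)$. Since $M_V^\bfZ$ acts trivially on $V$ and is a finitely generated abelian group, $\oH^q(M_V^\bfZ;V)\cong \Lambda^q(N)\otimes V$, where $N\coloneq (M_V^\bfZ)^\vee\otimes\bfQ$ equals $\Lambda^2(\bar H_V)$ for $n$ even and $\Sym^2(\bar H_V)$ for $n$ odd. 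In either case $N$ embeds $\GL(\bar H_V^\bfZ)$-equivariantly into $\bar H_V^{\otimes 2}$ as the $\epsilon$-isotypical subspace under $\Sigma_2$, where $\epsilon=-$ for $n$ even and $\epsilon=+$ for $n$ odd.

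Next I would invoke the $\GL$-version of \cref{thm:Borel-vanishing}: stably in $g$, the cohomology $\oH^p(\GL(\bar H_V^\bfZ);W)$ with coefficients in a summand $W$ of $\bar H_V^{\otimes a}\otimes(\bar H_V^\vee)^{\otimes b}$ vanishes unless $a=b$, in which case it is $\oH^p(\GL_\infty(\bfZ);\bfQ)\otimes W^{\GL}$. This is proved in the same way as the cited theorem. Since $\Lambda^q(N)\otimes V$ has $|P|$ factors of $\bar H_V^\vee$ and $|Q|+2q$ factors of $\bar H_V$, the row $E_2^{*,q}$ vanishes stably unless $|P|-|Q|=2q$. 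Writing $2t=|P|-|Q|$, the entire spectral sequence is stably concentrated on the single row $q=t$, so collapses there without any extension or filtration issues.

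The final computational step is to identify the surviving row with the claim. Since $\Lambda^t(N)=S_{1^t}(N)$ sits inside $\bar H_V^{\otimes 2t}$ as the $((1^t)\wr\epsilon)$-isotypical piece under $\Sigma_t\wr\Sigma_2$, the stable $\GL$-invariants of $\Lambda^t(N)\otimes V$ equal the $((1^t)\wr\epsilon)$-isotypical piece of the stable $\GL$-invariants of $(\bar H_V^\vee)^{\otimes P}\otimes \bar H_V^{\otimes(Q\sqcup\underline{2t})}$. By the first fundamental theorem of classical invariant theory in the stable range, the latter invariants are $\bfQ[\mathrm{Bij}(Q\sqcup\underline{2t},P)]$. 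Projecting to the $\epsilon$-isotypical on $\underline{2t}$ via Frobenius reciprocity gives exactly
\[E_2^{p,t}\cong \oH^p(\GL_\infty(\bfZ);\bfQ)\otimes C_{(-1)^n}(P,Q)\]
in a stable range, where the signs match because $(-)=(-1)^n$ for $n$ even and similarly for $n$ odd.

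To finish and produce $\Phi$, I would construct it as the edge-lift map. Since all entries $E_2^{p,q}$ with $p+q<t+(\text{stable range})$ and $q\ne t$ vanish stably, and the targets of differentials leaving $E_r^{p,t}$ all sit in rows $q<t$ that are stably zero, every element of $C_{(-1)^n}(P,Q)\subset E_2^{0,t}$ lifts uniquely to a class in $\oH^t(G_V;V)$ in the range. Cup-multiplying by the pullback of classes along $G_V\twoheadrightarrow \GL(\bar H_V^\bfZ)\to\GL_\infty(\bfZ)$ defines $\Phi$ as an $\oH^*(\GL_\infty(\bfZ);\bfQ)$-linear natural transformation in $\cat{FB}\times\cat{FB}$ (naturality is immediate from the construction). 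By construction $\Phi$ realises the identification of $E_\infty^{p,t}$ with the claimed module, so is an isomorphism in the stable range. The main obstacle is verifying the $\GL$-analogue of \cref{thm:Borel-vanishing} used above---specifically the stable-range input for $\GL(\bar H_V^\bfZ)$---but once this is granted everything else is a formal consequence of the row-concentration of the spectral sequence.
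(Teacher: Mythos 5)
Your proof is correct and follows essentially the same route as the paper: the Lyndon--Hochschild--Serre spectral sequence for the semi-direct product $G_V = M_V^\bfZ \rtimes \GL(\bar H_V^\bfZ)$, the computation of $\oH^*(M_V^\bfZ;\bfQ)$ as an exterior algebra, the stable $\GL$-cohomology vanishing (\cref{thm:Borel-vanishing}), the first fundamental theorem of invariant theory to compute the $E_2$-page, and the observation that the spectral sequence is stably concentrated on a single row and hence collapses. The one genuine difference is the construction of $\Phi$: the paper builds it \emph{explicitly} as cup products of two concretely given cohomology classes---the extension class $\epsilon \in \oH^1(G_V; (\bar H_V^\vee)^{\otimes 2})$ coming from the short exact sequence $0 \to \bar H_V^\vee \to \bar H_W \to \bar H_V \to 0$, and the coevaluation class $\pi \in \oH^0(G_V; \bar H_V \otimes \bar H_V^\vee)$---and then verifies that this explicit map realises the $E_2$-term in question, whereas you define $\Phi$ abstractly as the inverse of the edge map guaranteed by the row-concentration. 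Both yield the same map (they agree by the spectral sequence argument), but the explicit construction makes the $\cat{FB}\times\cat{FB}$-naturality and the $\oH^*(\GL_\infty(\bfZ);\bfQ)$-linearity manifest by inspection, rather than requiring an appeal to multiplicativity of the spectral sequence; it also records the class $\bar\epsilon$ concretely as the dual of (anti)symmetrisation, which is a useful byproduct. One small note: the ``obstacle'' you flag at the end is not actually an obstacle---the $\GL$-statement you need is exactly the second map in \cref{thm:Borel-vanishing}, which is already stated and proved in the paper, so you are only citing it, not proving a new variant.
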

\begin{proof}
The intersection form on $\bar{H}_W$ induces an identification of the  kernel of the projection $\bar{H}_W\ra \bar{H}_V$ with the dual of $\bar{H}_{V}^\vee$, resulting in an extension of $\bfQ[\GL(\bar{H}^\bfZ_V)]$-modules
\begin{equation}\label{eq:HWgExtension}
0 \lra \bar{H}_{V}^\vee \lra \bar{H}_{W} \lra \bar{H}_{V} \lra 0
\end{equation}
which represents a class
\[\epsilon \in \mathrm{Ext}^1_{\bfQ[G_V]}(\bar{H}_{V}, \bar{H}_{V}^\vee) \cong \mathrm{Ext}^1_{\bfQ[G_V]}(\bfQ, (\bar{H}_{V}^\vee)\otimes(\bar{H}_{V}^\vee) ) = \oH^1(G_V ; (\bar{H}_{V}^\vee)^{\otimes 2}).\]
Recall from Lemma \ref{lem:mcg-is-semidirect-product} the semi-direct product decomposition $G_V = M_V^\bfZ \rtimes \GL(\bar{H}_V^\bfZ)$. The Serre spectral sequence for this semi-direct product provides an identification
\[\oH^1(G_V ; (\bar{H}_{V}^\vee)^{\otimes 2}) = [\oH^1(M_V^\bfZ ; (\bar{H}_{V}^\vee)^{\otimes 2}]^{\GL(\bar{H}_V^\bfZ)} \oplus \oH^1( \GL(\bar{H}_V^\bfZ) ; (\bar{H}_{V}^\vee)^{\otimes 2}).\]
The extension \eqref{eq:HWgExtension} is $\GL(\bar{H}_V^\bfZ)$-equivariantly split (see \cref{sec:involution-and-intersection}), so $\epsilon$ decomposes as $(\bar{\epsilon}, 0)$. Under the identification
\[\oH^1(M_V^\bfZ ; (\bar{H}_{V}^\vee)^{\otimes 2})^{\GL(\bar{H}_V^\bfZ)} = \mathrm{Hom}_{\GL(\bar{H}_V^\bfZ)} \Big(M_V^\bfZ = \begin{cases}
 \Lambda^2(\bar{H}_V^\bfZ)^\vee&\text{for }n\text{ even}\\
\Gamma^2(\bar{H}_V^\bfZ)^\vee&\text{for }n\text{ odd},
\end{cases}, (\bar{H}_{V}^\vee)^{\otimes 2}\Big),\]
the class $\bar{\epsilon}$ is given by the dual of the (anti)symmetrisation map from $\bar{H}_V^{\otimes 2}$ to $\Gamma^2(\bar{H}_V)$ or $\Lambda^2(\bar{H}_V)$. In particular, under the $\Sigma_2$-action given by swapping the factors of $(\bar{H}_{V}^\vee)^{\otimes 2}$ the class $\bar{\epsilon}$ transforms as $\mathrm{sign}^{\otimes n+1}$, and hence $\epsilon$ does as well. 

In addition to $\epsilon$, we have a class
$\pi \in \oH^0(G_V; \bar{H}_{V} \otimes \bar{H}_{V}^\vee)$ defined by coevaluation. Taking cup products of copies of $\epsilon$ and $\pi$, and permuting the tensor factors, gives a map
\begin{equation}\label{equ:cupping-epsilon-and-pi}C_{(-1)^n}(\underline{p},\underline{q}) \lra \oH^*(G_V; (\bar{H}_{V}^\vee)^{\otimes p} \otimes \bar{H}_{V}^{\otimes q}),\end{equation}
that is $\Sigma_p \times \Sigma_q$-equivariant, which then extends to the natural transformation $\Phi$ as in the claim, using the $\oH^*(\mathrm{GL}_\infty(\bfZ);\bfQ)$-module structure of the codomain. To verify that this map is an isomorphism in a stable range, we will once more consider the Serre spectral sequence of the semi-direct product $G_V = M_V^\bfZ \rtimes \GL(\bar{H}_V^\bfZ)$
\[E_2^{s,t} = \oH^s\big( \GL(\bar{H}_V^\bfZ) ; \oH^t(M_V^\bfZ ;\bfQ) \otimes (\bar{H}_{V}^\vee)^{\otimes P} \otimes \bar{H}_{V}^{\otimes Q}\big) \Rightarrow \oH^{s+t}\big(G_V; (\bar{H}_{V}^\vee)^{\otimes P} \otimes \bar{H}_{V}^{\otimes Q}\big).\]
Rationally we may identify the divided square with the symmetric square, so $M_V^\bfZ \otimes \bfQ$ agrees with $\Lambda^2(\bar{H}_V)^\vee$ if $n$ is even and with $\mathrm{Sym}^2(\bar{H}_V)^\vee$ if $n$ is odd, and hence 
$\oH^t(M_V^\bfZ ;\bfQ) = \Lambda^t(\Lambda^2(\bar{H}_{V}))$ if $n$ is even and $\oH^t(M_V^\bfZ ;\bfQ) = \Lambda^t(\mathrm{Sym}^2(\bar{H}_{V}))$ if $n$ is odd. Taking  cup products thus gives a map of $\bfQ[\GL_\infty(\bfZ)]$-modules of the form
\begin{equation}\label{eq:BorelMap}
\oH^s(\mathrm{GL}_\infty(\bfZ);\bfQ) \otimes \Big[\Lambda^t\Big(\begin{cases}
\Lambda^2(\bar{H}_{V}) \\
\mathrm{Sym}^2(\bar{H}_{V})
\end{cases}\Big) \otimes (\bar{H}_{V}^\vee)^{\otimes P} \otimes \bar{H}_{V}^{\otimes Q}\Big]^{\GL(\bar{H}_V^\bfZ)} \lra E_2^{s,t}
\end{equation}
which is an isomorphism in a range of $s$ increasing with $g$ by \cref{thm:Borel-vanishing}. We write
\[\Lambda^t\Big(\begin{cases}
\Lambda^2(\bar{H}_{V}) &\text{for }n\text{ even}\\
\mathrm{Sym}^2(\bar{H}_{V}) &\text{for }n\text{ odd}
\end{cases}\Big) = 
\big(\Ind^{\Sigma_{2t}}_{\Sigma_t \wr \Sigma_2} ((1^t) \wr (1^2)^{\otimes n+1})
\big) \otimes_{\Sigma_{2t}} \bar{H}_{V}^{\otimes 2t}.\]
The map
\[\bfQ[\mathrm{Bij}(T, S)] \lra [(\bar{H}_{V}^\vee)^{\otimes S} \otimes \bar{H}_{V}^{\otimes T}]^{\GL(\bar{H}_V^\bfZ)}\]
given by inserting coevaluations is an isomorphism as long as $g > \max\{|S|, |T|\}$, which one sees as follows: The first and second fundamental theorems of invariant theory for $\mathrm{SL}_g(\bfC)$ (see e.g.\ \cite[Section 11.6.1, p.\ 388]{Procesi}) and Zariski density of $\SL_g(\bfZ) \subset \SL_g(\bfC)$ show that this map is an isomorphism when replacing the target with the $\SL(\bar{H}_V^\bfZ)$-invariants, but since inserting coevaluations gives elements that are even $\GL(\bar{H}_V^\bfZ)$-invariant, the $\SL(\bar{H}_V^\bfZ)$ and $\GL(\bar{H}_V^\bfZ)$ invariants in fact agree.

Combining this with the expression above identifies the domain of \eqref{eq:BorelMap} with $\oH^s(\mathrm{GL}_\infty(\bfZ);\bfQ) \otimes C_{(-1)^n}(P,Q)_t$. Thus in a range of $s$ increasing with $g$, the spectral sequence is supported along the row $t= (|P|-|Q|)/2$, so it collapses and there are no extension issues. For this value of $t$ it is clear that the map $C_{(-1)^n}(P,Q)_t \to E_2^{0,t} = \oH^t(G_V; (\bar{H}_{V}^\vee)^{\otimes P} \otimes \bar{H}_{V}^{\otimes Q})$ agrees with \eqref{equ:cupping-epsilon-and-pi}, so the result follows.
\end{proof}

\begin{cor}\label{cor:GradedGpCoh}
Fix partitions $\lambda \vdash p$ and $\mu \vdash q$. In a range of homological degrees increasing with $g$ the bigraded vector space $\oH_*(G_V; S_\lambda({H}_{V}) \otimes S_\mu({H}_{V}^\vee))$ is 
\begin{enumerate}[(i)]
\item trivial if $p-q$ is odd or negative, and
\item isomorphic to $\oH_*(\mathrm{GL}_\infty(\bfZ);\bfQ) \otimes \chi(\lambda, \mu)$ as a $\oH_*(\mathrm{GL}_\infty(\bfZ);\bfQ)$-comodule if $p-q = 2t$. Here 
\[ \chi(\lambda, \mu) \coloneq \Big[ \Ind_{\Sigma_q \times \Sigma_{2t}}^{\Sigma_p} \big( (\mu) \boxtimes \Ind^{\Sigma_{2t}}_{\Sigma_t \wr \Sigma_2} (1^t) \wr (2) \big) \otimes (\lambda) \Big]^{\Sigma_p} \] is placed in bidegree $(t, (p-q)(n-1))$
\end{enumerate}
\end{cor}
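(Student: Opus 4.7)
The plan is to deduce this combinatorially from the un-graded identification of \cref{lem:GpCoh}, via Schur--Weyl duality, the Koszul sign rule for graded Schur functors, and universal coefficients. First I would reduce the graded problem to an ungraded one. Since $H_V=\bar H_V[n-1]$, the rules for graded Schur functors in \cref{sec:Schur-functors} identify $S_\lambda(H_V)\otimes S_\mu(H_V^\vee)$ with $S_\alpha(\bar H_V)\otimes S_\beta(\bar H_V^\vee)$ placed in second grading $(p-q)(n-1)$, where $(\alpha,\beta)=(\lambda,\mu)$ when $n$ is odd and $(\alpha,\beta)=(\lambda',\mu')$ when $n$ is even (the Koszul sign rule exchanging symmetric with exterior powers). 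This already accounts for the bidegree placement of $\chi(\lambda,\mu)$, reducing the claim to identifying the bigraded $\oH_*(\GL_\infty(\bfZ);\bfQ)$-comodule $\oH_*(G_V;S_\alpha(\bar H_V)\otimes S_\beta(\bar H_V^\vee))$ in a stable range.

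Next, by universal coefficients for finite-dimensional $\bfQ$-coefficients, $\oH_*(G_V;M)\cong\oH^*(G_V;M^\vee)^\vee$, so it suffices to compute the cohomology of $G_V$ with coefficients in $S_\alpha(\bar H_V^\vee)\otimes S_\beta(\bar H_V)$. Writing this module via Schur--Weyl duality as
\[
S_\alpha(\bar H_V^\vee)\otimes S_\beta(\bar H_V)=\bigl[(\alpha)\boxtimes(\beta)\otimes(\bar H_V^\vee)^{\otimes p}\otimes \bar H_V^{\otimes q}\bigr]^{\Sigma_p\times\Sigma_q},
\]
and commuting the exact functor of $\Sigma_p\times\Sigma_q$-invariants past the cohomology functor, I would invoke \cref{lem:GpCoh} with $P=\underline p$ and $Q=\underline q$ to identify the result in a stable range with
\[
\oH^*(\GL_\infty(\bfZ);\bfQ)\otimes\bigl[(\alpha)\boxtimes(\beta)\otimes C_{(-1)^n}(\underline p,\underline q)\bigr]^{\Sigma_p\times\Sigma_q}.
\]
Dualising back gives the homology formula in the same form. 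Part~(i) is then immediate because $C_{(-1)^n}(\underline p,\underline q)$ vanishes by definition whenever $p-q$ is odd or negative.

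For part~(ii), when $p-q=2t\geq0$ I would identify $\bfQ[\mathrm{Bij}(\underline q\sqcup\underline{2t},\underline p)]$ with $\bfQ[\Sigma_p]$ as a $(\Sigma_p,\,\Sigma_q\times\Sigma_{2t})$-bimodule, exhibiting $C_{(-1)^n}(\underline p,\underline q)$ as $\Ind_{\Sigma_q\times\Sigma_{2t}}^{\Sigma_p}\bigl(\mathrm{triv}_{\Sigma_q}\boxtimes\Ind_{\Sigma_t\wr\Sigma_2}^{\Sigma_{2t}}((1^t)\wr(1^2)^{\otimes n+1})\bigr)$. Two applications of Frobenius reciprocity then rewrite $\bigl[(\alpha)\boxtimes(\beta)\otimes C_{(-1)^n}\bigr]^{\Sigma_p\times\Sigma_q}$ in the form appearing in the definition of $\chi(\lambda,\mu)$. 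The genuinely non-formal step, and the main obstacle, is to verify that the resulting formula is independent of the parity of $n$: for $n$ odd one reads off $\chi(\lambda,\mu)$ on the nose, whereas for $n$ even the partitions are conjugated \emph{and} the wreath representation carries an extra sign. I would resolve this by a character-theoretic identity that tensors by $\mathrm{sgn}_{\Sigma_{2t}}\lvert_{\Sigma_t\wr\Sigma_2}$, uses self-duality $(\alpha)^\vee=(\alpha)$, and invokes the Littlewood--Richardson symmetry $c^{\lambda'}_{\mu'\gamma'}=c^\lambda_{\mu\gamma}$ to show that the two expressions have equal characters summand by summand. The $\oH_*(\GL_\infty(\bfZ);\bfQ)$-comodule structure is free throughout, so it passes without further work.
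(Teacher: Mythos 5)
Your overall strategy matches the paper's: unravel the graded Schur functors via the Koszul sign rule, pass to tensor powers and $(\Sigma_p\times\Sigma_q)$-invariants, feed $P=\underline{p}$, $Q=\underline{q}$ into \cref{lem:GpCoh}, and then use Frobenius reciprocity to arrive at $\chi(\lambda,\mu)$. The parity bookkeeping you propose (conjugating partitions first and then showing independence of parity via sign twists and Littlewood--Richardson conjugation symmetry) is a legitimate, if slightly more roundabout, alternative to what the paper does: rather than conjugating $\lambda,\mu$, the paper keeps them fixed, carries the Koszul twist $(1^p)^{\otimes n+1}\boxtimes(1^q)^{\otimes n+1}$ explicitly into the graded version of \cref{lem:GpCoh}, and pushes it through the induction, where $(1^q)^{\otimes n+1}$ is absorbed by the regular representation and $(1^{2t})^{\otimes n+1}$ restricted to $\Sigma_t\wr\Sigma_2$ cancels the $(1^2)^{\otimes n+1}$ inside $(1^t)\wr(1^2)^{\otimes n+1}$, leaving $(1^t)\wr(2)$. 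The same sign computation underlies both routes.

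However, there is a concrete error in the central representation-theoretic step. You exhibit
\[
C_{(-1)^n}(\underline{p},\underline{q})\cong\Ind_{\Sigma_q\times\Sigma_{2t}}^{\Sigma_p}\bigl(\mathrm{triv}_{\Sigma_q}\boxtimes\Ind_{\Sigma_t\wr\Sigma_2}^{\Sigma_{2t}}((1^t)\wr(1^2)^{\otimes n+1})\bigr),
\]
but this has the wrong dimension (it is smaller than $C_{(-1)^n}(\underline{p},\underline{q})$ by a factor of $q!$), and, as an induced $\Sigma_p$-module, carries no residual $\Sigma_q$-action; consequently $[(\alpha)\boxtimes(\beta)\otimes C_{(-1)^n}]^{\Sigma_p\times\Sigma_q}$ would factor a trivial $\Sigma_q$-action against $(\beta)$ and vanish unless $\beta=(q)$, which is not what $\chi(\lambda,\mu)$ asserts. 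The correct identification replaces $\mathrm{triv}_{\Sigma_q}$ by the regular bimodule $\bfQ[\Sigma_q]$: starting from $C_{\pm}(\underline{p},\underline{q})=W\otimes_{\Sigma_{2t}}\bfQ[\Sigma_p]\cong\Ind_{\Sigma_{2t}}^{\Sigma_p}W$ and inducing in stages through $\Sigma_q\times\Sigma_{2t}$ gives $\Ind_{\Sigma_q\times\Sigma_{2t}}^{\Sigma_p}(\bfQ[\Sigma_q]\boxtimes W)$, where the surviving right $\Sigma_q$-action on $\bfQ[\Sigma_q]$ supplies the external $\Sigma_q$-module structure. After the first Frobenius reciprocity the regular factor collapses against $(\beta)$ and delivers $(\beta)$ (that is $(\mu)$, possibly conjugated) inside the induction, producing the $(\mu)$ in the displayed formula for $\chi(\lambda,\mu)$. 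With this correction the rest of your argument goes through.
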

\begin{proof}
Adapted to the graded setting \cref{lem:GpCoh} gives an isomorphism 
\[\oH^*(G_V; ({H}_{V}^\vee)^{\otimes p} \otimes {H}_{V}^{\otimes q}) \cong \oH^*(\mathrm{GL}_\infty(\bfZ);\bfQ) \otimes C_{(-1)^n}(\underline{p},\underline{q}) \otimes (((1^p)^{\otimes n+1}) \boxtimes (1^q)^{\otimes n+1}))\otimes\bfQ[0,(q-p)(n-1)]\]
of bigraded $\bfQ[\Sigma_p \times \Sigma_q]$-modules in a range of cohomological degrees increasing with $g$. The right-hand side vanishes unless $p-q$ is even and nonnegative, and if $p-q=2t$ we have
\begin{align*}
C_{(-1)^n}(\underline{p},\underline{q}) \otimes ((1^p)^{\otimes n+1}) \boxtimes (1^q)^{\otimes n+1}) &\cong \Ind_{\Sigma_q \times \Sigma_{2t}}^{\Sigma_p} \left( \bfQ[\Sigma_q] \boxtimes \Ind^{\Sigma_{2t}}_{\Sigma_t \wr \Sigma_2} (1^t) \wr (1^2)^{\otimes n+1} \right) \otimes (1^p)^{\otimes n+1}\\
&\cong \Ind_{\Sigma_q \times \Sigma_{2t}}^{\Sigma_p} \left( \bfQ[\Sigma_q] \boxtimes \Ind^{\Sigma_{2t}}_{\Sigma_t \wr \Sigma_2} (1^t) \wr (2) \right)
\end{align*}
as a $\bfQ[\Sigma_p \times \Sigma_q]$-module using Frobenius reciprocity, supported in cohomological degree $t$. Dualising and applying $[- \otimes (\lambda) \boxtimes (\mu)]^{\Sigma_p \times \Sigma_q}$ gives the claimed formula.
\end{proof}

\section{Relation to the work of Watanabe}\label{app:Watanabe}In \cite{WatanabeI} Watanabe used a characteristic class associated to the ``theta'' graph defined by Kontsevich  in terms of configuration space integrals \begin{equation}\label{equ:Kontsevich-class}\zeta_{2}\in\oH^{2n-2}(\BDiff_\partial^\fr(D^{2n+1});\bfQ)\end{equation} to show that the group $\pi_{2n-2}(\BDiff_\partial(D^{2n+1}))_\bfQ$ is nontrivial for many values of $n\ge2$ (see \cite[Theorem 1, Corollary 2]{WatanabeI}). The purpose of this appendix is twofold:

\begin{enumerate}
\item In Section \ref{sec:who-would-have-thought}, we explain how one can replace Watanabe's use of the Kontsevich class $\zeta_2$ in \cite{WatanabeI} by an invariant with a significantly simpler description. This results a surprisingly elementary proof that $\pi_{2n-2}(\BDiff_\partial(D^{2n+1}))_\bfQ$ is nontrivial for many values of $n\ge2$, which could have been discovered in the '70s. This also gives a simple argument for the odd-dimensional analogue $p_n\neq E\in\oH^{4n}(\BTOP(2n+1);\bfQ)$ of Weiss' theorem $p_n\neq e^2\in\oH^{4n}(\BTOP(2n+1);\bfQ)$ described in the introduction.
\item In Section \ref{sec:Kontsevich-alternative}, we give an alternative description of $\zeta_{2}$ which does not involve configuration space integrals and fits well with our computation of the rational homotopy type of the $(\approx 3n)$-truncation of $\BDiff_\partial^\fr(D^{2n+1})_{\ell}$ in \cref{sec:bdiff-and-btop} (but does not use it).
\end{enumerate}

\subsection{A ``classical'' proof that $\pi_{2n-2}(\BDiff_\partial(D^{2n+1}))_\bfQ$ is often nontrivial}\label{sec:who-would-have-thought}
On close inspection, the proof of the main theorem in \cite{WatanabeI} saying that $\pi_{2n-2}(\BDiff_\partial(D^{2n+1}))_\bfQ$ is nontrivial for ``many'' $n\ge2$ hinges only on very few properties of the Kontsevich class $\zeta_{2}\in\oH^{2n-2}(\BDiff_\partial^\fr(D^{2n+1});\bfQ)$. In fact, the only input needed for the final step in the argument (see Section 5 loc.cit.) to go through is the existence of a morphism
\begin{equation}\label{equ:alternative-invariant}\psi\colon \pi_{2n-2}(\BDiff^\fr_\partial(D^{2n+1})_\ell)\lra \bfQ\end{equation}
with the following two properties:
\begin{enumerate}[leftmargin=*]
\item\label{enum:psi-i} $\psi$ is integral, i.e.\ it has image in $\bfZ\subset\bfQ$.
\item\label{enum:psi-ii} The composition
\[\pi_{4n}(\BO(2n+1))\cong \pi_{2n-2}(\Omega^{2n+1}\OO(2n+1))\xlra{r} \pi_{2n-2}(\BDiff^\fr_\partial(D^{2n+1})_\ell)\xlra{\psi}\bfQ\]
agrees with $\tfrac{1}{4}\cdot p_n\colon\pi_{4n}(\BO(2n+1))\ra \bfQ$. Here $r$ is induced by reframing.
\end{enumerate}
The morphism induced by Kontsevich's class $\xi_2\in \oH^{2n-2}(\BDiff_\partial^\fr(D^{2n+1});\bfQ)$ as described in Section 2.5.2 loc.cit.\ satisfies \ref{enum:psi-i} by construction and \ref{enum:psi-ii} as a result of the proof of Theorem 2 loc.cit.. The class $\xi_2$ is thus a valid choice for $\psi$, but for the purpose of proving the main result in \cite{WatanabeI}, any other $\psi$ satisfying \ref{enum:psi-i} and \ref{enum:psi-ii} would do. Our favourite choice is the following.
\subsubsection{An alternative $\psi$}\label{sec:alternative-choice}Consider the composition
\begin{equation}\label{equ:framed-diff-to-G}\BDiff_\partial^\fr(D^{2n+1})_\ell\lra \Omega^{2n+1}\TOP(2n+1)\lra \Omega^{2n+1}\oG(2n+1)\simeq \Omega^{2n+2}\BSG(2n+1)\end{equation}
where the first map is the ``scanning map'' appearing in smoothing theory and the second map is the canonical map $\TOP(2n+1)\ra\oG(2n+1)$ obtained by viewing $\TOP(2n+1)$ as homeomorphisms fixing the origin and $\oG(2n+1)$ as homotopy automorphisms of $\bfR^{2n+1}\backslash \{0\}$. We construct $\psi$ as the composition of the map induced by \eqref{equ:framed-diff-to-G} on $\pi_{2n-2}(-)$ with an invariant
\begin{equation}\label{equ:varepsilon-alternative}\varepsilon\colon \pi_{4n}(\BSG(2n+1))\lra\bfZ\end{equation} 
defined as follows: given an oriented fibration $\pi\colon E\ra S^{4n}$ together with an identification $\pi^{-1}(*)\simeq S^{2n}$ of the preimage of the basepoint with $S^{2n}$, we set $\varepsilon(\pi)\coloneq \int_E \iota(v)^3\in \bfZ$ where $v\in \oH^{2n}(S^n;\bfZ)$ is the standard generator. Here $\iota\colon  \oH^{2n}(S^{2n};\bfZ)\ra \oH^{2n}(E;\bfZ)$ is the inverse of the map induced by restriction along $S^{2n}\simeq \pi^{-1}(*)\subset E$, which is an isomorphism by the Serre spectral sequence. In the universal case, this defines \eqref{equ:varepsilon-alternative}. The precomposition of $\varepsilon$ with the map induced by \eqref{equ:framed-diff-to-G} satisfies \ref{enum:psi-i} by construction and \ref{enum:psi-ii} by the following lemma.

\begin{lem}\label{lem:C1}
Precomposing $\varepsilon$ with $\pi_{4n}(\BO(2n+1))\ra \pi_{4n}(\BG(2n+1))$ gives $\tfrac{1}{4}\cdot p_n$.
\end{lem}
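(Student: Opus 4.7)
My plan is to reduce the identity to a short universal cohomology computation on the sphere bundle $\pi\colon \BSO(2n) \to \BSO(2n+1)$ associated to the tautological rank $2n+1$ oriented vector bundle. Since $\pi_{4n}(\BO(2n+1)) \cong \pi_{4n}(\BSO(2n+1))$, any class in the source is represented by a map $f\colon S^{4n} \to \BSO(2n+1)$ classifying some oriented rank $2n+1$ bundle $V \to S^{4n}$, and the spherification of $V$ is the pullback of $\pi$ along $f$. By naturality of integration along fibres the number $\varepsilon(V) = \int_{S(V)} \iota(v)^3$ equals $\langle f^* \pi_*(\iota(v)_{\mathrm{univ}}^3),\, [S^{4n}]\rangle$, where $\iota(v)_{\mathrm{univ}} \in \oH^{2n}(\BSO(2n);\bfQ)$ is any class restricting to the generator on a fibre. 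The lemma will therefore reduce to establishing the universal identity
\[ \pi_*\big(\iota(v)_{\mathrm{univ}}^3\big) \,=\, \tfrac{1}{4}\, p_n \ \in\ \oH^{4n}(\BSO(2n+1);\bfQ). \]

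The first step will be to identify $\iota(v)_{\mathrm{univ}}$ rationally in terms of the Euler class $e \in \oH^{2n}(\BSO(2n);\bfQ)$. The fibre inclusion $S^{2n} = \SO(2n+1)/\SO(2n) \hookrightarrow \BSO(2n)$ sends a unit vector $p \in \bfR^{2n+1}$ to the oriented subspace $p^\perp$, and since $p^\perp = T_p S^{2n}$ this inclusion classifies the tangent bundle $TS^{2n}$. It follows that the restriction of $e$ to a fibre is $e(TS^{2n}) = \chi(S^{2n})\cdot v = 2v$, so $\iota(v)_{\mathrm{univ}} = \tfrac{1}{2}\, e$ rationally.

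The second and final step is to compute $\pi_*(e^3)$. The same description of the fibration shows that the vertical tangent bundle $T_\pi$ is canonically the tautological rank $2n$ bundle $\gamma_{2n}$, so $\pi_*(e) = \pi_*(e(T_\pi)) = \chi(S^{2n}) = 2$. Combined with the standard identity $e^2 = \pi^* p_n$ in $\oH^{4n}(\BSO(2n);\bfQ)$ and the projection formula, this will give
\[ \pi_*(e^3) \,=\, \pi_*(e \cdot \pi^*p_n) \,=\, \pi_*(e) \cdot p_n \,=\, 2\, p_n, \]
and thus $\pi_*(\iota(v)_{\mathrm{univ}}^3) = \tfrac{1}{8}\pi_*(e^3) = \tfrac{1}{4}\, p_n$ as required. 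The whole computation is a few lines of standard bundle theory; the only step that requires a moment of thought is the verification that the fibre inclusion really classifies $TS^{2n}$ (equivalently, that $T_\pi \cong \gamma_{2n}$), which follows by unwinding the homogeneous-space description of the universal sphere bundle, and I do not expect any genuine obstacle here.
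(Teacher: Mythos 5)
Your proposal is correct and follows essentially the same argument as the paper: both identify the restriction of the Euler class to the fibre as $2v$ (so $\iota(v)$ corresponds to $\tfrac{1}{2}e$), then combine $e^2 = \pi^*p_n$ with the projection formula and $\pi_*(e) = \chi(S^{2n}) = 2$ to get the factor $\tfrac{1}{4}$. The only cosmetic difference is that you carry out the fibre integration universally over $\BSO(2n+1)$, whereas the paper does it directly over the $S^{4n}$ at hand; the underlying identities are identical.
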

\begin{proof}Given a pullback diagram
\[
\begin{tikzcd}
E\dar{\pi}\rar{\bar{f}} &\BSO(2n)\dar{p}\\
S^{4n}\rar{f} & \BSO(2n+1)
\end{tikzcd}
\]
where $p$ is the universal linear oriented $S^{2n}$-fibration, the task is to show $\varepsilon(\pi)=\tfrac{1}{4}\cdot p_n(f)$ or equivalently $2^3\cdot \varepsilon(\pi)=2\cdot p_n(f)$. As $\chi(S^{2n})=2$, we have $2\cdot \iota(v^{2n})=\bar{f}^*(e)$ where $e\in\oH^{2n}(\BSO(2n);\bfZ)$ is the Euler class. With this in mind, we compute
\[\textstyle{8\cdot \varepsilon(\pi)=\int_E\bar{f}^*(p_n\cdot e)=\int_{S^{4n}}\int_\pi \bar{f}^*(e)\cdot \pi^*(p_n(f))=\int_{S^{4n}} (\int_\pi \bar{f}^*(e))\cdot p_n(f)=2\cdot p_n(f)},\]
where the first equality uses $e^3=p_n\cdot e$ in $\oH^{4n}(\BSO(2n);\bfZ)$, the second follows by using the fact that $p_n$ pulls back from $\oH^{4n}(\BSO(2n+1);\bfZ)$, the third is a consequence of using the projection formula, and the final uses $\chi(S^{2n})=2$ once more.
\end{proof}

Using this choice of $\psi$ instead of $\xi_2$, the argument in \cite[Section 6]{WatanabeI} shows that $\pi_{2n-2}(\BDiff_\partial(D^{2n+1}))_\bfQ$ is nontrivial for many $n$ without using configuration space integrals, in fact only relying on differential topology as developed in the '60s and '70s. To illustrate the simplicity of this alternative line of argument, we spell out a complete proof of:

\begin{thm}[Watanabe]\label{thm:easy-rational-nontriviality}
$\pi_{2}(\BDiff_\partial(D^5))_\bfQ\neq 0$.
\end{thm}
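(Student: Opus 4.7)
The plan is to exhibit a specific class $\alpha \in \pi_2(\BDiff_\partial(D^5))$ arising from a Borromean ring configuration, and then detect its rational non-triviality by pairing with the Pontryagin--Weiss class $\Omega^5(p_2 - E)^\tau \in \oH^2(\BDiff_\partial(D^5);\bfQ)$ from \cref{sec:pon-classes-on-diff}. The key input beyond the appendix setup is the triple-intersection invariant $\varepsilon$ from \eqref{equ:varepsilon-alternative}, which is precisely the geometric shadow of the class $E$.

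First, I would translate the pairing into a topological-bundle computation. Via Morlet's equivalence \eqref{equ:intro-Morlet}, $\alpha$ corresponds to $\tilde\alpha \in \pi_7(\TOP(5)/\OO(5))_\bfQ$. Since $\pi_7(\BSO(5))_\bfQ = 0$, the long exact sequence of $\TOP(5)/\OO(5)\to \BSO(5) \to \BSTOP(5)$ allows us to lift $\tilde\alpha$ rationally to a class $\hat\alpha \in \pi_8(\BSTOP(5))_\bfQ$. Because $(p_2 - E)$ vanishes on $\BSO(5)$ by \ref{P3} (this being the very reason $(p_2-E)^\tau$ exists), the usual transgression identity recalled in \cref{sec:transgression} gives
\[\langle \Omega^5(p_2 - E)^\tau, \alpha\rangle \;=\; \langle p_2 - E,\, \hat\alpha\rangle,\]
and the right-hand side is independent of the choice of lift.

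Second, I would carry out the geometric computation. Post-composing $\hat\alpha$ with $\BSTOP(5)\to \BSG(5)$ yields a class $\hat\alpha_G \in \pi_8(\BSG(5))_\bfQ\cong \bfQ$, on which $E$ evaluates to $\varepsilon(\hat\alpha_G)$. The Borromean construction was designed precisely so that the associated scanned spherical fibration over $S^8$ realises three interlocked copies of $S^4$ whose triple self-intersection is a nonzero integer---this is the higher-dimensional analogue of the triple linking number of the classical Borromean rings, and furnishes $\varepsilon(\hat\alpha_G)\ne 0$. The companion quantity $p_2(\hat\alpha)$ factors through stabilisation $\BSTOP(5)\to \BSTOP$; one shows that the stabilised bundle is trivial (the underlying construction is a family of compactly supported diffeomorphisms which becomes smoothly isotopic to the identity after one further $\bfR$-stabilisation), so $p_2(\hat\alpha)=0$. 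Combining the two computations gives $\langle p_2-E, \hat\alpha\rangle = -\varepsilon(\hat\alpha_G)\ne 0$, and hence $\alpha \neq 0$ in $\pi_2(\BDiff_\partial(D^5))_\bfQ$.

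The main obstacle is the triple-intersection computation of $\varepsilon(\hat\alpha_G)$. All the heavy lifting in Watanabe's original argument is absorbed into showing that a Kontsevich configuration-space integral is nonzero; here it is replaced by an elementary evaluation of $\int e^3$ on an explicit $S^4$-fibration, which parallels the classical Borromean linking-number calculation. Verifying the vanishing $p_2(\hat\alpha)=0$ by stable triviality of the construction, together with this intersection count, are the only non-formal inputs, and both can be extracted from the geometry of the Borromean family---no configuration-space integrals, graph cohomology, or Bernoulli-number numerics are needed, which is what makes the argument ``discoverable in the '70s.''
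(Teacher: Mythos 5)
Your proposal takes a genuinely different route from the paper, and it has significant gaps in the two steps that carry the actual weight.

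The paper's proof is indirect and never constructs an explicit class. Starting from the exact sequence $\pi_{2n-2}(\Omega^{2n+1}\SO(2n+1)) \to \pi_{2n-2}(\BDiff^\fr_\partial(D^{2n+1})_\ell) \to \pi_{2n-2}(\BDiff_\partial(D^{2n+1}))$, it observes that the first group is rationally one-dimensional, so if the third group were rationally trivial then the two morphisms $\varepsilon$ and $\Omega^{2n+1}p_n$ would have to be proportional, and \cref{lem:C1} pins the constant to be $4\varepsilon = \Omega^{2n+1}p_n$. Since $\varepsilon$ is integer-valued, $\Omega^{2n+1}p_n$ would have to land in $4\bfZ$. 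This is then contradicted by evaluating $\Omega^5 p_2$ on a lift of the Milnor sphere $\Sigma_M \in \Theta_7$, produced via Cerf's theorem, using the $E_8$-plumbing and the Hirzebruch signature formula; the residue $\tfrac{5\cdot 3^2\cdot 2^3}{7} \bmod \bfZ$ involves $B_4 = -\tfrac{1}{30}$. So, contrary to what you write, the paper's ``discoverable in the '70s'' argument \emph{does} use Bernoulli-number arithmetic; what it avoids is configuration-space integrals and graph cohomology, and -- crucially -- it avoids having to construct and evaluate anything against the Borromean class at all.

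Your proposal instead constructs a Borromean class $\alpha$, lifts it along $\pi_8(\BSTOP(5))_\bfQ \twoheadrightarrow \pi_7(\TOP(5)/\OO(5))_\bfQ$, and tries to compute $\langle p_2 - E, \hat\alpha\rangle$ directly. The formal scaffolding (the transgression identity, the existence of a rational lift since $\pi_7(\BSO(5))=0$, the well-definedness of the pairing) is fine, but both of the nontrivial claims are asserted rather than proved. First, the statement that $\varepsilon(\hat\alpha_G) \ne 0$ ``parallels the classical Borromean linking-number calculation'' is precisely where the content of Watanabe's configuration-space-integral computation lives; by \cref{thm:KontsevichClass} the invariant $\varepsilon$ and the first Kontsevich class agree up to a scalar, so a direct $\int e^3$ evaluation on the scanned spherical fibration is not obviously simpler than what Watanabe does, and you offer no computation. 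Second, the claim $p_2(\hat\alpha)=0$ is justified by ``stable triviality after one $\bfR$-stabilisation,'' but this is vague on exactly which lift $\hat\alpha$ is meant (the lift is only determined modulo the image of $\pi_8(\BSO(5))_\bfQ$, on which $p_2$ is nontrivial), and the statement that the Borromean family becomes smoothly isotopic to the identity after one stabilisation needs an actual argument, not just an appeal to the classical fact that Borromean rings unlink in one higher dimension. In short: if one is prepared to construct the Borromean class and carry out a genuinely elementary evaluation of $\varepsilon$ on it, that would indeed be a nice alternative argument, but as written the two computations are exactly the non-elementary inputs, and the proposal does not supply them. The paper's proof buys simplicity precisely by sidestepping both of these computations in exchange for the (classical, '60s--'70s) Milnor-sphere/Bernoulli arithmetic.
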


\begin{proof}
We first consider a general odd dimension $2n+1$. By the exact sequence
\[\ldots\ra\pi_{2n-2}(\Omega^{2n+1}\SO(2n+1))\ra \pi_{2n-2}(\BDiff^\fr_\partial(D^{2n+1})_\ell)\ra \pi_{2n-2}(\BDiff_\partial(D^{2n+1}))\ra \ldots\]
and $\pi_{2n-2}(\Omega^{2n+1}\SO(2n+1))_\bfQ \cong \bfQ$, to see that $\pi_{2n-2}(\BDiff_\partial(D^{2n+1}))_\bfQ$ is nontrivial it suffices to show that $\varepsilon$ and $\Omega^{2n+1} p_n$ are not proportional as morphisms $\pi_{2n-2}(\BDiff^\fr_\partial(D^{2n+1})_\ell) \to \bfQ$. If they were, then by Lemma \ref{lem:C1} the proportionality must be $4 \cdot \varepsilon = \Omega^{2n+1}p_n$, so it suffices to show that the map $\Omega^{2n+1}p_n$ does not land in $4\bfZ$.

To analyse this question, consider the commutative diagram with exact rows
\[
\begin{tikzcd}[column sep=0.3cm, row sep=0.6cm]
& \pi_{2n-2}(\BDiff^\fr_\partial(D^{2n+1})_\ell) \dar \rar & \pi_{2n-2}(\BDiff_\partial(D^{2n+1})) \rar \dar & \pi_{4n-1}(\BO(2n+1)) \arrow[d, equals]\\
&\pi_{4n}(\BTOP(2n+1)) \rar{\partial}\dar&\pi_{4n-1}(\TOP(2n+1)/\OO(2n+1))\dar \rar & \pi_{4n-1}(\BO(2n+1))\\
\pi_{4n}(\BO) \dar{p_n} \rar &\pi_{4n}(\BTOP)\rar{\partial}\dar{p_n}&\pi_{4n-1}(\TOP/\OO)\dar{[p_n]}\cong\Theta_{4n-1} \rar & 0\\
\bfZ \rar{\subset} & \bfQ \rar & \bfQ/\bfZ \rar & 0,
\end{tikzcd}
\]
where the top vertical maps are the ``scanning maps'' appearing in smoothing theory (they are isomorphisms, but we will not use this), the middle vertical maps are given by stabilisation, the two leftmost bottom vertical maps $p_n$ are given by evaluation against the $n$th Pontryagin class, and the map $[p_n]$ is induced by the $p_n$. Identifying $\pi_{4n-1}(\TOP/\OO)$ with the group $\Theta_{4n-1}$ of homotopy $(4n-1)$-spheres by surgery, the map $[p_n]$ can be evaluated homotopy sphere $\Sigma$ by choosing a stable framing and  integrating the topological Pontryagin class $p_n$ over the topological $4n$-manifold $\mathrm{Cone}(\Sigma)$ with boundary $\Sigma$, using the stable framing on the boundary. The resulting rational number is not independent of the choice of framing, but its residue class modulo $\bfZ$ is. On the Milnor sphere $\Sigma_M \in \bP_{4n} \subset \Theta_{4n-1}$ one may compute this as follows. Gluing $\mathrm{Cone}(\Sigma_M)$ to the $E_8$-plumbing $E$ along $\Sigma_M$ gives a topological $4n$-manifold $\bar{W}$ of signature 8, so we have $8 = \int_{\bar{W}} \mathcal{L}_n$. As $W$ is parallelisable, all $p_i \in \oH^{4i}(\bar{W};\bfQ)$ with $i < n$ vanish, and using the stable framing on $\Sigma_M$ induced by a choice of stable framing of $W$ we have \[\textstyle{\int_{\bar{W}} p_n = \int_{(\mathrm{Cone}(\Sigma_M),  \Sigma_M)} p_n}.\] From \[\mathcal{L}_n = \tfrac{2^{2n} (2^{2n-1}-1) |B_{2n}|}{(2n)!} \cdot p_n + \text{decomposables},\] for $B_{2n}$ the $2n$th Bernoulli number, it thus follows that
\[[p_n](\Sigma_M) = \tfrac{(2n)!}{2^{2n-3} (2^{2n-1}-1) |B_{2n}|} \mod \bfZ.\]
In dimension $2n+1 = 5$, Cerf's ``pseudoisotopy implies isotopy'' theorem ensures that the map $\pi_2(\BDiff_\partial(D^5))\ra \Theta_{7}$ is surjective. As $\pi_{4n-1}(\BO(2n+1))=\pi_7(\BO(5))=0$, this implies by the upper row in the diagram that there is an element $x \in \pi_2(\BDiff_\partial^\fr(D^5)_\ell)$ that maps to $\Sigma_M \in \Theta_7$. By the above and $B_4=-\tfrac{1}{30}$ this satisfies the congruence
\[(\Omega^{5}p_2)(x) \equiv \tfrac{5 \cdot 3^2 \cdot 2^3}{7} \mod \bfZ,\] so $(\Omega^{5}p_2)(x) \not\in \bfZ$ and in particular $(\Omega^{5}p_2)(x) \not\in 4\cdot \bfZ$ as required.
\end{proof}

\begin{rem}As $\pi_{4n}(\BSG(2n+1))_\bfQ\cong \bfQ$ and the class $E\in \oH^{4n}(\BSG(2n+1);\bfQ)$ from \cref{sec:transgressed-pontryagin} to $\BSO(2n+1)$ agrees with $p_n$ in $\oH^{4n}(\BSG(2n+1);\bfQ)$, the invariant $\varepsilon$ agrees with the evaluation of $\tfrac{1}{4}\cdot E$. In particular, the argument explained above shows $p_n\neq E\in\oH^{4n}(\BSTOP(2n+1);\bfQ)$ for many values of $n\ge2$ by surprisingly elementary means.
\end{rem}

\subsection{An alternative description of the first Kontsevich class}\label{sec:Kontsevich-alternative}
Recall that there is a unique class $E\in \oH^{4n}(\BSG(2n+1);\bfQ)$ that pulls back to the square of the Euler class $e^2\in \oH^{4n}(\BSG(2n);\bfQ)$ along the stabilisation map $\BSG(2n)\ra \BSG(2n+1)$ (see \cref{sec:transgressed-pontryagin}). After looping this class $(2n+2)$ times, we may pull it back along the composition \eqref{equ:framed-diff-to-G}
to get a class $\Omega^{2n+2}E\in \oH^{2n-2}(\BDiff_\partial^\fr(D^{2n+1});\bfQ)$. It turns out that this class agrees with the Kontsevich class $\zeta_2$ up to the following explicit constant.

\begin{thm}\label{thm:KontsevichClass}
 For $2n+1\ge5$, we have
\[\zeta_{2}=\tfrac{1}{4}\cdot\Omega^{2n+2}E\in \oH^{2n-2}(\BDiff_\partial^\mathrm{fr}(D^{2n+1})_\ell;\bfQ)\] where $\zeta_{2}$ is the Kontsevich class associated to the $\Theta$-graph from \cite[Section 2.5.3]{WatanabeI}.
\end{thm}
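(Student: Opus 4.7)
The plan is to reduce the identity to a finite-dimensional comparison on rational homotopy groups, then use the main computation \cref{cor:homotopy-framed-odd-disc}, the reflection involution, and Watanabe's explicit calculations on the theta-graph class to pin down the coefficients.

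First I would reduce to a homotopy question. By \cref{cor:homotopy-framed-odd-disc}, the space $\BDiff^{\mathrm{fr}}_\partial(D^{2n+1})_\ell$ is rationally $(2n-3)$-connected, so the rational Hurewicz theorem provides a canonical isomorphism
\[
\oH^{2n-2}(\BDiff^{\mathrm{fr}}_\partial(D^{2n+1})_\ell;\bfQ) \cong \Hom_\bfQ\big(\pi_{2n-2}(\BDiff^{\mathrm{fr}}_\partial(D^{2n+1})_\ell)_\bfQ, \bfQ\big).
\]
The identity $\zeta_2 = \tfrac{1}{4}\Omega^{2n+2}E$ is therefore equivalent to equality as linear functionals. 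By the same corollary, in the range $2n-2 < 3n-7$ the group $\pi_{2n-2}(\BDiff^{\mathrm{fr}}_\partial(D^{2n+1})_\ell)_\bfQ$ has a basis given by duals of $\Omega^{2n+2}E$, $\Omega^{2n+2}p_n$ and (for $n$ odd) an algebraic $K$-theory class $\xi^\tau$. Write
\[
\zeta_2 = a\cdot \Omega^{2n+2}E + b\cdot \Omega^{2n+2}p_n + c\cdot \xi^\tau,
\]
so the theorem amounts to showing $a = 1/4$, $b = 0$ and $c = 0$.

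Next I would dispense with $c$ by a reflection-involution argument. Both $\Omega^{2n+2}E$ and $\Omega^{2n+2}p_n$ lie in the $(-1)$-eigenspace of $\rho^*$, since orientation reversal on $D^{2n+1}$ negates classes pulled back from $\BSG(2n+1)$ or from Pontryagin classes on $\BSTOP(2n+1)$. The Kontsevich class $\zeta_2$ also lies in the $(-1)$-eigenspace: under fibrewise orientation reversal the theta-graph configuration integral picks up a sign $(-1)^{3(2n+1)} = -1$. On the other hand, by the discussion in \cref{sec:involution-on-mcg} and \cref{sec:stable-cohomology-borel} the reflection acts trivially on $\GL_\infty(\bfZ)$, so $\xi^\tau$ lies in the $(+1)$-eigenspace. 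Comparing eigenspaces forces $c = 0$. A first linear relation between $a$ and $b$ then comes from evaluating both sides on the $\SO$-reframing class $r(\alpha)$ for $\alpha \in \pi_{4n}(\BSO(2n+1))_\bfQ$ with $p_n(\alpha) = 1$: by property \ref{enum:psi-ii} (the content of \cref{sec:who-would-have-thought}) we have $\zeta_2(r(\alpha)) = 1/4$, while $\Omega^{2n+2}E(r(\alpha)) = \Omega^{2n+2}p_n(r(\alpha)) = 1$ as a consequence of the relation \ref{P3}. Hence $a + b = 1/4$.

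The hard part will be producing a second independent equation showing $b = 0$. For this I would evaluate both sides on Watanabe's theta-graph class $\theta_*(\alpha \otimes \alpha) \in h_{2n-2}(\BDiff^{\mathrm{fr}}_\partial(D^{2n+1})_\ell)_\bfQ$ constructed in \cref{equ:the-construction}: Watanabe's main theorem in \cite{WatanabeII} pins down $\zeta_2(\theta_*(\alpha \otimes \alpha))$ to be a specific nonzero rational number, while the values of $\Omega^{2n+2}E$ and $\Omega^{2n+2}p_n$ on the same class can in principle be read off the Borromean-bundle description of $\alpha$ together with the compatibility encoded in \cref{lem:kappaIso}, \cref{cor:lowest-htp-grp-haut-v}, and \cref{lem:kappasAreCompatible}. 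The main obstacle is carrying out this last geometric evaluation, which amounts to computing the characteristic classes $E$ and $p_n$ of the explicit $D^{2n+1}$-bundle obtained by nesting two copies of Watanabe's Borromean construction via the theta-pattern embedding $\theta \colon V_3 \sqcup V_3 \hookrightarrow D^{2n+1}$ of \cref{fig:Clasper5}; once this is done, the resulting numerical match with Watanabe's integral computation forces $b = 0$, and thus $a = 1/4$.
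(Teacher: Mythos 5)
Your first step contains a concrete error: the space $\BDiff^{\fr}_\partial(D^{2n+1})_\ell$ is \emph{not} rationally $(2n-3)$-connected. Looking at \cref{cor:homotopy-framed-odd-disc}, the summand $\bigoplus_{i\ge 1}\bfQ^-[4i-2n-2]$ contributes classes in low positive degrees whenever $i>(n+1)/2$; concretely these arise from $\pi_*(\Omega^{2n+1}\OO(2n+1))_\bfQ$, and for $n\ge 5$ the smallest positive contribution is in degree $2$ or $4$. So $\oH^{2n-2}(\BDiff^\fr_\partial(D^{2n+1})_\ell;\bfQ)$ contains decomposable classes, and is strictly larger than the dual of $\pi_{2n-2}(\cdot)_\bfQ$; the claimed Hurewicz isomorphism is false. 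This could be patched by observing that $\zeta_2$ is primitive (which is exactly \cref{lem:Zeta2Prim}, a nontrivial input you would need to establish), and that $\Omega^{2n+2}E$, $\Omega^{2n+2}p_n$, $\xi^\tau$ are loopings/transgressions and hence primitive, so the comparison can be done on the primitive part. But you do not invoke primitivity.

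Two further issues remain even after that repair. First, relying on \cref{cor:homotopy-framed-odd-disc} means your argument only applies for $2n-2 < 3n-7$, i.e.\ $n\ge 6$; the theorem asserts the identity for all $n\ge 2$, and the introduction to the appendix explicitly flags that the argument should not use the main computation. Second, and most seriously, the ``hard part'' you single out --- computing $\Omega^{2n+2}E$ and $\Omega^{2n+2}p_n$ on the nested Borromean bundle $\theta_*(\alpha\otimes\alpha)$ --- is not carried out, and it is not at all clear how to do it: these are pullbacks of topological characteristic classes along the smoothing-theory map, not Pontryagin classes of the (trivialised) vertical tangent bundle of the explicit smooth bundle, so they are not directly accessible from the Borromean description. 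As written, the proposal establishes only the relation $a+b=1/4$ and $c=0$, not $b=0$.

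The paper's actual proof takes a different and more elementary route that sidesteps all of these issues. After establishing primitivity (\cref{lem:Zeta2Prim}), it analyses the definition of $\zeta_2$ directly on the Alexander model $\Omega^{2n+1}_0\TOP(2n+1)$, replacing the boundary of the Fulton--MacPherson compactification $\overline{C}_2(\bfR^{2n+1})$ by the homotopy-theoretic end space, on which the $\Homeo_c(\bfR^{2n+1})$-action (and hence the Alexander trick) makes the relevant pair equivariantly trivial. The upshot is that the map defining $\zeta_2$ factors through $\Omega^{2n+1}_0\G(2n+1)$, which is rationally a $K(\bfQ, 2n-2)$ detected by $\Omega^{2n+2}E$; this immediately gives $\zeta_2 = A\cdot\Omega^{2n+2}E$ for some $A\in\bfQ$ with no range restriction and no term in $p_n$ to rule out. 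The constant $A=1/4$ is then read off exactly as in your first linear relation, using the $\SO$-reframing and $p_n=E$ on $\BSO(2n+1)$.
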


\begin{rem}
It is worth mentioning that the class $\zeta_2\in \oH^{2k(n-1)}(\BDiff_\partial^\fr(D^{2n+1});\bfQ)$ as studied in \cite{WatanabeI} does \emph{not} map to the Kontsevich class \[\zeta_{2k,3k}\in \oH^{2k(n-1)}(\BDiff_\partial^\fr(D^{2n+1});\cA_{2k,3k}\otimes \bfR)\] for $k=1$ considered in \cite{WatanabeII} under the map induced by the ``theta graph'' (considered as a map $\Theta\colon \bfQ\ra \cA_{2,3}\otimes \bfR$). Comparing definitions, one sees that it rather maps to $12\cdot \zeta_{2,3}$. 
\end{rem}

\begin{rem}It follows from \cref{thm:KontsevichClass} that Watanabe's invariant \[\hat{Z}_2\colon\pi_{2n-2}(\BDiff_\partial(D^{2n+1}))_\bfQ\lra \bfQ\] from \cite[Theorem 2]{WatanabeI} agrees up to a nontrivial scalar with the invariant \begin{equation}\label{equ:e-pn-tau-on-homotopy}\Omega^{2n+2}(p_n-E)^\tau\colon \pi_{2n-2}(\BDiff_\partial(D^{2n+1}))_\bfQ\lra \bfQ\end{equation} featuring in our \cref{cor:homotopy-diff-odd-disc}. In particular, this implies that $\hat{Z}_2$ is nontrivial for \emph{all} $n\ge5$, which was previously known from Watanabe's work \cite{WatanabeI,WatanabeII} contingent on numerical conditions on $n$ involving Bernoulli numbers. As these conditions are satisfied for $n=2,3,4$, we can also argue the other way to conclude that \eqref{equ:e-pn-tau-on-homotopy} is nontrivial in the cases $n=2,3,4$ not captured by our \cref{cor:homotopy-diff-odd-disc}.
\end{rem}

For the remainder of this section we assume familiarity with the construction of $\zeta_2\in \oH^{2n-2}(\BDiff^\fr_\partial(D^{2n+1});\bfZ)$ as an \emph{integral} class given in \cite[Section 2.5]{WatanabeI}, and freely use the notation of that paper. For convenience we pass between $(D^d, \partial D^d)$-bundles and $\interior(D^d)$- or $\bfR^d$-bundles standard outside of a compact set without further mentioning. As a preparation to \cref{thm:KontsevichClass}, we prove the following.

\begin{lem}\label{lem:Zeta2Prim}
The map representing the class $\zeta_2\in \oH^{2n-2}(\BDiff^\fr_\partial(D^{2n+1});\bfZ)$ 
\[\zeta_2\colon \BDiff_\partial^\mathrm{fr}(D^{2n+1})_\ell\lra K(\bfQ,2n-2)\]
is a map of $H$-spaces; the $H$-space structure on the source is induced by the action of the little $(2n+1)$-discs operad induced by boundary connected sum. 
\end{lem}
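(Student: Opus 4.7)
The plan is to reduce the lemma to a cohomological primitivity statement: a continuous map from the $H$-space $X = \BDiff^\fr_\partial(D^{2n+1})_\ell$ to $K(\bfQ,2n-2)$ is an $H$-space map up to homotopy if and only if the class it represents is primitive in $\oH^{2n-2}(X;\bfQ)$, so it suffices to show
\[\mu^*(\zeta_2) = p_1^*\zeta_2 + p_2^*\zeta_2 \ \in\ \oH^{2n-2}(X\times X;\bfQ),\]
with $\mu$ the multiplication induced by boundary connected sum and $p_i$ the two projections. A pair of framed $D^{2n+1}$-bundles $\pi_i \colon E_i \to B$ classifies a map $B \to X \times X$, and $\mu$ composed with this map is classified by the fibrewise boundary connected sum $E = E_1 \natural_B E_2 \to B$. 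Thus the task becomes showing that the de Rham representative of $\zeta_2(E)$ differs from $\zeta_2(E_1) + \zeta_2(E_2)$ by an exact form on $B$.

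The second step is to unpack Watanabe's configuration space integral representative of $\zeta_2$: on a framed $D^{2n+1}$-bundle $\pi \colon E \to B$,
\[\zeta_2(E) = \int_{C_2^\pi(E)/B} \omega \wedge \omega \wedge \omega,\]
where $C_2^\pi(E)$ is the Fulton--MacPherson compactification of the fibrewise ordered configuration space of two distinct points and $\omega$ is a propagator $2n$-form on it, built from the framing. Different valid propagators differ by exact forms and yield the same class on $B$. Specialising to $E = E_1 \natural_B E_2$, the fibrewise configuration space decomposes, away from a codimension-one locus near the neck, as
\[C_2^\pi(E) = C_2^\pi(E_1) \sqcup C_2^\pi(E_2) \sqcup \bigl(\interior(E_1) \times_B \interior(E_2)\bigr) \sqcup \bigl(\interior(E_2) \times_B \interior(E_1)\bigr)\]
according to which half contains each of the two ordered points. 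The integrals over the first two pieces reproduce $p_1^*\zeta_2 + p_2^*\zeta_2$, so primitivity reduces to the vanishing of the two ``mixed'' contributions. The plan is to arrange this vanishing by choosing $\omega$ to be supported in an $\epsilon$-neighbourhood of the fibrewise diagonal: for $\epsilon$ smaller than the diameter of the neck of the connected sum, the support of $\omega$ is disjoint from both mixed strata, so the integrand vanishes there identically.

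The main obstacle is to justify that such a small-support propagator actually represents Watanabe's cohomology class $\zeta_2$: one must check that it can be arranged to satisfy the required boundary conditions on the codimension-one faces of $C_2^\pi(E)$, in particular restricting to the fibrewise volume form on the diagonal $S^{2n}$-blowup and matching the framing on the faces at infinity. The intended argument is to interpolate between a small-support propagator and Watanabe's standard propagator through a family of propagators with matched boundary data, and to use Stokes' theorem on $C_2^\pi(E)$ to conclude that the difference of the two fibre integrals is exact on $B$. This is a standard but delicate computation in the Bott--Taubes / Kontsevich / Watanabe framework for framed configuration space integrals.
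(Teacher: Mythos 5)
Your approach differs from the paper's in a way that introduces a genuine gap. The paper works with Watanabe's \emph{integral} cohomological construction of $\zeta_2$ (via the extension $\tilde\omega\colon E\overline{C}_2(\pi)\to K(\bfZ,2n)$ of the framing map $v^{2n}\circ p(\tau_E)$ and the canonical relative refinement of $\tilde\omega^2$), whereas you work with de Rham propagator representatives. The strategic difference is substantive: the paper never needs to shrink the support of any form.

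The gap is in your ``small-support propagator'' step. A valid propagator $\omega$ must restrict on \emph{all} codimension-one faces of $\overline{C}_2^\pi(E)$ to the form prescribed by the framing. On the faces at infinity (both points escaping toward $\partial D^{2n+1}$ or the complement of the support of the diffeomorphisms), the prescribed boundary form is pulled back from $S^{2n}$ along the Gauss map of the standard framing, which is generically surjective; its support is nowhere near the diagonal. A form supported in an $\epsilon$-neighbourhood of the diagonal therefore cannot satisfy the boundary condition on those faces, no matter how small $\epsilon$ is. The interpolation you propose would have to pass through forms violating the boundary conditions, and the Stokes argument would then pick up uncontrolled boundary contributions rather than an exact correction on $B$ --- precisely the thing you cannot afford.

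The paper's argument dissolves this difficulty at the level of cohomology. For $\pi$ the boundary connected sum of $\pi_1$ and $\pi_2$, the complement in $E\overline{C}_2(\pi)$ of $\interior(E\overline{C}_2(\pi_1)\sqcup E\overline{C}_2(\pi_2))$ consists of configurations where both points lie in the ``neck'' region $K$ (where the framing is standard, so $p(\tau_E)$ is already defined there) together with the two mixed strata $E_i\times_B E_j$. Since the fibres of the mixed strata are contractible (products of open discs), the framing map extends over them for free. One thereby obtains an extension $p^{\mathrm{ext}}(\tau_E)$ of the framing map to the whole complement, so $\tilde\omega$ is pulled back from $S^{2n}$ outside the two pieces, and $\tilde\omega^2$ canonically lifts to a relative class supported in $E\overline{C}_2(\pi_1)\sqcup E\overline{C}_2(\pi_2)$; excision then gives $\tilde\omega^2 = \tilde\omega_1^2 + \tilde\omega_2^2$, and multiplying by $\tilde\omega$ and fibre-integrating gives additivity on the nose. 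Note that the vanishing you were trying to engineer by squeezing the support of $\omega$ is instead obtained from $(v^{2n})^2=0$ together with the contractibility of the fibres of the mixed strata --- no choice of representative is needed.
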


\begin{proof}
The claim is equivalent to showing that the function $[B,\BDiff^\fr_\partial(D^{2n+1})]\ra \oH^{2n-2}(B;\bfQ)$ given by evaluating $\zeta_2$ is additive for spaces $B$. A class in the source is represented by a smooth $\interior(D^{2n+1})$-bundle  $\pi\colon E \to B$ with a framing $\tau_E$ of its vertical tangent bundle, which is identified with the trivial framed bundle $B \times \interior(D^{2n+1})$ outside of a compact set. It being the sum of classes $\pi_1$ and $\pi_2$ means the following: there is a vertically framed embedding
\[K \coloneq B \times (\interior(D^{2n+1}) \setminus \big(\tfrac{1}{10}\interior(D^{2n+1}) - \tfrac{1}{2} e_1) \sqcup (\tfrac{1}{10}\interior(D^{2n+1}) + \tfrac{1}{2} e_1)\big) \lra E\]
over $B$ whose complement $\pi_1 \sqcup \pi_2\colon E_1 \sqcup E_2 \to B$ is (after rescaling the framing by a factor of 10) a disjoint union of two bundles representing classes in $[B,\BDiff^\fr_\partial(D^{2n+1})]$. The bundle of compactified two-point configuration spaces
$\overline{C}_2(\pi)\colon E\overline{C}_2(\pi) \ra B$
contains subbudles $E\overline{C}_2(\pi_1)$, $E\overline{C}_2(\pi_2)$, $E_1 \times_B E_2$, $E_2 \times_B E_1$, and the complement of these consists of configurations where both points lie in $K$. The framing $\tau_E$ gives a map
\[p(\tau_E)\colon \partial^\mathrm{fib} E\overline{C}_2(\pi) \cup \partial^\mathrm{fib} E\overline{C}_2(\pi_1) \cup \partial^\mathrm{fib} E\overline{C}_2(\pi_2) \lra S^{2n},\]
and it is easy to see that this extends to a map
$p^\mathrm{ext}(\tau_E)\colon E\overline{C}_2(\pi) \setminus \interior( E\overline{C}_2(\pi_1) \sqcup  E\overline{C}_2(\pi_2)) \ra S^{2n},$
using the framing outside of $E_1 \times_B E_2$ and $E_2 \times_B E_1$, and then using the contractibility of the fibres of the latter bundles. Denoting the dual of the fundamental class of $S^{2n}$ by $v^{2n}$, \cite[Lemma 3]{WatanabeI} provides an extension
\[
\begin{tikzcd}
\partial^\mathrm{fib} E\overline{C}_2(\pi_1) \sqcup \partial^\mathrm{fib} E\overline{C}_2(\pi_2) \rar{p(\tau_E)} \dar & S^{2n} \dar{v^{2n}}\\
E\overline{C}_2(\pi_1) \sqcup E\overline{C}_2(\pi_2) \rar{\tilde{\omega}_1 \sqcup \tilde{\omega}_2} & K(\bfZ, 2n),
\end{tikzcd}
\]
which glued to $v^{2n} \circ p^\mathrm{ext}(\tau_E)$ gives a map $\tilde{\omega}\colon E\overline{C}_2(\pi) \to K(\bfZ, 2n)$ that extends $v^{2n} \circ p(\tau_E)\vert_{\partial^\mathrm{fib} E\overline{C}_2(\pi)}$.

The construction of $\zeta_2(\pi)$ is now as follows. As $(v^{2n})^2 = 0 \in \oH^{4n}(S^{2n};\bfZ)$, there is a canonical refinement of $\tilde{\omega}^2$ to a relative cohomology class $\tilde{\omega}^2 \in \oH^{4n}( E\overline{C}_2(\pi) , \partial^\mathrm{fib} E\overline{C}_2(\pi)  ; \bfZ)$, and hence we can consider $\tilde{\omega}^3$ as a relative cohomology class of degree $6n$ and fibre-integrate it along $\overline{C}_2(\pi)$ to obtain $\zeta_2(\pi) \in \oH^{6n-2(2n+1)}(B;\bfZ)=\oH^{2n-2}(B;\bfZ)$. 

Now, by construction $\tilde{\omega}^2$ in fact canonically refines to a relative cohomology class \[\tilde{\omega}^2 \in \oH^{4n}(E\overline{C}_2(\pi), E\overline{C}_2(\pi) \setminus \interior( E\overline{C}_2(\pi_1) \sqcup  E\overline{C}_2(\pi_2)) ; \bfZ),\] which by excision can be identified with \[\tilde{\omega}_1^2 + \tilde{\omega}_2^2 \in \oH^{4n}(E\overline{C}_2(\pi_1) \sqcup  E\overline{C}_2(\pi_2), \partial^\mathrm{fib}E\overline{C}_2(\pi_1) \sqcup  \partial^\mathrm{fib}E\overline{C}_2(\pi_2) ; \bfZ).\] Further multiplying with $\tilde{\omega}$ and fibre integrating gives
\[\textstyle{\zeta_2(\pi) = \int_{\overline{C}_2(\pi)} \tilde{\omega}^3 = \int_{\overline{C}_2(\pi_1)} \tilde{\omega}_1^3 + \int_{\overline{C}_2(\pi_2)} \tilde{\omega}_2^3 = \zeta_2(\pi_1) + \zeta_2(\pi_2)}\]
as claimed.
\end{proof}

\begin{proof}[Proof of \cref{thm:KontsevichClass}]
The characteristic class $\zeta_2$, whose construction was recalled in the proof of the previous lemma, gives a function
\begin{equation}\label{eq:zeta}
\zeta_2\colon [B, \BDiff_\partial^\fr(D^{2n+1})_\ell] \cong [B, \Omega^{2n+1}_0 \TOP(2n+1)] \lra \oH^{2n-2}(B;\bfZ)
\end{equation}
for any space $B$, using Morlet's equivalence $\Omega^{2n+1}_0 \TOP(2n+1) \simeq B\Diff_c^\mathrm{fr}(\bfR^{2n+1})_\ell$. This is a homomorphism by \cref{lem:Zeta2Prim}, so the claim is equivalent to showing that the rationalisation of this morphism agrees with the morphism induced by $\tfrac{1}{4}\cdot\Omega^{2n+2}E\in\oH^{2n-2}(\Omega_0^{2n+1}\BTOP(2n+1);\bfQ)$. We begin with two preliminaries.

The compactification $\overline{C}_2(\bfR^{2n+1})$ (denoted $\overline{C}_2(S^{2n+1}, \infty)$ in \cite{WatanabeI}) is a topological manifold with boundary, whose interior $C_2(\bfR^{2n+1})$ is the configuration space of two ordered points. The end space\footnote{See \cite[p.\ 1 and Appendix B]{HughesRanicki} for useful references on end spaces.}
$\partial_h C_2(\bfR^{2n+1}) \coloneq \{\gamma\colon [0,\infty) \to C_2(\bfR^{2n+1}) \, | \, \gamma\text{ is proper}\}$ is a homotopy-theoretic model for the boundary in that the zig-zag
\[\partial_h C_2(\bfR^{2n+1}) \overset{\text{res}}\lla \{\bar{\gamma}\colon [0,\infty] \to \overline{C}_2(\bfR^{2n+1}) \, | \, \bar{\gamma}^{-1}(\partial \overline{C}_2(\bfR^{2n+1})) = \{\infty\}\} \overset{\bar{\gamma} \mapsto \bar{\gamma}(\infty)}\lra \partial \overline{C}_2(\bfR^{2n+1})\]
consists of equivalences, which may be easily seen using a choice of collar for the boundary of $\overline{C}_2(\bfR^{2n+1})$. This induces an equivalence of ``pairs''
\[(C_2(\bfR^{2n+1}), \partial_h C_2(\bfR^{2n+1})) \simeq (\overline{C}_2(\bfR^{2n+1}), \partial \overline{C}_2(\bfR^{2n+1})),\]
where the ``inclusion of the boundary'' 
$\partial_h C_2(\bfR^{2n+1})\ra C_2(\bfR^{2n+1})$
is given by evaluation at $0$. The evident action of the group of compactly supported diffeomorphisms $\Diff_c(\bfR^{2n+1})$ on $C_2(\bfR^{2n+1})$ extends to an action on $\overline{C}_2(\bfR^{2n+1})$ and hence induces an action on $\partial \overline{C}_2(\bfR^{2n+1})$. Similarly it extends to an action on $\partial_h C_2(\bfR^{2n+1})$, but the advantage of the homotopy-theoretic boundary is that the $\Diff_c(\bfR^{2n+1})$-action on the pair $(C_2(\bfR^{2n+1}),\partial_h C_2(\bfR^{2n+1}))$ extends to an action of the group $\Homeo_c(\bfR^{2n+1})$, and so is homotopically trivial as $\Homeo_c(\bfR^{2n+1})$ is contractible by the Alexander trick.

It will also be helpful to recall how the first identification in \eqref{eq:zeta} can be made explicit. A class in the source of \eqref{eq:zeta} can be represented by a framed smooth $\bfR^{2n+1}$-bundle $\pi\colon E \ra B$ with structure group $\Diff_c(\bfR^{2n+1})$ (so $E$ is identified with the trivial framed bundle outside a compact set) whose framing is standard over each fibre. The Alexander trick gives a concordance 
$\bfR^{2n+1} \ra E_\text{Alex} \ra [0,1] \times B$
of topological bundles to the trivial bundle, under which the framing gives a trivialisation of the vertical tangent microbundle of $B \times \bfR^{2n+1}$, standard outside a compact set. Comparing this with the standard trivialisation of the vertical tangent microbundle gives 
\[\xi\colon B \lra \Omega^{2n+1}_0 \TOP(2n+1)\] which corresponds to the original framed smooth bundle under the first equivalence in \eqref{eq:zeta}.

We now show the claim. Applying the construction $(C_2(-), \partial_h C_2(-))$ fibrewise to the Alexander concordance defines a concordance from $(C_2(\pi), \partial_h^\mathrm{fib} C_2(\pi))$ to $B \times (C_2(\bfR^{2n+1}), \partial_h C_2(\bfR^{2n+1}))$. On the other hand, using the smooth structure gives an equivalence of pairs
$(C_2(\pi), \partial_h^\mathrm{fib} C_2(\pi)) \simeq (\overline{C}_2(\pi), \partial^\mathrm{fib} \overline{C}_2(\pi))$
over $B$, and similarly
\[B \times (C_2(\bfR^{2n+1}), \partial_h C_2(\bfR^{2n+1})) \simeq B \times (\overline{C}_2(\bfR^{2n+1}), \partial \overline{C}_2(\bfR^{2n+1})).\]
Using the identification of $(\overline{C}_2(\pi), \partial^\mathrm{fib} \overline{C}_2(\pi))$ with $B \times (\overline{C}_2(\bfR^{2n+1}), \partial \overline{C}_2(\bfR^{2n+1}))$, we can transfer the fibre integration calculation in the definition of $\zeta_2$ to this trivial bundle. The map
$p(\tau_E)\colon \partial^\mathrm{fib} \overline{C}_2(\pi) \ra S^{2n}$
given by the framing $\tau_E$ (and the standard behaviour outside of a compact set) corresponds, under this identification, to a map
$p'\colon B \times \partial \overline{C}_2(\bfR^{2n+1}) \ra S^{2n}.$
On the part of the boundary given by tending to $\infty$ in $\bfR^{2n+1}$, the Alexander concordance was constant and hence the map $p'$ agrees with $p(\tau_E)$ here. The remaining part of the boundary corresponds to pairs of points colliding and is given by $B \times S(T\bfR^{2n+1})$. On this part $p'$ is given by the map
\begin{align*}
B \times S(T\bfR^{2n+1}) &\lra C_2(\bfR^{2n+1}) \simeq S^{2n}\\
(b, x) &\longmapsto \xi(\pr(b)) \cdot x,
\end{align*}
where $\pr\colon S(T\bfR^{2n+1}) \ra \bfR^{2n+1}$ is the projection. In this formula we implicitly consider $S(T\bfR^{2n+1}) \subset C_2(\bfR^{2n+1})$ as antipodal points, and use the $\TOP(2n+1)$-action on $C_2(\bfR^{2n+1})$.

Now observe that this map $p'$ does not depend on the element $\xi \in [B, \Omega^{2n+1}_0 \TOP(2n+1)]$ but only on its image in $[B, \Omega^{2n+1}_0 \G(2n+1)]$, as it suffices for $b \mapsto \xi(\pr(b))$ to be a family of homotopy equivalences of the $2n$-sphere. Thus there is a factorisation
\begin{align*}
 [B, \Omega_0^{2n+1}\TOP(2n+1)]_\bfQ &\lra \mathrm{Im}\big([B, \Omega_0^{2n+1} \TOP(2n+1)]_\bfQ \to [B, \Omega_0^{2n+1} \G(2n+1)]_\bfQ\big)\\ 
&\lra \oH^{2n-2}(B;\bfQ)
\end{align*}
of the rationalisation of \eqref{eq:zeta}. As $E\colon \BSG(2n+1)\ra K(\bfQ,4n)$ is a rational equivalence (see \cref{sec:transgressed-pontryagin}), $\Omega^{2n+2} E$ induces an isomorphism
$[B, \Omega^{2n+1}_0 \G(2n+1)]_\bfQ \cong \oH^{2n-2}(B;\bfQ)$, so by naturality we must have $\zeta_2 = A \cdot (\Omega^{2n+2}E)\in \oH^{2n-2}(\Omega^{2n+1}_0\TOP(2n+1);\bfQ)$ for some constant $A \in \bfQ$. To determine this constant, we consider the composition
\vspace{-0.1cm}
\[\pi_{4n}(\BSO(2n+1)) = \pi_{2n-2}(\Omega^{2n+1} \OO(2n+1)) \lra \pi_{2n-2}(\Omega^{2n+1} \TOP(2n+1)) \xlra{\zeta_2} \bfQ\]
which is the map $Z'_2$ from \cite[p.\ 699]{WatanabeI}, and as shown in the proof of Theorem 2 of that paper this map agrees with evaluation against a quarter of $p_n$. As $p_n = E \in \oH^{4n}(\BSO(2n+1);\bfQ)$ (see \cref{sec:transgressed-pontryagin}) it then follows that $A=\tfrac{1}{4}$, as claimed.
\end{proof}

\clearpage

\section*{Glossary of notation}
\begin{table}[h!]
\begin{tabular}[h!]{ c | c}
Notation & Section\\
\hline
 $V_{g}$, $W_g$, $W_{g,1}$ &  \ref{sec:manifold-preliminaries} \\ 
 $\Diff(-)$, $\BlockDiff(-)$ &  \ref{sec:block-diffeos} \\ 
  $\hAut(-)$, $\Bun(-)$ &  \ref{sec:block-diffeos} \\
 $\half\partial$ &  \ref{sec:block-diffeos} \\  
 $\fr$, $\onefr$, $\pm\fr$ &  \ref{sec:block-diffeos} \\  
  $\lambda(-,-)$& \ref{sec:standard-model}\\
  $\bar{H}_V$, $\bar{H}_W$, $\bar{K}_W$& \ref{sec:standard-model}\\
  $\bar{H}_V^\bfZ$, $\bar{H}_W^\bfZ$, $\bar{K}_W^\bfZ$ & \ref{sec:standard-model}\\
    $e_i$, $f_i$ & \ref{sec:standard-model}\\
    $\ell_{V_g}$, $\ell_{W_{g,1}}$, $\ell$, $(-)_\ell$ & \ref{sec:standard-framing}\\
    $\check{\Lambda}_{V}$, $\check{\Lambda}_{W}$ & \ref{sec:MCG}\\
        $G_V$, $G_W$, $\GL(\bar{H}^\bfZ_V)$ & \ref{sec:MCG}\\
     $\OSp_{g}(\bfZ)$& \ref{sec:MCG}\\
     $\Sp^{(q)}_{2g}(\bfZ)$, $\Sp_{2g}(\bfZ)$, $\oO_{g,g}(\bfZ)$ & \ref{sec:MCG}\\
     $M_V^\bfZ$ & \ref{sec:MCG}\\
     $\rho$ & \ref{sec:Reflection}\\
     $\bfQ^\pm$, $\bfZ^{\pm}$ & \ref{sec:involution-and-intersection}\\
   $H_V$, $H_W$, and $K_W$ & \ref{sec:gradings}\\
   $(-)^\vee$ & \ref{sec:gradings}\\
      $(-)[k]$ & \ref{sec:gradings}\\
   $s^k(-)$  & \ref{sec:gradings}\\
   $[-]^{\Sigma_k}$ & \ref{sec:Schur-functors}\\
      $(\mu)$& \ref{sec:Schur-functors}\\
         $S_\mu(-)$& \ref{sec:Schur-functors}
\end{tabular}\quad\quad
\begin{tabular}[h!]{ c | c}
Notation & Section\\
\hline
   $V_\mu(-)$ & \ref{sec:symplectic-schur-functors}\\ 
   $\bfL(-)$& \ref{sec:derivations-reminder}\\ 
   $\Der^f_\omega(-,-)$ & \ref{sec:derivations-reminder}\\
   $\cL ie\dl -,-\dr$, $Lie(-)$, $Lie\dl -\dr$  & \ref{sec:homotopy-groups-haut}\\
   $\kappa_V$, $\kappa_W$ & \ref{sec:kappas}\\
   $\Emb^{(\sim)}(-,-)$ & \ref{section:splitting-handles}\\
   $\Emb^{\fr}(-,-)$ & \ref{sec:framed-embeddings}\\
      $(-)^\sim$ & \ref{sec:HEmbSpace}\\
      $\sigma_i$ & \ref{sec:stable-cohomology-borel}\\
   $K_\mu$& \ref{sec:stable-cohomolog-gv}\\
   $p_i^\tau$, $(p_n-e^2)^\tau$, $(p_n-E)^\tau$& \ref{sec:transgressed-pontryagin}\\
      $E$& \ref{sec:transgressed-pontryagin}\\
      $\G(-)$, $\SG(-)$ &\ref{sec:transgressed-pontryagin}, \ref{sec:FirstDerTop} \\
      $K_i(\bfZ)$, $K\OSp_i(\bfZ)$& \ref{section:homotopy-even-discs}\\
      $\xi_i$, $\xi_i^{\OSp}$, $\xi_i^\tau$, $(\xi_i^{\OSp})^\tau$& \ref{section:homotopy-even-discs}\\
   $\oC(-)$&\ref{sec:concordances}\\
   $\Theta\fbo^{(1)}$, $\Theta\fbt^{(1)}$, $\Theta\fbg^{(1)}$&\ref{sec:FirstDerTop}, \ref{sec:orthogonal-calc}\\
   $\fe$, $\fe^{(i)}$, $\Theta\fe^{(i)}$ & \ref{sec:orthogonal-calc}\\
   $e_{\fe}$& \ref{sec:KThyEuler}\\
   $\underline{\Theta\fe^{k}}$ &\ref{sec:KThyEuler}\\
   $\underline{\Omega}^\infty(-)$, $\underline{\bfS}_\mathrm{sign}$&\ref{sec:KThyEuler}\\
$C_{\pm}(-,-)$ & \ref{sec:AppendixA}\\
   $\zeta_2$, $\varepsilon$ & \ref{app:Watanabe}\\
   \ & \\
\end{tabular}
\end{table}

\bibliographystyle{amsalpha}
\bibliography{literature}

\vspace{0.2cm}

\end{document}